\definecolor{Red}{cmyk}{0,1,1,0.2}
\newcommand{\N}{\mathbb N}
\newcommand{\Z}{\mathbb Z}
\newcommand{\R}{\mathbb R}
\newcommand{\T}{\mathbb{T}}
\newcommand{\essup}{\textrm{\rm essup}}
\def\cC{\mathcal C}
\def\R{\mathbb R}
\def\N{\mathbb N}
\def\Z{\mathbb Z}
\def\E{\mathbb E}
\def\P{\mathbb P}
\def\ep{\epsilon}
\def\rg{\rangle} 
\def\lg{\langle} 
\def\dk{{\bf d}_1}
\newcommand{\be}{\begin{equation}}
\newcommand{\ee}{\end{equation}}
\def\1{{\bf 1}}
\def\inte{\int_{\T^d}}
\def\dive{{\rm div}}
\def\Pw{{\mathcal P}(\T^d)}
\def\Pk{{\mathcal P}(\T^d)}
\def\bx{{\boldsymbol x}}
\def\bX{{\boldsymbol X}}
\def\bY{{\boldsymbol Y}}
\def\bZ{{\boldsymbol Z}}
\def\ds{\displaystyle}
\newtheorem{Theorem}{Theorem}[section]
\newtheorem{Definition}[Theorem]{Definition}
\newtheorem{Proposition}[Theorem]{Proposition}
\newtheorem{Lemma}[Theorem]{Lemma}
\newtheorem{Corollary}[Theorem]{Corollary}
\newtheorem{Remark}[Theorem]{Remark}
\newtheorem{Example}[Theorem]{Example}
\begin{document}


\title{The master equation and the convergence problem in mean field games}
\author{P. Cardaliaguet\thanks{Ceremade, Universit\'e Paris-Dauphine, cardaliaguet@ceremade.dauphine.fr} \and F. Delarue\thanks{Laboratoire Jean-Alexandre Dieudonné, Université de Nice Sophia-Antipolis. delarue@unice.fr} 
 \and J.-M. Lasry\thanks{ 56 rue d'Assas 75006} \and P.-L. Lions\footnotemark[1]  \thanks{College de France, 11 Place Marcelin Berthelot,
Paris 75005}}

\maketitle

\begin{abstract} The paper studies the convergence,  as $N$ tends to infinity, of a system of $N$ coupled Hamilton-Jacobi equations, the Nash system. This system arises in differential game theory. We describe the limit problem in terms of the so-called ``master equation", a kind of second order partial differential equation stated on the space of probability measures. Our first main result is the well-posedness of the master equation. To do so, we first show the existence and uniqueness of a solution to the 
``mean field game system with common noise", which consists in a coupled system made of
a backward stochastic Hamilton-Jacobi equation and a forward stochastic Kolmogorov equation
and which plays the role of characteristics for the master equation. Our second main result is the convergence, in average, of the solution of the Nash system and a propagation of chaos property for the associated ``optimal trajectories". 
\end{abstract}

\setcounter{tocdepth}{1}
\tableofcontents

\section{Introduction}

\subsection{Motivation and summary of the results}

{\textit{\textbf{Statement of the problem.}}} The purpose of this paper is to discuss the behavior, as $N$ tends to infinity, of the strongly coupled system of $N$ parabolic equations 
\be\label{NashIntro}
\left\{ \begin{array}{l}
\ds - \partial_t v^{N,i} (t,\bx)-  \sum_{j=1}^N \Delta_{x_j}v^{N,i}(t,\bx) - \beta  \sum_{j,k=1}^N {\rm Tr} D^2_{x_j,x_k} v^{N,i}(t,\bx) + H(x_i,  D_{x_i}v^{N,i}(t,\bx)) \\
\ds \qquad \qquad + \sum_{j\neq i}  D_pH (x_j, D_{x_j}v^{N,j}(t,\bx))\cdot D_{x_j}v^{N,i}(t,\bx)=
F^{N,i}(\bx) \\
\qquad \qquad \qquad \qquad \qquad \qquad \qquad \qquad {\rm in}\; [0,T]\times (\R^{d})^N,\\
\ds v^{N,i}(T,\bx)= G^{N,i}(\bx)
\qquad {\rm in }\;  (\R^{d})^N.
\end{array}\right.
\ee
The above system is stated in $[0,T]\times (\R^d)^N$, where a typical element is denoted by $(t,\bx)$ with $\bx= (x_1,\dots, x_N)\in (\R^d)^N$. 
The unknowns are the $N$ maps $(v^{N,i})_{i\in \{1, \dots, N\}}$. The data are the Hamiltonian $H:\R^d\times \R^d\to \R$, the maps $F^{N,i},G^{N,i}:(\R^d)^N\to \R$, the nonnegative parameter $\beta$ and the horizon $T\geq 0$. 

System \eqref{NashIntro} describes the Nash equilibria of an $N-$player differential game (see Section \ref{subsec.informal} for a short description). In this game, the set of ``optimal trajectories" solves a system of $N$ coupled stochastic differential equations (SDE): 
\be\label{e.optitraj}
dX_{i,t}= -D_pH\bigl(X_{i,t}, Dv^{N,i}(t, \bX_t) \bigr)dt +\sqrt{2} dB^i_t +\sqrt{2\beta} dW_t, \qquad t\in [0,T],\; i\in \{1, \dots, N\}, 
\ee
where $v^{N,i}$ is the solution to \eqref{NashIntro} and the $((B^i_t)_{t\in [0,T]})_{i=1,\dots,N}$ and $(W_t)_{t\in [0,T]}$ are $d-$dimensional independent Brownian motions. The Brownian motions $((B^i_t)_{t \in [0,T]})_{i=1,\dots,N}$ correspond to the {\it individual noises}, while the Brownian motion $(W_t)_{t \in [0,T]}$ is the same for all the equations and, for this reason, is called the {\it common noise}. 
Under such a probabilistic point of view, 
the collection of random process $((X_{i,t})_{t \in [0,T]})_{i=1,\dots,N}$
forms a dynamical system of interacting particles.
Another, but closely related, objective of our paper is to study the mean-field limit of the $((X_{i,t})_{t \in [0,T]})_{i=1,\dots,N}$ as $N$ tends to infinity.
\vskip 4pt

As explained below, the motivation for investigating 
\eqref{NashIntro}
and 
\eqref{e.optitraj}
asymptotically is to justify the passage to the limit in the theory of mean-field games. 
\bigskip

\noindent 
\textit{\textbf{Link with the mean-field theory.}}
Of course, there is no chance to observe a mean-field 
limit for
\eqref{e.optitraj} under a general choice of the coefficients in
\eqref{NashIntro}. Asking for a mean-field limit certainly requires that 
the system has a specific symmetric structure in such a way that 
the players in the differential game are somewhat exchangeable
(when in equilibrium). For this purpose, 
we suppose that, for each $i\in \{1,\dots, N\}$, the maps 
$(\R^d)^N \ni \bx \mapsto F^{N,i}(\bx)$ and $(\R^d)^N \ni \bx \mapsto G^{N,i}(\bx)$ depend only on $x_i$ and on the empirical distribution of the variables $(x_j)_{j\neq i}$: 
\begin{equation}
\label{eq:intro:running:terminal:costs}
F^{N,i}(\bx)= F(x_i, m^{N,i}_{\bx}) \qquad 
{\rm and}
\qquad 
G^{N,i}(\bx)= G(x_i, m^{N,i}_{\bx}),
\end{equation}
where $m^{N,i}_{\bx}=\frac{1}{N-1} \sum_{j\neq i} \delta_{x_j}$ is the empirical distribution of the $(x_j)_{j\neq i}$ and where $F,G:\R^d\times {\mathcal P}(\R^d)\to \R$ are given functions, ${\mathcal P}(\R^d)$ being the set of Borel measures on $\R^d$. Under this assumption, the solution of the Nash system indeed enjoys strong symmetry properties, 
which imply in particular the required exchangeability property. Namely,
$v^{N,i}$ can be written into 
a similar form to 
\eqref{eq:intro:running:terminal:costs}: 
\begin{equation}
\label{eq:intro:form:vNi}
v^{N,i}(t,{\boldsymbol x})=v^{N}(t,x_i, m^{N,i}_{\boldsymbol x}),
\quad t \in [0,T], \quad {\boldsymbol x} \in (\R^d)^N,
\end{equation}
for a function $v^N(t,\cdot,\cdot)$ taking 
as arguments a state in $\R^d$ and an empirical distribution 
of size $N-1$ over $\R^d$.
\color{black}
\vspace{4pt}

Anyhow, 
even under the above symmetry assumptions, 
it is by no means clear whether 
the system \eqref{e.optitraj} can exhibit a mean-field limit. 
The reason is that the dynamics of the particles
$(X_{1,t},\dots,X_{N,t})_{t \in [0,T]}$
are coupled through the
unknown 
solutions $v^{N,1},\dots,v^{N,N}$ to the Nash system
\eqref{NashIntro}, whose symmetry properties
\eqref{eq:intro:form:vNi}
 may not suffice 
to apply standard results from the theory of propagation of chaos. 
Obviously, the 
difficulty
 is that the function $v^{N}$
 in the right-hand side of \eqref{eq:intro:form:vNi}
 precisely depends upon $N$. 
Part of the challenge in the paper is thus to show that 
the interaction terms in
\eqref{e.optitraj} get closer and closer, as $N$ tends to the infinity, 
to some interaction terms
with a
much more tractable
and much more explicit shape.

In order to get a picture of 
the ideal case under which the mean-field limit can be taken, 
one can
choose for a while $\beta = 0$ 
in \eqref{e.optitraj}
and 
then 
assume that the function $v^{N}$
in the right-hand side of 
\eqref{eq:intro:form:vNi}
is independent of $N$.
Equivalently, 
one can replace 
in \eqref{e.optitraj}
the interaction function 
$(\R^d)^N \ni \bx \mapsto D_{p}H(x_{i},v^{N,i}(t,\bx))$
by 
$(\R^d)^N \ni \bx \mapsto b(x_{i},m_{\bx}^{N,i})$, 
for a map $b: \R^d \times {\mathcal P}(\R^d) \mapsto \R^d$. 
In such a case,  
the coupled system of SDEs
\eqref{e.optitraj} turns into:
\be\label{e.SDEsIntro}
dX_{i,t}= b\Bigl(X_{i,t},
\frac{1}{N-1}\sum_{j \not = i}
\delta_{X_{j,t}} \Bigr)dt + \sqrt{2}dB^i_t, \qquad t\in [0,T],\; i\in \{1, \dots, N\},
\ee
the second argument in $b$ being nothing but the empirical measure of the 
particle system at time $t$. Under suitable assumptions on $b$ (e.g., if $b$ is bounded and Lipschitz continuous in both variables, the space of probability measures being equipped 
with the Wasserstein distance) and on the initial distribution of the 
$((X_{i,t})_{i=1,\dots,N})_{t \in [0,T]}$, 
both the marginal law of $(X^1_t)_{t \in [0,T]}$ (or of any other player) 
and the empirical distribution of the whole system 
converge to the solution of the McKean-Vlasov equation 
$$
\partial_t m-\Delta m +\dive\bigl(m \, b(\cdot,m)\bigr)=0.
$$
(see, among many other references, McKean \cite{McK}, Sznitman \cite{Szn}, M\'el\'eard \cite{Me96},...).
The standard strategy 
for establishing the convergence 
consists in a coupling argument.
Precisely, if one introduces the system of $N$ independent equations
$$
dY_{i,t}= b\bigl(Y_{i,t},{\mathcal L}(Y_{i,t})\bigr)\ dt+ \sqrt{2}dB^i_t, \qquad t\in [0,T],\; i\in \{1, \dots, N\}, 
$$
(where ${\mathcal L}(Y_{i,t})$ is the law of $Y_{i,t}$) with the same (chaotic) initial condition 
as that of the processes $((X_{i,t})_{t \in [0,T]})_{i=1,\dots,N}$, then it is known that 
(under appropriate integrability conditions, see 
Fournier and Guillin 
\cite{FG2014})
$$
\sup_{t \in [0,T]} \E\left[ |X_{1,t}-Y_{1,t}|\right]\leq CN^{-\frac1{\max(2,d)}}
\bigl( 
{\mathbf 1}_{\{d\not =2\}}
+
\ln(1+N) 
{\mathbf 1}_{\{d=2\}}
\bigr).
$$
In comparison with \eqref{e.SDEsIntro}, all the equations in \eqref{e.optitraj} are subject to the common noise $(W_t)_{t \in [0,T]}$, at least when $\beta \not =0$.
This makes a first difference between our limit problem and the above McKean-Vlasov example of interacting diffusions, but, for the time being, it is not
clear how deep this may affect the analysis. Indeed,
the presence of a common noise does not constitute a real challenge in the study
of McKean-Vlasov equations, 
the above coupling argument working in that case as well, provided that 
the distribution of $Y$ is replaced by its conditional distribution given 
the realization of the common noise. However, the key point here is precisely that our problem is not formulated as a McKean-Vlasov equation,
since the 
drifts in \eqref{e.optitraj}
are not of the same explicit mean-field structure as they are in 
\eqref{e.SDEsIntro} 
because of the additional dependence upon $N$ 
in the right-hand side of 
\eqref{eq:intro:form:vNi}
--obviously this is the second main difference between 
\eqref{e.optitraj}
and
\eqref{e.SDEsIntro}--. 
This makes rather difficult any attempt to guess the precise impact of the common noise onto the analysis.  
For sure, as we already pointed out, the major issue
for analyzing  
\eqref{e.optitraj}
is caused by the complex nature of the underlying interactions. 
As the equations depend upon one another 
 through the nonlinear system \eqref{NashIntro},
 the evolution with $N$ of
  the coupling between all of them is 
 indeed
 much more intricate than in \eqref{e.SDEsIntro}.
 And once again, 
on the top of that, the common noise adds another layer of difficulty. 
 For these reasons, the convergence of 
 both \eqref{NashIntro} and \eqref{e.optitraj} has been an open question since Lasry and Lions' initial papers on mean field games \cite{LL06cr1, LL06cr2}.
\bigskip

\noindent \textit{\textbf{The mean field game system.}} The analysis of the Nash system \eqref{NashIntro}  as the number of players is large pops up very naturally in game theory. Similar questions for static games 
were studied a long time ago by Aumann, who introduced the concept of nonatomic games in \cite{Au}; moreover, Schmeidler \cite{Sch73} and Mas-Colell \cite{MC84} defined and investigated non-cooperative Nash equilibria for one shot games with infinitely many small players. 

In the case of differential games,
the theory is known under the name of ``mean-field games'', whose 
principle goes as follows. If one tries, at least in the simpler case $\beta =0$, to describe --in a heuristic way-- the structure of a game with infinitely many
indistinguishable players, i.e., a ``nonatomic differential game", one finds a problem in 
which each (infinitesimal) player optimizes his payoff,
depending upon the collective behavior of the others,
and, meanwhile, the resulting optimal state of each of them 
is exactly distributed 
according to the state of the population. This is the ``mean field game system" (MFG system):
\be\label{e.MFGsyst}
\left\{ \begin{array}{l}
\ds - \partial_t u - \Delta u +H(x,Du)=F(x,m(t))\qquad {\rm in }\; [{0},T]\times \R^d,
\\
\ds \partial_t m - \Delta m -{\rm div}( m D_pH(x, D u))=0\qquad {\rm in }\; [{0},T]\times \R^d, \\
\ds u(T,x)=G(x,m(T)), \; m(0, \cdot)=m_{(0)} 
\qquad {\rm in }\;  \R^d,
\end{array}\right.
\ee
where $m_{(0)}$ denotes the initial state of the population. The system consists in a coupling between a (backward) Hamilton-Jacobi equation, 
describing the dynamics of the 
value function of any of the players, and a (forward) Kolmogorov equation, 
describing the dynamics of the 
distribution of the
population. 
In that framework, $H$ reads as an Hamiltonian, $F$ is understood as a running 
cost and $G$ as a terminal cost.
Since its simultaneous introduction by Lasry and Lions  \cite{LL07mf} and by Huang, Caines and Malhamé \cite{HCMieeeAC06}, this system has been thoroughly investigated: existence, under various assumptions, can be found in \cite{BeFr2012, CaDe2, HCMieeeAC07, HCMieeeDC07, HCMjssc07, KLY, LL07mf, LcoursColl}. Concerning uniqueness of the solution, two regimes were identified in \cite{LL07mf}. 
Uniqueness holds under Lipschitz type conditions when the time horizon $T$ is short (or, equivalently, when $H$, $F$ and $G$ are ``small"), 
but, 
as for finite-dimensional two-point boundary value problems, 
it
may fail when the system is set over a time interval 
of arbitrary length. Over long time intervals, 
uniqueness is guaranteed under the quite fascinating condition that {\it $F$ and $G$ are monotonous}, i.e., if, for any measures $m,m'$, the following holds:
\be\label{ineq.monoIntro}
\int_{\R^d} (F(x,m)-F(x,m')d(m-m')(x) \geq 0\; {\rm and }\; \int_{\R^d} (G(x,m)-G(x,m')d(m-m')(x) \geq 0.
\ee
The interpretation of the monotonicity condition is that the players dislike congested areas and favor configurations in which they are more scattered, see Remark 
\ref{ex:monotonicity:F:G} below for an example.
Generally speaking, condition
\eqref{ineq.monoIntro} plays a key role throughout the paper,
as it guarantees not only uniqueness but also stability of the 
solutions to \eqref{e.MFGsyst}. 

As announced,
a solution to the mean field game system \eqref{e.MFGsyst} can be
indeed interpreted as a Nash equilibrium for a differential game with infinitely many players: in that framework, it plays the role of the Schmeidler's non-cooperative equilibrium. A standard strategy 
to make the connection 
between \eqref{e.MFGsyst}
and differential games consists in inserting 
the optimal strategies 
from the 
Hamilton-Jacobi equation in 
\eqref{e.MFGsyst}
into finitely many player games in order to construct approximate Nash equilibria: see \cite{HCMieeeAC07}, as well as \cite{CaDe2, HCMieeeDC07, HCMjssc07, KLY}. 
However, although it establishes the 
interpretation 
of the system
\eqref{e.MFGsyst}
as a differential game
with infinitely many players, this says nothing about the convergence 
of 
\eqref{NashIntro} and \eqref{e.optitraj}. 
\vspace{4pt}

When $\beta$ is positive, the system
describing Nash equilibria within a population of infinitely many players
subject to the same common noise of intensity $\beta$ cannot 
be longer described by a deterministic system
of the same form as
\eqref{e.MFGsyst}. Owing to 
the theory of propagation of chaos for 
systems of
interacting particles, see the short remark above, 
the unknown $m$ in the forward equation is
then expected 
to represent the conditional law of 
the optimal state of any player
given the realization of the common noise.  
In particular, it must be random. 
This turns the forward Kolmogorov equation into 
a forward stochastic Kolmogorov equation. 
As the Hamilton-Jacobi equation depends on 
$m$, it renders 
$u$ random as well. 
Anyhow, a key fact from the theory of stochastic processes 
is that the solution to a stochastic differential equation must be adapted 
to the underlying observation, as its values at some 
time $t$ cannot anticipate the future of the noise after $t$. 
At first sight, it seems to be very demanding 
as $u$ is also required to match, at time $T$, $G(\cdot,m(T))$, which depends on the whole realization of 
the noise up until $T$. 
The right formulation 
to accommodate both constraints is
given by the theory of backward stochastic differential 
equations, which 
suggests to penalize the backward dynamics by a martingale in order to 
guarantee that the solution is indeed adapted. 
We refer the reader 
to the monograph 
\cite{PardouxRascanu} 
for a complete account on the finite dimensional 
theory and to
the paper 
\cite{Pe92}
for an insight into the infinite dimensional case. 
Denoting by $W$ ``the common noise" (here, a $d-$dimensional Brownian motion) and
by $m_{(0)}$ the initial distribution of the players at time $t_0$, 
the MFG system with common noise then takes the form (in which the unknown are now $(u_t,m_t,v_t)$): 
\be
\label{e.MFGstoch}
\left\{ \begin{array}{l}
\ds 
 d_{t} u_{t} = \bigl\{ - (1+\beta) \Delta u_{t} + H(x,Du_{t}) - F(x,m_{t}) -  \sqrt{2\beta} {\rm div}(v_{t}) \bigr\} dt+ v_{t} \cdot \sqrt{2\beta}dW_{t}\\
 \qquad \qquad \qquad \qquad \qquad \qquad \qquad \qquad \qquad \qquad {\rm in }\; [0,T]\times \T^d,\\
\ds d_{t} m_{t} = \bigl[  (1+\beta) \Delta m_{t} + {\rm div} \bigl( m_{t} D_{p} H(m_{t},D u_{t}) 
\bigr) \bigr] dt - {\rm div} ( m_{t} \sqrt{2\beta} dW_{t} \bigr), \\
 \qquad \qquad \qquad \qquad \qquad \qquad \qquad \qquad \qquad \qquad {\rm in }\; [0,T]\times \T^d,
 \\
 \ds u_T(x)=G(x,m_T), \; m_{0}=m_{(0)},\qquad {\rm in }\;  \T^d
\end{array}\right.
\ee
where we used the standard convention 
from the theory of stochastic processes that consists in 
indicating the time parameter as an index in random functions. 
As suggested right above, 
the map $v_t$ is a random vector field that forces the solution $u_t$ of the backward equation to be adapted to the filtration generated by $(W_t)_{t \in [0,T]}$. 
As far as we know, the system 
\eqref{e.MFGstoch} has never been investigated 
and part of the paper will be dedicated to its analysis
(see however 
\cite{CaDe14} for an informal discussion).
Below, we call the system 
\eqref{e.MFGstoch}
the
{\it MFG system with common noise}.
\vspace{4pt}

It is worth mentioning that
the aggregate equations  \eqref{e.MFGsyst} and \eqref{e.MFGstoch} (see also the master equation \eqref{MasterEqIntro} below) are the continuous time analogues of equations that appear in the analysis of dynamic stochastic general equilibria in  heterogeneous agent models, as introduced in economic theory by Aiyagari \cite{Aiy}, Bewley \cite{Bew} and Huggett \cite{Hug}.  
In this setting, the factor $\beta$ describes the intensity of ``aggregate shocks", as discussed by  Krusell and Smith in the seminal paper \cite{KrSm}. In some sense, the limit problem studied in the paper is an attempt to deduce the macroeconomic models, describing the dynamics of a typical (but heterogeneous) agent in an equilibrium configuration, from the microeconomic ones (the Nash equilibria). 
\bigskip

\noindent \textit{\textbf{ The master equation.}} 
Although the mean field game system has been widely studied since its introduction in \cite{LL07mf} and \cite{HCMieeeAC06}, it has become increasingly clear that this system was not sufficient to take into account the entire complexity of dynamic games with infinitely many players. 
The need for reformulating the original system 
\eqref{e.MFGsyst} into the much more complex stochastic version
\eqref{e.MFGstoch} in order to accommodate 
with the common noise 
(i.e., the case $\beta>0$)
sounds as a hint in that direction.
In the same spirit, we may notice that the original MFG system \eqref{e.MFGsyst} does not accommodate with mean field games with a major player and infinitely many small players, 
see \cite{Hu2010}. And, last but not the least, 
the main limitation is that, so far, the formulation based on the system 
\eqref{e.MFGsyst} (or \eqref{e.MFGstoch} when $\beta >0$) has not permitted to establish a clear connection with the Nash system \eqref{NashIntro}.

These issues led Lasry and Lions  \cite{LcoursColl} to introduce an infinite dimensional equation --the so-called ``master equation"-- that directly describes, at least formally, the limit of the Nash system \eqref{NashIntro} and encompasses the above complex situations. Before writing down this equation, let us explain its main features. One of the key observations has to do with the symmetry properties, to which we already alluded, that are satisfied by the solution of the Nash system \eqref{NashIntro}. 
Under the standing symmetry assumptions 
\eqref{eq:intro:running:terminal:costs}
on the $(F^{N,i})_{i=1,\dots,N}$ and $(G^{N,i})_{i=1,\dots,N}$, 
\eqref{eq:intro:form:vNi}
says that
the $(v^{N,i})_{1,\dots,N}$ can be written into 
a similar form to 
\eqref{eq:intro:running:terminal:costs}, namely
 $v^{N,i}(t,{\boldsymbol x})=v^{N}(t,x_i, m^{N,i}_{\boldsymbol x})$
 (where the empirical measures $m^{N,i}_{\boldsymbol x}$ are defined as in 
\eqref{eq:intro:running:terminal:costs}), but with the obvious but major restriction that
the function 
$v^N$ that appears on the right-hand side of the equality now depends upon $N$. 
With such a formulation, the value function to player $i$ reads as a function 
of the private state of player $i$ and of the empirical distribution 
formed by the others. 
Then, one may guess, at least under the additional assumption 
that 
such a structure
 is preserved as $N\to +\infty$, that the unknown in the limit problem 
takes the form $U=U(t,x,m)$, where $x$ is the position of the (typical) small player at time $t$ and $m$ is the distribution of the (infinitely many) other agents. 

The question is then to write down the dynamics of $U$. Plugging $U=U(t,x_{i},m^{N,i}_{\boldsymbol x})$ into the Nash system \eqref{NashIntro}, one obtains---at least formally---an equation stated in the space of measures (see Subsection \ref{subsec.informal} for a heuristic discussion). This is the so-called master equation. It takes the form:
\be\label{MasterEqIntro}
\left\{\begin{array}{l} 
\ds - \partial_t U  - (1+\beta)\Delta_x U +H(x,D_xU) \\
\ds    \qquad  -(1+\beta)\int_{\R^d} \dive_y \left[D_m U\right]\ d m(y)+ \int_{\R^d} D_m U\cdot D_pH(y,D_xU) \ dm(y) \\
\ds   \qquad  -2 \beta  \int_{\R^d}  \dive_x\left[D_mU\right] dm(y) -\beta \int_{\R^{2d}} {\rm Tr} \left[D^2_{mm}U\right]\ dm\otimes dm =F(x,m)\\
 \ds \qquad \qquad  \qquad {\rm in }\; [0,T]\times \R^d\times {\mathcal P}(\R^d)\\
 U(T,x,m)= G(x,m) \qquad  {\rm in }\;  \R^d\times {\mathcal P}(\R^d)\\
\end{array}\right.
\ee
In the above equation, $\partial_tU$, $D_xU$ and $\Delta_xU$ stand for the usual time derivative, space derivatives and Laplacian with respect to the local variables $(t,x)$ of the unknown $U$, while $D_mU$ and $D^2_{mm}U$ are the first and second order derivatives with respect to the measure $m$. The precise definition of these derivatives is postponed to Section \ref{sec:prelim}. For the time being, let us just note that it is related with the derivatives in the space of probability measures described, for instance,  by Ambrosio, Gigli and Savaré in \cite{AGS} and by Lions in \cite{LcoursColl}. 
It is worth mentioning 
that
the master equation \eqref{MasterEqIntro} is not the first example of an equation studied in the space of measures --by far: for instance Otto \cite{Otto} gave an interpretation of the porous medium equation as an evolution equation in the space of measures, and Jordan, Kinderlehrer and Otto \cite{JoKiOt} showed that the heat equation was also a gradient flow in that framework; notice also that the analysis of Hamilton-Jacobi equations in metric spaces is partly motivated by the specific case when the underlying metric space is the space of measures (see in particular \cite{AmFe, FeKa} and the references therein)--. The master equation is however the first one to combine at the same time the issue of being nonlocal, nonlinear and of second order.

Beside the discussion in \cite{LcoursColl}, the importance of the master equation \eqref{MasterEqIntro} has been acknowledged by several contributions: see for instance the monograph \cite{BFY} and 
the companion papers 
\cite{BFYmaster} and
\cite{BFYmaster2}
 in which Bensoussan, Frehse and Yam generalize this equation to mean field type control problems and reformulate it as a PDE set on an $L^2$ space, \cite{CaDe14} where Carmona and Delarue interpret this equation as a decoupling field of forward-backward stochastic differential equation in infinite dimension. 
\vskip 4pt

If the master equation has been discussed and manipulated thoroughly in the above references, it is mostly at a formal level: The well-posedness of the master equation has remained, to a large extend, open until now. Beside, even if the master equation has been introduced to explain the convergence of the Nash system, the rigorous justification of the convergence has not been understood. 

The aim of the paper is to give an answer to both questions. 
\bigskip

\noindent \textit{\textbf{Well-posedness of the master equation.}} The largest part of this paper is devoted to the proof of the existence and uniqueness of a classical solution to the master equation \eqref{MasterEqIntro}, where, by classical, 
we mean that all the derivatives in \eqref{MasterEqIntro} exist and are continuous. In order to avoid issues related to boundary conditions or conditions at infinity, we work for simplicity with periodic data: the maps $H$, $F$ and $G$ are periodic in the space variable. The state space is therefore the $d$-dimensional  torus $\T^d= \R^d/\Z^d$ and $m_{(0)}$ belongs to $\Pk$, the set of Borel probability measures on $\T^d$. We also assume that $F, G:\T^d\times \Pk\to \R$ satisfy the monotonicity conditions \eqref{ineq.monoIntro}, are sufficiently ``differentiable" with respect to both variables and, of course, periodic with respect to the state variable. Although the periodicity condition is rather restrictive, the extension to maps defined on the full space or to Neumann boundary conditions is probably not a major issue. Anyhow, it would certainly require further technicalities, which would have made the paper even longer than it is if we had decided to include them.
\vspace{4pt}

So far, the existence of classical solutions to the master equation 
has been known in more restricted frameworks. Lions discussed in \cite{LcoursColl} a finite dimensional analogue of the master equation and derived conditions for this hyperbolic system to be well-posed. 
These conditions correspond precisely to the monotonicity 
property \eqref{ineq.monoIntro},
which we here assume to be satisfied by the coupling functions $F$ and $G$.
This parallel strongly indicates --but this should not does not come as a surprise--
that the monotonicity of $F$ and $G$
should play a key role in the unique strong solvability of \eqref{MasterEqIntro}. Lions also explained in \cite{LcoursColl} how to get the well-posedness of the master equation without noise (no Laplacian in the equation) by extending the equation to a (fixed) space of random variables
under a convexity assumption in space of the data. In \cite{BuLiPeRa}
Buckdahn, Li, Peng and Rainer studied equation \eqref{MasterEqIntro},
by means of probabilistic arguments, when there is no coupling nor common noise ($F=G=0$, $\beta=0$) and proved the existence of 
a classical solution in this setting; in a somewhat similar spirit, 
Kolokoltsov, Li and Yang
\cite{KLY}
and
Kolokoltsov, Troeva and Yang 
\cite{KolTroYa14}
investigated
the tangent process to a flow of probability measures solving 
a McKean-Vlasov equation.
Gangbo and Swiech \cite{GS14-2} analyzed the first order master equation in short time (no Laplacian in the equation) for a particular class of Hamiltonians and of coupling functions $F$ and $G$ (which are required to derive from a potential in the measure argument). Chassagneux,  Crisan  and Delarue \cite{CgCrDe} obtained, by a probabilistic approach similar to that used in \cite{BuLiPeRa}, the existence and uniqueness of a solution to \eqref{MasterEqIntro} without common noise (when $\beta=0$) under the monotonicity condition 
\eqref{ineq.monoIntro} in either the non degenerate case (as we do here)
or in the degenerate setting provided that $F$, $H$ and $G$ satisfy 
an additional convexity conditions in the variables $(x,p)$. 
The complete novelty of our result, regarding the
specific question of solvability of the master equation,
is the existence and uniqueness of a classical solution to the problem with common noise.

The technique of proof in \cite{BuLiPeRa, CgCrDe, GS14-2} consists in finding a suitable representation of the solution: indeed a key remark in Lions \cite{LcoursColl} is that the master equation is a kind of transport equation in the space of measures and that its characteristics are, when $\beta =0$, the MFG system \eqref{e.MFGsyst}. Using this idea,  the main difficulty is then to prove that the candidate is smooth enough to perform the computation showing that it is a classical solution of \eqref{MasterEqIntro}. In 
\cite{BuLiPeRa,CgCrDe}  this is obtained by linearizing systems of forward-backward stochastic differential equations, while \cite{GS14-2} relies on a careful analysis of the characteristics of the associated first order PDE. 
%
\vspace{4pt}

Our starting point is the same: we use a representation formula for the master equation. When $\beta=0$, the characteristics are just the solution to the MFG system \eqref{e.MFGsyst}. 
When $\beta$ is positive, these characteristics 
become random under the action of the common noise
and are then given by the solution of 
the MFG system with common noise
\eqref{e.MFGstoch}. 

The construction of a solution $U$ 
to the master equation then relies on the method of characteristics. Namely, 
we {\it define} $U$ by letting $U(t_0,x,m_0):= u_{t_0}(x)$ where the pair $(u_t,m_t)_{t\in [t_0,T]}$ is the solution to \eqref{e.MFGstoch}
when the forward equation is initialized at $m_{(0)} \in {\mathcal P}(\T^d)$
at time $t_{0}$, that is 
\be\label{e.MFGstoch2}
\left\{ \begin{array}{l}
\ds 
 d_{t} u_{t} = \bigl\{ - (1+\beta) \Delta u_{t} + H(x,Du_{t}) - F(x,m_{t}) -  \sqrt{2\beta} {\rm div}(v_{t}) \bigr\} dt+ v_{t} \cdot \sqrt{2\beta}dW_{t}\\
 \qquad \qquad \qquad \qquad \qquad \qquad \qquad \qquad \qquad \qquad {\rm in }\; [t_0,T]\times \T^d,
 \\
\ds d_{t} m_{t} = \bigl[  (1+\beta) \Delta m_{t} + {\rm div} \bigl( m_{t} D_{p} H(m_{t},D u_{t}) 
\bigr) \bigr] dt - {\rm div} ( m_{t} \sqrt{2\beta} dW_{t} \bigr), 
\\
 \qquad \qquad \qquad \qquad \qquad \qquad \qquad \qquad \qquad \qquad {\rm in }\; [t_0,T]\times \T^d\\
 \ds u_T(x)=G(x,m_T), \; m_{t_0}=m_{(0)} \qquad {\rm in }\;  \T^d,
\end{array}\right.
\ee
There are two main difficult steps
in the analysis. The first one is to establish the smoothness of $U$ and
the second one is to show that $U$ indeed satisfies the master equation \eqref{MasterEqIntro}.
In order to proceed, the cornerstone is to make a systematic use of the monotonicity properties of the maps $F$ and $G$: Basically, monotonicity prevents the emergence of singularities in finite time. 
Our approach seems to be very powerful, 
although the reader might have a different feeling due to the length of the paper. As a matter of fact, part of the technicalities in the 
proof are caused by the stochastic aspect of the characteristics \eqref{e.MFGstoch2}.
As a result, we spend much effort to handle
the case with a common noise (for which almost nothing has been known so far), but, in the simpler case $\beta = 0$, our strategy to handle the 
first order master equation provides a much shorter proof than in the 
earlier works \cite{BuLiPeRa, CgCrDe, GS14-2}.
For this reason, we decided to display the proof in this simple context separately (Section \ref{sec:MasterWithoutCN}). 
\bigskip

\noindent
\textit{\textbf{The convergence result.}} 
Although most of the paper is devoted to the
 construction of a solution to the master equation, our main (and \textit{primary}) motivation remains 
to justify the mean field limit. Namely, we show that the solution of the Nash system \eqref{NashIntro} converges to the solution of the master equation. The main issue here is the complete lack of estimates on the solutions to this large system of Hamilton-Jacobi equations: This prevents the use of any compactness method to prove the convergence. So far, 
this question has been almost completely open. The convergence has been 
known in very few specific situations. For instance, it was proved for the ergodic mean field games (see Lasry-Lions \cite{LL06cr1}, revisited by Bardi-Feleqi \cite{BaFe}). In this case, the Nash equilibrium system reduces to a coupled system of $N$ equations in $\T^d$ (instead of $N$ equations in $\T^{Nd}$ as \eqref{NashIntro}) and estimates of the solutions are available. Convergence is also known in the ``linear-quadratic" setting, where the Nash system has explicit solutions: see Bardi \cite{Ba}. Let us finally quote the nice results by Fischer \cite{Fi14} and Lacker \cite{Lc14} on the convergence of {\it open loop Nash equilibria} for the $N-$player game and the characterization of the possible limits. Therein, the authors overcome the lack of strong estimates 
on the solutions to the $N-$player game by using 
the notion of
\textit{relaxed controls} 
for which weak compactness criteria are available.
The problem addressed here---concerning {\it closed loop Nash equilibria}---differs in a substantial way from \cite{Fi14, Lc14}: Indeed, we underline the surprising fact that the Nash system \eqref{NashIntro}, which concerns equilibria in which the players observe each other, converges to an equation in which the players only need to observe the evolution of the distribution of the population. 
\vspace{4pt}

Our main contribution is a general convergence result, in large time, for mean field games with common noise, as well as an estimate of the rate of convergence. The convergence holds in the following sense: 
for any $\bx\in (\T^d)^N$, let $m^N_\bx:= \frac{1}{N} \sum_{i=1}^N \delta_{x_i}$. Then 
\begin{equation}
\label{eq:intro:vitesse:cv}
\frac{1}{N}\sum_{i=1}^N \left|v^{N,i}(t_0,\bx)- U(t_0, x_i, m^{N}_\bx)\right|\leq CN^{-1}.
\end{equation}
We also prove a mean field result for the optimal solutions \eqref{e.optitraj}: if the initial conditions of the $((X_{i,\cdot}))_{i=1,\dots,N}$ are i.i.d. and with the same law $m_{(0)}\in \Pk$, then 
$$
\E\Bigl[\sup_{t\in [0,T]} | X_{i,t}-Y_{i,t}|\Bigr] \leq C N^{-\frac{1}{d+8}},
$$
where the $((Y_{i,t})_{i=1,\dots,N})_{t \in [0,T]}$ are the solutions to the McKean-Vlasov SDE
$$
dY_{i,t}= -D_pH\bigl(Y_{i,t}, D_{x}U\bigl(t, Y_{i,t}, {\mathcal L}(Y_{i,t}|W)\bigr)\bigr)dt +\sqrt{2} dB^{i}_t+\sqrt{2\beta} dW_t, \qquad t\in [t_0,T],\\
$$
with the same initial condition as the $((X_{i,t})_{i=1,\dots,N})_{t \in [0,T]}$. Here $U$ is the solution of the master equation and ${\mathcal L}(Y_{i,t}|W)$ is the conditional law of $Y_{i,t}$ given the realization of the whole path $W$. Since the $((Y_{i,t})_{t \in [0,T]})_{i=1,\dots,N}$ are conditionally independent given $W$, the above result shows that (conditional) propagation of chaos holds for the $N-$Nash equilibria. 

 The technique of proof consists in testing 
 the solution $U$ 
 of the master equation \eqref{MasterEqIntro}
 as a nearly solution to the $N-$Nash system 
 \eqref{NashIntro}.
 On the model of 
 \eqref{eq:intro:running:terminal:costs},  
 a natural candidate for being an approximate solution
to the $N-$Nash system is indeed
\begin{equation*}
u^{N,i}(t,{\boldsymbol x}) = U\bigl(t,x_{i},m_{\boldsymbol x}^{N,i}\bigr), 
\quad t \in [0,T], \ {\boldsymbol x} \in (\T^d)^N. 
\end{equation*} 
 Taking benefit from the smoothness of $U$, 
 we then prove that 
 the ``proxies'' $(u^{N,i})_{i=1,\dots,N}$
 almost solve the $N-$Nash system 
\eqref{NashIntro} up to a remainder term that vanishes as $N$ tends 
to $\infty$. As a by-product, 
we deduce that the $(u^{N,i})_{i=1,\dots,N}$ 
 get closer and closer to the
``true solutions" $(v^{N,i})_{i=1,\dots,N}$ when $N$ tends to $\infty$, 
which yields 
\eqref{eq:intro:vitesse:cv}. 
As the reader may notice, the 
convergence property  
\eqref{eq:intro:vitesse:cv}
is stated in a symmetric form, namely the convergence holds in the mean, the average
being taken over 
all the particles. Of course, this is reminiscent of the 
symmetry properties satisfied by the $N-$Nash system, 
which play a crucial role in the proof.

It is worth mentioning that the monotonicity 
properties \eqref{eq:intro:running:terminal:costs}
play no role in our proof of the convergence. Except structural conditions 
concerning the Lipschitz property of the coefficients, the arguments 
work under
the sole assumption that the master equation has a classical solution. 
\bigskip

\noindent
\textit{\textbf{Conclusion and further prospects.}} 
The fact that the existence of a classical solution 
to the master equation suffices to prove the convergence of 
the Nash system demonstrates the deep interest of the master equation, when regarded 
as a mathematical concept in its own right. 
Considering the problem from a more abstract point of view, 
the master equation indeed captures the evolution of the time-dependent 
semi-group generated by the Markov process formed, on the space of 
probability measures, by the forward component of the MFG system 
\eqref{e.MFGstoch2}. 
Such a semi-group is said to be \textit{lifted} as
the corresponding Markov process has ${\mathcal P}(\T^d)$ as state space.
In other words, the master equation is 
a nonlinear PDE driven by a Markov generator
acting on functions defined on ${\mathcal P}(\T^d)$. 
The general contribution of our paper is 
thus to show that
any classical solution to the master equation accommodates with 
a given perturbation of the 
lifted semi-group
and that the information enclosed in such a classical solution suffices to determine the distance between the semi-group and its perturbation. 
Obviously, as a perturbation of a semi-group on the space of 
probability measures, we are here thinking of a system 
of $N$ interacting particles, exactly as that formed by 
the Nash equilibrium of an $N-$player game. 

Identifying the master equation with a nonlinear PDE driven by the Markov generator 
of a lifted semi-group is a key observation.
As already pointed out, the Markov generator is precisely the operator, acting 
on functions from ${\mathcal P}(\T^d)$ to $\R$, generated by the 
forward component of the MFG system \eqref{e.MFGstoch2}. Put it differently,
the law of the forward component of the MFG system \eqref{e.MFGstoch2},
which lives in ${\mathcal P}({\mathcal P}(\T^d))$, satisfies a forward 
Kolmogorov equation, also referred to as a ``master equation" in physics. 
This says that 
``our master equation'' is somehow the \textit{dual} (in the sense that it is driven by the adjoint operator) of the ``master equation" that would describe, 
according to the terminology used in physics, the law of the Nash equilibrium 
for a game with infinitely many players (in which case the Nash equilibrium itself is 
a distribution). We stress that this interpretation is very close to the point of view developed by 
Mischler and Mouhot \cite{MiMo13} in order to investigate Kac's program
(up to the difference that, differently from ours, Mischler and Mouhot's work
investigates uniform propagation of chaos over
an infinite time horizon;  we refer to the companion paper
by Mischler, Mouhot and Wennberg
\cite{MiMoWe15} for the analysis, based on the same technology, of mean-field models in finite time). 
Therein, the authors introduce the evolution equation satisfied by the (\textit{lifted}) 
semi-group, 
acting on functions from ${\mathcal P}(\R^d)$ to $\R$, generated by 
the $d$-dimensional Boltzmann equation. According to our terminology, such an evolution equation is 
a ``master equation" on the space of probability measures, but it is linear
and of the first-order  
while ours is nonlinear and of the second-order (meaning second-order on ${\mathcal P}(\T^d)$). 

In this perspective, we also emphasize that our strategy for proving the convergence 
of the $N-$Nash system
relies on a similar idea to that used in 
\cite{MiMo13} to establish the convergence of Kac's jump process. 
While our approach consists in inserting the solution of the master equation 
into the $N-$Nash system, Mischler and Mouhot's point of view is to compare 
the semi-group generated by the $N-$particle Kac's jump process, 
which operates on symmetric functions from $(\R^d)^N$ to $\R$ (or equivalently 
on empirical distributions of size $N$),
with the \textit{limiting lifted} semi-group, when acting on the same class of symmetric functions from $(\R^d)^N$ to $\R$. 
Clearly, the philosophy is the same, except that, in our paper, the ``limiting master equation'' is nonlinear and of the second-order (which renders the analysis more difficult)
and is set over a finite time horizon only (which does not 
ask for uniform in time estimates).  
It is worth mentioning that similar ideas have been explored 
by Kolokoltsov in the monograph \cite{Kol10}
and developed, in the McKean-Vlasov framework, in the subsequent works \cite{KLY} and \cite{KolTroYa14} in collaboration with his coauthors.

Of course, these parallels raise interesting questions, but we refrain from comparing these different works in a more detailed way: This would require to address more technical questions regarding, for instance, the topology used on the space of probability measures and the regularity of the various objects in hand; clearly, this would distract us from our original objective. We thus feel better to keep 
the discussion at an informal level and to postpone a more careful comparison to future works on the subject.
\vskip 4pt

We complete the introduction by pointing out possible generalizations of our results. For simplicity of notation, we work in the autonomous case, but the results remain unchanged if $H$ or $F$ are time-dependent provided that the coefficients $F$, $G$ and $H$, and their derivatives (whenever they exist), are continuous in time and that the various quantitative assumptions we put on $F$, $G$ and $H$ hold uniformly with respect to the time variable. We can also remove 
the monotonicity condition \eqref{ineq.monoIntro} provided that the time horizon 
$T$ is assumed to be small enough. The reason is that the analysis of the 
smoothness of $U$ relies on the solvability and stability properties of the 
forward-backward system \eqref{e.MFGstoch2} 
and of its linearized version: As for finite-dimensional two-point boundary value 
problems,  
Lipschitz type conditions on the coefficients (and on their derivatives since we are also 
dealing with the linearized version) are sufficient
whenever $T$ is small enough.

As already mentioned, we also chose to work in the periodic framework. We expect
for similar results under other type boundary conditions, like the entire space $\R^d$ or Neumann boundary conditions.   

Notice also that our results can be generalized without much difficulty to the {\it stationary setting}, corresponding to infinite horizon problems. This framework is particularly meaningful for economic applications. In this setting the Nash system takes the form 
$$
\left\{ \begin{array}{l}
\ds r v^{N,i} (\bx)-  \sum_{j=1}^N \Delta_{x_j}v^{N,i}(\bx) - \beta  \sum_{j,k=1}^N {\rm Tr} D^2_{x_j,x_k} v^{N,i}(\bx) + H(x_i,  D_{x_i}v^{N,i}(\bx)) \\
\ds \qquad \qquad + \sum_{j\neq i}  D_pH (x_j, D_{x_j}v^{N,j}(\bx))\cdot D_{x_j}v^{N,i}(\bx)=
F^{N,i}(\bx) 
\qquad {\rm in}\; (\R^{d})^N,
\end{array}\right.
$$
where $r>0$ is interpreted as a discount factor. The  corresponding master equation is
$$
\left\{\begin{array}{l} 
\ds r U  - (1+\beta)\Delta_x U +H(x,D_xU) \\
\ds    \qquad  -(1+\beta)\int_{\R^d} \dive_y \left[D_m U\right]\ d m(y)+ \int_{\R^d} D_m U\cdot D_pH(y,D_xU) \ dm(y) \\
\ds   \qquad  -2 \beta  \int_{\R^d}  \dive_x\left[D_mU\right] dm(y) -\beta \int_{\R^{2d}} {\rm Tr} \left[D^2_{mm}U\right]\ dm\otimes dm =F(x,m)
\\
 \ds \qquad \qquad  
 \qquad {\rm in }\;\R^d\times {\mathcal P}(\R^d),
\end{array}\right.
$$
where the  unknown is the map $U=U(x,m)$. One can solve again this system by using the method of (infinite dimensional) characteristics, paying attention to the fact that 
these characteristics remain time-dependent.  The MFG system with common noise takes the form (in which the unknown are now $(u_t,m_t,v_t)$): 
$$
\left\{ \begin{array}{l}
\ds 
 d_{t} u_{t} = \bigl\{ ru_{t}- (1+\beta) \Delta u_{t} + H(x,Du_{t}) - F(x,m_{t}) -  \sqrt{2\beta} {\rm div}(v_{t}) \bigr\} dt+ v_{t} \cdot \sqrt{2\beta}dW_{t}\\
 \qquad \qquad \qquad \qquad \qquad \qquad \qquad \qquad \qquad \qquad {\rm in }\; [0,+\infty)\times \T^d\\
\ds d_{t} m_{t} = \bigl[  (1+\beta) \Delta m_{t} + {\rm div} \bigl( m_{t} D_{p} H(m_{t},D u_{t}) 
\bigr) \bigr] dt - {\rm div} ( m_{t} \sqrt{2\beta} dW_{t} \bigr), \\
 \qquad \qquad \qquad \qquad \qquad \qquad \qquad \qquad \qquad \qquad {\rm in }\; [0,+\infty)\times \T^d\\
 \ds m_{0}=\bar m_0\qquad {\rm in }\;  \T^d, \; (u_{t})_t \; \mbox{\rm bounded a.s.}
\end{array}\right.
$$
\\
\textbf{\textit{Organization of the paper.}} We present our main results in Section \ref{sec:prelim}, where we also explain the notation, state the assumption and rigorously define the notion of derivative on the space of measures. The well-posedness of the master equation is proved in Section \ref{sec:MasterWithoutCN} when $\beta =0$. Unique solvability of the MFG system with common noise is discussed in Section \ref{se:common:noise}. 
Results obtained in Section \ref{se:common:noise}
are implemented in the next Section \ref{subse:partie:2:first:order:derivative}
to derive the existence of a classical solution to the master equation in the general case.  The last section is devoted to the  convergence of the Nash system. In appendix, we revisit the notion of derivative on the space of probability measures and discuss some useful auxiliary properties. 

\subsection{Informal derivation of the master equation}
\label{subsec.informal}

Before stating our main results, it is worthwhile explaining the meaning of the Nash system, the heuristic derivation of the master equation from the Nash system and its main properties. We hope that this (by no means rigorous) presentation might help the reader to be acquainted with our notation and the main ideas of proof. To emphasize the informal aspect of the discussion, we state all the ideas in $\R^d$, without bothering about the boundary issues
(whereas in the rest of the paper we always work with periodic boundary conditions). 

\subsubsection{The differential game}

The Nash system \eqref{NashIntro} arises in  differential game theory. Differential games are just optimal control problems with many (here $N$) players. 
In this game, Player $i$ (for $i=1,\dots, N$) controls  his state $(X_{i,t})_{t \in [0,T]}$ through his control $(\alpha_{i,t})_{t \in [0,T]}$. The state  $(X_{i,t})_{t \in [0,T]}$ evolves according to the stochastic differential equation (SDE)
\begin{equation}
\label{eq:intro:N:player:game:alpha}
dX_{i,t}= \alpha_{i,t}dt +\sqrt{2} dB^i_t +\sqrt{2\beta} dW_t, \qquad X_{t_0}= x_{i,0}.
\end{equation}
Recall that the $d$-dimensional Brownian motions $((B^i_t)_{t\in [0,T]})_{i=1,\dots,N}$ and $(W_t)_{t\in [0,T]}$ are independent, $(B^i_t)_{t \in [0,T]}$ corresponding to the {\it individual noise} (or \textit{idiosyncratic noise}) to player $i$ and $(W_t)_{t \in [0,T]}$ being the {\it common noise},
which affects all the players. Controls $((\alpha_{i,t})_{t \in [0,T]})_{i=1,\dots,N}$ are required to be progressively-measurable 
with respect to the filtration generated by all the noises.
Given an initial condition
${\boldsymbol x}_{0}=(x_{1,0},\dots,x_{N,0}) \in (\T^d)^N$ 
for the whole system at time $t_{0}$,
each player aims at minimizing the cost functional:
$$
J^N_i\bigl(t_0,\bx_0,(\alpha_{j,\cdot})_{j=1,\dots,N}\bigr)= \E\left[ \int_{t_0}^T \left(L(X_{i,s}, \alpha_{i,s})+F^{N,i}(\bX_s)\right) ds +G^{N,i}(\bX_T)\right],
$$
where $\bX_t=(X_{1,t}, \dots, X_{n,t})$ and where $L:\R^{d}\times \R^d\to \R$,  $F^{N,i}:\R^{Nd}\to \R$ and $G^{N,i}:\R^{Nd}\to \R$ are given Borel maps. 
If we assume that, for each player $i$, the other players are undistinguishable, we can suppose that $F^{N,i}$ and $G^{N,i}$ take the form
$$
F^{N,i}(\bx)= F(x_i, m^{N,i}_{\bx}) \qquad 
{\rm and}
\qquad 
G^{N,i}(\bx)= G(x_i, m^{N,i}_{\bx}).
$$
In the above expressions, $F,G:\R^d\times {\mathcal P}(\R^d)\to \R$, where ${\mathcal P}(\R^d)$ is the set of Borel measures on $\R^d$. The Hamiltonian of the problem is related to $L$ by the formula: 
$$
\forall (x,p)\in \R^d\times \R^d, \qquad H(x,p)= \sup_{\alpha\in \R^d} \left\{-\alpha\cdot p- L(x,\alpha)\right\}.
$$
Let now $(v^{N,i})_{i=1,\dots,N}$ be the solution to \eqref{NashIntro}. By It\^{o}'s formula, it is easy to check that $(v^{N,i})_{i=1,\dots,N}$ corresponds to an optimal solution of the problem in the sense of Nash, i.e., a {\it Nash equilibrium} of the game. Namely, the feedback strategies 
\be\label{e.optiFeed}
\left(\alpha^*_i(t,\bx):=-D_pH(x_i,D_{x_i}v^{N,i}(t,\bx))\right)_{i=1,\dots, N}
\ee
provide a feedback Nash equilibrium for the game:
$$
v^{N,i}\bigl(t_0,\bx_0\bigr)= J_i^N\bigl(t_0,\bx_0, (\alpha^*_{j,\cdot})_{j=1,\dots,N}\bigr)\leq J_i^N(t_0,\bx_0, \alpha_{i,\cdot}, (\hat \alpha^*_{j,\cdot})_{j\neq i})
$$
for any $i\in \{1,\dots, N\}$ and any control $\alpha_{i,\cdot}$, progressively-measurable 
with respect to the filtration 
 generated by $((B^j_{t})_{j=1,\dots, N})_{t \in [0,T]}$ and 
$(W_{t})_{t \in [0,T]}$. 
In the left-hand side, $\alpha_{j,\cdot}^*$ is an abusive 
notation for the process 
$(\alpha^*_{j}(t,X_{j,t}))_{t \in [0,T]}$, where 
$(X_{1,t},\dots,X_{N,t})_{t \in [0,T]}$
solves the system of SDEs \eqref{eq:intro:N:player:game:alpha}
when $\alpha_{j,t}$ is precisely given under the 
implicit form
$\alpha_{j,t}=\alpha^*_{j}(t,X_{j,t})$.
Similarly,
in the right-hand side, $\hat{\alpha}_{j}^*$, for $j \not =i$,
denotes  
$(\alpha^*_{j}(t,X_{j,t}))_{t \in [0,T]}$, where 
$(X_{1,t},\dots,X_{N,t})_{t \in [0,T]}$
now solves the system of SDEs \eqref{eq:intro:N:player:game:alpha}
for the given $\alpha_{i,\cdot}$, 
the other $(\alpha_{j,t})_{j \not i}$'s being given under the 
implicit form
$\alpha_{j,t}=\alpha^*_{j}(t,X_{j,t})$.
In particular, system  \eqref{e.optitraj}, in which all the players play the optimal feedback \eqref{e.optiFeed},  describes the dynamics of the optimal trajectories. 

\subsubsection{Derivatives in the space of measures} 
In order to describe the limit of the maps $(v^{N,i})$, let us introduce---in a completely informal manner---a notion  of derivative in the space of measures ${\mathcal P}(\R^d)$. A rigorous description of the notion of derivative used in this paper is given in section \ref{subsec.Derivatives}. 

In the following discussion, we argue as if all the measures 
had a density. 
Let $U:{\mathcal P}(\R^d)\to \R$. {\it Restricting} the function $U$ to the elements $m$ of ${\mathcal P}(\R^d)$ which have a density in $L^2(\R^d)$ and assuming that $U$ is defined in a neighborhood
${\mathcal O} \subset L^2(\R^d)$ of ${\mathcal P}(\R^d)\cap L^2(\R^d)$, we can use the Hilbert structure on $L^2(\R^d)$. We denote by $\frac{\delta U}{\delta m}$ the gradient of $U$ in $L^2(\R^d)$, 
namely
$$
\frac{\delta U}{\delta m}(p)(q)
 = \lim_{\varepsilon \rightarrow 0} \frac{1}{\varepsilon}
 \Bigl( U(p + \varepsilon q) - U(p) \Bigr), \quad p \in {\mathcal O}, \ q \in L^2(\R^d).
 $$
Of course, way can identify $\frac{\delta U}{\delta m}(p)$ with an element 
of $L^2(\R^d)$. Then, the duality product 
$\frac{\delta U}{\delta m}(p)(q)$
reads as the inner product 
$\langle \frac{\delta U}{\delta m}(p),q \rangle_{L^2(\R^d)}$.
Similarly, we denote by $\frac{\delta^2 U}{\delta m^2}$ the second order derivative of $U$ (which can be identified with a symmetric bilinear form on $L^2(\R^d)$):
$$
\frac{\delta U}{\delta m}(p)(q,q')
 = \lim_{\varepsilon \rightarrow 0} \frac{1}{\varepsilon}
 \Bigl( 
 \frac{\delta U}{\delta m}(p + \varepsilon q)(q')
 -\frac{\delta U}{\delta m}(p)(q')
\Bigr),  \quad p \in {\mathcal O},\ q,q' \in L^2(\R^d).
 $$

We set, when possible, 
 \be\label{relationsDeri}
D_mU(m,y)=D_y \frac{\delta U}{\delta m}(m,y),  \quad D^2_{mm}U(m,\cdot,y,y')=D^2_{y,y'} \frac{\delta U}{\delta m}(m,y,y').
\ee
To explain the meaning of $D_mU$,  let us compute the action of a vector field on a measure $m$ and the image by $U$. For a given vector field $B:\R^d\to \R^d$ and $m\in {\mathcal P}(\R^d)$ absolutely continuous with a smooth density, let $m(t)=m(x,t)$ be the solution to 
$$
\left\{ \begin{array}{l}
\frac{\partial m}{\partial t}+{\rm div}( Bm)=0\\
m_0=m
\end{array}\right.
$$
This expression directly gives
\be\label{hbqsfnjkd}
\frac{d}{dh}U(m(h))_{|_{h=0}} =  \lg  \frac{\delta U}{\delta m}, -{\rm div}(Bm)\rg_{L^2(\R^d)} = \int_{\R^d} D_mU(m,y)\cdot B(y)\ dm(y), 
\ee
where we used an integration by parts in the last equality. 

Another way to understand these derivatives is to project the map $U$ to the finite dimensional space $(\R^d)^N$ via the empirical measure: if $\bx=(x_1,\dots, x_N)\in (\R^d)^N$, let $m^N_\bx:= (1/N)\sum_{i=1}^N \delta_{x_i}$ and set
$u^N(\bx)= U(m^N_{\boldsymbol x})$. Then one can check the following  relationships (see Proposition \ref{uNC2}): for any $j\in \{1,\dots, N\}$, 
\be\label{e.derivUN1Intro}
D_{x_j}  u^{N}({\bx})= \frac{1}{N} D_mU(m^N_{\bx},x_j),
\ee
\be\label{e.derivUN2Intro}
D^2_{x_j,x_j}  u^{N}({\bx})= \frac{1}{N} D_y\left[D_mU\right](m^{N}_{\bx},x_j)+\frac{1}{N^2} D^2_{mm}U(m^{N}_{\bx},x_j,x_j)  
\ee
while, if $j\neq k$, 
\be\label{e.derivUN3Intro}
D^2_{x_j,x_k}  u^{N}({\bx})= \frac{1}{N^2} D^2_{mm}U(m^{N}_{\bx},x_j,x_k).
\ee

\subsubsection{Formal asymptotic of the $(v^{N,i})$} 

Provided that \eqref{NashIntro} has a unique solution, each $v^{N,i}$, 
for $i=1,\dots,N$, is symmetric with respect to permutations on $\{1,\dots,N\}\backslash \{i\}$ and, for $i \not =j$,
the role played by $x^i$ in $v^{N,i}$
is the same as the role played by $x^j$ in $v^{N,j}$
(see Subsection 
\ref{subsubse:partie3:convergence}).
Therefore, it makes sense to expect, as limit as $N\to+\infty$, 
$$
v^{N,i}(t,\bx) \simeq U(t, x_i,  m^{N,i}_\bx)
$$
where $U:[0,T]\times \R^d\times {\mathcal P}(\R^d)\to \R$. Starting from this \textit{ansatz}, our aim is now to provide heuristic arguments explaining why $U$ should satisfy \eqref{MasterEqIntro}.  The sense in which the $(v^{N,i})_{i=1,\dots,N}$ actually converge to $U$ is stated in Theorem \ref{thm:mainCV} and the proof given in Section \ref{sec.convergence}. 

The informal idea is to assume that $v^{N,i}$ is already of the form $U(t, x_i,  m^{N,i}_\bx)$ and to plug this expression into the equation of the Nash equilibrium \eqref{NashIntro}: the time derivative and the derivative with respect to $x_i$ are understood in the usual sense, while the derivatives with respect to the other variables are computed by using the relations in the previous section. 

The terms $\partial_t v^{N,i}$ and $H(x_i,  D_{x_i}v^{N,i})$ easily become $\frac{\partial U}{\partial t}$ and $H(x,D_xU)$. We omit for a while the second order terms and concentrate on the expression 
$$
\sum_{j\neq i} D_pH (x_j, D_{x_j}v^{N,j})\cdot D_{x_j}v^{N,i}\;.
$$ 
Note that $D_{x_j}v^{N,j}$ is just like $D_xU(t,x_j, m^{N,j}_\bx)$. In view of \eqref{e.derivUN1Intro}, 
$$
D_{x_j}v^{N,i}\simeq \frac{1}{N-1}D_mU(t,x_i,m^{N,i}_\bx,x_j),
$$ 
and the sum over $j$ is like an integration with respect to $m^{N,i}_\bx$. So we find, ignoring the difference between  $m^{N,i}_\bx$ and $m^{N,j}_\bx$, 
$$
\sum_{j\neq i} D_pH (x_j, D_{x_j}v^{N,j})\cdot D_{x_j}v^{N,i}
\simeq
\inte D_pH(y, D_xU(t,m^{N,i}_\bx, y))\cdot D_mU(t,x_i,m^{N,i}_\bx,y) dm^{N,i}_\bx(y).
$$
We now study the term $\ds \sum_{j} \Delta_{x_j}v^{N,i}$. As $\Delta_{x_i}v^{N,i} \simeq \Delta_x U$, we  have to analyze the quantity  $\ds \sum_{j\neq i} \Delta_{x_j}v^{N,i}$. In view of \eqref{e.derivUN2Intro},  we expect
$$
\begin{array}{l}
\ds  \sum_{j\neq i} \Delta_{x_j}v^{N,i} \; \simeq \; \ds \frac{1}{N-1} \sum_{j\neq i} \dive_y\left[D_mU\right](t,x_i,m^{N,i}_\bx,x_j)+\frac{1}{(N-1)^2} \sum_{j\neq i} {\rm Tr}\left[D^2_{mm}U\right](t,x_i,m^{N,i}_\bx,x_j,x_j) \\
\qquad \ds \simeq \inte \dive_y\left[D_mU\right](t,x_i,m^{N,i}_\bx,y)dm^{N,i}_\bx(y)+\frac{1}{N-1} \inte {\rm Tr}\left[D^2_{mm}U\right](t,x_i,m^{N,i}_\bx,y,y)dm^{N,i}_\bx(y),
\end{array}
$$
where we can drop the last term since it is of order $1/N$. 

Let us finally discuss the limit of the term $\ds \sum_{k,l}  {\rm Tr}( \frac{\partial^2 v^{N,i}}{\partial x_k\partial  x_l})$ that we rewrite
\be\label{hgvjfg}
\Delta_{x_i} v^{N,i} 
+
2\sum_{k\neq i} {\rm Tr}(  \frac{\partial }{\partial  x_i}  \frac{\partial v^{N,i}}{\partial x_k})
+
\sum_{k,l\neq i } {\rm Tr}(  \frac{\partial^2 v^{N,i}}{\partial x_k\partial  x_l}) 
\ee
The first term gives $\Delta_x U$. Using \eqref{e.derivUN1Intro} the second one becomes 
$$
\begin{array}{rl}
\ds 2\sum_{k\neq i} {\rm Tr}(  \frac{\partial }{\partial  x_k}  \frac{\partial v^{N,i}}{\partial x_i}) \;
\simeq & \ds 
\frac{2}{N-1} \sum_{k\neq i}{\rm Tr}\left[D_xD_mU\right](t,x_i, m^{N,i}_\bx,x_k) \\
\simeq & \ds 2\inte \dive_x \left[D_mU\right](t,x_i, m^{N,i}_\bx,y)dm^{N,i}_\bx(y). 
\end{array}
$$
As for the last term in \eqref{hgvjfg}, we have by \eqref{e.derivUN3Intro}: 
$$
\begin{array}{rl}
\ds \sum_{k,l\neq i } {\rm Tr}(  \frac{\partial^2 v^{N,i}}{\partial x_k\partial  x_l}) \;  \simeq & \ds  
\frac{1}{(N-1)^2} \sum_{k,l\neq i } {\rm Tr} \left[D^2_{mm}U\right](t,x_i,m^{N,i}_{\bx},x_j,x_k) \\
\simeq & \ds \inte\inte  {\rm Tr} \left[D^2_{mm}U\right](t,x_i,m^{N,i}_{\bx},y,y') dm^{N,i}_\bx (y) dm^{N,i}_\bx (y').
\end{array}
$$
Collecting the above relations, we expect that the Nash system 
$$
\left\{ \begin{array}{l}
\ds -\frac{\partial v^{N,i} }{\partial t}-\sum_{j} \Delta_{x_j}v^{N,i} -\beta \sum_{k,l} {\rm Tr}( \frac{\partial^2 v^{N,i}}{\partial x_k\partial  x_l})
+H(x_i, D_{x_i}v^{N,i}) \\
\ds \qquad \qquad + \sum_{j\neq i} D_pH(x_j, D_{x_j}v^{N,j})\cdot D_{x_j}v^{N,i}=F(x_i,m^{N,i}_\bx)\\
\ds v^{N,i}(T,\bx)= G(x_i, m^{N,i}_{\bx})
\end{array}\right.
$$ 
has for limit
$$
\left\{ \begin{array}{l}
\ds - \frac{\partial U }{\partial t}- \Delta_x U- \int_{\R^d} \dive_y\left[ D_mU\right]dm +H(x, m, D_{x}U)\\
\qquad \ds -\beta \left( \Delta_x U+2 \int_{\R^d}  \dive_x \left[ D_mU\right]dm +  \int_{\R^d} \dive_y \left[ D_mU\right]dm+ \int_{\R^{2d}} {\rm Tr}\left[D^2_{mm}U\right]dm\otimes dm\right)  \\
\ds \qquad \qquad +  \int_{\R^d}  D_mU\cdot D_pH(y,D_xU)dm(y) =F(x,m)\\
\ds U(T,x,m)= G(x, m).
\end{array}\right.
$$
This  is the master equation. Note that there are only two genuine approximations in the above computation. One is where we dropped the term of order $1/N$ in the computation of the sum $\sum_{j\neq i} \Delta_{x_j}v^{N,i}$. The other one was at the very beginning, when we replaced $D_xU(t,x_j, m^{N,j}_\bx)$ by $D_xU(t,x_j, m^{N,i}_\bx)$. This is again of order $1/N$. 

\subsubsection{The master equation and the MFG systems}

We complete this informal discussion by explaining the relationship between the master equation and the MFG systems. This relation
 plays a central role in the paper. It is indeed
 the cornerstone for constructing a solution to the master equation \textit{via} a method of (infinite dimensional) characteristics.
 However, for pedagogical reasons, we here go the other way round: 
 While, in the next sections, we 
 start from the unique solvability of the system of characteristics to prove the existence 
 of a classical solution to the master equation, 
 we now assume for a while that the master equation has a classical solution and, 
 from this solution, we construct a solution to the MFG system. 

Let us start with the first order case, i.e., when $\beta =0$, since this is substantially easier. Let $U$ be the solution to the master equation \eqref{MasterEqIntro} and, for a fixed initial position $(t_0,m_{(0)})\in [0,T]\times {\mathcal P}(\R^d)$, $(u,m)$ be a solution of the MFG system \eqref{e.MFGsyst} with initial condition $m(t_0)=m_{(0)}$. We claim that 
\be\label{chareqINTRO}
\begin{array}{l}
\ds \partial_t m -\Delta m -\dive
\Bigl(mD_pH\bigl(x,D_xU(t,x,m(t))\bigr)\Bigr)=0, \\
\ds u(t,x)=U(t,x,m(t)), \qquad t\in [t_0,T]. 
\end{array}
\ee
In other words, to compute $U(t_0,x,m_{(0)})$, we just need to compute the solution $(u,m)$ of the MFG system \eqref{e.MFGsyst} and let $U(t_0,x,m_{(0)}):=u(t_0,x)$. This is exactly the method of proof of Theorem \ref{theo:ex}. 

To check \eqref{chareqINTRO}, we solve the McKean-Vlasov equation 
$$
\partial_t m'-\Delta m'  -\dive\Bigl(m'D_pH\bigl(x,D_xU(t,x,m'(t))\bigr)\Bigr)=0, \qquad m'(t_0,\cdot)=m_{(0)},
$$
and set $u'(t,x)=U(t,x,m'(t))$. Then 
\be\label{eq:intro:1storder:chainrule:measure}
\begin{array}{rl}
\ds \partial_t u'(t,x)\; = & \ds \partial_t U+ \Big\lg \frac{\delta U}{\delta m},  \partial_tm' \Big\rg_{L^2} = \partial_t U+ \Big\lg \frac{\delta U}{\delta m},  \Delta m'  +\dive\bigl(m'D_pH(\cdot,D_xU)\bigr)\Big\rg_{L^2} \vspace{3mm}\\
= & \ds \partial_t U+ \int_{\R^d}\Bigl(\dive_y\left[D_mU\right]  - D_mU\cdot D_pH(y,D_xU)\Bigr) dm'(y) \vspace{3mm}\\
= & \ds -\Delta_xU +H(x,D_xU)-F(x,m)
\end{array}
\ee
where we used the equation satisfied by $U$ in the last equality. 
Therefore the pair $(u',m')$ is a solution to \eqref{e.MFGsyst}, which, 
provided that the MFG system is at most uniquely solvable, shows that $(u',m')=(u,m)$. \bigskip

For the second order master equation ($\beta>0$) the same principle applies except that, now, the MFG system becomes stochastic. Let $(t_0,m_{(0)})\in [0,T]\times {\mathcal P}(\R^d)$ and $(u_t,m_t)$ be a solution of the MFG system with common noise \eqref{e.MFGstoch2}. 
Provided that the master equation has a classical solution, 
we claim that 
\be\label{chareqINTRObis}
\begin{array}{l}
\ds d_t m_t = \Bigl\{(1+\beta)\Delta m_t +\dive\Bigl(m_tD_pH\bigl(x,D_xU(t,x,m_t)\bigr)
\Bigr)
\Bigr\}dt+\sqrt{2\beta}\dive(m_tdW_t), \\
\ds u_t(x)=U(t,x,m_t), \qquad t\in [t_0,T], \; a.s.. 
\end{array}
\ee
Once again, we stress that this formula (whose derivation here is informal) underpins the rigorous construction of the second order master equation performed in Section \ref{subse:partie:2:first:order:derivative}. As a matter of fact, it says that, in order to define $U(t_0,x,m_{(0)})$
(meaning that $U$ is no more \textit{a priori} given as we assumed a few lines above), one ``just needs" to solve the MFG system \eqref{e.MFGstoch2} with $m_{t_0}=m_{(0)}$ and then set $U(t_0,x,m_{(0)})=u_{t_0}(x)$. Here one faces the additional issue that, so far, there has not been any solvability result for \eqref{e.MFGstoch} and that the regularity of the map $U$ that is defined in this way is much more involved to investigate than in the first order case.
 
Returning to the proof of  
\eqref{chareqINTRObis} (and thus assuming again 
that the master equation has a classical solution),
the argument is the same in the case $\beta=0$, but with extra terms coming from the stochastic contributions. First, we (uniquely) solve the stochastic McKean-Vlasov equation 
$$
\ds d_t m_t' = \left\{(1+\beta)\Delta m_t' +\dive\Bigl(m_t'D_pH\bigl(x,D_xU(t,x,m_t')\bigr)\Bigr)\right\}dt+\sqrt{2\beta}\dive(m_t'dW_t), \qquad m'_{t_0}=m_0,
$$
and set $u_t'(x)=U(t,x,m_t')$. Then, by It\^{o}'s formula,  
\be\label{eq:intro:2ndorder:chainrule:measure}
\begin{array}{rl}
\ds d_t u_t'(x)\; = & \ds \Bigl\{ \partial_t U+ \Bigl\lg  \frac{\delta U}{\delta m}, (1+\beta)\Delta m_t' +\dive\bigl(m_t'D_pH(\cdot,D_{x} U)\bigr) \Bigr\rg_{L^2}
+ \beta \Big\lg \frac{\delta^2 U}{\delta^2 m}Dm_t', Dm_t' \Bigr\rg_{L^2}\Bigr\} dt 
\vspace{4pt}
\\
& \ds \qquad + \Bigl\lg \frac{\delta U}{\delta m}, \sqrt{2\beta}\dive(m_t'dW_t)
\Bigr\rg_{L^2}.
\end{array}
\ee
In comparison with the first-order formula 
\eqref{eq:intro:1storder:chainrule:measure}, 
equation 
\eqref{eq:intro:2ndorder:chainrule:measure}
involves two additional terms: The stochastic term on the second line 
derives directly from the Brownian part in the forward part of 
\eqref{e.MFGstoch2} whilst the second order term on the first line
is reminiscent of the second order term that appears 
in the standard It\^o calculus. We provide 
a rigorous proof of 
\eqref{eq:intro:2ndorder:chainrule:measure}
in Section 
\ref{subse:partie:2:first:order:derivative}.

Using \eqref{relationsDeri}, we obtain 
$$
\begin{array}{rl}
\ds d_t u_t'(x)\; = & \ds \Bigl\{ \partial_t U+ \int_{\R^d} \Bigl((1+\beta)\dive_y \left[D_mU\right]- D_mU\cdot D_pH(\cdot,D_{x}U)\Bigr)dm_t 
\vspace{4pt}
\\
&\hspace{150pt} \ds + \beta\int_{\R^d\times \R^d} {\rm Tr}\bigl[D^2_{mm}U\bigr]dm'_t\otimes m'_t \Bigr\} dt 
\vspace{4pt}
\\
& \ds \qquad +  \Bigl(\int_{\R^d}D_mUdm'_t \Bigr)\cdot \sqrt{2\beta}dW_t
\end{array}
$$
Taking into account the equation satisfied by $U$, we get
$$
\begin{array}{rl}
\ds d_t u_t'(x)\; = & \ds \Bigl\{ -(1+\beta)\Delta_xU +H(\cdot,D_{x}U) -2\beta \int_{\R^d}\dive_x\left[D_mU\right]dm'_t-F\Bigr\}dt 
\vspace{4pt}
\\
& \ds \qquad 
+  \Bigl(\int_{\R^d}D_mUdm'_t \Bigr)\cdot \sqrt{2\beta}dW_t 
\vspace{4pt}
\\
= & \ds  \ds \bigl\{ -(1+\beta)\Delta_xU +H(\cdot,D_{x} U) -2\beta \dive(v_t')-F\bigr\}dt +  v_t'\cdot \sqrt{2\beta}dW_t 
\end{array}
$$
for $\ds v_t':= \int_{\R^d}D_mUdm'_t$. 

This proves that $(u_t', m_t',v_t')$ is a solution to the MFG system \eqref{e.MFGstoch2} and, provided that the MFG system is at most uniquely solvable, proves the claim.

\newpage

\section{Main results}\label{sec:prelim}

In this section we collect our main results. We first state the notation used in the paper, 
specify the notion of derivatives in the space of measures, and describe the assumptions on the data. 

\subsection{Notations}\label{sub.notation}

Throughout the paper, $\R^d$ denotes the $d-$dimensional euclidean space, with norm $|\cdot|$, the scalar product between two vector $a,b\in \R^d$ being written $a\cdot b$. We work in the $d-$dimensional torus (i.e., periodic boundary conditions) that we denote $\T^d:=\R^d/\Z^d$. When $N$ is a (large) integer, we use bold symbols for elements of $(\T^d)^N$: for instance, $\bx=(x_1,\dots, x_N)\in (\T^d)^N$.

The set $\Pk$  of Borel probability measures on $\T^d$ is endowed with the  Monge-Kantorovich distance 
$$
\dk(m,m') =\sup_\phi \inte \phi(y)\ d(m-m')(y),
$$
where the supremum is taken over all Lipschitz continuous maps $\phi:\T^d\to\R$ with a Lipschitz constant bounded by $1$. Let us recall that this distance metricizes the weak convergence of measures. If $m$ belongs to $\Pk$ and $\phi:\T^d\to \T^d $ is a Borel map, then $\phi\sharp m$ denotes the push-forward of $m$ by $\phi$, i.e., the Borel probability measure such that $[\phi\sharp m](A)= m(\phi^{-1}(A))$ for any Borel set $A\subset \T^d$. 
When the probability measure $m$ is absolutely continuous with respect to 
the Lebesgue measure, we use the same letter $m$ to denote 
its 
density. Namely, we write $m : \T^d \ni x \mapsto m(x) \in \R_{+}$. 
Besides we often consider flows of time dependent measures
of the form
$(m(t))_{t\in [0,T]}$, with $m(t) \in \Pk$ for any $t \in [0,T]$. 
When, at each time $t \in [0,T]$, 
$m(t)$ is absolutely continuous with respect to the Lebesgue measure on $\T^d$, 
we identify $m(t)$ with its density and we sometimes
denote by $m : [0,T] \times \T^d \ni (t,x)\mapsto m(t,x) \in \R_{+}$
the collection of the densities. In all the examples considered below,
such an $m$ has a time-space continuous version and, implicitly, we identify $m$ with it. 

If $\phi:\T^d\to \R$ is sufficiently smooth and $\ell=(\ell_1,\dots, \ell_d)\in \N^d$, then $D^\ell \phi$ stands for the derivative $\frac{\partial^{\ell_1}}{\partial x_1^{\ell_1}}\dots\frac{\partial^{\ell_d}}{\partial x_d^{\ell_d}}\phi$. The order of derivation $\ell_1+\dots+\ell_d$ is denoted by $|\ell|$.  Given $e\in \R^d$, we also denote by $\partial_{e} \phi$ the directional derivative of $\phi$ in the direction $e$. 
For $n\in \N$ and $\alpha\in (0,1)$, $\cC^{n+\alpha}$ is the set of maps for which $D^\ell \phi$ is defined and $\alpha-$H\"{o}lder continuous for any $\ell\in \N^d$ with $|\ell|\leq n$. We set 
$$
\|\phi\|_{n+\alpha} := \sum_{|\ell|\leq n} \sup_{x\in \T^d} |D^\ell \phi(x)|+ \sum_{|\ell|=n} \sup_{x\neq x'} \frac{|D^\ell \phi(x)-D^\ell \phi(x')|}{|x-x'|^\alpha}.
$$
The dual space of $\cC^{n+\alpha}$ is denoted by $(\cC^{n+\alpha})'$ with norm 
$$
\forall \rho\in  (\cC^{n+\alpha})', \qquad 
\|\rho\|_{-(n+\alpha)} := \sup_{\|\phi\|_{n+\alpha}\leq 1} \lg \rho, \phi\rg_{(\cC^{n+\alpha})',\cC^{n+\alpha}}. 
$$
If a smooth map $\psi$ depends on two space variables, e.g. $\psi=\psi(x,y)$, and $m,n\in \N$ are the order of derivation of $\psi$ with respect to  $x$ and $y$ respectively, we set 
$$
\|\psi\|_{(m,n)} := \sum_{|\ell|\leq m, |\ell'|\leq n} \|D^{(\ell,\ell')}\psi\|_\infty, 
$$
and, if moreover the derivatives are H\"{o}lder continuous, 
$$ 
\|\psi\|_{(m+\alpha,n+\alpha)} := \|\psi\|_{(m,n)} + \sum_{|\ell|= m, |\ell'|= n} \sup_{(x,y)\neq (x',y')} \frac{|D^{(\ell,\ell')} \phi(x,y)-D^{(\ell,\ell')} \phi(x',y')|}{|x-x'|^\alpha+|y-y'|^\alpha}.
$$
The notation is generalized in an obvious way to mappings depending on 3 or more variables.  

If now the (sufficiently smooth) map $\phi$ depends on time and space, i.e., $\phi=\phi(t,x)$, we say that $\phi\in \cC^{l/2,l}$ (where $l=n+\alpha$, $n\in \N$, $\alpha\in (0,1)$) if $D^\ell D_t^j\phi$ exists for any $\ell\in \N^d$ and $j\in \N$ with $|\ell|+2j\leq n$ and is $\alpha-$H\"{o}lder in $x$ and $\alpha/2-$H\"{o}lder in $t$. We set 
$$
\|\phi\|_{n/2+\alpha/2, n+\alpha}:= \sum_{|\ell|+2j\leq n} \|D^\ell D_t^j\phi\|_\infty + \sum_{|\ell|+2j= n} \lg D^\ell D_t^j\phi\rg_{x,\alpha}+
\lg D^\ell D_t^j\phi\rg_{t,\alpha/2} 
$$ 
with 
$$
\lg D^\ell D_t^j\phi\rg_{x,\alpha}:= \sup_{t, x\neq x'}\frac{|\phi(t,x)-\phi(t,x')|}{|x-x'|^\alpha}, \; 
\lg D^\ell D_t^j\phi\rg_{t,\alpha}:= \sup_{t\neq t', x}\frac{|\phi(t,x)-\phi(t',x)|}{|t-t'|^\alpha}. 
$$

If $X,Y$ are a random variables on a probability space $(\Omega, {\mathcal A},\P)$, ${\mathcal L}(X)$  is the law of $X$ and ${\mathcal L}(Y|X)$ is the conditional law of $Y$ given $X$. 
Recall that, whenever $X$ and $Y$ take values in Polish spaces (say ${\mathcal S}_{X}$
and ${\mathcal S}_{Y}$ respectively), 
we can always find a regular version of the conditional law ${\mathcal L}(Y|X)$, 
that is 
a mapping $q : {\mathcal S}_{X} \times {\mathcal B}({\mathcal S}_{Y}) 
\rightarrow [0,1]$ such that:
\begin{itemize}
\item
 for 
each $x \in {\mathcal S}_{X}$, $q(x,\cdot)$ is a probability measure on 
${\mathcal S}_{Y}$ equipped with its Borel $\sigma$-field ${\mathcal B}({\mathcal S}_{Y})$, 
\vspace{-10pt}
\item
for any $A \in {\mathcal B}({\mathcal S}_{Y})$, 
the mapping ${\mathcal S}_{X} \ni x \mapsto q(x,A)$ is Borel measurable, 
\vspace{-10pt}
\item
$q(X,\cdot)$ is a version of the conditional law of $X$ given $Y$, in the sense that
\begin{equation*}
\E \bigl[ f(X,Y) \bigr] = \int_{{\mathcal S}_{X}}
\biggl( \int_{{\mathcal S}_{Y}} 
f(x,y)q(x,dy) \biggr) d \bigl( {\mathcal L}(X) \bigr) (x)
= 
\E \biggl[ \int_{{\mathcal S}_{Y}}
f(X,y) q(X,dy) \biggr], 
\end{equation*}
for any bounded Borel measurable mapping $f : {\mathcal S}_{X} \times {\mathcal S}_{Y}
\rightarrow \R$. 
\end{itemize}

\subsection{Derivatives}
\label{subsec.Derivatives}

One of the striking features of the master equation is that it involves derivatives of the unknown with respect to the measure. In the paper, we use two notions of derivatives.  The first one, denoted by $\frac{\delta U}{\delta m}$ is, roughly speaking, the $L^2$ derivative when one looks at  the restriction of $\Pk$ to densities in $L^2(\T^d)$. It is widely used in linearization procedures. The second one, denoted by $D_mU$,  is more intrinsic and is related with the so-called Wasserstein metric on $\Pk$. It can be introduced as in  Ambrosio, Gigli and Savaré \cite{AGS} by defining a kind of manifold structure on $\Pk$ or,  as in Lions \cite{LcoursColl}, by embedding $\Pk$ into an $L^2(\Omega, \T^d)$ space of random variables. We introduce this notion here in a slightly different way, as the derivative in space of $\frac{\delta U}{\delta m}$. In appendix we briefly compare the different notions. 

\subsubsection{First order derivatives}

\begin{Definition}\label{def:Diff} We say that $U:\Pw\to \R$ is $\cC^{1}$ if there exists a continuous  map $\ds \frac{\delta U}{\delta m}:\Pw\times \T^d\to \R$ such that, for any $m,m'\in \Pk$,  
$$
\lim_{s\to 0^+} \frac{U((1-s)m+sm')-U(m)}{s}  = \inte \frac{\delta U}{\delta m}(m, y) d(m'-m)(y).
$$
\end{Definition}

Note that $\frac{\delta U}{\delta m}$ is defined up to an additive constant. We adopt the normalization convention
\be\label{ConvCondDeriv}
\inte \frac{\delta U}{\delta m}(m,y)dm(y)=0.
\ee
For any $m\in \Pk$ and any signed measure $\mu$ on $\T^d$, we will use indifferently the notations
$\ds \frac{\delta U}{\delta m}(m)(\mu)$ and $\ds \inte \frac{\delta U}{\delta m}(m,y)d\mu(y)$. 

Note also that 
\be\label{e.iubsdiazsd}
\forall m,m'\in \Pk, \qquad U(m')-U(m) = \int_0^1\inte \frac{\delta U}{\delta m}((1-s)m+sm',y)\ d(m'-m)(y)ds.
\ee

Let us explain the relationship between the derivative in the above sense and the Lipschitz continuity of $U$ in $\Pk$. 
If $\ds \frac{\delta U}{\delta m}= \frac{\delta U}{\delta m}(m,y)$ is Lipschitz continuous with respect to the second variable with a Lipschitz constant bounded independently of $m$, then $U$ is Lipschitz continuous: indeed, by \eqref{e.iubsdiazsd}, 
$$
\begin{array}{rl}
\ds \left|U(m')-U(m)\right|\; \leq & \ds   \int_0^1\left\|D_y \frac{\delta U}{\delta m}((1-s)m+sm',\cdot)\right\|_\infty ds\  \dk(m,m')\\
 \leq & \ds \sup_{m''} \left\|D_y \frac{\delta U}{\delta m}(m'',\cdot)\right\|_\infty \  \dk(m,m') .  
\end{array}
$$
This leads us to define  the ``intrinsic derivative" of $U$.

\begin{Definition} 
\label{def:intrinsic:derivative}
 If $\ds \frac{\delta U}{\delta m}$ is of class $\cC^1$ with respect to the second variable, 
the intrinsic derivative  $D_mU:\Pw\times \T^d\to \R^d$ is defined by 
$$
D_mU(m,y):= D_y \frac{\delta U}{\delta m}(m,y)
$$
\end{Definition}

The expression $D_mU$ can be understood as a derivative of $U$ along vector fields: 

\begin{Proposition}\label{prop:DmUderiv} Assume that $U$ is $\cC^1$, with $\frac{\delta U}{\delta m}$ $\cC^1$ with respect to $y$ and $D_mU$ is continuous in both variables. Let $\phi:\T^d\to\R^d$ be a Borel measurable and bounded vector field.  Then 
$$
\lim_{h\to 0} \frac{U((id+h\phi)\sharp m)-U(m)}{h} = \inte  D_mU(m,y)\cdot \phi(y)\ dm(y).
$$
\end{Proposition}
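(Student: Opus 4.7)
The plan is to combine the integral representation \eqref{e.iubsdiazsd} with a first-order Taylor expansion in the spatial variable of $\frac{\delta U}{\delta m}$, then to pass to the limit using the continuity of $D_m U$ together with the compactness of $\T^d$ and $\Pk$.

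First I would set $m_h := (id + h\phi)\sharp m$ for $h \in \R$. Note that for any Lipschitz $\psi : \T^d \to \R$ with constant $\leq 1$,
$$
\int_{\T^d} \psi\, d(m_h - m) = \int_{\T^d} \bigl(\psi(y+h\phi(y)) - \psi(y)\bigr)\, dm(y),
$$
which is bounded by $|h|\,\|\phi\|_\infty$, hence $\dk(m_h, m) \leq |h|\,\|\phi\|_\infty$, so $m_h \to m$ in $\Pk$. Applying \eqref{e.iubsdiazsd} and the pushforward identity,
$$
U(m_h) - U(m) = \int_0^1 \int_{\T^d} \Bigl[\frac{\delta U}{\delta m}(m_s^h, y+h\phi(y)) - \frac{\delta U}{\delta m}(m_s^h, y)\Bigr] dm(y)\, ds,
$$
where $m_s^h := (1-s)m + sm_h$.

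Next, since $\frac{\delta U}{\delta m}$ is $\cC^1$ in the second variable with derivative $D_m U$, the fundamental theorem of calculus yields
$$
\frac{\delta U}{\delta m}(m_s^h, y+h\phi(y)) - \frac{\delta U}{\delta m}(m_s^h, y) = h \int_0^1 D_m U\bigl(m_s^h, y + th\phi(y)\bigr) \cdot \phi(y)\, dt.
$$
Dividing by $h$ and substituting gives
$$
\frac{U(m_h)-U(m)}{h} = \int_0^1 \int_0^1 \int_{\T^d} D_m U\bigl(m_s^h, y+th\phi(y)\bigr) \cdot \phi(y)\, dm(y)\, dt\, ds.
$$

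Finally I would pass to the limit $h \to 0$. For each fixed $(s,t,y)$, the continuity of $D_m U$ on $\Pk \times \T^d$, together with $m_s^h \to m$ in $\Pk$ and $y+th\phi(y) \to y$, gives pointwise convergence of the integrand to $D_m U(m,y)\cdot \phi(y)$. The main (mild) technical point is to justify the interchange of limit and integral: this follows from dominated convergence because $\Pk \times \T^d$ is compact (recall $\Pk$ is compact in $\dk$), so the continuous function $D_m U$ is bounded on this compact set, and $\phi$ is bounded by assumption; thus the integrand is uniformly bounded by $\|D_m U\|_\infty \|\phi\|_\infty$. This yields
$$
\lim_{h\to 0} \frac{U(m_h) - U(m)}{h} = \int_{\T^d} D_m U(m,y) \cdot \phi(y)\, dm(y),
$$
as required.
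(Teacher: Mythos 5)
Your argument is correct and follows essentially the same route as the paper: apply the representation formula \eqref{e.iubsdiazsd} together with the push-forward identity, use the fundamental theorem of calculus in $y$ to introduce $D_mU$, and then pass to the limit using the continuity of $D_mU$. You simply spell out the dominated convergence step (via compactness of $\Pk \times \T^d$) more explicitly than the paper does.
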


\begin{proof} Let us set $m_{h,s}:= s(id+ h \phi)\sharp m + (1-s) m $. Then 
$$
\begin{array}{rl}
\ds U((id+h\phi)\sharp m)-U(m) \; = & \ds \int_0^1 \inte \frac{\delta U}{\delta m}(m_{h,s},y)d((id+h\phi)\sharp m-m)(y)ds \\
 = & \ds \int_0^1 \inte ( \frac{\delta U}{\delta m}(m_{h,s},y+h\phi(y))- \frac{\delta U}{\delta m}(m_{h,s},y)) dm(y)ds\\
 = & \ds h \int_0^1 \inte \int_0^1 D_m U (m_{h,s},y+t h\phi(y))\cdot \phi(y)\ dtdm(y)ds.
\end{array}
$$
Dividing by $h$ and letting $h\to 0$ gives the result thanks to the continuity of $D_mU$. 
\end{proof}

 Note also that, if $U:\Pk\to \R$ and $\frac{\delta U}{\delta m}$ is $\cC^2$ in $y$, then $D_yD_mU(m,y)$ is a symmetric matrix since
$$
D_yD_mU(m,y)= D_y \left(D_y \frac{\delta U}{\delta m}\right)(m,y) = {\rm Hess_y} \frac{\delta U}{\delta m}(m,y).
$$

\subsubsection{Second order derivatives.} 

If, for a fixed $y\in \T^d$, the map $\ds m\mapsto \frac{\delta U}{\delta m}(m,y)$ is $\cC^1$, then we say that $U$ is $\cC^2$ and denote by $\ds \frac{\delta^2 U}{\delta m^2}$ its derivative. (Pay attention that $y$ is fixed. At this stage, nothing is said about 
the smoothness in the direction $y$.)
By Definition \ref{def:Diff} we have that $\ds \frac{\delta^2 U}{\delta m^2}:\Pw\times \T^d\times \T^d\to \R$ with
$$
\frac{\delta U}{\delta m}(m',y)-\frac{\delta U}{\delta m}(m,y)= \int_0^1\inte \frac{\delta^2 U}{\delta m^2}((1-s)m+sm',y,y')\ d(m'-m)(y').
$$
If $U$ is $\cC^2$ and if $\ds \frac{\delta^2 U}{\delta m^2}= \frac{\delta^2 U}{\delta m^2}(m,y,y')$ is $\cC^2$ in the variables $(y,y')$, then we set
$$
D^2_{mm}U(m,y,y'):= D^2_{y,y'}  \frac{\delta^2 U}{\delta m^2}(m,y,y').
$$ 
We note that $D^2_{mm}U:\Pk\times \T^d\times \T^d \to \R^{d\times d}$. The next statement asserts that $\ds \frac{\delta^2 U}{\delta m^2}$ enjoys the classical symmetries of second order derivatives. 

\begin{Lemma} 
\label{lem:schwarz}
 Assume that $\ds \frac{\delta^2 U}{\delta m^2}$ is jointly continuous in all the variables. Then 
$$
\frac{\delta^2 U}{\delta m^2}(m,y,y')= \frac{\delta^2 U}{\delta m^2}(m, y',y), \quad m \in {\mathcal P}(\T^d), \ y,y' \in \T^d.
$$
In the same way, if 
$\ds \frac{\delta U}{\delta m}$ is $\cC^1$ in the variable $y$
and
$\ds \frac{\delta^2 U}{\delta m^2}$ is also $\cC^1$ in the variable $y$,
$\ds D_{y} \frac{\delta^2 U}{\delta m^2}$
being jointly continuous in all the variables, then, 
for any fixed $y \in \T^d$, 
the map 
$m \mapsto D_{m} U(m,y)$ is ${\mathcal C}^1$ 
and  
$$
D_y\frac{\delta^2 U}{\delta m^2}(m,y,y') = \frac{\delta}{\delta m} \bigl(D_mU(m,y)\bigr)(y'),
\quad m \in {\mathcal P}(\T^d), \ y,y' \in \T^d,
$$
while, if $\ds \frac{\delta^2 U}{\delta m^2}$ is also $\cC^2$ in the variables $(y,y')$, then,
for any fixed $y \in \T^d$, the map 
$\ds \frac{\delta}{\delta m}(D_{m} U(\cdot,y))$ is ${\mathcal C}^1$ in the variable $y'$ and
$$
D_m\bigl(D_mU(\cdot,y)\bigr)(m,y') = D^2_{mm}U(m,y, y').
$$
\end{Lemma}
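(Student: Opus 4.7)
The plan is to establish the three assertions in succession, with the symmetry occupying most of the work. For the symmetry $\frac{\delta^2 U}{\delta m^2}(m, y, y') = \frac{\delta^2 U}{\delta m^2}(m, y', y)$, I would exploit Schwarz's theorem on a two-parameter family of convex combinations. Fix $m \in \Pk$ and two further measures $m_1, m_2 \in \Pk$, and consider
\begin{equation*}
\Phi(s, t) := U\bigl((1-s-t) m + s m_1 + t m_2\bigr), \qquad (s, t) \in \{s, t \geq 0,\ s + t \leq 1\}.
\end{equation*}
Applying Definition \ref{def:Diff} at each level of differentiation and invoking the joint continuity of $\frac{\delta^2 U}{\delta m^2}$ to bring derivatives under the integral, one verifies that $\Phi \in \cC^2$ with
\begin{equation*}
\partial_s \partial_t \Phi(s,t) = \inte \inte \frac{\delta^2 U}{\delta m^2}(m_{s,t}, y, y')\, d(m_1 - m)(y')\, d(m_2 - m)(y),
\end{equation*}
and an analogous expression for $\partial_t \partial_s \Phi$ with the roles of $y$ and $y'$ exchanged. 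Schwarz's theorem applied at $(s,t)=(0, 0)$ then gives
\begin{equation*}
\inte \inte \Bigl[\frac{\delta^2 U}{\delta m^2}(m, y, y') - \frac{\delta^2 U}{\delta m^2}(m, y', y)\Bigr] d(m_1 - m)(y)\, d(m_2 - m)(y') = 0
\end{equation*}
for every $m_1, m_2 \in \Pk$. Plugging in $m_i = (1 - \epsilon) m + \epsilon \delta_{a_i}$, dividing by $\epsilon^2$, and sending $\epsilon \to 0$ yields the pointwise symmetry up to an antisymmetric contribution that integrates to zero against zero-mass test measures; the normalization convention \eqref{ConvCondDeriv} carried over to the second derivative is then invoked to rule out that residual contribution.

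For the second assertion, identifying $D_y \frac{\delta^2 U}{\delta m^2}$ with the flat derivative of $D_m U(\cdot, y)$, I would start from the representation
\begin{equation*}
\frac{\delta U}{\delta m}(m', y) - \frac{\delta U}{\delta m}(m, y) = \int_0^1 \inte \frac{\delta^2 U}{\delta m^2}\bigl((1-s) m + s m', y, y'\bigr)\, d(m' - m)(y')\, ds
\end{equation*}
and differentiate in $y$ under the integral, which is legitimate because $D_y \frac{\delta^2 U}{\delta m^2}$ is jointly continuous. The resulting identity
\begin{equation*}
D_m U(m', y) - D_m U(m, y) = \int_0^1 \inte D_y \frac{\delta^2 U}{\delta m^2}\bigl(m_s, y, y'\bigr)\, d(m' - m)(y')\, ds
\end{equation*}
is exactly Definition \ref{def:Diff} applied to the map $m \mapsto D_m U(m, y)$, proving that this map is $\cC^1$ in $m$ with $\frac{\delta}{\delta m}\bigl(D_m U(\cdot, y)\bigr)(m, y') = D_y \frac{\delta^2 U}{\delta m^2}(m, y, y')$. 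To check that this is the properly normalized representative, I would differentiate the identity $\inte \frac{\delta^2 U}{\delta m^2}(m, y, y')\, dm(y') = 0$ in $y$ to obtain $\inte D_y \frac{\delta^2 U}{\delta m^2}(m, y, y')\, dm(y') = 0$, as required by convention \eqref{ConvCondDeriv}.

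The third assertion then follows immediately by combining Definition \ref{def:intrinsic:derivative} with the step just proved:
\begin{equation*}
D_m\bigl(D_m U(\cdot, y)\bigr)(m, y') = D_{y'} \frac{\delta}{\delta m}\bigl(D_m U(\cdot, y)\bigr)(m, y') = D_{y'} D_y \frac{\delta^2 U}{\delta m^2}(m, y, y') = D^2_{mm} U(m, y, y'),
\end{equation*}
where the last equality is the very definition of $D^2_{mm} U$. The hard part of the proof will be the reduction from the bilinear Schwarz identity to genuine pointwise symmetry in the first step: that bilinear identity determines $\frac{\delta^2 U}{\delta m^2}$ only modulo additive corrections depending on a single variable, so pinning down the pointwise equality requires a careful use of the normalization convention to exclude a residual antisymmetric ``cocycle'' of the form $Q(y) - Q(y')$.
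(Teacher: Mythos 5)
Your strategy for all three assertions---Schwarz's theorem applied to a two-parameter family of perturbations of $m$, then a direct verification of the defining identity for $\frac{\delta}{\delta m}\bigl(D_m U(\cdot,y)\bigr)$, then one more $y'$-derivative---is the same as the paper's. Your treatment of the second and third assertions is correct (the paper applies Schwarz to ${\mathcal U}'(t,y)=\frac{\delta U}{\delta m}(m+t\mu,y)$ and integrates in $t$; you differentiate the integral representation of $\frac{\delta U}{\delta m}(m',y)-\frac{\delta U}{\delta m}(m,y)$ in $y$ under the integral, which is the same computation) and, crucially, neither of these relies on the first assertion, since $D_y$ and $D_{y'}$ annihilate the residue discussed next.

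Your closing worry about the first assertion is not a loose end but a genuine obstruction, and the route you sketch---invoking the normalization $\int_{\T^d}\frac{\delta^2 U}{\delta m^2}(m,y,y')\,dm(y')=0$---cannot close it. The Schwarz argument, in your proof and in the paper's, only yields the bilinear identity, which forces the antisymmetric part to have the form $A(y,y')=f(y)-f(y')$; the normalization in $y'$ ties the symmetric part to $f$ but does not make $f$ constant. In fact the first assertion is \emph{false} for the normalized flat second derivative: take $U(m)=\bigl(\int_{\T^d}\phi\,dm\bigr)^2$ for a non-constant smooth $\phi$ and write $I=\int_{\T^d}\phi\,dm$. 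A direct computation gives $\frac{\delta U}{\delta m}(m,y)=2I(\phi(y)-I)$ and
\begin{equation*}
\frac{\delta^2 U}{\delta m^2}(m,y,y')=2\phi(y)\phi(y')-4I\phi(y')-2I\phi(y)+4I^2,
\end{equation*}
whose antisymmetric part is $2I\bigl(\phi(y)-\phi(y')\bigr)\not\equiv 0$, and one checks that the normalization in $y'$ is satisfied. The paper's own proof has the identical gap (``Choosing $s=t=0$, the first claim easily follows''). What \emph{is} true, and is all that is used downstream, is precisely that the defect is a cocycle $f(y)-f(y')$: it is killed by $D_yD_{y'}$, so $D^2_{mm}U$ is genuinely symmetric (which is what enters the master equation), and it integrates to zero against products of zero-mass measures, so every Taylor-type expansion is unaffected. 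If you want a literally correct first assertion, state it for $D^2_{mm}U$ or modulo such a cocycle; your instinct that this point is ``the hard part'' was exactly right.
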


\begin{proof} 
\textit{First step.}
We start with the proof of the first claim.
By continuity, we just need to show the result when $m$ has a smooth positive density. Let $\mu,\nu\in L^\infty(\T^d)$, such that $\int_{\T^d} \mu=\int_{\T^d} \nu=0$,
with a small enough norm so that 
$m+s \mu + t \nu$ is a probability measure for any $(s,t) \in [0,1]^2$. 

Since $U$ is ${\mathcal C}^2$, the mapping 
${\mathcal U} : [0,1]^2 \in (s,t) \mapsto U(m + s \mu + n \nu)$
is twice differentiable and, by standard Schwarz' Theorem,
$D_{t} D_{s} {\mathcal U}(s,t) = 
D_{s} D_{t} {\mathcal U}(s,t)$, for any $(s,t) \in [0,1]^2$.
Notice that 
\begin{equation*}
\begin{split}
&D_{t}
D_{s} {\mathcal U}(s,t)
= \int_{[\T^d]^2} \frac{\delta^2 U}{\delta m^2}
\bigl( m +  s \mu + t \nu, y,y' \bigr) \mu(y) \nu(y') dy dy' 
\\
&D_{s}
D_{t} {\mathcal U}(s,t)
= \int_{[\T^d]^2} \frac{\delta^2 U}{\delta m^2}
\bigl( m +  s \mu + t \nu, y',y \bigr) \mu(y) \nu(y') dy dy'. 
\end{split}
\end{equation*}
Choosing $s=t=0$,
the first claim easily follows. 

\textit{Second step.}
The proof is the same for the second assertion, except that now we have to consider the 
mapping
$\ds {\mathcal U}' : [0,1] \times \T^d \ni (t,y) \mapsto 
\frac{\delta U}{\delta m}(m + t \mu,y)$, 
for a general probability measure $m \in {\mathcal P}(\T^d)$
and a general finite signed measure $\mu$ on $\T^d$, 
such that 
$\mu(\T^d)=0$ and $m + \mu$ is a probability measure. 
(In particular, $m + t \mu = (1-t) m + t(m + \mu)$ is also a probability 
measure for any $t \in [0,1]$.) 
By assumption, ${\mathcal U}'$ is 
${\mathcal C}^1$ in each variable 
$t$ and $y$ with
\begin{equation*}
D_{t} {\mathcal U}'(t,y) = \int_{\T^d}
\frac{\delta^2 U}{\delta m^2}(m+t \mu,y,y') d\mu(y'),
\quad D_{y} {\mathcal U}'(t,y)
= D_{m} U(m + t\mu,y). 
\end{equation*}
In particular, 
$D_{t} {\mathcal U}'$ 
is ${\mathcal C}^1$ in $y$
and 
\begin{equation*}
D_{y}
D_{t} {\mathcal U}'(t,y) = 
\int_{\T^d}
D_{y} \frac{\delta^2 U}{\delta m^2}(m+t \mu,y,y') \mu(y') dy'.
\end{equation*} 
By assumption, $D_{y} D_{t} {\mathcal U}'$ is jointly continuous 
and, by standard Schwarz' Theorem, 
the mapping $D_{y} {\mathcal U}'$
is differentiable in $t$, with
\begin{equation*}
D_{t} \bigl( D_{y} {\mathcal U}' \bigr)(t,y)
=
D_{t} \bigl( 
D_{m} U(m + t\mu,y) \bigr)
= \int_{\T^d}
D_{y} \frac{\delta^2 U}{\delta m^2}(m+t \mu,y,y') \mu(y') dy'.
\end{equation*}
Integrating in $t$, this shows that 
\begin{equation*}
D_{m} U(m + \mu,y)
- D_{m} U(m,y) 
= \int_{0}^1 
\int_{\T^d}
D_{y} \frac{\delta^2 U}{\delta m^2}(m+t \mu,y,y') 
\mu(y') dy'
dt.
\end{equation*}
Choosing $\mu = m'-m$, for another probability measure 
$m' \in {\mathcal P}(\T^d)$
and noticing that
(see Remark 
\ref{rem:convention:D^2m} below):
 $$\int_{\T^d}
D_{y}\frac{\delta^2 U}{\delta m^2}(m,y,y') 
dm(y')=0,$$ we complete the proof of the 
second claim. 

%
For the last assertion, one just need to take the derivative in $y$ in the second one. 
\end{proof}

\begin{Remark}
\label{rem:convention:D^2m}
Owing to the convention 
\eqref{ConvCondDeriv}, 
we have
\begin{equation*}
\forall y \in \T^d, \quad \int_{\T^d} \frac{\delta^2 U}{\delta m^2}(m,y,y') d 
m(y') = 0,
\end{equation*}
when $U$ is ${\mathcal C}^2$. 
By symmetry, we also have
\begin{equation*}
\forall y' \in \T^d, \quad \int_{\T^d} \frac{\delta^2 U}{\delta m^2}(m,y,y') d 
m(y) = 0.
\end{equation*}
And, of course,
\begin{equation*}
\int_{[\T^d]^2} \frac{\delta^2 U}{\delta m^2}(m,y,y') 
d 
m(y)
d 
m(y') = 0.
\end{equation*}
\end{Remark}

\subsubsection{Comments on the notions of derivatives} 

Since several concepts of derivatives have been used in the mean field game theory, we now discuss the link between these notions. For simplicity, we argue as if our state space was $\R^d$ and not $\T^d$, since most results have been stated in this context.
(We refer to the Appendix for an exposition on $\T^d$.) 

A first idea consists in looking at the restriction of the map $U$ to the subset of measures with a density which is in $L^2(\R^d)$, and take the derivative of $U$ in the $L^2(\R^d)$ sense. This is partially the point of view adopted by Lions in \cite{LcoursColl} and followed by Bensoussan, Frehse and Yam \cite{BFY}. In the context of smooth densities, this is closely related to our first and second derivatives $\ds \frac{\delta U}{\delta m}$ and  $\ds \frac{\delta^2 U}{\delta m^2}$.

Many works on mean field games (as in Buckdahn, Li, Peng and Rainer \cite{BuLiPeRa},  Carmona and Delarue \cite{CaDe14}, 
Chassagneux, Crisan and Delarue \cite{CgCrDe}, Gangbo and Swiech \cite{GS14-2}) make use of an idea introduced by Lions in \cite{LcoursColl}. It consists in working in a sufficiently large probability space $(\Omega, {\mathcal A},\P)$ and in looking at maps $U:{\mathcal P}(\R^d)\to \R$ through their lifting to $L^2(\Omega,{\mathcal A},\P; \R^d)$ defined by 
$$
\widetilde U(X)= U({\mathcal L}(X)) \qquad \forall X\in L^2(\Omega, \R^d),
$$
where ${\mathcal L}(X)$ is the law of $X$. It is clear that the derivative of $\widetilde U$---if it exists---enjoys special properties because $\widetilde U(X)$  depends only on the law of $X$ and not on the full random variable. 
As explained in \cite{LcoursColl}, if $\widetilde U$ is differentiable at some point $X_0\in L^2(\Omega,{\mathcal A},\P; \R^d)$, then its gradient can be written as 
$$
\nabla \widetilde U(X_0) = \partial_{\mu} U({\mathcal L}(X_0))(X_0),
$$
where $\partial_{\mu} U:{\mathcal P}(\R^d)\times \R^d \ni (m,x) \mapsto 
\partial_{\mu} U(m)(x) \in 
\R^d$. We explain in the Appendix that the maps $\partial_{\mu} U$ and $D_mU$ introduced in Definition \ref{def:intrinsic:derivative} coincide, as soon as one of the two derivatives exists. Let us also underline that this concept of derivative is closely related with the notion introduced by Ambrosio, Gigli and Savaré \cite{AGS} in a more general setting.

\subsection{Assumptions}
\label{subsec:hyp}

Throughout the paper, we assume that $H:\T^d\times \R^d\to \R$ is smooth, globally Lipschitz continuous and satisfies the coercivity condition: 
\be
\label{HypD2H}
C^{-1}\frac{ I_d }{1+|p|}\leq D^2_{pp}H(x,p) \leq CI_d \qquad {\rm for }\; (x,p)\in \T^d\times \R^d.
\ee
We also always assume that the maps $F,G:\T^d\times \Pk\to\R$ are globally Lipschitz continuous and monotone: for any $m,m'\in \Pk$, 
\be\label{e.monotoneF}
\inte (F(x,m)-F(x,m'))d(m-m')(x)\geq 0, \; \inte (G(x,m)-G(x,m'))d(m-m')(x)\geq 0.
\ee
Note that assumption \eqref{e.monotoneF} implies that $\frac{\delta F}{\delta m}$ and $\frac{\delta G}{\delta m}$ satisfy the following monotonicity property (explained for $F$): 
$$
\inte\inte \frac{\delta F}{\delta m}(x,m,y)\mu(x)\mu(y)dxdy\geq 0
$$
for any centered measure $\mu$. Throughout the paper the conditions \eqref{HypD2H} and \eqref{e.monotoneF} are in force. \\

Next we describe assumptions that might differ according to the results. Let us fix 
$n\in \N$ and $\alpha\in (0,1)$.  We set (with the notation introduced in subsection \ref{sub.notation})
$$
{\rm Lip}_n(\frac{\delta F}{\delta m}):= \sup_{m_1\neq m_2}\left(\dk(m_1,m_2)\right)^{-1}
\left\| \frac{\delta F}{\delta m}(\cdot,m_1,\cdot)-\frac{\delta F}{\delta m}(\cdot,m_2,\cdot)\right\|_{(n+\alpha,n+\alpha)}
$$
and use the symmetric notation for $G$.
We call {\bf (HF1(${\boldsymbol n}$))} the following regularity conditions on $F$: 
$$
{\rm {\bf (HF1({\boldsymbol n}))}} \qquad \sup_{m\in \Pk} \left(\left\|F(\cdot, m)\right\|_{n+\alpha}+ \left\|\frac{\delta F(\cdot, m,\cdot)}{\delta m}\right\|_{(n+\alpha, n+\alpha)}\right)+ {\rm Lip}_n(\frac{\delta F}{\delta m})\; <\; \infty.
$$
and  {\bf (HG1(${\boldsymbol n}$))} the symmetric condition on $G$: 
$$
{\rm {\bf (HG1({\boldsymbol n}))}} \qquad \sup_{m\in \Pk} \left(\left\|G(\cdot, m)\right\|_{n+\alpha}+ \left\|\frac{\delta G(\cdot, m,\cdot)}{\delta m}\right\|_{(n+\alpha, n+\alpha)}\right)+ {\rm Lip}_n(\frac{\delta G}{\delta m})\; <\; \infty.
$$
We use similar notation when dealing with second order derivatives: 
$$
\ds {\rm Lip}_n(\frac{\delta^2 F}{\delta m^2}):= \sup_{m_1\neq m_2}\left(\dk(m_1,m_2)\right)^{-1} 
\left\| \frac{\delta^2 F}{\delta m^2}(\cdot,m_1,\cdot,\cdot)-\frac{\delta^2 F}{\delta m^2}(\cdot,m_2,\cdot,\cdot)\right\|_{(n+\alpha, n+\alpha, n+\alpha)}
$$
and call {\bf (HF2(${\boldsymbol n}$))} (respectively {\bf (HG2(${\boldsymbol n}$))}) the second order regularity conditions on $F$: 
$$
\begin{array}{rl}
\ds {\rm {\bf (HF2({\boldsymbol n}))}} \; & \ds  \sup_{m\in \Pk} \left(\left\|F(\cdot, m)\right\|_{n+\alpha}+ \left\|\frac{\delta F(\cdot, m,\cdot)}{\delta m}\right\|_{(n+\alpha, n+\alpha)}\right)\\
& \ds \qquad  +\sup_{m\in \Pk}\left\|\frac{\delta^2 F(\cdot, m,\cdot,\cdot)}{\delta m^2}\right\|_{(n+\alpha,n+\alpha,n+\alpha)}
+ {\rm Lip}_n(\frac{\delta^2 F}{\delta m^2})\; <\; \infty.
\end{array}$$
and on $G$: 
$$
\begin{array}{rl}
\ds {\rm {\bf (HG2({\boldsymbol n}))}} \; & \ds  \sup_{m\in \Pk} \left(\left\|G(\cdot, m)\right\|_{n+\alpha}+ \left\|\frac{\delta G(\cdot, m,\cdot)}{\delta m}\right\|_{(n+\alpha,n+\alpha)}\right)\\
& \ds \qquad  +\sup_{m\in \Pk}\left\|\frac{\delta^2 G(\cdot, m,\cdot,\cdot)}{\delta m^2}\right\|_{(n+\alpha,n+\alpha,n+\alpha)}
+ {\rm Lip}_n(\frac{\delta^2 G}{\delta m^2})\; <\; \infty.
\end{array}$$

\begin{Example}
\label{ex:monotonicity:F:G}
{\rm Assume that $F$ is of the form: 
$$
F(x, m)= \int_{\R^d} \Phi(z, (\rho\star m)(z))\rho(x-z)dz, 
$$
where $\star$ denotes the usual convolution product (in $\R^d$) and where $\Phi:\R^2\to \R$ is a smooth map which is nondecreasing with respect to the second variable and $\rho$ is a smooth, even function with compact support. Then $F$ satisfies the monotonicity condition \eqref{e.monotoneF} as well as the regularity conditions {\bf (HF1(}${\boldsymbol n}${\bf))} and {\bf (HF2(}${\boldsymbol n}${\bf))} for any $n\in \N$. 
}\end{Example}

\begin{proof} Let us first note that, for any $m,m'\in \Pk$, 
\begin{multline*}
\inte (F(x,m)-F(x,m'))d(m-m')(x)\\ 
= \inte \left[\Phi(y, \rho\star m(y))-\Phi(y, \rho\star m'(y))\right] \left(\rho\star m(y)-\rho\star m'(y)\right)dy \geq 0,
\end{multline*}
since $\rho$ is even and $\Phi$ is nondecreasing with respect to the  second variable. So $F$ is monotone. Writing $\Phi=\Phi(x,\theta)$, the derivatives of $F$ are given by 
$$
\frac{\delta F}{\delta m}(x,m,y)= \int_{\R^d} \frac{\partial \Phi}{\partial \theta}
\bigl(z,\rho\star m(z)\bigr)\rho(x-z) \rho(z-y) dz 
$$
and
%
$$
\frac{\delta^2 F}{\delta m^2}(x,m,y,y')= \int_{\R^d} \frac{\partial^2 \Phi}{\partial \theta^2}\bigl(z,\rho\star m(z)\bigr)
\rho(z-y)\rho(z-y')
\rho(x-z)dz.
$$
Then {\bf (HF1(}${\boldsymbol n}${\bf))} and {\bf (HF2(}${\boldsymbol n}${\bf))} hold because of the smoothness of $\rho$. 
\end{proof}

\subsection{Statement of the main results}

The paper contains two main results: on the one hand the well-posedness of the master equation, and, on the other hand, the convergence of the Nash system with $N$ players as $N$ tends to infinity. We start by considering the first order master equation ($\beta=0$), because, in this setting, the approach is relatively simple (Theorem \ref{theo:ex}). In order to handle the second order master equation, we build solutions to the mean field game system with common noise, which play the role of ``characteristics" for the master equation (Theorem \ref{thm:partie:2:existence:uniquenessINTRO}). Our first main result is Theorem \ref{theo:2nd-order:master:equation}, which states that the master equation has a unique classical solution under our regularity and monotonicity assumptions on $H$, $F$ and $G$. Once we know that the master equation has a solution, we can use this solution to build approximate solutions for the Nash system with $N-$players. This yields to our main convergence results, either in term of functional terms (Theorem \ref{thm:mainCV}) or in term of optimal trajectories (Theorem \ref{thm:CvMFG}). 

\subsubsection{First order master equation}

We first consider the first order master equation (or master equation  without common noise): 
\be\label{MFGf}
\begin{array}{l}
\left\{\begin{array}{l} 
\ds - \partial_t U(t,x,m)  - \Delta_x U(t,x,m) +H(x,D_xU(t,x,m)) -\inte \dive_y\left[D_m U\right](t,x,m,y) \ d m(y)\\
\ds    \qquad+ \inte  D_m U(t,y,m,y)\cdot D_pH(y,D_xU(t,y,m))\ dm(y) =F(x,m), \\
 \ds  \qquad\qquad\qquad \qquad {\rm in }\; [0,T]\times \T^d\times \Pw,\\
 \;\\
 U(T,x,m)= G(x,m) \qquad  {\rm in }\; \T^d\times \Pw.\\
\end{array}\right.
\end{array}
\ee
We call it the first order master equation since it only contains first order derivatives with respect to the measure variable. 
Let us first explain the notion of solution. 

\begin{Definition} 
\label{def:master:eq:1st:order}
We say that a map $U:[0,T]\times \T^d\times \Pw\to \R$ is a classical solution to the first order master equation if
\begin{itemize}
\item $U$ is continuous in all its arguments (for the $\dk$ distance on $\Pw$), is of class $\cC^2$ in $x$ and $\cC^1$ in time (the derivatives of order one in time and space 
and of order two in space being continuous in all the arguments),  

\item $U$ is of class $\cC^1$ with respect to $m$, 
the first order derivative
\begin{equation*}
\begin{split}
&[0,T] \times \T^d \times \Pk 
\times \T^d \ni
(t,x,m,y) \mapsto \frac{\delta U}{\delta m}(t,x,m,y),
\end{split}
\end{equation*}
being continuous in all the arguments, 
$\delta U/\delta m$ 
being
 twice differentiable in $y$, the derivatives being continuous 
in all the arguments, 

\item $U$ satisfies the master equation
\eqref{MFGf}. 
\end{itemize}
\end{Definition}

\begin{Theorem}\label{theo:ex} Assume that $F$, $G$ and $H$ satisfy  \eqref{HypD2H} and  \eqref{e.monotoneF} in Subsection \ref{subsec:hyp}, 
and that  {\bf (HF1(${\boldsymbol n}$+1))} and {\bf (HG1(${\boldsymbol n}$+2))} hold for some $n\geq 1$
and some $\alpha \in (0,1)$.
Then the first order master equation \eqref{MFGf} has a unique solution. 

Moreover, $U$ is $\cC^1$ (in all variables), $\frac{\delta U}{\delta m}$ is continuous in all variables and $U(t,\cdot,m)$ and $\frac{\delta U}{\delta m}(t,\cdot, m, \cdot)$ are bounded in $\cC^{n+2+\alpha}$ and $\cC^{n+2+\alpha}\times \cC^{n+1+\alpha}$ respectively, independently of $(t,m)$. Finally,  $\frac{\delta U}{\delta m}$ is Lipschitz continuous with respect to the measure variable: 
$$
\sup_{t\in[0,T]}  \sup_{m_1\neq m_2}\left(\dk(m_1,m_2)\right)^{-1}
\left\| \frac{\delta U}{\delta m}(t,\cdot,m_1,\cdot)-\frac{\delta U}{\delta m}(t,\cdot,m_2,\cdot)\right\|_{(n+2+\alpha,n+\alpha)} \; <\; \infty.
$$
\end{Theorem}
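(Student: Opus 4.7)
The plan is to construct $U$ by the method of characteristics sketched in Subsection 1.2.4: given $(t_0,m_0)\in[0,T]\times\Pk$, solve the MFG system \eqref{e.MFGsyst} (with initial datum $m(t_0)=m_0$) to obtain $(u,m)$, and set
$$U(t_0,x,m_0):=u(t_0,x).$$
Under assumptions \eqref{HypD2H}–\eqref{e.monotoneF}, well-posedness of \eqref{e.MFGsyst} is classical (Lasry–Lions); uniqueness is where the monotonicity of $F,G$ enters, via the standard duality argument that compares two solutions $(u^1,m^1)$ and $(u^2,m^2)$ by integrating $(u^1-u^2)$ against $\partial_t(m^1-m^2)$. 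Standard parabolic Schauder estimates applied to the HJ equation, once the forward flow $m(t)$ is viewed as a frozen input, yield bounds on $u(t_0,\cdot)$ in $\cC^{n+2+\alpha}$ depending only on the norms of $F(\cdot,m(\cdot))$ and $G(\cdot,m(T))$; these are under control by {\bf (HF1(${\boldsymbol n}$+1))} and {\bf (HG1(${\boldsymbol n}$+2))}. This gives the regularity of $U$ in $(t,x)$ uniformly in $m_0$.

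The heart of the proof is the differentiability of $U$ in $m$. I would differentiate the MFG system formally along a direction $\mu$ (a signed measure with $\mu(\T^d)=0$), producing the \emph{linearized MFG system} for $(z,\rho)$:
$$
\left\{\begin{array}{l}
-\partial_t z-\Delta z+D_pH(x,Du)\cdot Dz = \dfrac{\delta F}{\delta m}(x,m(t))(\rho(t)),\\[4pt]
\partial_t\rho-\Delta\rho-\mathrm{div}\bigl(\rho\, D_pH(x,Du)+m\, D^2_{pp}H(x,Du)Dz\bigr)=0,\\[4pt]
\rho(t_0)=\mu,\qquad z(T,x)=\dfrac{\delta G}{\delta m}(x,m(T))(\rho(T)).
\end{array}\right.
$$
The candidate for $\frac{\delta U}{\delta m}(t_0,x,m_0)(\mu)$ is $z(t_0,x)$. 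The crucial point is that the monotonicity of $F$ and $G$, inherited at the linear level by the non-negativity of the quadratic forms associated with $\delta F/\delta m$ and $\delta G/\delta m$ (as noted after \eqref{e.monotoneF}), yields uniqueness and an energy-type estimate for the linearized system: multiplying the first equation by $\rho$, the second by $z$, subtracting and integrating in time and space produces a bound
$$\int_{t_0}^T\!\int_{\T^d} m\,D^2_{pp}H(x,Du)Dz\cdot Dz\,dxdt \;\le\; C\|\mu\|_{-(n+1+\alpha)},$$
from which, combined with parabolic duality for the Fokker–Planck equation on $\rho$ and Schauder estimates for $z$, one extracts $\cC^{n+2+\alpha}\times\cC^{n+1+\alpha}$ bounds on $(z,\rho)$. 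A second differentiation against an independent direction, plus stability of the linearized system, provides the Lipschitz dependence of $(z,\rho)$ in the initial measure and hence the announced Lipschitz bound on $\delta U/\delta m$.

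Once $\frac{\delta U}{\delta m}$ is constructed, the derivation of the master equation \eqref{MFGf} is precisely the computation \eqref{eq:intro:1storder:chainrule:measure}: for any $(t_0,m_0)$ one has $u(t,x)=U(t,x,m(t))$ along the characteristics, so one formally computes $\partial_t u$ using the chain rule in $(t,m)$ and identifies the resulting identity with the HJ equation satisfied by $u$, which yields \eqref{MFGf} at $(t_0,x,m_0)$; since $(t_0,m_0)$ is arbitrary, $U$ is a classical solution. Conversely, uniqueness of $U$ as a classical solution follows from the same chain-rule argument run in reverse: given any classical solution $U$, the pair $(U(t,\cdot,m(t)),m(t))$, where $m$ solves the McKean–Vlasov equation $\partial_t m-\Delta m-\mathrm{div}(mD_pH(x,D_xU(t,\cdot,m)))=0$ with $m(t_0)=m_0$, solves \eqref{e.MFGsyst}, and uniqueness of \eqref{e.MFGsyst} under monotonicity forces $U(t_0,x,m_0)=u(t_0,x)$.

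The main obstacle I anticipate is the regularity and stability analysis of the linearized MFG system, because the coupling between the backward and forward equations through $\delta F/\delta m$ and $\delta G/\delta m$ is nonlocal in $m$ and the bootstrap from an a priori energy estimate on $mD^2_{pp}H(x,Du)Dz\cdot Dz$ to Schauder-type bounds on $z$ and the corresponding norm bounds on $\rho$ requires carefully matching regularity indices (hence the gap between the assumptions on $F$ and $G$, where $G$ is required to be one derivative smoother). Everything else — the chain rule that recovers \eqref{MFGf}, and uniqueness — is a direct consequence of the representation formula and the monotonicity argument.
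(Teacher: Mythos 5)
Your plan coincides, step for step, with the paper's own proof in Section~\ref{sec:MasterWithoutCN}: represent $U(t_0,\cdot,m_0)$ as $u(t_0,\cdot)$ from the MFG system, prove Lipschitz dependence of $U$ on $m_0$ via the Lasry--Lions monotonicity argument plus a coupling/Gronwall estimate, build $\delta U/\delta m$ as the initial backward value of the linearized system, obtain estimates on $(z,\rho)$ through a monotonicity/duality/Schauder bootstrap, and derive the equation and its uniqueness by the chain rule along characteristics forward and backward. Two points deserve caution. First, the energy identity you cite is bilinear, not linear, in the data: what it produces is a bound of the form $\int m\,\Gamma Dz\cdot Dz \lesssim \sup_t\|z(t)\|_{X_{n+1}}\,\|\mu\|_{-(n+1+\alpha)}$, which has to be closed by the duality estimate on $\rho$ and the Schauder estimate on $z$; as written your right-hand side $C\|\mu\|_{-(n+1+\alpha)}$ is dimensionally off and would not close. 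Second, under the standing hypotheses \textbf{(HF1(${\boldsymbol n}$+1))} and \textbf{(HG1(${\boldsymbol n}$+2))} one cannot perform a genuine ``second differentiation'' in $m$ --- that requires the second-order hypotheses \textbf{(HF2)}/\textbf{(HG2)} used only in the common-noise section. The Lipschitz bound on $\delta U/\delta m$ in $m$ comes instead from the stability branch of your alternative: compare the two linearized systems obtained from base data $m_0^1$ and $m_0^2$ with the \emph{same} direction $\mu_0$, write the difference $(z,\rho)=(v^1-v^2,\mu^1-\mu^2)$ as a solution of the linear system with source terms controlled by $\dk(m_0^1,m_0^2)\,\|\mu_0\|$, and apply the same estimates to that system. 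This is exactly Proposition~\ref{prop:DmULip}. Finally, to promote the formal linearization into an actual derivative, one needs the quadratic error estimate $\|\hat u-u-v\|_{n+2+\alpha}\le C\dk^2(m_0,\hat m_0)$ (Proposition~\ref{prop:diff}), a step you leave implicit but which is load-bearing.
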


Section \ref{sec:MasterWithoutCN} is devoted to the proof of Theorem \ref{theo:ex}. We also discuss in this section the link between the solution $U$ and the derivative of the solution of a Hamilton-Jacobi equation in the space of measure. 

The proof of Theorem \ref{theo:ex} relies on the representation of the solution in terms of the mean field game system: for any $(t_0,m_0)\in [0,T)\times \Pw$, the MFG system is the system of forward-backward equations: 
\be\label{MFGSec2}
\left\{ \begin{array}{l}
\ds - \partial_t u - \Delta u +H(x,Du)=F(x,m(t))\\
\ds \partial_t m - \Delta m -{\rm div}( m D_pH(x, D u))=0 \\
\ds u(T,x)=G(x,m(T)), \; m(t_0, \cdot)=m_0
\end{array}\right.
\ee
As recalled below (Proposition \ref{prop:reguum}), under suitable assumptions on the data, there exists a unique solution $(u,m)$ to the above system. Our aim is to show that the map $U$ defined by 
\be\label{defUSec2}
U(t_0,\cdot, m_0):= u(t_0,\cdot)
\ee
is a solution to \eqref{MFGf}. The starting point is the obvious remark that, for $U$ defined by \eqref{defUSec2} and for any $h\in [0,T-t_0]$,  
$$
u(t_0+h, \cdot)= U(t_0+h,\cdot, m(t_0+h)).
$$
Taking the derivative with respect to $h$ and letting $h=0$ shows that $U$ satisfies \eqref{MFGf}. 

The main issue is to prove that the map $U$ defined by \eqref{defUSec2} is sufficiently smooth to perform the above computation. In order to prove the differentiability of the map $U$, we use a flow method and differentiate the MFG system \eqref{MFGSec2} with respect to the measure argument $m_{0}$. The derivative system then reads as a \textit{linearized system}
initialized with a signed measure. 
Fixing a solution $(u,m)$ to \eqref{MFGSec2} and  
allowing for a more singular initial 
distribution
$\mu_0\in (\cC^{n+1+\alpha}(\T^d))'$ (instead of a signed measure),
the \textit{linearized system}, with $(v,\mu)$ as unknown, takes the form:
$$
\left\{ \begin{array}{l}
\ds - \partial_t v - \Delta v + D_pH(x,Du)\cdot Dv =\frac{\delta F}{\delta m}\bigl(x,m(t)\bigr)(\mu(t))\\
\ds \partial_t \mu - \Delta \mu -{\rm div}\bigl( \mu  D_pH(x, D u)\bigr)-{\rm div}\bigl( m  D^2_{pp}H(x, D u)Dv\bigr)=0 \\
\ds v(T,x)=\frac{\delta G}{\delta m}\bigl(x,m(T)\bigr)(\mu(T)), \; \mu(t_0,\cdot)=\mu_0.
\end{array}\right.
$$
We prove that $v$ can be interpreted as the directional derivative of $U$ in the direction $\mu_0$: 
$$
v(t_0,x)= \inte \frac{\delta U}{\delta m}(t_0,x,m_0,y) \mu_0(y)dy.
$$
Note that this shows at the same time the differentiability of $U$ and the regularity of its derivative. For this reason the introduction of the directional derivative appears extremely useful in this context.

\subsubsection{The mean field game system with common noise}

As explained in the previous subsection, the characteristics of the first order master equation \eqref{MFGf} are the solution to the mean field game system \eqref{MFGSec2}. The analogous construction for the second order master equation (with $\beta>0$) yields to a system of stochastic partial differential equations, the  {\it mean field game system with common noise}. Given an initial distribution ${m}_{0} \in \Pk$ at an initial time $t_0\in [0,T]$, this system 
reads\footnote{
In order to emphasize the random nature of the 
functions $u$ and $m$, the time variable 
is now indicated as an index, as often done in the theory of stochastic processes.
}
\be\label{e.MFGsto}
\left\{
\begin{array}{l}
 d_{t} u_{t} = \bigl\{ -  (1+\beta) \Delta u_{t} + H(x,Du_{t}) - F(x,m_{t}) -  2\beta {\rm div}(v_{t}) \bigr\} dt+ v_{t} \cdot \sqrt{2\beta} dW_{t},\\
 d_{t} m_{t} = \bigl[  (1+\beta) \Delta m_{t} + {\rm div} \bigl( m_{t} D_{p} H(m_{t},D u_{t}) 
\bigr) \bigr] dt - \sqrt{2\beta} {\rm div} ( m_{t} dW_{t} \bigr), \qquad {\rm in}\; [t_0,T]\times \T^d,\\
 m_{t_0}=m_0, \; u_T(x)= G(x, m_T) \qquad {\rm in}\; \T^d.
\end{array}
\right.
\ee
Here $(W_t)_{t\in [0,T]}$ is a given $d-$dimensional Brownian motion, generating a  filtration $({\mathcal F}_t)_{t\in [0,T]}$. The solution is the process $(u_t,m_t,v_t)_{t\in [0,T]}$, adapted to $({\mathcal F}_t)_{t\in [t_0,T]}$, where, for each $t\in [t_0,T]$, $v_t$ is a vector field which ensures the solution $(u_t)$ to the backward equation to be adapted to the filtration $({\mathcal F}_t)_{t\in [t_0,T]}$. 
Up to now, the  well-posedness  of this system has never been investigated, 
but it is reminiscent of the theory of forward-backward stochastic 
differential equations in finite dimension, see for instance 
the monograph \cite{PardouxRascanu}.

To analyze \eqref{e.MFGsto}, we take advantage of the additive structure of the common noise and perform the (formal) change of variable
\begin{equation*}
\tilde{u}_{t}(x) = u_{t}(x+ \sqrt{2\beta} W_{t}), 
\quad \tilde{m}_{t}(x) = m_{t}(x+ \sqrt{2\beta} W_{t}), \quad x \in \T^d, \quad t \in [0,T]. 
\end{equation*}
Setting $\tilde{H}_{t}(x,p) = H(x+ \sqrt{2}W_{t},p)$, $\tilde{F}_{t}(x,m) = F(x+\sqrt{2} W_{t},m)$ and $\tilde{G}_{t}(x,m) = G(x+\sqrt{2} W_{t},m)$
and invoking 
the  It\^o-Wentzell formula (see Section 
\ref{se:common:noise} for a more precise account), 
the pair $(\tilde{u}_t, \tilde{m}_{t})_{t\in [t_0,T]}$ formally satisfies 
the system
\begin{equation}
\label{eq:se:3:tilde:HJB:FPINTRO}
\left\{
\begin{array}{l}
d_{t} \tilde{u}_{t}
=  \bigl\{ -   \Delta \tilde{u}_{t} + \tilde{H}_{t}(\cdot,D\tilde{u}_{t}) - 
\tilde{F}_{t}(\cdot,m_{t}) 
\bigr\} dt
+ d\tilde{M}_{t},
\\
d_{t} \tilde{m}_{t} = \bigl\{ \Delta \tilde{m}_{t}
+{\rm div} \bigl( \tilde{m}_{t} D_{p} \tilde{H}_{t}(\cdot,D \tilde{u}_{t})
\bigr) \bigr\} dt, \\
 \;  \tilde{m}_{t_0} = m_{0},\; \tilde{u}_{T}
= \tilde{G}(\cdot,m_{T}).
\end{array}
\right.\end{equation}
where (still formally) $d\tilde{M}_{t}=v_{t}(x+\sqrt{2} W_{t})dW_{t}$. 

Let us explain how we understand the above system. The solution  
$(\tilde{u}_{t})_{t \in [0,T]}$ is
seen as an $({\mathcal F}_{t})_{t \in [0,T]}$-adapted process
with paths in the space ${\mathcal C}^0([0,T],{\mathcal C}^{n+2}({\mathbb T}^d))$, 
for some fixed $n \geq 0$. 
The process $(\tilde{m}_{t})_{t \in [0,T]}$ reads 
as an $({\mathcal F}_{t})_{t \in [0,T]}$-adapted process
with paths in the space 
$\cC^0([0,T],{\mathcal P}({\mathbb T}^d))$. 
We shall look for solutions satisfying 
\begin{equation}
\label{eq:se:3:bsde:1INTRO}
\sup_{t \in [0,T]} \bigl( \| \tilde{u}_{t} \|_{n+2+\alpha}
\bigr) \in L^\infty(\Omega,{\mathcal A},\P),
\end{equation}
(for some fixed $\alpha \in (0,1)$). 
The process $(\tilde{M}_{t})_{t \in [0,T]}$ is
seen as an $({\mathcal F}_{t})_{t \in [0,T]}$-adapted process
with paths in the space ${\mathcal C}^0([0,T],
{\mathcal C}^{n}({\mathbb T}^d))$, 
such that, for any $x \in {\mathbb T}^d$, 
$(\tilde{M}_{t}(x))_{t \in [0,T]}$
is an $({\mathcal F}_{t})_{t \in [0,T]}$ martingale. 
It is required to satisfy 
\begin{equation}
\label{eq:se:3:bsde:2INTRO}
\sup_{t \in [0,T]} \bigl( \| \tilde{M}_{t} \|_{n+\alpha}
\bigr) \in L^\infty(\Omega,{\mathcal A},\P).
\end{equation}

\begin{Theorem}
\label{thm:partie:2:existence:uniquenessINTRO}
Assume that
$F$, $G$ and $H$ satisfy 
\eqref{HypD2H}
and 
\eqref{e.monotoneF}
and that
{\bf (HF1(${\boldsymbol n}$+1))}
and 
{\bf (HG1(${\boldsymbol n}$+2))}
hold true for some 
$n \geq 0$
and some $\alpha \in (0,1)$. 
Then,
there exists a unique solution $(\tilde{u}_{t},\tilde{m}_{t},\tilde{M}_{t})_{t \in [0,T]}$
to \eqref{eq:se:3:tilde:HJB:FPINTRO},
satisfying 
\eqref{eq:se:3:bsde:1INTRO}
and \eqref{eq:se:3:bsde:2INTRO}. 
\end{Theorem}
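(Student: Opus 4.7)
My plan is to treat \eqref{eq:se:3:tilde:HJB:FPINTRO} as a decoupled forward–backward stochastic PDE system and close it by a fixed-point argument on the forward component. The key structural observation is that, in the tilde coordinates, the It\^o--Wentzell change of variable has absorbed the common noise into the random coefficients $\tilde H_t, \tilde F_t, \tilde G$, so that the forward equation for $\tilde m_t$ carries no stochastic integral and is, for each fixed $\omega$, a genuine parabolic PDE with $({\mathcal F}_t)$-adapted drift. The backward equation, however, retains a martingale term $d\tilde M_t$ whose role is precisely to enforce adaptedness of $\tilde u_t$ in the presence of a terminal condition depending on the entire Brownian path. Concretely, I would define a map $\Phi : (\tilde m_t)_t \mapsto (\tilde m'_t)_t$ by first solving the backward stochastic Hamilton--Jacobi equation for $(\tilde u_t,\tilde M_t)$ given $\tilde m$, then solving the forward Kolmogorov equation for $\tilde m'$ given $\tilde u$, and show that $\Phi$ has a fixed point in a suitable Banach space of adapted $\Pk$-valued flows.

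\textbf{Uniqueness.} I would dispatch uniqueness first, by the Lasry--Lions monotonicity argument in its stochastic version. Given two solutions $(\tilde u^i,\tilde m^i,\tilde M^i)_{i=1,2}$, I apply It\^o's formula to $t \mapsto \langle \tilde u^1_t - \tilde u^2_t, \tilde m^1_t - \tilde m^2_t \rangle_{L^2(\T^d)}$ and take expectations. The $d\tilde M_t$ contribution integrates out because $\tilde m^i$ carries no Brownian part in tilde coordinates, so no quadratic covariation survives. What remains is a terminal pairing controlled from below by monotonicity of $G$, a time integral of pairings against $F$ controlled from below by monotonicity of $F$, and a Hamiltonian quadratic term controlled from below by the strict convexity of $H$ in $p$ from \eqref{HypD2H}. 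All three being nonnegative while summing to a nonpositive quantity forces $\tilde m^1 = \tilde m^2$ almost surely; then uniqueness of $(\tilde u,\tilde M)$ as the solution of a linear BSPDE (given $\tilde m$) closes the argument.

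\textbf{Existence via short-time contraction and continuation.} For the backward step in isolation, given an adapted $(\tilde m_t)_t$, existence of $(\tilde u_t,\tilde M_t)$ with the required $\cC^{n+2+\alpha}$ regularity follows by Picard iteration on the Lipschitz Hamiltonian nonlinearity, combined with pathwise parabolic Schauder estimates and the martingale representation theorem for the Brownian filtration. The forward step is standard linear parabolic theory, with ${\bf d}_1$-stability in the Lipschitz norm of $D \tilde u$. For $T - t_0$ small (depending only on the data), Banach contraction of $\Phi$ on the space of $({\mathcal F}_t)$-adapted flows equipped with $\sup_t \E[\dk(\tilde m^1_t,\tilde m^2_t)]$ gives a unique short-time solution. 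To extend to arbitrary $T$, I would re-run the monotonicity computation of the uniqueness step on a difference of candidate solutions at consecutive iterates, which yields $T$-independent a priori bounds; these permit an iteration of the short-time construction on a finite partition of $[t_0,T]$ whose mesh depends only on the data.

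\textbf{Main obstacle.} The most delicate point will be obtaining $\sup_{t} \|\tilde u_t\|_{n+2+\alpha} \in L^\infty(\Omega)$ and the analogous $\cC^{n+\alpha}$ bound on $\tilde M_t$ at the high-regularity scale required by \eqref{eq:se:3:bsde:1INTRO}--\eqref{eq:se:3:bsde:2INTRO}. Classical parabolic Schauder theory demands time-H\"older continuity of the coefficients at the H\"older scale, whereas the shifted coefficient $\tilde H_t(x,p) = H(x+\sqrt{2\beta}W_t,p)$ inherits only H\"older exponent $\tfrac12 - \varepsilon$ in $t$ from the Brownian shift; the apparent roughness in time must be absorbed by the martingale $\tilde M_t$, and controlling it uniformly in $\omega$ at the $\cC^{n+\alpha}$ level requires careful BMO-type estimates on the BSDE representation of each coordinate $\tilde u_t(x)$. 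Once this H\"older regularity is secured, the remaining arguments — the fixed point, the monotonicity uniqueness, and the passage to the master equation in Section~\ref{subse:partie:2:first:order:derivative} — proceed along comparatively standard lines.
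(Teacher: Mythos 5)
Your proposal takes a genuinely different route from the paper. You propose a fixed-point argument $\Phi : \tilde m \mapsto \tilde u \mapsto \tilde m'$ that contracts in short time, followed by a bootstrap over a partition of $[t_0,T]$ whose mesh is controlled by $T$-independent a priori bounds from the Lasry--Lions computation. The paper, in contrast, deliberately avoids this bootstrap: it notes that Schauder fixed-point theory is unavailable (no compactness on the big function space) and that naive short-time restart is obstructed by the forward-backward coupling, so it instead sets up a continuation method in two parameters $(\vartheta,\varpi)$ interpolating between an explicitly solvable affine system and the full MFG system, using the monotonicity at each step to propagate solvability. Both strategies can in principle succeed; the comparison is that your route is the Delarue-type induction for forward-backward systems (control the Lipschitz constant of the decoupling field uniformly, then chain), while the paper's is the Peng--Wu continuation (interpolate in the coefficients, keep the stability estimate at each $\vartheta$). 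The paper's choice sidesteps the chaining subtlety entirely.

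That chaining subtlety is where your sketch has a real gap. When you split $[t_0,T]$ into $[t_0,t_1]$ and $[t_1,T]$, the terminal condition for the backward problem on $[t_0,t_1]$ is $\tilde u_{t_1}(\cdot)$, which is itself the decoupling field of the system on $[t_1,T]$ evaluated at the random measure $\tilde m_{t_1}$. Short-time contraction on $[t_0,t_1]$ is only available if this decoupling field is Lipschitz in the measure argument with a constant that does not degrade as the backward horizon lengthens; otherwise the contraction constant deteriorates and no mesh independent of $T$ exists. You invoke ``$T$-independent a priori bounds'' from the monotonicity computation, but the monotonicity argument as you state it only bounds $\int \int |D\tilde u^1 - D\tilde u^2|^2\,d(\tilde m^1+\tilde m^2)\,dt$; converting this into a Lipschitz bound $\|\tilde u^1_{t_1} - \tilde u^2_{t_1}\|_{n+2+\alpha} \leq C\,\dk(\tilde m^1_{t_1},\tilde m^2_{t_1})$ requires the coupling argument on the forward SDE plus a feedback estimate through the linearized backward equation, precisely the mechanism of Proposition~\ref{prop:Ulip} and its stochastic version Lemma~\ref{lem:partie:2:stability}. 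This is the nontrivial core of the theorem, not a routine consequence of the monotonicity identity; your sketch names the ingredient but elides the step that makes the bootstrap close.

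Your uniqueness argument is sound and matches the paper's stochastic Lasry--Lions argument (the integration-by-parts formula in Lemma~\ref{lem:partie2:ito:special}), and you correctly note that the absence of a Brownian part in the forward equation in tilde coordinates is what makes the quadratic covariation vanish. On the other hand, the ``main obstacle'' you identify concerning time-H\"older regularity of $\tilde H_t$ is a red herring. The paper never asks for parabolic $\cC^{1+\alpha/2,\,2+\alpha}$ estimates: it takes $\tilde u$ in $\cC^0([0,T],\cC^{n+\alpha}(\T^d))$ and obtains spatial regularity purely from the Duhamel representation and heat-kernel estimates (Lemma~\ref{lem:cas:vartheta=0} and Lemma~\ref{lem:super:reg}), so the $1/2$-H\"older-in-time roughness of the shifted coefficients never enters. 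Likewise, $\tilde M_t$ is reconstructed a posteriori by subtracting the absolutely continuous part from $\tilde u_t$, so its spatial regularity follows directly from that of $\tilde u$ with a two-derivative loss; no BMO estimates are needed.
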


We postpone the discussion of the existence of the solution to the true MFG system with common noise \eqref{e.MFGsto} to the next section, where the master equation allows to identify the correction term $(v_t)_{t\in [0,T]}$. 

Theorem \ref{thm:partie:2:existence:uniquenessINTRO} is proved in section \ref{se:common:noise} (see Theorem \ref{thm:partie:2:existence:uniqueness} for more precise estimates). The main difference with the deterministic mean field game system is that 
the solution $(\tilde{u}_{t},\tilde{m}_{t})_{0 \leq t \leq T}$ 
is sought in a much bigger space, namely 
$[
{\mathcal C}^0([0,T],{\mathcal C}^{n}(\T^d))
\times
{\mathcal C}^0([0,T],{\mathcal P}(\T^d))]^{\Omega}$, 
which is not well-suited to the use of 
 compactness 
arguments. Because of that, one can 
can no longer invoke Schauder's Theorem to prove the existence of 
a solution. For this reason, the proof uses instead a continuation method, 
directly inspired
from the literature on 
finite dimensional forward-backward stochastic systems (see \cite{PeWu99}).
Notice also that, due to the presence of the noise $(W_{t})_{t \in [0,T]}$, 
the analysis of the time-regularity of the solution becomes a challenging issue
and that the continuation method permits to bypass this difficulty. 

\subsubsection{Second order master equation}

The second main result of the paper
concerns the analogue of
Theorem \ref{theo:ex}
 when 
the underlying
mean-field game problem incorporates an additive common noise.
Then the master 
equation  
\eqref{MFGf}
then
involves additional terms, including 
second order derivatives in the direction of 
the measure. It has the form (for some fixed level of common noise $\beta>0$):
\begin{equation}
\left\{
\begin{array}{l}
\ds
- \partial_{t} U(t,x,m)
- (1+\beta) \Delta_{x} U(t,x,m) 
+ H\bigl(x,D_{x} U(t,x,m) \bigr)
- F \bigl(x,m \bigr)
\vspace{5pt}
\\
\ds \ -(1+\beta) \int_{\T^d}
\textrm{div}_{y} 
\bigl[
D_{m} U
\bigr]
 \bigl( t,x,m,y
 \bigr) d {m}(y)
 +  \int_{\T^d}
 D_{m} U  \bigl( t,x,m,y \bigr)\cdot  D_{p} H\bigl(y, D_{x} U(t,y,m)  \bigr)  d m(y)
 \vspace{5pt}
 \\
\ds \ - 2\beta 
\int_{\T^d}
\textrm{div}_{x}
\bigl[
D_{m} 
U
\bigr]
\bigl( t, x,
 m,y
\bigr) dm(y)
-\beta 
\int_{\T^d \times \T^d}
{\rm Tr} \Bigl[ D^2_{mm}
U
\bigl( t, x ,
 m,y,y'\bigr) 
\Bigr] d m
(y)
d m
(y')\; = \; 0,
\vspace{5pt}
\\
\hspace{130pt} \textrm{for} \ (t,x,m) \in [0,T] \times \T^d \times \Pk,
\vspace{5pt}
\\
U(T,x,m) = G(x,m), \quad \quad 
\textrm{for} \ (x,m) \in  \T^d \times \Pk.
\end{array}
\right.
\label{eq:master:equation:2nd:order:def}
\end{equation}
Following 
Definition \ref{def:master:eq:1st:order}, 
we let
\begin{Definition} 
\label{def:master:eq:2nd:order}
We say that a map $U:[0,T]\times \T^d\times \Pw\to \R$ is a classical solution to the 
second order master equation 
\eqref{eq:master:equation:2nd:order:def}
if
\begin{itemize}
\item $U$ is continuous in all its arguments (for the $\dk$ distance on $\Pw$), is of class $\cC^2$ in $x$ and $\cC^1$ in time (the derivatives of order one in time and space 
and of order two in space being continuous in all the arguments),  

\item $U$ is of class $\cC^2$ with respect to $m$, 
the first and second order derivatives 
\begin{equation*}
\begin{split}
&[0,T] \times \T^d \times \Pk 
\times \T^d \ni
(t,x,m,y) \mapsto \frac{\delta U}{\delta m}(t,x,m,y),
\\
&[0,T] \times \T^d \times \Pk
\times \T^d \times \T^d 
\ni (t,x,m,y,y') 
\mapsto \frac{\delta^2 U}{\delta m^2}(t,x,m,y),
\end{split}
\end{equation*}
being continuous in all the arguments, the first order derivative
$\delta U/\delta m$ 
being
 twice differentiable in $y$, the derivatives being continuous 
in all the arguments, and the second order derivative 
$\delta^2 U/\delta m^2$ 
being also twice differentiable in the pair $(y,y')$, the derivatives being continuous 
in all the arguments, 
\item the function $D_{y}(\delta U/\delta m)=D_{m} U$ is differentiable in 
$x$, the derivatives being continuous in all the arguments,

\item $U$ satisfies the master equation
\eqref{eq:master:equation:2nd:order:def}. 
\end{itemize}
\end{Definition}

On the model of Theorem 
\ref{theo:ex}, we claim
\begin{Theorem}
\label{theo:2nd-order:master:equation}
Assume that
$F$, $G$ and $H$ satisfy 
\eqref{HypD2H}
and 
\eqref{e.monotoneF}
in Subsection \ref{subsec:hyp}
and 
that
{\bf (HF2(${\boldsymbol n}$+1))}
and 
{\bf (HG2(${\boldsymbol n}$+2))}
hold true for some $n \geq 2$ and for some $\alpha \in (0,1)$.

Then, the second-order master equation 
\eqref{eq:master:equation:2nd:order:def}
has a unique solution $U$.  

The solution $U$ enjoys the following regularity: for any
$\alpha' \in [0,\alpha)$,
 $t \in [0,T]$ 
and $m \in {\mathcal P}(\T^d)$, 
$U(t,\cdot,m)$, 
$[\delta U/\delta m](t,\cdot,m,\cdot)$
and 
$[\delta^2 U/\delta m^2](t,\cdot,m,\cdot,\cdot)$
are
in $\cC^{n+2+\alpha'}$,
$\cC^{n+2+\alpha'} \times \cC^{n+1+\alpha'}$
and $\cC^{n+2+\alpha'}\times \cC^{n+\alpha'} \times \cC^{n+\alpha'}$ respectively, 
independently of $(t,m)$. 
Moreover, 
the mappings 
\begin{equation*}
\begin{split}
&[0,T] \times {\mathcal P}(\T^d) \ni (t,m) \mapsto 
U(t,\cdot,m) \in \cC^{n+2+\alpha'},\\
&[0,T] \times {\mathcal P}(\T^d) \ni (t,m) \mapsto 
[\delta U/\delta m](t,\cdot,m,\cdot) \in 
\cC^{n+2+\alpha'} \times \cC^{n+1+\alpha'},\\
&[0,T] \times {\mathcal P}(\T^d) \ni (t,m) \mapsto 
[\delta^2 U/\delta m^2](t,\cdot,m,\cdot,\cdot) \in 
\cC^{n+2+\alpha'}\times [\cC^{n+\alpha'}]^2
 \end{split}
\end{equation*}
are continuous.
When $\alpha'=0$, these mappings are Lipschitz continuous in 
$m$, uniformly in time. 
\end{Theorem}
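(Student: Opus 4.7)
The plan is to mimic the strategy used for Theorem~\ref{theo:ex} in the first-order case, but replacing the deterministic MFG system~\eqref{MFGSec2} by the stochastic MFG system with common noise, whose well-posedness is granted by Theorem~\ref{thm:partie:2:existence:uniquenessINTRO}. Concretely, for each initial datum $(t_0,m_0) \in [0,T] \times {\mathcal P}(\T^d)$, we let $(\tilde u_t,\tilde m_t,\tilde M_t)_{t \in [t_0,T]}$ denote the unique solution to the tilted system~\eqref{eq:se:3:tilde:HJB:FPINTRO} started from $m_0$ at time $t_0$, undo the change of variables to recover the triple $(u_t,m_t,v_t)_{t\in [t_0,T]}$ solving~\eqref{e.MFGsto}, and \emph{define}
\begin{equation*}
U(t_0,x,m_0) := u_{t_0}(x).
\end{equation*}
The key fact is that $u_{t_0}$ is ${\mathcal F}_{t_0}$-measurable; using the independence of Brownian increments before and after $t_0$ together with the uniqueness statement of Theorem~\ref{thm:partie:2:existence:uniquenessINTRO}, one first checks that $u_{t_0}$ is in fact deterministic, so that $U$ is well-defined as a real-valued function. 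The flow property $u_{t_0+h}(\cdot) = U(t_0+h,\cdot,m_{t_0+h})$ holds almost surely by the same uniqueness argument.

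Next I would establish the regularity of $U$ by a flow method, as in Section~\ref{sec:MasterWithoutCN}. Formally differentiating~\eqref{eq:se:3:tilde:HJB:FPINTRO} once in the direction of a signed measure $\mu_0$ produces a \emph{linearized stochastic forward--backward system} for a pair $(\tilde v_t,\tilde \mu_t)_{t\in [t_0,T]}$, forced by $\frac{\delta F}{\delta m}(\cdot,\tilde m_t)(\tilde \mu_t)$ and a terminal condition $\frac{\delta G}{\delta m}(\cdot,\tilde m_T)(\tilde \mu_T)$, together with a martingale correction. Solvability and stability of this linearized system (in Hölder norms compatible with~\eqref{eq:se:3:bsde:1INTRO}--\eqref{eq:se:3:bsde:2INTRO}) are obtained by the same continuation method used in Section~\ref{se:common:noise}, the monotonicity of $F$ and $G$ providing the needed a priori estimates. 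Identifying $\tilde v_{t_0}(x)$ with the directional derivative gives the representation
\begin{equation*}
\frac{\delta U}{\delta m}(t_0,x,m_0)(\mu_0) = \tilde v_{t_0}(x),
\end{equation*}
from which the asserted regularity and Lipschitz estimate for $\delta U/\delta m$ follow. Differentiating the linearized system a second time in a direction $\mu_0'$, one produces a \emph{second-order} linearized system, whose unique solvability gives the existence and regularity of $\delta^2 U/\delta m^2$ and of $D^2_{mm} U$. Continuity and the joint Lipschitz dependence on $(t,m)$ claimed in the theorem come from Hölder continuity of the flow $m_0 \mapsto (\tilde u,\tilde m,\tilde M)$ in the appropriate norms, again obtained via the stability of the forward--backward system.

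With enough smoothness in hand, I would then verify that $U$ solves the master equation~\eqref{eq:master:equation:2nd:order:def}. The argument is the rigorous counterpart of the formal calculation~\eqref{eq:intro:2ndorder:chainrule:measure}: given a realization $(m_t)_{t_0 \le t \le T}$ of the forward equation, one applies an It\^o-type chain rule along the flow of measures to $U(t,x,m_t)$, using the identities of Lemma~\ref{lem:schwarz} and the relations~\eqref{relationsDeri} to re-express the derivatives as $D_m U$ and $D^2_{mm} U$. The stochastic integral term produced by the chain rule is then identified, via uniqueness of the semimartingale decomposition, with the martingale part $v_t \cdot \sqrt{2\beta} dW_t$ of the backward equation in~\eqref{e.MFGsto}, which pins down $v_t = \int_{\T^d} D_m U(t,x,m_t,y)\, dm_t(y)$ and forces the drift to match the master equation at $(t,x,m_t)$. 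Since $(t_0,m_0)$ is arbitrary and the set of $m_t$ one can reach at time $t$ from varying $(t_0,m_0)$ is dense in ${\mathcal P}(\T^d)$, continuity of all the derivatives implies that~\eqref{eq:master:equation:2nd:order:def} holds pointwise. Uniqueness of classical solutions follows by running this chain rule in reverse: any classical solution $\tilde U$ must coincide on the characteristics $(m_t)$ with the solution of the backward component of~\eqref{e.MFGsto}, so $\tilde U(t_0,\cdot,m_0) = u_{t_0}(\cdot) = U(t_0,\cdot,m_0)$ by the uniqueness in Theorem~\ref{thm:partie:2:existence:uniquenessINTRO}.

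The main obstacle I expect is the second item: deriving quantitative Hölder estimates on $\delta^2 U/\delta m^2$. The second-order linearized system is itself a stochastic forward--backward system with a common noise, and the inhomogeneous forcing involves quadratic expressions in the first-order linearized solution (products of $\tilde v$ and $D\tilde v$ against derivatives of $H$); keeping enough spatial regularity through these products while preserving the $L^\infty(\Omega)$-bounds on Hölder norms of the type~\eqref{eq:se:3:bsde:1INTRO}--\eqref{eq:se:3:bsde:2INTRO} requires one extra degree of smoothness of the data, which is exactly why the hypotheses are strengthened to {\bf (HF2(${\boldsymbol n}$+1))} and {\bf (HG2(${\boldsymbol n}$+2))}, and why the continuation method of Section~\ref{se:common:noise} must be pushed one order higher. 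The monotonicity assumption~\eqref{e.monotoneF} is essential throughout, since it ensures that the forward--backward systems remain uniquely solvable on the full interval $[0,T]$ regardless of its length.
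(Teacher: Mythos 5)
Your plan follows the paper's proof essentially step for step: define $U(t_0,x,m_0) := \tilde u_{t_0}(x)$ from the unique solution of the tilted system (Theorem~\ref{thm:partie:2:existence:uniquenessINTRO}), establish the flow identity of Lemma~\ref{lem:partie2:master:equation}, obtain $\delta U/\delta m$ and $\delta^2 U/\delta m^2$ from first- and second-order linearized stochastic forward--backward systems solved by the continuation method of Section~\ref{se:common:noise}, and verify the PDE via a tailor-made It\^o chain rule along the flow; uniqueness follows by reversing the chain rule. You also correctly identify the genuine technical bottleneck (Hölder estimates on $\delta^2 U/\delta m^2$ with quadratic forcing terms) and the role of the strengthened hypotheses.

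Two small remarks on points you glossed over. First, the flow identity $u_{t_0+h}(\cdot)=U(t_0+h,\cdot,m_{t_0+h})$ is not a direct consequence of uniqueness, because $m_{t_0+h}$ is \emph{random} while $U$ is defined on deterministic measures; the paper handles this by covering ${\mathcal P}(\T^d)$ with finitely many small balls, solving the system from a deterministic representative of each cell, and passing to the limit using the Lipschitz-in-$m$ stability estimate. You should be aware this is a nontrivial lemma, not a one-line application of uniqueness. Second, the density-of-reachable-measures argument you invoke to pass from characteristics to the master equation is unnecessary: since the flow may be started from any $(t_0,m_0)$ and the forward component equals $m_0$ exactly at $t=t_0$, one simply takes the right derivative at $h=0$ at an arbitrary point; the real subtlety is justifying the It\^o-type expansion (the paper's Lemma~\ref{lem:ito:local}) and controlling the error terms uniformly, which is where the continuity of $\delta^2 U/\delta m^2$ is used.
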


Section \ref{subse:partie:2:first:order:derivative} is devoted to the proof of Theorem \ref{theo:2nd-order:master:equation}. As for the first order master equation, the starting point consists in letting, given $(t_0,m_0)\in [0,T]\times \Pk$, 
\begin{equation*}
U(t_{0},x,m_{0}) = \tilde{u}_{t_{0}}(x), \quad x \in \T^d, 
\end{equation*}
where $(\tilde{u}_{t},\tilde{m}_{t},\tilde{M}_{t})_{t \in [0,T]}$ is the solution to the mean field game system with common noise \eqref{eq:se:3:tilde:HJB:FPINTRO}, when 
$(W_{t})_{t \in [0,T]}$
in the definition of the 
coefficients $\tilde{F}$, $\tilde{G}$ and $\tilde{H}$ is replaced
by 
$(W_{t}-W_{t_{0}})_{t \in [t_{0},T]}$. The key remark (see Lemma \ref{lem:partie2:master:equation}), is that, if we let ${m}_{t_{0},t} = [id + 
\sqrt{2}(W_{t}-W_{t_{0}})] \sharp 
\tilde{m}_{t}$, then, for any $h \in [0,T-t_{0}]$,
$\P$ almost surely,
\begin{equation*}
\tilde{u}_{t_{0}+h}(x) =
U\bigl(t_{0}+h,x + \sqrt{2}(W_{t_{0}+h}-W_{t_{0}}),m_{t_{0},t_{0}+h}\bigr), \quad x \in \T^d. 
\end{equation*}
Taking the derivative with respect to $h$ at $h=0$ on both sides of the equality shows that the map $U$ thus defined satisfies the master equation
(up to a tailor-made It\^{o}'s formula, see section \ref{subsub:Tailor-madeItoFormula}). Of course, the main issue is to prove that $U$ is sufficiently smooth to perform the above computation: for this we need to show that $U$ has a first and second order derivative with respect to the measure. As for the deterministic case, this is obtained by linearizing the mean field game system (with common noise). This linearization procedure is complicated by the fact that the triplet $(\tilde{u}_{t},\tilde{m}_{t},\tilde{M}_{t})_{t \in [0,T]}$ solves an equation in which the coefficients have little time regularity. 

As a byproduct of the construction of the master equation, we can come back to the MFG system with common noise. Let $U$ be the solution of the master equation \eqref{eq:master:equation:2nd:order:def}. 
\begin{Corollary}\label{c.sec5.MFGstoch}
Given $t_{0} \in [0,T]$, 
we call a solution to 
\eqref{e.MFGsto} 
a triplet 
$({u}_{t},{m}_{t},v_{t})_{t \in [t_{0},T]}$ 
of $({\mathcal F}_{t})_{t \in [t_{0},T]}$-adapted processes with paths in the space ${\mathcal C}^0([t_{0},T],{\mathcal C}^{2}({\mathbb T}^d)\times {\mathcal P}({\mathbb T}^d)
\times {\mathcal C}^1(\T^d))$
such that 
$\sup_{t \in [t_{0},T]}
( \| u_{t} \|_{2} + \|v_{t}\|_{1})
\in L^{\infty}(\Omega,{\mathcal A},\P)$
and 
\eqref{e.MFGsto} 
holds true with probability $1$.
Under the assumptions of Theorem \ref{theo:2nd-order:master:equation}, for any initial data $(t_0,m_0)\in [0,T]\times \Pk$, the stochastic mean field game system \eqref{e.MFGsto} 
has a unique solution $(u_t,m_t,v_t)_{t\in [0,T]}$, where $({u}_{t},{m}_{t})_{t \in [0,T]}$ is an $({\mathcal F}_{t})_{t \in [0,T]}$-adapted processes with paths in the spaces ${\mathcal C}^0([0,T],{\mathcal C}^{n}({\mathbb T}^d)\times {\mathcal P}({\mathbb T}^d))$ and where the vector field $(v_t)_{t \in [0,T]}$ is given by 
$$
v_t(x) =  \inte D_mU (t,x,m_t,y)dm_t(y).
$$
\end{Corollary}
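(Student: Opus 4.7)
My plan is to establish existence by constructing the solution explicitly from the master equation, and then deduce uniqueness via the change of variables that leads to the tilded system of Theorem~\ref{thm:partie:2:existence:uniquenessINTRO}.

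For existence, I would start by solving the forward stochastic Kolmogorov equation
\begin{equation*}
d_t m_t = \bigl[ (1+\beta) \Delta m_t + \mathrm{div}\bigl( m_t D_p H(x, D_x U(t,x,m_t)) \bigr) \bigr] dt - \sqrt{2\beta}\, \mathrm{div}(m_t\, dW_t), \qquad m_{t_0}=m_0,
\end{equation*}
which is well-posed because Theorem~\ref{theo:2nd-order:master:equation} guarantees that $(t,x,m)\mapsto D_x U(t,x,m)$ is Lipschitz in $m$ (for the $\dk$ distance) and smooth in $x$ with derivatives controlled uniformly. I would then \emph{define} $u_t(x):=U(t,x,m_t)$ and $v_t(x):=\int_{\T^d} D_m U(t,x,m_t,y)\,dm_t(y)$. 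The crux is to apply the tailor-made It\^o formula developed in Section~\ref{subsub:Tailor-madeItoFormula} to the functional $U(t,x,m_t)$: the regularity of $U$, $\delta U/\delta m$ and $\delta^2 U/\delta m^2$ provided by Theorem~\ref{theo:2nd-order:master:equation} is precisely what is required for that formula. The martingale part will be $(\int_{\T^d} D_m U(t,x,m_t,y)\,dm_t(y))\cdot \sqrt{2\beta}\,dW_t=v_t\cdot\sqrt{2\beta}\,dW_t$, coming from the $\mathrm{div}(m_t\,dW_t)$ term in $d_t m_t$ after an integration by parts against $\delta U/\delta m$. The drift, after collecting the terms produced by $\partial_t U$, $\Delta_x U$, the first-order measure derivative (against both the $\Delta m_t$ part and the transport part), and the second-order measure derivative (against the quadratic variation $\beta\,dm_t\otimes dm_t$), coincides---thanks to the master equation \eqref{eq:master:equation:2nd:order:def}---with $\{-(1+\beta)\Delta u_t + H(x,Du_t) - F(x,m_t) - 2\beta\,\mathrm{div}(v_t)\}\,dt$, where the identity $\mathrm{div}_x\int D_m U\,dm_t=\int \mathrm{div}_x[D_m U]\,dm_t$ recognizes the $v_t$ divergence term. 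The terminal condition $u_T = G(\cdot,m_T)$ follows from $U(T,\cdot,\cdot)=G$.

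For uniqueness, let $(u_t,m_t,v_t)_{t\in[t_0,T]}$ be any solution satisfying the stated regularity. I would apply the It\^o--Wentzell transformation of Section~\ref{se:common:noise}, setting
\begin{equation*}
\tilde u_t(x) := u_t\bigl(x+\sqrt{2\beta}(W_t-W_{t_0})\bigr), \qquad \tilde m_t := [\mathrm{id}+\sqrt{2\beta}(W_t-W_{t_0})]^{-1}\sharp m_t,
\end{equation*}
and taking $\tilde M_t$ to be the resulting martingale component. A direct computation shows that $(\tilde u_t,\tilde m_t,\tilde M_t)$ solves \eqref{eq:se:3:tilde:HJB:FPINTRO} in the class of processes satisfying \eqref{eq:se:3:bsde:1INTRO}--\eqref{eq:se:3:bsde:2INTRO}: the regularity of $u_t$ and $v_t$ assumed in the statement transfers to $\tilde u_t$ and $\tilde M_t$ via this shift, and the hypotheses {\bf (HF2($n$+1))}, {\bf (HG2($n$+2))} of Theorem~\ref{theo:2nd-order:master:equation} imply those of Theorem~\ref{thm:partie:2:existence:uniquenessINTRO}. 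The latter then forces $(\tilde u_t,\tilde m_t)$ to be unique, hence so is $(u_t,m_t)$. Finally, undoing the change of variables in the backward equation and comparing the martingale part to the expression obtained in the existence step by It\^o's formula applied to $U(t,x,m_t)$, Brownian martingale representation pins down $v_t(x)=\int_{\T^d} D_m U(t,x,m_t,y)\,dm_t(y)$.

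The step I expect to be the main obstacle is the rigorous application of the tailor-made It\^o formula to $U(t,x,m_t)$ at each fixed $x$: the process $(m_t)$ is an $({\mathcal F}_t)$-adapted measure-valued process with only H\"older time regularity, the second-order derivative $\delta^2 U/\delta m^2$ enters through the common-noise quadratic variation, and one has to handle the interplay between the spatial derivatives in $x$ and the measure derivatives (in particular the mixed term $\mathrm{div}_x[D_m U]$ producing the $-2\beta\,\mathrm{div}(v_t)$ contribution). Once Section~\ref{subsub:Tailor-madeItoFormula} is in place, however, the calculation is essentially a bookkeeping of the terms that the master equation is designed to cancel; everything else reduces to standard SPDE well-posedness for the McKean--Vlasov Kolmogorov equation with a Lipschitz--in--$m$ coefficient and to the uniqueness statement of Theorem~\ref{thm:partie:2:existence:uniquenessINTRO}.
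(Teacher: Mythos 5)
Your overall strategy matches the paper's: existence via the tailor-made It\^o formula applied to $U(t,x,m_t)$, uniqueness via the shift $\tilde u_t(x)=u_t(x+\sqrt{2\beta}(W_t-W_{t_0}))$, $\tilde m_t=(\mathrm{id}-\sqrt{2\beta}(W_t-W_{t_0}))\sharp m_t$ followed by an appeal to Theorem~\ref{thm:partie:2:existence:uniquenessINTRO}. Two points, however, are not settled by the sketch and are precisely where the paper spends its effort.

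First, for existence you propose to solve the stochastic Fokker--Planck equation for $(m_t)$ directly, invoking the Lipschitz-in-$m$ property of $D_xU$. That Lipschitz property alone does not give well-posedness, because the equation contains the transport noise $-\sqrt{2\beta}\,\mathrm{div}(m_t\,dW_t)$ and is a genuine SPDE on $\mathcal P(\T^d)$, not a McKean--Vlasov ODE. The paper does not solve this SPDE head on: it starts from the solution $(\tilde u_t,\tilde m_t,\tilde M_t)$ of the \emph{tilded} system given by Theorem~\ref{thm:partie:2:existence:uniqueness}, sets $m_t=(\mathrm{id}+\sqrt{2\beta}W_t)\sharp\tilde m_t$, and then \emph{verifies} a posteriori, via the It\^o--Wentzell formula for distribution-valued processes (Krylov \cite{Kry11}), that this $m_t$ satisfies \eqref{e.MFGsto}. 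Your route is salvageable --- one can solve the tilded deterministic Fokker--Planck with random coefficients (as in the uniqueness argument of Subsection~\ref{subsub:uniqueness.sec4}) and push forward --- but then you are essentially reconstructing the paper's argument, so the claim that the forward SPDE is ``well-posed'' off the shelf is a real gap as written.

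Second, you correctly flag the tailor-made It\^o computation as ``the main obstacle,'' but it is worth being precise about \emph{why}: Lemma~\ref{lem:ito:local} only produces the conditional first-order expansion, i.e.\ the absolutely continuous part of $d_tU(t,x,m_t)$. Extracting the martingale part $v_t\cdot\sqrt{2\beta}\,dW_t$ is a separate step: the paper revisits the decomposition of \eqref{eq:partie2:proof:master:equation:3}, isolates the $\sqrt{2}\,\bigl(\int_{\T^d}D_m U(t,x,m_t,y)\,dm_t(y)\bigr)\cdot(W_{t+h}-W_t)$ contribution with an $o(\sqrt h)$-in-$L^2$ residue (see \eqref{eq:eta:t,t+h}), and passes to the limit along refining partitions to identify the stochastic integral. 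Your ``integration by parts against $\delta U/\delta m$'' description is the right heuristic, but without this $L^2$-residue/partition argument the semimartingale decomposition --- and hence the formula $v_t=\int_{\T^d}D_mU(t,\cdot,m_t,y)\,dm_t(y)$ --- is not established. Once this is done, your proposal to pin down uniqueness of $v_t$ by Brownian martingale representation after transferring uniqueness of $(\tilde u,\tilde m)$ back to $(u,m)$ is sound.
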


\subsubsection{The convergence of the Nash system for $N$ players}

We finally study the convergence of Nash equilibria of differential games with $N$ players to the limit system given by the master equation.

We consider the solution $(v^{N,i})$ of the Nash system:  
\be\label{NashvResults}
\left\{ \begin{array}{l}
\ds - \partial_t v^{N,i} -  \sum_{j} \Delta_{x_j}v^{N,i} - \beta  \sum_{j,k} {\rm Tr} D^2_{x_j,x_k} v^{N,i} + H(x_i,  D_{x_i}v^{N,i}) \\
\ds \qquad \qquad + \sum_{j\neq i}  D_pH (x_j, D_{x_j}v^{N,j})\cdot D_{x_j}v^{N,i} =
F(x_i, m^{N,i}_\bx)\qquad {\rm in }\; [0,T]\times \T^{Nd}\\
\ds v^{N,i}(T,\bx)= G(x_i,m^{N,i}_\bx)\qquad {\rm in }\;  \T^{Nd}
\end{array}\right.
\ee
where we have set, for $\ds \bx=(x_1, \dots, x_N)\in (\T^{d})^N$, $\ds m^{N,i}_\bx=\frac{1}{N-1}\sum_{j\neq i} \delta_{x_j}$.

Let us recall that, under 
the 
same
assumptions 
on $H$, $F$ and $G$
as in the statement 
of 
Theorem
\ref{theo:2nd-order:master:equation}, the above system has a unique solution (see for instance \cite{LSU}).

Our main result says that the $v^{N,i}$ ``converges" to the solution of the master equation as $N\to+\infty$. This result, conjectured in Lasry-Lions \cite{LL07mf},  is somewhat subtle because in the Nash system players observe each other (closed loop form) while in the limit system the players just need to observe the theoretical distribution of the population, and not the specific behavior of each player. We first study  the convergence of the functions $v^{N,i}$ and then the convergence of the optimal trajectories. 


\bigskip

We have two different ways to express the convergence of the $v^{N,i}$, described in the following result: 

\begin{Theorem}
\label{thm:mainCV} 
Let the assumption of Theorem 
\ref{theo:2nd-order:master:equation}
be in force for some $n \geq 2$
and let $(v^{N,i})$ be the solution to \eqref{NashvResults} and $U$ be the classical solution to the second order master equation.  Fix $N\geq 1$ and $(t_0,m_0)\in [0,T]\times \Pk$.
\begin{itemize}
\item[(i)] For any $\bx\in (\T^d)^N$, let $m^N_\bx:= \frac{1}{N} \sum_{i=1}^N \delta_{x_i}$. Then 
$$
\frac{1}{N}\sum_{i=1}^N \left|v^{N,i}(t_0,\bx)- U(t_0, x_i, m^{N}_\bx)\right|\leq CN^{-1}.
$$

\item[(ii)]  For any $i\in\{1,\dots, N\}$ and $x\in \T^d$, let us set 
$$
w^{N,i}(t_0,x,m_0) := \inte\dots \inte v^{N,i}(t_0, \bx) \prod_{j\neq i}m_0(dx_j) \qquad {\rm where }\; \bx=(x_1,\dots, x_N).
$$ 
Then 
$$
\left\| w^{N,i}(t_0,\cdot, m_0)-U(t_0,\cdot, m_0)\right\|_{L^1(m_0)} \leq \left\{\begin{array}{ll}
C N^{-1/d} & {\rm if}\; d\geq 3\\
C N^{-1/2}\log(N) & {\rm if}\; d=2\end{array}\right. 
$$
\end{itemize} 
In (i) and (ii), the constant $C$ does not depend on $i$, $t_0$, $m_0$, $i$ nor $N$. 
\end{Theorem}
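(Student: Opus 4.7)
Following the informal derivation of Subsection \ref{subsec.informal}, the strategy is to use the classical solution $U$ to the master equation \eqref{eq:master:equation:2nd:order:def} supplied by Theorem \ref{theo:2nd-order:master:equation} to construct the explicit proxies
\begin{equation*}
u^{N,i}(t,\bx) := U\bigl(t,x_i,m^{N,i}_{\bx}\bigr),\qquad i=1,\dots,N,\ (t,\bx)\in [0,T]\times (\T^d)^N,
\end{equation*}
for the solutions $(v^{N,i})_{i=1}^N$ of the Nash system \eqref{NashvResults}. The proof of (i) has two components: first, show that the family $(u^{N,i})_{i}$ solves \eqref{NashvResults} up to a source term $r^{N,i}$ whose sup-norm is $O(1/N)$; second, use an It\^o-based energy argument to translate this residual into the averaged bound on $v^{N,i}-u^{N,i}$. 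Statement (ii) will then be deduced from (i) by a coupling with i.i.d.\ initial data and a Glivenko--Cantelli-type estimate.

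\textbf{Residual estimate.} Thanks to the regularity of $U$ granted by Theorem \ref{theo:2nd-order:master:equation}, the chain rules \eqref{e.derivUN1Intro}--\eqref{e.derivUN3Intro}, adapted to the empirical measure on the $N-1$ points $(x_j)_{j\neq i}$, yield explicit expressions for $\partial_t u^{N,i}$, $D_{x_j}u^{N,i}$ and $D^2_{x_j,x_k}u^{N,i}$ in terms of $U,\,D_mU$ and $D^2_{mm}U$. In particular, for $j\neq i$, $D_{x_j}u^{N,i}=\frac{1}{N-1}D_mU(t,x_i,m^{N,i}_\bx,x_j)=O(1/N)$. Plugging these into the $i$-th equation of \eqref{NashvResults} and subtracting the master equation \eqref{eq:master:equation:2nd:order:def} evaluated at $(t,x_i,m^{N,i}_\bx)$, all leading-order terms cancel, and the remainder $r^{N,i}(t,\bx)$ collects (a) the second-order pieces of size $O(1/N)$ appearing in $\Delta_{x_j}u^{N,i}$ and $D^2_{x_j,x_k}u^{N,i}$, and (b) the error introduced when the measure argument $m^{N,j}_\bx$ (which arises inside $D_xU(t,x_j,m^{N,j}_\bx)$ when reading $D_{x_j}v^{N,j}$-type terms) is replaced by $m^{N,i}_\bx$ in the sum over $j\neq i$. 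The Lipschitz dependence of $D_xU$ and $D_mU$ on $m$ together with $\dk(m^{N,i}_\bx,m^{N,j}_\bx)\leq 2/(N-1)$ renders (b) of order $1/N$ as well, so $\|r^{N,i}\|_\infty\leq C/N$ uniformly in $i$, $t$, $\bx$.

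\textbf{Comparison --- the main obstacle.} Fix $(t_0,\bx_0)$ and consider the coupled SDE
\begin{equation*}
dX_{i,t}=-D_pH\bigl(X_{i,t},D_{x_i}v^{N,i}(t,\bX_t)\bigr)\,dt+\sqrt{2}\,dB^i_t+\sqrt{2\beta}\,dW_t,\qquad \bX_{t_0}=\bx_0,
\end{equation*}
driven by the Nash-optimal feedback. Applying It\^o to $(v^{N,i}-u^{N,i})(t,\bX_t)$ and using the Nash PDE for $v^{N,i}$ (whose ``mean-field'' sum $\sum_{j\neq i}D_pH(\cdot,Dv^{N,j})\cdot D_{x_j}v^{N,i}$ collapses against the drift of $\bX$) together with the approximate PDE for $u^{N,i}$ (where no such collapse happens), one obtains
\begin{equation*}
d\bigl(v^{N,i}-u^{N,i}\bigr)(t,\bX_t) = \bigl[A^i_t+B^i_t + r^{N,i}(t,\bX_t)\bigr]\,dt + dM^i_t,
\end{equation*}
with $M^i$ a martingale, $A^i_t=H(X_{i,t},Dv^{N,i})-H(X_{i,t},Du^{N,i})-D_pH(X_{i,t},Dv^{N,i})\cdot(Dv^{N,i}-Du^{N,i})\leq 0$ by the convexity of $H$, and $B^i_t=\sum_{j\neq i}[D_pH(X_{j,t},Dv^{N,j})-D_pH(X_{j,t},Du^{N,j})]\cdot D_{x_j}u^{N,i}$. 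Running the analogous computation along the proxy-feedback trajectory (drift $-D_pH(Y_{i,t},D_{x_i}u^{N,i}(t,\bY_t))$) produces the opposite inequality with $D_{x_j}u^{N,i}$ replaced by $D_{x_j}v^{N,i}$. The heart of the proof is then closing an energy estimate: the quantitative convexity $D^2_{pp}H\geq C^{-1}(1+|p|)^{-1}I_d$ gives $-A^i\geq c_M|D_{x_i}(v^{N,i}-u^{N,i})|^2$, which controls the quadratic piece; the cross-terms $B^i_t$ are linear in $D_{x_j}(v^{N,j}-u^{N,j})$ with coefficient of size $O(1/N)$ --- the smallness being direct from the chain rule on the $u^{N,i}$ side and, on the $v^{N,i}$ side, a consequence of the permutation symmetry of \eqref{NashvResults} in $(x_k)_{k\neq i}$ together with a coupling argument for the Nash-optimal trajectories. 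A Cauchy--Schwarz / Young absorption then yields $\sum_i|v^{N,i}(t_0,\bx_0)-u^{N,i}(t_0,\bx_0)|\leq C$, and the final substitution $U(t_0,x_i,m^N_\bx)=U(t_0,x_i,m^{N,i}_\bx)+O(1/N)$ (from Lipschitz continuity of $U$ in $m$ and $\dk(m^N_\bx,m^{N,i}_\bx)\leq 2/N$) yields (i). The sharpness of the $1/N$ rate depends on the interplay between the strict convexity of $H$ and the $1/N$-smallness of the mixed gradients; I expect this energy step to be the most delicate.

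\textbf{Part (ii).} By the permutation symmetry of \eqref{NashvResults}, the marginalized function $w^{N,i}(t_0,\cdot,m_0)$ is independent of $i$; write $w^N=w^{N,1}$. For $\bx=(x,x_2,\dots,x_N)$,
\begin{multline*}
w^N(t_0,x,m_0)-U(t_0,x,m_0) = \int\bigl[v^{N,1}(t_0,\bx)-U(t_0,x,m^{N,1}_\bx)\bigr]\prod_{j\neq 1}m_0(dx_j) \\ + \int\bigl[U(t_0,x,m^{N,1}_\bx)-U(t_0,x,m_0)\bigr]\prod_{j\neq 1}m_0(dx_j).
\end{multline*}
Integrating against $m_0(dx)$, the first integral is $O(1/N)$ by (i) and the exchangeability of $(x_j)_{j\neq 1}$, and the second is bounded by $\mathrm{Lip}_m(U)\cdot \E[\dk(m^{N,1}_\bx,m_0)]$; the Fournier--Guillin estimate then gives the announced rates $N^{-1/d}$ for $d\geq 3$ and $N^{-1/2}\log N$ for $d=2$.
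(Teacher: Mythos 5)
Your overall architecture --- construct the proxies $u^{N,i}(t,\bx)=U(t,x_i,m^{N,i}_{\bx})$, show they solve the Nash system up to a residual of size $O(1/N)$, then compare $u^{N,i}$ with $v^{N,i}$ along a trajectory driven by the Nash feedback --- matches the paper's strategy, and your residual analysis coincides with Propositions \ref{uNC2} and \ref{Prop:equNi}. The ``collapse'' of the mean-field sum against the drift is the correct algebraic observation and is what produces \eqref{rep:vNi}--\eqref{rep:uNi}. Your treatment of part (ii) is also essentially the paper's.

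The gap is in the comparison step. You work with the \emph{first} power of $v^{N,i}-u^{N,i}$ along the trajectories and rely on the convexity inequality $A^i_t\leq 0$, reinforced by the quantitative convexity $D^2_{pp}H\geq C^{-1}(1+|p|)^{-1}I_d$. Two things go wrong. First, the Young absorption does not close at the $O(1/N)$ level: denoting $\delta^j=D_{x_j}(v^{N,j}-u^{N,j})$ along the trajectory, the cross-term
\[
\sum_{i}\E\int |B^i_t|\,dt \;\lesssim\; \frac{C(N-1)}{N}\sum_{j}\E\int |\delta^j|\,dt
\;\lesssim\; \frac{NT}{\eta}+\eta\sum_j\E\int|\delta^j|^2\,dt
\]
for any $\eta$, and the $NT/\eta$ leftover is $O(N)$, which gives only $|\sum_i(v^{N,i}-u^{N,i})(t_0,\bx_0)|\lesssim N$, i.e.\ $\frac1N\sum_i|v^{N,i}-u^{N,i}|=O(1)$ rather than $O(1/N)$. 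Second, your ``opposite inequality'' along the $u^{N,i}$-feedback trajectory requires a bound $|D_{x_j}v^{N,i}|=O(1/N)$ for $j\neq i$, which you attribute to symmetry plus a coupling argument. But such a bound on the cross-gradients of the $v^{N,i}$, uniform in $N$, is precisely what is \emph{not} available here --- the paper explicitly observes the lack of $N$-uniform estimates on the Nash system as the central difficulty, and its whole argument is designed to avoid needing them.

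The paper instead squares: it applies It\^o to $(v^{N,i}-u^{N,i})^2(t,\bY_t)$ along the single trajectory $\bY$ driven by the Nash feedback. The quadratic-variation term then produces, on the left side, the positive quantity $\sum_{j}\E\int|DU^{N,i,j}_s-DV^{N,i,j}_s|^2\,ds$ (see \eqref{eq:quelles:H:utiles:2}), which absorbs all the gradient differences --- including the troublesome $|DU^{N,j,j}-DV^{N,j,j}|^2$ terms arising from $B^i$ --- after Young, Gronwall, and summation over $i$ with exchangeability. Because the residual $r^{N,i}$ enters quadratically against $v^{N,i}-u^{N,i}$, the resulting bound is $C/N^2$, hence the $O(1/N)$ rate. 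Crucially, this argument uses only the global Lipschitz continuity of $H$ and $D_pH$ and the $O(1/N)$ bound on $D_{x_j}u^{N,i}$ for $j\neq i$ coming from the solution of the master equation; it does not use convexity of $H$ (see Remark \ref{rem:quelles:H:utiles:2}) and it requires no a priori information about $D_{x_j}v^{N,i}$. Your proposal, by contrast, invokes convexity and an unavailable estimate, and in the end does not deliver the claimed rate.
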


Theorem \ref{thm:mainCV} says, in two different ways, that ``in average", the $(v^{N,i})$ are close to $U$. The first statement explains that, for a fixed $\bx\in (\T^d)^N$, the quantity  $|v^{N,i}(t_0,\bx)- U(t_0, x_i, m^{N,i}_\bx)|$ is, in average over $i$, of order $N^{-1}$. In  the second statement, 
 one fixes a measure $m_0$ and an index $i$, and one averages in space $v^{N,i}(t_0,\cdot)$ over $m_0$ for all variables but the $i-$th one. The resulting map $w^{N,i}$ is at a distance of order $N^{-1/d}$ of $U(t_0,\cdot, m_0)$.

Because of the lack of estimates for the $v^{N,i}$ uniform with respect to $N$, we do not know if it is possible to avoid the two averaging procedures in the above results.  However, if one knows that the solution of the Nash system has a (locally uniform) limit, then this limit is necessarily $U$: 

\begin{Corollary}\label{cor.NashLim} 
Under the assumption of
Theorem  
\ref{thm:mainCV},
let $(t,x_1,m)\in [0,T]\times \T^d\times \Pk$ be fixed and assume that there exists $v\in \R$ such that 
$$
\limsup_{N\to+\infty, \ x_1'\to x_1, \ m^{N,1}_{\bx'}\to m} \left| v^{N,1}(t,\bx')-v\right| =0.
$$
Then, if $x_1$ belongs to the support of $m$, we have  $v=U(t,x_1,m)$. 
\end{Corollary}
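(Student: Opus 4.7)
The idea is to combine the quantitative estimate from Theorem \ref{thm:mainCV}(i) with the permutation symmetry of the Nash system \eqref{NashvResults}, exploiting the assumption $x_1 \in \mathrm{supp}(m)$ to produce many indices $i$ whose coordinate $x_i^N$ sits close to $x_1$.

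First, by uniqueness of the solution to \eqref{NashvResults} and the symmetric structure of the coefficients, for every transposition $\tau_{i}$ swapping coordinates $1$ and $i$ one has $v^{N,i}(t,\bx)=v^{N,1}(t,\tau_i \bx)$. Fix $\eta>0$; by the assumption, there exist $\delta=\delta(\eta)>0$ and $N_{0}=N_{0}(\eta)$ such that, whenever $N\ge N_{0}$, $|x_{1}'-x_{1}|<\delta$ and $\dk(m^{N,1}_{\bx'},m)<\delta$, one has $|v^{N,1}(t,\bx')-v|<\eta$. Since $x_{1}\in\mathrm{supp}(m)$, the ball $B(x_{1},\delta/2)$ has positive $m$-mass. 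Using either a deterministic quantization or iid sampling from $m$ followed by extraction, choose a sequence $(\bx^{N})_{N}$ with $x_{1}^{N}=x_{1}$, $\dk(m^{N,1}_{\bx^{N}},m)\to 0$, and $|I_{N}|\ge c N$ for $N$ large, where $I_{N}:=\{i\le N: |x_{i}^{N}-x_{1}|<\delta/2\}$ and $c:=\frac{1}{2}m(B(x_{1},\delta/2))>0$.

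Applying Theorem \ref{thm:mainCV}(i) at $\bx^{N}$ gives $\sum_{i=1}^{N}|v^{N,i}(t,\bx^{N})-U(t,x_{i}^{N},m^{N}_{\bx^{N}})|\le C$. Restricting the sum to $I_{N}$, a pigeonhole argument yields an index $i_{N}^{*}\in I_{N}$ such that
\begin{equation*}
\bigl|v^{N,i_{N}^{*}}(t,\bx^{N})-U(t,x_{i_{N}^{*}}^{N},m^{N}_{\bx^{N}})\bigr|\le \frac{C}{|I_{N}|}\le \frac{C}{cN}.
\end{equation*}
Using the symmetry above, $v^{N,i_{N}^{*}}(t,\bx^{N})=v^{N,1}(t,\tau_{i_{N}^{*}}\bx^{N})$. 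The first coordinate of $\tau_{i_{N}^{*}}\bx^{N}$ is $x_{i_{N}^{*}}^{N}\in B(x_{1},\delta/2)$, and a direct computation shows
\begin{equation*}
m^{N,1}_{\tau_{i_{N}^{*}}\bx^{N}} = m^{N,1}_{\bx^{N}}+\frac{1}{N-1}\bigl(\delta_{x_{1}}-\delta_{x_{i_{N}^{*}}^{N}}\bigr),
\end{equation*}
so $\dk(m^{N,1}_{\tau_{i_{N}^{*}}\bx^{N}},m)\to 0$ as well. For $N$ large enough, both quantities are $<\delta$, and the $\limsup$ hypothesis yields $|v^{N,1}(t,\tau_{i_{N}^{*}}\bx^{N})-v|<\eta$.

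To conclude, we use the joint continuity of $U$ provided by Theorem \ref{theo:2nd-order:master:equation}: denoting by $\omega$ a modulus of continuity of $U(t,\cdot,\cdot)$ on a compact neighborhood of $(x_{1},m)$ (for the product of the Euclidean and $\dk$ topologies), and noting that $\dk(m^{N}_{\bx^{N}},m)\to 0$, we have $|U(t,x_{i_{N}^{*}}^{N},m^{N}_{\bx^{N}})-U(t,x_{1},m)|\le \omega(\delta/2+o(1))$. Chaining the three estimates through the triangle inequality gives
\begin{equation*}
|v-U(t,x_{1},m)|\le \eta+\frac{C}{cN}+\omega(\delta/2+o(1)),
\end{equation*}
and letting first $N\to\infty$ and then $\eta\to 0$ (so $\delta(\eta)\to 0$) yields $v=U(t,x_{1},m)$. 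The main obstacle is the symmetric averaging built into Theorem \ref{thm:mainCV}(i), which does not pin down a single $v^{N,1}$; the key idea is precisely to harvest the positive-density assumption $x_{1}\in\mathrm{supp}(m)$ so that the pigeonhole step selects indices $i_{N}^{*}$ for which the symmetry $v^{N,i_{N}^{*}}=v^{N,1}\circ\tau_{i_{N}^{*}}$ is compatible with the $\limsup$ hypothesis on $v^{N,1}$.
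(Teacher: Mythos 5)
Your proof is correct, but it takes a genuinely different route from the paper's. The paper integrates the $\limsup$ hypothesis over iid coordinates sampled from $m$ (splitting on the event $\{\dk(m^{N,1}_{\bx'},m)\geq \delta\}$ and using the uniform bound $M$ on $v^{N,1}$), obtaining $|w^{N,1}(t,x_1')-v|\leq (M+1)\ep$ for $x_1'$ near $x_1$; it then integrates that in $x_1'$ over $B(x_1,\delta)$ against $m$, passes to the $N\to\infty$ limit via Theorem \ref{thm:mainCV}(ii) (the $L^1(m)$ convergence of $w^{N,1}$ to $U(t,\cdot,m)$), and concludes by continuity of $U$ and $m(B(x_1,\delta))>0$. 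You instead avoid part (ii) entirely: you pick a single well-distributed configuration $\bx^N$, invoke only the sharper $O(1/N)$ estimate of part (i), and use the permutation symmetry of the Nash system together with the support hypothesis to produce (by pigeonhole) one index $i^*_N$ for which both the part-(i) error is small and the transposed configuration $\tau_{i^*_N}\bx^N$ falls in the domain of the $\limsup$ hypothesis. Both proofs ultimately rest on the same three ingredients (support hypothesis, continuity of $U$, empirical-measure concentration), and both are of comparable length; yours is slightly leaner in what it imports (no Dereich--Scheutzow--Schottstedt quantization rate from part (ii)), but makes the symmetry step explicit, whereas the paper's argument implicitly hides it inside the definition of $w^{N,1}$.

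One small point to tidy: your concluding line "letting first $N\to\infty$ and then $\eta\to 0$ (so $\delta(\eta)\to 0$)" is not automatic, since the $\limsup$ hypothesis only provides \emph{some} $\delta(\eta)>0$. You should say explicitly that one may assume without loss of generality $\delta(\eta)\leq\eta$ (shrinking $\delta$ only strengthens the condition), so that $\omega(\delta(\eta)/2)\to 0$ as $\eta\to 0$. Also, in the definition of $I_N$, the distances $|x_i^N-x_1|$ should be read on the torus. With these cosmetic fixes the argument is complete.
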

\bigskip

We can also describe the convergence in terms of optimal trajectories. Let $t_0\in [0,T)$, $m_0\in \Pw$ and let $(Z_i)$ be an i.i.d family of $N$ random variables of law $m_0$. We set $\bZ=(Z_1,\dots, Z_N)$.  Let also $((B^{i}_{t})_{t \in [ 0,T]})_{i \in \{1,\dots,N\}}$ be a family of $N$ independent Brownian motions which is also independent of $(Z_i)$ and let $(W_{t})_{t \in [0,T]}$ be a Brownian motion independent of the $(B^i)$ and $(Z_i)$. 
We consider the optimal trajectories $(\bY_t=(Y_{1,t},\dots, Y_{N,t}))_{t\in [t_0,T]}$ for the $N-$player game: 
$$
\left\{\begin{array}{l}
dY_{i,t}= -D_pH(Y_{i,t}, D_{x_i}v^{N,i}(t, \bY_t))dt +\sqrt{2} dB^{i}_t+\sqrt{2\beta} dW_t, \qquad t\in [t_0,T]\\
Y_{i,t_0}= Z_i
\end{array}\right.
$$
and the solution $(\tilde \bX_t=(\tilde X_{1,t}, \dots, \tilde X_{N,t}))_{t\in[t_0,T]}$ of stochastic differential equation of McKean-Vlasov type:
$$
\left\{\begin{array}{l}
d\tilde X_{i,t} =  -D_pH\left(\tilde X_{i,t}, D_xU
\bigl(t,\tilde X_{i,t},{\mathcal L}(\tilde X_{i,t}\vert W)\bigr)\right)dt +\sqrt{2}dB^i_t+\sqrt{2\beta} dW_t, \\
\tilde X_{i,t_0}= Z_i. 
\end{array}\right.
$$
Both system of SDEs
are set on $(\R^d)^N$. Since both are driven by periodic coefficients, solutions generate (canonical) flows of probability measures on 
$(\T^d)^N$:
The flow of probability measures 
generated
in ${\mathcal P}((\T^d)^N)$ 
by each solution 
is independent of the representatives in $\R^d$ of
the $\T^d$-valued random variables $Z_{1},\dots,Z_{N}$.

The next result says that the solutions of the two systems are close: 

\begin{Theorem}\label{thm:CvMFG} 
Let the assumption of Theorem 
\ref{thm:mainCV} be in force. Then,
for any $N\geq 1$ and any $i\in \{1,\dots, N\}$, we have
$$
\E\biggl[ \sup_{t\in [t_0,T]} \left|Y_{i,t}-\tilde X_{i,t}\right| \biggr]\leq C N^{-\frac{1}{d+8}}
$$
for some constant $C>0$ independent of $t_0$, $m_0$ and $N$. 
\end{Theorem}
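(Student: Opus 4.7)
The strategy is to couple $(Y_{i,t})$ and $(\tilde X_{i,t})$ by driving them with the same Brownian motions $(B^{i})$ and $W$ and by starting them from the same initial conditions $Z_{i}$; then $e_{i,t}:=Y_{i,t}-\tilde X_{i,t}$ satisfies an ODE (with random coefficients) whose right-hand side I decompose into three pieces. Introducing the intermediate quantity $D_{p}H\bigl(Y_{i,t},D_{x}U(t,Y_{i,t},m_{\bY_{t}}^{N,i})\bigr)$, one writes
\begin{equation*}
D_{p}H\bigl(Y_{i,t}, D_{x_{i}}v^{N,i}(t,\bY_{t})\bigr)-D_{p}H\bigl(\tilde X_{i,t}, D_{x}U(t,\tilde X_{i,t},\mathcal{L}(\tilde X_{i,t}\vert W))\bigr)=A_{i,t}+B_{i,t}+C_{i,t},
\end{equation*}
where $A_{i,t}$ is Lipschitz-controlled by $|e_{i,t}|$ thanks to the smoothness of $D_{p}H$ and of $D_{x}U$ in $x$; $B_{i,t}$ is controlled by the conditional Wasserstein distance $\dk\bigl(m_{\bY_{t}}^{N,i},\mathcal{L}(\tilde X_{i,t}\vert W)\bigr)$ using the Lipschitz-in-$m$ property of $D_{x}U$ granted by Theorem \ref{theo:2nd-order:master:equation}; and $C_{i,t}:=D_{p}H(Y_{i,t},D_{x_{i}}v^{N,i}(t,\bY_{t}))-D_{p}H(Y_{i,t},D_{x}U(t,Y_{i,t},m_{\bY_{t}}^{N,i}))$ is the ``Nash defect.''

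The $B_{i,t}$ contribution is handled by conditional propagation of chaos: conditionally on $W$, the processes $(\tilde X_{j,\cdot})_{j=1,\dots,N}$ are i.i.d.\ with common law $\mathcal{L}(\tilde X_{i,\cdot}\vert W)$, so the Fournier--Guillin concentration inequality yields $\E[\dk(m_{\tilde \bX_{t}}^{N,i},\mathcal{L}(\tilde X_{i,t}\vert W))]\leq C N^{-1/d}(1+\log N)$, and swapping $\tilde\bX_{t}$ for $\bY_{t}$ in the empirical measure costs at most $(N-1)^{-1}\sum_{j\neq i}|e_{j,t}|$, which is absorbed by Gronwall after averaging over $i$.

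The main obstacle is controlling the Nash defect $C_{i,t}$. Theorem \ref{thm:mainCV}(i) only controls $v^{N,i}-u^{N,i}$ (with $u^{N,i}(t,\bx):=U(t,x_{i},m_{\bx}^{N,i})$) in the averaged $L^{1}$ sense $\frac{1}{N}\sum_{i}|v^{N,i}-u^{N,i}|\leq C/N$, whereas what enters $C_{i,t}$ is the \emph{first} derivative $D_{x_{i}}v^{N,i}-D_{x}U(t,\cdot,m_{\bx}^{N,i})$. To pass from values to derivatives, I exploit two facts: first, by the permutation symmetry of the Nash system \eqref{NashvResults} and classical parabolic Schauder theory, the $v^{N,i}$ satisfy uniform-in-$N$ H\"{o}lder bounds in the private variables $(t,x_{i})$ in $\cC^{1+\alpha/2,2+\alpha}$; second, $U$ itself enjoys the analogous smoothness by Theorem \ref{theo:2nd-order:master:equation}. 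An interpolation inequality between $L^{\infty}$ and $\cC^{2+\alpha}$ in the private variable $x_{i}$, applied together with the averaged value bound (and a Chebyshev argument to transfer it to bounds holding along the random trajectory $\bY_{t}$), upgrades the $N^{-1}$ control on values to an $N^{-1/(d+8)}$ control on gradients; the loss in the exponent reflects the balance between the value decay rate $N^{-1}$, the concentration rate $N^{-1/d}$ of empirical measures needed to localize the interpolation, and the order of the H\"{o}lder norm used.

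Collecting the three estimates, applying It\^{o}'s formula to $|e_{i,t}|^{2}$ (supplemented by a Burkholder--Davis--Gundy argument to control the supremum over $t$), averaging over $i$ to exploit the full symmetry of both systems, and invoking Gronwall's lemma yields the bound $\E[\sup_{t\in[t_{0},T]}|e_{i,t}|]\leq C N^{-1/(d+8)}$ with a constant uniform in $i$, $t_{0}$ and $m_{0}$, as claimed.
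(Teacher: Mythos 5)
Your proposal replaces the paper's argument for the ``Nash defect'' term with an interpolation strategy that, as far as I can see, does not close. The paper's proof goes through an intermediate process $(X_{i,t})_{t\in[t_0,T]}$ solving \eqref{defxit}, i.e.\ the system driven by the $D_{x_i}u^{N,i}$ rather than by the $D_{x_i}v^{N,i}$, and splits $|Y_{i,t}-\tilde X_{i,t}|\leq |Y_{i,t}-X_{i,t}|+|X_{i,t}-\tilde X_{i,t}|$. The first difference is bounded by $CN^{-1}$ via \eqref{estixi-yi} in Theorem~\ref{thm:Cvyx}, whose proof supplies the crucial control on the gradient mismatch, namely
\begin{equation*}
\E\Bigl[\int_{t_0}^T \bigl|D_{x_i}v^{N,i}(t,\bY_t)-D_{x_i}u^{N,i}(t,\bY_t)\bigr|^2\,dt\Bigr]\leq CN^{-2},
\end{equation*}
a quantity obtained by applying It\^o's formula to $(u^{N,i}(t,\bY_t)-v^{N,i}(t,\bY_t))^2$, using the dissipativity coming from the Laplacian terms in the Nash system, and closing by Gronwall after averaging over $i$. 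This is an $L^2$-in-time estimate \emph{along the random optimal trajectory}, not a pointwise or uniform one. The second difference $|X_{i,t}-\tilde X_{i,t}|$ is then handled by the Lipschitz dependence of $D_xU$ on $m$ together with Lemma~\ref{lem:LLN}, a conditional Horowitz--Karandikar bound that is \emph{entirely} responsible for the exponent $1/(d+8)$.

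The gap in your argument is the claim that ``by the permutation symmetry of the Nash system \eqref{NashvResults} and classical parabolic Schauder theory, the $v^{N,i}$ satisfy uniform-in-$N$ H\"older bounds in the private variables $(t,x_i)$ in $\cC^{1+\alpha/2,2+\alpha}$.'' No such uniform-in-$N$ estimate is available here, and in fact the paper emphasizes that the ``complete lack of estimates on the solutions to this large system'' is the main obstruction and the reason one cannot argue by compactness. Schauder estimates for the equation satisfied by $v^{N,i}$, even in the single variable $x_i$, depend on the regularity of the coefficients $D_pH(x_j,D_{x_j}v^{N,j})$ for $j\neq i$, which in turn depend on regularity of the $v^{N,j}$; one does not obtain a uniform bound without a separate argument, and the paper does not provide one (nor does it need to). Because your interpolation step rests on this unavailable uniform Schauder bound, the upgrade from the averaged value bound $\frac1N\sum_i|v^{N,i}-u^{N,i}|\leq C/N$ to an $L^\infty$-type gradient bound does not go through as written. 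Note also that the exponent $1/(d+8)$ does not arise from an interpolation trade-off but directly from Lemma~\ref{lem:LLN}; your heuristic account of where the exponent comes from therefore does not match the mechanism in the paper.
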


In particular, since the $(\tilde X_{i,t})$ are independent conditioned on $W$, the above result is a (conditional) propagation of chaos. 

The proofs of Theorem \ref{thm:mainCV} and Theorem \ref{thm:CvMFG} rely on the existence of the solution $U$ of the master equation \eqref{eq:master:equation:2nd:order:def} and 
constitute the aim of Section \ref{sec.convergence}. 
Our starting point is that, for any $N\geq 1$, the ``projection" of $U$ onto the finite dimensional space $[0,T]\times (\T^d)^N$ is  almost a solution to the Nash system \eqref{NashvResults}. Namely, if we set, for any $i\in \{1,\dots, N\}$ and any $\bx=(x_1,\dots, x_N)\in (\T^d)^N$, 
$$
u^{N,i}(t,\bx):= U(t, x_i, m^{N,i}_\bx),
$$
then $(u^{N,i})_{i \in \{1,\dots,N\}}$ satisfies \eqref{NashvResults} up to an error term of size $O(1/N)$ for each equation (Proposition \ref{Prop:equNi}). Note that, as the number of equations in \eqref{NashvResults} is $N$, this could yield to a serious issue because the error terms could add up. The strategy of proof consists in controlling the error terms by exploiting the symmetry of the Nash system along the optimal paths. 

One of the thrust of our approach is that, somehow, the proofs 
work under the sole assumption that the 
master equation \eqref{eq:master:equation:2nd:order:def}
admits a classical solution. Here existence of a
classical solution is guaranteed under the assumption 
of Theorem \ref{theo:2nd-order:master:equation}, 
which includes in particular 
the monotonicity properties of $F$ and $G$,
but the analysis provided in Section 
 \ref{sec.convergence}
shows that monotonicity plays no role 
in the proofs of Theorems \ref{thm:mainCV} and 
\ref{thm:CvMFG}.
Basically, 
only the global Lipschitz properties of $H$ and $D_{p}H$, together with the various bounds obtained for the solution 
of the master equation and its derivatives, matter. 
This is a quite remarkable fact, which 
demonstrates the efficiency of our strategy.

\newpage

\section{A starter: the first order master equation}\label{sec:MasterWithoutCN}

In this section we prove Theorem \ref{theo:ex}, i.e., we establish the well-posedness of the master equation  without common noise: 
\be\label{masterwithoutCN}
\begin{array}{l}
\left\{\begin{array}{l} 
\ds - \partial_t U(t,x,m)  - \Delta_x U(t,x,m) +H\bigl(x,D_xU(t,x,m) \bigr) -\inte \dive_y \left[ D_m U\right](t,x,m,y) \ d m(y)\\
\ds    \qquad+ \inte  D_m U(t,x,m,y)\cdot D_pH\bigl(y,D_xU
(t,y,m)
\bigr)\ dm(y) =F(x,m) \\
 \ds  \qquad\qquad\qquad \qquad {\rm in }\; [0,T]\times \T^d\times \Pw\\
 U(T,x,m)= G(x,m) \qquad  {\rm in }\; \T^d\times \Pw\\
\end{array}\right.
\end{array}
\ee
The idea is to represent $U$ by solutions of the MFG system: let us recall that, for any $(t_0,m_0)\in [0,T)\times \Pw$, the MFG system is the system of forward-backward equations: 
\be\label{MFG}
\left\{ \begin{array}{l}
\ds - \partial_t u - \Delta u +H(x,Du)=F(x,m(t))\\
\ds \partial_t m - \Delta m -{\rm div}( m D_pH(x, D u))=0 \\
\ds u(T,x)=G(x,m(T)), \; m(t_0, \cdot)=m_0
\end{array}\right.
\ee
As recalled below, under suitable assumptions on the data, there exists a unique solution $(u,m)$ to the above system. Our aim is to show that the map $U$ defined by 
\be\label{defU}
U(t_0,\cdot, m_0):= u(t_0,\cdot)
\ee
is a solution to \eqref{masterwithoutCN}. 
 
Throughout this section assumptions \eqref{HypD2H} and \eqref{e.monotoneF} are in force. Let us however underline that the global Lipschitz continuity of $H$ is not absolutely necessary. We just need to know that the solutions of the MFG system are uniformly Lipschitz continuous, independently of the initial conditions: sufficient conditions for this can be found in \cite{LL07mf} for instance.

The proof of Theorem \ref{theo:ex} requires several preliminary steps. We first recall the existence of a solution to the MFG system \eqref{MFG} (Proposition \ref{prop:reguum}) and show that this solution depends in a Lipschitz continuous way of the initial measure $m_0$ (Proposition \ref{prop:Ulip}). Then we show by a linearization procedure that the map $U$ defined in \eqref{defU} is of class $\cC^1$ with respect to the measure (Proposition \ref{prop:diff}, Corollary  \ref{cor:diff0}). The proof relies on the analysis of a linearized system with a specific structure, for which well-posedness and estimates are given in Lemma \ref{lem:BasicEsti0} and Lemma \ref{lem:BasicEsti2}. 
We are then ready to prove Theorem \ref{theo:ex} (subsection \ref{subsec:prooftheomain}). We also show, for later use, that the first order derivative of $U$ is Lipschitz continuous with respect to $m$ (Proposition \ref{prop:DmULip}). We complete the section  by explaining how one obtains the solution $U$ as the derivative with respect to the measure $m$ of the value function of an optimal control problem set over flows of probability measures (Theorem \ref{theo:HJB}). 
\vspace{4pt}

Some of the proofs given in this section consist of a sketch only.
One of the reason is that some of the arguments we use here in order to investigate the 
MFG system \eqref{MFG} have been already developed in the literature. 
Another reason is that this section constitutes a starter only, 
specifically devoted to the simpler case without common noise. 
Arguments will be expanded in detail in the two next sections, 
when handling mean-field games with a common noise, 
for which there are much less available results in the literature.

\subsection{Space regularity of $U$}

In this part we investigate the space regularity of $U$ with respect to $x$. Recall that $U(t_0,\cdot,m_0)$ is defined by 
$$
U(t_0,x,m_0)= u(t_0,x)
$$
where $(u,m)$ is a classical solution to \eqref{MFG} with initial condition $m(t_0)=m_0$. 
By a classical solution to \eqref{MFG} we mean a pair $(u,m)\in \cC^{1,2}\times \cC^0([t_0,T], \Pk)$ such that the equation for $u$ holds in the classical sense while the equation for $m$ holds in the sense of distribution. 

\begin{Proposition}\label{prop:reguum} Assume that {\bf (HF1(${\boldsymbol n}$))} and {\bf (HG1(${\boldsymbol n}$+2))} hold for some $n\geq 0$. Then, for any initial condition $(t_0,m_0)\in [0,T]\times \Pk$, the MFG system \eqref{MFG} has a unique classical solution $(u,m)$ and this solution satisfies 
\be\label{esti:um}
\sup_{t_1\neq t_2} \frac{\dk(m(t_1),m(t_2))}{|t_2-t_1|^{1/2}}+ \sum_{|\ell |\leq n} \|D^\ell u\|_{1+\alpha/2,2+\alpha} \leq C_n,
\ee
where the constant $C_n$ does not depend on $(t_0,m_0)$. 

If moreover $m_0$ is absolutely continuous with a smooth positive density, then $m$ is of class $\cC^{1+\alpha/2,2+\alpha}$ with a smooth, positive density.  
\end{Proposition}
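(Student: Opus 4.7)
The plan is to follow the now-standard three-step scheme for second order MFG systems: existence via a Schauder fixed-point, uniqueness via the Lasry--Lions monotonicity identity, and regularity via a parabolic bootstrap. Crucially, every estimate has to be shown to depend only on the Lipschitz norm of $H$ and on the constants in \textbf{(HF1($n$))}, \textbf{(HG1($n$+2))}, and not on $(t_0,m_0)$.

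\emph{Existence.} Fix a continuous flow $\mu:[t_0,T]\to \Pk$ endowed with the uniform $\dk$-metric. I would first solve the backward HJ equation $-\partial_t u -\Delta u + H(x,Du)=F(x,\mu(t))$ with terminal datum $G(\cdot,\mu(T))$. Since $H$ is globally Lipschitz and $F,G$ are bounded by \textbf{(HF1(0))}, \textbf{(HG1(2))}, parabolic theory gives a unique classical solution with $\|u\|_{1+\alpha/2,2+\alpha}$ bounded uniformly in $\mu$ (in particular $\|Du\|_\infty$ is controlled). Then I would solve the forward Kolmogorov equation with drift $b(t,x)=-D_pH(x,Du(t,x))$, which is bounded. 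This produces a map $\Phi:\mu\mapsto m$ from $C^0([t_0,T],\Pk)$ to itself. Testing this equation against Lipschitz functions and using the bounded drift gives $\dk(m(s),m(t))\le C|t-s|^{1/2}$, independent of $\mu$. The range of $\Phi$ therefore lies in a uniformly $1/2$-Hölder subset, which is compact for the uniform $\dk$-topology by Ascoli. Continuity of $\Phi$ is straightforward, and Schauder's theorem supplies a fixed point.

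\emph{Uniqueness.} Given two solutions $(u_1,m_1)$ and $(u_2,m_2)$, I would compute
$$
\frac{d}{dt}\inte (u_1-u_2)\,d(m_1-m_2).
$$
The Laplacian contributions cancel after integration by parts, and integrating in time the terminal/initial contributions combine with the monotonicity \eqref{e.monotoneF} of $F$ and $G$ to yield
$$
\int_{t_0}^T\!\!\inte\Bigl[m_1\bigl(H(x,Du_2)-H(x,Du_1)-D_pH(x,Du_1)\cdot(Du_2-Du_1)\bigr)+(1\leftrightarrow 2)\Bigr]\,dx\,dt\le 0.
$$
Each integrand is non-negative by convexity of $H(x,\cdot)$, and the strict lower bound in \eqref{HypD2H} forces $Du_1=Du_2$ on $\mathrm{supp}(m_i)$. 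Hence $m_1$ and $m_2$ solve the same linear Kolmogorov equation with the same initial datum, so $m_1=m_2$, and then $u_1=u_2$.

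\emph{Regularity and bootstrap.} The estimate on $\dk(m(t_1),m(t_2))$ follows from the boundedness of $-D_pH(\cdot,Du)$ just as in the construction. For the spatial bound $\sum_{|\ell|\le n}\|D^\ell u\|_{1+\alpha/2,2+\alpha}\le C_n$, I would proceed inductively on $n$: given $m$ that is $1/2$-Hölder in $\dk$, assumption \textbf{(HF1($n$))} implies that $F(\cdot,m(\cdot))$ is $C^{n+\alpha}$ in $x$ uniformly in $t$ and $1/2$-Hölder in $t$ for any $C^0_x$ norm, and \textbf{(HG1($n$+2))} gives $G(\cdot,m(T))\in C^{n+2+\alpha}$; differentiating the HJ equation $\ell$ times in $x$ (for $\ell\le n$) and applying parabolic Schauder estimates yields the claimed bound, with a constant depending only on the Lipschitz norm of $H$ and on the constants in the regularity assumptions. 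When in addition $m_0$ is smooth and positive, the Kolmogorov equation has smooth coefficients (from the improved regularity of $u$) and a smooth positive initial datum, and the strong maximum principle together with interior parabolic Schauder give a $C^{1+\alpha/2,2+\alpha}$ density that stays strictly positive.

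The main obstacle, and the reason the uniformity in $(t_0,m_0)$ is not automatic, is the very first gradient estimate on $u$: all subsequent bootstrap steps depend on $\|Du\|_\infty\le C$. Here this is immediate because $H$ has been assumed globally Lipschitz, so $\|Du\|_\infty$ is controlled by $\|D_p H\|_\infty$ together with standard parabolic estimates on the bounded source $F$. Without global Lipschitz character of $H$ one would have to invoke a Bernstein-type argument as in \cite{LL07mf} to close the loop, but under our standing assumptions this step is cheap and the rest of the proof is routine iteration.
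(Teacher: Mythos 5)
Your proposal is correct and follows essentially the same route the paper (sketchily) describes: the paper relegates existence and uniqueness for $n=0$ to the references (which use precisely the Schauder fixed point plus Lasry--Lions monotonicity scheme you lay out), singles out the global Lipschitz continuity of $H$ as the point that makes the gradient estimate on $u$ cheap and uniform in $(t_0,m_0)$, obtains the higher-order bounds by differentiating the Hamilton--Jacobi equation $n$ times in $x$ exactly as you do, and gets the final smoothness and positivity of $m$ by parabolic Schauder theory and the strong maximum principle. Your write-up simply fleshes out the steps the paper compresses into a citation, and your observations about where each hypothesis is used (in particular the role of the Lipschitz bound on $H$ and the coercivity in \eqref{HypD2H} in closing the monotonicity argument) are the right ones.
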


 Note that further regularity of $F$ and $G$ improves the space regularity of $u$ but not its time regularity (as the time regularity of the coefficients depends upon that of 
 $m$, see Proposition 
 \ref{prop:reguum} right above). By \eqref{esti:um}, we have, under assumptions  {\bf (HF1(${\boldsymbol n}$))} and {\bf (HG1(${\boldsymbol n}$+2))}
 $$
\sup_{t\in [0,T]} \sup_{m\in \Pk} \|U(t, \cdot, m)\|_{n+2+\alpha}\leq C_n.
$$

\begin{proof} 
We provide a sketch of proof only.
Existence and uniqueness of classical solutions for \eqref{MFG} under assumptions {\bf (HF1(${\boldsymbol n}$))} and  {\bf (HG1(${\boldsymbol n}$+2))} for $n=0$ are standard: see, e.g., \cite{LL06cr2, LL07}. Note that we use here the Lipschitz continuity assumption on $H$, which guaranties uniform Lipschitz estimates on $u$. 

We obtain further regularity on $u$ by deriving in space $n$ times  the equation for $u$.

When $m_0$ has a smooth density, $m$ satisfies an equation with $\cC^{\alpha/2,\alpha}$ exponents, so that by Schauder theory $m$ is $\cC^{1+\alpha/2,2+\alpha}$. If moreover, $m_0$ is positive, then $m$ remains positive by strong maximum principle. 
\end{proof}

\subsection{Lipschitz continuity of $U$}

\begin{Proposition}\label{prop:Ulip} Assume that {\bf (HF1(${\boldsymbol n+1}$))} and {\bf (HG1(${\boldsymbol n}$+2))} hold for some $n\geq 0$. 
Let $m_0^1, m_0^2\in \Pw$, $t_0\in [0,T]$ and $(u^1,m^1)$, $(u^2,m^2)$  be the solutions of the MFG system \eqref{MFG} with initial condition $(t_0,m^1_0)$ and $(t_0,m^2_0)$ respectively.  Then 
$$
\sup_{t\in [0,T]} \left\{ \dk(m^1(t),m^2(t)) +   \left\|u^1(t,\cdot)-u^2(t,\cdot)\right\|_{n+2+\alpha}\right\} \leq C_n\dk(m^1_0,m^2_0),
$$
for a constant $C_{n}$ independent of 
$t_{0}$, $m^1_{0}$ and $m^2_{0}$.
In particular, 
$$
\left\|U(t_0,\cdot, m^1_0)-U(t_0,\cdot,m^2_0)\right\|_{n+2+\alpha}\leq C_n\dk (m_0^1,m^2_0).
$$
\end{Proposition}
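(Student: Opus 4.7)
The plan is to combine parabolic Schauder estimates for the HJB equation with a duality bound for the Fokker--Planck equation, closing the loop on a short time interval, and then to upgrade the resulting short-time estimate to arbitrary time horizons via the Lasry--Lions monotonicity identity.

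Set $\bar u := u^1-u^2$ and $\bar m := m^1-m^2$. Subtracting the two copies of \eqref{MFG}, the pair $(\bar u,\bar m)$ satisfies
\begin{equation*}
\left\{
\begin{array}{l}
-\partial_t \bar u - \Delta \bar u + V(t,x)\cdot D\bar u = F(x,m^1(t))-F(x,m^2(t)),\\
\partial_t \bar m - \Delta \bar m -\dive\bigl(\bar m\, D_pH(x,Du^1)\bigr) = \dive\bigl(m^2\cdot W(t,x)\bigr),\\
\bar u(T,\cdot)=G(\cdot,m^1(T))-G(\cdot,m^2(T)),\quad \bar m(t_0,\cdot)=m^1_0-m^2_0,
\end{array}
\right.
\end{equation*}
where $V(t,x) := \int_0^1 D_pH(x, sDu^1+(1-s)Du^2)\,ds$ and $W(t,x) := \bigl(\int_0^1 D^2_{pp}H(x, sDu^1+(1-s)Du^2)\,ds\bigr)\cdot D\bar u(t,x)$.

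First, I would apply parabolic Schauder theory to the backward equation for $\bar u$. By Proposition \ref{prop:reguum} and the smoothness of $H$, the coefficient $V$ is bounded in $\cC^{n+\alpha/2,n+\alpha}$ uniformly with respect to $(t_0,m^1_0,m^2_0)$. Writing $F(\cdot,m^1)-F(\cdot,m^2)$ and $G(\cdot,m^1(T))-G(\cdot,m^2(T))$ as integrals of $\frac{\delta F}{\delta m}$ and $\frac{\delta G}{\delta m}$ along the interpolating path $sm^1+(1-s)m^2$, the assumptions {\bf (HF1(${\boldsymbol n+1}$))} and {\bf (HG1(${\boldsymbol n+2}$))} give $\|F(\cdot,m^1(t))-F(\cdot,m^2(t))\|_{n+\alpha} \leq C\,\dk(m^1(t),m^2(t))$ and $\|G(\cdot,m^1(T))-G(\cdot,m^2(T))\|_{n+2+\alpha}\leq C\,\dk(m^1(T),m^2(T))$. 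Schauder estimates therefore yield
\begin{equation*}
\sup_{t\in[t_0,T]}\|\bar u(t,\cdot)\|_{n+2+\alpha} \leq C\,\sup_{t\in[t_0,T]}\dk\bigl(m^1(t),m^2(t)\bigr).
\end{equation*}
Second, I would control the Monge--Kantorovich distance by a duality argument. For any $1$-Lipschitz test function $\phi:\T^d\to\R$ and any $t\in[t_0,T]$, let $\psi$ solve the linear backward equation $-\partial_s\psi - \Delta\psi - D_pH(x,Du^1)\cdot D\psi = 0$ on $[t_0,t]$ with $\psi(t,\cdot)=\phi$. Standard estimates give $\sup_{s\in[t_0,t]}\|D\psi(s,\cdot)\|_\infty \leq C$, uniformly in $\phi$, since $D_pH(\cdot,Du^1)$ is uniformly Lipschitz in $x$. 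Pairing the equation for $\bar m$ against $\psi$, integrating by parts, and taking the supremum over $1$-Lipschitz $\phi$ yields
\begin{equation*}
\dk\bigl(m^1(t),m^2(t)\bigr) \leq C\,\dk(m^1_0,m^2_0) + C\int_{t_0}^t \|D\bar u(s,\cdot)\|_\infty\,ds.
\end{equation*}
Combined with the previous display, this closes the estimate whenever $T-t_0$ is smaller than some universal threshold $\tau_0>0$ depending only on the data, delivering the claimed bound on any such short interval.

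The hard part will be to extend this short-time result to arbitrary time horizons, and this is precisely where the monotonicity \eqref{e.monotoneF} and the convexity \eqref{HypD2H} of $H$ in $p$ enter. Differentiating $t\mapsto \int_{\T^d}\bar u(t,\cdot)\,d\bar m(t)$ along the two equations of the coupled system, one obtains the Lasry--Lions identity
\begin{multline*}
\int_{\T^d}\bar u(t_0)\,d(m^1_0-m^2_0) = \int_{\T^d}\bigl(G(\cdot,m^1(T))-G(\cdot,m^2(T))\bigr)\,d(m^1(T)-m^2(T))\\
+ \int_{t_0}^T\!\!\int_{\T^d}\bigl(F(\cdot,m^1)-F(\cdot,m^2)\bigr)\,d(m^1-m^2)\,dt +\int_{t_0}^T\!\!\int_{\T^d}\bigl(m^1[-\alpha]+m^2[-\beta]\bigr)\,dx\,dt,
\end{multline*}
where $-\alpha,-\beta$ are the nonnegative Bregman remainders $H(\cdot,Du^j)-H(\cdot,Du^i)-D_pH(\cdot,Du^i)\cdot D\bar u$ coming from the convexity of $H$; all three terms on the right-hand side are nonnegative by \eqref{e.monotoneF} and \eqref{HypD2H}. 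Splitting $[t_0,T]$ into a partition $(t_k)$ of mesh less than $\tau_0$, I would apply the short-time estimate on each subinterval $[t_k,t_{k+1}]$ with $m^i(t_k)$ as the new initial data and use the above identity, applied telescopically, to absorb the constants that would otherwise accumulate geometrically across the iteration. The technical heart of the argument lies in this uniform control: the nonnegativity of the Bregman and monotonicity terms, combined with the uniform bounds of Proposition \ref{prop:reguum} and the parabolic smoothing furnished by the Laplacian, is what prevents a geometric blow-up and delivers a global Lipschitz constant $C_n$ independent of $t_0$ and $T$. The bound on $U(t_0,\cdot,m^1_0)-U(t_0,\cdot,m^2_0)=\bar u(t_0,\cdot)$ then follows by evaluating the global Schauder estimate at $t=t_0$.
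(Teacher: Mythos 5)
Your short-time argument (Schauder for $\bar u$ in terms of $\sup_t \dk(m^1,m^2)$, then duality for $\dk(m^1(t),m^2(t))$ in terms of $\dk(m^1_0,m^2_0)+\int_{t_0}^t\|D\bar u\|_\infty$, then close for $T-t_0$ small) is sound, and you are right that the monotonicity and the convexity of $H$ must enter to go beyond small time. But the proposed extension by partitioning $[t_0,T]$ into mesh-$\tau_0$ subintervals has a real gap. When you restrict to $[t_k,t_{k+1}]$, the pair $(u^i,m^i)$ is \emph{not} the solution of the MFG system on that subinterval with terminal cost $G$: the HJB terminal datum at $t_{k+1}$ is $u^i(t_{k+1},\cdot)$, not $G(\cdot,m^i(t_{k+1}))$. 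So to run the short-time estimate on $[t_k,t_{k+1}]$ you would need to know that $\|\bar u(t_{k+1},\cdot)\|_{n+2+\alpha}\le C\,\dk(m^1(t_{k+1}),m^2(t_{k+1}))$, which is exactly what you are trying to prove on the next interval -- the scheme is circular and, even if carried out backward in time starting from $T$, yields a constant $C^K$ that grows geometrically in the number $K$ of subintervals. The claim that the Lasry--Lions identity, ``applied telescopically,'' absorbs this growth is never made quantitative, and it is not clear that it can be: the identity produces a one-sided, scalar inequality, not a per-step contraction of the relevant H\"older norm.

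The paper avoids partitioning entirely. Its argument is \emph{global} and closes by a rearrangement rather than by iteration: the monotonicity identity gives directly, on the whole interval $[t_0,T]$, the weak energy bound
$\int_{t_0}^T\!\int_{\T^d}(m^1+m^2)\,|D\bar u|^2 \le C\,\|D\bar u(t_0,\cdot)\|_\infty\,\dk(m^1_0,m^2_0)$;
a coupling of the two state SDEs (rather than your adjoint-duality) converts this, via Cauchy--Schwarz and Gronwall, into
$\sup_t\dk(m^1(t),m^2(t))\le C\bigl[\dk(m^1_0,m^2_0)+\|D\bar u(t_0,\cdot)\|_\infty^{1/2}\dk(m^1_0,m^2_0)^{1/2}\bigr]$;
Schauder then bounds $\sup_t\|\bar u(t,\cdot)\|_{n+2+\alpha}$ by the same quantity; and finally one absorbs the $\|D\bar u(t_0,\cdot)\|_\infty^{1/2}$ factor (which is dominated by the left-hand side to the power $1/2$) to close. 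If you want your strategy to go through, the missing ingredient is precisely this kind of $L^2(m^1+m^2)$-weighted estimate on $D\bar u$ valid over the whole horizon, obtained from monotonicity \emph{before} any smallness assumption -- not a per-interval contraction. As written, the ``technical heart'' you flag remains unproved, and it is the heart of the result.
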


\begin{proof} {\it First step.} To simplify the notation, we show the result for $t_0=0$. We use the well-known Lasry-Lions monotonicity argument (see the proof of Theorem 2.4 and Theorem 2.5 of \cite{LL07mf}): 
\begin{multline*}
\frac{d}{dt}\inte \bigl(u^1(t,y)-u^2(t,y)\bigr)\bigl(
m^{1}(t,y)-m^2(t,y) \bigr)dy \\
\leq -C^{-1} \inte \frac12 |Du^1(t,y)-Du^2(t,y)|^2
\bigl(m^1(t,y)+m^2(t,y)\bigr)dy
\end{multline*}
since $F$ is monotone, $Du^1$ and $Du^2$ are uniformly bounded and $H$ satisfies \eqref{HypD2H}. 
So 
\begin{multline*}
\int_0^T \inte   |Du^1(t,y)-Du^2(t,y)|^2\bigl(m^1(t,y)+m^2(t,y)\bigr) \ dy dt \\ \leq C \left[\inte \bigl(u^1(t,y)-u^2(t,y) \bigr) \bigl(m^1(t,y)-m^2(t,y) \bigr)dy\right]_0^T.
\end{multline*}
At time $T$ we use the monotonicity of $G$ to get 
\begin{multline*}
\inte \bigl(u^1(T,y)-u^2(T,y) \bigr) \bigl(m^1(T,y)-m^2(T,y) \bigr)dy\\ =\inte 
\bigl(G(y,m^1(T))-G(y,m^2(T))\bigr)\bigl(m^1(T,y)-m^2(T,y)\bigr)dy\geq 0.
\end{multline*}
 At time $0$ we have by the definition of $\dk$,
$$
\inte \bigl(u^1(0,y)-u^2(0,y)\bigr)\bigl(m^1_0(y)-m^2_0(y)\bigr)dy \leq  C \|D(u^1-u^2)(0,\cdot)\|_{\infty} \dk(m^1_0,m^2_0).
$$
Hence 
\be\label{mmDuDu}
\int_0^T \inte \bigl(m^1(t,y)+m^2(t,y)\bigr)  |Du^1(t,y)-Du^2(t,y)|^2dydt \leq C \|D(u^1-u^2)(0,\cdot)\|_{\infty} \dk(m^1_0,m^2_0).
\ee

{\it Second step:} Next we estimate $m^1-m^2$: to do so, let $(\Omega,{\mathcal F},\P)$ be a standard probability space, $X^1_0$, $X^2_0$ be random variables on $\Omega$ with law $m^1_0$ and $m^2_0$ respectively and such that 
$\E[|X^1_0-X^2_0|]=\dk (m^1_0,m^2_0)$. Let also $(X^1_t)$, $(X^2_t)$ be the solutions to 
$$
dX^i_t= - D_pH(X^i_t, Du^i(t,X^i_t))dt +\sqrt{2}dB_t\qquad t\in [0,T], \; i=1,2,
$$
where $(B_t)_{t\in [0,T]}$ is a $d-$dimensional Brownian motion. Then the law of $X^i_t$ is $m^i(t)$ for any $t$. We have
$$
\begin{array}{rl}
\ds \E\bigl[ |X^1_t-X^2_t| \bigr] \; \leq & \ds  \E\bigl[|X^1_0-X^2_0|\bigr]+\E\biggl[ \int_0^t \Bigl( \bigl| D_pH \bigl(X^1_s, Du^1(s,X^1_s) \bigr)-D_pH\bigl(
X^2_s, Du^1(s,X^2_s) \bigr) \bigr|
\\
 & \ds \qquad \qquad \qquad + \bigl|D_pH \bigl(X^2_s, Du^1(s,X^2_s) \bigr)-D_pH
 \bigl(X^2_s, Du^2(s,X^2_s) \bigr) \bigr|\ ds\Bigr)\biggr].
\end{array}
$$
As the maps $x\mapsto D_pH(x,Du^1(s,x))$ and $p\mapsto D_pH(x,p)$  are  Lipschitz continuous (see 
\eqref{HypD2H}
and
Proposition 
\ref{prop:Ulip}): 
$$
\begin{array}{l}
\ds \E\bigl[ |X^1_t-X^2_t| \bigr] \\
\;   \leq    \ds  \E\bigl[|X^1_0-X^2_0|\bigr]+ C\int_0^t \E\bigl[|X^1_s-X^2_s|\bigr]ds
 +C\int_0^t\inte  |Du^1(s,x)-Du^2(s,x)| m^2(s,x)dxds \\
 \; \ds \leq  \ds \ds  \dk(m^1_0,m^2_0)+ C\int_0^t \E\bigl[
 |X^1_s-X^2_s|\bigr]ds
 +C\left(\int_0^t\inte  |Du^1(s,x)-Du^2(s,x)|^2 m^2(s,x)dxds\right)^{1/2}.
\end{array}
$$
In view of \eqref{mmDuDu} and Gronwall inequality, we obtain
\be\label{X1t-X2t}
\E\bigl[ |X^1_t-X^2_t|\bigr]  \leq  \ds  C\left[ \dk(m^1_0,m^2_0)+  \|D(u^1-u^2)(0,\cdot)\|_{\infty}^{1/2} \dk(m^1_0,m^2_0)^{1/2}\right].
\ee
As $\ds \dk(m^1(t),m^2(t))\leq \E[ |X^1_t-X^2_t|]$, we get therefore 
\be\label{suptm1m2}
\sup_{t\in [0,T]} \dk(m^1(t),m^2(t))\leq C\left[ \dk(m^1_0,m^2_0)+  \|D(u^1-u^2)(0,\cdot)\|_{\infty}^{1/2} \dk(m^1_0,m^2_0)^{1/2}\right].
\ee

{\it Third step:} We now estimate the difference $w:=u^1-u^2$. We note that $w$ satisfies: 
$$
\left\{\begin{array}{l}
\ds -\partial_t w(t,x) -\Delta w(t,x) +  V(t,x)\cdot D w(t,x) = R_1(t,x)\qquad {\rm in} \; [0,T]\times \T^d \\
\ds w(T,x)= R_T(x)\qquad {\rm in} \;  \T^d
\end{array}\right.
$$
where, for $(t,x)\in[0,T]\times \T^d$, 
$$
V(t,x)= \int_0^1D_pH(x,sDu^1(t,x)+(1-s)D u^2(t,x)) \ ds,
$$
$$
 R_1(t,x)= \int_0^1\inte  \frac{\delta F}{\delta m}(x, s m^1(t)+(1-s)m^2(t),y)(m^1(t,y)-m^2(t,y))\ dyds
$$
and  
$$
 R_T(x) =\int_0^1 \inte  \frac{\delta G}{\delta m}(x, s m^1(T)+(1-s)m^2(T),y)(m^1(T,y)-m^2(T,y))\  dyds. 
 $$
By assumption {\bf (HF1(${\boldsymbol n+1}$))} and inequality \eqref{suptm1m2}, we have, for any $t\in [0,T]$,  
$$
\begin{array}{rl}
\ds  \left\|D^\ell R_1(t,\cdot)\right\|_{n+1+\alpha}\; \leq & \ds \int_0^1 \left\|D_y\frac{\delta F}{\delta m}(\cdot, s m^1(t)+(1-s)m^2(t),\cdot)\right\|_{\cC^{n+1+\alpha}\times L^\infty} ds \ \dk(m^1(t),m^2(t))  \\
\leq & \ds C \left[ \dk(m^1_0,m^2_0)+  \|Dw(0,\cdot)\|_{\infty}^{1/2} \dk(m^1_0,m^2_0)^{1/2}\right]
\end{array}
$$
and, in the same way (using assumption {\bf (HG1(${\boldsymbol n}$+2))}),  
$$
\left\|R_T\right\|_{n+2+\alpha}\leq C \left[ \dk(m^1_0,m^2_0)+  \|Dw(0,\cdot)\|_{\infty}^{1/2} \dk(m^1_0,m^2_0)^{1/2}\right].
$$
On another hand, $V(t,\cdot)$ is bounded in  $\cC^{n+1+\alpha}$ in view of the regularity of $u^1$ and $u^2$ (Proposition \ref{prop:reguum}). 
Then Lemma \ref{l.estilinear} below states that 
$$
\begin{array}{rl}
\ds \sup_{t\in [0,T]} \|w(t,\cdot)\|_{n+2+\alpha} \; \leq & \ds C\Bigl\{ \left\|R_T\right\|_{n+2+\alpha}+
\sup_{t\in [0,T]} \|R_1(t,\cdot)\|_{n+1+\alpha}\Bigr\} \\
\leq & \ds C \left[ \dk(m^1_0,m^2_0)+  \|Dw(0,\cdot)\|_{\infty}^{1/2} \dk(m^1_0,m^2_0)^{1/2}\right].
\end{array}
$$
Rearranging, we find 
$$
\sup_{t\in [0,T]} \|w(t,\cdot)\|_{n+2+\alpha} \; \leq \; C\dk(m^1_0,m^2_0),
$$
and coming back to inequality \eqref{suptm1m2}, we also obtain 
$$
\sup_{t\in [0,T]} \dk(m^1(t),m^2(t))\leq C \dk(m^1_0,m^2_0).
$$
\end{proof}

In the proof we used the following estimate: 
\begin{Lemma}\label{l.estilinear} Let $n\geq 1$, $V\in \cC^0([0,T], \cC^{n-1+\alpha}(\T^d,\R^d))$ and $f\in \cC^0([0,T], \cC^{n-1+\alpha}(\T^d))$. 
Then, for any $z_T\in \cC^{n+\alpha}(\T^d)$, the (backward) equation
$$
\left\{\begin{array}{l}
-\partial_t z-\Delta z +V(t,x)\cdot Dz = f(t,x),\qquad {\rm in }\; [0,T]\times \T^d\\
z(T,x)=z_T(x)
\end{array}\right.
$$
has a unique solution which satisfies 
$$
\sup_{t\in [0,T]} \|z(t,\cdot)\|_{n+\alpha} +\sup_{t\neq t'} \frac{\|z(t',\cdot)-z(t,\cdot)\|_{n+\alpha}}{|t'-t|^\frac12}
\leq C\left\{ \|z_T\|_{n+\alpha}+\sup_{t\in [0,T]} \|f(t,\cdot)\|_{n-1+\alpha}\right\},
$$
where $C$ depends  on $\sup_{t\in [0,T]} \|V(t,\cdot)\|_{n-1+\alpha}$. 
\end{Lemma}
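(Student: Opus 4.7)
The plan is to treat this as a classical parabolic Schauder estimate. I would first reverse time by setting $\tilde z(t,x)=z(T-t,x)$, which turns the problem into the forward Cauchy problem
\[
\partial_t\tilde z-\Delta\tilde z+\tilde V\cdot D\tilde z=\tilde f,\qquad \tilde z(0,\cdot)=z_T,
\]
a standard linear parabolic equation on $[0,T]\times\T^d$ with H\"older continuous coefficients in space and merely bounded coefficients in time. Existence and uniqueness then follow from the classical theory (cf.\ Ladyzhenskaya--Solonnikov--Uraltseva, Chapter IV, or Lieberman), either by a fixed-point/continuation argument in $\cC^{\alpha/2,\alpha}$ or by viewing $V\cdot D$ as a perturbation of the heat semigroup on $\T^d$. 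The a priori bound is what carries the content of the lemma.

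For the base case $n=1$, the interior parabolic Schauder estimate applied to the operator $L=\partial_t-\Delta+\tilde V\cdot D$ with $\tilde V\in L^\infty(0,T;\cC^\alpha)$ yields directly
\[
\sup_{t\in[0,T]}\|\tilde z(t,\cdot)\|_{1+\alpha}\leq C\bigl(\|z_T\|_{1+\alpha}+\sup_t\|\tilde f(t,\cdot)\|_\alpha\bigr),
\]
the constant $C$ depending only on $\sup_t\|\tilde V(t,\cdot)\|_\alpha$. For $n\geq 2$ I would bootstrap by induction. Differentiating the equation in space, for any multi-index $\ell$ with $|\ell|\leq n-1$ the function $w=D^\ell z$ satisfies a backward equation of the same form with the same transport coefficient and with source
\[
g_\ell = D^\ell f-\sum_{0<\ell'\leq\ell}\binom{\ell}{\ell'}D^{\ell'}V\cdot D^{\ell-\ell'+1}z,
\]
and terminal datum $D^\ell z_T$. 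Under the induction hypothesis, the product rule in H\"older spaces controls $g_\ell$ in $\cC^\alpha$ by $\sup_t\|f\|_{n-1+\alpha}$ plus a constant depending on $\sup_t\|V\|_{n-1+\alpha}$ times the previously established $\cC^{n-1+\alpha}$ bound on $z$. Applying the base case to each such $w$ and summing over $|\ell|\leq n-1$ closes the induction and gives the spatial bound stated in the lemma.

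For the half-H\"older time regularity, I would exploit the equation itself: $\partial_t z=-\Delta z+V\cdot Dz-f$ is controlled in a spatial norm that is essentially two derivatives below that of $z$, so $t\mapsto z(t,\cdot)$ is Lipschitz with values in $\cC^{n-2+\alpha}$ (resp.\ a weaker semigroup-type estimate for $n=1$); interpolating this with the uniform $\cC^{n+\alpha}$ bound then produces the $|t-t'|^{1/2}$ rate at the intermediate H\"older level, which is the content of the second term on the left-hand side. The main obstacle is not conceptual but organisational: careful bookkeeping in the induction and in the product-rule estimates is needed to ensure the Schauder constant depends only on $\sup_t\|V(t,\cdot)\|_{n-1+\alpha}$ as claimed; beyond that, the lemma is a routine application of the parabolic Schauder theory to a linear equation with H\"older coefficients.
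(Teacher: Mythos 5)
Your proposal takes a genuinely different route from the paper.  For the spatial estimate the paper does not use classical parabolic Schauder theory at all: it refers to Lemma~\ref{lem:cas:vartheta=0} (in the stochastic section), which represents the solution by the Duhamel formula $z(t)=P_{T-t}z_T-\int_t^T P_{s-t}\bigl(V\cdot Dz-f\bigr)(s)\,ds$, uses the heat-semigroup smoothing $\|P_\tau g\|_{k+\alpha}\le C\tau^{-1/2}\|g\|_{k-1+\alpha}$ to gain one derivative at each level, and closes by a singular-kernel Gronwall argument. Your bootstrap (differentiate the equation up to order $n-1$ in $x$ and apply a base-case estimate to each derived equation) would also give the spatial bound, and it is arguably more elementary. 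Note however that the classical Schauder theorems of LSU or Lieberman that you invoke for the base case require $\cC^{\alpha/2,\alpha}$ regularity in time of the coefficients and source; here $V$ and $f$ are only assumed continuous in time, so you need the semigroup-perturbation version (which you mention as an alternative, and which is in effect what the paper does) rather than "interior parabolic Schauder" in the usual sense.

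The genuine gap is in the time regularity. Your interpolation argument — $z$ Lipschitz in $\cC^{n-2+\alpha}$ from the equation, bounded in $\cC^{n+\alpha}$, interpolate — yields $|t-t'|^{1/2}$-H\"older continuity of $t\mapsto z(t,\cdot)$ in the \emph{intermediate} norm $\cC^{n-1+\alpha}$, not in $\cC^{n+\alpha}$. But the lemma's second term,
$$
\sup_{t\neq t'}\frac{\|z(t',\cdot)-z(t,\cdot)\|_{n+\alpha}}{|t'-t|^{1/2}},
$$
is at the same top level $n+\alpha$ as the uniform bound, and this top-level H\"older-in-time estimate is used downstream (for instance in Lemma~\ref{lem:BasicEsti0}, where $w(\tau)-w(t)$ is paired against an element of $(\cC^{n+1+\alpha})'$), so the one-derivative slack you leave is not harmless. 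The paper instead goes back to the Duhamel formula, writes $z(t+h)-z(t)$ as the terminal-data term $(P_{T-t-h}-P_{T-t})z_T$ plus boundary-layer and far-field contributions of the source, and estimates each term in $\cC^{n+\alpha}$ directly using the kernel bounds $\|P_{s-t}\psi\|_{n+\alpha}\le C(s-t)^{-1/2}\|\psi\|_{n-1+\alpha}$ and $\|(P_{s-t-h}-P_{s-t})\psi\|_{n+\alpha}\le Ch(s-t-h)^{-3/2}\|\psi\|_{n-1+\alpha}$. Your interpolation argument does not recover this, so the proposal as written does not establish the lemma at the stated norm level.
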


\begin{proof} Beside the time estimate, Lemma \ref{l.estilinear} is a particular case (in the deterministic setting) of Lemma \ref{lem:cas:vartheta=0}. So we postpone this part of the proof to section \ref{se:common:noise}. 

We now prove the time regularity. By Duhamel formula, we have,
$$
z(t+h,\cdot)-z(t,\cdot) = (P_{T-t-h}-P_{T-t})z_T + \int_{t+h}^T P_{s-t-h}\psi(s,\cdot) ds -\int_{t}^T P_{s-t}\psi(s,\cdot) ds ,
$$
where $P_t$ is the heat semi-group and $\psi(s,\cdot):= V(s,\cdot)\cdot Dz(s,\cdot)-f(s,\cdot)$. Hence, 
for $2h \leq T-t$, 
\begin{equation}
\label{aboveformula}
\begin{array}{rl}
\ds \|z(t+h,\cdot)-z(t,\cdot)\|_{n+\alpha} \; \leq &\ds
\hspace{-5pt}
 \|(P_{T-t-h}-P_{T-t})z_T\|_{n+\alpha} +\int_{t}^{t+2h} \|P_{s-t}\psi(s,\cdot)\|_{n+\alpha} ds
\\
 &
\hspace{-55pt}
\ds
 +\int_{t+h}^{t+2h} \|P_{s-t-h}\psi(s,\cdot)\|_{n+\alpha} ds + \int_{t+2h}^T \bigl\|(P_{s-t-h}-P_{s-t})\psi(s,\cdot)\bigr\|_{n+\alpha} ds .
\end{array}
\end{equation}
Recalling the standard estimates $\|(P_{T-t-h}-P_{T-t})z_T\|_{n+\alpha}\leq C h^\frac12\|z_T\|_{n+\alpha}$, $\|P_{s-t}\psi(s,\cdot)\|_{n+\alpha}\leq C(s-t)^{-\frac12}\|\psi(s)\|_{n-1+\alpha}$ and $\|(P_{s-t-h}-P_{s-t})\psi(s,\cdot)\|_{n+\alpha} \leq Ch(s-t-h)^{-\frac32}\|\psi(s,\cdot)\|_{n-1+\alpha}$, we find the result
when $2h \leq T-t$. 

When $2h > T-t$,  there is no need to consider the integral from 
$t+2h$ to $T$ in the above formula \eqref{aboveformula}, and the result follows in the same way.
\end{proof}

\subsection{Estimates on a linear system} 
\label{subsec:LS}

In the sequel we need to estimate several times solutions of a forward-backward system of linear equations. In order to minimize the computation, we collect in this section  two different results on this system. The first one provides existence of a solution and estimates for smooth data.  The second one deals with general data. 

We consider systems of the form 
\be\label{eq:prem}
\left\{ \begin{array}{rl}
 (i) & \ds - \partial_t z - \Delta z + V(t,x)\cdot Dz  = \frac{\delta F}{\delta m}(x, m(t))(\rho(t))+ b(t,x) \qquad {\rm in}\; [t_0,T]\times \T^d\\
 (ii) & \ds \partial_t \rho - \Delta \rho - \dive(\rho V)- \dive (m\Gamma Dz+c)=0 \qquad  {\rm in}\; [t_0,T]\times \T^d\\
 (ii) & \ds z(T,x)=\frac{\delta G}{\delta m}(x, m(T))(\rho(T))+z_T(x), \; \rho(t_0)=\rho_0\qquad  {\rm in}\;  \T^d
\end{array}\right.
\ee
where  $V:[t_0,T]\times \R^d\to \R^d$ is a given vector field, $m\in \cC^0([0,T], \Pk)$, $\Gamma: [0,T]\times \T^d\to \R^{d\times d}$ is a continuous map with values into the family of symmetric matrices and where  the maps $b:[t_0,T]\times \T^d\to \R$, 
$c : [t_{0},T] \times \T^d \to \R^d$ and $z_T:\T^d\to \R$ are given. We always assume that there is a constant $\bar C>0$  such that
\be
\label{condGamma}
\begin{array}{l}
\forall t,t'\in [t_0,T], \qquad \ds \dk(m(t),m(t'))\leq \bar C |t-t'|^{1/2}, 
\\
\ds \forall (t,x)\in [t_0,T]\times \T^d, \qquad \bar C^{-1}I_d\leq \Gamma(t,x)\leq  \bar CI_d.
\end{array}
\ee

Typically, $V(t,x)= D_pH(x,Du(t,x))$, $\Gamma(t,x)= D^2_{pp}H(x,Du(t,x))$ for some solution $(u,m)$ of the MFG system \eqref{MFG} starting from some initial data $m(t_0)=m_0$. Recall that the derivative $Du$ is globally Lipschitz continuous with a constant independent of $(t_0,m_0)$, so that assumption \eqref{HypD2H} gives the existence of a constant $\bar C$ for which \eqref{condGamma} holds.  We note for later use that this constant does not depend on $(t_0,m_0)$. 

To simplify the notation, let us set, for $n\in\N$, $X_n= \cC^{n+\alpha}(\T^d)$ and let $(X_n)'$ be its dual space ($(X_n)'=({\mathcal C}^{n+\alpha}(\T^d))'$).  We first establish the existence of a solution and its smoothness for smooth data: 

\begin{Lemma}\label{lem:BasicEsti0} Assume that $b$, $c$, $z_T$ and $\rho_0$ are smooth, $V$ is of class $\cC^{1+\alpha/2,2+ \alpha}$, $\Gamma$ is of class $\cC^1$ and $(m(t))_{t\in [t_0,T]}$ is a $\cC^1$ family of densities, which are uniformly bounded above and below by positive constants. Suppose furthermore that {\bf (HF1(${\boldsymbol n}$))} and {\bf (HG1(${\boldsymbol n}$+2))} hold for some $n\geq  0$.  Then system \eqref{eq:prem} has a classical solution $(z,\rho)\in \cC^{1+\alpha/2,2+\alpha}\times  \cC^{1+\alpha/2,2+\alpha}$. 

Moreover, the pair $(z,\rho)$ satisfies the following estimates: 
\be\label{esti1theo}
\ds\sup_{t\in [t_0,T]} \|z(t,\cdot)\|_{n+2+\alpha} +\sup_{t\neq t'} \frac{\|z(t',\cdot)-z(t,\cdot)\|_{n+2+\alpha}}{|t'-t|^\frac12}\leq C_nM.
\ee
and
\be\label{esti3theo}
\sup_{t\in [t_0,T]} \|\rho(t)\|_{(X_{n+1})'} + \sup_{t\neq t'} \frac{\|\rho(t')-\rho(t)\|_{(X_{n+1})'}}{|t-t'|^{\frac12}}
 \leq  C_nM,
\ee
where the constant $C_n$ depends on $n$, $T$,  $\sup_{t\in [t_0,T]}\|V(t,\cdot)\|_{X_{n+1}}$, the constant $\bar C$ in \eqref{condGamma}, $F$ and $G$ (but not on the smoothness assumption on  $b$, $c$, $z_T$, $\rho_0$, $V$, $\Gamma$ and $m$) and where $M$ is given by 
\be\label{defMtheo}
M:= \|z_T\|_{X_{n+2}}+  \|\rho_0\|_{({X_{n+1}})'}+ \sup_{t\in[t_0,T]}( \|b(t,\cdot)\|_{X_{n+1}} +\|c(t)\|_{(X_{n})'}) . 
\ee
\end{Lemma}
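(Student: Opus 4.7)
The plan is to combine a Lasry--Lions monotonicity energy identity on \eqref{eq:prem}, a parabolic Schauder estimate on the backward equation \eqref{eq:prem}(i), and a duality estimate on the forward equation \eqref{eq:prem}(ii), into a coupled bootstrap, and to extract the $\tfrac12$-Hölder time regularity via Duhamel. For existence of a classical solution under the smoothness hypotheses on $b,c,z_T,\rho_0,V,\Gamma,m$, one can apply Schauder's fixed point theorem to the map $\rho\mapsto z\mapsto \rho'$ (solve \eqref{eq:prem}(i) with $\rho$ frozen, then \eqref{eq:prem}(ii) with the resulting drift $Dz$), or run a Peng--Wu style continuation along a parameter $\lambda\in[0,1]$ switching on the three coupling terms $(\lambda\Gamma, \lambda\delta F/\delta m,\lambda\delta G/\delta m)$; both schemes feed on the a priori estimates below, and the same estimates, applied to the difference of two candidate solutions with $b=c=z_T=\rho_0=0$, yield uniqueness.

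The first ingredient is the fundamental energy identity obtained by computing
\begin{equation*}
\frac{d}{dt}\int_{\T^d} z(t,x)\,\rho(t,x)\,dx,
\end{equation*}
substituting \eqref{eq:prem}(i)--(ii), and integrating by parts on $\T^d$ to cancel the Laplacian and the transport cross-terms (leaving $-\int m\,\Gamma\,Dz\cdot Dz$). Integrating in $t$ from $t_0$ to $T$ and using the boundary conditions, together with the linearized monotonicity inequalities \eqref{e.monotoneF}---which amount to positivity of the quadratic forms $\mu\mapsto\int\!\int \tfrac{\delta F}{\delta m}(x,m,y)\mu(x)\mu(y)\,dxdy$ and its $G$-analogue on centered $\mu$ (the centered case being the relevant one since \eqref{eq:prem}(ii) conserves $\int\rho$ and the constant component of $\rho_0$ can be split off)---produces an $L^2_{t,x}$ bound
\begin{equation*}
\int_{t_0}^T\!\!\int_{\T^d} m\,\Gamma\,Dz\cdot Dz\,dxdt \,\leq\, C\bigl(\|z(t_0)\|_{X_{n+1}}\|\rho_0\|_{(X_{n+1})'}+\|z_T\|_{X_{n+1}}\|\rho(T)\|_{(X_{n+1})'}+\cdots\bigr).
\end{equation*}

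The crux is then the coupled Schauder/duality bootstrap. On the one hand, the parabolic Schauder estimate of Lemma \ref{l.estilinear} applied to \eqref{eq:prem}(i), together with the regularity of $\delta F/\delta m$ and $\delta G/\delta m$ from (HF1)--(HG1) bounding the coupling terms by the dual norms of $\rho(t)$ and $\rho(T)$, yields
\begin{equation*}
\sup_t\|z(t,\cdot)\|_{X_{n+2}}\leq C\bigl(M+\sup_t\|\rho(t)\|_{(X_{n+1})'}\bigr).
\end{equation*}
On the other hand, testing \eqref{eq:prem}(ii) against the classical solution $w\in \cC^0([t_0,t],X_{n+1})$ of the adjoint backward parabolic equation on $[t_0,t]\times\T^d$ with terminal data $w(t,\cdot)=\phi\in X_{n+1}$ gives, via integration by parts, the weak identity
\begin{equation*}
\langle \rho(t),\phi\rangle = \langle \rho_0,w(t_0,\cdot)\rangle - \int_{t_0}^t\!\!\int_{\T^d}(m\,\Gamma\,Dz)\cdot Dw\,dxds - \int_{t_0}^t\langle c(s),Dw(s,\cdot)\rangle\,ds,
\end{equation*}
which, combined with the Schauder bound $\sup_s\|w(s,\cdot)\|_{X_{n+1}}\leq C\|\phi\|_{X_{n+1}}$, produces
\begin{equation*}
\sup_t\|\rho(t)\|_{(X_{n+1})'}\leq C\bigl(\|\rho_0\|_{(X_{n+1})'}+(T-t_0)\sup_s\|z(s,\cdot)\|_{X_{n+1}}+(T-t_0)\sup_s\|c(s)\|_{(X_n)'}\bigr).
\end{equation*}
Since $X_{n+2}\hookrightarrow X_{n+1}$, combining both inequalities on a short time interval closes the system by Gronwall, and iteration covers $[t_0,T]$. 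The $\tfrac12$-Hölder time regularity then follows from the Duhamel representation and heat-kernel smoothing bounds as in Lemma \ref{l.estilinear} (for $z$), and from evaluating the weak identity above at two times $t$ and $t'$ and exploiting the $|t-t'|^{1/2}$-smoothing of $\phi$ under the adjoint heat flow (for $\rho$).

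The main obstacle is this third step: closing the bootstrap requires a precise matching of derivative losses between the coupling terms and the Schauder/duality gains, which is what dictates the index shift between (HF1($\boldsymbol n$)) and (HG1($\boldsymbol n$+2)), and what makes it possible for $C_n$ to depend only on $T$, $\sup_t\|V\|_{X_{n+1}}$, $\bar C$, and the data-free norms of $F$ and $G$---not on the higher smoothness of the data, which is used only qualitatively to run the existence and weak-form arguments.
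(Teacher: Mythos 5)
You have the right three ingredients -- the Lasry--Lions energy identity, Schauder on the backward equation via Lemma \ref{l.estilinear}, and duality on the forward equation -- and the paper indeed builds the existence argument on a Leray--Schauder fixed point (one of your two proposed schemes), so the framework is sound. The gap is in how the bootstrap is closed, and it is not a small one.

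First, your duality estimate bounds $\int_{t_0}^t\!\int (m\,\Gamma\,Dz)\cdot Dw$ by $(T-t_0)\sup_s\|z(s,\cdot)\|_{X_{n+1}}\cdot\sup_s\|Dw(s,\cdot)\|_\infty$, which is \emph{linear} in $\sup_s\|z(s,\cdot)\|_{X_{n+1}}$. Combined with the Schauder estimate $\sup_t\|z(t,\cdot)\|_{X_{n+2}}\leq C\bigl(M+\sup_t\|\rho(t)\|_{(X_{n+1})'}\bigr)$, the two inequalities are mutually linear and close only if $(T-t_0)$ is small, whence your appeal to "iteration." But this is a genuine forward--backward coupling with the nonlocal terminal condition $z(T)=\tfrac{\delta G}{\delta m}(\cdot,m(T))(\rho(T))+z_T$: the terminal data for $z$ depend on $\rho(T)$, which is determined by the forward evolution across the \emph{whole} interval. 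Restricting to a subinterval $[t_0,t_1]$ does not give you usable boundary data at $t_1$, so the short-time argument cannot be iterated in the naive way. This is precisely why the paper's $C_n$ can be taken to depend on $T$ in a controlled manner rather than requiring smallness.

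The way out, and what the paper actually does, is to feed the energy identity into the duality step. You state the $L^2_{t,x}$ energy bound $\int_{t_0}^T\!\int m\,\Gamma\,Dz\cdot Dz\leq C\bigl(\sup_t\|\rho\|_{(X_{n+1})'}(\|z_T\|+\|b\|)+\sup_t\|z\|_{X_{n+1}}(\|\rho_0\|+\|c\|)\bigr)$, but you then do not use it when estimating $\langle\rho(t),\phi\rangle$. If instead of a uniform bound you apply Cauchy--Schwarz to the term $\int (m\,\Gamma\,Dz)\cdot Dw$ -- splitting it as $\bigl(\int |Dw|^2 m\bigr)^{1/2}\bigl(\int |\Gamma Dz|^2 m\bigr)^{1/2}$ -- the second factor is controlled by the energy bound, which is only of size $\sup_t\|z\|_{X_{n+1}}^{1/2}\cdot(\text{data})^{1/2}$, i.e. \emph{sublinear} in $\sup_t\|z\|$. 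That sublinearity is exactly what lets the fixed-point inequality $\sup_t\|z\|_{X_{n+2}}\leq C\bigl(M+\sup_t\|z\|_{X_{n+1}}^{1/2}(\|\rho_0\|^{1/2}+\|c\|^{1/2})\bigr)$ be rearranged (Young) to yield \eqref{esti1theo} on the full interval $[t_0,T]$ at once, with no smallness of $T-t_0$, and then \eqref{esti3theo} follows by plugging back. The monotonicity is thus not an auxiliary ingredient but the mechanism that transforms a linear, short-time estimate into a global one; your proposal demotes it to an unused preliminary and substitutes Gronwall, which does not apply here.

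A secondary remark: in the Leray--Schauder implementation, the paper inserts a homotopy parameter $\sigma$ multiplying both the $F$, $G$ coupling terms and the inputs $(b,c,z_T,\rho_0)$, and the a priori bound must be verified uniformly in $\sigma\in(0,1]$. Your description of the fixed-point map as $\rho\mapsto z\mapsto\rho'$ without the $\sigma$ scaling is harmless for intuition but would need that uniformity to conclude.
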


Remark: if $m_0$ has a smooth density which is bounded above and below by positive constants and if $(u,m)$ is the solution to \eqref{MFG}, then $V(t,x):= D_pH(x,Du(t,x))$ and $\Gamma(t,x):= D^2_{pp}H(x,Du(t,x))$ satisfy the conditions of Lemma \ref{lem:BasicEsti0}.

\begin{proof}
Without loss of generality we assume $t_0=0$.   We prove the existence of a solution to \eqref{eq:prem} by Leray-Schauder argument. The proof requires several steps, the key argument being precisely the estimates \eqref{esti1theo} and \eqref{esti3theo}. \\

\noindent {\it Step 1: Definition of the map ${\bf T}$.} Let $\beta\in (0,1/2)$ and set $X:=\cC^\beta([0,T], (X_{n+1})')$. For $\rho\in X$, we define ${\bf T}(\rho)$ as follows: let $z$ be the solution to 
\be\label{e.uhbzosdhu}
\left\{ \begin{array}{l}
\ds - \partial_t z -  \Delta z + V(t,x)\cdot Dz  = \frac{\delta F}{\delta m}(x, m(t))(\rho(t))+ b \qquad {\rm in}\; [0,T]\times \T^d,\\
\ds z(T)=\frac{\delta G}{\delta m}(x, m(T))(\rho(T))+z_T \qquad {\rm in } \; \T^d
\end{array}\right.
\ee
By our assumptions on the data, $z$ solves a parabolic equation with $\cC^{\beta/2,\beta}$ coefficients, and, by Schauder estimates, is therefore bounded in $\cC^{1+\beta/2, 2+\beta}$ when $\rho$ is bounded in $X$. Next we define $\tilde \rho$ as the solution to 
$$
\left\{ \begin{array}{l}
\ds \partial_t \tilde\rho - \Delta \tilde \rho - \dive(\tilde \rho V) - \dive (m\Gamma Dz+c)=0 \qquad  {\rm in}\; [0,T]\times \T^d\\
\ds \tilde \rho(0)=\rho_0\qquad {\rm in}\; \T^d.
\end{array}\right.
$$
Again by Schauder estimates $\tilde \rho$ is bounded in $\cC^{1+\beta/2, 2+\beta}$ for bounded $\rho$. Setting ${\bf T}(\rho):= \tilde \rho$ defines the continuous and compact map ${\bf T}:X\to X$.  

In the rest of the proof we show that, if  $\rho=\sigma {\bf T}(\rho)$ for some $(\rho,\sigma)\in X\times [0,1]$, then $\rho$ satisfies \eqref{esti3theo}. This estimate proves that the norm in $X$ of $\rho$ is bounded independently of $\sigma$. Then we can conclude by Leray-Schauder Theorem the existence of a fixed point for ${\bf T}$, which, by definition, yields a classical solution to \eqref{eq:prem}. 

From now on  we fix $(\rho,\sigma)\in X\times [0,1]$ such that $\rho=\sigma {\bf T}(\rho)$ and let $z$ be the solution to \eqref{e.uhbzosdhu}. Note that the pair $(z,\rho)$ satisfies 
$$
\left\{ \begin{array}{l}
\ds - \partial_t z -  \Delta z + V(t,x)\cdot Dz  = \sigma\left(\frac{\delta F}{\delta m}(x, m(t))(\rho(t))+ b\right) \qquad {\rm in}\; [0,T]\times \T^d,\\
\ds \partial_t \rho - \Delta  \rho - \dive( \rho V) - \sigma\dive (m\Gamma Dz+c)=0 \qquad  {\rm in}\; [0,T]\times \T^d\\
\ds  \rho(0)=\sigma\rho_0,\qquad z(T)=\sigma\left(\frac{\delta G}{\delta m}(x, m(T))(\rho(T))+z_T\right) \qquad {\rm in } \; \T^d.
\end{array}\right.
$$
Our goal is to show that \eqref{esti1theo} and \eqref{esti3theo} hold for $z$ and $\rho$ respectively. Without loss of generality we can assume  that $\sigma$ is positive, since otherwise $\rho=0$. \\

\noindent {\it Step 2: Use of the monotonicity condition.} We note that 
$$
\begin{array}{l}
\ds \frac{d}{dt} \inte z(t,x)\rho(t,x)dx \\ 
\qquad \qquad =  \ds -\sigma  \inte \Bigl[
\frac{\delta F}{\delta m}(x, m(t))\bigl(\rho(t) \bigr)+b(t) \Bigr] \rho(t,x)dx \\
\qquad \qquad \qquad \ds -\sigma \inte  Dz(t,x)\cdot
\bigl[ \Gamma(t,x) Dz(t,x) \bigr] \ m(t,x)dx - \sigma \inte  Dz(t,x)\cdot c(t,x)dx.
 \end{array}
$$
Using  the monotonicity of $F$ and $G$ and dividing by $\sigma$, we have:
$$
\begin{array}{l}
\ds \int_0^T\inte  \Gamma(t,x) Dz(t,x)\cdot Dz(t,x) m(t,x)dxdt \\
 \qquad \ds \leq  \ds -\inte 
 \bigl[ \frac{\delta G}{\delta m}(x, m(T))(\rho(T))+z_T(x) \bigr] \rho(T,x)dx\\
 \qquad \ds  +\inte z(0,x)\rho_0(x)dx-\int_0^T  \inte \bigl( b(t,x) \rho(t,x) + Dz(t,x) \cdot c(t,x)\bigr) dxdt \\
 \leq   \ds  \sup_{t\in [0,T]} \|\rho(t)\|_{({X_{n+1}})'}
 \bigl(\|z_T\|_{X_{n+1}}+\|b\| \bigr)+
 \sup_{t\in [0,T]} \|z(t,\cdot)\|_{X_{n+1}}
 \bigl(\|\rho_0\|_{({X_{n+1}})'}+\|c\| \bigr)
 \end{array}
$$
where we have set $\ds \|b\|= \sup_{t\in [0,T]} \|b(t,\cdot) \|_{X_{n+1}}$,  $\|c\|:= \sup_{t\in [0,T]} \|c(t)\|_{({X_{n}})'}$. 
Using assumption \eqref{condGamma} on $\Gamma$, we get: 
\be\label{jhblbj}
\begin{array}{l}
\ds \int_0^T\inte |\Gamma(t,x) Dz(t,x)|^2 m(t,x)dx \\
\ds \qquad \leq  \ds  C\biggl( \sup_{t\in [0,T]} \|\rho(t)\|_{({X_{n+1}})'}
\bigl(\|z_T\|_{X_{n+1}}+\|b\| \bigr)+
 \sup_{t\in [0,T]} \|z(t)\|_{X_{n+1}}
 \bigl(\|\rho_0\|_{({X_{n+1}})'}+\|c\| \bigr)\biggr).
 \end{array}
 \ee

{\it Second step: Duality technique.} Next we use a duality technique for checking the regularity of $\rho$. Let $\tau\in(0,T]$, $\xi \in X_{n+1}$ and $w$ be the solution to the backward equation
\be\label{eq:wpreli}
- \partial_t w -  \Delta w + V(t,x)\cdot Dw  =0\; {\rm in}\; [0,\tau]\times \T^d, \qquad w(\tau)= \xi \ {\rm in} \ \T^d.
\ee
Lemma \ref{l.estilinear} states that
\be\label{e.ineqwxi}
\sup_{t\in [0,T]} \|w(t,\cdot)\|_{n+1+\alpha} +\sup_{t\neq t'} \frac{\|w(t',\cdot)-w(t,\cdot)\|_{n+1+\alpha}}{|t'-t|^\frac12}
\leq C\|\xi\|_{n+1+\alpha},
\ee
where $C$ depends  on $\sup_{t\in [0,T]} \|V(t,\cdot)\|_{n+\alpha}$. 
As
\begin{equation}
\label{eq:duality:eqenplus}
\frac{d}{dt} \inte w(t,x)\rho(t,x)dx =  - \sigma\inte  Dw(t,x)\cdot \bigl(m(t,x)\Gamma(t,x) Dz(t,x)+c(t,x)\bigr)dx \;,
\end{equation}
we get (recalling that $\sigma \in (0,1]$)
$$
\begin{array}{l}
\ds \inte \xi(x)\rho(\tau,x)dx \\ 
\qquad =  \ds \sigma \inte w(0,x)\rho_0(x)dx-\sigma \int_0^\tau \inte  Dw(t,x)\cdot \bigl(
m(t,x)\Gamma(t,x) Dz(t,x)+c(t,x)\bigr)dxdt  \\
\qquad  \leq  \ds \|w(0)\|_{X_{n+1}}  \|\rho_0\|_{({X_{n+1}})'}+ \sup_{t\in [0,T]}\|Dw(t,\cdot)\|_{X_{n}}\|c\|\\
\qquad  \ds  \qquad \ds   +  \left(\int_0^T\inte |Dw(t,x)|^2m(t,x)dxdt\right)^{1/2}\left(\int_0^T\inte |\Gamma(t,x) Dz(t,x)|^2 m(t,x)dxdt 
\right)^{1/2} \\
\qquad  \leq  \ds C\left[ \|\xi\|_{X_{n+1}}  \|\rho_0\|_{({X_{n+1}})'}+ \sup_{t\in [0,T]}\|Dw(t,\cdot)\|_{X_{n}}\|c\|
\right.\\
\ds \qquad \qquad \qquad \ds \left.  
+  \|Dw\|_{\infty} \left(\int_0^T\inte |\Gamma(t,x) Dz(t,x)|^2 m(t,x)dxdt \right)^{1/2}  \right].
\end{array}
$$
Using  \eqref{jhblbj} and \eqref{e.ineqwxi} we obtain therefore
$$
\begin{array}{rl}
\ds \inte \xi(x)\rho(\tau,x)dx \; \leq &   \ds C\|\xi\|_{X_{n+1}} \biggl[   \|\rho_0\|_{({X_{n+1}})'}+ \|c\|+  \sup_{t\in [0,T]} \|\rho(t)\|_{({X_{n+1}})'}^{1/2}
\bigl(\|z_T\|_{X_{n+1}}^{1/2}+\|b\|^{1/2} \bigr)
\\
& \qquad \qquad \qquad \qquad \qquad \qquad \qquad \ds +
 \sup_{t\in [0,T]} \|z(t,\cdot)\|_{X_{n+1}}^{1/2}
 \bigl(\|\rho_0\|_{({X_{n+1}})'}^{1/2}+\|c\|^{1/2}\bigr)\biggr].
\end{array}
$$
Taking the supremum over $\xi$ with $\|\xi\|_{X_n+1}\leq 1$ and over $\tau\in [0,T]$ yields to 
$$
\begin{array}{rl}
\ds \sup_{t\in [0,T]} \|\rho(t)\|_{({X_{n+1}})'} \; \leq &   \ds C\biggl[   \|\rho_0\|_{({X_{n+1}})'}+ \|c\|+  \sup_{t\in [0,T]} \|\rho(t)\|_{({X_{n+1}})'}^{1/2}
\bigl(\|z_T\|_{X_{n+1}}^{1/2}+\|b\|^{1/2}\bigr)
\\
& \qquad \qquad \qquad \qquad \qquad \qquad \qquad \ds +
 \sup_{t\in [0,T]} \|z(t,\cdot)\|_{X_{n+1}}^{1/2}
 \bigl(\|\rho_0\|_{({X_{n+1}})'}^{1/2}+\|c\|^{1/2}\bigr)\biggr].
\end{array}
$$
Rearranging and using the definition of $M$ in \eqref{defM}, we obtain 
\be\label{esti:rho}
\sup_{t\in [0,T]} \|\rho(t)\|_{({X_{n+1}})'} \leq  C \biggl[  M+
 \sup_{t\in [0,T]} \|z(t,\cdot)\|_{X_{n+1}}^{1/2}
 \bigl(\|\rho_0\|_{({X_{n+1}})'}^{1/2}+\|c\|^{1/2}\bigr)\biggr].
\ee
We can use the same kind of argument to obtain the regularity of $\rho$ with respect to the time variable: integrating 
\eqref{eq:duality:eqenplus}
 in time and using the H\"{o}lder estimate in \eqref{e.ineqwxi} we have, for any $\tau\in [0,T]$,  
$$
\begin{array}{l}
\ds \inte  \xi(x) \bigl( \rho(\tau,x)-\rho(t,x) \bigr)dx\\
\qquad  =   \ds \inte  \bigl(w(t,x)-w(\tau,x) \bigr)\rho(t,x)dx -\sigma\int_t^\tau \inte  Dw(s,x)\cdot  \bigl[ \Gamma(s,x) Dz(s,x)  \bigr] m(s,x) dxds
\\
\qquad \qquad \qquad \ds -\sigma\int_t^\tau \inte  Dw(s,x)\cdot c(s,x)dxds\\
\qquad \leq \ds C (\tau-t)^{\frac12}\|\xi\|_{X_{n+1}}\sup_{t\in [0,T]} \|\rho(t)\|_{({X_{n+1}})'}\\
\qquad \qquad \qquad \ds +(\tau-t)^{1/2}\|Dw\|_\infty \left(\int_0^T \inte \left|\Gamma(s,x) Dz(s,x)\right|^2m(s,x) dxds\right)^{1/2}
\\
 \qquad \qquad \qquad \ds 
 +(\tau-t)\sup_{t\in [0,T]} \|w(t,\cdot)\|_{X_{n+1}}\|c(t)\| .
\end{array}
$$ 
Plugging 
\eqref{esti:rho}
into
\eqref{jhblbj}, we get that the 
root of the
left-hand side in 
\eqref{jhblbj} 
satisfies the same bound as the left-hand side
in \eqref{esti:rho}.
Therefore,
$$
\begin{array}{l}
\ds \inte  \xi(x) \bigl( \rho(\tau,x)-\rho(t,x) \bigr)dx
\\
\hspace{15pt} \leq    \ds C  (\tau-t)^{\frac12}\|\xi\|_{X_{n+1}}    \biggl[M+\sup_{t\in [0,T]} \|z(t,\cdot)\|_{X_{n+1}}^{1/2}
\bigl(\|\rho_0\|_{({X_{n+1}})'}^{1/2}+\|c\|^{1/2} \bigr)\biggr].
\end{array}
$$ 
Dividing by $(\tau-t)^{1/2}$
and taking the supremum over $\xi$ yields
\be\label{esti:rhobis}
\sup_{t\neq t'} \frac{\|\rho(t')-\rho(t)\|_{(X_{n+1})'}}{|t-t'|^{\frac12}} \leq C  \left[M
+
 \sup_{t\in [0,T]} \|z(t,\cdot)\|_{X_{n+1}}^{1/2}
 \bigl(\|\rho_0\|_{({X_{n+1}})'}^{1/2}+\|c\|^{1/2} \bigr) \right] .
\ee

\noindent {\it Third step: Estimate of $z$.} In view of the equation satisfied by $z$, we have, by Lemma \ref{l.estilinear}, 
$$
\begin{array}{l}
\ds\sup_{t\in [0,T]} \|z(t,\cdot)\|_{n+2+\alpha} +\sup_{t\neq t'} \frac{\|z(t',\cdot)-z(t,\cdot)\|_{n+2+\alpha}}{|t'-t|^\frac12} \\
\qquad \leq  \ds C\sigma \biggl[\sup_{t\in [0,T]} \left\|\frac{\delta F}{\delta m}\bigl(x, m(t)
\bigr)(\rho(t))+ b(t,\cdot)\right\|_{n+1+\alpha}+ \left\|\frac{\delta G}{\delta m}
\bigl(x, m(T)\bigr)(\rho(T))+z_T\right\|_{n+2+\alpha}\biggr],
\end{array}
$$
where $C$ depends on  $\sup_{t\in [0,T]} \|V(t,\cdot)\|_{n+1+\alpha}$.
Assumptions {\bf (HF1(${\boldsymbol n}$+1))} and {\bf (HG1(${\boldsymbol n}$+2))} on $F$ and $G$ and the fact that $\sigma\in [0,1]$ imply that the right-hand side of the previous inequality is bounded above by  
$$
C\left[\sup_{t\in [0,T]} \|\rho(t)\|_{(X_{n+1})'}+ \|b\|+ \|\rho(T)\|_{(X_{n+2})'}+\|z_T\|_{X_{n+2}}\right].
$$
Estimate  \eqref{esti:rho}  on $\rho$ then implies (since $\|\rho(T)\|_{(X_{n+2})'}\leq \|\rho(T)\|_{(X_{n+1})'}$): 
$$
\begin{array}{l}
\ds\sup_{t\in [0,T]} \|z(t,\cdot)\|_{n+2+\alpha} +\sup_{t\neq t'} \frac{\|z(t',\cdot)-z(t,\cdot)\|_{n+2+\alpha}}{|t'-t|^\frac12} \\
 \ds
\qquad \qquad \qquad \leq C\left[M+
 \sup_{t\in [0,T]} \|z(t,\cdot)\|_{X_{n+1}}^{1/2}
 \bigl(\|\rho_0\|_{({X_{n+1}})'}^{1/2}+\|c\|^{1/2}\bigr)\right].
 \end{array}
$$
Rearranging we obtain \eqref{esti1theo}. 
Plugging this estimate into \eqref{esti:rho} and  \eqref{esti:rhobis} then  gives \eqref{esti3theo}. 
%
%
\end{proof}

We now discuss the existence and uniqueness of the solution for general data. 
Given $n\geq 2$, $z_T\in X_{n+2}$, $\rho_0\in X_{n}'$, $b\in L^\infty([0,T], X_{n})$, $c\in L^\infty([0,T], [(X_{n})']^d)$, we define a solution to \eqref{eq:prem} to be a pair $(z,\rho)\in  \cC^0([0,T], X_{n+2}\times (X_{n})')$ that satisfies \eqref{eq:prem} in the sense of distribution. 


Here is our main estimate on system \eqref{eq:prem}.

\begin{Lemma}
\label{lem:BasicEsti2} Let $n\geq 0$. Assume that {\bf (HF1(${\boldsymbol n}$+1))} and {\bf (HG1(${\boldsymbol n}$+2))} hold, that $V\in C^0([0,T], X_{n+1})$ and that 
\be\label{defM}
M:= \|z_T\|_{X_{n+2}}+  \|\rho_0\|_{({X_{n+1}})'}+ \sup_{t\in[t_0,T]}( \|b(t,\cdot)\|_{X_{n+1}} +\|c(t)\|_{(X_{n})'}) \; <\; \infty. 
\ee
Then there exists a unique  solution  $(z,\rho)$ of \eqref{eq:prem} with initial condition $\rho(t_0)=\rho_0$. This solution satisfies 
$$
\ds \sup_{t\in [t_0,T]} \|(z(t,\cdot),\rho(t))\|_{{X_{n+2}}\times ({X_{n+1}})'} \;  \leq \; \ds  CM,
$$
where the constant $C$ depends on $n$, $T$,  $\sup_{t\in [0,T]}\|V(t,\cdot)\|_{X_{n+1}}$, the constant $\bar C$ in \eqref{condGamma}, $F$ and $G$. 

Moreover this solution is stable in the following sense: assume that 
\begin{itemize}
\item the data $V^k$, $m^k$, $\Gamma^k$ and $\rho_0^k$ converge to $V$, $m$, $\Gamma$ and $\rho_0$ respectively in the spaces $\cC^0([0,T]\times \T^d,\R^d)$, $\cC^0([t_0,T],\Pk)$, $\cC^1([0,T]\times \T^d,\R^d)$  and $(X_{n+1})'$, 
\item the perturbations $(b^k)$, $(c^k)$ and $z_T^k$ converge to $b$, $c$ and $z_T$, uniformly in time, in $X_{n+1}$, in $[(X_{n})']^d$ and in $X_{n+2}$ respectively.
\end{itemize}
Suppose also that the $M^k$ (defined by \eqref{defM} for the $(b^k)$, $(c^k)$, $z_T^k$ and $\rho_0^k$) are bounded above by $M+1$ and that the $\sup_{t\in [0,T]}\|V^k(t,\cdot)\|_{X_{n+1}}$ are uniformly bounded. Then the corresponding solutions $(z^k,\rho^k)$ converge to the solution $(z,\rho)$ of \eqref{eq:prem} in $\cC^0([t_0,T], \cC^{n+2+\alpha}(\T^d)\times (\cC^{n+1+\alpha}(\T^d))')$. 
\end{Lemma}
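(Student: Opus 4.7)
The overall strategy is a regularization-and-passage-to-the-limit argument built on top of Lemma \ref{lem:BasicEsti0}: I shall produce smooth approximations of every coefficient and every piece of data entering \eqref{eq:prem}, invoke Lemma \ref{lem:BasicEsti0} to obtain classical solutions with bounds that depend only on the quantities appearing in the statement, and then extract a limit in suitable Banach spaces. Concretely, I would mollify $V$, $\Gamma$, the flow $m$ (replacing it by a smooth positive density such that \eqref{condGamma} is preserved, with the same constant up to a vanishing error), and convolve $b$, $c$, $z_T$, $\rho_0$ so that the approximants $b^\varepsilon, c^\varepsilon, z_T^\varepsilon, \rho_0^\varepsilon$ are smooth, converge in the norms prescribed by $M$ in \eqref{defM}, and satisfy $M^\varepsilon \leq M+o(1)$. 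Lemma \ref{lem:BasicEsti0} then yields classical solutions $(z^\varepsilon,\rho^\varepsilon)$ satisfying \eqref{esti1theo}--\eqref{esti3theo} with a constant independent of $\varepsilon$.

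The compactness step is where the function-space bookkeeping matters. The uniform bound \eqref{esti1theo} says that $(z^\varepsilon)_\varepsilon$ is bounded in $\cC^0([t_0,T], X_{n+2+\alpha})$ and $1/2$-H\"older in time with values in the same space; by Arzel\`a--Ascoli and the compact embedding $X_{n+2+\alpha} \hookrightarrow X_{n+2+\alpha'}$ for $\alpha' < \alpha$, a subsequence converges in $\cC^0([t_0,T], X_{n+2})$. Similarly, \eqref{esti3theo} combined with the compact embedding $(X_{n+1})' \hookrightarrow (X_{n+2})'$ provides convergence of $(\rho^\varepsilon)$ in $\cC^0([t_0,T], (X_{n+2})')$, while a weak-$*$ argument preserves the $(X_{n+1})'$ bound in the limit. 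Passing to the limit in the distributional form of \eqref{eq:prem} is then routine: the terms $V^\varepsilon \cdot Dz^\varepsilon$, $m^\varepsilon \Gamma^\varepsilon Dz^\varepsilon$, and the nonlocal terms involving $\delta F/\delta m$ and $\delta G/\delta m$ all pass thanks to the convergence of the coefficients together with assumptions \textbf{(HF1($\boldsymbol{n}$+1))} and \textbf{(HG1($\boldsymbol{n}$+2))}. The limit $(z,\rho)$ is then a solution in the sense required, and it inherits the uniform estimate $\sup_t \|(z(t,\cdot),\rho(t))\|_{X_{n+2}\times (X_{n+1})'} \leq CM$ by lower semicontinuity.

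Uniqueness, which is the most delicate conceptual point, is obtained by the Lasry--Lions monotonicity trick already used in Step~2 of the proof of Lemma \ref{lem:BasicEsti0}. For the homogeneous problem ($b=c=0$, $z_T=0$, $\rho_0=0$), one computes
\begin{equation*}
\frac{d}{dt}\int_{\T^d} z\, \rho\, dx = -\int_{\T^d} \tfrac{\delta F}{\delta m}(x,m(t))(\rho(t))\,\rho(t,x)\,dx - \int_{\T^d} \Gamma(t,x) Dz\cdot Dz\, m(t,x)\, dx,
\end{equation*}
and integrating from $t_0$ to $T$, the monotonicity of $\delta F/\delta m$ and of $\delta G/\delta m$ together with the vanishing boundary data force $\int_{t_0}^T\int_{\T^d} \Gamma Dz\cdot Dz\, m\, dx\, dt = 0$. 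Thanks to the ellipticity of $\Gamma$ in \eqref{condGamma}, this gives $m Dz\equiv 0$, so the drift term $m\Gamma Dz$ drops out of the forward equation, which becomes a standard linear Fokker--Planck equation with zero initial datum; classical uniqueness yields $\rho\equiv 0$. Plugging this back into the backward equation, a standard maximum principle argument (or Lemma \ref{l.estilinear}) gives $z\equiv 0$. By linearity, this settles uniqueness for \eqref{eq:prem}.

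Stability is then almost a corollary of the construction: under the stated convergence of $V^k, m^k, \Gamma^k$ and of the data, the uniform bounds given by the a priori estimate are preserved, so one repeats the compactness argument above to extract a limit of $(z^k,\rho^k)$; uniqueness identifies this limit with $(z,\rho)$ and forces convergence of the whole sequence in $\cC^0([t_0,T], \cC^{n+2+\alpha}(\T^d) \times (\cC^{n+1+\alpha}(\T^d))')$. The main technical obstacle is the careful choice of the intermediate function spaces in which compactness is invoked versus the spaces in which the final bounds are stated; beyond that, the argument is a clean combination of Lemma \ref{lem:BasicEsti0}, the monotonicity of $F$ and $G$, and standard parabolic duality.
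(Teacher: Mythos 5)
Your high-level plan — mollify the data and coefficients, invoke Lemma \ref{lem:BasicEsti0} to get classical approximating solutions with uniform bounds, pass to a limit, and derive uniqueness from the Lasry--Lions monotonicity identity — is the paper's plan, and your uniqueness argument is the same one the paper states more tersely (any solution obeys \eqref{esti1theo}--\eqref{esti3theo}, so $M=0$ forces $(z,\rho)=(0,0)$). The genuine divergence is in how you extract the limit, and there your argument has a gap that the paper's route is specifically designed to avoid. The paper does not use Arzel\`a--Ascoli at all: it observes that, by linearity, the difference $(z^k-z^l,\rho^k-\rho^l)$ solves the same system with data of size $M^{k,l}\to 0$ (the discrepancies between $m^k,\Gamma^k$ and $m^l,\Gamma^l$ being absorbed into $b,c,z_T$ using $\mathrm{Lip}_n(\tfrac{\delta F}{\delta m})$ and the uniform bounds on $(z^k,\rho^k)$), so \eqref{esti1theo}--\eqref{esti3theo} applied to this difference show that the approximating sequence is \emph{Cauchy} in $\cC^0([t_0,T],X_{n+2}\times(X_{n+1})')$. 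This gives strong convergence in precisely the topology needed, and the stability assertion falls out of the same computation applied to $(z^k-z,\rho^k-\rho)$.

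Your compactness route breaks at two points. First, Arzel\`a--Ascoli with the compact adjoint embedding $(X_{n+1})'\hookrightarrow(X_{n+2})'$ only gives a subsequence of $(\rho^\varepsilon)$ converging in $\cC^0([t_0,T],(X_{n+2})')$. To pass to the limit in $\tfrac{\delta F}{\delta m}(\cdot,m^\varepsilon(t))(\rho^\varepsilon(t))$ you must test $\rho^\varepsilon(t)$ against $y\mapsto\tfrac{\delta F}{\delta m}(x,m^\varepsilon(t),y)$, which by \textbf{(HF1(${\boldsymbol n}$+1))} lies in $X_{n+1}=\cC^{n+1+\alpha}$ but not, in general, in $X_{n+2}$. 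A sequence that is bounded in $(X_{n+1})'$ and converges strongly in $(X_{n+2})'$ need not converge when tested against a genuine $\cC^{n+1+\alpha}$ function: on $\T^1$, $\rho^k(y)=k^{n+1+\alpha}\cos(ky)$ is bounded in $(X_{n+1})'$, tends to $0$ in $(X_{n+2})'$, yet pairs to a constant $\tfrac12$ with $\phi_k(y)=k^{-(n+1+\alpha)}\cos(ky)$, which has $\|\phi_k\|_{n+1+\alpha}\approx 1$. So ``the nonlocal terms all pass'' is precisely the step you cannot take; to push the limit through you would need to add a pointwise weak-$*$ convergence in $(X_{n+1})'$ plus an equicontinuity argument to make it uniform in $t$, which your sketch does not supply. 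Second, even if the limit were identified, the sub-subsequence trick only yields whole-sequence convergence in the topology in which compactness holds, i.e.\ in $\cC^{n+2+\alpha'}\times(X_{n+2})'$ for $\alpha'<\alpha$; it cannot upgrade to the $\cC^{n+2+\alpha}\times(X_{n+1})'$ convergence claimed in the stability assertion of the Lemma. The paper's Cauchy/linearity argument delivers both the full H\"older exponent and the $(X_{n+1})'$ topology directly, because \eqref{esti1theo}--\eqref{esti3theo} control exactly those norms.
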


\begin{proof}  By Lemma \ref{lem:BasicEsti0}, existence of a solution holds for smooth data. We now address the case where the data are not smooth
(so that $M$ cannot be zero). We smoothen $m$, $\Gamma$, $b$, $c$, $z_T$ and $\rho_0$ into $m^k$, $\Gamma^k$, $(b^k)$, $(c^k)$, $z_T^k$ and $\rho_0^k$ in such a way that the corresponding $M^k$ is bounded by
$2M$
 and $m^k$ is a smooth density bounded above and below by positive constants. Let $(z^k,\rho^k)$ be a classical solution to \eqref{eq:prem} as given  by Lemma \ref{lem:BasicEsti0}. Using the linearity of the equation, estimates \eqref{esti1theo}, \eqref{esti3theo} imply that $(z^k,\rho^k)$ is a Cauchy sequence in $C^0([0,T], \cC^{n+2+\alpha}(\T^d)\times \cC^{n+\alpha}(\T^d))')$ and therefore converges in that space to some limit $(z,\rho)$.  Moreover $(z,\rho)$ still satisfies the estimates \eqref{esti1theo},  \eqref{esti3theo}. By {\bf (HF1(${\boldsymbol n}$+1))} and {\bf (HG1(${\boldsymbol n}$+2))}, $(\frac{\delta F}{\delta m}(\cdot, m^k)(\rho^k))$ converges uniformly in time to $(\frac{\delta F}{\delta m}(\cdot, m)(\rho))$ while $(\frac{\delta G}{\delta m}(\cdot, m^k)(\rho^k))$ converges to $(\frac{\delta G}{\delta m}(\cdot, m)(\rho))$. Therefore $(z,\rho)$ is a weak solution to \eqref{eq:prem}. 

Note also that any solution of  \eqref{eq:prem} satisfies estimates \eqref{esti1theo}, \eqref{esti3theo}, so that uniqueness holds by linearity of the problem.
\end{proof}

\subsection{Differentiability of $U$ with respect to the measure}
\label{subse:partie:1:differentiability}

In this section we show that the map $U$ has a derivative with respect to $m$. To do so, we linearize the MFG system \eqref{MFG}. Let us fix $(t_0,m_0)\in [0,T]\times \Pk$ and let $(m,u)$ be the solution to the MFG system \eqref{MFG} with initial condition $m(t_0)=m_0$. Recall that, by  definition, $U(t_0,x,m_0)=u(t_0,x)$. 

For any $\mu_0$ in a suitable space, we consider the solution $(v,\mu)$ to the {\it linearized system} 
\be\label{eq:vmubis}
\left\{ \begin{array}{l}
\ds - \partial_t v - \Delta v + D_pH(x,Du)\cdot Dv =\frac{\delta F}{\delta m}(x,m(t))(\mu(t))\\
\ds \partial_t \mu - \Delta \mu -{\rm div}( \mu  D_pH(x, D u))-{\rm div}( m  D^2_{pp}H(x, D u)Dv)=0 \\
\ds v(T,x)=\frac{\delta G}{\delta m}(x,m(T))(\mu(T)), \; \mu(t_0,\cdot)=\mu_0
\end{array}\right.
\ee
Our aim is to prove that  $U$ is of class $\cC^1$ with respect to $m$ with
$$
v(t_0,x)= \inte \frac{\delta U}{\delta m}(t_0,x,m_0,y) \mu_0(y)dy.
$$

Let us start by showing that the linearized system \eqref{eq:vmubis} has a solution and give estimates on this solution. 

\begin{Proposition}\label{prop:estiDmU} Assume that {\bf (HF1(${\boldsymbol n}$+1))} and {\bf (HG1(${\boldsymbol n}$+2))} hold for some $n\geq 0$.
\begin{itemize}
\item[(i)] Let $m_0$ be a smooth density bounded below by
a positive constant and let $\mu_0$ be smooth map on $\T^d$. Then there exists a unique solution $(v,\mu)\in \cC^{1+\alpha/2,2+\alpha}\times  \cC^{1+\alpha/2,2+\alpha}$ to system \eqref{eq:vmubis}.

\item[(ii)] If $m_0\in \Pk$ and $\mu_0\in (\cC^{n+1+\alpha})'$, there is a unique solution  $(v,\mu)$ of \eqref{eq:vmubis} (in the sense given in section \ref{subsec:LS}) and this solution satisfies 
$$
\ds \sup_{t\in [t_0,T]} \left\{ \|v(t,\cdot)\|_{n+2+\alpha}+\|\mu(t)\|_{-(n+1+\alpha)} \right\}\;  \leq \; \ds  C\|\mu_0\|_{-(n+1+\alpha)}, 
$$
where the  constant $C$ depends on $n$, $T$,  $H$, $F$ and $G$ (but not on $(t_0,m_0)$). 

\item[(iii)] The solution is stable in the following sense: assume that the triplet $(t_0^k,m_0^k,\mu_0^k)$ converges to $(t_0,m_0,\mu_0)$ in $[0,T]\times \Pk\times (\cC^{n+1+\alpha})'$. Then  the corresponding solutions $(v^k,\mu^k)$ to \eqref{eq:vmubis} (where  $(u^k,m^k)$ solves  \eqref{MFG} with initial condition $m^k(t^k_0)=m_0^k$) converge to the solution $(v,\mu)$ in $\cC^0([t_0,T], \cC^{n+2+\alpha}(\T^d)\times (\cC^{n+1+\alpha}(\T^d))')$.  
\end{itemize}
\end{Proposition}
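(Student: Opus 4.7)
The key observation is that the linearized system \eqref{eq:vmubis} is precisely an instance of the abstract forward--backward linear system \eqref{eq:prem} studied in Subsection \ref{subsec:LS}. Specifically, I would set $z=v$, $\rho=\mu$, $V(t,x)=D_pH(x,Du(t,x))$, $\Gamma(t,x)=D^2_{pp}H(x,Du(t,x))$, $m$ equal to the density component of the MFG solution starting from $(t_0,m_0)$, and $b\equiv 0$, $c\equiv 0$, $z_T\equiv 0$, $\rho_0=\mu_0$. With this identification, the quantity $M$ in \eqref{defMtheo}/\eqref{defM} reduces to $\|\mu_0\|_{-(n+1+\alpha)}$, and the three parts of the proposition become direct consequences of Lemmas \ref{lem:BasicEsti0} and \ref{lem:BasicEsti2}, provided we verify their hypotheses uniformly in $(t_0,m_0)$.

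For part (i), I would invoke the last part of Proposition \ref{prop:reguum}: when $m_0$ has a smooth positive density, $m$ itself is $\cC^{1+\alpha/2,2+\alpha}$ with a smooth positive density, and $u$ inherits as much spatial regularity as the data allow; since $H$ is smooth, both $V\in\cC^{1+\alpha/2,2+\alpha}$ and $\Gamma\in\cC^1$. Condition \eqref{condGamma} on $\Gamma$ follows from \eqref{HypD2H} together with the uniform Lipschitz bound on $Du$ from \eqref{esti:um}, and condition \eqref{condGamma} on $m$ is the $\frac12$-H\"older-in-time estimate from \eqref{esti:um}. Lemma \ref{lem:BasicEsti0} then yields a classical $\cC^{1+\alpha/2,2+\alpha}\times\cC^{1+\alpha/2,2+\alpha}$ solution, and uniqueness in that smooth class follows by linearity from the estimates \eqref{esti1theo}--\eqref{esti3theo}.

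For part (ii), I would apply Lemma \ref{lem:BasicEsti2} directly. The hypothesis $V\in \cC^0([0,T],X_{n+1})$ follows from \eqref{esti:um} under the assumed regularity on $F$ and $G$, and crucially the resulting norm bounds, as well as the constant $\bar C$ in \eqref{condGamma}, are uniform in $(t_0,m_0)$ since the right-hand side of \eqref{esti:um} does not depend on the initial data. Lemma \ref{lem:BasicEsti2} thus delivers a unique solution $(v,\mu)\in\cC^0([t_0,T],X_{n+2}\times (X_{n+1})')$ with the stated bound $\sup_t(\|v(t,\cdot)\|_{n+2+\alpha}+\|\mu(t)\|_{-(n+1+\alpha)})\leq C\|\mu_0\|_{-(n+1+\alpha)}$, the constant $C$ being independent of $(t_0,m_0)$.

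For part (iii), I would rely on the stability statement of Lemma \ref{lem:BasicEsti2}, whose hypotheses require the convergences $V^k\to V$, $\Gamma^k\to\Gamma$, $m^k\to m$, and $\mu_0^k\to\mu_0$ in the appropriate spaces. The non-trivial input is then the convergence $(u^k,m^k)\to (u,m)$ under $(t_0^k,m_0^k)\to(t_0,m_0)$, which I would derive from Proposition \ref{prop:Ulip} applied on the common interval $[t_0\vee t_0^k,T]$ with initial measures $m_0^k$ and $m(t_0^k)$; the additional error $\dk(m_0^k,m(t_0^k))\to 0$ is controlled by continuity in time of $m$ together with $\dk(m_0^k,m_0)\to 0$. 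After extending $(v^k,\mu^k)$ to $[t_0,T]$ in a natural way when $t_0^k>t_0$, the smoothness of $H$ converts these convergences into $V^k\to V$, $\Gamma^k\to\Gamma$ at the required level, and Lemma \ref{lem:BasicEsti2} concludes. The main technical nuisance --- but not a genuine obstacle --- is precisely the careful bookkeeping needed to keep all constants and convergences uniform in the initial data and to handle the varying initial times $t_0^k$; the substantial analytic work has already been absorbed into the abstract estimates of Subsection \ref{subsec:LS}.
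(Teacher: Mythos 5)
Your proposal takes essentially the same route as the paper: you recognize \eqref{eq:vmubis} as the instance of \eqref{eq:prem} with $V=D_pH(\cdot,Du)$, $\Gamma=D^2_{pp}H(\cdot,Du)$ and $b=c=z_T=0$, $\rho_0=\mu_0$, and then invoke Lemma \ref{lem:BasicEsti0} for part (i) and Lemma \ref{lem:BasicEsti2} for parts (ii) and (iii); the paper's proof is exactly this (stated in one sentence, with the uniformity in $(t_0,m_0)$ coming from \eqref{esti:um} and the remark after \eqref{condGamma}). The additional bookkeeping you carry out to verify the hypotheses and to handle the varying initial times $t_0^k$ is correct and merely makes explicit what the paper leaves implicit.
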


\begin{proof} It is a straightforward application of Lemmata \ref{lem:BasicEsti0} and \ref{lem:BasicEsti2} respectively, with $V(t,x)= D_pH(x,Du(t,x))$, $\Gamma(t,x)=D^2_{pp}H(x,Du(t,x))$ and $z_T=b=c=0$.  Note that $V$ satisfies the condition that $D^\ell V$ belongs to $C^0([0,T], \cC^{n+1+\alpha}(\T^d))$ in view of Proposition \ref{prop:reguum}. 
\end{proof}

\begin{Corollary}\label{cor:repv} Under the assumptions of Proposition \ref{prop:estiDmU}, there exists, for any $(t_0,m_0)$, a $\cC^{n+2+\alpha}(\T^d)\times \cC^{n+1+\alpha}(\T^d)$ map $(x,y)\mapsto K(t_0,x,m_0,y)$ such that, for any $\mu_0\in  (\cC^{n+1+\alpha}(\T^d))'$, the $v$ component of the solution of \eqref{eq:vmubis} is given by 
\be\label{repvPart3}
v(t_0,x)= \lg K(t_0,x,m_0,\cdot), \mu_0\rg_{\cC^{n+1+\alpha}(\T^d), (\cC^{n+1+\alpha}(\T^d))'}.
\ee
Moreover 
$$
\|K(t_0,\cdot,m_0,\cdot)\|_{(n+2+\alpha,n+1+\alpha)} \leq C_n
$$
and $K$ and its derivatives in $(x,y)$ are continuous on $[0,T]\times \T^d\times \Pk\times\T^d$.
\end{Corollary}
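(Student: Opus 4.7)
The strategy is to construct the kernel $K$ by specializing the linear system \eqref{eq:vmubis} to Dirac initial data and then extending by linearity to arbitrary $\mu_0 \in (\cC^{n+1+\alpha}(\T^d))'$. For each $y \in \T^d$, let $v_y$ denote the first component of the solution to \eqref{eq:vmubis} with initial condition $\mu_0 = \delta_y$, which is well defined by Proposition \ref{prop:estiDmU}(ii), and set
\begin{equation*}
K(t_0,x,m_0,y) := v_y(t_0,x).
\end{equation*}
Since $\|\delta_y\|_{-(n+1+\alpha)} \leq 1$ uniformly in $y$, Proposition \ref{prop:estiDmU}(ii) gives
$\|K(t_0,\cdot,m_0,y)\|_{n+2+\alpha} \leq C_n$, uniformly in $(t_0,m_0,y)$. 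Continuity of $K$ in $(t_0,x,m_0,y)$ will follow from the stability statement (iii) of Proposition \ref{prop:estiDmU}: the map $(t_0,m_0,y) \mapsto \delta_y \in (\cC^{n+1+\alpha})'$ is continuous (at least on a fixed compact of $\T^d$), and the $v$-component then depends continuously on these data in $\cC^0([t_0,T],\cC^{n+2+\alpha})$.

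The heart of the argument is to establish the $\cC^{n+1+\alpha}$ regularity of $K(t_0,x,m_0,\cdot)$ in $y$. The plan is to identify, for each multi-index $\ell$ with $|\ell|\leq n+1$, the candidate derivative $D^\ell_y K(t_0,x,m_0,y)$ with $v_{(-1)^{|\ell|}\partial^\ell \delta_y}(t_0,x)$, i.e.\ with the $v$-component of the solution to \eqref{eq:vmubis} whose initial datum is the $\ell$-th distributional derivative of the Dirac mass at $y$. Since $\|\partial^\ell \delta_y\|_{-(n+1+\alpha)} \leq 1$ whenever $|\ell|\leq n+1$, all such solutions are bounded in $\cC^{n+2+\alpha}$ in $x$, uniformly in $y$. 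To justify this identification we consider the finite differences $\mu_h := h^{-1}(\delta_{y+he_i}-\delta_y)$ and observe that, by a Taylor expansion against test functions $\phi$ with $\|\phi\|_{n+1+\alpha}\leq 1$, we have $\|\mu_h - (-\partial_{y_i}\delta_y)\|_{-(n+1+\alpha)} \leq C h^\alpha \to 0$ (this uses only that $\partial_{y_i}\phi$ is at least $\alpha$-Hölder, which holds already for $n=0$). Stability (iii) of Proposition \ref{prop:estiDmU} then yields convergence of $v_{\mu_h}(t_0,x)$ to $v_{-\partial_{y_i}\delta_y}(t_0,x)$, hence the differentiability of $K$ in $y$ with the claimed identification. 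Iterating up to order $n+1$ gives all the derivatives, and for the Hölder regularity of the top-order derivative one uses
\begin{equation*}
\|\partial^\ell \delta_y - \partial^\ell \delta_{y'}\|_{-(n+1+\alpha)} \leq |y-y'|^\alpha \quad\text{for } |\ell|=n+1,
\end{equation*}
together with Proposition \ref{prop:estiDmU}(ii) applied to $\partial^\ell\delta_y - \partial^\ell\delta_{y'}$.

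Once $K$ is constructed with the claimed regularity, the representation \eqref{repvPart3} is proved by approximation. For $\mu_0 \in (\cC^{n+1+\alpha})'$, pick a sequence of smooth densities $\mu_0^k$ converging to $\mu_0$ in $(\cC^{n+1+\alpha})'$ (e.g.\ by mollification). For each $k$, linearity of \eqref{eq:vmubis} and Fubini show that the function
\[ \tilde v^k(t,x) := \int_{\T^d} v_y(t,x)\,\mu_0^k(y)\,dy \]
together with the corresponding measure $\tilde \mu^k(t)$ solves \eqref{eq:vmubis} with initial condition $\mu_0^k$, so by uniqueness $\tilde v^k = v^k$; evaluating at $t=t_0$ gives the formula with $\mu_0^k$ in place of $\mu_0$. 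Passing to the limit using stability (iii) on the left-hand side and using $K(t_0,x,m_0,\cdot) \in \cC^{n+1+\alpha}$ with the convergence $\mu_0^k \to \mu_0$ in the dual on the right-hand side gives \eqref{repvPart3}.

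The main obstacle is the regularity of $K$ in the variable $y$, since one must interpret $D^\ell_y K$ as the $v$-component of a solution to \eqref{eq:vmubis} driven by a genuine distribution (a derivative of a Dirac), and control the convergence of the difference quotients uniformly against the dual norm $\|\cdot\|_{-(n+1+\alpha)}$; the rest is essentially a linearity and approximation argument that follows directly from Proposition \ref{prop:estiDmU}.
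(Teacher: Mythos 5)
Your proposal is correct and follows essentially the same route as the paper: define $K$ as the $v$-component of the solution of \eqref{eq:vmubis} with Dirac initial datum, identify $D^\ell_y K$ with the $v$-component driven by $(-1)^{|\ell|}D^\ell\delta_y$ via difference quotients converging in $(\cC^{n+1+\alpha})'$ and the stability point (iii) of Proposition \ref{prop:estiDmU}, and obtain the Hölder regularity of $K$ in $y$ from linearity and the bound in point (ii). The paper leaves the verification of \eqref{repvPart3} implicit, whereas you supply the short mollification argument explicitly; otherwise the arguments coincide.
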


\begin{proof} For $\ell\in \N^d$ with $|\ell |\leq n+1$ and $y\in \T^d$, let $(v^{(\ell)}(\cdot,\cdot, y), \mu^{(\ell)}(\cdot, \cdot,y))$ be the solution to \eqref{eq:vmubis} with initial condition $\mu_0=D^\ell \delta_y$  (the $\ell-$th derivative of the Dirac mass at $y$). Note that $\mu_0\in (\cC^{n+1+\alpha}(\T^d))'$. We set 
$K(t_0,x,m_0,y):= v^{(0)}(t_0,x,y)$. 

Let us check that $\partial_{y_1} K(t_0,x,m_0,y)= -v^{(e_1)}(t_0,x,y)$ where $e_1=(1,0, \dots, 0)$. Indeed, since $\ep^{-1}(\delta_{y+\ep e_1}-\delta_y)$ converges to $-
D^{e_{1} \delta_{y}}$
 in $(\cC^{n+1+\alpha})'$ while, by linearity, the map $\ep^{-1}( K(\cdot,\cdot,m_0,y+\ep e_1)-K(\cdot,\cdot,m_0,y))$ is the first component of the solution of  \eqref{eq:vmubis} with initial condition $ \ep^{-1}(\delta_{y+\ep e_1)}-\delta_y)$, this map must converge by stability (point (iii) in Proposition \ref{prop:estiDmU}) to the first component of the solution with initial condition $ -D^{e_1}\delta_y$, which is $-v^{(e_1)}(\cdot,\cdot, y)$. This proves our claim. 

One can then check in the same way by induction that, for $|\ell |\leq n+1$, 
$$
D^{\ell}_yK(t_0,x,m_0,y):= (-1)^{|\ell |}v^{(\ell)}(t_0,x,y).
$$
Finally, if $|\ell |\leq n+1$, point (ii) in Proposition \ref{prop:estiDmU} combined with the linearity of system \eqref{eq:vmubis} implies that
$$
\begin{array}{rl}
\ds \left\| K^{(\ell)}(t_0,\cdot,m_0,y)- K^{(\ell)}(t_0,\cdot,m_0,y')\right\|_{n+2+\alpha} \; \leq & \ds C\| D^\ell \delta_y-D^\ell \delta_{y'}\|_{-(n+1+\alpha)} \\
\leq &  C\| \delta_y-\delta_{y'}\|_{-\alpha} \leq 
C|y-y'|^{\alpha}. 
\end{array}
$$
Therefore $K(t_0,\cdot,m_0,\cdot)$ belongs to $\cC^{n+2+\alpha}\times \cC^{n+1+\alpha}$. 
Continuity of $K$ and its derivatives in $(t_{0},m_{0})$
follows from 
point (iii) in Proposition \ref{prop:estiDmU}.
\end{proof}

We now show that $K$ is indeed the derivative  of $U$ with respect to $m$. 

\begin{Proposition}\label{prop:diff} Assume that {\bf (HF1(${\boldsymbol n}$+1))} and {\bf (HG1(${\boldsymbol n}$+2))} hold for some $n\geq  0$. Fix $t_0\in [0,T]$, $m_0,\hat m_0\in \Pk$.  Let $(u,m)$ and $(\hat u,\hat m)$ be the solution of the MFG system \eqref{MFG} starting from $(t_0,m_0)$ and $(t_0,\hat m_0)$ respectively and let $(v,\mu)$ be the solution to \eqref{eq:vmubis} with initial condition $(t_0,\hat m_0-m_0)$. Then
$$
\ds \sup_{t\in [t_0,T]}\left\{\|\hat u(t,\cdot)-u(t,\cdot)-v(t,\cdot)\|_{n+2+\alpha}+\|\hat m(t,\cdot)-m(t,\cdot)-\mu(t,\cdot)\|_{-(n+1+\alpha)} \right\}
\; \leq C\dk^2(m_0,\hat m_0).
$$
\end{Proposition}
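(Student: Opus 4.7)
The strategy is to identify the pair $(w,\rho) := (\hat u - u - v,\, \hat m - m - \mu)$ as the solution of a linearized system of the form \eqref{eq:prem}, with zero initial condition and source terms of second order in $\dk(m_0,\hat m_0)$, and then invoke the estimate of Lemma~\ref{lem:BasicEsti2}.

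First, I would subtract the MFG equations satisfied by $(u,m)$ and $(\hat u,\hat m)$, and use second-order Taylor expansions to separate the linear part from a quadratic remainder. For the backward Hamilton-Jacobi equation, writing
\begin{equation*}
H(x,D\hat u) - H(x,Du) = D_p H(x,Du)\cdot D(\hat u-u) + R_H, \qquad F(x,\hat m) - F(x,m) = \tfrac{\delta F}{\delta m}(x,m)(\hat m - m) + R_F,
\end{equation*}
where $R_H$ is a double integral of $D^2_{pp}H$ times $|D(\hat u-u)|^2$ and $R_F$ involves $[\tfrac{\delta F}{\delta m}(x,m+s(\hat m-m),\cdot)-\tfrac{\delta F}{\delta m}(x,m,\cdot)](\hat m-m)$, and subtracting the equation for $v$ yields
\begin{equation*}
-\partial_t w - \Delta w + D_pH(x,Du)\cdot Dw = \tfrac{\delta F}{\delta m}(x,m)(\rho) + (R_F - R_H).
\end{equation*}
The terminal condition gives $w(T,x) = \tfrac{\delta G}{\delta m}(x,m(T))(\rho(T)) + R_G$ with $R_G$ analogous to $R_F$. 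I would perform the same expansion on the Kolmogorov side, splitting
\begin{equation*}
\hat m D_pH(x,D\hat u) - m D_pH(x,Du) = (\hat m - m)D_pH(x,Du) + m D^2_{pp}H(x,Du) D(\hat u - u) + c_{\mathrm{rem}},
\end{equation*}
where $c_{\mathrm{rem}}$ collects the bilinear term $(\hat m - m)[D_pH(x,D\hat u) - D_pH(x,Du)]$ and the quadratic remainder from Taylor-expanding $D_pH$ against $m$. After subtracting the equation for $\mu$, this produces
\begin{equation*}
\partial_t \rho - \Delta \rho - \dive(\rho D_pH(x,Du)) - \dive(m D^2_{pp}H(x,Du) Dw) - \dive(c_{\mathrm{rem}}) = 0,
\end{equation*}
with $\rho(t_0) = 0$. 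Hence $(w,\rho)$ solves \eqref{eq:prem} with $V=D_pH(\cdot,Du)$, $\Gamma=D^2_{pp}H(\cdot,Du)$, $b=R_F-R_H$, $c=c_{\mathrm{rem}}$, $z_T=R_G$, $\rho_0=0$.

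Next I would estimate $M$ from \eqref{defM}. By Proposition~\ref{prop:Ulip} applied at order $n+1$, one has $\sup_t \|\hat u(t,\cdot)-u(t,\cdot)\|_{n+2+\alpha} + \sup_t \dk(\hat m(t),m(t)) \leq C\,\dk(m_0,\hat m_0)$. Combined with assumption {\bf (HF1($n$+1))} (in particular the Lipschitz dependence ${\rm Lip}_{n}(\tfrac{\delta F}{\delta m})$) and the analogous bound from {\bf (HG1($n$+2))}, this gives $\|R_F(t,\cdot)\|_{n+1+\alpha} + \|R_G\|_{n+2+\alpha} \leq C\,\dk^2(m_0,\hat m_0)$. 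The Hamiltonian remainder $R_H$ is controlled by $\|D^2_{pp}H\|_\infty \|D(\hat u-u)\|_{n+1+\alpha}^2 \leq C\,\dk^2(m_0,\hat m_0)$, using that $H$ is smooth and $Du,D\hat u$ are uniformly bounded. For $c_{\mathrm{rem}}$, the purely quadratic part coming from $\tilde R_H$ is bounded in sup (hence in $(X_n)'$) by $C\|D(\hat u-u)\|_\infty^2$, while the bilinear piece $(\hat m-m)[D_pH(\cdot,D\hat u)-D_pH(\cdot,Du)]$ is estimated in $(X_n)'$ by $\dk(\hat m,m)\cdot \|D(\hat u-u)\|_{n+\alpha} \leq C\,\dk^2(m_0,\hat m_0)$. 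Thus $M \leq C\,\dk^2(m_0,\hat m_0)$, and Lemma~\ref{lem:BasicEsti2} delivers the announced bound
\begin{equation*}
\sup_{t\in[t_0,T]} \bigl(\|w(t,\cdot)\|_{n+2+\alpha} + \|\rho(t)\|_{-(n+1+\alpha)}\bigr) \leq CM \leq C\,\dk^2(m_0,\hat m_0).
\end{equation*}

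The main technical obstacle is the careful bookkeeping of the bilinear term $(\hat m-m)[D_pH(\cdot,D\hat u)-D_pH(\cdot,Du)]$ inside $c_{\mathrm{rem}}$: one needs to split a product of two first-order differences into a quadratic quantity measured in the asymmetric norm $(X_n)'$, using the $\cC^{n+\alpha}$ regularity of one factor together with the $\dk$-regularity of the other (as given by Proposition~\ref{prop:Ulip}). The remainders $R_F, R_G$ are handled by the Lipschitz property of $\tfrac{\delta F}{\delta m}, \tfrac{\delta G}{\delta m}$ in the measure variable encoded in {\bf (HF1($n$+1))} and {\bf (HG1($n$+2))}, while $R_H$ is standard. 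Once all source terms are bounded by $\dk^2(m_0,\hat m_0)$ in the correct norms, the conclusion is immediate from Lemma~\ref{lem:BasicEsti2}.
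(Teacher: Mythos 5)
Your proposal is correct and follows essentially the same approach as the paper: set $(z,\rho)=(\hat u-u-v,\hat m-m-\mu)$, show it solves a linear system of the form \eqref{eq:prem} with $V=D_pH(\cdot,Du)$, $\Gamma=D^2_{pp}H(\cdot,Du)$, zero initial data, and quadratic-in-$\dk(m_0,\hat m_0)$ source terms $b$, $c$, $z_T$ arising from second-order Taylor remainders, then combine Proposition~\ref{prop:Ulip} with Lemma~\ref{lem:BasicEsti2}. Your decomposition of $c_{\mathrm{rem}}$ puts $m$ rather than $\hat m$ in front of the Taylor remainder of $D_pH$, but this is an equivalent regrouping and the estimates go through identically.
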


As a straightforward consequence, we obtain the differentiability of $U$ with respect to the measure: 

\begin{Corollary}
\label{cor:diff0} Under the assumption of Proposition \ref{prop:diff}, the map $U$ is of class $\cC^1$ (in the sense of Definition \ref{def:Diff}) with
$$
\frac{\delta U}{\delta m}(t_0,x,m_0,y)= K(t_0,x, m_0,y),
$$
whose regularity is given by Corollary \ref{cor:repv}.
Moreover, 
$$
\left\|U(t_0,\cdot,\hat m_0)-U(t_0,\cdot,m_0)- \inte \frac{\delta U}{\delta m}(t_0,\cdot,m_0,y)d(\hat m_0-m_0)(y)\right\|_{n+2+\alpha}\leq C\dk^2(m_0,\hat m_0).
$$
\end{Corollary}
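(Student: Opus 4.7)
The proof is essentially a direct consequence of Proposition \ref{prop:diff} combined with the representation formula from Corollary \ref{cor:repv}, so the plan is to stitch these two results together.

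First I would translate Proposition \ref{prop:diff} into the language of the master solution. Since $U(t_0,\cdot,m_0)=u(t_0,\cdot)$ and $U(t_0,\cdot,\hat m_0)=\hat u(t_0,\cdot)$ by the very definition \eqref{defU}, the estimate $\|\hat u(t,\cdot)-u(t,\cdot)-v(t,\cdot)\|_{n+2+\alpha}\leq C \dk^2(m_0,\hat m_0)$ evaluated at $t=t_0$ reads
\begin{equation*}
\bigl\| U(t_0,\cdot,\hat m_0)-U(t_0,\cdot,m_0)-v(t_0,\cdot) \bigr\|_{n+2+\alpha}\leq C\dk^2(m_0,\hat m_0).
\end{equation*}
Next, the linearized system \eqref{eq:vmubis} is initialized with $\mu_0=\hat m_0-m_0\in (\cC^{n+1+\alpha})'$, so Corollary \ref{cor:repv} gives
$$
v(t_0,x)=\bigl\langle K(t_0,x,m_0,\cdot),\hat m_0-m_0 \bigr\rangle_{\cC^{n+1+\alpha},(\cC^{n+1+\alpha})'} = \inte K(t_0,x,m_0,y)\,d(\hat m_0-m_0)(y).
$$
Substituting this representation gives the quantitative estimate of the corollary with $\delta U/\delta m = K$.

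Second, I would upgrade this second-order estimate to the first-order $\cC^1$ statement in the sense of Definition \ref{def:Diff}. Fix $m_0, m_0'\in\Pk$ and apply the above with $\hat m_0 = (1-s)m_0 + s m_0'$ for $s\in(0,1]$. Since $\dk(\hat m_0,m_0)\leq s\dk(m_0,m_0')$ and the linearized system \eqref{eq:vmubis} is linear in its initial data, the solution $v^{(s)}$ corresponding to $\mu_0^{(s)}=s(m_0'-m_0)$ equals $s v^{(1)}$, where $v^{(1)}$ is the solution with $\mu_0=m_0'-m_0$. Dividing the estimate by $s$ and letting $s\to 0^+$ yields
$$
\lim_{s\to 0^+}\frac{U(t_0,x,(1-s)m_0+sm_0')-U(t_0,x,m_0)}{s}=\inte K(t_0,x,m_0,y)\,d(m_0'-m_0)(y),
$$
with uniform convergence in $x$. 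The continuity in $(t_0,m_0,x,y)$ asserted in Corollary \ref{cor:repv} then shows that the candidate derivative $K$ has the regularity required by Definition \ref{def:Diff}.

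Third, I need to address the normalization convention \eqref{ConvCondDeriv}: only the values of the derivative against zero-mean measures are constrained by the above limit, so $K$ is determined only up to a function of $(t_0,x,m_0)$ alone. I would therefore \emph{define} $\frac{\delta U}{\delta m}(t_0,x,m_0,y):=K(t_0,x,m_0,y)-\inte K(t_0,x,m_0,y')\,dm_0(y')$; this new function satisfies \eqref{ConvCondDeriv}, preserves the regularity properties of $K$ stated in Corollary \ref{cor:repv} (continuity jointly in all variables and $\cC^{n+2+\alpha}\times\cC^{n+1+\alpha}$ bounds in $(x,y)$), and gives the same action on the signed measure $\hat m_0-m_0$, so both conclusions of the corollary hold.

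There is no genuine obstacle here: all the analytical work has been invested in Proposition \ref{prop:diff} and in the representation formula of Corollary \ref{cor:repv}. The only point requiring minor care is the normalization step, since the naive identification $\delta U/\delta m = K$ need not be consistent with the convention \eqref{ConvCondDeriv} chosen in Subsection \ref{subsec.Derivatives}; the subtraction of the $m_0$-average described above remedies this without affecting any of the derivative bounds.
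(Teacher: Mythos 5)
Your proof is essentially the same as the paper's (the paper treats this corollary as a "straightforward consequence" of Proposition \ref{prop:diff} and Corollary \ref{cor:repv}, without spelling out the details), and the linearity argument in your second step is the right way to upgrade the quadratic estimate to the defining limit of Definition \ref{def:Diff}. However, your normalization step introduces a small but real mismatch with what the corollary actually claims. You define $\frac{\delta U}{\delta m}:=K-\inte K(t_0,\cdot,m_0,y')\,dm_0(y')$, whereas the corollary asserts $\frac{\delta U}{\delta m}=K$ on the nose. These two statements coincide only if one first verifies $\inte K(t_0,x,m_0,y)\,dm_0(y)=0$, which is not automatic from your argument. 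The paper settles this point in Remark \ref{rem:normalization}, directly after the corollary: taking $\mu_0=m_0$ in the linearized system \eqref{eq:vmubis}, the normalization convention \eqref{ConvCondDeriv} applied to $F$ and $G$ forces $\frac{\delta F}{\delta m}(\cdot,m(t))(m(t))=0$ and $\frac{\delta G}{\delta m}(\cdot,m(T))(m(T))=0$, so the unique solution is $(v,\mu)=(0,m)$, whence the representation formula \eqref{repvPart3} yields $\inte K\,dm_0=0$. With this additional observation your subtraction becomes vacuous and the identification $\frac{\delta U}{\delta m}=K$ holds exactly. So the gap is minor — you correctly identified the normalization issue — but the fix you propose changes the stated conclusion rather than proving it; the short argument with $\mu_0=m_0$ is what actually closes the loop.
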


\begin{Remark}\label{rem:normalization}{\rm Let us recall that the derivative $\delta U/\delta m$ is defined up to an additive constant and that our normalization condition is
$$
\inte \frac{\delta U}{\delta m}(t_0,x,m_0,y)dm_0(y)=0.
$$
Let us check that we have indeed 
\be\label{e.condmeanK}
\inte K(t,x,m_0,y)dm_0(y)=0. 
\ee
For this let us chose $\mu_0=m_0$ in \eqref{eq:vmubis}. Since, by normalization condition, $\frac{\delta F}{\delta m}(t,m(t))(m(t))=0$, for any $t\in [0,T]$, and $\frac{\delta G}{\delta m}(t,m(T))(m(T))=0$,  it is clear that the solution to \eqref{eq:vmubis} is just $(v,\mu)=(0,m)$. So, by \eqref{repvPart3}, \eqref{e.condmeanK} holds. 
}\end{Remark}

\begin{proof}[Proof of Proposition \ref{prop:diff}.] Let us set $z:=\hat u-u-v$ and $\rho:= \hat m-m-\mu$. The proof consists in estimating the pair $(z,\rho)$, which satisfies: 
$$
\left\{ \begin{array}{l}
\ds - \partial_t z - \Delta z + D_pH(x,Du)\cdot Dz =\frac{\delta F}{\delta m}(x,m(t))(\rho(t))+b\\
\ds \partial_t \rho - \Delta \rho -{\rm div}( \rho  D_pH(x, D u))-{\rm div}( m  D^2_{pp}H(x, D u)Dz)-\dive(c)=0 \\
\ds z(T,x)=\frac{\delta G}{\delta m}(x,m(T))(\rho(T))+z_T(x), \; \rho(t_0,\cdot)=0,
\end{array}\right.
$$
where 
$$
b(t,x)= A(t,x)+B(t,x)
$$
with 
$$
A(t,x)= -\int_0^1\left( D_pH(x,sD\hat u+(1-s)Du)-D_pH(x,Du)\right) \cdot D(\hat u-u)\ ds
$$
and 
$$
B(t,x)= \int_0^1\inte \left(\frac{\delta F}{\delta m}(x,s\hat m(t)+(1-s)m(t),y)-\frac{\delta F}{\delta m}(x,m(t),y)\right)d(\hat m(t)-m(t))(y)ds,
$$
$$
\begin{array}{rl}
\ds c(t)\; = & \ds  (\hat m-m)(t)D^2_{pp}H\bigl(\cdot,D u(t,\cdot)\bigr)(D\hat u-Du)(t,\cdot)
\\
& \ds  + \hat m \int_0^1\Bigl(D^2_{pp}H\bigl(\cdot, sD\hat u(t,\cdot)+(1-s)Du(t,\cdot) \bigr)-D^2_{pp}H\bigl(\cdot,Du(t,\cdot))\bigr) \Bigr)(D\hat u-Du)(t,\cdot) ds
\end{array}
$$
(note that $c(t)$ is a signed measure) and 
$$
z_T(x)=  \int_0^1\inte \left(\frac{\delta G}{\delta m}(x,s\hat m(T)+(1-s)m(T),y)-\frac{\delta G}{\delta m}(x,m(T),y)\right)d(\hat m(T)-m(T))(y)ds.
$$
We apply Lemma \ref{lem:BasicEsti2}  to get (recalling the notation $X_n= \cC^{n+\alpha}(\T^d)$):
$$
\ds \sup_{t\in [t_0,T]} \|(z(t),\rho(t))\|_{X_{n+2}\times (X_{n+1})'} \;  \leq \; \ds  C\left[\|z_T\|_{X_{n+2}}+  \|\rho_0\|_{(X_{n+1})'}+ \sup_{t\in [t_0,T]}  (\|b(t)\|_{X_{n+1}} +\|c(t)\|_{(X_{n})'})  \right].
$$
It remains to estimate the various quantities in the right-hand side. We have 
$$
 \sup_{t\in [0,T]}  \|b(t)\|_{X_{n+1}} \leq   \sup_{t\in [0,T]}  \|A\|_{X_{n+1}} +\sup_{t\in [0,T]}  \|B\|_{X_{n+1}}, 
 $$
 where,  
 $$
 \sup_{t\in [0,T]}  \|A\|_{X_{n+1}}\leq C\sup_{t\in [0,T]} \|\hat u-u \|_{X_{n+2}}^2\leq C \dk^2(m_0,\hat m_0)
 $$
according to Proposition \ref{prop:Ulip}. To estimate $B$ and $\|z_T\|_{X_{n+2}}$, we argue  in the same way:
$$
\|z_T\|_{X_{n+2}}+ \sup_{t\in [0,T]}  \|B\|_{X_{n+1}}\leq C\dk^2(m_0,\hat m_0),
$$
where we have used as above Proposition \ref{prop:Ulip} now combined with assumptions {\bf (HF1(${\boldsymbol n}$+1))} and {\bf (HG1(${\boldsymbol n}$+2))} which imply (e.g., for $F$) that, for any $m_1,m_2\in \Pk$, 
$$
\begin{array}{l}
\ds \left\|\inte \left(\frac{\delta F}{\delta m}(\cdot,m_1,y)-\frac{\delta F}{\delta m}(\cdot, m_2,y)\right)d(m_1- m_2)(y)\right\|_{X_{n+1}}\\ 
\qquad \qquad \ds \leq 
\dk(m_1,m_2)\left\| D_y\frac{\delta F}{\delta m}(\cdot,m_1,\cdot)-D_y\frac{\delta F}{\delta m}(\cdot,m_2,\cdot)\right\|_{X_{n+1}\times L^\infty} 
\; \leq \; C\dk^2(m_1,m_2).
\end{array}
$$
Finally, 
$$
\sup_{t\in [0,T]} \|c(t)\|_{(X_{n})'} \leq \sup_{t\in [0,T]} \sup_{\|\xi\|_{X_{n}}\leq 1} \lg \xi, c(t) \rg_{X_{n}, (X_{n})'},
$$
where, for  $\|\xi\|_{X_{n}}\leq 1$, 
\begin{equation*}
\begin{split}
&\lg \xi, c(t) \rg_{X_{n}, (X_{n})'} 
\\
&=    \inte \bigg\langle \xi, (\hat m-m)D^2_{pp}H
\bigl(\cdot,D u(t,\cdot)\bigr)(D\hat u-Du)(t,\cdot)
\\
&\hspace{5pt} + \hat m(t) \int_0^1\left(D^2_{pp}
H\bigl(\cdot, [sD\hat u +(1-s)Du](t,\cdot) \bigr)-D^2_{pp}H\bigl(\cdot,Du(t,\cdot)
\bigr)\right)(D\hat u-Du)(t,\cdot) ds \biggr\rangle_{X_{n}, (X_{n})'} 
\\ 
&\leq  C
\Bigl( \|\xi\|_{1}\|u-\hat u\|_{2}\dk(m_0, \hat m_0)+ 
\|\xi\|_{\cC^0}\|u-\hat u\|_{1}^2 \Bigr).
\end{split}
\end{equation*}
So again by Proposition \ref{prop:Ulip} we get 
$$
\sup_{t\in [0,T]} \|c(t)\|_{(X_{n})'} \leq C\dk^2(m_0,\hat m_0).
$$
This completes the proof.
\end{proof}

%
%

\subsection{Proof of Theorem \ref{theo:ex}}\label{subsec:prooftheomain}

\begin{proof}[Proof of Theorem \ref{theo:ex} (existence).] We check in a first step that the map $U$ defined by \eqref{defU} is a solution of the first order master equation. Let us first assume that $m_0\in \cC^1(\T^d)$, with $m_0>0$. Let $t_0>0$, $(u,m)$ be the solution of the MFG system \eqref{MFG} starting from $m_0$ at time $t_0$. Then  
\begin{equation*}
\begin{split}
\frac{U(t_0+h,x, m_0)-U(t_0,x,m_0)}{h} &= 
\frac{U(t_0+h,x, m_0)-U(t_0+h, x, m(t_0+h))}{h}
\\
& \ds \qquad + \frac{U(t_0+h, x, m(t_0+h))-U(t_0,x,m_0)}{h}.
\end{split}
\end{equation*}
Let us set $m_s= (1-s)m(t_0)+sm(t_0+h)$. Note that, by the equation satisfied by $m$ and the regularity of $U$ given by Corollary \ref{cor:diff0},  
\begin{equation*}
\begin{split}
&U\bigl(t_0+h, x, m(t_0+h)\bigr)-U\bigl(t_0+h,x,m(t_0)\bigr) 
\\ 
&\qquad =  
\int_0^1\inte \frac{\delta U}{\delta m}(t_0+h, x, m_s, y)(m(t_0+h,y)-m(t_0,y))\ dyds
\\
&\qquad =  \int_0^1\inte\int_{t_0}^{t_0+h}  \frac{\delta U}{\delta m}(t_0+h, x, m_s, y)
\Bigl(\Delta m(t,y)+\dive\bigl(
m(t,y)D_pH(y,D u(t,y)) \bigr) \Bigr)
\ dtdyds 
\\
&\qquad =  \ds  \int_0^1\inte\int_{t_0}^{t_0+h}  \Delta_y \frac{\delta U}{\delta m}(t_0+h, x, m_s, y) m(t,y)\ dtdyds 
\\
&\qquad \qquad \ds - \int_0^1\inte\int_{t_0}^{t_0+h}   D_y \frac{\delta U}{\delta m}(t_0+h, x, m_s, y)\cdot D_pH\bigl(y,D u(t,y)\bigr)\ m(t,y)\ dtdyds.
\end{split}
\end{equation*}
We can then divide by $h$ to obtain, using the continuity of $D_mU$ and its smoothness with respect to the space variables: 
$$
\begin{array}{l}
\ds \lim_{h\to0} \frac{U(t_0+h, x, m(t_0+h))- U(t_0+h,x, m_0)}{h}\\
\qquad \qquad \qquad \ds = \inte  \Bigl( \dive_y\left[D_mU\right](t_0, x, m_0, y) -
  D_mU(t_0, x, m_0, y)\cdot D_pH\bigl(y,D u(t_0,y)\bigr) \Bigr)\  m_0(y)\ dy.
\end{array}
$$
On the other hand, for $h>0$, 
$$
U(t_0+h,x, m(t_0+h))-U(t_0,x,m_0)= u(t_0+h,x)-u(t_0,x)= h\partial_t u (t_0,x)+ o(h),
$$
since $u$ is smooth, so that 
$$
\lim_{h\to0^+}  \frac{U(t_0+h, x, m(t_0+h))-U(t_0,x,m_0)}{h}= \partial_t u (t_0,x).
$$
Therefore $\partial_t U (t_0,x,m_0)$ exists and, using the equation satisfied by $u$,  is equal to
\begin{equation}
\label{jhbsldlbh}
\begin{split}
\partial_t U (t_0,x,m_0)
 &=  - \inte \dive_y \left[D_m U\right](t_0, x, m_0,y) m_0(y)dy
 \\
&\hspace{15pt} 
+ \inte  D_m U(t_0, x, m_0,y)\cdot D_pH\bigl(x,D_xU(t_0,y,m_0) \bigr) m_0(y)dy
\\
&\hspace{15pt} - \Delta_{x} U(t_0,x,m_0)+H\bigl(x,D_xU(t_0,x,m_0)\bigr)-F(x,m_0).
\end{split}
\end{equation}
This means that $U$ has a continuous time derivative at any point $(t_0,x,m_0)$ where $m_0\in \cC^1(\T^d)$ with $m_0>0$ and satisfies  
\eqref{MFGf} at such a point. By continuity of the right-hand side of \eqref{jhbsldlbh}, $U$ has a time derivative everywhere and \eqref{MFGf} holds at any point. 
\end{proof}


Next we turn to the uniqueness part of the Theorem: 

\begin{proof}[Proof of Theorem \ref{theo:ex} (uniqueness).] In order to prove the uniqueness of the solution for the master equation, we explicitly show that the solutions of the MFG system \eqref{MFG} coincide with the characteristics of the master equation.  Let $V$ be another solution to the master equation. The main point is that,  by the definition of a solution, $D^2_{x,y}\frac{\delta V}{\delta m}$ is bounded, and therefore $D_xV$ is Lipschitz continuous with respect to the measure variable. 

Let us fix $(t_0,m_0)$. In view of the Lipschitz continuity of $D_xV$, one can easily uniquely solve in $\cC^0([0,T], \Pk)$ the 
Fokker-Planck equation: 
$$
\left\{\begin{array}{l}
\partial_t \tilde m -\Delta \tilde  m -\dive \bigl(
\tilde mD_pH(x,D_xV(t,x,\tilde m)) \bigr)=0\qquad {\rm in }\; [t_0,T]\times \T^d\\
\tilde m(t_0)=m_0\qquad {\rm in }\;  \T^d.
\end{array}\right.
$$
Then let us set $\ds \tilde  u(t,x)= V(t,x,\tilde m(t))$. By the regularity properties of $V$, $\tilde u$ is at least of class $\cC^{1,2}$ with 
\begin{equation*}
\begin{split}
\partial_t \tilde u (t,x) &=
\partial_t V(t,x,\tilde m(t))+ \big\langle \frac{\delta V}{\delta m}(t,x,\tilde m(t),\cdot), \partial_t \tilde m(t)\big\rangle_{\cC^2,(\cC^2)'} 
\\
&= \partial_t V(t,x,\tilde m(t))+
\big\langle
 \frac{\delta V}{\delta m}(t,x,\tilde m(t),\cdot), \Delta \tilde m +\dive (\tilde mD_pH\bigl(x,D_xV(t,x,\tilde m) \bigr)
 \big\rangle_{\cC^2,(\cC^2)'}  
\\
&= \partial_t V(t,x,\tilde m(t))+\inte \dive_y \left[ D_mV\right](t,x,\tilde m(t),y)\ d\tilde m(t)(y)\\
& \qquad \ds - \inte  D_mV(t,x,\tilde m(t),y)\cdot D_pH(y,D_xV(t,y,\tilde m))\ d\tilde m(t)(y).
\end{split}
\end{equation*}
Recalling that $V$ satisfies the master equation, we obtain: 
\begin{equation*}
\begin{split}
\ds \partial_t \tilde u(t,x)
 &=  - \Delta_x V(t,x,\tilde m(t)) +H\bigl(x,D_xV(t,x,\tilde m(t))\bigr) -F(x,\tilde m(t)) 
 \\
&=  - \Delta \tilde u(t,x) +H(x,D\tilde u(t,x)) -F(x,\tilde m(t))
\end{split}
\end{equation*}
with terminal condition $\tilde u(T,x)=V(T,x,\tilde m(T))=G(x,\tilde m(T))$. Therefore the pair $(\tilde u,\tilde m)$ is a solution of the MFG system \eqref{MFG}. As the solution of this system is unique, we get that $V(t_0,x,m_0)= U(t_0,x,m_0)$. 
\end{proof}

\subsection{Lipschitz continuity of $\frac{\delta U}{\delta m}$ with respect to $m$}

We later need the Lipschitz continuity of the derivative of $U$ with respect to the measure. 

\begin{Proposition}\label{prop:DmULip}  Let us assume that {\bf (HF1(${\boldsymbol n}$+1))} and {\bf (HG1(${\boldsymbol n}$+2))} hold for some $n\geq 2$. Then
$$
\sup_{t\in[0,T]}  \sup_{m_1\neq m_2}\left(\dk(m_1,m_2)\right)^{-1}
\left\| \frac{\delta U}{\delta m}(t,\cdot,m_1,\cdot)-\frac{\delta U}{\delta m}(t,\cdot,m_2,\cdot)\right\|_{(n+2+\alpha,n+\alpha)} \; <\; \infty,
$$
where $C$ depends on $n$, $F$, $G$, $H$ and $T$. 
\end{Proposition}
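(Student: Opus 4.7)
The approach is to linearize the mean-field game system at each base measure and differentiate the resulting linearized solution with respect to $m$, in the spirit of Proposition \ref{prop:diff}. Fix $t_0 \in [0,T]$ and $m_0^1, m_0^2 \in \Pk$, and let $(u^i, m^i)$ denote the solutions to the MFG system \eqref{MFG} with $m^i(t_0) = m_0^i$. For an initial datum $\mu_0 \in (\cC^{n+\alpha})'$, let $(v^i, \mu^i)$ be the solution of the linearized system \eqref{eq:vmubis} around $(u^i, m^i)$ with $\mu^i(t_0) = \mu_0$; by Corollary \ref{cor:repv}, $v^i(t_0, x) = \lg K(t_0, x, m_0^i, \cdot), \mu_0 \rg$. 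The plan is first to establish the master bound
\[
\|v^1(t_0, \cdot) - v^2(t_0, \cdot)\|_{n+2+\alpha} \leq C \dk(m_0^1, m_0^2) \|\mu_0\|_{-(n+\alpha)},
\]
and then to conclude by duality: choosing $\mu_0 = (-1)^{|\ell|} D^\ell \delta_y$ for $|\ell| \leq n$ (noting that $\|\mu_0\|_{-(n+\alpha)} \leq 1$) yields the desired bound on $D^\ell_y K(t_0, \cdot, m_0^1, y) - D^\ell_y K(t_0, \cdot, m_0^2, y)$ in $\cC^{n+2+\alpha}_x$, while $\mu_0 = (-1)^{|\ell|} D^\ell(\delta_y - \delta_{y'})$ with $|\ell| = n$ (for which $\|\mu_0\|_{-(n+\alpha)} \leq C|y-y'|^\alpha$) delivers the $\alpha$-H\"older modulus in $y$ of the $n$-th $y$-derivative.

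A direct subtraction shows that $(w, \nu) := (v^1 - v^2, \mu^1 - \mu^2)$ solves a linearized system of the form \eqref{eq:prem} with $V = D_pH(\cdot, Du^1)$, $\Gamma = D^2_{pp}H(\cdot, Du^1)$, $m = m^1$, zero initial datum $\nu(t_0) = 0$, and source terms
\[
b = \Bigl[\tfrac{\delta F}{\delta m}(\cdot, m^1) - \tfrac{\delta F}{\delta m}(\cdot, m^2)\Bigr](\mu^2) - \bigl[D_pH(\cdot, Du^1) - D_pH(\cdot, Du^2)\bigr] \cdot Dv^2,
\]
\[
c = \mu^2\bigl[D_pH(\cdot, Du^1) - D_pH(\cdot, Du^2)\bigr] + (m^1 - m^2) D^2_{pp}H(\cdot, Du^1) Dv^2 + m^2\bigl[D^2_{pp}H(\cdot, Du^1) - D^2_{pp}H(\cdot, Du^2)\bigr] Dv^2,
\]
together with the terminal correction $z_T = [\tfrac{\delta G}{\delta m}(\cdot, m^1(T)) - \tfrac{\delta G}{\delta m}(\cdot, m^2(T))](\mu^2(T))$. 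Applying Lemma \ref{lem:BasicEsti2} with smoothness parameter $n$ reduces the master bound to estimating $\|z_T\|_{n+2+\alpha}$, $\sup_t \|b(t)\|_{n+1+\alpha}$ and $\sup_t \|c(t)\|_{-(n+\alpha)}$ by $C \dk(m_0^1, m_0^2) \|\mu_0\|_{-(n+\alpha)}$.

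These source-term estimates follow by assembling the Lipschitz bound $\|u^1 - u^2\|_{n+2+\alpha} + \dk(m^1(t), m^2(t)) \leq C \dk(m_0^1, m_0^2)$ from Proposition \ref{prop:Ulip}; the linear estimates on $(v^2, \mu^2)$ of Proposition \ref{prop:estiDmU}(ii); the Lipschitz-in-$m$ hypotheses ${\rm Lip}_{n+1}(\delta F/\delta m)$ and ${\rm Lip}_{n+2}(\delta G/\delta m)$ provided by \textbf{(HF1(n+1))} and \textbf{(HG1(n+2))}, which handle the $F$- and $G$-increments in $b$ and $z_T$; and the smoothness of $H$ in $p$, which handles the $D_pH$ and $D^2_{pp}H$ increments in $b$ and $c$. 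The increment of $m$ in the middle summand of $c$ is absorbed by Kantorovich--Rubinstein duality, pairing $m^1-m^2$ against a Lipschitz-in-$x$ function.

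The main technical obstacle lies in bounding the first summand of $c$, namely $\mu^2 [D_pH(\cdot, Du^1) - D_pH(\cdot, Du^2)]$, in the dual norm $-(n+\alpha)$. Testing it against $\xi \in \cC^{n+\alpha}$ reduces to pairing $\mu^2$ against $\xi \cdot [D_pH(\cdot, Du^1) - D_pH(\cdot, Du^2)]$, a function which lies in $\cC^{n+\alpha}$ but generically not in $\cC^{n+1+\alpha}$; the standard bound $\mu^2 \in (\cC^{n+1+\alpha})'$ from Proposition \ref{prop:estiDmU}(ii) is therefore insufficient. The remedy is to re-apply Lemma \ref{lem:BasicEsti2} with reduced smoothness parameter $n_{\rm lemma} = n-1$ (legitimate thanks to the hypothesis $n \geq 2$ together with our standing assumptions, which still yield \textbf{(HF1(n))} and \textbf{(HG1(n+1))}), thereby obtaining the sharper control $\sup_t \|\mu^2(t)\|_{-(n+\alpha)} \leq C \|\mu_0\|_{-(n+\alpha)}$. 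This sharpened bound closes the estimate on $c$, and the final invocation of Lemma \ref{lem:BasicEsti2} with parameter $n$ then produces the master bound, completing the plan.
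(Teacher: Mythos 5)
Your proposal is correct and takes essentially the same route as the paper: subtract the two linearized systems, identify $V = D_pH(\cdot,Du^1)$, $\Gamma = D^2_{pp}H(\cdot,Du^1)$, $m = m^1$ and the source terms $b$, $c$, $z_T$, then close the estimate through Lemma \ref{lem:BasicEsti2} together with Propositions \ref{prop:Ulip} and \ref{prop:estiDmU}(ii). The one thing you add (and which the paper treats silently) is the explicit observation that bounding the $\mu^2(H_1'-H_2')$ term of $c$ in $(\cC^{n+\alpha})'$ requires the sharper control $\sup_t\|\mu^2(t)\|_{-(n+\alpha)} \leq C\|\mu_0\|_{-(n+\alpha)}$ obtained by re-indexing the linear estimate at $n-1$, which is exactly why $n\geq 2$ and the unused regularity margin in \textbf{(HF1($n+1$))}, \textbf{(HG1($n+2$))} are needed; the paper simply writes $\|\mu^2(t)\|_{-(n+\alpha)}\|\mu_0\|_{-(n+\alpha)}^{-1}\leq C$ without comment. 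The only minor step you elide is that the paper first carries out the estimate for smooth positive densities $m_0^1$, $m_0^2$ and then passes to general measures by the continuity of $\delta U/\delta m$ in $m$ (Corollary \ref{cor:repv}); worth a sentence, but not a gap in the argument.
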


\begin{proof} By continuity of $\frac{\delta U}{\delta m}$ 
in the measure argument (see Corollaries
\ref{cor:diff0} and \ref{cor:repv}),
we can assume without loss of generality that $m^1_0$ and $m^2_0$ are two smooth, positive  densities. 
Let $\mu_0\in \cC^\infty(\T^d)$.  We consider  $(u^1, m^1)$ and $(u^2,m^2)$ the classical solutions to the MFG system \eqref{MFG} associated with the initial condition $(t_0, m^1_0)$ and $(t_0,m^2_0)$ respectively and $(v^1,\mu^1)$ and $(v^2, \mu^2)$ the associated classical solutions to \eqref{eq:vmubis}
with $\mu^1(t_{0},\cdot) = \mu^2(t_{0},\cdot) = \mu_{0}$. 

Let us set $(z, \rho):=(v^1-v^2, \mu^1-\mu^2)$. We first write an equation for $(z, \rho)$. 
To avoid too heavy notation, we set $H_1'(t,x)=D_pH(x,Du^1(t,x))$, $H_1''(t,x)= D^2_{pp}H(x,Du^1(t,x))$,  $F'_1(x,\mu)= \int_{\T^d}
\frac{\delta F}{\delta m}(x,m^1,y) \mu(y) dy$, etc... 
 Then $(z, \rho)$ satisfies
$$
\left\{\begin{array}{l}
\ds  -\partial_t z -\Delta z +H_1'Dz=F_1'(\cdot,\rho)+b 
\\
\ds \partial_t \rho -\Delta \rho -\dive( \rho H_1')  -\dive(m^1H_1''Dz)-\dive(c)=0\\
z(T)= G_1'(\rho(T))+z_T, \; \tilde m(t_0)=0 
\end{array}\right.
$$
where 
\begin{equation*}
\begin{split}
&b(t,x) :=F_1'\bigl(x,\mu^2(t)\bigr)-F_2'\bigl(x,\mu^2(t)\bigr)  - 
\bigl[ (H_1'-H_2')Dv^2 \bigr](t,x), 
\\
&c(t,x) := \mu^2(t,x)(H_1'-H_2')(t,x)
+ \bigl[ 
(m^1H_1''-m^2H_2'')Dv^2 \bigr](t,x),
\\
&z_T(x) := G_1'(\mu^2(T))-G_2'(\mu^2(T)).
\end{split}
\end{equation*}
We apply Lemma \ref{lem:BasicEsti2} with  $V=H_1'$. Recalling the notation $X_n=\cC^{n+\alpha}(\T^d)$,  it says that, under assumptions {\bf (HF1(${\boldsymbol n}$+1))} and {\bf (HG1(${\boldsymbol n}$+2))},
$$
\begin{array}{rl}
\ds \sup_{t\in [t_0,T]}\|z(t,\cdot)\|_{X_{n+2}}\; \leq & \ds C\Bigl[\|z_T\|_{X_{n+2}}+ \sup_{t\in [0,T]}
\bigl( \|b(t,\cdot)\|_{X_{n+1}}+ \|c(t,\cdot)\|_{(X_{n})'}
\bigr)\Bigr].
\end{array}
$$
Let us estimate the various terms in the right-hand side: 
\begin{equation*}
\begin{split}
\|z_T\|_{X_{n+2}} &\leq 
\biggr\| \inte \Bigl(\frac{\delta G}{\delta m}(0,\cdot, m^1(T),y)-\frac{\delta G}{\delta m}(0,\cdot, m^2(T),y)\Bigr)
\mu^2(T,y)dy\biggr\|_{n+2+\alpha} 
\\
&\leq  \left\| \frac{\delta G}{\delta m}(0,\cdot, m^1(T),\cdot)-\frac{\delta G}{\delta m}(0,\cdot, m^2(T), \cdot)\right\|_{(n+2+\alpha,n+1+\alpha)}\|\mu^2(T)\|_{-(n+1+\alpha)}
\\
& \leq C\dk(m^1_0,m^2_0)\ \|\mu_0\|_{-(n+1+\alpha)} 
\end{split}
\end{equation*}
where we have used Proposition \ref{prop:estiDmU}-(ii) in the last inequality. Moreover, we have 
$$
\|b(t,\cdot)\|_{X_{n+1}} \leq \bigl\|F_1'\bigl(\cdot,\mu^2(t) \bigr)-F_2'
\bigl(\cdot,\mu^2(t) \bigr) \bigr\|_{X_{n+1}}
+
\bigl\| \bigl(H_1'-H_2' \bigr)(t,\cdot)Dv^2(t,\cdot) \bigr\|_{X_{n+1}},
$$
where the first term can be estimated as $z_T$: 
$$
\bigl\|F_1'\bigl(\cdot,\mu^2(t)\bigr)-F_2'
\bigl(\cdot,\mu^2(t) \bigr) \bigr\|_{X_{n+1}}\leq C \dk(m^1_0,m^2_0)  \|\mu_0\|_{-(n+1+\alpha)} .
$$
 The second one is bounded by 
\begin{equation*}
\begin{split}
\bigl\|\bigl(H_1'-H_2'\bigr)(t,\cdot)Dv^2(t,\cdot)\bigr\|_{X_{n+1}}
&=
\bigl\| \bigl(D_pH\bigl(\cdot, Du^1(t,\cdot) \bigr)-
D_pH(\cdot, Du^2(t,\cdot)) \bigr)Dv^2(t,\cdot)
\bigr\|_{n+1+\alpha} 
\\
&\leq \|(u^1-u^2)(t,\cdot)\|_{n+2+\alpha}\|v^2(t,\cdot)\|_{n+2+\alpha} 
\\ 
&\leq C \dk(m^1_0,m^2_0) \ \|\mu_0\|_{-(n+1+\alpha)},
\end{split}
\end{equation*}
where the last inequality comes from Proposition \ref{prop:Ulip} and Proposition \ref{prop:estiDmU} thanks to assumptions {\bf (HF1(${\boldsymbol n}$+1))} and {\bf (HG1(${\boldsymbol n}$+2))}. 
Finally, by a similar argument,
\begin{equation*}
\begin{split}
\|c(t)\|_{-(n+\alpha)}  
&=  \ds \sup_{\|\phi\|_{n+\alpha}\leq 1} \inte \phi(x) \Bigl[ \mu^2\bigl(H_1'-H_2'\bigr) 
+\bigl((m^1-m^2)H_1''+m^2(H''_1-H_2'')Dv^2\bigr)\Bigr](t,x) dx  \\
&\leq \sup_{\|\phi\|_{n+\alpha}\leq 1} 
\bigl\| \phi  (H_1'-H_2')(t,\cdot) 
\bigr\|_{n+\alpha}
\|\mu^2(t,\cdot)\|_{-(n+\alpha)}
\\
&\hspace{15pt} + \dk
\bigl(m^1(t),m^2(t)\bigr)
\sup_{\|\phi\|_1\leq 1} 
\bigl\|\phi  
\bigl(H_1''Dv^2\bigr)(t,\cdot)
\bigr\|_{1}+ \sup_{\|\phi\|_{0}\leq 1} 
 \bigl\|\phi(H_1''-H_2'')(t,\cdot)Dv^2(t,\cdot)
 \bigr\|_{0} 
 \\
&\leq  C\bigl\|
(u^1-u^2)(t,\cdot)
\bigr\|_{n+\alpha}\|\mu_0\|_{-(n+\alpha)}
\\
&\hspace{15pt}
+ C \dk\bigl(m^1(t),m^2(t)\bigr)
\|v^2(t,\cdot)\|_{2}
+ C \bigl\|
\bigl(
u^1-u^2
\bigr)(t,\cdot)
\bigr\|_{1}\|v^2(t,\cdot)\|_{1}
\\
&\leq  C\dk(m^1_0,m^2_0)\|\mu_0\|_{-(n+\alpha)}.
\end{split}
\end{equation*}
This shows that 
$$
\begin{array}{rl}
\ds \sup_{t\in [t_0,T]}\|z(t,\cdot)\|_{n+2+\alpha}\; \leq & \ds  C\dk(m^1_0,m^2_0)\|\mu_0\|_{-(n+\alpha)}.
\end{array}
$$
As 
$$
z(t_0,x)= \inte \left(\frac{\delta U}{\delta m}(t_0, x, m^1_0, y)-\frac{\delta U}{\delta m}(t_0, x, m^2_0,y) \right) \mu_0(y)dy, 
$$
we have proved 
$$
 \sup_{m_1\neq m_2}\left(\dk(m_1,m_2)\right)^{-1}
\left\| \frac{\delta U}{\delta m}(t_0,\cdot,m_1,\cdot)-\frac{\delta U}{\delta m}(t_0,\cdot,m_2,\cdot)\right\|_{(n+2+\alpha,n+\alpha)} \; \leq\; C,
$$
where the supremum is taken over smooth densities. The map $\frac{\delta U}{\delta m}$ being continuous, we can remove the restriction of the measures $m_1$ and $m_2$ by approximation to get the full result.
\end{proof}

\subsection{Link with the optimal control of Fokker-Planck equation}

We now explain that, when $F$ and $G$ derive from potentials functions ${\mathcal F}$ and ${\mathcal G}$, the space derivative $D_xU$ is nothing but the derivative with respect to the measure of the solution ${\mathcal U}$ of a Hamilton-Jacobi equation stated in the space of measures. The fact that the mean field game system can be viewed as a necessary condition for an optimal transport of the Kolmogorov equation goes back to  Lasry and Lions \cite{LL07mf}. As explained by Lions \cite{LcoursColl}, one can also write the value function of this optimal control problem, which turns out to be a Hamilton-Jacobi equation in the space of measure. The (directional) derivative with respect to the measure of the value function is then (at least formally) the solution of the master equation. This is  rigorously derived, for short horizon and first order (in space and measure) master equation by Gangbo and Swiech \cite{GS14-2}. We show here that this holds true for the master equation without common noise.

Let us assume that $F$ and $G$ derive from $\cC^1$  potential maps ${\mathcal F}:\Pk\to\R$ and ${\mathcal G}:\Pk\to \R$: 
\be\label{e.mathcalFG}
F(x,m)= \frac{\delta {\mathcal F}}{\delta m}(x,m), \qquad G(x,m)= \frac{\delta {\mathcal G}}{\delta m}(x,m).
\ee
Note for later use that the monotonicity of $F$ and $G$ implies the convexity of ${\mathcal F}$ and ${\mathcal G}$. 

\begin{Theorem}\label{theo:HJB} Under the assumptions of Theorem \ref{theo:ex}, let $U$ be  the solution to the master equation \eqref{masterwithoutCN} and suppose that \eqref{e.mathcalFG} holds.  Then the Hamilton-Jacobi-Bellmann equation
\be\label{e.HJB}
\left\{\begin{array}{l}
\ds -\partial_t {\mathcal U}(t,m)+ \inte H\left(y, D_m{\mathcal U}(t,m,y)\right)dm(y) -\inte \dive\left[D_m{\mathcal U}\right](t,m,y)dm(y)= {\mathcal F}(m)\\
\ds \qquad \qquad \qquad \qquad \qquad {\rm in} \; [0,T]\times \Pk,
\\
\ds {\mathcal U}(T,m)= {\mathcal G}(m)  \qquad {\rm in} \;  \Pk,
\end{array}\right.
\ee
has a unique classical solution ${\mathcal U}$ and 
\be\label{e.MdmathcalU}
D_m{\mathcal U}(t,x,m)= D_xU(t,x,m)\qquad \forall (t,x,m)\in [0,T]\times \T^d\times \Pk. 
\ee
\end{Theorem}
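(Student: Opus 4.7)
The plan is to interpret the Hamilton-Jacobi-Bellman equation \eqref{e.HJB} as the Bellman equation of an optimal control problem stated over flows of probability measures, for which the MFG system \eqref{MFG} provides the first-order optimality conditions. Given $(t_0,m_0)\in[0,T]\times\Pk$, and letting $L(x,\alpha)=\sup_{p}\{-\alpha\cdot p-H(x,p)\}$ be the Legendre transform of $H$ in $p$, I would consider the cost
\[
J(t_0,m_0;\alpha):=\int_{t_0}^T\!\left[\int_{\T^d}\!L(y,\alpha(t,y))\,dm(t,y)+\mathcal F(m(t))\right]dt+\mathcal G(m(T)),
\]
over feedback controls $\alpha$ with $m$ solving $\partial_t m-\Delta m+\dive(\alpha m)=0$, $m(t_0)=m_0$. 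I would then \emph{define} $\mathcal U(t_0,m_0):=J(t_0,m_0;\alpha^\star)$, where $\alpha^\star(t,y):=-D_pH(y,Du(t,y))$ and $(u,m)$ is the solution of \eqref{MFG} starting from $(t_0,m_0)$ given by Proposition \ref{prop:reguum}. The regularity of $\mathcal U$ in $(t,m)$ follows from the regularity of $U=u$ established in Theorem \ref{theo:ex} and from the Lipschitz dependence of $(u,m)$ on $m_0$ (Proposition \ref{prop:Ulip}).

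The core of the argument is the identity \eqref{e.MdmathcalU}, which I would establish by linearizing $J$ along the MFG flow. For a smooth $\mu_0$ with zero mean, I would let $(v,\mu)$ be the solution of the linearized MFG system \eqref{eq:vmubis} with $\mu(t_0)=\mu_0$ (Proposition \ref{prop:estiDmU}), interpreting $(\mu,\delta\alpha):=(\mu,-D^2_{pp}H(\cdot,Du)Dv)$ as the first-order variation of the optimal trajectory induced by $\mu_0$. The key computation is the duality identity
\[
\frac{d}{dt}\int_{\T^d}\!u\,\mu\,dy \;=\; -\!\int [L(\cdot,\alpha^\star)+F(\cdot,m)]\,\mu\,dy+\int Du\cdot\delta\alpha\,dm,
\]
obtained from the MFG system and the Legendre relation $H(y,Du)+L(y,\alpha^\star)=Du\cdot D_pH(y,Du)$. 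Combining this with the envelope formula for $\delta J[\mu_0]$ at the optimum $\alpha^\star$ (where $\partial_\alpha L(\cdot,\alpha^\star)=-Du$, so the $\delta\alpha$ terms cancel) and with the normalization $\int F(\cdot,m)dm=\int G(\cdot,m)dm=0$ coming from $F=\delta\mathcal F/\delta m$, $G=\delta\mathcal G/\delta m$, the boundary contributions collapse and one obtains $\delta\mathcal U[\mu_0]=\int u(t_0,y)\mu_0(y)\,dy$. Fixing the normalization $\int[\delta\mathcal U/\delta m]\,dm=0$, this gives $\frac{\delta\mathcal U}{\delta m}(t_0,m_0,y)=U(t_0,y,m_0)-\int_{\T^d}\!U(t_0,y',m_0)\,dm_0(y')$, and differentiating in $y$ yields \eqref{e.MdmathcalU}. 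The stability statement in Proposition \ref{prop:estiDmU} and the regularity bounds on $\delta U/\delta m$ from Theorem \ref{theo:ex} give the required continuity of $D_m\mathcal U$ and of its $y$-divergence.

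With \eqref{e.MdmathcalU} established, the HJB equation \eqref{e.HJB} follows from dynamic programming: the identity $\mathcal U(t_0,m_0)=\int_{t_0}^{t_0+h}[\int L(\cdot,\alpha^\star)dm+\mathcal F(m)]\,dt+\mathcal U(t_0+h,m(t_0+h))$ differentiated at $h=0$ and combined with $\partial_t m(t_0)=\Delta m_0+\dive(m_0 D_pH(\cdot,Du))$ gives
\[
\partial_t\mathcal U=-\!\int L(\cdot,\alpha^\star)dm_0-\mathcal F(m_0)-\!\int\dive[D_m\mathcal U]dm_0+\!\int D_m\mathcal U\cdot D_pH\,dm_0.
\]
Using \eqref{e.MdmathcalU} and the Legendre relation $L(y,\alpha^\star)=D_m\mathcal U\cdot D_pH(y,D_m\mathcal U)-H(y,D_m\mathcal U)$, the two $D_m\mathcal U\cdot D_pH$ terms cancel and one recovers \eqref{e.HJB} exactly. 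The terminal condition $\mathcal U(T,m)=\mathcal G(m)$ is built in by construction.

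For uniqueness, the plan is a verification argument: if $\mathcal V$ is another classical solution, then for any smooth admissible pair $(m_t,\alpha_t)$ starting from $m_0$, the chain rule along the Fokker--Planck flow and the HJB equation for $\mathcal V$, combined with the pointwise inequality $H(y,D_m\mathcal V)\geq -\alpha\cdot D_m\mathcal V-L(y,\alpha)$ (with equality iff $\alpha=-D_pH(y,D_m\mathcal V)$), give $\frac{d}{dt}\mathcal V(t,m_t)\geq -\int L(\cdot,\alpha)dm_t-\mathcal F(m_t)$, and integration in time yields $\mathcal V(t_0,m_0)\leq J(t_0,m_0;\alpha)$ with equality at $\alpha=-D_pH(\cdot,D_m\mathcal V)$; this forces $\mathcal V=\mathcal U$. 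The main obstacle is the rigorous justification of the linearization step when $\mu_0$ is a signed measure rather than a smooth density, which requires combining the stability of \eqref{eq:vmubis} from Proposition \ref{prop:estiDmU} with an approximation of $\mu_0$ by smooth densities, and controlling the corresponding dependencies of $(u,m)$ and $(v,\mu)$ uniformly in the approximation.
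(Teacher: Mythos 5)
Your construction of $\mathcal U$ through the optimal control of the Fokker--Planck equation, the linearization of $J$ around the optimal flow via the tangent system \eqref{eq:vmubis}, and the dynamic-programming derivation of the HJB equation are essentially the route taken by the paper. The paper's Step~2 obtains $\delta\mathcal U[\mu_0]=\int u(t_0,\cdot)\mu_0$ by expanding $\mathcal U(t_0,\hat m_0)-\mathcal U(t_0,m_0)$ term by term using the $O(\dk^2)$ estimate of Proposition~\ref{prop:diff}, while you package the same cancellations as an envelope identity (the $\delta\alpha$ contributions drop out because $\partial_\alpha L(\cdot,\alpha^\star)=-Du$); these are two phrasings of the same computation, and both ultimately rest on the same stability bounds for the linearized system. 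Your observation that one must approximate a general $\mu_0$ (and indeed a general $m_0$) by smooth data and pass to the limit via Proposition~\ref{prop:estiDmU} is correct; the paper's version sidesteps this by working directly with $\hat m_0-m_0$ and invoking Proposition~\ref{prop:diff}.

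Where you genuinely depart from the paper is the uniqueness argument. The paper proves uniqueness by a maximum-principle comparison: perturbing $\mathcal V-\mathcal U$ by $\varepsilon\log(t/T)$, locating a maximizer $(\hat t,\hat m)\in(0,T]\times\Pk$, reading off the first-order conditions in $t$ and $m$, and plugging into the two HJB equations to derive a contradiction. You instead use a verification argument: along any controlled flow $(m_t,\alpha_t)$, the chain rule and the pointwise Legendre inequality $H(y,D_m\mathcal V)+\alpha\cdot D_m\mathcal V\geq -L(y,\alpha)$ give $\frac{d}{dt}\mathcal V(t,m_t)\geq-\int L\,dm_t-\mathcal F(m_t)$, hence $\mathcal V(t_0,m_0)\leq J(t_0,m_0;\alpha)$, with equality at the feedback $\alpha=-D_pH(\cdot,D_m\mathcal V)$. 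This is a clean and valid alternative; it avoids the subtlety in the paper's argument of deducing from optimality at a boundary point of the convex set $\Pk$ that the $m$-derivatives agree, but in exchange it requires a chain rule for $t\mapsto\mathcal V(t,m_t)$ along Fokker--Planck flows (which holds precisely because a classical solution has $\delta\mathcal V/\delta m$ of class $\cC^2$ in $y$, so that the pairing with $\partial_t m_t$ is well defined) and well-posedness of the closed-loop flow driven by $-D_pH(\cdot,D_m\mathcal V(t,\cdot,m_t))$, which follows from the Lipschitz continuity of $D_m\mathcal V$. Both routes work; your verification version is arguably more robust.
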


We represent the solution ${\mathcal U}$ of \eqref{e.HJB} as the value function of an optimal control problem: for an initial condition $(t_0,m_0)\in [0,T]\times \Pk$, let 
\be\label{def:mathcalU}
{\mathcal U}(t_0,m_0) := \inf_{(m,\alpha)} \int_{t_0}^T \biggl[ \inte H^*\left(x,\alpha(t,x)\right)m(t,dx) \biggr] dt + \int_{t_0}^T {\mathcal F}\bigl(m(t)
\bigr)dt + {\mathcal G}\bigl(m(T)\bigr)
\ee
(where $H^*$ is the convex conjugate of $H$ with respect to the second variable) under the constraint that 
$m \in 
\cC^0([0,T], \Pk)$,
$\alpha$ is a bounded and Borel measurable 
function from $[0,T] \times \T^d$ into $\R^d$
and the pair $(m,\alpha)$ satisfies in the sense of distribution: 
\begin{equation}
\label{eq:mkv:control}
\partial_t m -\Delta m - \dive \bigl(
 \alpha 
m \bigr)
 = 0\; {\rm in}\; [0,T]\times \T^d, \qquad m(t_0)=m_0\; {\rm in}\; \T^d.
\end{equation}
Of course, \eqref{eq:mkv:control} is understood
as the Fokker-Planck equation describing the flow of measures
generated on the torus by the SDE
\begin{equation*}
dZ_{t} = - \alpha(t,Z_{t}) dt + dB_{t}, \quad t \in [0,T],
\end{equation*} 
which is is known to be uniquely solvable in the weak sense. 
Notice that, throughout the subsection, 
we shall use, as in 
\eqref{def:mathcalU}, 
the notation $m(t,dx)$ to denote
the integral on the torus with respect to the (time-dependent)
measure $m(t)$.
\vspace{4pt}

The following characterization of the optimal path for ${\mathcal U}$ is due to Lasry and Lions \cite{LL07mf}: 
\begin{Proposition}\label{p.minim} For an initial position $(t_0,m_0)\in [0,T]\times \Pk$, let $(u,m)$ be the solution of the MFG system \eqref{MFG}. Then 
$(m,\alpha) = (m,D_pH(\cdot,Du(\cdot,\cdot)))$ is a minimizer for ${\mathcal U}(t_0,m_0)$.  
\end{Proposition}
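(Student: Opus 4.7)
My plan is to verify directly that the candidate pair is admissible and then to compare its cost to that of any competitor by a duality identity combined with the convexity of $\mathcal F$ and $\mathcal G$.

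\textbf{Step 1: Admissibility.} The pair $(m^{*}, \alpha^{*}) := (m, D_p H(\cdot, Du(\cdot,\cdot)))$ is admissible: indeed, $\alpha^{*}$ is bounded and continuous because $Du$ is uniformly Lipschitz (Proposition~\ref{prop:reguum}) and $D_p H$ is Lipschitz, and the second equation in the MFG system~\eqref{MFG} is precisely the constraint~\eqref{eq:mkv:control} for this choice of $\alpha^{*}$.

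\textbf{Step 2: Duality identity along the optimal flow.} I would compute $\frac{d}{dt}\int_{\T^d} u(t,x)\, m^{*}(t,dx)$ by integration by parts, using the HJB equation $-\partial_t u - \Delta u + H(x,Du) = F(x,m^{*}(t))$ and the Fokker-Planck equation for $m^{*}$:
\begin{equation*}
\frac{d}{dt}\int_{\T^d} u\, dm^{*} = \int_{\T^d} \bigl[H(x,Du) - Du \cdot D_p H(x,Du) - F(x,m^{*}(t))\bigr] dm^{*}(t).
\end{equation*}
By Legendre duality, $H^{*}(x, D_p H(x,Du)) = Du \cdot D_p H(x,Du) - H(x,Du)$, so the bracket equals $-H^{*}(x,\alpha^{*}) - F(x,m^{*}(t))$. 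Integrating from $t_0$ to $T$ and using $u(T,\cdot)=G(\cdot,m^{*}(T))$ yields
\begin{equation*}
\int_{\T^d} u(t_0,x)\, dm_0(x) = \int_{\T^d} G(x,m^{*}(T))\, dm^{*}(T) + \int_{t_0}^{T}\int_{\T^d} H^{*}(x,\alpha^{*})\, dm^{*}dt + \int_{t_0}^{T}\int_{\T^d} F(x,m^{*}(t))\, dm^{*}(t)\, dt.
\end{equation*}

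\textbf{Step 3: Inequality along any competitor.} For any admissible $(m,\alpha)$, the analogous computation now replaces the duality identity by Young's inequality $Du(t,x)\cdot \alpha(t,x) - H(x,Du(t,x)) \leq H^{*}(x,\alpha(t,x))$, yielding
\begin{equation*}
\int_{\T^d} u(t_0,x)\, dm_0(x) \leq \int_{\T^d} G(x,m^{*}(T))\, dm(T) + \int_{t_0}^{T}\int_{\T^d} H^{*}(x,\alpha)\, dm\, dt + \int_{t_0}^{T}\int_{\T^d} F(x,m^{*}(t))\, dm(t)\, dt.
\end{equation*}
Subtracting the identity of Step 2 from this inequality and adding $\mathcal G(m(T)) - \mathcal G(m^{*}(T))$ and $\int_{t_0}^{T}[\mathcal F(m(t)) - \mathcal F(m^{*}(t))]\, dt$ to both sides, I obtain
\begin{equation*}
J(m,\alpha) - J(m^{*},\alpha^{*}) \geq \Bigl\{\mathcal G(m(T)) - \mathcal G(m^{*}(T)) - \int_{\T^d}\!\! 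G(x,m^{*}(T))\, d(m-m^{*})(T)\Bigr\} + \int_{t_0}^T\!\!\Bigl\{\mathcal F(m(t)) - \mathcal F(m^{*}(t)) - \int_{\T^d}\!\! F(x,m^{*}(t))\, d(m-m^{*})(t)\Bigr\}dt,
\end{equation*}
where $J$ denotes the functional in~\eqref{def:mathcalU}.

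\textbf{Step 4: Conclusion via convexity.} The monotonicity hypothesis~\eqref{e.monotoneF} on $F$ and $G$, combined with the relations $F=\delta\mathcal F/\delta m$ and $G=\delta\mathcal G/\delta m$, translates into the convexity of $\mathcal F$ and $\mathcal G$ along linear interpolations in $\Pk$: indeed, writing $\mathcal G(m) - \mathcal G(m^{*}) = \int_0^1 \int_{\T^d} G(x,(1-s)m^{*}+sm)\, d(m-m^{*})(x)\, ds$ and using monotonicity to show that the integrand is nondecreasing in $s$ yields $\mathcal G(m) - \mathcal G(m^{*}) \geq \int_{\T^d} G(x,m^{*})\, d(m-m^{*})(x)$, and similarly for $\mathcal F$. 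Hence each bracket in Step 3 is nonnegative, giving $J(m,\alpha) \geq J(m^{*},\alpha^{*})$, which is the claim.

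The only nontrivial point is to justify the integration by parts in Step 2 and Step 3 for competitors $(m,\alpha)$ with only the minimal regularity assumed in~\eqref{eq:mkv:control} — that $m \in \cC^0([0,T],\Pk)$ is a distributional solution driven by a bounded Borel $\alpha$. Since $u$ is $\cC^{1+\alpha/2,2+\alpha}$ by Proposition~\ref{prop:reguum}, this is handled by a standard mollification of $u$ in space and an approximation argument, which passes to the limit thanks to the uniform bounds on $Du$ and the dominated convergence theorem. This is the only step that is mildly technical; the algebraic backbone is elementary.
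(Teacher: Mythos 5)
Your proof is correct and rests on the same three ingredients as the paper's argument: the Fenchel--Young inequality for $H$ and $H^*$, the HJB/Fokker--Planck duality (integration by parts along the flow), and the convexity of $\mathcal F$ and $\mathcal G$ inherited from the monotonicity of $F$ and $G$. The only difference is organizational: you derive a duality identity for the optimizer and an inequality for competitors, then subtract; the paper bounds $J(\hat m,\hat\alpha)$ below directly and shows the correction terms cancel. The two computations are the same modulo rearrangement.
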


\begin{proof} For 
a function 
$\hat{m} \in \cC^0([0,T],\Pk)$
and 
a bounded and measurable 
function 
$\hat{\alpha}$
from $[0,T] \times \T^d$ into $\R^d$, we let
$$
J(\hat m, \hat\alpha):= \int_{t_0}^T \inte H^*\bigl(x, \hat{\alpha}(t,x)\bigr)d\hat m(t)+ \int_{t_0}^T {\mathcal F}\bigl(\hat m(t)\bigr)dt + {\mathcal G}\bigl(\hat m(T)\bigr)
$$
where $\hat m$ solves 
$$
\partial_t \hat m -\Delta \hat m - \dive 
\bigl(\hat{\alpha}  \hat m 
\bigr) = 0\; {\rm in}\; [0,T]\times \T^d, \qquad \hat m(t_0)=m_0\; {\rm in}\; \T^d.
$$
As, for any 
$m' \in {\mathcal P}(\T^d), \alpha' \in \R^d$, 
$$
H^*(x,\alpha') =\sup_{p\in \R^d} \left( \alpha' \cdot p -
H(x,p)\right), 
$$
we have, by convexity of ${\mathcal F}$ and ${\mathcal G}$, 
\begin{equation*}
\begin{split}
&J(\hat m, \hat \alpha) 
\\
&\geq  \int_{t_0}^T
\biggl[ \inte
\Bigl[
 \hat \alpha(t,x) \cdot Du(t,x) -H \bigl(x, Du(t,x)\bigr)
 \Bigr] \hat{m}(t,dx) 
 \biggr] dt 
 \\
&\hspace{5pt} + \int_{t_0}^T 
\Bigl[{\mathcal F}\bigl(m(t)\bigr)+ F\bigl(\cdot,m(t) \bigr) 
\bigl(\hat m(t)-m(t)
\bigr)
\Bigr] dt + {\mathcal G}\bigl(m(T) \bigr)+ G\bigl(\cdot,m(T))
\bigl(\hat m(T)-m(T)\bigr) 
\\
&= J(m, \alpha) 
\\
&\hspace{5pt}+\int_{t_0}^T \biggl[\inte 
 \Bigl[
 Du(t,x) \cdot 
\bigl( 
 \hat \alpha(t,x)
 \hat{m}(t,dx)
 -
\alpha(t,x)
{m}(t,dx)
\bigr)  
-H\bigl(x, Du(t,x)\bigr)
\bigl( \hat{m} - \hat{m}\bigr)(t,dx)
\Bigr]
\biggr] dt
\\
&\hspace{5pt}
 + \int_{t_0}^T F\bigl(\cdot,m(t)\bigr)(\hat m-m)(t)dt
 +  G\bigl(\cdot,m(T)\bigr)\bigl(\hat m(T)-m(T)\bigr). 
\end{split}
\end{equation*}
because
$$
\alpha(t,x) \cdot Du(t,x) - H\bigl(x, Du(t,x)\bigr) = H^*
\bigl(x,\alpha(t,x)\bigr).
$$
Using the equation satisfied by $(m,w)$ and $(\hat m, \hat w)$ we have
\begin{equation*}
\begin{split}
&\int_{t_0}^T \biggl[ \inte 
Du(t,x) 
\cdot \bigl( \hat{\alpha}(t,x) 
\hat{m}(t,dx) 
- \alpha(t,x) m(t,dx)
\bigr)
\biggr] dt
\\
&\hspace{5pt} =  -\left[ \inte u(t,x) (\hat m-m)(t,dx) \right]_0^T +\int_0^T
\biggl[ \inte (\partial_t u+\Delta u)(t,x) \bigl(\hat{m}-m\bigr)
(t,dx) \biggr] dt
\\
&\hspace{5pt} =   - G\bigl(\cdot,m(T)\bigr)\bigl(\hat m(T)-m(T)
\bigr)+ \int_0^T
\biggl[ \inte 
\Bigl(H\bigl(x,Du(t,x)\bigr)-F\bigl(x,m(t)\bigr)\Bigr)
\bigl(\hat m-m)(t,dx)\biggr] dt .
 \end{split}
\end{equation*}
This proves that $J(\hat m, \hat \alpha)\geq J( m, \alpha)$ and shows the optimality of $(m, \alpha)$. 
\end{proof}

\begin{proof}[Proof of Theorem \ref{theo:HJB}] 
\textit{First step.}
Let us first check that ${\mathcal U}$, defined by \eqref{def:mathcalU}, is $\cC^1$ with respect to $m$ and satisfies
\be
\label{mathcalU=U}
\frac{\delta {\mathcal U}}{\delta m}(t,x,m)= U(t,x,m)-\inte U(t,y,m)dm(y)\qquad \forall (t,x,m)\in [0,T]\times \T^d\times \Pk. 
\ee
Assume for a while that 
\eqref{mathcalU=U} holds true. 
Then, taking the derivative with respect to $x$ on both sides shows \eqref{e.MdmathcalU}.

We now prove \eqref{mathcalU=U}.
 Let $m_0, \hat m_0$ be two initial measures, $(u,m)$ and $(\hat u, \hat m)$ be the solutions of the MFG system \eqref{MFG} with initial conditions $(t_0,m_0)$ and $(t_0, \hat m_0)$ respectively. 
Let also $(v,\mu)$ be the solution of the linearized system \eqref{eq:vmubis} with initial condition $(t_0, \hat m_0-m_0)$. Let us recall that, according to Proposition \ref{prop:diff}, we have 
\be\label{e.estiqjhfbdjhsl}
\ds \sup_{t\in [t_0,T]} \left\{ \|\hat u-u-v\|_{n+2+\alpha}+ \|\hat m-m-\mu \|_{-(n+1+\alpha)} \right\} \; \leq C\dk^2(m_0,\hat m_0)
\ee
while Proposition \ref{prop:Ulip} and Proposition \ref{prop:estiDmU} imply that 
$$
\sup_{t\in [0,T]} \left\{ \|\hat u-u\|_{n+2+\alpha}+ \|\mu \|_{-(n+1+\alpha)} \right\}\leq C\dk(m_0,\hat m_0).
$$
Our aim is to show that 
\begin{equation}
\label{eq:proof:mkv}
{\mathcal U}(t_0,\hat m_0)-{\mathcal U}(t_0, m_0) -\inte U(t_0,x,m_0)d(\hat m_0-m_0)(x) = O\bigl(\dk^2(m_0,\hat m_0)
\bigr).
\end{equation}
Indeed, if \eqref{eq:proof:mkv}
 holds true, then
$U$ is a derivative of ${\mathcal U}$ and, by convention \eqref{ConvCondDeriv}, proves \eqref{mathcalU=U}. 
\vspace{4pt}

\textit{Second step.}
We now turn to the proof of 
\eqref{eq:proof:mkv}.
Since $(u,m)$ and $(\hat u, \hat m)$ are optimal in ${\mathcal U}(t_0, m_0)$ and ${\mathcal U}(t_0,\hat m_0)$ respectively, we have 
\begin{equation*}
\begin{split}
&{\mathcal U}(t_0,\hat m_0)-{\mathcal U}(t_0, m_0)
\\
&=  \int_{t_0}^T \left( \inte H^*
\bigl(x, D_pH(x,D\hat u(t,x))\bigr)
\hat m(t,dx)-
\inte H^*\bigl(x, D_pH(x,D u(t,x))\bigr)
m(t,dx)\right)dt
\\
&\hspace{15pt}+ \int_{t_0}^T \Bigl(
{\mathcal F}\bigl(\hat m(t)\bigr)-{\mathcal F}\bigl(m(t)\bigr)\Bigr)dt + {\mathcal G}\bigl(\hat m(T)\bigr)- 
{\mathcal G}\bigl(m(T)\bigr).
\end{split}
\end{equation*}
Note that, by \eqref{e.estiqjhfbdjhsl},  
\begin{equation*}
\begin{split}
&\int_{t_0}^T \left( 
\inte H^*\Bigl(x, D_pH\bigl(x,D\hat u(t,x)\bigr)\Bigr) \hat m(t,dx)
-
\inte H^*\Bigl(x, D_pH\bigl(x,D u(t,x)\bigr)\Bigr) m(t,dx)
\right)dt
\\
&= \int_{t_0}^T\biggl( \inte H^*\Bigl(x, D_pH\bigl(x,D u(t,x)\bigr)
\Bigr) \mu(t,dx) 
\\
&\hspace{5pt} + \inte   D_q
H^*\Bigl(x, D_pH\bigl(x,Du(t,x)\bigr)
\Bigr)
\cdot  \bigl[ D^2_{pp}H\bigl(x,Du(t,x) \bigr)Dv(t,x)
\bigr]
m(t,dx) \biggr)
dt
+O\bigl(\dk^2(m_0,\hat m_0)\bigr)
\\
&= \int_{t_0}^T \biggl(\inte  \Bigl(Du(t,x)
\cdot  D_pH\bigl(x,D u(t,x)\bigr)-H\bigl(x,Du(t,x)\bigr)
\Bigr)\mu(t,dx) 
\\
&\hspace{5pt}
+\inte Du(t,x)\cdot 
\bigl[ D^2_{pp}H\bigl(x,Du(t,x)\bigr) Dv(t,x)
\bigr] m(t,dx)
\biggr)dt
+O\bigl(\dk^2(m_0,\hat m_0)\bigr),
\end{split}
\end{equation*}
where we have used the properties of the Fenchel conjugate in the last equality, while
$$
\begin{array}{l}
\ds \int_{t_0}^T \Bigl[{\mathcal F}\bigl(\hat m(t)\bigr)-{\mathcal F}\bigl(m(t)\bigr)\Bigr]dt + {\mathcal G}\bigl(\hat m(T)\bigr)- {\mathcal G}\bigl(m(T)\bigr)
\\
\ds \qquad = \int_{t_0}^T \biggl( 
\inte F\bigl(x,m(t)\bigr)\mu(t,dx) \bigr) dt + \inte 
G\bigl(x, m(T)\bigr) \mu(T,dx) +O\bigl(\dk^2(m_0,\hat m_0)\bigr).
\end{array}
$$
Recalling the equation satisfied by $u$ and $\mu$, we have 
\begin{equation*}
\begin{split}
&\frac{d}{dt} \inte u(t,x)\mu(t,dx) 
\\
&= 
\inte \Bigl[H\bigl(x,Du(t,x)
\bigr)-F\bigl(x,m(t)\bigr)\Bigr] \mu(t,dx) -
\inte  
Du(t,x) \cdot  D_pH\bigl(x,Du(t,x) \bigr) 
\mu(t,dx)
\\
&\hspace{15pt} - \inte  Du(t,x) \cdot
\Bigl[ D^2_{pp}H
\bigl(x,Du(t,x)\bigr)
Dv(t,x)
\Bigr]
 m(t,dx).
\end{split}
\end{equation*}
Putting the last three identities
together,
 we obtain 
\begin{equation*}
\begin{split}
&{\mathcal U}(t_0,\hat m_0)-{\mathcal U}(t_0, m_0)
\\
&= -\int_{t_0}^T 
\biggl( \frac{d}{dt} \inte u(t,x)\mu(t,dx)
\biggr) dt
+\inte G\bigl(x, m(T)\bigr)\mu(T,dx) +O
\bigl(\dk^2(m_0,\hat m_0)\bigr) 
\\
&= \inte u(t_0,x)\mu(t_0,dx)+O\bigl(\dk^2(m_0,\hat m_0)\bigr) 
 = \inte U(t_0,x,m_0)d(\hat m_0-m_0)(x)+O
 \bigl(\dk^2(m_0,\hat m_0)\bigr).
\end{split}
\end{equation*}
This completes the proof of \eqref{mathcalU=U}.
\vspace{4pt}

\textit{Third step.}
Next we show that ${\mathcal U}$ is a classical solution to the Hamilton-Jacobi equation \eqref{e.HJB}. 
Let us fix $(t_0,m_0)\in [0,T)\times \Pk$, where $m_0$ has a smooth, positive density. Let also $(u,m)$ be the solution of the MFG system \eqref{MFG} with initial condition $(t_0,m_0)$. Proposition \ref{p.minim} states that $(m, D_pH(\cdot,Du(\cdot,\cdot)))$ is a minimizer for ${\mathcal U}(t_0,m_0)$. By standard dynamic programming principle, we have therefore, for any $h\in (0,T-t_0)$, 
\begin{equation}
\label{e.PPD}
\begin{split}
{\mathcal U}(t_0,m_0)&= \int_{t_0}^{t_0+h} \inte H^*\Bigl(x, D_pH\bigl(x,Du(t,x)\bigr)\Bigr)m(t,x)dx dt
\\
&\hspace{20pt} + \int_{t_0}^{t_0+h} {\mathcal F}\bigl(m(t)\bigr)dt + {\mathcal U}(t_0+h,m(t_0+h)). 
\end{split}
\end{equation}
Now we note that
\begin{multline}\label{e.mathcalUt0+h}
\frac{{\mathcal U}(t_0+h,m_0)-{\mathcal U}(t_0,m_0)}{h} \\
= 
\frac{{\mathcal U}(t_0+h,m_0)-{\mathcal U}(t_0+h,m(t_0+h))}{h}
+
\frac{{\mathcal U}(t_0+h,m(t_0+h))-{\mathcal U}(t_0,m_0)}{h}.
\end{multline}
We can handle the  first term in the right-hand side of \eqref{e.mathcalUt0+h}  by using the fact that ${\mathcal U}$ is $\cC^1$ with respect to $m$. Letting $m_{s,h}:= (1-s) m_0+sm(t_0+h)$), we have:
\begin{equation*}
\begin{split}
&{\mathcal U}\bigl(t_0+h,m(t_0+h)\bigr)-{\mathcal U}\bigl(t_0+h,m_0\bigr) 
\\
&\hspace{15pt}= 
\int_0^1 \inte \frac{\delta {\mathcal U}}{\delta m}\bigl(t_0+h, m_{s,h},y\bigr)d\bigl(m(t_0+h)-m_0\bigr)(y)ds 
\\
&\hspace{15pt}= 
- \int_0^1 \inte \int_{t_0}^{t_0+h} D_m {\mathcal U}\bigl(t_0+h, m_{s,h},y\bigr)\cdot \Bigl(D m(t,y)+ D_pH\bigl(y,Du(t,y) \bigr)m(t,y)\Bigr)\ dt dy ds.
\end{split}
\end{equation*}
Dividing by $h$, letting $h\to 0^+$ and rearranging gives
$$
\begin{array}{l}
\ds \lim_{h\to0^+} \frac{{\mathcal U}(t_0+h,m(t_0+h))-{\mathcal U}(t_0+h,m_0)}{h} 
\\
\ds \qquad = 
\inte \dive\left[D_m {\mathcal U}\right] (t_0, m_0,y) dm_0(y)-\inte  D_m {\mathcal U}(t_0, m_0,y) \cdot D_pH\bigl(y,Du(t_0,y)\bigr)\ dm_0(y) .
\end{array}
$$
To handle the second term in the right-hand side of \eqref{e.mathcalUt0+h}, we use \eqref{e.PPD} and get 
$$
\lim_{h\to0^+}
 \frac{{\mathcal U}(t_0+h,m(t_0+h))-{\mathcal U}(t_0,m_0)}{h}
=
-\inte H^*\bigl(x, D_pH(x,Du(t_0,x))\bigr)dm_0(x)- {\mathcal F}(m_0).
$$
As $Du(t_0,x)= D_xU(t_0, x,m_0)= D_m{\mathcal U}(t_0,m_0,x)$, we have 
\begin{equation*}
\begin{split}
&-H^*\bigl(x, D_pH\bigl(x,Du(t,x)\bigr)\bigr)- D_m {\mathcal U}(t_0, m_0,x) \cdot D_pH\bigl(y,Du(t_0,y)
\bigr) 
\\
&\hspace{15pt} =
-H^*\bigl(x, D_pH\bigl(x,D_m{\mathcal U}(t_0,m_0,x)\bigr)\bigr)+ D_m {\mathcal U}(t_0, m_0,x) \cdot D_pH
\bigl(x,D_m{\mathcal U}(t_0,m_0,x)\bigr) 
\\
&\hspace{15pt}= H\bigl(x,D_m{\mathcal U}(t_0,m_0,x)\bigr).
\end{split}
\end{equation*}
Collecting the above equalities, we obtain therefore 
\begin{multline*}
\lim_{h\to0^+}\frac{{\mathcal U}(t_0+h,m_0)-{\mathcal U}(t_0,m_0)}{h} \\
= 
-\inte \dive\left[D_m {\mathcal U}\right] (t_0, m_0,y) dm_0(y) + \inte H\bigl(x,D_m{\mathcal U}(t_0,m_0,x)\bigr)dm_0(x) - {\mathcal F}(m_0). 
\end{multline*}
As the right-hand side of the above equality is continuous in all variables, this shows that ${\mathcal U}$ is continuously derivable with respect to $t$ and satisfies \eqref{e.HJB}. 
\vspace{4pt}

\textit{Last step.}
We finally check that ${\mathcal U}$ is the unique classical solution to \eqref{e.HJB}. For this we use the standard comparison argument. Let ${\mathcal V}$ be another classical solution and assume that ${\mathcal V}\neq {\mathcal U}$. To fix the ideas, let us suppose that $\sup({\mathcal V}-{\mathcal U})$ is positive. Then, for any $\ep>0$ small enough, 
$$
\sup_{(t,x)\in (0,T]\times \Pk} {\mathcal V}(t,m)- {\mathcal U}(t,m)+\ep\log(\frac{t}{T}) 
$$
is positive. Let $(\hat t, \hat m)$ be a maximum point. Note that $\hat t<T$ because ${\mathcal V}(T,\cdot)={\mathcal U}(T,\cdot)$. 
By optimality of $(\hat t, \hat m)$ and regularity of ${\mathcal V}$ and ${\mathcal U}$, we have:
$$
\partial_t {\mathcal V}(\hat t, \hat m)- \partial_t{\mathcal U}(\hat t, \hat m)+\frac{\ep}{\hat t}=0
\qquad {\rm 
and}
\qquad
\frac{\delta {\mathcal V}}{\delta m}(\hat t,\hat m,\cdot)= \frac{\delta {\mathcal U}}{\delta m}(\hat t,\hat m,\cdot),
$$
so that
$$
D_m{\mathcal V}(\hat t,\hat m,\cdot)= D_m{\mathcal U}(\hat t,\hat m,\cdot)\; {\rm and} \; 
\dive\left[D_m{\mathcal V}\right](\hat t,\hat m,\cdot)= \dive\left[D_m{\mathcal U}\right](\hat t,\hat m,\cdot). 
$$
Using the equation satisfied by ${\mathcal U}$ and ${\mathcal V}$ yields to $\frac{\ep}{\hat t}=0$, a contradiction. 
\end{proof}

\newpage

\section{MFG system with a common noise}
\label{se:common:noise}
The main purpose of the two next sections is to show that the same approach as the one developed in the 
previous section may be implemented in the case when the whole system 
is forced by a so-called `common noise'. Such a common noise 
is sometimes referred to as a 'systemic noise', see for instance Lions' lectures 
at the \textit{Coll\`ege de France}. 

Thinking of a game with a finite number of players, the 
common noise describes some noise that affects all the players
in the same way, so that the dynamics of one given particle 
reads\footnote{Equation
\eqref{eq:sec:3:X} 
is set on $\R^d$ but 
the solution may be canonically mapped onto
$\T^d$ since the coefficients are periodic: 
When the process $(X_{t})_{t \in [0,T]}$
is initialized with a probability 
measure on $\T^d$, 
the dynamics on the torus are independent of 
the representative in $\R^d$
of the initial condition.}
\begin{equation}
\label{eq:sec:3:X}
dX_{t} = - D_{p} H(X_{t},Du_{t}(X_{t})) dt + \sqrt{2} dB_{t} + \sqrt{2\beta} dW_{t}, \quad t \in [0,T],
\end{equation} 
where $\beta$ is a nonnegative parameter, $B$ and $W$ are two independent $d$-dimensional Wiener processes, 
$B$ standing for the same idiosyncratic noise as in the previous section and $W$ now standing for the so-called common noise. 
Throughout the section, 
we use the standard convention from the theory of stochastic processes that consists in 
indicating the time parameter as an index in random functions.

As we shall see next, the effect of the common noise is to randomize the MFG equilibria
so that, with the same notations as above, $(m_{t})_{t \geq 0}$ becomes a random flow of measures. Precisely, it reads 
as the flow of conditional marginal measures of $(X_{t})_{t \in [0,T]}$
given the realization of $W$. In order to distinguish things properly, we shall refer
the situation discussed in the previous section to as the `deterministic' or `first-order' case. 
In this way, we point out that, without common noise, equilibria are  
completely deterministic. Compared to the notation of the introduction or of section \ref{sec:prelim}, we let the level of common noise $\beta$ be equal to $1$ throughout the section: this is without loss of generality and simplifies (a little) the notation.

This section is specifically devoted to the analysis of the MFG system
 in the presence of the common noise (see 
 \eqref{e.MFGstoch}).
Using a \textit{continuation like} argument (instead of the classical strategy based on 
the Schauder fixed point theorem), we investigate existence and uniqueness 
of a solution. On the model of the first-order case, we also investigate the 
linearized system. The derivation of the master equation is deferred to the next section. 
The use of the continuation method 
in the analysis of MFG systems is a new point, which is directly inspired from 
the analysis of finite dimensional forward-backward systems: Its application is 
here made possible thanks to the monotonicity assumption required on $F$ and $G$. 

{\it As already mentioned, we assume without loss of generality that $\beta=1$ throughout this section.}

\subsection{Stochastic Fokker-Planck/Hamilton-Jacobi-Bellman System}

The major difficulty for handling MFG with a common noise 
is that the system made of the Fokker-Planck and Hamilton-Jacobi-Bellman
equations in 
\eqref{MFG}
becomes stochastic. 
Its general form has been already discussed in \cite{CaDe14}. 
Both the forward and the backward equations become stochastic as 
both the equilibrium $(m_{t})_{0 \leq t \leq T}$
and the value function $(u_{t})_{0 \leq t\leq T}$
depend upon the realization of the common noise $W$. 
Unfortunately, 
the stochastic system does not consist of a simple 
randomization of the coefficients: In order to ensure that 
the value function $u_{t}$ at time $t$
depends upon the past before $t$ in the realization of $(W_{s})_{0 \leq s \leq T}$,
the backward equation incorporates an additional 
correction term which is reminiscent of the theory of finite-dimensional 
backward stochastic differential equations. 

The Fokker-Planck equation
satisfied by $(m_{t})_{t \in [0,T]}$ reads
\begin{equation}
\label{eq:se:3:m}
d_{t} m_{t} = \bigl[  2 \Delta m_{t} + {\rm div} \bigl( m_{t} D_{p} H(m_{t},D u_{t}) 
\bigr) \bigr] dt - \sqrt{2} {\rm div} ( m_{t} dW_{t} \bigr), \quad t \in [0,T]. 
\end{equation}
The value function $u$ is sought as the solution of the stochastic HJB equation:
\begin{equation}
\label{eq:se:3:u}
 d_{t} u_{t} = \bigl\{ -  2 \Delta u_{t} + H(x,Du_{t}) - F(x,m_{t}) -  \sqrt{2} {\rm div}(v_{t}) \bigr\} dt+ v_{t} \cdot dW_{t},
\end{equation}
where, at any time $t$, $v_{t}$ is a 
random function of $x$ with values in $\R^d$. 
Once again, we emphasize that the term $v_{t} \cdot dW_{t}
= \sum_{i=1}^d v_{t}^i dW_{t}^i$ permits to guarantee that 
$(u_{t})_{0 \leq t \leq T}$ is adapted with respect to the filtration generated by the common noise. 
The extra term $- \sqrt{2} {\rm div}(v_{t})$
may be explained by the so-called It\^o-Wentzell formula, 
which is the chain rule for random fields applied to random processes,
see for instance \cite{Kunita1990}. 
It permits to cancel out the bracket that arises in the application of 
the It\^o-Wentzell formula\footnote{In the application of It\^o-Wentzell formula, 
$u_{t}$ is seen as a (random) 
periodic function from $\R^d$ to $\R$.} to $(u_{t}(X_{t}))_{t \in [0,T]}$, with $(X_{t})_{0 \leq t \leq T}$ as in 
\eqref{eq:sec:3:X}. Indeed, when expanding the infinitesimal variation 
of $(u_{t}(X_{t}))_{t \in [0,T]}$, the martingale term contained in $u_{t}$ conspires with 
the martingale term contained in $X$ and generates an additional bracket term.
This additional bracket term is precisely $\sqrt{2} {\rm div}(v_{t})(X_{t})$; it thus cancels out with the term $- \sqrt{2} {\rm div}(v_{t})(X_{t})$ that appears in the dynamics of $u_{t}$. For the sake of completeness, we provide a rough version of the computations that 
enter the definition of this additional bracket.  When 
expanding the difference
$u_{t+dt}(X_{t+dt}) - u_{t}(X_{t})$, for $t \in [0,T]$ and an infinitesimal 
variation $dt$, the martingale structure 
in \eqref{eq:se:3:u} induces a term of the form 
$v_{t}(X_{t+dt})(W_{t+dt}-W_{t})$. By standard It\^o's formula, 
it looks like 
\begin{equation}
\label{eq:partie2:ito:wentzell:rough}
\begin{split}
&v_{t}(X_{t+dt})\bigl(W_{t+dt}-W_{t}\bigr)
\\
&\hspace{15pt} = \sum_{i=1}^d v_{t}^i(X_{t+dt}) \bigl(W_{t+dt}^i - W_{t}^i 
\bigr)
= \sum_{i=1}^d v_{t}^i(X_{t}) dW_{t}^i 
+ \sqrt{2}
\sum_{i=1}^d \frac{\partial v_{t}^i}{\partial x_{i}}(X_{t}) dt,
\end{split}
\end{equation}
the last term matching precisely 
the divergence term (up to the sign) that appears in 
\eqref{eq:se:3:u}.
\vspace{5pt}

As in the deterministic case, our aim is to define 
$U$ by means of the same formula as in \eqref{defU}, that 
is $U(0,x,m_{0})$ is the value 
at point $x$ of the value function taken at time $0$
when the population is initialized with the distribution 
$m_{0}$. 

In order to proceed, the idea is to reduce the equations
by taking advantage of the additive structure of the common noise. 
The point is to make the (formal) change of variable
\begin{equation*}
\tilde{u}_{t}(x) = u_{t}(x+ \sqrt{2} W_{t}), 
\quad \tilde{m}_{t}(x) = m_{t}(x+ \sqrt{2} W_{t}), \quad x \in \T^d, \quad t \in [0,T]. 
\end{equation*}
The second definition makes sense when $m_{t}$ is a density, which is the case in the analysis
because of the smoothing effect of the noise. 
A more rigorous way to define $\tilde{m}_{t}$ is to let
it be the push-forward of $m_{t}$ by the shift $\T^d \ni 
x \mapsto x-\sqrt{2}W_{t} \in \T^d$. Pay attention that such a definition is completely licit 
as $m_{t}$ reads as a conditional measure given the common noise. 
As the conditioning consists in freezing the common noise, 
the shift $x \mapsto x-\sqrt{2}W_{t}$ may be seen as a `deterministic' mapping. 

The main feature is that $\tilde{m}_{t}$ is the conditional law
of the process $(X_{t}-\sqrt{2}W_{t})_{t \in [0,T]}$ given the common noise.
Since
\begin{equation*}
d \bigl( X_{t} -\sqrt{2} W_{t} \bigr)
= - D_{p} H \bigl( X_{t}- \sqrt{2} W_{t} + \sqrt{2} W_{t},Du_{t}
(X_{t}- \sqrt{2}W_{t}+\sqrt{2}W_{t}) \bigr) dt + \sqrt{2} dB_{t}, 
\quad t \in [0,T].
\end{equation*}
we get that 
$(\tilde{m}_{t})_{t \in [0,T]}$ should satisfy
\begin{equation}
\label{eq:tilde:m}
\begin{split}
d_{t} \tilde{m}_{t} &= \bigl\{  \Delta \tilde{m}_{t}
+{\rm div} \bigl( \tilde{m}_{t} D_{p} H(\cdot + \sqrt{2} W_{t} ,D \tilde{u}_{t})
\bigr) \bigr\} dt
\\
&= \bigl\{ \Delta \tilde{m}_{t}
+{\rm div} \bigl( \tilde{m}_{t} D_{p} \tilde{H}_{t}(\cdot,D \tilde{u}_{t})
\bigr) \bigr\} dt,
\end{split} 
\end{equation}
where we have denoted $\tilde{H}_{t}(x,p) = H(x+ \sqrt{2}W_{t},p)$. 
This reads as the standard Fokker-Planck equation but in a random medium. 
Such a computation may be recovered by applying the 
It\^o-Wentzell formula to $(m_{t}(x+\sqrt{2} W_{t}))_{t \in [0,T]}$, provided 
that each $m_{t}$ be smooth enough in space. 
Quite remarkably, $(\tilde{m})_{t \in [0,T]}$ is of absolutely continuous variation in time, 
which has a clear meaning when $(\tilde{m}_{t})_{t \in [0,T]}$
is seen as a process with values in a set of smooth functions; 
when $(\tilde{m}_{t})_{t \in [0,T]}$
is seen as a process with values in $\Pk$,
the process $(\langle \varphi , \tilde{m}_{t} \rangle)_{t \in [0,T]}$
($\langle \cdot,\cdot \rangle$ standing for the duality bracket)
is indeed of absolutely continuous variation.

Similarly, 
we can apply (at least formally) It\^o-Wentzell formula 
to $(u_{t}(x+\sqrt{2} W_{t}))_{t \in [0,T]}$ in order
to express the dynamics of $(\tilde{u}_{t})_{t \in [0,T]}$. 
\begin{equation}
\label{eq:tilde:u}
\begin{split}
d_{t} \tilde{u}_{t}
&=  \bigl\{ -   \Delta \tilde{u}_{t} + H\bigl(\cdot + \sqrt{2} W_{t},D\tilde{u}_{t}
\bigr) - F\bigl(\cdot+ \sqrt{2}W_{t},m_{t} \bigr) 
\bigr\} dt
+ \tilde{v}_{t} dW_{t},
\\
&= \bigl\{ -   \Delta \tilde{u}_{t} + \tilde{H}_{t}(\cdot,D\tilde{u}_{t}) - 
\tilde{F}_{t}(\cdot,m_{t}) 
\bigr\} dt
+ \tilde{v}_{t} dW_{t}, \quad t \in [0,T], 
\end{split}
\end{equation}
where $\tilde{F}_{t}(x,m) = F(x+\sqrt{2} W_{t},m)$, 
for a new representation term $\tilde{v}_{t}(x)=v_{t}(x+\sqrt{2} W_{t})$,
the boundary condition writing
$\tilde{u}_{T}(\cdot) = \tilde{G}(\cdot,m_{T})$
with $\tilde{G}(x,m) = G(x+\sqrt{2} W_{T},m)$.
In such a way, we completely avoid any discussion about the smoothness of 
$\tilde{v}$. Pay attention that there is no way to get rid of the stochastic integral
as it permits to ensure that $\tilde{u}_{t}$ remains 
adapted with respect to the observation up until time $t$. 

Below, we shall investigate the system \eqref{eq:tilde:m}--\eqref{eq:tilde:u}
directly. It is only in the next section, see 
Subsection \ref{subse:proof:c.sec5.MFGstoch}, that we make the connection 
with the original 
formulation 
\eqref{eq:se:3:m}--\eqref{eq:se:3:u}
and then complete the proof of 
Corollary \ref{c.sec5.MFGstoch}.  
The reason is that it suffices to define 
the solution of the master equation by letting 
$U(0,x,m_{0})$ be the value of $\tilde{u}_{0}(x)$
with $m_{0}$ as initial distribution. 
Notice indeed that $\tilde{u}_{0}(x)$ is expected to 
match $\tilde{u}_{0}(x) = u_{0}(x-\sqrt{2} W_{0})
=u_{0}(x)$. 
Of course, the same strategy may be applied at any time 
$t \in [0,T]$ by investigating $(\tilde{u}_{s}(x+\sqrt{2}(W_{s}-W_{t})))_{s \in [t,T]}$. 

With these notations, the monotonicity assumption takes the form:
\begin{Lemma}
\label{lem:partie2:monotonie}
Let $m$ and $m'$ be two elements of $\Pk$.
For some $t \in [0,T]$
and for 
some realization of the noise, denote by 
$\tilde{m}$ and $\tilde{m}'$ the push-forwards of $m$ and $m'$ 
by the mapping $\T^d \ni x \mapsto x - \sqrt{2} W_{t} \in \T^d$. Then, 
for the given realization of $(W_{s})_{s \in [0,T]}$,
\begin{equation*}
\int_{\T^d} 
\bigl( \tilde{F}_{t}(x,m) -\tilde{F}_{t}(x,m') \bigr) d (\tilde{m} - \tilde{m}') \geq 0,
\quad \int_{\T^d} 
\bigl( \tilde{G}(x,m) -\tilde{G}(x,m') \bigr) d (\tilde{m} - \tilde{m}') \geq 0.
\end{equation*}
\end{Lemma}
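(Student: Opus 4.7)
The plan is to observe that Lemma \ref{lem:partie2:monotonie} is, despite the appearance brought by the stochastic setting, a direct consequence of the standard monotonicity condition \eqref{e.monotoneF} by a single change of variables on the torus. Fix $t \in [0,T]$ and a realization of the noise. By the very definition, $\tilde m$ is the push-forward of $m$ by $\T^d \ni x \mapsto x - \sqrt{2} W_{t} \in \T^d$, so that, equivalently, $m$ is the push-forward of $\tilde m$ by the inverse shift $\T^d \ni x \mapsto x + \sqrt{2} W_{t} \in \T^d$ (both shifts being well-defined as maps from $\T^d$ into itself since $\Z^d$-translations act trivially on the torus). The same applies to $m'$ and $\tilde m'$.

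I would then insert the definition $\tilde F_{t}(x,m) = F(x + \sqrt{2} W_{t},m)$ into the left-hand side and perform the change of variables $y = x + \sqrt{2} W_{t}$ with respect to each of $\tilde m$ and $\tilde m'$. By the push-forward property, this gives
\begin{equation*}
\int_{\T^d} \bigl( \tilde{F}_{t}(x,m) - \tilde{F}_{t}(x,m') \bigr) d (\tilde{m} - \tilde{m}')(x)
= \int_{\T^d} \bigl( F(y,m) - F(y,m') \bigr) d(m - m')(y),
\end{equation*}
which is nonnegative by the monotonicity assumption \eqref{e.monotoneF} on $F$. The argument for $\tilde G$ is identical, substituting $W_{T}$ for $W_{t}$ throughout and invoking the monotonicity of $G$ in \eqref{e.monotoneF}.

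There is no real obstacle here: the content of the lemma is only that the additive common noise, thanks to its purely translational action on the state space, commutes with the Lasry--Lions monotonicity pairing. The only point worth being careful about is the bookkeeping of the push-forward direction (i.e.\ remembering that $m$ is recovered from $\tilde m$ by \emph{adding} $\sqrt{2} W_{t}$, not by subtracting it), so as to ensure that the shift inside $\tilde F_{t}$ is exactly canceled by the shift encoded in the push-forward.
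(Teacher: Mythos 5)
Your proof is correct and is exactly the paper's own argument; the paper's proof reads, in full, ``The proof consists of a straightforward change of variable,'' and you have supplied the details that statement condenses. One small clarification worth making explicit: since $\tilde G$ is defined via the shift by $\sqrt{2}W_{T}$ (not $\sqrt{2}W_{t}$), the second inequality only reduces to the monotonicity of $G$ when the push-forwards $\tilde m, \tilde m'$ are taken with $t=T$ — which is how the lemma is used in the paper (on the terminal measures $\tilde m_T, \tilde m_T'$) and which is what you implicitly acknowledge by ``substituting $W_T$ for $W_t$ throughout.''
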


\begin{proof}
The proof consists of a straightforward change of variable. 
\end{proof}

\begin{Remark}
\label{rem:partie2:notation}
Below, we shall use quite systematically, without recalling it, the notation tilde
$\sim$ in order to denote the new coefficients 
and the new solutions after the random change of variable 
$x \mapsto x + \sqrt{2} W_{t}$. 
\end{Remark}

\subsection{Probabilistic Set-Up}

Throughout the section, we shall use the probabilistic 
space $(\Omega,{\mathcal A},\P)$ equipped with 
two independent $d$-dimensional Brownian motions 
$(B_{t})_{t \geq 0}$ and $(W_{t})_{t \geq 0}$. 
The probability space is assumed to be complete. 
We then denote by $({\mathcal F}_{t})_{t \geq 0}$
the completion of the 
filtration generated by $(W_{t})_{t \geq 0}$. 
When needed, we shall also use the 
filtration generated by $(B_{t})_{t \geq 0}$.

Given an initial distribution ${m}_{0} \in \Pk$, we consider
the system
\begin{equation}
\label{eq:se:3:tilde:HJB:FP}
\begin{split}
&d_{t} \tilde{m}_{t} = \bigl\{ \Delta \tilde{m}_{t}
+{\rm div} \bigl( \tilde{m}_{t} D_{p} \tilde{H}_{t}(\cdot,D \tilde{u}_{t})
\bigr) \bigr\} dt,  
\\
&d_{t} \tilde{u}_{t}
=  \bigl\{ -   \Delta \tilde{u}_{t} + \tilde{H}_{t}(\cdot,D\tilde{u}_{t}) - 
\tilde{F}_{t}(\cdot,m_{t}) 
\bigr\} dt
+ d\tilde{M}_{t},
\end{split}
\end{equation}
with 
the initial condition $\tilde{m}_{0} = m_{0}$
and
the terminal boundary condition $\tilde{u}_{T}
= \tilde{G}(\cdot,m_{T})$, 
with $\tilde{G}(x,m_{T}) = G(x+ \sqrt{2} W_{T},m_{T})$. 

The solution  
$(\tilde{u}_{t})_{t \in [0,T]}$ is
seen as an $({\mathcal F}_{t})_{t \in [0,T]}$-adapted process
with paths in the space ${\mathcal C}^0([0,T],{\mathcal C}^{n}({\mathbb T}^d))$, 
where $n$ is a large enough integer (see the precise statements below). 
The process $(\tilde{m}_{t})_{t \in [0,T]}$ reads 
as an $({\mathcal F}_{t})_{t \in [0,T]}$-adapted process
with paths in the space 
$\cC^0([0,T],{\mathcal P}({\mathbb T}^d))$, 
${\mathcal P}({\mathbb T}^d)$ being equipped with 
the
$1$-Wasserstein metric ${\mathbf d}_{1}$. 
We shall look for solutions satisfying 
\begin{equation}
\label{eq:se:3:bsde:1}
\sup_{t \in [0,T]} \bigl( \| \tilde{u}_{t} \|_{n+\alpha}
\bigr) \in L^\infty(\Omega,{\mathcal A},\P),
\end{equation}
for some $\alpha \in (0,1)$. 

The process $(\tilde{M}_{t})_{t \in [0,T]}$ is
seen as an $({\mathcal F}_{t})_{t \in [0,T]}$-adapted process
with paths in the space ${\mathcal C}^0([0,T],
{\mathcal C}^{n-2}({\mathbb T}^d))$, 
such that, for any $x \in {\mathbb T}^d$, 
$(\tilde{M}_{t}(x))_{t \in [0,T]}$
is an $({\mathcal F}_{t})_{t \in [0,T]}$ martingale. 
It is required to satisfy 
\begin{equation}
\label{eq:se:3:bsde:2}
\sup_{t \in [0,T]} \bigl( \| \tilde{M}_{t} \|_{n-2+\alpha}
\bigr) \in L^\infty(\Omega,{\mathcal A},\P).
\end{equation}
Notice that, for our purpose, there is no need to discuss of the representation 
of the martingale as a stochastic integral. 

\subsection{Solvability of the Stochastic FP/HJB System}
The objective is to discuss the existence and uniqueness of a classical solution 
to such the system 
\eqref{eq:se:3:tilde:HJB:FP}
under the same assumptions as in the deterministic case.
Theorem 
\ref{thm:partie:2:existence:uniqueness}
below covers
Theorem \ref{thm:partie:2:existence:uniquenessINTRO}
in Section \ref{sec:prelim}:
\begin{Theorem}
\label{thm:partie:2:existence:uniqueness}
Assume that
$F$, $G$ and $H$ satisfy 
\eqref{HypD2H}
and 
\eqref{e.monotoneF}
in Subsection \ref{subsec:hyp}. 
Assume moreover that, for some integer $n \geq 2$ and some\footnote{In most of the analysis, 
$\alpha$ is assumed to be (strictly) positive, except in this statement where it may be zero. 
Including the case $\alpha=0$ allows for a larger range of application of the uniqueness property.}
 $\alpha \in [0,1)$, 
{\bf (HF1(${\boldsymbol n}$-1))}
and 
{\bf (HG1(${\boldsymbol n}$))}
hold true.

Then,
there exists a unique solution $(\tilde{m}_{t},\tilde{u}_{t},\tilde{M}_{t})_{t \in [0,T]}$
to \eqref{eq:se:3:tilde:HJB:FP}, with the prescribed initial condition
$\tilde{m}_{0}=m_{0}$,
satisfying 
\eqref{eq:se:3:bsde:1}
and \eqref{eq:se:3:bsde:2}. It 
satisfies 
$\sup_{t \in [0,T]} ( 
\| \tilde{u}_{t} \|_{n+\alpha} +
\| \tilde{M}_{t} \|_{n+\alpha-2}) 
\in L^{\infty}(\Omega,{\mathcal A},\P)$. 

Moreover,
we can find a constant $C$ such that, 
 for any two initial conditions $m_{0}$ and $m_{0}'$ in ${\mathcal P}(\T^d)$, we have 
\begin{equation*}
\begin{split}
& \sup_{t \in [0,T]}
\bigl( 
{\mathbf d}_{1}^2(\tilde m_{t},\tilde m_{t}')
+ \| \tilde{u}_{t} - \tilde{u}_{t}' \|_{n+\alpha}^2
\bigr)
 \leq C
{\mathbf d}_{1}^2(m_{0},m_{0}') \qquad \P-{\rm a.e.},
\end{split}
\end{equation*}
where $(\tilde{m},\tilde{u},\tilde{M})$ and $(\tilde{m}',\tilde{u}',\tilde{M}')$ denote the 
solutions to 
\eqref{eq:se:3:tilde:HJB:FP}
with $m_{0}$ and $m_{0}'$ as initial conditions. 
\end{Theorem}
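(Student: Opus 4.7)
My approach is to imitate, in the stochastic framework, the Lasry--Lions style monotonicity argument used in the deterministic case (see Step~1 of the proof of Proposition~\ref{prop:Ulip}), combined with a continuation method on a parameter $\lambda \in [0,1]$, borrowed from the finite-dimensional FBSDE theory (as in \cite{PeWu99}). Concretely, for each $\lambda$ I consider the family
\begin{equation*}
\left\{
\begin{array}{l}
d_{t} \tilde{m}_{t}^\lambda = \bigl\{ \Delta \tilde m^\lambda_{t} + \lambda\,\textrm{div}\bigl(\tilde m^\lambda_{t} D_p \tilde H_t(\cdot,D\tilde u^\lambda_t)\bigr) \bigr\} dt,
\\
d_t \tilde u^\lambda_t = \bigl\{ -\Delta \tilde u^\lambda_t + \lambda \tilde H_t(\cdot,D\tilde u^\lambda_t) - \lambda \tilde F_t(\cdot,m^\lambda_t) \bigr\} dt + d \tilde M^\lambda_t,
\end{array}
\right.
\end{equation*}
with $\tilde m^\lambda_0 = m_0$ and $\tilde u^\lambda_T = \lambda \tilde G(\cdot,m^\lambda_T)$. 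For $\lambda=0$ the system fully decouples: $\tilde m^0$ is the heat flow starting from $m_0$ and $\tilde u^0$ solves a linear backward SPDE whose unique solution (including a martingale representation for $\tilde M^0$) is classical. The target is $\lambda=1$, which is the system of interest.

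\textbf{A priori estimates via monotonicity.} The cornerstone, as in the deterministic case, is to apply It\^o's product rule to $\langle \tilde u^\lambda_t - \tilde{u}^{\lambda,\prime}_t, \tilde m^\lambda_t - \tilde{m}^{\lambda,\prime}_t \rangle_{L^2}$, where two hypothetical solutions with the same datum $m_0$ are compared. Since the common noise only enters through the driftless martingale term $d\tilde M_t$ in the backward equation (the forward equation having become deterministic in time after the change of variable), the It\^o cross term vanishes in expectation, and Lemma~\ref{lem:partie2:monotonie} together with the monotonicity conditions \eqref{e.monotoneF} at times $t$ and $T$ yield
\begin{equation*}
\lambda \, \E \int_0^T\!\!\int_{\T^d} \bigl(\tilde m^\lambda_t + \tilde{m}^{\lambda,\prime}_t\bigr) \bigl| D\tilde u^\lambda_t - D\tilde{u}^{\lambda,\prime}_t\bigr|^2 dx\, dt \leq 0,
\end{equation*}
by the uniform convexity \eqref{HypD2H} of $H$ in $p$. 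A coupling argument for the forward equation (of the type \eqref{X1t-X2t}) then transfers this into a bound $\sup_t \mathbf d_1(\tilde m^\lambda_t, \tilde{m}^{\lambda,\prime}_t) = 0$, and finally linear Schauder estimates on the backward SPDE (the stochastic analogue of Lemma~\ref{l.estilinear}, treating the martingale term purely as a boundary/source term in the $x$-variable at frozen $\omega$) propagate this to $\tilde u^\lambda = \tilde{u}^{\lambda,\prime}$, $\tilde M^\lambda = \tilde{M}^{\lambda,\prime}$. This simultaneously delivers uniqueness at each solvable $\lambda$ and provides uniform-in-$\lambda$ Lipschitz control of the solution with respect to $m_0$, yielding in turn the stability estimate stated in the theorem.

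\textbf{Continuation: openness and closedness.} Let $\Lambda \subset [0,1]$ be the set of $\lambda$'s for which a solution satisfying \eqref{eq:se:3:bsde:1}--\eqref{eq:se:3:bsde:2} exists. Closedness follows from the uniform a~priori bounds on $\|\tilde u^\lambda_t\|_{n+\alpha}$ obtained by differentiating the backward SPDE in $x$ and using standard parabolic regularity (with a weak time-regularity since now driven by a martingale), combined with compactness in $\mathbf d_1$ for the forward component. Openness is the key step: given a solution at some $\lambda_0$, one solves the equation at $\lambda_0 + \varepsilon$ by a Picard iteration whose contraction constant is governed by the linearized system around $(\tilde u^{\lambda_0},\tilde m^{\lambda_0})$. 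The linearized system is forward-backward, stochastic, with a structure identical to \eqref{eq:prem} but now including a martingale representation term on the backward side; its well-posedness and Lipschitz estimates follow from the same monotonicity duality argument as in Subsection~\ref{subsec:LS}, adapted to the stochastic case. This gives an $\varepsilon_0>0$ independent of $\lambda_0 \in \Lambda$, hence $\Lambda = [0,1]$.

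\textbf{Main obstacle.} The hard part is this stochastic analogue of Lemma~\ref{lem:BasicEsti2}: obtaining Schauder-type bounds for a linear forward--backward SPDE, \emph{with} a martingale correction whose time regularity is only H\"older-$1/2$ and no better. The deterministic Duhamel trick used in Lemma~\ref{l.estilinear} must be replaced by a duality argument in which the test function solves a linear BSPDE and the martingale term is neutralized by the It\^o cross term in the duality pairing. The precise accounting of these cross terms in H\"older norms is what justifies working with the reduced (tilded) system \eqref{eq:se:3:tilde:HJB:FP} rather than the original one, and is precisely the reason why the continuation method is preferred here over a Schauder fixed point: it sidesteps any need for compactness in the gigantic path space $[\cC^0([0,T],\cC^n(\T^d))]^\Omega$.
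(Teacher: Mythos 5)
Your overall strategy --- a continuation homotopy combined with the Lasry--Lions monotonicity argument as the source of a priori estimates --- is exactly the one used in the paper. Two structural points, however, need correction, and one step is simply missing.

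\textbf{Closedness of $\Lambda$ via compactness does not work, and is in fact the move the paper is expressly built to avoid.} The paper's preamble to the continuation method states that Schauder-type compactness arguments are not available here, because the relevant ambient space is $L^\infty(\Omega;\cC^0([0,T],\Pk))$ (or $L^\infty(\Omega;\cC^0([0,T],\cC^n(\T^d)))$ for the backward component), and compactness of each slice $\Pk$ in $\dk$ does not yield any compactness in this gigantic space of adapted processes indexed by $\omega$. Fortunately your openness argument, if fleshed out, does not need closedness at all: once you have (i) a uniform a priori bound on $\sup_t\|\tilde u^\lambda_t\|_{n+\alpha}$ for \emph{any} $\lambda$ (this is what the paper's Lemma~\ref{lem:super:reg} delivers, via the Gronwall-type estimate on the Duhamel representation with the weight $\exp(\lambda_k(T-t))$), and (ii) a stability estimate for the linear auxiliary system with a constant independent of $\lambda$ (the paper's Lemma~\ref{lem:partie:2:stability}), then the Picard step $\lambda_0\mapsto\lambda_0+\varepsilon$ has a uniform $\varepsilon_0>0$, and $\Lambda=[0,1]$ follows by stepping. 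The closedness clause should be deleted; as stated it is wrong.

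\textbf{The monotonicity computation as you present it only treats uniqueness, not the stability estimate $\P$-a.e.} With the same initial datum $m_0$, the boundary terms in the duality pairing vanish or have the right sign, and you indeed conclude $\tilde m=\tilde m'$ then $\tilde u=\tilde u'$. But the theorem's stability bound is pointwise in $\omega$ with \emph{different} $m_0$, $m_0'$, and the boundary term $\langle\tilde u_0-\tilde u_0',m_0-m_0'\rangle$ has no sign. The paper's route is: (a) derive an $L^2$-in-$\omega$ estimate from the monotonicity argument plus the coupling estimate \eqref{X1t-X2t} (this is Lemma~\ref{lem:partie:2:stability}); then (b) redo the same computation with conditional expectation given $\mathcal F_t$ in place of the full expectation, observe that $\|\tilde u_t-\tilde u_t'\|_{n+\alpha}$ is $\mathcal F_t$-measurable and hence exits the conditional expectation, and deduce the pointwise bound $\|\tilde u_t-\tilde u_t'\|_{n+\alpha}\le C\dk(m_t,m_t')$; then (c) feed this into the forward coupling/Gronwall to get $\sup_t\dk(m_t,m_t')\le C\dk(m_0,m_0')$ $\P$-a.s.\ You omit step (b) entirely --- saying the Lipschitz control ``in turn yields the stability estimate'' glosses over this conditioning trick, which is what upgrades $L^2$ to $L^\infty(\Omega)$.

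\textbf{Minor differences and imprecisions.} Your single homotopy parameter $\lambda$ multiplying both the Hamiltonian and the couplings $F$, $G$ is a legitimate variant of the paper's two-parameter $(\vartheta,\varpi)$ scheme; the paper reaches $(\vartheta,\varpi)=(1,0)$ by a direct Picard contraction on the backward equation alone (since it then decouples from the forward equation), and only uses continuation for $\varpi$. Neither route is forced, and yours should work too, but make sure your Picard step tracks both the $\varepsilon H$ and $\varepsilon F$ perturbations simultaneously. Finally, the description of the ``main obstacle'' is slightly off: there is no It\^o cross-term to neutralize in the duality pairing $\langle\tilde z_t,\tilde\rho_t\rangle$, because after the tilting change of variable the forward component has \emph{no} stochastic integral (paths in $\Pk$ of absolutely continuous time variation); the martingale in the backward equation simply vanishes upon taking the (possibly conditional) expectation. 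The backward equation itself is controlled not by duality but by a conditional-expectation Duhamel formula, which is the content of the paper's Lemma~\ref{lem:cas:vartheta=0} (equation \eqref{eq:partie2:construction:backward}).
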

Theorem \ref{thm:partie:2:existence:uniqueness} is the 
analogue of Propositions 
\ref{prop:reguum} and 
\ref{prop:Ulip}
 in the deterministic setting, except that we do not discuss the time
 regularity of the solutions (which, as well known in the theory of 
 finite dimensional BSDEs, may be a rather difficult question).
\vspace{5pt}

The strategy of proof relies on the so-called continuation method.  
We emphasize that, differently from the standard argument that is used in the deterministic case, 
we will not make use of Schauder's theorem to establish the existence of a solution. The reason is that, in order to apply Schauder's theorem, we would need a compactness criterion on the space 
on which the equilibrium is defined, namely $L^\infty(\Omega,{\mathcal A},\P;{\mathcal C}^0([0,T],{\mathcal P}(\T^d)))$. 
As already noticed in the earlier paper \cite{CaDeLa},
this would ask for a careful (and certainly complicated) discussion on the choice of $\Omega$ and then 
 on the behavior of the solution to \eqref{eq:se:3:tilde:HJB:FP}
 with respect to the topology put on $\Omega$.

Here the idea is as follows. 
Given two parameters $(\vartheta,\varpi) \in [0,1]^2$, we shall first have a look at the 
parameterized system:
\begin{equation}
\label{eq:se:FP/HJB:vartheta}
\begin{split}
&d_{t} \tilde{m}_{t} = \bigl\{  \Delta \tilde{m}_{t}
+
{\rm div} \bigl[ \tilde{m}_{t} \bigl( 
\vartheta D_{p} \tilde H_{t}( \cdot,D \tilde{u}_{t}) + b_{t} \bigr)
\bigr] \bigr\} dt,
\\
&d_{t} \tilde{u}_{t}
=  \bigl\{ - \Delta \tilde{u}_{t} + 
\vartheta \tilde H_{t}(\cdot ,D\tilde{u}_{t}) - \varpi \tilde F_{t}(\cdot,m_{t}) 
+ f_{t}  \bigr\} dt
+ d\tilde{M}_{t},
\end{split}
\end{equation}
with the initial condition $\tilde{m}_{0}=m_{0}$ and the terminal boundary condition $\tilde{u}_{T}
= \varpi \tilde{G}(\cdot,m_{T}) + g_{T}$, 
where $((b_{t},f_{t})_{t \in [0,T]},g_{T})$ is some input. 

In the above equation, there are two extreme regimes: when 
$\vartheta=\varpi=0$ and the input is arbitrary, the equation is known to be explicitly 
solvable; when $\vartheta=\varpi=1$ and the input is set equal to $0$, the above equation 
fits the original one. This is our precise purpose to
prove first, by a standard contraction argument, that the equation is solvable when $\vartheta=1$ and $\varpi=0$
and then to
 propagate existence and uniqueness from the case $(\vartheta,\varpi)=(1,0)$ to the case $(\vartheta,\varpi)=(1,1)$ by means of a continuation argument. 
 \vspace{5pt}

Throughout the analysis, 
the assumption of Theorem 
\ref{thm:partie:2:existence:uniqueness}
is in force. 
Generally speaking,
the inputs $(b_{t})_{t \in [0,T]}$ and $(f_{t})_{t \in [0,T]}$
are $({\mathcal F}_{t})_{t \in [0,T]}$ adapted processes 
with paths in the space ${\mathcal C}^0([0,T],
[{\mathcal C}^{1}({\mathbb T}^d)]^d)$
and ${\mathcal C}^0([0,T],{\mathcal C}^{n-1}({\mathbb T}^d))$ respectively. Similarly, 
$g_{T}$ is an ${\mathcal F}_{T}$-measurable 
random variable with realizations in ${\mathcal C}^{n+\alpha}({\mathbb T}^d)$.
We shall require that 
\begin{equation*}
\sup_{t \in [0,T]} \|b_{t} \|_{1}, \quad 
\sup_{t \in [0,T]} \|f_{t} \|_{n-1+\alpha}, \quad \|g_{T}\|_{n+\alpha}
\end{equation*}
are bounded (in $L^{\infty}(\Omega,{\mathcal A},\P)$). 

It is worth mentioning that, whenever 
$\varphi : [0,T] \times \T^d \rightarrow \R$ is a continuous mapping such that 
$\varphi(t,\cdot) \in {\mathcal C}^{\alpha}(\T^d)$ for any $t \in [0,T]$, 
the mapping $[0,T] \ni t \mapsto \| \varphi(t,\cdot) \|_{\alpha}$
is lower semicontinuous and, thus, the mapping 
$[0,T] \ni t \mapsto \sup_{s \in [0,t]} 
\| \varphi(t,\cdot) \|_{\alpha}$
is continuous. 
In particular, 
whenever
$(f_{t})_{t \in [0,T]}$
is a process 
with paths in 
${\mathcal C}^0([0,T],{\mathcal C}^{k}({\mathbb T}^d))$, for some 
$k \geq 0$, the quantity $\sup_{t \in [0,T]} \|f_{t}\|_{k+\alpha}$
is a random variable, equal to 
$\sup_{t \in [0,T] \cap {\mathbb Q}} \|f_{t}\|_{k+\alpha}$,
 and the process 
$(\sup_{s \in [0,t]} \|f_{s}\|_{k+\alpha})_{t \in [0,T]}$
has continuous paths. As a byproduct, 
$$\essup_{\omega \in \Omega}
\sup_{t \in [0,T]} \|f_{t}\|_{k+\alpha}
= \sup_{t \in [0,T]} \essup_{\omega \in \Omega}
 \|f_{t}\|_{k+\alpha}.$$ 

\subsubsection{Case $\vartheta = \varpi = 0$}
We start with the following simple lemma:

\begin{Lemma}
\label{lem:cas:vartheta=0}
Assume that $\vartheta=\varpi=0$. 
Then, with the same type of inputs as above, 
\eqref{eq:se:FP/HJB:vartheta}
has a unique solution 
$(\tilde{m}_{t},\tilde{u}_{t},\tilde{M}_{t})_{t \in [0,T]}$, with the prescribed initial condition. 
It satisfies 
\eqref{eq:se:3:bsde:1}
and \eqref{eq:se:3:bsde:2}. 
Moreover, there exists a constant $C$, only depending on $n$ and $T$, 
such that
\begin{equation}
\label{eq:9:bis}
\essup_{\omega \in \Omega} \sup_{t \in [0,T]} \|\tilde u_{t}\|_{n+\alpha}
\leq C \bigl( \essup_{\omega \in \Omega} \|g_{T} \|_{n+\alpha}
+ \essup_{\omega \in \Omega} \sup_{t \in [0,T]} \|f_{t} \|_{n-1+\alpha}
\bigr),
\end{equation}
\end{Lemma}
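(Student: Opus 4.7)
When $\vartheta = \varpi = 0$, the system \eqref{eq:se:FP/HJB:vartheta} decouples into independent forward and backward equations, which I would handle separately.

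For the forward Fokker-Planck equation $d_t \tilde m_t = \{\Delta \tilde m_t + {\rm div}(\tilde m_t b_t)\} dt$ with $\tilde m_0 = m_0$, my plan is to work pathwise in $\omega$: for each $\omega$, the drift $b_t(\omega)$ is a continuous $\cC^1$-valued function of time, so I can invoke standard linear parabolic theory (or, equivalently, represent $\tilde m_t(\omega)$ as the marginal law of an auxiliary SDE $dZ_t = -b_t(\omega)(Z_t)\, dt + \sqrt{2}\, dB'_t$ with initial law $m_0$, where $B'$ is a fresh Brownian motion independent of the randomness in $b$). Adaptedness follows from the fact that $\tilde m_t(\omega)$ depends only on $(b_s(\omega))_{s \leq t}$, and path continuity in $\Pk$ is standard.

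For the backward equation $d_t \tilde u_t = (-\Delta \tilde u_t + f_t) dt + d\tilde M_t$ with $\tilde u_T = g_T$, the idea is to write the solution explicitly via conditional expectation using the heat semigroup $(P_\tau)_{\tau \geq 0}$ on $\T^d$. Precisely, I would set
\begin{equation*}
\tilde u_t(x) := \E\left[P_{T-t} g_T(x) - \int_t^T P_{r-t} f_r(x)\, dr \,\Big|\, {\mathcal F}_t\right],
\end{equation*}
which is manifestly $({\mathcal F}_t)$-adapted, and define the martingale correction by $\tilde M_t := \tilde u_t - \tilde u_0 + \int_0^t (\Delta \tilde u_s - f_s)\, ds$. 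Using the semigroup identity $P_{r-t} = P_{r-s} \circ P_{s-t}$ for $t \leq s \leq r$ together with the tower property, I would verify that $(\tilde M_t(x))_{t \in [0,T]}$ is indeed an $({\mathcal F}_t)$-martingale for each $x$, so that the pair $(\tilde u, \tilde M)$ solves the backward equation. Uniqueness reduces to the linear BSDE argument: the difference $\hat u$ of two solutions satisfies $\hat u_t = \E[\int_t^T \Delta \hat u_s\, ds \mid {\mathcal F}_t]$ with $\hat u_T = 0$, and iteration (or Gronwall in the relevant norm) gives $\hat u \equiv 0$, hence $\hat M \equiv 0$ as well.

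For the quantitative estimate \eqref{eq:9:bis}, I would exploit standard regularity of the heat semigroup in H\"older spaces: $\|P_\tau \phi\|_{n+\alpha} \leq C \|\phi\|_{n+\alpha}$ uniformly in $\tau \in [0,T]$, and the smoothing bound $\|P_\tau \phi\|_{n+\alpha} \leq C \tau^{-1/2} \|\phi\|_{n-1+\alpha}$ for $\tau \in (0,T]$. Together with Jensen's inequality applied to the conditional expectation, these yield
\begin{equation*}
\essup_{\omega} \|\tilde u_t\|_{n+\alpha} \leq C\, \essup_{\omega} \|g_T\|_{n+\alpha} + C \int_t^T (r-t)^{-1/2} dr \cdot \essup_{\omega} \sup_{s \in [0,T]} \|f_s\|_{n-1+\alpha},
\end{equation*}
which is the desired bound with $C$ depending only on $n$ and $T$. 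The bound \eqref{eq:se:3:bsde:2} for $\tilde M$ then follows by the triangle inequality applied to its definition, together with the just obtained control on $\tilde u$. The main technical obstacle I anticipate is ensuring the joint measurability and path regularity of $\tilde u$ in $(t,\omega)$ with values in $\cC^{n+\alpha}(\T^d)$, which I would handle by first arguing on a countable dense set of times and leveraging the uniform H\"older bounds above to extend continuously to $[0,T]$.
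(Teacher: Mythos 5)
Your construction is essentially identical to the paper's: the forward equation is solved pathwise, $\tilde u_t$ is defined as the conditional Duhamel formula $\E[P_{T-t}g_T - \int_t^T P_{r-t}f_r\,dr\mid\mathcal F_t]$, $\tilde M$ is the residual, \eqref{eq:9:bis} follows from the heat-semigroup bounds $\|P_\tau\|_{\cC^{n+\alpha}\to\cC^{n+\alpha}}\le C$ and $\|P_\tau\|_{\cC^{n-1+\alpha}\to\cC^{n+\alpha}}\le C\tau^{-1/2}$ together with the $L^\infty$-contractivity of conditional expectation, and the joint-measurability issue you anticipate is precisely what the paper addresses in the auxiliary Lemma~\ref{lem:part:2:conditionnement} (continuity of Brownian martingales plus a Borel--Cantelli argument on a dense countable set of times, i.e.\ a careful execution of the extension-from-a-dense-set plan you sketch). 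One minor imprecision to watch: iteration or Gronwall applied directly to $\hat u_t=\E[\int_t^T\Delta\hat u_s\,ds\mid\mathcal F_t]$ does not close, since $\Delta$ is unbounded and each step loses two derivatives; uniqueness must go through the mild formulation, where conditioning on $\mathcal F_s$ reduces the BSDE to a deterministic heat equation whose Duhamel representation forces $\hat u\equiv 0$, which is exactly the route the paper takes.
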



\begin{proof}[Proof of Lemma \ref{lem:cas:vartheta=0}] 
When $\vartheta=\varpi=0$,
the forward equation simply reads
\begin{equation*}
\begin{split}
&d_{t} \tilde{m}_{t} = \bigl\{ \Delta \tilde{m}_{t}
+
{\rm div} \bigl[ \tilde{m}_{t} b_{t}\bigr] \bigr\} dt, \quad t \in [0,T]
\end{split}
\end{equation*}
with initial condition $m_0$. This is a standard Kolmogorov equation (with random coefficient) which is pathwise solvable. By standard estimates, we have  
$$
\essup_{\omega \in \Omega} \sup_{s\neq t} \frac{\dk(\tilde m_s, \tilde m_s)}{|s-t|^{\frac12}} \leq \essup_{\omega \in \Omega}   \|b\|_\infty .
$$
As $\vartheta=\varpi=0$, the backward equation in 
\eqref{eq:se:FP/HJB:vartheta}
has the form:
\begin{equation*}
d_{t} \tilde{u}_{t}
=  \bigl\{ -   \Delta \tilde{u}_{t}  
+ f_{t} \bigr\} dt
+ d\tilde{M}_{t}, \quad t \in [0,T],
\end{equation*}
with the terminal boundary condition $\tilde{u}_{T} = g_{T}$. 
Although the equation is infinite-dimensional, it may be solved in 
a quite straightforward way. Taking the conditional expectation given 
$s \in [0,T]$ in the above equation, 
we indeed get that any solution  should satisfy (provided we can exchange
differentiation and conditional expectation):
\begin{equation*}
d_{t} {\mathbb E} \bigl[ \tilde{u}_{t} \vert {\mathcal F}_{s}
\bigr]
=  \bigl\{ -  \Delta {\mathbb E} \bigl[ 
\tilde{u}_{t}  \vert {\mathcal F}_{s} \bigr]
+ {\mathbb E} \bigl[ f_{t} \vert {\mathcal F}_{s} \bigr] \bigr\} dt,
\quad t \in [s,T],
\end{equation*}
which suggests to let 
\begin{equation}
\label{eq:partie2:construction:backward}
\tilde{u}_{s}(x) = \E \bigl[ \bar{u}_{s}(x) \vert {\mathcal F}_{s}
\bigr], \quad 
\bar{u}_{s}(x) = P_{T-s} g_{T}(x) - \int_{s}^T P_{t-s} f_{t}(x) dt,
\quad s \in [0,T], \ x \in \T^d,
\end{equation}
where $P$ denotes the heat semigroup (but associated with the Laplace
operator $\Delta$ instead of $(1/2)\Delta$).
For any $s \in [0,T]$ and $x \in {\mathbb T}^d$,
the conditional expectation is uniquely defined 
up to a negligible event under $\P$. We claim 
that, for any $s \in [0,T]$, we can find a version of the conditional expectation
in such a way that the process 
$[0,T] \ni s \mapsto (\T^d \ni x \mapsto \tilde{u}_{s}(x))$
reads as a progressively-measurable random variable with values in $\cC^0([0,T],\cC^0(\T^d))$.
By the representation formula 
\eqref{eq:partie2:construction:backward}, 
we indeed have that, $\P$ almost surely, $\bar u$ is jointly continuous in time and space. 
Making use of Lemma \ref{lem:part:2:conditionnement} below, 
we deduce that the realizations of 
$[0,T] \ni s \mapsto (\T^d \ni x \mapsto \tilde{u}_{s}(x))$
belong to $\cC^0([0,T],\cC^0(\T^d))$, 
the mapping 
$[0,T] \times \Omega \ni (s,\omega) 
\mapsto (\T^d \ni x \mapsto (\tilde{u}_{s}(\omega))(x))$
being measurable with respect to the progressive $\sigma$-field
\begin{equation}
\label{eq:partie2:progressive}
{\mathscr P} = \bigl\{ A \in {\mathcal B}([0,T]) \otimes {\mathcal A} : 
\quad \forall t \in [0,T],
\ 
A \cap ([0,t] \times \Omega) \in {\mathcal B}([0,t]) \otimes 
{\mathcal F}_{t}\bigr\}.
\end{equation} 
By the maximum principle, we can find a constant 
$C$, depending on $T$ and $d$ only, such that 
\begin{equation*}
\essup_{\omega \in \Omega}
\sup_{s \in [0,T]} \|\tilde u_{s}\|_{0} \leq
\essup_{\omega \in \Omega}
\sup_{s \in [0,T]} \|\bar u_{s}\|_{0}
 \leq 
C \bigl( \essup_{\omega \in \Omega} \| g_{T} \|_{0} + 
\essup_{\omega \in \Omega} \sup_{0 \leq s \leq T} \| f_{s} \|_{0}
\bigr).
\end{equation*}
More generally, 
taking
the representation formula
\eqref{eq:partie2:construction:backward}
at two different $x,x' \in \T^d$
and then
making the difference, we get 
\begin{equation*}
\essup_{\omega \in \Omega}
\sup_{s \in [0,T]} \|\tilde u_{s}\|_{\alpha} 
 \leq 
C \bigl( \essup_{\omega \in \Omega} \| g_{T} \|_{\alpha} + 
\essup_{\omega \in \Omega} \sup_{s \in [0,T]} \| f_{s} \|_{\alpha}
\bigr).
\end{equation*}
We now proceed with the derivatives of higher order. 
Generally speaking, there are two ways to differentiate 
the representation formula \eqref{eq:partie2:construction:backward}. 
The first one is to say that, for any $k \in \{1,\dots,n-1\}$, 
\begin{equation}
\label{eq:part:2:representation:derivatives}
D_{x}^k \bar{u}_{s}(x) = P_{T-s} \bigl( D^k  
g_{T} \bigr) (x) - \int_{s}^T P_{t-s} \bigl( D_{x}^k f_{t} \bigr)(x) dt,
\quad (s,x) \in [0,T] \times \T^d, 
\end{equation}
which may be established by a standard induction argument. 
The second way is to make use of the regularization property of the 
heat kernel in order to go one step further, namely, 
for any $k \in \{1,\dots,n\}$,
\begin{equation}
\label{eq:part:2:representation:derivatives:2}
\begin{split}
D_{x}^k \bar{u}_{s}(x) &= P_{T-s} \bigl( D^k  
g_{T} \bigr) (x) - \int_{s}^T D P_{t-s} \bigl( D_{x}^{k-1} f_{t} \bigr)(x) dt,
\\
&=  P_{T-s} \bigl( D^k  
g_{T} \bigr) (x) - \int_{0}^{T-s} D P_{t} \bigl( D_{x}^{k-1} f_{t+s} \bigr)(x) dt, 
\quad (s,x) \in [0,T] \times \T^d, 
\end{split}
\end{equation}
where $DP_{t-s}$ stands for the derivative of the heat semigroup. 
Equation 
\eqref{eq:part:2:representation:derivatives:2}
is easily derived from 
\eqref{eq:part:2:representation:derivatives}. 
It permits to handle the fact that $f$ is $(n-1)$-times differentiable only. 

Recalling that $| DP_{t} \varphi | \leq c t^{-1/2} \| \varphi \|_\infty $
for any bounded Borel function $\varphi : \T^d \rightarrow \R$ and
 for some $c \geq 1$ independent of $\varphi$ and of 
 $t \in [0,T]$, 
we deduce that, for any $k \in \{1,\dots,n\}$, 
the mapping $[0,T] \times \T^d \ni (s,x) \mapsto D_{x}^k \bar{u}_{s}(x)$
is continuous. Moreover, we can find a constant $C$ such that, 
for any $s \in [0,T]$,
\begin{equation}
\label{eq:part:2:representation:derivatives:3}
\essup_{\omega \in \Omega}
\| \bar{u}_{s} \|_{k+\alpha}
\leq 
\essup_{\omega \in \Omega}
\| g_{T} \|_{k+\alpha} 
+ C \int_{s}^T \frac{1}{\sqrt{t-s}} \essup_{\omega \in \Omega}
\| f_{t} \|_{k+\alpha-1} dt. 
\end{equation} 
In particular, invoking once again
Lemma \ref{lem:part:2:conditionnement}
below, we 
can find a version of the conditional expectation 
in the representation formula $\tilde{u}_{s}(x)
=\E [ \bar{u}_{s}(x) \vert {\mathcal F}_{s}
]$ such that 
$\tilde{u}$ has paths in 
${\mathcal C}^0([0,T],{\mathcal C}^n(\T^d))$. 
For any 
$k \in \{1,\dots,n\}$, 
$D_{x}^k \tilde{u}$ is 
progressively-measurable and, for all $(s,x) \in [0,T] \times \T^d$,
it holds that 
$D_{x}^k \tilde{u}_{s}(x) = \E[ D_{x}^k \bar{u}_{s}(x) \vert {\mathcal F}_{s}]$.

Using \eqref{eq:part:2:representation:derivatives:3}, we have, 
for any $k \in \{1,\dots,n\}$,
\begin{equation*}
\essup_{\omega \in \Omega}
\sup_{s \in [0,T]} \|\tilde u_{s}\|_{k+\alpha} \leq 
C \bigl( \essup_{\omega \in \Omega} \| g_{T} \|_{k+\alpha} + 
\essup_{\omega \in \Omega} \sup_{s \in [0,T]} \| f_{s} \|_{k+\alpha-1}
\bigr).
\end{equation*}

Now that $\tilde{u}$ has been constructed, it remains to reconstruct the martingale 
part $(\tilde{M}_{t})_{0 \leq t \leq T}$ in 
the backward equation of the system 
\eqref{eq:se:FP/HJB:vartheta} (with $\vartheta = \varpi = 0$ therein). 
Since $\tilde{u}$ has trajectories in ${\mathcal C}^0([0,T],\cC^{n+\alpha}(\T^d))$, 
$n \geq 2$,
we can let:
\begin{equation*}
\tilde M_{t}(x) = 
\tilde{u}_{t}(x) -  \tilde{u}_{0}(x)  + \int_{0}^t  \Delta \tilde{u}_{s}(x) ds - \int_{0}^t f_{s}(x) ds, \quad t \in [0,T], \ x \in \T^d. 
\end{equation*}
It is then clear that $\tilde{M}$ has trajectories in ${\mathcal C}^0([0,T],\cC^{n-2}(\T^d))$
and that 
\begin{equation*}
\essup_{\omega \in \Omega} 
\sup_{t \in [0,T]}
\big\| \tilde{M}_{t} \big\|_{n+\alpha-2} < \infty. 
\end{equation*}
It thus remains to prove that, for each $x \in \T^d$, 
the process $(\tilde M_{t}(x))_{0 \leq t \leq T}$
is a martingale (starting from $0$). Clearly,
it has continuous and $({\mathcal F}_{t})_{0 \leq t \leq T}$-adapted 
paths. Moreover,
\begin{equation*}
\begin{split}
\tilde{M}_{T}(x) - \tilde{M}_{t}(x)
= g_{T}(x) - \tilde{u}_{t}(x) + \int_{t}^T \Delta \tilde{u}_{s}(x) ds 
- \int_{t}^T f_{s}(x) ds, \quad t \in [0,T], \ x \in \T^d. 
\end{split}
\end{equation*}
Now, recalling the relationship $ {\mathbb E}[ \Delta  \tilde{u}_{s}(x) \vert {\mathcal F}_{t}]
= {\mathbb E}[ \Delta  \bar{u}_{s}(x) \vert {\mathcal F}_{t}]$, 
we get 
$$
{\mathbb E}[ 
\int_{t}^T
\Delta \tilde{u}_{s}(x) 
ds
\vert {\mathcal F}_{t}
]
 =
{\mathbb E}[ 
\int_{t}^T
\Delta \bar{u}_{s}(x) 
ds
\vert {\mathcal F}_{t}
].
$$
Taking the conditional expectation given ${\mathcal F}_{t}$, we deduce that
\begin{equation*}
\begin{split}
\E \bigl[ \tilde{M}_{T}(x) - \tilde{M}_{t}(x) \vert {\mathcal F}_{t}
\bigr] 
&= 
\E \biggl[ g_{T}(x) - \bar{u}_{t}(x) 
- \int_{t}^T f_{s}(x) ds 
+ \int_{t}^T \Delta  \bar{u}_{s}(x) ds \big\vert {\mathcal F}_{t} \biggr] = 0,
\end{split}
\end{equation*}
the second equality following from 
\eqref{eq:partie2:construction:backward}. This shows 
that $\tilde{M}_{t}(x) = \E [ \tilde{M}_{T}(x) \vert {\mathcal F}_{t}]$,
so that the process $(\tilde{M}_{t}(x))_{0 \leq t \leq T}$ is a martingale, as required. 
\end{proof}

\begin{Remark}
Notice that, alternatively
to \eqref{eq:9:bis}, we also have, by Doob's inequality,
\begin{equation}
\label{eq:9}
{\mathbb E} \bigl[ \sup_{t \in [0,T]}
\| \tilde u_{t} \|_{n+\alpha}^2 \bigr] \leq C 
{\mathbb E} \bigl[ \|g_{T} \|_{n+\alpha}^2 + 
\sup_{t \in [0,T]} \| f_{t} \|_{n+\alpha-1}^2 \bigr]. 
\end{equation}
\end{Remark}

\begin{Lemma}
\label{lem:part:2:conditionnement}
Consider a random field ${\mathcal U} : [0,T] \times \T^d \rightarrow \R$, 
with continuous paths (in the variable $(t,x) \in [0,T] \times \T^d$), such that 
\begin{equation*}
\essup_{\omega \in \Omega} \| {\mathcal U} \|_{0} < \infty.
\end{equation*}
Then, we can find a version of the random field $[0,T] \times \T^d 
\ni (t,x) \mapsto {\mathbb E}[{\mathcal U}(t,x) \vert {\mathcal F}_{t}]$
such that $[0,T] \ni t \mapsto (\T^d \ni x \mapsto 
\E [ {\mathcal U}(t,x) \vert {\mathcal F}_{t}])$
is a progressively-measurable random variable with values in ${\mathcal C}^0([0,T],{\mathcal C}^0(\T^d))$, the progressive $\sigma$-field ${\mathscr P}$ being defined in 
\eqref{eq:partie2:progressive}.  

More generally, if, for some $k \geq 1$, the paths of ${\mathcal U}$
are $k$-times differentiable in the space variable,  
the derivatives up to the order $k$ having jointly continuous (in $(t,x)$) paths 
and satisfying
\begin{equation*}
\essup_{\omega \in \Omega}
\sup_{t \in [0,T]}
\|{\mathcal U}(t,\cdot) \|_{k} < \infty,
\end{equation*}
then we can find a version of the random field $[0,T] \times 
\T^d \ni (t,x) \mapsto \E[{\mathcal U}(t,x) \vert {\mathcal F}_{t}]$
that is progressively-measurable and that has paths 
in ${\mathcal C}^0([0,T],{\mathcal C}^k(\T^d))$, 
the derivative of order $i$ writing 
$[0,T] \times 
\T^d \ni (t,x) \mapsto \E[D^i_{x}
{\mathcal U}(t,x) \vert {\mathcal F}_{t}]$.
\end{Lemma}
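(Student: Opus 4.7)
The plan is to construct the progressively-measurable continuous version by first choosing continuous-in-$s$ versions of the Doob martingales $s \mapsto \E[\mathcal U(t,x)|\mathcal F_s]$ for each $(t,x)$ in a countable dense set $D \subset [0,T]\times \T^d$, and then extending by uniform continuity using Doob's maximal inequality. Since $(\mathcal F_t)_{t \in [0,T]}$ is the Brownian filtration, every bounded martingale admits a continuous version; for each $(t,x) \in D$, let $(N^{t,x}_s)_{s \in [0,T]}$ denote such a version of $(\E[\mathcal U(t,x)|\mathcal F_s])_{s \in [0,T]}$, and set $V(t,x) := N^{t,x}_t$.

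The first key step is to establish an a.s.\ uniform modulus of continuity for $V$ on $D$. For $(t,x), (t',x') \in D$ with $t \leq t'$, decompose
\begin{equation*}
V(t,x) - V(t',x') = \bigl(N^{t,x}_{t} - N^{t,x}_{t'}\bigr) + \E\bigl[\mathcal U(t,x) - \mathcal U(t',x') \, \big| \, \mathcal F_{t'}\bigr].
\end{equation*}
Applying Doob's $L^2$-inequality to the continuous bounded martingale $(N^{t,x}_s - N^{t',x'}_s)_{s \in [0,T]}$ yields
\begin{equation*}
\E\Bigl[\sup_{s \in [0,T]} \bigl|N^{t,x}_s - N^{t',x'}_s\bigr|^2\Bigr] \leq 4\, \E\bigl[\bigl(\mathcal U(t,x) - \mathcal U(t',x')\bigr)^2\bigr],
\end{equation*}
and this right-hand side tends to $0$ as $(t',x') \to (t,x)$ by dominated convergence, using that $\mathcal U$ is bounded and has continuous paths. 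A Borel-Cantelli argument along countably many grids of shrinking mesh then provides an a.s.\ uniform modulus of continuity of $V$ on $D$, hence a continuous extension $\tilde V$ to $[0,T]\times \T^d$.

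The second step is to identify $\tilde V(t,x)$ with $\E[\mathcal U(t,x)|\mathcal F_t]$ and to verify progressive measurability. For any $(t,x) \in [0,T]\times \T^d$ and a sequence $(t_n,x_n) \in D$ with $(t_n,x_n) \to (t,x)$ and $t_n \downarrow t$, we have $\tilde V(t_n,x_n) \to \tilde V(t,x)$ by construction, while $\E[\mathcal U(t_n,x_n)|\mathcal F_{t_n}] \to \E[\mathcal U(t,x)|\mathcal F_t]$ in $L^1$ by dominated convergence and right-continuity of the filtration; the two limits must agree $\P$-a.s. Since each $\tilde V(t,x)$ is $\mathcal F_t$-measurable and $\tilde V$ has continuous paths, the process is progressively measurable in the sense of \eqref{eq:partie2:progressive}. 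For the second assertion, the same construction applied to each spatial derivative $D^i_x \mathcal U$, $1 \leq i \leq k$, produces a version $V^{(i)}$, and dominated convergence allows one to differentiate under the conditional expectation to conclude that $D^i_x \tilde V = V^{(i)}$, so that $\tilde V$ has paths in $\cC^0([0,T],\cC^k(\T^d))$.

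The main obstacle will be the a.s.\ uniform modulus of continuity in the first step: since $\mathcal U$ is only continuous (not H\"older), Kolmogorov's criterion does not apply directly, and one must combine the pointwise $L^2$-estimate above with the uniform continuity on the compact $[0,T] \times \T^d$ of the bounded continuous paths of $\mathcal U$, turning pointwise control of martingale differences into a pathwise modulus via Borel-Cantelli on a dyadic partition. Once this is achieved, the identification and the handling of derivatives are routine.
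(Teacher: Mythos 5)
Your plan is close to the paper's in spirit—continuous versions of the Doob martingales from the Brownian filtration, a maximal inequality, Borel–Cantelli, continuous extension, identification—but the execution of the modulus-of-continuity step has two genuine gaps.

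First, after you decompose
\begin{equation*}
V(t,x) - V(t',x') = \bigl(N^{t,x}_{t} - N^{t,x}_{t'}\bigr) + \E\bigl[\mathcal U(t,x) - \mathcal U(t',x') \, \big| \, \mathcal F_{t'}\bigr],
\end{equation*}
your Doob estimate only controls $\sup_{s} \vert N^{t,x}_s - N^{t',x'}_s\vert$, i.e.\ the second term (which equals $N^{t,x}_{t'} - N^{t',x'}_{t'}$). The first term $N^{t,x}_{t} - N^{t,x}_{t'}$ is a time increment of a \emph{single} martingale at a fixed parameter $(t,x)$; nothing in your argument controls it uniformly over $(t,x)$ and over $\vert t - t' \vert$ small. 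The clean route is not to aim directly for a modulus of $V$ on $D$ at all: establish a modulus of continuity for the map $\Psi : D \ni (t,x) \mapsto N^{t,x}_{\cdot} \in \cC^0([0,T],\R)$, extend $\Psi$ to a continuous $\tilde\Psi$ on $[0,T]\times\T^d$ with values in $\cC^0([0,T],\R)$, and then observe that the diagonal slice $\tilde V(t,x) := \tilde\Psi(t,x)(t)$ is automatically continuous (the missing term is then handled by the continuity in $s$ of $\tilde\Psi(t,x)$, which is free). Uniform continuity of $\tilde V$ on the compact $[0,T]\times\T^d$ then comes for free afterwards; you never need it a priori.

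Second, even for the modulus of $\Psi$, your suggested route—combine a pointwise $L^2$ Doob estimate with a Borel–Cantelli argument ``along countably many grids of shrinking mesh''—is problematic: since $\mathcal U$ is only continuous and not H\"older, $\E[w(\delta)^2]$ has no quantitative decay, so summing the $L^2$ tail bounds over the $\sim \delta^{-(d+1)}$ pairs in a dyadic grid at scale $\delta$ need not converge, and Borel–Cantelli on pairs fails. The paper sidesteps this by using the \emph{pathwise} bound
\begin{equation*}
\sup_{s \in [0,T]}\bigl\vert \E[\mathcal U(t,x)\vert\mathcal F_s]-\E[\mathcal U(t',x')\vert\mathcal F_s]\bigr\vert \;\leq\; \sup_{s \in [0,T]} \E\bigl[w(\delta) \, \vert \, \mathcal F_s\bigr]
\quad \text{whenever } \vert t-t'\vert + \vert x-x'\vert \leq \delta,
\end{equation*}
whose right-hand side does not depend on the particular pair $(t,x),(t',x')$. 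This bounds \emph{all} pairs at a given scale simultaneously, so Borel–Cantelli is applied only to the single sequence of random variables $\sup_{s}\E[w(1/a_p)\vert\mathcal F_s]$ (one per scale $p$), not to pairs; Doob's $L^1$ maximal inequality then gives $\P(\sup_s\E[w(1/p)\vert\mathcal F_s]\geq\varepsilon) \leq \varepsilon^{-1}\E[w(1/p)]\to 0$, and a subsequence $(a_p)$ can be extracted by the usual summability trick. Your identification step (taking $(t_n,x_n)\downarrow(t,x)$ in $D$ and using right-continuity of the filtration) and your treatment of derivatives are sound, but the first part of the argument needs to be reorganised along the lines above.
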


\begin{proof}
\textit{First step.}
We first prove the first part of the statement
(existence of a progressively-measurable 
version with continuous paths). Existence of a differentiable version 
will be handled next. A key fact in the proof is that, the filtration $({\mathcal F}_{t})_{t \in [0,T]}$ being generated by $(W_{t})_{t \in [0,T]}$, any martingale with respect to 
$({\mathcal F}_{t})_{t \in [0,T]}$ admits a continuous version. 

Throughout the proof, we denote by $w$ the (pathwise) modulus of continuity of 
${\mathcal U}$ on the compact set $[0,T] \times \T^d$, namely:
\begin{equation*}
w(\delta) = \sup_{x,y \in \T^d : 
\vert x-y \vert \leq \delta} \sup_{s,t \in [0,T] : \vert t-s \vert \leq \delta}
\vert {\mathcal U}(s,x) - {\mathcal U}(t,y) \vert, \quad \delta >0.
\end{equation*}
Since $\essup_{\omega \in \Omega} \| {\mathcal U} \|_{0} < \infty$, we
have, for any $\delta >0$, 
\begin{equation*}
\essup_{\omega \in \Omega} w(\delta) < \infty.
\end{equation*}
By Doob's inequality, we have that, for any integer $p \geq 1$,
\begin{equation*}
\forall \varepsilon >0, 
\quad {\mathbb P}
\Bigl( \sup_{s \in [0,T]} {\mathbb E}\Bigl[w \Bigl( \frac{1}{p} \Bigr) \vert {\mathcal F}_{s}
\Bigr] 
\geq \varepsilon \Bigr) \leq \varepsilon^{-1} {\mathbb E}\Bigl[w \Bigl( \frac{1}{p} \Bigr) \Bigr],
\end{equation*}
the right-hand side converging to $0$ as $p$ tends to $\infty$, thanks to Lebesgue's dominated convergence theorem. 
Therefore, by a standard application of Borel-Cantelli Lemma, we can find an increasing sequence of integers $(a_{p})_{p \geq 1}$ such that
the sequence 
$(\sup_{s \in [0,T]} {\mathbb E}[w(1/a_{p}) \vert {\mathcal F}_{s}])_{p \geq 1}$ converges 
to $0$ with probability 1. 

We now come back to the original problem. For any $(t,x) \in [0,T] \times \T^d$, we let 
\begin{equation*}
{\mathcal V}(t,x) = {\mathbb E}[{\mathcal U}(t,x) \vert {\mathcal F}_{t}]. 
\end{equation*}
The difficulty comes from the fact that each ${\mathcal V}(t,x)$ is uniquely defined 
up to a negligible set. The objective is thus to choose each of these 
negligible sets in a relevant way. 

Denoting by
${\mathcal T}$ a dense countable subset of $[0,T]$ and by
 ${\mathcal X}$ a dense countable subset 
of $\T^d$, we can find a negligible event $N \subset {\mathcal A}$
such that, outside $N$, the process 
$[0,T] \ni s \mapsto {\mathbb E}[{\mathcal U}(t,x) \vert {\mathcal F}_{s}]$
has a continuous version for any $t \in {\mathcal T}$ and $x \in 
{\mathcal X}$. 
Modifying the set $N$ if necessary, 
we have, 
outside $N$,
for any integer $p \geq 1$, any $t,t' \in {\mathcal T}$ and 
$x,x' \in {\mathcal X}$, with $\vert t-t' \vert + \vert x-x' \vert \leq 1/a_p$,
\begin{equation*}
\sup_{s \in [0,T]}
\bigl\vert 
{\mathbb E}[{\mathcal U}(t,x) \vert {\mathcal F}_{s}]
- {\mathbb E}[{\mathcal U}(t',x') \vert {\mathcal F}_{s}]
\bigr\vert \leq \sup_{s \in [0,T]} {\mathbb E}
\bigl[ w \bigl( \frac{1}{a_p} \bigr) \vert {\mathcal F}_{s} \bigr],
\end{equation*}
the right-hand side converging to $0$
as $p$ tends to $\infty$. Therefore, by a uniform continuity extension argument, it is thus possible to extend continuously,
outside $N$, 
the mapping ${\mathcal T} \times {\mathcal X} \ni (t,x) \mapsto 
([0,T] \ni s \mapsto
{\mathbb E}[{\mathcal U}(t,x) \vert {\mathcal F}_{s}]) \in {\mathcal C}^0([0,T],\R)$ to the entire 
$[0,T] \times \T^d$. For any $(t,x) \in [0,T] \times \T^d$, the 
value of the extension is a version of the conditional expectation 
$ {\mathbb E}[{\mathcal U}(t,x) \vert {\mathcal F}_{s}]$.
Outside $N$, 
the slice $(s,x) \mapsto 
{\mathbb E}[{\mathcal U}(s,x) \vert {\mathcal F}_{s}]$ is obviously continuous. Moreover, 
it satisfies, for all $p \geq 1$,
\begin{equation*}
\forall x,x' \in \T^d, 
\quad \vert x-x' \vert \leq \frac{1}{a_{p}}
\Rightarrow
\sup_{s \in [0,T]}
\bigl\vert 
{\mathbb E}[{\mathcal U}(s,x) \vert {\mathcal F}_{s}]
- {\mathbb E}[{\mathcal U}(s,x') \vert {\mathcal F}_{s}]
\bigr\vert 
\leq \sup_{s \in [0,T]} {\mathbb E}
\bigl[ w \bigl( \frac{1}{a_p} \bigr) \vert {\mathcal F}_{s} \bigr],
\end{equation*}
which says that, for each realization outside $N$, the functions $(\T^d \ni x \mapsto 
{\mathbb E}[{\mathcal U}(s,x) \vert {\mathcal F}_{s}])_{s \in [0,T]}$ 
are equicontinuous. Together with the continuity in $s$, we deduce that, outside $N$, the function $[0,T] \ni s \mapsto 
(\T^d \ni x \mapsto {\mathbb E}[{\mathcal U}(s,x) \vert {\mathcal F}_{s}]) \in \cC^0(\T^d)$ is continuous. 
On $N$, we can arbitrarily let ${\mathcal V} \equiv 0$,
which is licit since $N$ has zero probability. 
Progressive-measurability is then easily checked
(the fact that ${\mathcal V}$ 
is arbitrarily defined on $N$ does not matter since
the filtration is complete).
\vspace{2pt}

\textit{Second step.}
We now handle the second part of the statement
(existence 
of a ${\mathcal C}^k$ version). 
By a straightforward induction argument, it suffices to
treat the case $k=1$. 
By the first step, we already know that the random field
$[0,T] \times \T^d \ni (t,x) \mapsto \E[D_{x} u(t,x) \vert {\mathcal F}_{t}]$
has a continuous version. In particular, 
for any unit vector $e \in \R^d$, it makes sense to consider the mapping 
\begin{equation*}
\T^d \times \R^* \ni (x,h) \mapsto 
\frac1h 
\Bigl(
\E 
\bigl[
{\mathcal U}(t,x+h e) \vert {\mathcal F}_{t}
\bigr]  
- 
\E 
\bigl[
{\mathcal U}(t,x) \vert {\mathcal F}_{t}
\bigr]  
\Bigr)
- \E \bigl[ \langle D_{x} {\mathcal U}(t,x), e \rangle \vert {\mathcal F}_{t}
\bigr].
\end{equation*}
Notice that we can find an event of probability 1, 
on which
\begin{equation}
\label{eq:differentiable:version:cond:expectation}
\begin{split}
&\Bigl\vert \frac1h 
\Bigl(
\E 
\bigl[
{\mathcal U}(t,x+h e) \vert {\mathcal F}_{t}
\bigr]  
- 
\E 
\bigl[
{\mathcal U}(t,x) \vert {\mathcal F}_{t}
\bigr]  
\Bigr)
- \E \bigl[ \langle D_{x} {\mathcal U}(t,x), e \rangle \vert {\mathcal F}_{t}
\bigr]
\Bigr\vert 
\\
&=
\biggl\vert
\E 
\biggl[
\int_{0}^1
\Bigl\langle
D_{x}
{\mathcal U}(t,x+ \lambda h e) 
- D_{x} {\mathcal U}(t,x) ,
e 
\Bigr\rangle 
d\lambda
\vert {\mathcal F}_{t}
\biggr] \biggr\vert
\\
&=
\biggl\vert
\int_{0}^1
\Bigl(
\E
\Bigl[
\bigl\langle
D_{x}
{\mathcal U}(t,x+ \lambda h e),e
\bigr\rangle
\vert {\mathcal F}_{t}
\Bigr]
-
\E 
\Bigl[ 
\bigl\langle
D_{x}
{\mathcal U}(t,x),e
\bigr\rangle
\vert {\mathcal F}_{t}
\Bigr]
\Bigr)
d\lambda
\biggr\vert,
\end{split}
\end{equation}
where we used the fact 
the mapping
$[0,T] \times \T^d \ni (t,x) \mapsto \E[D_{x} u(t,x) \vert {\mathcal F}_{t}]$
has continuous paths in order to guarantee
the integrability of the integrand in the third line. 
By continuity of the paths again, the right hand side tends to $0$
with $h$ (uniformly in $t$ and $x$).   
\end{proof}

Instead of \eqref{eq:9:bis}, we will sometimes make use of the 
following:
\begin{Lemma}
\label{lem:reg}
We can find a constant $C$ such that, whenever $\vartheta = \varpi = 0$, 
any solution to \eqref{eq:se:FP/HJB:vartheta} satisfies:
\begin{equation*}
\begin{split}
\forall k \in \{1,\dots,n\}, \quad 
&\int_{t}^T 
\frac{
\essup_{\omega \in \Omega} \| \tilde u_{s}\|_{k+\alpha}}{\sqrt{s-t}}
ds 
\\
&\hspace{15pt} \leq C 
\biggl(  
\essup_{\omega \in \Omega}  \|g_{T}\|_{k+\alpha} + \int_{t}^T 
 \essup_{\omega \in \Omega}  \| f_{s} \|_{k+\alpha-1}
 ds
\biggr). 
\end{split}
\end{equation*}
\end{Lemma}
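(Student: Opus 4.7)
The strategy is to bootstrap from the representation formula and pointwise estimates already established in the proof of Lemma \ref{lem:cas:vartheta=0}. Recall that, when $\vartheta=\varpi=0$, the solution is explicitly given by
\[
\tilde u_s(x) = \E\bigl[\bar u_s(x) \mid {\mathcal F}_s\bigr], \qquad
\bar u_s(x) = P_{T-s} g_T(x) - \int_s^T P_{r-s} f_r(x)\, dr,
\]
and, via the identity \eqref{eq:part:2:representation:derivatives:2} together with the heat-kernel bound $\|DP_\tau \varphi\|_0 \leq c \tau^{-1/2} \|\varphi\|_0$, one has the pointwise in $s$ estimate \eqref{eq:part:2:representation:derivatives:3}: for every $k \in \{1,\dots,n\}$,
\begin{equation}
\label{eq:plan:reg:pointwise}
\essup_{\omega \in \Omega} \| \bar u_s \|_{k+\alpha}
\leq
\essup_{\omega \in \Omega} \| g_T \|_{k+\alpha}
+ C \int_s^T \frac{1}{\sqrt{r-s}}\, \essup_{\omega \in \Omega} \| f_r \|_{k+\alpha-1}\, dr.
\end{equation}

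The first step is to transfer \eqref{eq:plan:reg:pointwise} from $\bar u$ to $\tilde u$. Since every ${\mathcal C}^{k+\alpha}$ seminorm is a supremum of (pathwise) linear functionals of $\bar u_s$ (differences of values and of difference quotients of derivatives), Jensen's inequality applied to the conditioning ${\mathcal F}_s$ yields
$\|\tilde u_s\|_{k+\alpha} \leq \E[\|\bar u_s\|_{k+\alpha} \mid {\mathcal F}_s]$ almost surely, whence
\[
\essup_{\omega \in \Omega} \|\tilde u_s\|_{k+\alpha}
\leq
\essup_{\omega \in \Omega} \|\bar u_s\|_{k+\alpha},
\]
and \eqref{eq:plan:reg:pointwise} is inherited by $\tilde u$.

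The second step is to multiply \eqref{eq:plan:reg:pointwise} by $(s-t)^{-1/2}$ and integrate on $[t,T]$. The contribution of the terminal datum is controlled by
$\int_t^T (s-t)^{-1/2} ds = 2\sqrt{T-t} \leq 2\sqrt{T}$, which yields the desired $\essup_{\omega} \|g_T\|_{k+\alpha}$ term. For the contribution of $f$, Fubini's theorem allows to exchange the order of integration:
\[
\int_t^T \frac{1}{\sqrt{s-t}} \int_s^T \frac{1}{\sqrt{r-s}}\, \essup_{\omega \in \Omega} \|f_r\|_{k+\alpha-1}\, dr\, ds
= \int_t^T \essup_{\omega \in \Omega} \|f_r\|_{k+\alpha-1} \biggl( \int_t^r \frac{ds}{\sqrt{s-t}\sqrt{r-s}} \biggr) dr.
\]
The inner integral is a classical Beta integral: the substitution $s = t + (r-t)u$ gives
$\int_t^r (s-t)^{-1/2}(r-s)^{-1/2} ds = \int_0^1 u^{-1/2}(1-u)^{-1/2} du = B(1/2,1/2) = \pi$,
independently of $t$ and $r$. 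This produces the stated bound $\pi \int_t^T \essup_{\omega} \|f_r\|_{k+\alpha-1}\, dr$ on the second term, completing the proof upon adjusting the constant~$C$.

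There is no genuine obstacle here: the lemma is really a direct integral consequence of the refined representation \eqref{eq:part:2:representation:derivatives:2} that was introduced precisely to gain the extra half-derivative from the heat kernel. The only point that deserves care is the Jensen step transferring the ${\mathcal C}^{k+\alpha}$ bound through the conditional expectation, and the verification that the resulting double-singular kernel integrates to a Beta function.
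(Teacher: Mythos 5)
Your proof is correct and takes essentially the same route as the paper: divide the pointwise bound (the paper's equation \eqref{eq:11}, which you rederive from \eqref{eq:part:2:representation:derivatives:3}) by $\sqrt{s-t}$, integrate, and apply Fubini to reduce the inner integral to a bounded Beta function. You make explicit two steps the paper leaves implicit — the Jensen argument transferring the ${\mathcal C}^{k+\alpha}$ bound from $\bar u_s$ to $\tilde u_s = \E[\bar u_s\,|\,{\mathcal F}_s]$, and the closed-form $\int_t^r (s-t)^{-1/2}(r-s)^{-1/2}\,ds = \pi$ — which is fine, and the Jensen step is exactly what justifies the paper's jump from \eqref{eq:part:2:representation:derivatives:3} (on $\bar u$) to \eqref{eq:11} (on $\tilde u$).
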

\begin{proof}
Assume that we have a solution to 
\eqref{eq:se:FP/HJB:vartheta}. Then, 
making use of 
\eqref{eq:part:2:representation:derivatives:3}
in the proof of Lemma \ref{lem:cas:vartheta=0}, we have that, 
for all $k \in \{1,\dots,n\}$
and all $s \in [0,T]$,
\begin{equation}
\label{eq:11}
\begin{split}
&\essup_{\omega \in \Omega} \|  \tilde u_{s}\|_{k+\alpha}
 \leq C \biggl( 
\essup_{\omega \in \Omega}  \|g_{T}\|_{k+\alpha} + \int_{s}^T 
\frac{ 
 \essup_{\omega \in \Omega}  \| f_{r} \|_{k+\alpha-1}}
 {\sqrt{r-s}} dr \biggr).
\end{split}
\end{equation}
Dividing by 
$\sqrt{s-t}$ for a given $t \in [0,T]$, integrating from 
$t$ to $T$
and modifying the value of $C$ if necessary, we deduce that 
\begin{equation*}
\begin{split}
&\int_{t}^T 
 \frac{
\essup_{\omega \in \Omega} \|\tilde u_{s}\|_{k+\alpha}}{\sqrt{s-t}}
ds
\\
&\leq C \biggl(
\essup_{\omega \in \Omega}  \|g_{T}\|_{k+\alpha} + \int_{t}^T  
 ds \int_{s}^T 
 \frac{
 \essup_{\omega \in \Omega}  \| f_{r} \|_{k+\alpha-1}}
 {\sqrt{s-t}\sqrt{r-s}}
 dr
 \biggr)
 \\
 &=
 C \biggl[
\essup_{\omega \in \Omega}  \|g_{T}\|_{k+\alpha} + \int_{t}^T  
 \essup_{\omega \in \Omega}  \| f_{r} \|_{k+\alpha-1}
 \biggl(\int_{t}^r
 \frac{1}
 {\sqrt{s-t}\sqrt{r-s}}
 ds\biggr) dr
 \Biggr],
 \end{split}
\end{equation*}
the last line following from Fubini's theorem. 
The result easily follows.
\end{proof}

Following 
\eqref{eq:9}, we shall use the following variant of 
Lemma 
\ref{lem:reg}:
\begin{Lemma}
\label{lem:reg:cond}
For $p \in \{1,2\}$, we can find a constant $C$ such that, whenever $\vartheta = \varpi = 0$, 
any solution to \eqref{eq:se:FP/HJB:vartheta} satisfies, for all $t \in [0,T]$:
\begin{equation*}
\begin{split}
&\forall k \in \{1,\dots,n\}, \quad 
\E \biggl[ \int_{t}^T 
\frac{
\| \tilde u_{s}\|_{k+\alpha}^p}{\sqrt{s-t}}
ds \vert {\mathcal F}_{t} \biggr] \leq C 
{\mathbb E}\biggl[ 
\| g_{T} \|_{k+\alpha}^p
+ 
\int_{s}^T
\| f_{r} \|_{k+\alpha-1}^p
dr
\, \big\vert \, {\mathcal F}_{t} \biggr].
\end{split}
\end{equation*}
\end{Lemma}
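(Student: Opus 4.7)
The plan is to revisit the proof of Lemma \ref{lem:reg}, this time keeping conditional expectations instead of essential suprema. The main ingredient remains the representation formula $\tilde u_{s}(x) = {\mathbb E}[\bar u_{s}(x) \mid {\mathcal F}_{s}]$ from \eqref{eq:partie2:construction:backward}, together with the differentiated formula \eqref{eq:part:2:representation:derivatives:2}, which yield, pathwise and for every $k \in \{1,\dots,n\}$ and $s \in [0,T]$, the estimate
$$
\|\bar u_{s}\|_{k+\alpha} \leq \|g_{T}\|_{k+\alpha} + C \int_{s}^T \frac{\|f_{r}\|_{k+\alpha-1}}{\sqrt{r-s}}\, dr
$$
(this is precisely \eqref{eq:part:2:representation:derivatives:3}, read $\omega$ by $\omega$). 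Since $\|\cdot\|_{k+\alpha}$ is a sublinear seminorm, applying conditional Jensen slot by slot to $D^{i}_{x}\tilde u_{s}(x) = {\mathbb E}[D^{i}_{x}\bar u_{s}(x) \mid {\mathcal F}_{s}]$, $i \in \{0,\dots,k\}$, gives $\|\tilde u_{s}\|_{k+\alpha} \leq {\mathbb E}[\|\bar u_{s}\|_{k+\alpha} \mid {\mathcal F}_{s}]$ almost surely; by continuity of the paths of both sides in $s$, this holds on a single event of full probability simultaneously for all $s$.

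For $p=1$, I would plug the previous display into this conditional estimate, multiply by $(s-t)^{-1/2}$ and integrate in $s$ from $t$ to $T$. For $p=2$, before taking the conditional expectation I would use $(a+b)^2 \leq 2(a^2+b^2)$ together with Cauchy--Schwarz against the weight $(r-s)^{-1/2}$, namely
$$
\Bigl( \int_{s}^T \frac{\|f_{r}\|_{k+\alpha-1}}{\sqrt{r-s}}\, dr \Bigr)^{2} \leq \Bigl( \int_{s}^T \frac{dr}{\sqrt{r-s}} \Bigr) \Bigl( \int_{s}^T \frac{\|f_{r}\|_{k+\alpha-1}^2}{\sqrt{r-s}}\, dr \Bigr) \leq 2\sqrt{T}\, \int_{s}^T \frac{\|f_{r}\|_{k+\alpha-1}^2}{\sqrt{r-s}}\, dr,
$$
followed by conditional Jensen for $x \mapsto x^{2}$. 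In both cases this yields a uniform (in $s$) bound of the form
$$
\|\tilde u_{s}\|_{k+\alpha}^{p} \leq C\,{\mathbb E}\Bigl[ \|g_{T}\|_{k+\alpha}^{p} + \int_{s}^T \frac{\|f_{r}\|_{k+\alpha-1}^{p}}{\sqrt{r-s}}\, dr \,\Big|\, {\mathcal F}_{s} \Bigr].
$$

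The last step is to multiply by $(s-t)^{-1/2}$, integrate from $t$ to $T$, take conditional expectation with respect to ${\mathcal F}_{t} \subset {\mathcal F}_{s}$ (so that the tower property replaces ${\mathbb E}[\cdot\mid{\mathcal F}_{s}]$ by ${\mathbb E}[\cdot\mid{\mathcal F}_{t}]$), and swap the two integrals by Fubini. The term carrying $\|g_{T}\|_{k+\alpha}^{p}$ produces the factor $\int_{t}^T (s-t)^{-1/2}\, ds = 2\sqrt{T-t}$, whereas the term involving $f$ gives rise to the double integral
$$
\int_{t}^T \|f_{r}\|_{k+\alpha-1}^{p} \Bigl( \int_{t}^r \frac{ds}{\sqrt{s-t}\,\sqrt{r-s}} \Bigr) dr = \pi \int_{t}^T \|f_{r}\|_{k+\alpha-1}^{p}\, dr,
$$
thanks to the classical Beta identity $\int_{0}^1 u^{-1/2}(1-u)^{-1/2}\, du = \pi$. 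Combining the two contributions gives the announced inequality.

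The only genuinely delicate point is the case $p=2$: one has to distribute the weight in Cauchy--Schwarz so that the singular kernel $(r-s)^{-1/2}$ survives inside the remaining integral, as otherwise the final Fubini/Beta computation would not produce a finite constant. Once this is set, everything else is a routine conditional analogue of the proof of Lemma \ref{lem:reg}.
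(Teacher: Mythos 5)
Your proof is correct and follows essentially the same route as the paper's: derive the pointwise-in-$s$ conditional bound
$\| \tilde u_{s}\|_{k+\alpha}^p \leq C\,\E\bigl[\|g_T\|_{k+\alpha}^p + \int_s^T (r-s)^{-1/2}\|f_r\|_{k+\alpha-1}^p\,dr \,\big|\,{\mathcal F}_s\bigr]$, then multiply by $(s-t)^{-1/2}$, integrate, use the tower property, Fubini, and the Beta identity. The paper simply states this intermediate estimate as a ``variant'' of \eqref{eq:part:2:representation:derivatives:3} without spelling out the conditional Jensen and Cauchy--Schwarz steps that you carry out; otherwise the two arguments coincide.
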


\begin{proof} The proof goes along 
the same lines
as that of Lemma 
\ref{lem:reg}. We start with
the following variant 
of \eqref{eq:part:2:representation:derivatives:3}, 
that holds, 
for any $s \in [0,T]$,
\begin{equation}
\label{eq:label:supplementaire:tildeu}
\| \tilde{u}_{s} \|_{k+\alpha}^p
\leq C 
{\mathbb E}\biggl[ 
\| g_{T} \|_{k+\alpha}^p
+ \int_{s}^T \frac{\| f_{r} \|_{k+\alpha-1}^p}{\sqrt{r-s}} dr
\, \big\vert \, {\mathcal F}_{s} \biggr].
\end{equation}
Therefore, for any $0 \leq t \leq s \leq T$,
we get 
\begin{equation*}
{\mathbb E}
\bigl[
\| \tilde{u}_{s} \|_{k+\alpha}^p \vert {\mathcal F}_{t}
\bigr]
\leq C 
{\mathbb E}\biggl[ 
\| g_{T} \|_{k+\alpha}^p
+ 
\int_{s}^T
\frac{
\| f_{r} \|_{k+\alpha-1}^p}{
\sqrt{r-s}}
dr
\, \big\vert \, {\mathcal F}_{t} \biggr].
\end{equation*}
Dividing by $\sqrt{s-t}$ and integrating in $s$, we get
\begin{equation*}
{\mathbb E}
\biggl[
\biggl(
\int_{t}^T  \frac{\| \tilde{u}_{s} \|_{k+\alpha}^p}{\sqrt{s-t}} ds
\biggr)
\vert {\mathcal F}_{t}
\biggr]
\leq 
C 
{\mathbb E}\biggl[ 
\| g_{T} \|_{k+\alpha}^p
+
\int_{t}^T
\| f_{r} \|_{k+\alpha-1}^p
\biggl( \int_{t}^r
\frac{1}{\sqrt{r-s}\sqrt{s-t}}
ds \biggr) dr
\vert {\mathcal F}_{t}
\biggr]
\end{equation*}
Therefore,
\begin{equation*}
{\mathbb E}
\biggl[
\biggl(
\int_{t}^T  \frac{\| \tilde{u}_{s} \|_{k+\alpha}^p}{\sqrt{s-t}} ds
\biggr)
\vert {\mathcal F}_{t}
\biggr]
\leq 
C 
{\mathbb E}\biggl[ 
\| g_{T} \|_{k+\alpha}^p
+
\int_{t}^T
\| f_{r} \|_{k+\alpha-1}^p
dr
\vert {\mathcal F}_{t}
\biggr],
\end{equation*}
which completes the proof. 
\end{proof}

%
%
%

\subsubsection{\textit{A priori} estimates}
In the previous paragraph, we handled the case $\vartheta= \varpi = 0$. 
In order to handle 
the more general case when $(\vartheta,\varpi) \in [0,1]^2$, 
we shall use the following \textit{a priori} regularity estimate:
\begin{Lemma}
\label{lem:super:reg}
Let $(b_{t}^0)_{t \in [0,T]}$ and 
$(f_{t}^0)_{t \in [0,T]}$ be 
$({\mathcal F}_{t})_{t \in [0,T]}$ 
adapted 
processes
with paths in the space ${\mathcal C}^0([0,T],\cC^1(\T^d,\R^d))$
and ${\mathcal C}^0([0,T],\cC^{n-1}(\T^d))$ and 
$g_{T}$ be an ${\mathcal F}_{T}$-measurable 
random variable with values in ${\mathcal C}^n(\T^d)$, such that 
\begin{equation*}
\essup_{\omega \in \Omega} \sup_{t \in [0,T]} \| b_{t}^0 \|_{1}, 
\
\essup_{\omega \in \Omega}
\sup_{t \in [0,T]} \|f_{t}^0 \|_{n+\alpha-1}, \
\essup_{\omega \in \Omega} \|g_{T}^0\|_{n+\alpha} \leq C,
\end{equation*}
for some constant $C \geq 0$. 
Then, for any $k \in \{0,\dots,n\}$, we can find two constants 
$\lambda_{k}$ and $\Lambda_{k}$, depending upon $C$, such that, 
denoting by ${\mathcal B}$ the cylinder:
\begin{equation*}
{\mathcal B}
:= \Bigl\{ 
w \in {\mathcal C}^0([0,T],\cC^n(\T^d)) : 
\forall k \in \{0,\dots,n\}, \ \forall 
t \in [0,T], \ \| w_{t} \|_{k+\alpha}
\leq 
 \Lambda_{k} \exp \bigl( \lambda_{k} (T-t) \bigr) \Bigr\},
\end{equation*}
it holds that, for any integer $N \geq 1$, any
family of adapted processes
$(\tilde m^i,\tilde u^i)_{i=1,\dots,N}$ with paths in  ${\mathcal C}^0([0,T],{\mathcal P}(\T^d))
\times {\mathcal B}$, any families 
$(a^i)_{i=1,\dots,N} \in [0,1]^N$
and 
$(b^i)_{i=1,\dots,N} \in [0,1]^N$
 with 
$a^{1}+\dots+a^{N}
\leq 2$
and $b^1 + \dots + b^N \leq 2$,
 and any input  $(f_{t})_{t \in [0,T]}$ and $g_{T}$ of the form
\begin{equation*}
\begin{split}
f_{t} = 
\sum_{i=1}^N 
\bigl[ a^i \tilde{H}_{t}(\cdot,D \tilde{u}_{t}^i)
- b^i \tilde{F}_{t}(\cdot,\tilde{m}_{t}^i) \bigr] + f_{t}^0,
\quad g_{T}=
\sum_{i=1}^N 
b^i\tilde{G}(\cdot,\tilde{m}_{T}^i) + g_{T}^0,
\end{split}
\end{equation*}
any solution $(\tilde{m},\tilde{u})$
 to \eqref{eq:se:FP/HJB:vartheta} 
 for some $\vartheta,\varpi \in [0,1]$
has paths in $\cC^0([0,T],{\mathcal P}(\T^d)) \times {\mathcal B}$, that is
\begin{equation*}
\essup_{\omega \in \Omega} \| \tilde u_{t} \|_{k+\alpha} \leq \Lambda_{k} \exp \bigl( \lambda_{k} (T-t) \bigr), \quad t \in [0,T].
\end{equation*}
\end{Lemma}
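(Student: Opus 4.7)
The plan is, for each $k \in \{0, 1, \dots, n\}$, to derive a pathwise Volterra-type integral inequality for $\| \tilde u_s \|_{k+\alpha}$ and then to apply a generalized Gronwall argument with singular kernel $(t-s)^{-1/2}$ in order to obtain an exponential-in-$(T-s)$ bound. As a first step, I would rewrite the backward equation of \eqref{eq:se:FP/HJB:vartheta} in the abstract form
\begin{equation*}
d_{t} \tilde u_{t} = \bigl\{ -\Delta \tilde u_{t} + \Phi_{t} \bigr\} dt + d\tilde M_{t},
\qquad \tilde u_{T} = \tilde g_{T},
\end{equation*}
where $\Phi_{t} := \vartheta \tilde H_{t}(\cdot, D \tilde u_{t}) - \varpi \tilde F_{t}(\cdot, \tilde m_{t}) + f_{t}$ and $\tilde g_{T} := \varpi \tilde G(\cdot, \tilde m_{T}) + g_{T}$ are treated as a (random) source and terminal. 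Applying the heat semigroup representation derived in the proof of Lemma \ref{lem:cas:vartheta=0} (cf.\ \eqref{eq:part:2:representation:derivatives:3}) pathwise and taking ${\mathcal F}_{s}$-conditional expectations would then yield, for every $k \in \{1, \dots, n\}$ and every $s \in [0,T]$,
\begin{equation*}
\essup_{\omega \in \Omega} \| \tilde u_{s} \|_{k+\alpha}
\leq
C \Bigl(
\essup_{\omega \in \Omega} \| \tilde g_{T} \|_{k+\alpha}
+ \int_{s}^T \frac{\essup_{\omega \in \Omega} \| \Phi_{t} \|_{k+\alpha-1}}{\sqrt{t-s}}\, dt
\Bigr),
\end{equation*}
with $C$ depending only on $k$, $d$, and $T$.

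The next step would be to estimate $\|\Phi_{t}\|_{k+\alpha-1}$. The $\tilde F$ and $\tilde G$ contributions will be uniformly bounded in $\tilde m$ thanks to \textbf{(HF1$(n-1)$)} and \textbf{(HG1$(n)$)}. The input $f_{t}$ splits into $f_{t}^{0}$ (bounded by hypothesis) and the sums $\sum_{i} a^{i} \tilde H_{t}(\cdot, D \tilde u_{t}^{i}) - \sum_{i} b^{i} \tilde F_{t}(\cdot, \tilde m_{t}^{i})$; since each $\tilde u^{i}$ sits in the cylinder $\mathcal B$ and $\sum_{i} a^{i}, \sum_{i} b^{i} \leq 2$, the linear dependence on the weights together with the global Lipschitz property of $H$ will produce a bound depending only on the cylinder constants $(\Lambda_{\ell}, \lambda_{\ell})_{\ell \leq k}$, the Lipschitz norms of $H$ and $F$, and the data---crucially, independently of $N$. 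Finally, the self-referential term $\vartheta \tilde H_{t}(\cdot, D \tilde u_{t})$ can be estimated through the standard composition bound for H\"{o}lder functions: $\|\tilde H_{t}(\cdot, D \tilde u_{t})\|_{k+\alpha-1} \leq A + B \| \tilde u_{t} \|_{k+\alpha}$, modulo a polynomial in the lower-order norms $\| \tilde u_{t} \|_{\ell + \alpha}$ for $\ell < k$, which will be controlled inductively. Altogether, these estimates would yield the Volterra inequality
\begin{equation*}
\phi_{k}(s) := \essup_{\omega \in \Omega} \| \tilde u_{s} \|_{k+\alpha}
\leq A_{k} + B_{k} \int_{s}^T \frac{\phi_{k}(t)}{\sqrt{t-s}}\, dt,
\end{equation*}
where $A_{k}, B_{k}$ depend only on the data and on the previously chosen constants $(\Lambda_{\ell}, \lambda_{\ell})_{\ell < k}$.

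The final step would be to iterate this inequality once, using the beta-function identity $\int_{s}^{r} (t-s)^{-1/2} (r-t)^{-1/2}\, dt = \pi$, which converts the singular kernel into a bounded one and then, by classical backward Gronwall, produces $\phi_{k}(s) \leq (A_{k} + 2 A_{k} B_{k} \sqrt{T-s}) \exp(\pi B_{k}^{2} (T-s))$. Choosing $\lambda_{k} := \pi B_{k}^{2}$ and $\Lambda_{k}$ large enough would then give the desired cylinder bound $\phi_{k}(s) \leq \Lambda_{k} e^{\lambda_{k}(T-s)}$. The construction would proceed by induction on $k$ from $k = 1$ up to $k = n$; the end-point case $k = 0$ is to be handled separately, where the heat semigroup no longer gains a derivative, so that one directly bounds $\|\tilde u_{s}\|_{\alpha} \leq \|\tilde g_{T}\|_{\alpha} + \int_{s}^{T} \|\Phi_{t}\|_{\alpha}\, dt$, the source norm now being controlled by the already-established $k = 1$ bound on $\|\tilde u_{t}\|_{1+\alpha}$. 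The main technical difficulty throughout will be to ensure that each pair $\Lambda_{k}, \lambda_{k}$ depends only on the fixed data ($H$, $F$, $G$, the constant $C$ controlling $(b^{0}, f^{0}, g^{0})$) and on the previous levels, and is independent of the solution, of $N$, and of the weights $(a^{i}, b^{i})$; this uniformity is precisely what the constraints $\sum_{i} a^{i}, \sum_{i} b^{i} \leq 2$ provide, by linearity of the sum in the weights.
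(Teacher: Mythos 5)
Your overall plan — the heat-semigroup representation, the singular Volterra kernel, the induction on $k$ — tracks the paper's proof, but there is a genuine gap in the way you handle the contribution of the input processes $\tilde u^i$, and it breaks the self-consistency of the cylinder.

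The issue is your assertion that ``$A_k, B_k$ depend only on the data and on the previously chosen constants $(\Lambda_\ell,\lambda_\ell)_{\ell<k}$.'' To bound $\|\tilde H_t(\cdot,D\tilde u_t^i)\|_{k+\alpha-1}$, which sits in $f_t$, you need $\|\tilde u_t^i\|_{k+\alpha}$ at the top order (the chain rule and Fa\`a di Bruno put the $k$-th derivative of $\tilde u^i$, $\alpha$-H\"older, into the estimate of the $(k-1)$-th derivative of the composition). So the Volterra ``constant'' $A_k$ actually carries a term $\max_i \|\tilde u^i_s\|_{k+\alpha}$, which under the cylinder hypothesis is $\Lambda_k e^{\lambda_k(T-s)}$ — this involves $\Lambda_k,\lambda_k$ and is not a constant at all. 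If you replace it by its supremum $\Lambda_k e^{\lambda_k T}$ so that $A_k$ becomes a constant, then the iterate-once-then-Gronwall conclusion yields something of size $\gtrsim \Lambda_k e^{\lambda_k T}\cdot e^{\pi B_k^2(T-s)}$; near $s=T$ this must be at most $\Lambda_k$ for the cylinder to hold, and it is larger by a factor $e^{\lambda_k T}$, so the argument does not close.

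What makes the fixed point close in the paper's proof is keeping the time dependence of the input term and exploiting that the singular kernel gains a small factor: $\int_t^T \frac{\Lambda_k e^{\lambda_k(T-s)}}{\sqrt{s-t}}\,ds \le \gamma_2(\lambda_k)\,\Lambda_k e^{\lambda_k(T-t)} - \gamma_1(\lambda_k)\Lambda_k$ with $\gamma_1(\lambda_k),\gamma_2(\lambda_k)\sim 1/\sqrt{\lambda_k}\to 0$. One then chooses $\lambda_k$ first so that $C_k\gamma_2(\lambda_k)\le 1$ (taming the top-order input contribution) and afterwards $\Lambda_k$ so that the lower-order terms, controlled inductively, fit into the $\gamma_1(\lambda_k)\Lambda_k$ slot. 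Your proposal never performs this computation; without it, the self-referential structure of the cylinder $\mathcal B$ cannot be closed.
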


\begin{proof}
Consider the source term in the backward equation in \eqref{eq:se:FP/HJB:vartheta}:
\begin{equation*}
\varphi_{t} := \vartheta \tilde H_{t}(\cdot,D \tilde{u}_{t}) 
- \varpi \tilde F_{t} (\cdot,\tilde m_{t}) + 
\sum_{i=1}^N
\bigl[ a^i 
\tilde H_t(\cdot ,D \tilde{u}_{t}^i) 
- b^i \tilde F_{t} (\cdot , \tilde m_{t}^i) \bigr] + f_{t}^0. 
\end{equation*}
Then, for any $k \in \{1,\dots,n\}$, we can find 
a constant $C_{k}$ and a continuous non-decreasing function $\Phi_{k}$, independent 
of $(\tilde{m}^i,\tilde{u}^i)$, $i=1,\dots,N$,
and of $(\tilde{m},\tilde{u})$ (but depending on the inputs $(b_{t}^0)_{t \in [0,T]}$, 
$(f_{t}^0)_{t \in [0,T]}$ and $g_{T}$), such that 
\begin{equation}
\label{eq:partie2:borne:varphi}
\| \varphi_{t} \|_{k+\alpha-1} \leq C_{k }
\Bigl[ 1 +
\Phi_{k} \Bigl( \|\tilde{u}_{t} \|_{k+\alpha-1} + \max_{i=1,\dots,N}
\|\tilde{u}_{t}^i \|_{k+\alpha-1}  \Bigr) +  \| \tilde{u}_{t} \|_{k+\alpha}
+  
\max_{i=1,\dots,N}
\|\tilde{u}_{t}^i \|_{k+\alpha}
 \Bigr].
\end{equation}
When $k=1$, the above bound 
holds true with $\Phi_{1} \equiv 0$: It then
follows from 
{\bf (HF1(${\boldsymbol n}$-1))}
and from 
the fact that 
$H$ (or equivalently $\tilde{H}_{t}$)
is globally Lipschitz in $(x,p)$ (uniformly in $t$ if dealing with 
$\tilde{H}_{t}$ instead of $H$). 
When $k \in \{2,\dots,n\}$, it follows from 
the standard Fa\`a di Bruno formula for the 
higher-order derivatives of the composition of two functions
(together with the fact that 
$D_{p}H$ is 
globally
bounded and that the higher-order derivatives
of $H$ are locally bounded). 
Fa\`a di Bruno's formula says that 
each $\Phi_{k}$ may be chosen as a polynomial function.  

Therefore, by 
\eqref{eq:partie2:borne:varphi}
and by
\eqref{eq:11}
in the proof of 
Lemma \ref{lem:reg}
(choosing the constant 
$C_{k}$
such that 
$\| g^0_{T} \|_{k+\alpha}
+ 
\sup_{m \in {\mathcal P}(\T^d)}
\| G(\cdot,m)\|_{k+\alpha} \leq 
C_{k}$
), we deduce that 
\begin{equation*}
\begin{split}
\essup_{\omega \in \Omega}
\| \tilde{u}_{t} \|_{k+\alpha}
\leq C_{k}
&\biggl[ 1 + \essup_{\omega \in \Omega} \sup_{s \in [0,T]} \Phi_{k} \Bigl( \|\tilde{u}_{s} \|_{k+\alpha-1} +
\max_{i=1,\dots,N} \|\tilde{u}_{s}^i \|_{k+\alpha-1}
\Bigr)
\\
&\hspace{15pt} + \int_{t}^T {\frac1{\sqrt{s-t}}}\bigl[  \essup_{\omega \in \Omega}\| \tilde{u}_{s} \|_{k+\alpha}
+  \essup_{\omega \in \Omega} \max_{i=1,\dots,N}
\| \tilde{u}_{s}^i \|_{k+\alpha} \bigr] ds \biggr].
\end{split}
\end{equation*}
Now (independently of the above bound), 
by 
\eqref{eq:partie2:borne:varphi}
and
Lemma \ref{lem:reg}, we can modify $C_{k}$ in such a way that 
\begin{equation*}
\begin{split}
\int_{t}^T \frac
{ \essup_{\omega \in \Omega}\| \tilde{u}_{s} \|_{k+\alpha}}{\sqrt{s-t}} ds
\leq C_{k}
&\biggl[ 1 + \essup_{\omega \in \Omega} \sup_{s \in [0,T]} \Phi_{k} \Bigl( \|\tilde{u}_{s} \|_{k+\alpha-1} +
\max_{i=1,\dots,N} \|\tilde{u}_{s}^i \|_{k+\alpha-1}
\Bigr)
\\
&\hspace{15pt} + \int_{t}^T  
\Bigl(
\essup_{\omega \in \Omega}
\| \tilde{u}_{s} \|_{k+\alpha}
+
\essup_{\omega \in \Omega} \max_{i=1,\dots,N}
\| \tilde{u}_{s}^i \|_{k+\alpha} \Bigr) ds \biggr],
\end{split}
\end{equation*}
so that, collecting the two last inequalities
(and allowing the constant $C_{k}$ to increase from line to line),
\begin{equation}
\label{eq:partie2:proof:stability:s-t:root:bis}
\begin{split}
&\essup_{\omega \in \Omega}
\| \tilde{u}_{t} \|_{k+\alpha}
\\
&\leq C_{k}
\biggl[ 1 + \essup_{\omega \in \Omega} \sup_{s \in [0,T]} \Phi_{k} \Bigl( \|\tilde{u}_{s} \|_{k+\alpha-1} +
\max_{i=1,\dots,N} \|\tilde{u}_{s}^i \|_{k+\alpha-1}
\Bigr)
\\
&\hspace{30pt} + \int_{t}^T
\Bigl(
\essup_{\omega \in \Omega}
\| \tilde{u}_{s} \|_{k+\alpha}
 +
\frac{  
 \essup_{\omega \in \Omega} \max_{i=1,\dots,N}
\| \tilde{u}_{s}^i \|_{k+\alpha}}{\sqrt{s-t}} \Bigr) ds \biggr]
\\
&\leq C_{k}
\biggl[ 1 + \essup_{\omega \in \Omega} \sup_{s \in [0,T]} \Phi_{k} \Bigl( \|\tilde{u}_{s} \|_{k+\alpha-1} +
\max_{i=1,\dots,N} \|\tilde{u}_{s}^i \|_{k+\alpha-1}
\Bigr)
\\
&\hspace{30pt} + \int_{t}^T
\Bigl(
\essup_{\omega \in \Omega}
\| \tilde{u}_{s} \|_{k+\alpha}
 +
\frac{  
 \essup_{\omega \in \Omega} \max_{i=1,\dots,N}
 \sup_{r \in [s,T]}
\| \tilde{u}_{r}^i \|_{k+\alpha}}{\sqrt{s-t}} \Bigr) ds \biggr].
\end{split}
\end{equation}
Now, notice that the last term in the above right-hand side may be rewritten
\begin{equation*}
\begin{split}
&\int_{t}^T
\frac{  
 \essup_{\omega \in \Omega} \max_{i=1,\dots,N}
 \sup_{r \in [s,T]}
\| \tilde{u}_{r}^i \|_{k+\alpha}}{\sqrt{s-t}}
 ds
 \\
&\hspace{15pt} = \int_{0}^{T-t} 
 \frac{  
 \essup_{\omega \in \Omega} \max_{i=1,\dots,N}
 \sup_{r \in [t+s,T]}
\| \tilde{u}_{r}^i \|_{k+\alpha}}{\sqrt{s}}
ds,
\end{split}
\end{equation*}
which is clearly non-increasing in $t$. 
Returning to 
\eqref{eq:partie2:proof:stability:s-t:root:bis}, this permits 
to apply Gronwall's lemma, from which we get:
\begin{equation}
\label{eq:partie2:proof:stability:s-t:root}
\begin{split}
\essup_{\omega \in \Omega}
\| \tilde{u}_{t} \|_{k+\alpha}
&\leq C_{k}
\biggl[ 1 + \essup_{\omega \in \Omega} \sup_{s \in [0,T]} \Phi_{k} \Bigl( \|\tilde{u}_{s} \|_{k+\alpha-1} +
\max_{i=1,\dots,N} \|\tilde{u}_{s}^i \|_{k+\alpha-1}
\Bigr)
\\
&\hspace{30pt} + \int_{t}^T
\frac{   
 \essup_{\omega \in \Omega} \max_{i=1,\dots,N}
 \sup_{r \in [s,T]}
\| \tilde{u}_{r}^i \|_{k+\alpha}}{\sqrt{s-t}} ds \biggr].
\end{split}
\end{equation}
In particular, if, for any $s \in [0,T]$
and any $i \in \{1,\dots,N\}$, 
$\essup_{\omega \in \Omega} 
\| \tilde{u}_{s}^i \|_{k+\alpha} \leq \Lambda_{k} \exp(\lambda_{k}(T-s))$, 
then, for all $t \in [0,T]$, 
\begin{equation}
\label{eq:partie2:proof:stability:s-t:root:2}
\begin{split}
\int_{t}^T  \frac{ 
 \essup_{\omega \in \Omega} \max_{i=1,\dots,N}
\sup_{r \in [s,T]}
\| \tilde{u}_{r}^i \|_{k+\alpha}}{\sqrt{s-t}} ds 
&\leq \Lambda_{k}
\int_{t}^T \frac{\exp(\lambda_{k}(T-s))}{\sqrt{s-t}} ds
\\
&\leq 
\Lambda_{k} \exp (\lambda_{k}(T-t)) 
\int_{0}^{T-t} \frac{\exp(- \lambda_{k} s)}{\sqrt{s}} ds,
\end{split}
\end{equation}
the passage from the first to the second line following from a change of variable. 
Write now
\begin{equation*}
\begin{split}
&\Lambda_{k} \exp (\lambda_{k}(T-t)) 
\int_{0}^{T-t} \frac{\exp(- \lambda_{k} s)}{\sqrt{s}} ds
\\
&= \Lambda_{k} \exp (\lambda_{k}(T-t)) 
\int_{0}^{\infty} \frac{\exp(- \lambda_{k} s)}{\sqrt{s}} ds
- \Lambda_{k} 
\int_{T-t}^{+\infty} \frac{\exp(- \lambda_{k} (s-(T-t))}{\sqrt{s}} ds
\\
&= \Lambda_{k} \exp (\lambda_{k}(T-t)) 
\int_{0}^{\infty} \frac{\exp(- \lambda_{k} s)}{\sqrt{s}} ds
-
\Lambda_{k}  
\int_{0}^{+\infty} \frac{\exp(- \lambda_{k} s)}{\sqrt{T-t+s}} ds
\\
&\leq
\Lambda_{k} \exp (\lambda_{k}(T-t)) 
\int_{0}^{\infty} \frac{\exp(- \lambda_{k} s)}{\sqrt{s}} ds
-
\Lambda_{k}  
\int_{0}^{+\infty} \frac{\exp(- \lambda_{k} s)}{\sqrt{T+s}} ds,
\end{split}
\end{equation*}
and deduce, 
from \eqref{eq:partie2:proof:stability:s-t:root}
and \eqref{eq:partie2:proof:stability:s-t:root:2}, that 
we can find two constants $\gamma_{1}(\lambda_{k})$
and $\gamma_{2}(\lambda_{k})$ that tend to $0$ as $\lambda_{k}$
tend to $+\infty$ such that 
\begin{equation*}
\begin{split}
&\essup_{\omega \in \Omega}
\| \tilde{u}_{t} \|_{k+\alpha}
\\
&\leq C_{k}
\biggl[ 1 + \essup_{\omega \in \Omega} \essup_{s \in [0,T]} \Phi_{k} \bigl( \|\tilde{u}_{s} \|_{k+\alpha-1} +
\max_{i=1,\dots,N}
\|\tilde{u}_{s}^i \|_{k+\alpha-1} 
 \bigr)
- \Lambda_{k} \gamma_{1}(\lambda_{k})
\\
&\hspace{15pt}
+  \gamma_{2}(\lambda_{k})
\Lambda_{k} \exp \bigl( \lambda_{k} (T-t) \bigr) \biggr].
\end{split}
\end{equation*}
Choosing $\lambda_{k}$ first such that $\gamma_{2}(\lambda_{k})
C_{k} \leq 1$
and then $\Lambda_{k}$ such that 
\begin{equation*}
1 + \essup_{\omega \in \Omega} \essup_{s \in [0,T]} \Phi_{k} \bigl( \|\tilde{u}_{s} \|_{k+\alpha-1} +
\max_{i=1,\dots,N}
\|\tilde{u}_{s}^i \|_{k+\alpha-1}  \bigr) \leq 
\gamma_{1}(\lambda_{k}) \Lambda_{k},
\end{equation*}
we finally get that 
\begin{equation*}
\begin{split}
\essup_{\omega \in \Omega}
\| \tilde{u}_{t} \|_{k+\alpha}
&\leq 
\Lambda_{k} \exp \bigl( \lambda_{k} (T-t) \bigr).
\end{split}
\end{equation*}
The proof is easily completed by induction.
\end{proof}

\subsubsection{Case $(\vartheta,\varpi)=(1,0)$}
Using a standard contraction argument, we are going to prove:
\begin{Proposition}
\label{prop:partie2:vartheta1:varpi0}
Given some adapted inputs $(b_{t})_{t \in [0,T]}$, $(f_{t})_{t \in [0,T]}$
and $g_{T}$ satisfying 
\begin{equation*}
\essup_{\omega \in \Omega} \sup_{t \in [0,T]} \|b_{t} \|_{1},
\
\essup_{\omega \in \Omega}
\sup_{t \in [0,T]} \|f_{t} \|_{n+\alpha-1}, \
\essup_{\omega \in \Omega} \|g_{T}\|_{n+\alpha} < \infty,
\end{equation*}
the system \eqref{eq:se:FP/HJB:vartheta}, with $\vartheta=1$ and $\varpi=0$, admits a 
unique adapted
solution $(\tilde{m}_{t},\tilde{u}_{t})_{t \in [0,T]}$, with paths in 
${\mathcal C}^0([0,T],{\mathcal P}(\T^d)) \times 
{\mathcal C}^0([0,T],\cC^n(\T^d))$. 
It satisfies 
\begin{equation*}
\essup_{\omega \in \Omega}
\sup_{t \in [0,T]} \|\tilde{u}_{t} \|_{n+\alpha} < \infty.
\end{equation*}
\end{Proposition}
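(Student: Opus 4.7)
The key structural observation is that when $\varpi=0$, the coupling terms $\tilde F_{t}(\cdot,m_{t})$ and $\tilde G(\cdot,m_{T})$ vanish from the backward equation, so the system effectively \emph{decouples}: the backward HJB equation for $\tilde u$ no longer depends on $\tilde m$, and can be solved in isolation. Once $\tilde u$ is in hand, the forward equation becomes a standard (random, linear) Fokker-Planck equation whose drift is the bounded vector field $D_{p}\tilde H_{t}(\cdot,D\tilde u_{t})+b_{t}$. I will therefore proceed in two steps, first constructing $\tilde u$, then $\tilde m$.

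For the first step, I plan a fixed-point argument based on the representation machinery already developed for the case $\vartheta=\varpi=0$. Given an input $\tilde u\in{\mathcal C}^{0}([0,T],{\mathcal C}^{n+\alpha}(\T^d))$, define $\Psi(\tilde u)$ as the unique solution $\tilde u'$ (with associated martingale $\tilde M'$) to the \emph{linear} backward equation
\begin{equation*}
d_{t}\tilde u_{t}'=\bigl\{-\Delta\tilde u_{t}'+\tilde H_{t}(\cdot,D\tilde u_{t})+f_{t}\bigr\}dt+d\tilde M_{t}',\qquad\tilde u_{T}'=g_{T},
\end{equation*}
whose existence, adaptedness and regularity follow from Lemma \ref{lem:cas:vartheta=0} applied with source $f_{t}+\tilde H_{t}(\cdot,D\tilde u_{t})$ (using {\bf (HF1(${\boldsymbol n}$-1))} and the global Lipschitz property of $H$ to control the new source in $\cC^{n+\alpha-1}$ via Fa\`a di Bruno). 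A fixed point of $\Psi$ is exactly a solution of the backward equation. To obtain one, I will restrict $\Psi$ to the cylinder ${\mathcal B}$ provided by Lemma \ref{lem:super:reg} (applied with $N=0$, i.e.\ no sum over $\tilde u^{i}$), which is stable under $\Psi$ by construction. Contraction will be established in the weaker norm $\essup_{\omega}\sup_{t}\|\tilde u_{t}\|_{1+\alpha}$: for two inputs $\tilde u,\tilde u'$, the difference $\tilde u'-\tilde u$ solves a linear backward equation with source $\tilde H_{t}(\cdot,D\tilde u_{t})-\tilde H_{t}(\cdot,D\tilde u_{t}')$, which is bounded in $\cC^{\alpha}$ by $C\|\tilde u_{t}-\tilde u_{t}'\|_{1+\alpha}$ thanks to the global Lipschitz property of $H$ and its derivatives. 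Feeding this into Lemma \ref{lem:reg} (with $k=1$) gives
\begin{equation*}
\essup_{\omega}\|\Psi(\tilde u)_{t}-\Psi(\tilde u')_{t}\|_{1+\alpha}\leq C\int_{t}^{T}\frac{\essup_{\omega}\|\tilde u_{s}-\tilde u_{s}'\|_{1+\alpha}}{\sqrt{s-t}}\,ds,
\end{equation*}
from which a standard weighted-norm argument (or iteration on small subintervals followed by concatenation, using the unique solvability of the terminal-value problem on each piece) yields contraction and hence existence and uniqueness of $\tilde u$ in ${\mathcal B}$. The a priori bound from Lemma \ref{lem:super:reg} promotes this to the full regularity $\sup_{t}\|\tilde u_{t}\|_{n+\alpha}\in L^{\infty}(\Omega)$.

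For the second step, with $\tilde u$ now fixed, the drift $\tilde b_{t}:=D_{p}\tilde H_{t}(\cdot,D\tilde u_{t})+b_{t}$ is an adapted, pathwise continuous, uniformly bounded and uniformly Lipschitz-in-$x$ vector field. The forward equation is then the Fokker-Planck equation associated, pathwise, with the SDE
\begin{equation*}
dZ_{t}=\tilde b_{t}(Z_{t})\,dt+\sqrt{2}\,dB_{t},\qquad Z_{0}\sim m_{0},
\end{equation*}
where $B$ is independent of $\tilde b$; this SDE is uniquely weakly solvable with $Z_{0}$ independent of $(W,B)$ (and adapted to the appropriate enlarged filtration), and the conditional law $\tilde m_{t}:={\mathcal L}(Z_{t}\mid{\mathcal F}_{t})$ provides an $({\mathcal F}_{t})$-adapted process in ${\mathcal C}^{0}([0,T],{\mathcal P}(\T^d))$ solving the forward equation in the sense of distributions. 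Uniqueness at the level of the PDE follows by the standard duality argument for linear Fokker-Planck equations with bounded Lipschitz drifts.

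The main technical subtlety, and the one I expect to be the most delicate bookkeeping, is the contraction step: one has to verify that the linearized backward estimate from Lemma \ref{lem:reg} can indeed be iterated or dressed with a suitable time weight to yield a strict contraction on the whole interval $[0,T]$ in a norm compatible with the cylinder ${\mathcal B}$. Once this is in place, existence, uniqueness, and the claimed bound $\essup_{\omega}\sup_{t}\|\tilde u_{t}\|_{n+\alpha}<\infty$ all follow directly.
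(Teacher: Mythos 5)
Your proposal follows essentially the same route as the paper's proof: decouple the backward HJB equation from the forward Fokker--Planck equation (possible precisely because $\varpi=0$), build the backward solution by a Picard iteration on the cylinder ${\mathcal B}$ supplied by Lemma \ref{lem:super:reg}, derive the contraction estimate from the $\vartheta=\varpi=0$ representation via the $1/\sqrt{s-t}$-weighted Volterra inequality and an exponentially weighted time norm, and then solve the forward equation pathwise with the constructed drift. The only real deviations are cosmetic: you contract in the weaker $\|\cdot\|_{1+\alpha}$ norm and lift to full regularity through the a priori bound on ${\mathcal B}$ (plus interpolation for path continuity in $\cC^n$), whereas the paper contracts directly in the weighted time-integral of $\|\cdot\|_{n+\alpha}$ and recovers uniform-in-time convergence from the cylinder bound; and you describe the forward step probabilistically through the conditional law of an SDE, whereas the paper simply cites Lemma \ref{lem:cas:vartheta=0}. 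Also note the small slip that ``$N=0$'' in the invocation of Lemma \ref{lem:super:reg} should be ``$N=1$, $a^1=1$, $b^1=0$'', since the source $\tilde H_{t}(\cdot,D\tilde u_{t})$ from the previous iterate counts as one of the $\tilde u^{i}$; with this reading your stability claim matches the paper's.
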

\begin{proof}
Actually, the only difficulty is to solve the backward equation. 
Once the backward equation has been solved, the forward equation may
be solved by means of Lemma 
\ref{lem:cas:vartheta=0}.  

In order to solve the backward equation,
we make use of the Picard fixed point theorem. 
Given an $({\mathcal F}_{t})_{t \in [0,T]}$ adapted process
$(\tilde{u}_{t})_{t \in [0,T]}$, with paths in 
${\mathcal C}^0([0,T],\cC^{n}(\T^d))$
and
satisfying $\essup_{\omega \in \Omega}
\sup_{t \in [0,T]} \| \tilde{u}_{t} \|_{n+\alpha}
< \infty$, we 
denote by 
$(\tilde{u}_{t}')_{t \in [0,T]}$
the solution to the backward equation in \eqref{eq:se:FP/HJB:vartheta}, with $\vartheta=\varpi=0$
and with $(f_{t})_{t \in [0,T]}$ replaced 
by 
$(f_{t} + H_{t}(\cdot,D \tilde{u}_{t}))_{t \in [0,T]}$. 
By Lemma \ref{lem:cas:vartheta=0}, the process
$(\tilde{u}_{t}')_{t \in [0,T]}$ belongs to 
$ {\mathcal C}^0([0,T],\cC^n(\T^d))$
and satisfies 
$\essup_{\omega \in \Omega}
\sup_{t \in [0,T]} \| \tilde{u}_{t}' \|_{n+\alpha}
< \infty$. 
This defines a mapping (with obvious domain and codomain)
\begin{equation*}
\Psi : 
(\tilde{u}_{t})_{t \in [0,T]}
\mapsto 
(\tilde{u}_{t}')_{t \in [0,T]}.
\end{equation*}
The point is to exhibit a norm for which it is a contraction. 

Given two adapted datas 
$(\tilde{u}_{t}^i)_{t \in [0,T]}$, $i=1,2$,
with paths 
in ${\mathcal C}^0([0,T],\cC^n(\T^d))$
and with 
$\essup_{\omega \in \Omega}
\sup_{t \in [0,T]} \| \tilde{u}_{t}^i \|_{n+\alpha}
< \infty$, $i=1,2$, 
we call 
$(\tilde{u}_{t}^{\prime,i})_{t \in [0,T]}$, $i=1,2$, 
the images by $\Psi$. 
By Lemma \ref{lem:super:reg}
(with $N=1$, $a^1=1$ and $b^1=0$), we can find constants 
$(\lambda_{k},\Lambda_{k})_{k=1,\dots,n}$ such that the cylinder 
\begin{equation*}
{\mathcal B}
= \Bigl\{ 
w \in {\mathcal C}^0([0,T],\cC^n(\T^d)) : 
\forall k \in \{0,\dots,n\}, \ \forall 
t \in [0,T], \ \| w_{t} \|_{k+\alpha}
\leq 
 \Lambda_{k} \exp \bigl( \lambda_{k} (T-t) \bigr) \Bigr\},
\end{equation*}
is stable by $\Psi$. 
We shall prove that $\Psi$ is a contraction on ${\mathcal B}$. 

We let $\tilde{w}_{t} = \tilde{u}_{t}^1 - \tilde{u}_{t}^2$
and $\tilde{w}_{t}' = \tilde{u}_{t}^{\prime,1} - \tilde{u}_{t}^{\prime,2}$, 
for $t \in [0,T]$. We notice that 
\begin{equation*}
-d \tilde w_{t}' = \bigl[  
\Delta \tilde{w}_{t}'
-  \langle \tilde V_{t},D \tilde{w}_{t} \rangle
\bigr] dt - d \tilde{N}_{t},
\end{equation*}
with the terminal boundary condition 
$\tilde{w}_{T}' = 0$. 
Above,  $(\tilde{N}_{t})_{t \in [0, T]}$ is a process with paths in 
$\cC^0([0,T],{\mathcal C}^{n-2}(\T^d))$ and, for any $x \in \T^d$, 
$(\tilde{N}_{t}(x))_{t \in [0,T]}$ is a martingale. 
Moreover, $(\tilde V_{t})_{t \in [0,T]}$ is given by 
\begin{equation*}
\begin{split}
&\tilde V_{t}(x) = \int_0^1 D_p \tilde H_{t}\bigl (x, r D \tilde u^1(x) +(1-r)D \tilde u^2(x) \bigr) dr.
 \end{split}
\end{equation*}
We can find a constant 
$C$ such that, for any $\tilde{u}^1,\tilde{u}^2 \in {\mathcal B}$, 
\begin{equation*}
\sup_{t \in [0,T]}
\|
\tilde{V}_{t}
\|_{n+\alpha-1}
\leq C. 
\end{equation*}
Therefore, for any $\tilde{u}^1,\tilde{u}^2 \in {\mathcal B}$,
for any $k \in \{0,\dots,n-1\}$
\begin{equation*}
\forall t \in [0,T], \quad 
\|
\langle \tilde{V}_{t}, D \tilde{w}_{t} \rangle
\|_{k+\alpha}
\leq C \| \tilde{w}_{t} \|_{k+1+\alpha}, \quad \tilde{w} := \tilde{u}^1 - \tilde{u}^2.
\end{equation*}
Now, 
following 
\eqref{eq:11}, we deduce that, for any 
$k \in \{1,\dots,n\}$,
\begin{equation}
\label{eq:partie2:proof:existence:vartheta=1:varpi=0}
\begin{split}
&\essup_{\omega \in \Omega}  \| \tilde{w}_{t}' \|_{k+\alpha}
\leq C \int_{t}^T 
\frac{\essup_{\omega \in \Omega}
  \| \tilde{w}_{s} \|_{k+\alpha}}{\sqrt{s-t}}
 ds,
\end{split}
\end{equation}
so that, for any $\mu >0$,
\begin{equation*}
\begin{split}
\int_{0}^T 
\essup_{\omega \in \Omega}  \| \tilde{w}_{t}' \|_{k+\alpha}
\exp( \mu t) dt
&\leq C \int_{0}^T 
\essup_{\omega \in \Omega}  \| \tilde{w}_{s} \|_{k+\alpha}
\biggl( \int_{0}^s {\frac{\exp( \mu t)}{\sqrt{s-t}}} dt \biggr) ds
\\
&\leq \biggl( C \int_{0}^{+\infty}
\frac{\exp(- \mu s)}{\sqrt{s}} ds
\biggr) 
\int_{0}^T 
\essup_{\omega \in \Omega}  \| \tilde{w}_{s} \|_{k+\alpha}
\exp( \mu s) ds.
\end{split}
\end{equation*}
Choosing $\mu$ large enough,
we easily deduce that $\Psi$ has at most 
one fixed point in 
${\mathcal B}$. Moreover, letting 
$\tilde{u}^0 \equiv 0$
and defining by induction 
$\tilde{u}^{i+1} = \Psi(\tilde{u}^i)$, $i \in {\mathbb N}$, 
we easily deduce that, 
for $\mu$ large enough,
for any $i,j \in {\mathbb N}$,
\begin{equation*}
\int_{0}^T 
\essup_{\omega \in \Omega}  \| \tilde{u}^{i+j}_{t} - \tilde{u}^i_{t} \|_{n+\alpha}
\exp( \mu t) dt \leq \frac{C}{2^i},
\end{equation*}
so that (modifying the value of $C$)
\begin{equation*}
\int_{0}^T 
\essup_{\omega \in \Omega}  \| \tilde{u}^{i+j}_{t} - \tilde{u}^i_{t} \|_{n+\alpha}
dt \leq \frac{C}{2^i}.
\end{equation*}
Therefore, by definition of ${\mathcal B}$ and by
\eqref{eq:partie2:proof:existence:vartheta=1:varpi=0}, we deduce that, 
for any $\varepsilon >0$,
\begin{equation*}
\forall i \in {\mathbb N}, \quad \sup_{j \in {\mathbb N}}
\essup_{\omega \in \Omega} \sup_{t \in [0,T]}  \| \tilde{u}^{i+j}_{t} - \tilde{u}^i_{t} \|_{n+\alpha}
\leq C \sqrt{\varepsilon} + \frac{C}{2^i \sqrt{\varepsilon}}, 
\end{equation*}
from which we deduce that the sequence 
$(\tilde{u}^i)_{i \in {\mathbb N}}$
converges in $L^{\infty}(\Omega,{\mathcal C}^0([0,T],{\mathcal C}^n(\T^d)))$. 
The limit is in ${\mathcal B}$ and is a fixed point of $\Psi$. 

Actually, by Lemma \ref{lem:super:reg}
(with $N=1$ and $a^1=b^1=0$), any fixed point must be in ${\mathcal B}$, so that 
$\Psi$ has a unique fixed point in the whole space. 
\end{proof}

\subsubsection{Stability estimates}

\begin{Lemma}
\label{lem:partie:2:stability}
Consider two sets of inputs 
$(b,f,g)$ and $(b',f',g')$ to 
\eqref{eq:se:FP/HJB:vartheta}, when driven by 
two parameters $\vartheta,\varpi \in [0,1]$. 
Assume that 
$(\tilde{m},\tilde{u})$ and $(\tilde{m}',\tilde{u}')$
are associated solutions (with adapted paths that take 
values in ${\mathcal C}^0([0,T],{\mathcal P}(\T^d))
\times {\mathcal C}^0([0,T],\cC^n(\T^d))$) that 
satisfy the conclusions of Lemma 
\ref{lem:super:reg} with respect to some 
vectors of constants 
${\boldsymbol \Lambda}=(\Lambda_{1},\dots,\Lambda_{n})$
and ${\boldsymbol \lambda}=(\lambda_{1},\dots,\lambda_{n})$. 
Then, we can find a constant $C \geq 1$, depending on 
the inputs and the outputs through ${\boldsymbol \Lambda}$
and ${\boldsymbol \lambda}$ only, such that, provided that
\begin{equation*}
\essup_{\omega \in \Omega} \sup_{t \in [0,T]}
\| b_{t} \|_{1} \leq \frac{1}{C}
\end{equation*}
it holds that 
\begin{equation*}
\begin{split}
&{\mathbb E}
\bigl[ \sup_{t \in [0,T]} \| \tilde{u}_{t} - \tilde{u}_{t}' \|_{n+\alpha}^2
+{\mathbf d}_{1}^2(\tilde m_{t},\tilde m_{t}')
\bigr] 
\\
&\hspace{15pt} \leq C
\Bigl\{ 
{\mathbf d}_{1}^2(m_{0},m_{0}')
+  
{\mathbb E} \bigl[ \sup_{t \in [0,T]}
\| b_{t} - b_{t}' \|_{0}^2
+  \sup_{t \in [0,T]}
\| f_{t} - f_{t}' \|_{n+\alpha-1}^2
+  
\| g_{T} - g_{T}' \|_{n+\alpha}^2
\bigr] \Bigr\}.
\end{split}
\end{equation*}
\end{Lemma}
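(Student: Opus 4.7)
The plan is to estimate separately (i) the backward component $\delta \tilde u_{t} := \tilde u_{t} - \tilde u_{t}'$ via the linearized HJB equation and (ii) the forward component $\delta \tilde m_{t} := \tilde m_{t} - \tilde m_{t}'$ in Wasserstein distance via an SDE coupling, and then close the estimates in a way that uses the smallness of $\|b\|_{1}$. Write first the backward equation satisfied by $\delta \tilde u$:
\begin{equation*}
d_{t} \delta \tilde u_{t} = \bigl\{ - \Delta \delta \tilde u_{t} + \Phi_{t} \bigr\} dt + d (\tilde M_{t}-\tilde M_{t}'),
\end{equation*}
with source
\begin{equation*}
\Phi_{t} = \vartheta\bigl[ \tilde H_{t}(\cdot,D\tilde u_{t}) - \tilde H_{t}(\cdot,D\tilde u_{t}')\bigr] - \varpi \bigl[\tilde F_{t}(\cdot,\tilde m_{t})-\tilde F_{t}(\cdot,\tilde m_{t}')\bigr] + (f_{t}-f_{t}'),
\end{equation*}
and terminal datum $\delta \tilde u_{T} = \varpi [\tilde G(\cdot,\tilde m_{T})-\tilde G(\cdot,\tilde m_{T}')] + (g_{T}-g_{T}')$. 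The uniform bound on $\| \tilde u_{t}\|_{n+\alpha}, \| \tilde u_{t}'\|_{n+\alpha}$ furnished by the assumption that both solutions lie in the cylinder $\mathcal B$ of Lemma \ref{lem:super:reg}, combined with {\bf (HF1(${\boldsymbol n}$-1))} and with Fa\`a di Bruno, yields the pointwise control
\begin{equation*}
\|\Phi_{t}\|_{n+\alpha-1} \leq C \bigl\{ \|\delta \tilde u_{t}\|_{n+\alpha} + \mathbf d_{1}(\tilde m_{t},\tilde m_{t}') + \|f_{t}-f_{t}'\|_{n+\alpha-1}\bigr\}.
\end{equation*}

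Applying Lemma \ref{lem:cas:vartheta=0} (in its conditional $L^2$ form, Lemma \ref{lem:reg:cond}) to the linear equation satisfied by $\delta \tilde u$, and then performing a Volterra-type Gronwall argument on the resulting weakly-singular integral inequality exactly as in the proof of Lemma \ref{lem:super:reg} (in which the $1/\sqrt{s-t}$ factor generated by the heat kernel absorbs the term in $\|\delta \tilde u_{s}\|_{n+\alpha}$ coming from $\Phi_{s}$), one reaches the backward bound
\begin{equation*}
\mathbb E\bigl[ \sup_{t} \|\delta \tilde u_{t}\|_{n+\alpha}^2 \bigr] \leq C\, \mathbb E\Bigl[ \|g_{T}-g_{T}'\|_{n+\alpha}^2 + \sup_{t} \|f_{t}-f_{t}'\|_{n+\alpha-1}^2 + \sup_{t}  \mathbf d_{1}^2(\tilde m_{t},\tilde m_{t}')\Bigr].
\end{equation*}

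For the forward part, represent $\tilde m_{t}$ (resp.\ $\tilde m_{t}'$) as the conditional law, given $\mathcal F_{t}$, of a diffusion $X_{t}$ (resp.\ $X_{t}'$) driven by the same idiosyncratic Brownian motion $B$ and solving
\begin{equation*}
dX_{t} = - \bigl[\vartheta D_{p}\tilde H_{t}(X_{t},D\tilde u_{t}(X_{t})) + b_{t}(X_{t})\bigr]\,dt + \sqrt 2\, dB_{t},
\end{equation*}
and analogously for $X'$; initialize $(X_{0},X_{0}')$ by an optimal Wasserstein coupling so that $\mathbb E |X_{0}-X_{0}'| = \mathbf d_{1}(m_{0},m_{0}')$. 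Using that $D_{p}\tilde H$ is Lipschitz in both variables, that $\|\tilde u\|_{2},\|\tilde u'\|_{2}$ are bounded by Lemma \ref{lem:super:reg}, and that $\|b_{t}\|_{1} \leq 1/C$ is small (so that the Lipschitz constant of the full drift is controlled independently of $b$), a standard Gronwall argument on $\mathbb E[|X_{t}-X_{t}'|\mid \mathcal F_{t}]$ produces
\begin{equation*}
\sup_{t}\mathbf d_{1}^2(\tilde m_{t},\tilde m_{t}') \leq C \Bigl\{ \mathbf d_{1}^2(m_{0},m_{0}') + \sup_{t} \|b_{t}-b_{t}'\|_{0}^2 + \sup_{t} \|\delta \tilde u_{t}\|_{n+\alpha}^2 \Bigr\}.
\end{equation*}

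It remains to combine the two estimates. Plugging the forward bound into the backward one produces, on the right-hand side, a term $C\sup_{t}\|\delta \tilde u_{t}\|_{n+\alpha}^2$ whose coefficient involves $\|b\|_{1}$ through the Gronwall constant above; this is precisely where the smallness assumption $\|b\|_{1} \leq 1/C$ is used, as it ensures this coefficient is strictly less than $1$ and can therefore be absorbed into the left-hand side. One then obtains the announced bound on $\mathbb E[\sup_{t} \|\delta \tilde u_{t}\|_{n+\alpha}^2]$, and re-inserting into the forward estimate yields the corresponding bound on $\mathbb E[\sup_{t} \mathbf d_{1}^2(\tilde m_{t},\tilde m_{t}')]$. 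The main technical obstacle in this plan is carrying out the Volterra Gronwall closure in the backward step in such a way that the final constant depends only on $(\boldsymbol \Lambda, \boldsymbol \lambda, T, n, F, G, H)$ and on nothing else; this delicate point is exactly what forces us to use the heat-kernel representation with a half-derivative gain rather than a naive Schauder estimate.
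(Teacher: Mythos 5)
There is a genuine gap in the proposed closure. You plug the forward bound
$\sup_t \mathbf d_1^2(\tilde m_t,\tilde m_t') \le C\{\mathbf d_1^2(m_0,m_0') + \sup_t\|b_t-b_t'\|_0^2 + \sup_t\|\delta\tilde u_t\|_{n+\alpha}^2\}$
into the backward bound
$\mathbb E[\sup_t\|\delta\tilde u_t\|_{n+\alpha}^2] \le C\,\mathbb E[\cdots + \sup_t\mathbf d_1^2(\tilde m_t,\tilde m_t')]$,
and claim the resulting $C^2\,\mathbb E[\sup_t\|\delta\tilde u_t\|^2]$ on the right can be absorbed because its coefficient ``involves $\|b\|_1$'' and is therefore $<1$ once $\|b\|_1$ is small. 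This is not so: in the SDE coupling argument, the coefficient of $\int_0^T \int |D\tilde u_s-D\tilde u_s'|\,d\tilde m_s'\,ds$ is governed by the Lipschitz constant of $D_pH$ in $p$ and by $e^{C_1 T}$, with $C_1$ involving $\|b\|_1$ \emph{additively}. As $\|b\|_1\to 0$ this coefficient tends to a fixed positive number, not to zero; so $C^2$ cannot be made $<1$ by the hypothesis $\|b\|_1\le 1/C$, and the loop does not close. (If it did close this way, you would get the stability estimate for any $T$ without any use of the monotonicity of $F$ and $G$, which cannot be right, as the forward-backward system genuinely may fail to be stable over long horizons without monotonicity.)

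What is missing is the Lasry--Lions monotonicity/duality argument, which the paper performs first. Applying the stochastic integration-by-parts formula (Lemma~\ref{lem:partie2:ito:special}) to $\int(\tilde u_t'-\tilde u_t)\,d(\tilde m_t'-\tilde m_t)$ and using the monotonicity of $F$, $G$ and the coercivity of $D^2_{pp}H$, one obtains the independent ``energy'' estimate
\begin{equation*}
\vartheta\,c\,\mathbb E\int_0^T \int_{\T^d} |D\tilde u_t'-D\tilde u_t|^2\,d(\tilde m_t+\tilde m_t')\,dt \le \text{(data terms)} + \mathbb E\int_0^T \|\langle b_t, D\tilde u_t'-D\tilde u_t\rangle\|_1\,\mathbf d_1(\tilde m_t,\tilde m_t')\,dt,
\end{equation*}
and it is the cross term on the right, of order $\|b\|_1\cdot\|\tilde u-\tilde u'\|_2\cdot\mathbf d_1$, that produces (after Young's inequality) a coefficient proportional to $\|b\|_1$ in front of $\|\tilde u-\tilde u'\|_2^2$ in \eqref{eq:partie2:inegalite:principale:stabilite}; this is where the smallness of $\|b\|_1$ is actually used. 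The forward coupling estimate in the paper keeps the right-hand side in the form $\int_0^T \int |D\tilde u_s'-D\tilde u_s|\,d(\tilde m_s+\tilde m_s')\,ds$ (not the cruder $\sup\|\delta\tilde u\|_{n+\alpha}$), precisely so that Cauchy--Schwarz can plug in the square-integrated bound from the monotonicity step. Your backward estimate and your SDE coupling are both fine as building blocks, but without the monotonicity inequality the closure is circular. You need to insert that step before trying to combine the two.
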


\begin{Remark}
The precise knowledge of ${\boldsymbol \Lambda}$
and ${\boldsymbol \lambda}$ is crucial in order to 
make use of the convexity assumption of the Hamiltonian.  
\end{Remark}

The proof relies on the following 
stochastic integration by parts formula:
\begin{Lemma}
\label{lem:partie2:ito:special}
Let $(m_{t})_{t \in [0,T]}$ be an adapted process 
with paths in ${\mathcal C}^0([0,T],{\mathcal P}(\T^d))$
such that, 
with $n$ as in the statement of Theorem 
\ref{thm:partie:2:existence:uniqueness},
for any smooth test function $\varphi \in \cC^n(\T^d)$,
$\P$ almost surely,
\begin{equation*}
d_{t} \biggl[ 
\int_{\T^d} \varphi(x) dm_{t}(x) \biggr]
= \biggl\{ 
\int_{\T^d} 
\bigl[ 
\Delta \varphi(x) - \langle \beta_{t}(x), D\varphi(x) \rangle \bigr] dm_{t}(x)
\biggr\} dt, 
\quad t \in [0,T],
\end{equation*}
for some adapted process $(\beta_{t})_{0 \leq t \leq T}$
with paths in ${\mathcal C}^0([0,T],[\cC^0(\T^d)]^d)$. 
(Notice, by separability of 
$\cC^n(\T^d)$, that the above holds true, $\P$ almost surely, 
for any smooth test function $\varphi \in \cC^n(\T^d)$.)

Let $(u_{t})_{t \in [0,T]}$ be an adapted process 
with paths in ${\mathcal C}^0([0,T],\cC^n(\T^d))$
such that, for any $x \in \T^d$,
\begin{equation*}
d_{t} u_{t}(x) = \gamma_{t}(x) dt + dM_{t}(x), 
 \quad t \in [0,T],
\end{equation*}
where $(\gamma_{t})_{t \in [0,T]}$ and $(M_{t})_{t \in [0,T]}$
are adapted 
processes with paths in ${\mathcal C}^0([0,T],\cC^0(\T^d))$
and, for any $x \in \T^d$, $(M_{t}(x))_{t \in [0,T]}$
is a martingale.  

Assume that
\begin{equation}
\label{eq:partie:2:assumption:ito:special}
\essup_{\omega \in \Omega} \sup_{0 \leq t \leq T}
\bigl( \| {u}_{t} \|_{n} + \| \beta_{t} \|_{0} + \| \gamma_{t} \|_{0}
+  \| M_{t} \|_{0} \bigr) < \infty. 
\end{equation}
Then, the process 
\begin{equation*}
\biggl( 
\int_{\T^d} u_{t}(x) dm_{t}(x)
-
\int_{0}^t \biggl\{ 
\int_{\T^d} 
\bigl[ 
\gamma_{s}(x) + \Delta u_{s}(x) - \langle \beta_{s}(x), D u_{s}(x) \rangle \bigr] dm_{s}(x)
\biggr\} ds
\biggr)_{t \in [0,T]}
\end{equation*}
is a continuous martingale. 
\end{Lemma}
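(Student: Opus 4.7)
The goal is to establish an It\^o--Wentzell type integration by parts formula. The key structural observation is that $(m_{t})_{t \in [0,T]}$ is of finite variation in time (when tested against any smooth function, its pairing has absolutely continuous paths), whereas $(u_{t})_{t \in [0,T]}$ carries a martingale part. Consequently, the mixed bracket between $u$ and $m$ ought to vanish, which is precisely the content of the proposed formula. My plan is to derive the identity via time discretization, exploiting the pathwise regularity \eqref{eq:partie:2:assumption:ito:special} to control the remainders.

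First, I would fix a partition $\pi : 0 = t_{0} < t_{1} < \cdots < t_{N} = t$ and telescope
\begin{equation*}
\int_{\T^d} u_{t}(x) dm_{t}(x) - \int_{\T^d} u_{0}(x) dm_{0}(x) = A^\pi + B^\pi,
\end{equation*}
where $A^\pi = \sum_{i} \int_{\T^d} u_{t_{i}}(x) d(m_{t_{i+1}}-m_{t_{i}})(x)$ and $B^\pi = \sum_{i} \int_{\T^d} (u_{t_{i+1}}-u_{t_{i}})(x) dm_{t_{i+1}}(x)$. For each fixed $i$, the realization of $u_{t_{i}}$ is in $\cC^n(\T^d)$ with $n \geq 2$; by separability of $\cC^n(\T^d)$, the equation satisfied by $m$ can be tested simultaneously, outside a single null set, against any smooth function, whence applying it to $u_{t_{i}}$ gives $A^\pi = \sum_{i} \int_{t_{i}}^{t_{i+1}} \int_{\T^d} [\Delta u_{t_{i}}(x) - \langle \beta_{s}(x), D u_{t_{i}}(x) \rangle] dm_{s}(x) ds$. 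Thanks to the pathwise continuity of $s \mapsto u_{s}$ in $\cC^n(\T^d)$ and the uniform bounds in \eqref{eq:partie:2:assumption:ito:special}, this converges, as $|\pi| \to 0$ along a refining sequence, to $\int_{0}^t \int_{\T^d} [\Delta u_{s}(x) - \langle \beta_{s}(x), D u_{s}(x) \rangle] dm_{s}(x) ds$ by dominated convergence.

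For $B^\pi$, insert $u_{t_{i+1}} - u_{t_{i}} = \int_{t_{i}}^{t_{i+1}} \gamma_{s}\, ds + (M_{t_{i+1}} - M_{t_{i}})$. The Riemann piece converges, again by dominated convergence and pathwise continuity, to $\int_{0}^t \int_{\T^d} \gamma_{s}(x) dm_{s}(x) ds$. It thus remains to analyze
\begin{equation*}
N^\pi := \sum_{i} \int_{\T^d} \bigl(M_{t_{i+1}}(x) - M_{t_{i}}(x)\bigr) dm_{t_{i+1}}(x).
\end{equation*}
Rewriting $dm_{t_{i+1}} = dm_{t_{i}} + d(m_{t_{i+1}} - m_{t_{i}})$ yields $N^\pi = N^\pi_{1} + N^\pi_{2}$. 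The term $N^\pi_{1} = \sum_{i} \int_{\T^d}(M_{t_{i+1}}-M_{t_{i}})(x) dm_{t_{i}}(x)$ is a martingale increment: conditionally on $\mathcal{F}_{t_{i}}$, the measure $m_{t_{i}}$ is deterministic while $\E[M_{t_{i+1}}(x) - M_{t_{i}}(x) \vert \mathcal{F}_{t_{i}}] = 0$ for every $x$, and Fubini (justified by \eqref{eq:partie:2:assumption:ito:special}) gives that each summand of $N^\pi_{1}$ has zero conditional mean. For $N^\pi_{2}$, the equation for $m$ controls $\|m_{t_{i+1}}-m_{t_{i}}\|_{(\cC^{n})'}$ by $C(t_{i+1}-t_{i})$ in terms of $\|\beta_{s}\|_{0}$, so $|N^\pi_{2}| \leq C \,|\pi|^{1/2} \, \bigl( \sum_{i} \|M_{t_{i+1}}-M_{t_{i}}\|_{n}^2 \bigr)^{1/2}$ up to control of the $\cC^n$ norm of the martingale increments; combining with a standard argument based on the $L^\infty$ bound on $\|M_{t}\|_{0}$ (after mollification in space against a kernel of width $\varepsilon$, then letting $\varepsilon \to 0$) shows that $N^\pi_{2} \to 0$ in probability along a refining sequence.

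The main obstacle is therefore to show that $N^\pi_{1}$ converges to a \emph{continuous} martingale. For this I would argue along a fixed refining sequence $(\pi_{k})_{k \geq 1}$: the process $t \mapsto N^{\pi_{k}}_{1}(t)$ is a martingale by the above, and, by the reconstruction from $A^\pi$ and $B^\pi$, it differs from
\begin{equation*}
X_{t} := \int_{\T^d} u_{t} dm_{t} - \int_{\T^d} u_{0} dm_{0} - \int_{0}^t \int_{\T^d} \bigl[ \gamma_{s} + \Delta u_{s} - \langle \beta_{s}, Du_{s} \rangle \bigr] dm_{s}\, ds
\end{equation*}
by quantities that tend to zero in $L^1$. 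Since $L^1$ limits of martingales are martingales, $X$ is a martingale; its continuity is automatic from its defining formula since every summand on the right-hand side has continuous paths (by the assumed regularity of $u$, $m$, $\gamma$ and $\beta$). This completes the proof.
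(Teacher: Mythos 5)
Your telescoping decomposition is the \emph{mirror image} of the paper's: you write $A^{\pi}=\sum_{i}\int u_{t_{i}}\,d(m_{t_{i+1}}-m_{t_{i}})$ and $B^{\pi}=\sum_{i}\int(u_{t_{i+1}}-u_{t_{i}})\,dm_{t_{i+1}}$, i.e.\ you freeze $u$ at the left endpoint and $m$ at the right endpoint, whereas the paper freezes $u$ at the right endpoint and $m$ at the left endpoint, writing $\sum_{i}\int u_{r_{i+1}}\,d(m_{r_{i+1}}-m_{r_{i}})+\sum_{i}\int(u_{r_{i+1}}-u_{r_{i}})\,dm_{r_{i}}$. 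This is not a cosmetic difference: the paper's choice makes the martingale cross-term $\sum_{i}\int(M_{r_{i+1}}-M_{r_{i}})\,dm_{r_{i}}$ immediately a discrete martingale increment (each $m_{r_{i}}$ is ${\mathcal F}_{r_{i}}$-measurable and the martingale increment averages to zero conditionally), and no further decomposition is needed. Your choice gives $N^{\pi}=\sum_{i}\int(M_{t_{i+1}}-M_{t_{i}})\,dm_{t_{i+1}}$, which is \emph{not} a martingale increment since $m_{t_{i+1}}$ is ${\mathcal F}_{t_{i+1}}$-measurable, and you are then forced to introduce the extra splitting $N^{\pi}=N^{\pi}_{1}+N^{\pi}_{2}$ with $N^{\pi}_{2}=\sum_{i}\int(M_{t_{i+1}}-M_{t_{i}})\,d(m_{t_{i+1}}-m_{t_{i}})$.

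The treatment of $N^{\pi}_{2}$ is where the argument has a genuine gap. Your estimate $|N^{\pi}_{2}|\leq C|\pi|^{1/2}\bigl(\sum_{i}\|M_{t_{i+1}}-M_{t_{i}}\|_{n}^{2}\bigr)^{1/2}$ uses the dual pairing $\langle M_{t_{i+1}}-M_{t_{i}},\,m_{t_{i+1}}-m_{t_{i}}\rangle_{\cC^{n},(\cC^{n})'}$, so it requires control of $\|M_{t}\|_{n}$, but the hypothesis \eqref{eq:partie:2:assumption:ito:special} only bounds $\|M_{t}\|_{0}$. Writing $M_{t_{i+1}}-M_{t_{i}}=(u_{t_{i+1}}-u_{t_{i}})-\int_{t_{i}}^{t_{i+1}}\gamma_{s}\,ds$, the $u$-part can be handled by testing the Fokker--Planck equation (since $u_{t}$ has $\cC^{n}$-paths with $n\geq2$), but the $\gamma$-part $\sum_{i}\int(\int_{t_{i}}^{t_{i+1}}\gamma_{s}\,ds)\,d(m_{t_{i+1}}-m_{t_{i}})$ cannot be tested against the PDE because $\gamma_{s}$ is only $\cC^{0}$; the only available bound is $\sum_{i}2\|\int_{t_{i}}^{t_{i+1}}\gamma_{s}\,ds\|_{0}\leq 2T\sup_{s}\|\gamma_{s}\|_{0}$, which is $O(1)$, not $o(1)$. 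The mollification you invoke does not rescue this: mollifying $M$ against a kernel of width $\varepsilon$ produces $\cC^{n}$ bounds of order $\varepsilon^{-n}$ and the trade-off against the mollification error (controlled only through the $\cC^{0}$ modulus of continuity of $M$) does not close. The fix is to use the paper's choice of endpoints, which sidesteps $N^{\pi}_{2}$ entirely.
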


\begin{proof}
Although slightly technical, the proof is quite standard. 
Given two reals $s<t$ in $[0,T]$, we consider 
 a mesh $s=r_{0}<r_{1}<\dots<r_{N}=t$ of the interval
$[s,t]$. Then, 
\begin{equation}
\label{eq:IPP:function:measure:proof}
\begin{split}
&\int_{\T^d} u_{t}(x) dm_{t}(x)
- \int_{\T^d} u_{s}(x) dm_{s}(x)
\\
&= \sum_{i=0}^{N-1}
\biggl[ \int_{\T^d} u_{r_{i+1}}(x) dm_{r_{i+1}}(x)
- \int_{\T^d} u_{r_{i}}(x) dm_{r_{i}}(x) \biggr]
\\
&= \sum_{i=0}^{N-1} \biggl[ \int_{\T^d} u_{r_{i+1}}(x)
dm_{r_{i+1}}(x) -  \int_{\T^d} u_{r_{i+1}}(x)
dm_{r_{i}}(x)
\biggr] 
\\
&\hspace{15pt} + \sum_{i=0}^{N-1} \biggl[
\int_{\T^d} u_{r_{i+1}}(x)
dm_{r_{i}}(x) - \int_{\T^d} u_{r_{i}}(x)
dm_{r_{i}}(x)
\biggr]
\\
&= \sum_{i=0}^{N-1} \int_{r_{i}}^{r_{i+1}}
\biggl\{ 
\int_{\T^d} 
\bigl[ 
\Delta u_{r_{i+1}}(x) - \langle \beta_{r}(x), D u_{r_{i+1}}(x) \rangle \bigr] dm_{r}(x)
\biggr\} dr
\\
&\hspace{15pt} + \sum_{i=0}^{N-1} \int_{\T^d} 
\biggl\{ \int_{r_{i}}^{r_{i+1}}
\gamma_{r}(x) dr + M_{r_{i+1}}(x) - M_{r_{i}}(x) 
\biggr\} dm_{r_{i}}(x). 
\end{split}
\end{equation}
By conditional Fubini's theorem and by 
\eqref{eq:partie:2:assumption:ito:special},
\begin{equation*}
\begin{split}
&{\mathbb E}
\biggl[ 
\sum_{i=0}^{N-1} \int_{\T^d} 
\bigl\{  M_{t_{i+1}}(x) - M_{t_{i}}(x) 
\bigr\} dm_{t_{i}}(x) \vert {\mathcal F}_{s} \biggr]
\\
&= 
\sum_{i=0}^{N-1}
 \int_{\T^d} 
\bigl\{
{\mathbb E}
\bigl[ 
  M_{t_{i+1}}(x) - M_{t_{i}}(x)
\vert {\mathcal F}_{s}  \bigr]
\bigr\} dm_{t_{i}}(x) =0,
\end{split}
\end{equation*}
so that
\begin{equation*}
\E \bigl[ S^N \vert {\mathcal F}_{s} \bigr] = 0,
\end{equation*}
where we have let
\begin{equation*}
\begin{split}
S^N &:=\int_{\T^d} u_{t}(x) dm_{t}(x)
- \int_{\T^d} u_{s}(x) dm_{s}(x)
\\
&\hspace{15pt} - \sum_{i=0}^{N-1}
 \int_{r_{i}}^{r_{i+1}}
\biggl\{ 
\int_{\T^d} 
\bigl[ 
\Delta u_{r_{i+1}}(x) - \langle \beta_{r}(x), D u_{r_{i+1}}(x) \rangle \bigr] dm_{r}(x)
\biggr\} dr
\\
&\hspace{15pt} - \sum_{i=0}^{N-1} \int_{\T^d} 
\biggl\{ \int_{r_{i}}^{r_{i+1}}
\gamma_{r}(x) dr 
\biggr\} dm_{r_{i}}(x).
\end{split}
\end{equation*}
Now, we notice that the sequence $(S^N)_{N \geq 1}$ converges pointwise to
\begin{equation*}
\begin{split}
S^\infty &:=\int_{\T^d} u_{t}(x) dm_{t}(x)
- \int_{\T^d} u_{s}(x) dm_{s}(x)
\\
&\hspace{15pt} - 
 \int_{s}^{t}
\biggl\{ 
\int_{\T^d} 
\bigl[ 
\Delta u_{r}(x) - \langle \beta_{r}(x), D u_{r}(x) \rangle
+ \gamma_{r}(x) \bigr] dm_{r}(x)
\biggr\} dr.
\end{split}
\end{equation*}
As the sequence $(S^N)_{N \geq 1}$ is bounded in 
$L^\infty(\Omega,{\mathcal A},\P)$, it is straightforward to deduce that, 
$\P$ almost surely, 
\begin{equation*}
{\mathbb E}
\bigl[ S^\infty \vert {\mathcal F}_{s} \bigr]
=\lim_{N \rightarrow \infty}
{\mathbb E} \bigl[ S^N \vert {\mathcal F}_{s} \bigr]
 = 0.
\end{equation*}
\end{proof}

We now switch to 
\begin{proof}[Proof of Lemma \ref{lem:partie:2:stability}]
Following the deterministic case, the idea is to use the monotonicity condition. Using the same duality argument as in the deterministic case, we thus compute
by means of Lemma \ref{lem:partie2:ito:special}:
\begin{equation*}
\begin{split}
&d_{t} \int_{{\mathbb T}^d}
\bigl( \tilde{u}^{\prime}_{t} - \tilde u_{t}
\bigr) d \bigl( 
\tilde{m}^{\prime}_{t} - \tilde m_{t}
\bigr)
\\
&=  \biggl\{  - \vartheta
\int_{{\mathbb T}^d}
\big\langle D \tilde{u}^{\prime}_{t}
- D \tilde{u}_{t},
D_{p} \tilde H_{t}(\cdot,D \tilde{u}_{t}^{\prime})
d \tilde{m}_{t}'
-D_{p} \tilde H_{t}(\cdot,D \tilde{u}_{t})
d \tilde{m}_{t}
\big\rangle
\\
&\hspace{15pt}- \int_{{\mathbb T}^d}
\bigl\langle 
 D \tilde{u}^{\prime}_{t}
- D \tilde{u}_{t}, b_{t}' d \tilde{m}_{t}' - b_{t} d \tilde{m}_{t} \big\rangle
+ \vartheta \int_{{\mathbb T}^d}
\bigl( \tilde H_{t}(\cdot,D \tilde{u}_{t}^{\prime})
- 
\tilde H_{t}(\cdot ,D \tilde{u}_{t})
\bigr) d (  \tilde{m}_{t}' - \tilde m_{t}) 
\\
&\hspace{15pt} - \varpi \int_{{\mathbb T}^d}
\bigl( \tilde F_{t}(\cdot, m_{t}')
- 
\tilde F_{t}(\cdot,m_{t})
\bigr) d ( \tilde{m}_{t}' - \tilde m_{t}) 
+ \int_{{\mathbb T}^d}
\bigl( 
f_{t}' - f_{t}
\bigr)d \bigl( \tilde{m}_{t}' - \tilde{m}_{t}
\bigr)
\biggr\} dt
\\
&+ dM_{t},
\end{split}
\end{equation*}
where $(M_{t})_{t \in [0,T]}$ is a martingale, with the terminal boundary condition
\begin{equation*}
\begin{split}
\int_{\T^d}
\bigl(
\tilde{u}_{T}' - \tilde{u}_{T}
\bigr) d ( \tilde{m}_{T}'
- \tilde{m}_{T}) &=
\varpi 
\int_{\T^d}
\bigl(
\tilde{G}(\cdot,m_{T}') - 
\tilde{G}(\cdot,m_{T}) \bigr)
 d ( \tilde{m}_{T}'
- \tilde{m}_{T})
\\
&\hspace{15pt} + \int_{\T^d}
\bigl( g_{T}' - g_{T} \bigr) 
d ( \tilde{m}_{T}'
- \tilde{m}_{T}).
\end{split} 
\end{equation*}

Making use of the convexity and monotonicity assumptions and taking the expectation, 
we can find a constant $c>0$, depending on the inputs and the outputs 
through ${\boldsymbol \Lambda}$ and ${\boldsymbol \lambda}$ only, such that 
\begin{equation}
\label{eq:partie2:convexity}
\begin{split}
&\vartheta c {\mathbb E}\int_{0}^T 
\biggl[ \int_{{\mathbb T}^d}
\vert D \tilde{u}_{t}' - D\tilde{u}_{t}
\vert^2 
d \bigl( \tilde{m}_{t} + \tilde{m}_{t}'
\bigr)\biggr] dt
\\
&\leq
\| u_{0}'-u_{0} \|_{1}
{\mathbf d}_{1}(\tilde{m}_{0},\tilde{m}'_{0}) +
 {\mathbb E}
 \bigl[
\| g_{T}' -g_{T} \|_{1}
{\mathbf d}_{1}(\tilde{m}_{T},\tilde{m}'_{T})
\bigr]
\\
&\hspace{5pt}+
 {\mathbb E}\int_{0}^T 
\| b_{t}' -b_{t} \|_{0}
\| \tilde{u}_{t}' -\tilde{u}_{t}
\|_{1} dt 
+
 {\mathbb E}\int_{0}^T 
 \bigl( 
\| \langle b_{t} , D \tilde{u}_{t}' - D \tilde{u}_{t} \rangle \|_{1}
+ \| f_{t}' -f_{t} \|_{1} \bigr)
{\mathbf d}_{1}(\tilde{m}_{t},\tilde{m}'_{t}) dt.
\end{split}
\end{equation}
We now implement the same strategy as in the
proof of Proposition \ref{prop:Ulip}
in the
deterministic case. 
Following 
\eqref{X1t-X2t}, we get
that there exists a constant $C$,
depending upon $T$, the Lipschitz constant 
of $D_{p} H$
and the parameters 
${\boldsymbol \Lambda}$ and ${\boldsymbol \lambda}$,
 such that
\begin{equation}
\label{eq:partie2:estimation:m:Du}
\begin{split}
&\sup_{t \in [0,T]} 
{\mathbf d}_{1}(\tilde{m}_{t}',\tilde{m}_{t})
 \\
 &\hspace{15pt} \leq C
\biggl( {\mathbf d}_{1}(\tilde{m}_{0}',\tilde{m}_{0})
+  \sup_{t \in [0,T]} \| b_{t}' - b_{t} \|_{0} 
+ \vartheta \int_{0}^T 
\biggl[ \int_{{\mathbb T}^d}
\vert D \tilde{u}_{s}' - D\tilde{u}_{s} \vert  
d
\bigl( \tilde{m}_{s} + \tilde{m}_{s}'
\bigr) \biggr] ds \biggr),
\end{split}
\end{equation}
which holds pathwise. 

Taking the square and the expectation and then plugging \eqref{eq:partie2:convexity}, 
we deduce that, for any small $\eta>0$ and for a possibly new value of $C$,
\begin{equation}
\label{eq:partie2:inegalite:principale:stabilite}
\begin{split}
{\mathbb E} \bigl[ 
\sup_{t} {\mathbf d}_{1}^2(\tilde{m}_{t},\tilde{m}'_{t}) \bigr]
&\leq C
\Bigl\{ \eta^{-1}
{\mathbf d}_{1}^2(m_{0},m_{0}')
+ \eta {\mathbb E}
\bigl[  \sup_{t \in [0,T]}
\| \tilde u_{t} - \tilde u_{t}' \|_{1}^2 \bigr]
\\
&\hspace{15pt} + \eta^{-1} \essup_{\omega \in \Omega} \sup_{t \in [0,T]} \|b_{t}\|_{1}
{\mathbb E}
\bigl[  \sup_{t \in [0,T]}
\| \tilde u_{t} - \tilde u_{t}' \|_{2}^2 \bigr]
\\
&\hspace{15pt}+  
\eta^{-1} {\mathbb E} \bigl[ \sup_{t \in [0,T]}
\| b_{t} - b_{t}' \|_{0}^2
+  \sup_{t \in [0,T]}
\| f_{t} - f_{t}' \|_{1}^2
+  
\| g_{T} - g_{T}' \|_{1}^2
\bigr] \Bigr\}.
\end{split}
\end{equation}
Following the deterministic case, we let $\tilde{w}_{t} = \tilde{u}_{t} - \tilde{u}_{t}'$, 
for $t \in [0,T]$, so that 
\begin{equation}
\label{eq:partie:2:linear:typical:method:1}
-d \tilde w_{t} = \bigl[  
\Delta \tilde{w}_{t}
- \vartheta \langle \tilde V_{t},D \tilde{w}_{t} \rangle
+ \varpi\tilde R^1_{t} - \bigl( f_{t}-f_{t}' \bigr) 
\bigr] dt - d \tilde{N}_{t},
\end{equation}
with the terminal boundary condition 
$\tilde{w}_{T} = \varpi \tilde R^T + g_{T}'-g_{T}$. 
Above,  $(\tilde{N}_{t})_{t \in [0, T]}$ is a process with paths in 
$\cC^0([0,T],{\mathcal C}^0(\T^d))$, 
with $\essup_{\omega \in \Omega} \sup_{t \in [0,T]}
\| \tilde{N}_{t}\|_{0} < \infty$, and, for any $x \in \T^d$, 
$(\tilde{N}_{t}(x))_{t \in [0,T]}$ is a martingale. 
Moreover, 
the coefficients $(\tilde V_{t})_{t \in [0,T]}$, $(\tilde R^1_{t})_{t \in [0,T]}$
and $\tilde R^T$ are given by 
\begin{equation*}
\begin{split}
&\tilde V_{t}(x) = \int_0^1 D_p \tilde H_{t}\bigl (x, r D \tilde u(x) +(1-r)D \tilde u'(x) \bigr) \, dr,
\\
&\tilde R^1_{t}(x) = \int_0^1 \frac{\delta \tilde F_{t}}{\delta m}\bigl(x, r \tilde m_{t}
+(1-r) \tilde m_{t}' \bigr) \bigl(\tilde m_{t} - \tilde m_{t}'
\bigr)
\, dr, 
\\
&\tilde R^T(x) = \int_{0}^1 
\frac{\delta G}{\delta m}(x, r \tilde m_{T}+ 
(1-r) \tilde m_{T}') \bigl( \tilde m_{T} - \tilde m_{T}' \bigr) dr. 
\end{split}
\end{equation*}
Following 
the deterministic case, 
we have 
\begin{equation}
\label{eq:partie2:majoration:R}
\begin{split}
\sup_{t \in [0,T]}  \| \tilde R_t^1 \|_{n+\alpha-1} 
+ \|\tilde R^T \|_{n+\alpha}
&\leq  
C \sup_{t \in [0,T]}
\dk(\tilde m_{t},\tilde m_{t}').
\end{split}
\end{equation}
Moreover, recalling that 
the outputs $\tilde{u}$ and $\tilde{u}'$ are assumed 
to satisfy the conclusion of Lemma 
\ref{lem:super:reg}, we deduce that
\begin{equation*}
\sup_{t \in [0,T]}
\|
\tilde{V}_{t}
\|_{n+\alpha-1}
\leq C. 
\end{equation*}
In particular, for any $k \in \{0,\dots,n-1\}$
\begin{equation*}
\forall t \in [0,T], \quad 
\|
\langle \tilde{V}_{t}, D \tilde{w}_{t} \rangle
\|_{k+\alpha}
\leq C \| \tilde{w}_{t} \|_{k+1+\alpha}. 
\end{equation*}
Now, following 
\eqref{eq:label:supplementaire:tildeu}
and implementing 
\eqref{eq:partie2:majoration:R}, we get, 
for any $t \in [0,T]$,
\begin{equation*}
\begin{split}
&\| \tilde{w}_{t} \|_{k+\alpha}
\\
&\leq {\mathbb E}\biggl[ \| g_{T} - g_{T}' \|_{k+\alpha}
+  \int_{t}^T \frac{ \| \tilde{w}_{s} \|_{k+\alpha}}{\sqrt{s-t}} 
ds + \sup_{s \in [0,T]}
\| f_{s} - f_{s}' \|_{k+\alpha-1}
+
\sup_{s \in [0,T]}
\dk(\tilde m_{s},\tilde m_{s}') 
\, \vert {\mathcal F}_{t}
\biggr]
\\
&\leq C {\mathbb E}\biggl[ \| g_{T} - g_{T}' \|_{k+\alpha}
+  \int_{t}^T \| \tilde{w}_{s} \|_{k+\alpha}
ds + \sup_{s \in [0,T]}
\| f_{s} - f_{s}' \|_{k+\alpha-1} 
+
\sup_{s \in [0,T]}
\dk(\tilde m_{s},\tilde m_{s}') 
\, \vert {\mathcal F}_{t}
\biggr],
\end{split}
\end{equation*}
the second line following from Lemma 
\ref{lem:reg:cond} (with $p=1$). 
By Doob's inequality, we deduce that 
\begin{equation*}
\begin{split}
&{\mathbb E}
\bigl[ \sup_{s \in [t,T]}
\| \tilde{w}_{s} \|_{k+\alpha}^2
\bigr]
\\
\hspace{15pt }&\leq {\mathbb E}\biggl[ \| g_{T} - g_{T}' \|_{k+\alpha}
+  \int_{t}^T \| \tilde{w}_{s} \|_{k+\alpha}^2
ds + \sup_{s \in [0,T]}
\| f_{s} - f_{s}' \|_{k+\alpha-1}^2 
+
\sup_{s \in [0,T]}
\dk^2(\tilde m_{s},\tilde m_{s}') 
\biggr].
\end{split}
\end{equation*}
By Gronwall's lemma, we deduce that, for any 
$k \in \{1,\dots,n\}$,
\begin{equation}
\label{eq:partie:2:linear:typical:method:2}
\begin{split}
&{\mathbb E}
\bigl[ \sup_{t \in [0,T]} \| \tilde{w}_{t} \|_{k+\alpha}^2
\bigr] 
\leq C \E \Bigl[ 
\| g_{T} - g_{T}' \|_{k+\alpha}^2
 +
 \sup_{t \in [0,T]}
\| f_{t} - f_{t}' \|_{k+\alpha-1}^2
+  
 \sup_{t \in [0,T]} {\mathbf d}_{1}^2(\tilde{m}_{t},\tilde{m}'_{t})
\Bigr].
\end{split}
\end{equation}
We finally go back to \eqref{eq:partie2:inegalite:principale:stabilite}. 
Choosing $\eta$ small enough
and assuming that 
$\essup_{\omega \in \Omega} \|b_{t}\|_{1}$
is also
small enough, we finally obtain (modifying the constant $C$):
\begin{equation*}
\begin{split}
&{\mathbb E}
\bigl[ \sup_{t \in [0,T]} \bigl( \| \tilde{u}_{t} - \tilde{u}_{t}' \|_{n+\alpha}^2
+{\mathbf d}_{1}^2(\tilde m_{t},\tilde m_{t}')
\bigr)
\bigr] 
\\
&\leq C
\Bigl\{ 
{\mathbf d}_{1}^2(m_{0},m_{0}')
+  
{\mathbb E} \bigl[ \sup_{t \in [0,T]}
\| b_{t} - b_{t}' \|_{0}^2
+  \sup_{t \in [0,T]}
\| f_{t} - f_{t}' \|_{n+\alpha-1}^2
+  
\| g_{T} - g_{T}' \|_{n+\alpha}^2
\bigr] \Bigr\},
\end{split}
\end{equation*}
which completes the proof.
\end{proof}
\subsubsection{Proof of Theorem \ref{thm:partie:2:existence:uniqueness}}
We now end up the proof of Theorem \ref{thm:partie:2:existence:uniqueness}. 

\textit{First step.}
We first  notice that the $L^2$ stability estimate in the statement is a direct consequence of 
Lemma 
\ref{lem:super:reg}
(in order to bound the solutions)
and
of Lemma
\ref{lem:partie:2:stability}
(in order to get the stability estimate itself),
provided that existence and uniqueness hold true. 
\vspace{5pt}

\textit{Second step (a).}
We now prove that, given an initial condition $m_{0} \in {\mathcal P}(\T^d)$,
the system \eqref{eq:se:3:tilde:HJB:FP} is uniquely solvable. 

The strategy consists in increasing 
inductively the value of $\varpi$, step by step, from $\varpi =0$ 
to $\varpi=1$, and to 
prove, at each step, that 
existence and uniqueness hold true. 
At each step of the induction, the strategy relies on a fixed point argument. It works as follows.
Given some $\varpi \in [0,1)$, we assume that, for any input $(f,g)$ in a certain class, 
we can (uniquely) solve (in the same sense as in the statement of
Theorem \ref{thm:partie:2:existence:uniqueness})
\begin{equation}
\label{eq:10}
\begin{split}
&d_{t} \tilde{m}_{t} = \bigl\{  \Delta \tilde{m}_{t}
+
{\rm div} \bigl[ \tilde{m}_{t}  
 D_{p} \tilde H_{t}( \cdot ,D \tilde{u}_{t}) 
 \bigr] \bigr\} dt,  
\\
&d_{t} \tilde{u}_{t}
=  \bigl\{ -  \Delta \tilde{u}_{t} + 
 \tilde H_{t}(\cdot ,D\tilde{u}_{t}) - \varpi \tilde F_{t}(\cdot ,m_{t}) 
+ f_{t} \bigr\} dt
+ d \tilde{M}_{t},
\end{split}
\end{equation}
with $\tilde{m}_{0}=m_{0}$ as initial condition and $\tilde{u}_{T} = \varpi \tilde G(\cdot,m_{T}) + g_{T}$ as boundary condition. 
Then, the objective is 
to prove that the same holds true
for $\varpi$ replaced by $\varpi+\epsilon$, 
for $\epsilon>0$ small enough (independent of $\varpi$). 
Freezing an input $(\bar f,\bar g)$ in the admissible class, 
the point is to show that the mapping 
\begin{equation*}
\Phi : 
(\tilde m_{t})_{t \in [0,T]}
\mapsto 
\left\{
\begin{array}{l}
\bigl(f_{t} = -\epsilon \tilde F_{t}(\cdot,{m}_{t}) + \bar f_{t} \bigr)_{t \in [0,T]}
\\
g_{T}=
\epsilon \tilde G(\cdot,{m}_{T}) + \bar g_{T}
\end{array}
\right\}
\mapsto (\tilde{m}_{t}')_{t \in [0,T]},
\end{equation*}
is a contraction on the space 
of adapted processes 
$(\tilde m_{t})_{t \in [0,T]}$
with paths in ${\mathcal C}^0([0,T],{\mathcal P}(\T^d))$, 
 where the 
last output is given as the forward component of the solution of the system 
\eqref{eq:10}. 

The value of $\varpi$ being given, we assume that the input
$(\bar f,\bar g)$ is of the form
\begin{equation}
\label{eq:partie:2:bar:f:g}
\begin{split}
&\bar f_{t} = 
- \sum_{i=1}^N b^i
 \tilde{F}_{t}(\cdot,{m}_{t}^i),
\quad
\bar g_{T} =
\sum_{i=1}^N b^i \tilde{G}(\cdot,{m}_{T}^i),
\end{split}
\end{equation}
where $N \geq 1$, $b^1,\dots,b^N \geq 0$, with 
$\epsilon+
b^1+\dots+b^N \leq 2$, and 
$(\tilde{m}^i)_{i=1,\dots,N}$ (or equivalently 
$({m}^i)_{i=1,\dots,N}$)
 is a family of $N$ adapted processes with paths in 
$\cC^0([0,T],{\mathcal P}(\T^d))$. 

The input $((\bar{f}_{t})_{t \in [0,T]},\bar{g}_{T})$
being given, 
we consider 
two adapted processes
$(\tilde{m}^{(1)}_{t})_{t \in [0,T]}$ and 
$(\tilde{m}^{(2)}_{t})_{t \in [0,T]}$
with paths in 
$\cC^0([0,T],{\mathcal P}(\T^d))$
(or equivalently 
$({m}^{(1)}_{t})_{t \in [0,T]}$ and 
$({m}^{(2)}_{t})_{t \in [0,T]}$
without the push-forwards by each of the mappings 
$(\T^d \ni x \mapsto x - \sqrt{2} W_{t} \in \T^d)_{t \in [0,T]}$, 
\textit{cf.}
Remark \ref{rem:partie2:notation}),
and we let
\begin{equation*}
f_{t}^{(i)}=  -\epsilon \tilde F_{t}\bigl(\cdot,{m}_{t}^{(i)}\bigr) + \bar f_{t}, \ 
t \in [0,T] ; \quad 
g_{T}^{(i)} = 
- \epsilon \tilde{G} \bigl( \cdot, m_{T}^{(i)} \bigr)
+ \bar{g}_{T} ;
 \quad i =1,2.
\end{equation*}
and
\begin{equation*}
\tilde{m}^{(i\prime)} = \Phi\bigl(\tilde{m}^{(i)}\bigr),
\quad
i=1,2.
\end{equation*}

\textit{Second step (b).}
By Lemma \ref{lem:super:reg}, 
we can find  
positive constants 
$(\lambda_{k})_{k=0,\dots,n}$
and $(\Lambda_{k})_{k=0,\dots,n}$
such that, whenever 
$(\tilde{m}_{t},\tilde{u}_{t})_{t \in [0,T]}$
solves \eqref{eq:10}
with respect to an input
$((\bar{f}_{t})_{t \in [0,T]},\bar{g}_{T})$
of the same type as in \eqref{eq:partie:2:bar:f:g},
then
\begin{equation*}
\forall k \in \{0,\dots,n\}, \ \forall t \in [0,T],
\quad
\essup_{\omega \in \Omega}
\| \tilde{u}_{t}
\|_{k+\alpha} \leq \Lambda_{k} \exp \bigl( \lambda_{k} (T-t) \bigr).
\end{equation*}
\vspace{5pt}
It is worth mentioning that the values of 
$(\lambda_{k})_{k=0,\dots,n}$
and $(\Lambda_{k})_{k=0,\dots,n}$
are somehow universal in the sense that they depend neither on 
$\varpi$ nor on the precise shape of the inputs 
$(\bar{f},\bar{g})$ when taken in the class \eqref{eq:partie:2:bar:f:g}. In particular, any output $(\tilde{m}_{t}')_{t \in [0,T]}$
of the mapping $\Phi$ 
must satisfy the same bound. 
\vspace{5pt}

\textit{Second step (c).}
We apply Lemma \ref{lem:partie:2:stability} with $b=b'=0$,
$(f_{t},f_{t}')_{0 \leq t \leq T}=(\bar{f}^{(1)}_{t},\bar{f}^{(2)}_{t})_{0 \leq t \leq T}$
and 
$(g_{T},g_{T}')=(\bar{g}^{(1)}_{T},\bar{g}^{(2)}_{T})$. We deduce that
\begin{equation*}
\begin{split}
&{\mathbb E}
\bigl[ \sup_{t \in [0,T]} 
{\mathbf d}_{1}^2(\tilde m_{t}^{(1\prime)},\tilde m_{t}^{(2\prime)})
\bigr] 
\\
&\hspace{15pt} \leq \epsilon^2 C
\Bigl\{ 
{\mathbb E} \bigl[ \sup_{t \in [0,T]}
\| \tilde F_{t}(\cdot,m_{t}^{(1)}) - \tilde F_{t}(\cdot,m_{t}^{(2)}) \|_{n+\alpha-1}^2
+  
\| \tilde G_{T}(\cdot,m_{T}^{(1)}) - \tilde G_{T}(\cdot,m_{T}^{(2)})  \|_{n+\alpha}^2
\bigr] \Bigr\},
\end{split}
\end{equation*}
the constant $C$ being independent of $\varpi$ and of the precise shape of 
the input $(\bar{f},\bar{g})$ in the class \eqref{eq:partie:2:bar:f:g}. 
Up to a modification of $C$, we deduce that 
\begin{equation*}
\begin{split}
&{\mathbb E}
\bigl[ \sup_{t \in [0,T]} 
{\mathbf d}_{1}^2(\tilde m_{t}^{(1\prime)},\tilde m_{t}^{(2\prime)})
\bigr] 
 \leq \epsilon^2 C
{\mathbb E}
\bigl[ \sup_{t \in [0,T]} 
{\mathbf d}_{1}^2(\tilde m_{t}^{(1)},\tilde m_{t}^{(2)})
\bigr],
\end{split}
\end{equation*}
which shows that $\Phi$ is a contraction
on the space $L^2(\Omega,{\mathcal A},\P;{\mathcal C}^0([0,T],{\mathcal P}(\T^d)))$,
 when $\epsilon$ is small enough (independently of $\varpi$ and of 
$(\bar{f},\bar{g})$ in the class \eqref{eq:partie:2:bar:f:g}).
By Picard fixed point theorem, we deduce that the system 
\eqref{eq:10} is solvable when 
$\varpi$ is replaced by $\varpi+\varepsilon$ (and for the same input 
$(\bar{f},\bar{g})$ in the class \eqref{eq:partie:2:bar:f:g}). 
By Lemma \ref{lem:super:reg}
and Proposition
\ref{lem:partie:2:stability}, the solution must be unique.
\vspace{5pt}

\textit{Third step.} We finally establish the $L^{\infty}$ version of the stability estimates. 
The trick is to derive the $L^\infty$ estimate from the $L^2$ version of the stability 
estimates, which seems rather surprising at first sight but which is quite standard in the theory 
of backward SDEs. 

The starting point is to notice that the expectation in the proof of 
the $L^2$ version permits to get rid of the martingale part when 
applying It\^o's formula in the proof of 
Lemma \ref{lem:partie:2:stability}
(see for instance \eqref{eq:partie2:convexity}). Actually, it would 
suffice to use the conditional expectation given 
${\mathcal F}_{0}$ in order to get rid of it, which means that 
the $L^2$ estimate may be written as 
\begin{equation*}
\begin{split}
&{\mathbb E}
\bigl[ \sup_{t \in [0,T]}
\bigl( 
{\mathbf d}_{1}^2(\tilde m_{t},\tilde m_{t}')
+ \| \tilde{u}_{t} - \tilde{u}_{t}' \|_{n+\alpha}^2
\bigr) \vert {\mathcal F}_{0}
\bigr] 
 \leq C
{\mathbf d}_{1}^2(m_{0},m_{0}'),
\end{split}
\end{equation*}
which holds $\P$ almost surely. Of course, when 
$m_{0}$ and $m_{0}'$ are deterministic the above conditional bound does not say 
anything 
more in comparison with the original one: When $m_{0}$ and $m_{0}'$ 
are deterministic,  the $\sigma$-field ${\mathcal F}_{0}$ contains no information and 
is almost surely trivial. Actually, the inequality is especially meaningful when 
the initial time $0$ is replaced by another time $t \in (0,T]$, in which case the initial conditions 
become $\tilde{m}_{t}$ and $\tilde{m}_{t}'$ and are thus random. 
The trick is thus to say that the same inequality as above holds with any time $t \in [0,T]$
as initial condition instead of $0$. This proves that 
\begin{equation*}
{\mathbb E}
\bigl[ \sup_{s \in [t,T]}
\bigl( 
{\mathbf d}_{1}^2(\tilde m_{s},\tilde m_{s}')
+ \| \tilde{u}_{s} - \tilde{u}_{s}' \|_{n+\alpha}^2
\bigr) \vert {\mathcal F}_{t}
\bigr] 
 \leq C
{\mathbf d}_{1}^2(m_{t},m_{t}').
\end{equation*}
Since $\| \tilde{u}_{t} - \tilde{u}_{t}' \|_{n+\alpha}$ is ${\mathcal F}_{t}$-measurable, 
we deduce that
\begin{equation*}
\| \tilde{u}_{t} - \tilde{u}_{t}' \|_{n+\alpha}
 \leq C
{\mathbf d}_{1}(m_{t},m_{t}').
\end{equation*}
Plugging the above bound in \eqref{eq:partie2:estimation:m:Du}, 
we deduce that (modifying $C$ if necessary)
\begin{equation*}
\sup_{t \in [0,T]}{\mathbf d}_{1}(m_{t},m_{t}')
 \leq C
{\mathbf d}_{1}(m_{0},m_{0}').
\end{equation*}
Collecting the two last bounds, the proof is easily completed. \qed
\subsection{Linearization}
\label{subse:par tie:2:linearization}
\textbf{Assumption.} Throughout the paragraph,
 $\alpha$ stands for a H\"older exponent in $(0,1)$.  
\vspace{5pt}

The purpose here is to follow Subsection \ref{subsec:LS} and to discuss the following linearized version 
of the system \eqref{eq:se:3:tilde:HJB:FP}: 
\begin{equation}
\label{eq:partie2:systeme:linearise}
\begin{split}
&d_{t} \tilde{z}_{t} = \bigl\{ -  \Delta \tilde{z}_{t} + \langle \tilde V_{t}(\cdot),D \tilde{z}_{t}
\rangle - 
\frac{\delta \tilde F_{t}}{\delta m}(\cdot,m_{t})(\rho_{t})
 + \tilde{f}_{t}^0 \bigr\} dt + d \tilde{M}_{t},
\\
&\partial_{t} \tilde{\rho}_{t} -  \Delta \tilde{\rho}_{t}
- \textrm{div}\bigl(\tilde{\rho}_{t} \tilde V_{t} 
\bigr) - \textrm{div} \bigl( \tilde{m}_{t} \Gamma_{t} D \tilde{z}_{t} + \tilde b_{t}^0
\bigr) = 0,
\end{split}
\end{equation}
with a boundary condition of the form 
\begin{equation*}
\tilde{z}_{T} = 
\frac{\delta \tilde G}{\delta m}(\cdot,m_{T})(\rho_{t}) + \tilde{g}_{T}^0,
\end{equation*}
where $(\tilde{M}_{t})_{t \in [0,T]}$
is the so-called martingale part of the backward equation, 
that is $(\tilde{M}_{t})_{t \in [0,T]}$ is
an $({\mathcal F}_{t})_{t \in [0,T]}$-adapted process
with paths in the space ${\mathcal C}^0([0,T],
{\mathcal C}^{0}({\mathbb T}^d))$, 
such that, for any $x \in {\mathbb T}^d$, 
$(\tilde{M}_{t}(x))_{t \in [0,T]}$
is an $({\mathcal F}_{t})_{t \in [0,T]}$ martingale. 

\begin{Remark}
\label{rem:partie2:notation:2}
Above, we used the same convention as in Remark \ref{rem:partie2:notation}. 
For $(\tilde{\rho}_{t})_{t \in [0,T]}$ with paths in 
${\mathcal C}^0([0,T],({\mathcal C}^k(\T^d))')$
for some $k \geq 0$, we let 
$({\rho}_{t})_{t \in [0,T]}$ be the distributional-valued
random function 
with paths in 
${\mathcal C}^0([0,T],({\mathcal C}^k(\T^d))')$
defined by 
\begin{equation*}
\langle
\varphi,\rho_{t} \rangle_{
{\mathcal C}^k(\T^d),
({\mathcal C}^k(\T^d))'}
= \langle
\varphi ( \cdot + \sqrt{2} W_{t} ),\tilde \rho_{t} 
\rangle_{
{\mathcal C}^k(\T^d),
({\mathcal C}^k(\T^d))'}.
\end{equation*}
\end{Remark}

Generally speaking, the framework 
is the same as that used in 
Subsection \ref{subsec:LS}, namely we can find a constant $C \geq 1$ such that:
\begin{enumerate}
\item The initial condition $\tilde{\rho}_{0}={\rho}_{0}$ takes values in 
$({\mathcal C}^{n+\alpha'}(\T^d))'$, for some $\alpha' \in (0,\alpha)$, and, unless it is explicitly stated, it is deterministic. 
\item $(\tilde{V}_{t})_{t \in [0,T]}$ is an adapted process 
with paths in ${\mathcal C}^0([0,T],{\mathcal C}^{n}(\T^d,\R^d))$,
with 
\begin{equation*}
\essup_{\omega \in \Omega}
\sup_{t \in [0,T]}
\|\tilde{V}_{t} \|_{n+\alpha} \leq C.
\end{equation*}
\item $(\tilde{m}_{t})_{t \in [0,T]}$ is an adapted process
with paths in ${\mathcal C}^0([0,T],{\mathcal P}(\T^d))$.

\item $({\Gamma}_{t})_{t \in [0,T]}$ is an adapted process
with paths in ${\mathcal C}^0([0,T],[{\mathcal C}^1(\T^d)]^{d \times d})$
such that, with probability 1,
\begin{equation*}
\begin{split}
&\sup_{t \in [0,T]} \| \Gamma_{t} \|_{1} \leq C,
\\
&\forall (t,x) \in [0,T] \times \T^d, \quad 
C^{-1} I_{d} \leq \Gamma_{t}(x) \leq C I_{d}.
\end{split}
\end{equation*}
\item $(\tilde{b}_{t}^0)_{t \in [0,T]}$
is an adapted process with paths in 
${\mathcal C}^0([0,T],
[({\mathcal C}^{n+\alpha-1}(\T^d))']^d)$,
and $(\tilde{f}_{t}^0)_{t \in [0,T]}$
is an adapted process with paths in 
${\mathcal C}^0([0,T],{\mathcal C}^{n}(\T^d))$, with
\begin{equation*}
\essup_{\omega \in \Omega}
\sup_{t \in [0,T]} \bigl( \| \tilde{b}_{t}^0 \|_{-(n+\alpha'-1)}
+ \| \tilde{f}_{t}^0 \|_{{n+\alpha}}
\bigr) < \infty.
\end{equation*}
\item $\tilde{g}_{T}^0$ is an 
${\mathcal F}_{T}$-measurable random variable
with values in ${\mathcal C}^{n+1}(\T^d)$, with
\begin{equation*}
\essup_{\omega \in \Omega}
\| \tilde{g}_{T}^0 \|_{n+1+\alpha} < \infty.
\end{equation*}
\end{enumerate}

Here is the analogue of Lemma \ref{lem:BasicEsti2}:

\begin{Theorem}
\label{thm:partie2:existence:uniqueness:linearise}
Under the assumption (1--6) right above
and 
{\bf (HF1(${\boldsymbol n}$))}
and 
{\bf (HG1(${\boldsymbol n}$+1))}, for $n \geq 2$
and $\beta \in (\alpha',\alpha)$, 
the system 
\eqref{eq:partie2:systeme:linearise}
admits a unique solution 
$(\tilde{\rho},\tilde{z},\tilde{M})$, adapted with respect to the filtration 
$({\mathcal F}_{t})_{t \in [0,T]}$, with paths in the space
${\mathcal C}^0([0,T],({\mathcal C}^{n+\beta}(\T^d))' 
\times {\mathcal C}^{n+1+\beta}(\T^d)
\times {\mathcal C}^{n+\beta}(\T^d)
)$ and with $\essup_{\omega} \sup_{t \in [0,T]}
(\| \tilde{\rho}_{t} \|_{-(n+\beta)} + \| \tilde{z}_{t} \|_{n+1+\beta}
+ \| \tilde{M}_{t} \|_{n-1+\beta})
< \infty$.
It satisfies
\begin{equation*}
\begin{split}
&
\essup_{\omega \in \Omega}
\sup_{t \in [0,T]}
\bigl( \|\tilde{\rho}_{t}\|_{-(n+\alpha')}
+  \| \tilde{z}_{t}\|_{n+1+\alpha} 
+  \| \tilde{M}_{t}\|_{n+\alpha-1} 
\bigr)
< \infty.
\end{split}
\end{equation*}
\end{Theorem}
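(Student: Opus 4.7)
The strategy mirrors the deterministic linearized analysis of Section \ref{subsec:LS} (Lemmata \ref{lem:BasicEsti0}, \ref{lem:BasicEsti2}), adapted to the stochastic framework by means of the backward-equation theory of Lemma \ref{lem:cas:vartheta=0} and the integration-by-parts formula of Lemma \ref{lem:partie2:ito:special}. Since \eqref{eq:partie2:systeme:linearise} is linear, the argument decomposes into three stages: existence for smooth data via a Picard contraction, an \emph{a priori} estimate in the target H\"older norms, and passage to the limit on general data by approximation; uniqueness is then immediate by linearity applied to the same estimate on the difference of two solutions.

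For smooth data --- i.e.\ $\rho_0, \tilde b^0, \tilde f^0, \tilde g_T^0$ smooth and $\tilde m_t$ a smooth positive density --- I would set up a fixed point on the forward variable. Given a candidate $\tilde \rho$, Lemma \ref{lem:cas:vartheta=0} produces $(\tilde z,\tilde M)$ solving the backward equation with source $-\frac{\delta \tilde F_t}{\delta m}(\cdot,m_t)(\rho_t)+\tilde f_t^0$ and terminal datum $\frac{\delta \tilde G}{\delta m}(\cdot,m_T)(\rho_T)+\tilde g_T^0$; given $\tilde z$, the forward equation is a pathwise linear Fokker--Planck equation with smooth coefficients (the common noise having been absorbed by the shift $x\mapsto x+\sqrt 2 W_t$), producing a new forward flow $\tilde \rho'$. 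As in the proof of Proposition \ref{prop:partie2:vartheta1:varpi0}, the composed map $\tilde \rho \mapsto \tilde \rho'$ is a contraction on $L^\infty(\Omega; L^1_\mu([0,T], (\cC^{n+\beta}(\T^d))'))$ for $\mu$ large, producing the unique solution in the rougher $\beta$-class.

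The crux is the \emph{a priori} estimate, whose role is both to sharpen regularity from $\beta$ to $\alpha'$ (resp.\ $\alpha$) on smooth data and to drive the approximation on general data. Applying Lemma \ref{lem:partie2:ito:special} to $\inte \tilde z_t\, d\tilde \rho_t$ and taking expectations kills the martingale; combined with the monotonicity of $\delta F/\delta m$ and $\delta G/\delta m$ deduced from \eqref{e.monotoneF}, and with the coercivity of $\Gamma_t$, this yields control of $\E\int_0^T\inte |D\tilde z_t|^2\, d\tilde m_t\, dt$ in terms of the data and of the pairings at $t=0$ and $t=T$. To pass from this energy bound to a pointwise-in-$\omega$ estimate on $\sup_t\|\tilde \rho_t\|_{-(n+\alpha')}$, I would transpose the duality step of Lemma \ref{lem:BasicEsti0}: for $\tau\in[0,T]$ and an $\mathcal F_\tau$-measurable test $\xi$ with $\|\xi\|_{n+\alpha'}\leq 1$, solve via Lemma \ref{lem:cas:vartheta=0} the adjoint backward stochastic equation with terminal condition $\xi$ at time $\tau$, and pair its solution $w$ with $\tilde \rho$ via Lemma \ref{lem:partie2:ito:special}; conditioning on $\mathcal F_\tau$ eliminates the stochastic integrals and, after taking the supremum over $\xi$, yields $\|\tilde \rho_\tau\|_{-(n+\alpha')} \leq CM$ with $M$ collecting the norms of the data. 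The sharp bound on $\tilde z$ in $\cC^{n+1+\alpha}$ then follows from Lemma \ref{lem:reg} applied to the backward equation (the source being controlled in $\cC^{n+\alpha}$ thanks to \textbf{(HF1($\boldsymbol n$))}), and $\tilde M$ is reconstructed as in the final step of the proof of Lemma \ref{lem:cas:vartheta=0}.

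The principal obstacle is precisely this stochastic duality step: the test $\xi$ must be allowed to be $\mathcal F_\tau$-measurable so that the essential supremum over $\xi$ is taken \emph{after} conditioning, forcing a conditional integration-by-parts rather than the purely deterministic one of Lemma \ref{lem:BasicEsti0}, and requiring that the auxiliary adjoint equation be genuinely solved as a backward SPDE with random terminal datum. Once this pathwise $L^\infty_\omega$ estimate is in hand on smooth data, passage to general data satisfying (1)--(6) follows by mollification of $\rho_0, \tilde b^0, \tilde f^0, \tilde g_T^0, \tilde m_t$ and $\Gamma_t$: the linear stability extracted from the a priori estimate applied to differences of approximants gives Cauchy convergence in $L^\infty(\Omega; \cC^0([0,T], (\cC^{n+\beta})'\times\cC^{n+1+\beta}\times\cC^{n+\beta}))$, and the limit inherits the sharper bounds in $-(n+\alpha')$, $n+1+\alpha$ and $n-1+\alpha$ by lower semicontinuity. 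Uniqueness is then immediate from linearity and the same estimate applied to the difference of any two solutions.
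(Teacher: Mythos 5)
Your proposed scheme has a significant gap in the existence argument. You set up a Picard iteration $\tilde\rho\mapsto\tilde\rho'$ directly on the forward variable and claim, "As in the proof of Proposition \ref{prop:partie2:vartheta1:varpi0}, the composed map \dots is a contraction \dots for $\mu$ large." But the analogy to Proposition \ref{prop:partie2:vartheta1:varpi0} is false: there the parameter $\varpi=0$, so the backward equation does not see the forward variable at all, and the iteration is only on the \emph{internal} nonlinearity $\tilde H_t(\cdot,D\tilde u_t)$ of the backward equation. The weighted-norm trick works there because the dependence is causal in a single time direction. In contrast, the linearized system \eqref{eq:partie2:systeme:linearise} is a genuinely coupled forward--backward system: $\tilde z$ depends on $\rho$ through both the source $\frac{\delta\tilde F_t}{\delta m}(\cdot,m_t)(\rho_t)$ and the terminal condition $\frac{\delta\tilde G}{\delta m}(\cdot,m_T)(\rho_T)$, while $\tilde\rho$ depends on $D\tilde z$ through $\mathrm{div}(\tilde m_t\Gamma_tD\tilde z_t)$. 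Since the backward equation integrates from $T$ downward and the forward from $0$ upward, no exponential weight can give contraction uniformly in $T$; this is the familiar two-point boundary-value obstruction for forward--backward systems. A direct Picard fixed point can only close for $T$ small.

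This is precisely why the paper replaces the direct iteration by a continuation method: a parameter $\vartheta\in[0,1]$ is interpolated in front of the coupling terms (see \eqref{eq:partie2:systeme:linearise:continuation}), solvability at $\vartheta=0$ is elementary (Lemma \ref{lem:partie2:estimate:linear:vartheta=0}, decoupled), and solvability is propagated from $\vartheta$ to $\vartheta+\varepsilon$ for a uniform $\varepsilon>0$ by a Picard iteration whose Lipschitz constant is $O(\varepsilon)$ --- small because only the increment of the coupling is iterated. The uniformity of $\varepsilon$ across all $\vartheta\in[0,1]$ is what the stability estimate (Proposition \ref{lem:partie2:stability:2}) and the $L^\infty_\omega$ a priori bound (Corollary \ref{cor:partie2:systeme:linearise:linfty}) are \emph{for}: they bound the solution operator at each $\vartheta$ in a $\vartheta$-independent way, so the continuation step size never degenerates. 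In your proposal, the monotonicity/duality estimate appears only \emph{after} existence has (incorrectly) been claimed; in the paper's argument the very same estimate is the engine that enables existence via continuation. Your duality step and the reconstruction of $\tilde M$ are otherwise in the right spirit, but without the continuation device the proof of existence does not close for general $T$.
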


The proof imitates that one of Theorem 
\ref{thm:partie:2:existence:uniqueness}
and relies on a continuation argument.
For a parameter $\vartheta \in [0,1]$, we consider the system 
\begin{equation}
\label{eq:partie2:systeme:linearise:continuation}
\begin{split}
&d_{t} \tilde{z}_{t} = \bigl\{ -  \Delta \tilde{z}_{t} +  \langle \tilde V_{t}(\cdot),D \tilde{z}_{t}
\rangle - 
\vartheta \frac{\delta \tilde F_{t}}{\delta m}(\cdot,m_{t})(\rho_{t})
 + \tilde{f}_{t}^0 \bigr\} dt + d \tilde{M}_{t},
\\
&\partial_{t} \tilde{\rho}_{t} -  \Delta \tilde{\rho}_{t}
-  \textrm{div}\bigl(\tilde{\rho}_{t} \tilde V_{t} 
\bigr) -  \textrm{div} \bigl( \vartheta  \tilde{m}_{t} \Gamma_{t} D \tilde{z}_{t} + \tilde b_{t}^0
\bigr) = 0,
\end{split}
\end{equation}
with the boundary conditions
\begin{equation}
\label{eq:partie2:systeme:linearise:continuation:bdary}
\tilde{\rho}_{0}=\rho_{0},
\quad
\tilde{z}_{T} = \vartheta
\frac{\delta \tilde G}{\delta m}(\cdot,m_{T})(\rho_{T}) + \tilde{g}_{T}^0. 
\end{equation}
As above the goal is to prove, by increasing step by step the value
of $\vartheta$, that the system \eqref{eq:partie2:systeme:linearise:continuation}, with 
the boundary condition
\eqref{eq:partie2:systeme:linearise:continuation:bdary},
has a unique solution for any $\vartheta \in [0,1]$. 

Following the discussion after 
Theorem \ref{thm:partie:2:existence:uniqueness}, 
notice that, whenever 
$(b_{t})_{t \in [0,T]}$
is a process 
with paths in 
${\mathcal C}^0([0,T],{\mathcal C}^{-(n+\beta)}({\mathbb T}^d))$, for some 
$\beta \in (\alpha',\alpha)$, the quantity $\sup_{t \in [0,T]} \|b_{t}\|_{-(n+\alpha')}$
is a random variable, equal to 
$\sup_{t \in [0,T] \cap {\mathbb Q}} \|b_{t}\|_{-(n+\alpha')}$. Moreover, 
$$\essup_{\omega \in \Omega}
\sup_{t \in [0,T]} \|b_{t}\|_{-(n+\alpha')}
= \sup_{t \in [0,T]} \essup_{\omega \in \Omega}
 \|b_{t}\|_{-(n+\alpha')}.$$ 

Below, we often omit the process
$(\tilde{M}_{t})_{t \in [0,T]}$
when denoting a solution, namely we often write 
$(\tilde{\rho}_{t},\tilde{z}_{t})_{t \in [0,T]}$
instead of
$(\tilde{\rho}_{t},\tilde{z}_{t},\tilde{M}_{t})_{t \in [0,T]}$
so that the backward component is understood implicitly.
We feel that the rule is quite clear now: In a systematic way, 
the martingale component 
has two degrees of regularity less than $(\tilde{z}_{t})_{t \in [0,T]}$. 

Throughout the subsection, we assume that
the assumption of 
Theorem 
\ref{thm:partie2:existence:uniqueness:linearise} 
is in force. 
\subsubsection{Case $\vartheta =0$}
We start with the case $\vartheta=0$:
\begin{Lemma}
\label{lem:partie2:estimate:linear:vartheta=0}
Assume that $\vartheta=0$ in 
the system \eqref{eq:partie2:systeme:linearise:continuation}
with the boundary condition \eqref{eq:partie2:systeme:linearise:continuation:bdary}. 
Then, 
for any $\beta \in (\alpha',\alpha)$,
there is a unique solution 
$(\tilde{\rho},\tilde{z})$, adapted with respect to 
$({\mathcal F}_{t})_{t \in [0,T]}$, 
with paths in 
${\mathcal C}^0([0,T],({\mathcal C}^{n+\beta}(\T^d))' \times {\mathcal C}^{n+1+\beta}(\T^d)))$ and  
 with $\essup_{\omega} \sup_{t \in [0,T]}
(\| \tilde{\rho}_{t} \|_{-(n+\beta)} + \| \tilde{z}_{t} \|_{n+1+\beta})
< \infty$. Moreover,
we can find a constant $C'$, only depending upon $C$, 
the bounds in {\bf (HF1(${\boldsymbol n}$))}
and 
{\bf (HG1(${\boldsymbol n}$+1))},
$T$ and $d$, such that 
\begin{equation*}
\begin{split}
&\essup_{\omega \in \Omega} \sup_{t \in [0,T]} \|\tilde{\rho}_{t}\|_{-(n+\alpha')}
\leq 
C'\bigl( \|{\rho}_{0} \|_{-(n+\alpha')} + 
 \essup_{\omega \in \Omega} \sup_{t \in [0,T]}
\| \tilde{b}_{t}^0 \|_{-(n+\alpha'-1)} \bigr),
\\
&\essup_{\omega \in \Omega} \sup_{t \in [0,T]} \| \tilde{z}_{t}\|_{n+1+\alpha} 
\leq 
C'\bigl(\essup_{\omega \in \Omega} \| \tilde{g}_{T}^0 \|_{n+1+\alpha} + 
 \essup_{\omega \in \Omega} \sup_{t \in [0,T]}
\| \tilde{f}_{t}^0 \|_{n+\alpha} \bigr).
\end{split}
\end{equation*}
\end{Lemma}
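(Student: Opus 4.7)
When $\vartheta = 0$ the forward equation for $\tilde{\rho}$ and the backward equation for $\tilde{z}$ decouple, so I would treat each one independently.

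\textbf{Backward equation.} The equation $d_t \tilde z_t = \{-\Delta \tilde z_t + \langle \tilde V_t, D\tilde z_t\rangle + \tilde f_t^0\}dt + d\tilde M_t$ with $\tilde z_T = \tilde g_T^0$ differs from the one considered in Lemma \ref{lem:cas:vartheta=0} only through the linear transport term $\langle \tilde V_t, D\tilde z_t\rangle$. Since $\tilde V$ is uniformly bounded in $\mathcal{C}^{n+\alpha}$, I would proceed by a Picard iteration modeled on the proof of Proposition \ref{prop:partie2:vartheta1:varpi0}: given $\tilde z^{(k)}$, define $\tilde z^{(k+1)}$ (and the associated martingale) as the solution supplied by Lemma \ref{lem:cas:vartheta=0} (with $n$ replaced by $n+1$) with source $\langle \tilde V_t, D\tilde z^{(k)}_t\rangle + \tilde f_t^0$ and terminal datum $\tilde g_T^0$. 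The difference $\tilde w^{(k)} := \tilde z^{(k+1)} - \tilde z^{(k)}$ solves the same linear equation with vanishing data and source $\langle \tilde V_t, D\tilde w^{(k-1)}_t\rangle$, so Lemma \ref{lem:reg} yields
\[
\essup_{\omega \in \Omega}\|\tilde w^{(k)}_t\|_{n+1+\alpha}\;\leq\; C\int_t^T \frac{\essup_{\omega \in \Omega} \|\tilde w^{(k-1)}_s\|_{n+1+\alpha}}{\sqrt{s-t}}\,ds.
\]
Multiplying by $e^{\mu t}$ and integrating in $t$ provides, for $\mu$ large enough, a strict contraction for the norm $\int_0^T e^{\mu t}\,\essup_\omega \|\tilde z_t\|_{n+1+\alpha}\,dt$, exactly as in the proof of Proposition \ref{prop:partie2:vartheta1:varpi0}. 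The resulting fixed point $\tilde z$ (together with its martingale part, recovered as in Lemma \ref{lem:cas:vartheta=0} by setting $\tilde M_t = \tilde z_t - \tilde z_0 + \int_0^t(-\Delta \tilde z_s + \langle \tilde V_s, D\tilde z_s\rangle + \tilde f_s^0)ds$) is the unique solution in the class, and a final application of Lemma \ref{lem:cas:vartheta=0} to the fixed point source delivers the announced $\essup \sup_t \|\tilde z_t\|_{n+1+\alpha}$ bound.

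\textbf{Forward equation.} The Fokker--Planck equation $\partial_t \tilde\rho_t - \Delta \tilde\rho_t - \mathrm{div}(\tilde\rho_t \tilde V_t) - \mathrm{div}(\tilde b_t^0) = 0$ with $\tilde\rho_0 = \rho_0$ is, for each fixed $\omega$, a deterministic linear equation with random coefficients. I would solve it pathwise by duality. Given $\tau \in (0,T]$ and $\xi \in \mathcal{C}^{n+\alpha'}(\T^d)$ with $\|\xi\|_{n+\alpha'}\leq 1$, let $w$ be the (pathwise) solution of the backward dual equation
\[
-\partial_t w - \Delta w + \langle \tilde V_t, Dw\rangle = 0 \ \text{on} \ [0,\tau], \qquad w_\tau = \xi.
\]
By Lemma \ref{l.estilinear} applied with the H\"older exponent $\alpha'$ (which is admissible since $\alpha' < \alpha$ and $\tilde V$ is bounded in $\mathcal{C}^{n+\alpha}$), one has $\sup_{t\in[0,\tau]}\|w_t\|_{n+\alpha'}\leq C'\|\xi\|_{n+\alpha'}$ pathwise. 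Testing the equation for $\tilde\rho$ against $w$ and integrating by parts yields
\[
\int_{\T^d}\xi\,d\tilde\rho_\tau \;=\; \int_{\T^d} w_0\,d\rho_0 + \int_0^\tau \langle Dw_t, \tilde b_t^0\rangle \,dt,
\]
and taking the supremum over $\xi$ gives the desired bound on $\essup_\omega \sup_t \|\tilde\rho_t\|_{-(n+\alpha')}$. Uniqueness is immediate from the linearity of both equations and the estimates just obtained applied to the difference of two solutions.

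\textbf{Main obstacle.} The essential difficulty is not analytic but measure-theoretic: one must secure that the iterates $\tilde z^{(k)}$ (and the limit $\tilde z$, together with its martingale part $\tilde M$) are genuinely progressively measurable with paths in $\mathcal{C}^0([0,T],\mathcal{C}^{n+1+\beta}(\T^d))$, not merely defined almost surely at each $(t,x)$. This is handled by Lemma \ref{lem:part:2:conditionnement}, exactly as in the proof of Lemma \ref{lem:cas:vartheta=0}, since each Picard iterate is produced by that lemma. A minor but necessary check is that the contraction takes place in the $\mathcal{C}^{n+1+\alpha}$ scale (rather than merely some $\mathcal{C}^{n+1+\beta}$ with $\beta < \alpha$), so that the regularity of the data is preserved in the fixed point; this follows because Lemma \ref{lem:cas:vartheta=0} preserves the sharp H\"older exponent at each iteration.
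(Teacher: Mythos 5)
Your overall strategy matches the paper's: when $\vartheta=0$ the two equations decouple; the backward equation is solved by a Picard iteration modeled on Proposition~\ref{prop:partie2:vartheta1:varpi0}, and the forward equation is handled by duality, using the dual transport-diffusion equation and the pathwise estimate of Lemma~\ref{l.estilinear} (this is the same duality technique that Lemma~\ref{lem:BasicEsti2} employed in the deterministic setting). The backward half of your argument is essentially complete and correct.

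The gap is in the forward half. Your duality computation produces the \emph{a priori estimate}
$\|\tilde\rho_\tau\|_{-(n+\alpha')}\le C'\bigl(\|\rho_0\|_{-(n+\alpha')}+\sup_t\|\tilde b^0_t\|_{-(n+\alpha'-1)}\bigr)$, but it does not by itself \emph{construct} a solution $\tilde\rho$ with paths in ${\mathcal C}^0([0,T],({\mathcal C}^{n+\beta}(\T^d))')$. Knowing what the pairing $\langle\xi,\tilde\rho_\tau\rangle$ must equal does not yet yield a continuously distribution-valued path satisfying the equation in the distributional sense, and for generic $\rho_0\in({\mathcal C}^{n+\alpha'})'$ and $\tilde b^0$ with values in $({\mathcal C}^{n+\alpha'-1})'$ one cannot simply ``solve pathwise.'' The paper handles this by first solving the forward PDE classically when $\rho_0$ and $\tilde b^0$ are smooth, then mollifying the actual data, using the duality estimate (applied to differences) to show the mollified solutions are Cauchy in ${\mathcal C}^0([0,T],({\mathcal C}^{n+\beta}(\T^d))')$, and passing to the limit. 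That same construction is what delivers the adaptedness of $\tilde\rho$ with respect to $({\mathcal F}_t)_t$; you verify measurability only for the backward iterates via Lemma~\ref{lem:part:2:conditionnement} and never address it for $\tilde\rho$. Both the existence-for-rough-data step and the adaptedness check need to be added. (Minor: the sign in your integration-by-parts identity should be $-\int_0^\tau\langle Dw_t,\tilde b^0_t\rangle\,dt$, which of course does not change the bound.)
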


\begin{proof}
When 
$\vartheta=0$,  there is no nonlinearity in the equation
and  
it simply reads
\begin{equation}
\label{eq:partie2:systeme:linearise:continuation:2}
\begin{split}
(i) \; &d_{t} \tilde{z}_{t} = \bigl\{ -  \Delta \tilde{z}_{t} +  \langle \tilde V_{t}(\cdot),D \tilde{z}_{t}
\rangle 
 + \tilde{f}_{t}^0 \bigr\} dt + d \tilde{M}_{t},
\\
(ii) \; &\partial_{t} \tilde{\rho}_{t} -  \Delta \tilde{\rho}_{t}
-  \textrm{div}\bigl(\tilde{\rho}_{t} \tilde V_{t} 
\bigr) -  \textrm{div} \bigl(  \tilde b_{t}^0
\bigr) = 0,
\end{split}
\end{equation}
with the boundary condition $\tilde{\rho}_{0}={\rho}_{0}$ and
$\tilde{z}_{T} = \tilde{g}_{T}^0$. 

\textit{First step.} Let us first consider the forward equation (\ref{eq:partie2:systeme:linearise:continuation:2}-(ii)). We notice that,
whenever $\rho_{0}$ and $(\tilde{b}_{t}^0)_{t \in [0,T]}$
are smooth in the space variable, the forward equation may be solved pathwise in the classical 
sense. Then, by the same duality technique as in Lemma \ref{lem:BasicEsti2}
(with the restriction that 
the role played by $n$ in the statement of Lemma \ref{lem:BasicEsti2}
is now played by $n-1$ and that the coefficients $c$ and $b$ in the statement of 
Lemma \ref{lem:BasicEsti2} are now 
respectively
denoted by $\tilde{b}^0$
and $\tilde{f}^0$), 
for any $\beta \in [\alpha',\alpha]$,
it holds, $\P$ almost surely, that
\begin{equation}
\label{eq:linear:adapt:deterministic:5}
\begin{split}
&\sup_{t \in [0,T]} \| \tilde{\rho}_{t} \|_{-(n+\beta)} 
\leq C'
\bigl( \| \tilde{\rho}_{0} \|_{-(n+\beta)} + 
\sup_{t \in [0,T]}
\| \tilde{b}_{t}^0 \|_{-(n-1+\beta)} \bigr).
\end{split}
\end{equation}

Whenever $\rho_{0}$ and 
$(\tilde{b}_{t}^0)_{t \in [0,T]}$
are not smooth but
take values in $({\mathcal C}^{n+\alpha'}(\T^d))'$ 
and 
$({\mathcal C}^{n+\alpha'-1}(\T^d))'$
only, 
we can mollify them by a standard convolution argument.
Denoting 
the mollified sequences
by $(\rho_{0}^N)_{N \geq 1}$ and 
$((\tilde{b}_{t}^{0,N})_{t \in [0,T]})_{N \geq 1}$, it is standard to check that, 
for any $\beta \in (\alpha',\alpha)$,
$\P$ almost surely, 
\begin{equation}
\label{eq:partie:2:cv:mollification}
\lim_{N \rightarrow + \infty} 
\bigl( 
\| \rho_{0}^N -  \rho_{0} \|_{-(n+\beta)}
+ \sup_{t \in [0,T]}
\| \tilde{b}_{t}^{0,N} - \tilde{b}_{t} \|_{-(n-1+\beta)}
\bigr) = 0,
\end{equation}
from which, together with  
\eqref{eq:linear:adapt:deterministic:5}, we deduce
that, $\P$ almost surely, the sequence $((\tilde{\rho}_{t}^{N})_{t \in [0,T]})_{N \geq 1}$
is Cauchy in the space ${\mathcal C}([0,T],({\mathcal C}^{n+\beta}(\T^d))')$, 
where each $(\tilde{\rho}_{t}^{N})_{t \in [0,T]}$
denotes the solution of the forward equation (\ref{eq:partie2:systeme:linearise:continuation:2}-(ii))
with inputs $(\rho_{0}^N,(\tilde{b}_{t}^{0,N})_{t \in [0,T]})$. 
With probability $1$ under $\P$, the limit of the Cauchy sequence belongs to 
${\mathcal C}([0,T],({\mathcal C}^{n+\beta}(\T^d))')$
and satisfies \eqref{eq:linear:adapt:deterministic:5}. 
Pathwise, it solves the forward equation. 

Note that the duality techniques of Lemma \ref{lem:BasicEsti2} are valid for any 
solution $(\tilde{\rho}_{t})_{t \in [0,T]}$
of the forward equation in 
(\ref{eq:partie2:systeme:linearise:continuation:2}-(ii)), with paths in ${\mathcal C}^0([0,T],({\mathcal C}^{n+\beta}(\T^d))')$. 
This proves uniqueness to the forward equation. 

Finally, it is plain that the solution is adapted with respect to the filtration 
$({\mathcal F}_{t})_{t \in [0,T]}$. The reason is that
the solutions are constructed as limits of Cauchy sequences,
which may be shown to be adapted by means of a Duhamel type formula. 
\vspace{2pt}

\textit{Second step.}
For the backward component of \eqref{eq:partie2:systeme:linearise:continuation:2}, 
we can adapt Proposition 
\ref{prop:partie2:vartheta1:varpi0}: the solution is adapted, has paths in ${\mathcal C}^0([0,T],{\mathcal C}^{n+1+\beta}(\T^d))$, for any $\beta \in (\alpha',\alpha)$,
and, following \eqref{eq:9:bis}, it satisfies:
\begin{equation*}
\essup_{\omega \in \Omega} \sup_{0 \leq t \leq T} \| \tilde{z}_{t} \|_{n+1+\alpha} 
\leq C' \bigl( \essup_{\omega \in \Omega} \| \tilde g_{T}^0 \|_{n+1+\alpha}
+ \essup_{\omega \in \Omega} \sup_{t \in [0,T]} \|\tilde f_{t}^0 \|_{n+\alpha}
\bigr),
\end{equation*}
which completes the proof. 
\end{proof}

\subsubsection{Stability argument}
The purpose is now to increase $\vartheta$ step by step in order to 
prove that 
\eqref{eq:partie2:systeme:linearise:continuation}--\eqref{eq:partie2:systeme:linearise:continuation:bdary}
has a unique solution. 

We start with the following consequence of Lemma 
\ref{lem:partie2:estimate:linear:vartheta=0}:

\begin{Lemma}
Given some $\vartheta \in [0,1]$,
an initial condition $\tilde{\rho}_{0}$ 
in $({\mathcal C}^{n+\alpha'}(\T^d))'$,
a set of coefficients 
$(\tilde{V}_{t},\tilde{m}_{t},\Gamma_{t})_{t \in [0,T]}$
as in points 2, 3 and 4
of the introduction of 
Subsection \ref{subse:par tie:2:linearization}
 and a set of inputs 
$((\tilde b_{t}^0,\tilde{f}_{t}^0)_{t \in [0,T]},
\tilde{g}_{T}^0)$
as in points 5 and 6
of the introduction of 
Subsection \ref{subse:par tie:2:linearization},
consider 
a solution 
$(\tilde{\rho}_{t},\tilde{z}_{t})_{t \in [0,T]}$
of the system 
\eqref{eq:partie2:systeme:linearise:continuation}
with the boundary condition 
\eqref{eq:partie2:systeme:linearise:continuation:bdary},  
the solution being adapted with respect to the filtration $({\mathcal F}_{t})_{t \in [0,T]}$, having 
paths in the space
${\mathcal C}^0([0,T],({\mathcal C}^{n+\beta}(\T^d))') \times 
{\mathcal C}^0([0,T],{\mathcal C}^{n+1+\beta}(\T^d))$,
for some $\beta \in (\alpha',\alpha)$, 
and satisfying $\essup_{\omega \in \Omega} \sup_{t \in [0,T]}
(\| \tilde{\rho}_{t} \|_{-(n+\beta)} + \| \tilde{z}_{t} \|_{n+1+\beta})
< \infty$. 

Then,   
\begin{equation*}
\essup_{\omega \in \Omega}
\sup_{t \in [0,T]} \Bigl[ 
\| \tilde{\rho}_{t} \|_{-(n+\alpha')}
+ 
\| \tilde{z}_{t} \|_{n+1+\alpha}
\Bigr] < \infty.
\end{equation*}
\end{Lemma}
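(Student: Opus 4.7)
The strategy is to freeze the nonlinear coupling terms in \eqref{eq:partie2:systeme:linearise:continuation}, treat them as additional inputs, and then reduce the problem to the already-solved case $\vartheta=0$ treated in Lemma \ref{lem:partie2:estimate:linear:vartheta=0}. Concretely, given the solution $(\tilde{\rho}_{t},\tilde{z}_{t})_{t\in[0,T]}$, define
\begin{equation*}
\begin{split}
\tilde b_{t}^{0,\sharp} &:= \tilde b_{t}^0 + \vartheta\, \tilde m_{t} \Gamma_{t} D \tilde{z}_{t}, \quad
\tilde f_{t}^{0,\sharp} := \tilde f_{t}^0 - \vartheta\, \frac{\delta \tilde F_{t}}{\delta m}(\cdot,m_{t})(\rho_{t}),\\
\tilde g_{T}^{0,\sharp} &:= \tilde g_{T}^0 + \vartheta\, \frac{\delta \tilde G}{\delta m}(\cdot,m_{T})(\rho_{T}).
\end{split}
\end{equation*}
Then $(\tilde \rho,\tilde z)$ solves the system \eqref{eq:partie2:systeme:linearise:continuation:2} associated with these new inputs (i.e., the $\vartheta=0$ case with boundary data $\tilde\rho_{0}=\rho_{0}$ and $\tilde z_{T}=\tilde g_{T}^{0,\sharp}$). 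The plan is then to apply Lemma \ref{lem:partie2:estimate:linear:vartheta=0}, which will automatically deliver the desired $L^{\infty}(\Omega)$ bounds, provided we verify that the new inputs satisfy assumptions (5--6) of the introduction of Subsection \ref{subse:par tie:2:linearization}.

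For the forward part, since $\tilde z_{t}\in \cC^{n+1+\beta}$ with $n\geq 2$, the function $\Gamma_{t} D\tilde z_{t}$ lies in $\cC^{1}$ uniformly in $(t,\omega)$, so $\tilde m_{t}\Gamma_{t}D\tilde z_{t}$ is a finite signed measure with total mass bounded by $C\,\| \tilde z_{t}\|_{1}$. Embedding $(\cC^{0})'\hookrightarrow (\cC^{n+\alpha'-1})'$ (valid because $n+\alpha'-1\geq 1$) yields $\essup_{\omega}\sup_{t}\|\tilde b_{t}^{0,\sharp}\|_{-(n+\alpha'-1)}<\infty$. For the backward part, the key point is to estimate the $\cC^{n+\alpha}$-norm of the map $x\mapsto \frac{\delta \tilde F_{t}}{\delta m}(x,m_{t})(\rho_{t})$. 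For any multiindex $\ell$ with $|\ell|\leq n$, one has
\begin{equation*}
D_{x}^{\ell}\Bigl[\frac{\delta \tilde F_{t}}{\delta m}(\cdot,m_{t})(\rho_{t})\Bigr](x)=\bigl\langle D_{x}^{\ell}\tfrac{\delta \tilde F_{t}}{\delta m}(x,m_{t},\cdot),\rho_{t}\bigr\rangle_{\cC^{n+\beta},(\cC^{n+\beta})'},
\end{equation*}
and assumption {\bf (HF1({\boldsymbol n}))} together with the embedding $\cC^{n+\alpha}\hookrightarrow \cC^{n+\beta}$ (valid because $\beta<\alpha$) gives $|D^\ell[\cdot]|\leq C\|\rho_{t}\|_{-(n+\beta)}$, while the joint $(n+\alpha,n+\alpha)$-regularity controls the $\alpha$-Hölder seminorm in $x$ of the top-order derivative. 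This yields $\essup_{\omega}\sup_{t}\|\tilde f_{t}^{0,\sharp}\|_{n+\alpha}<\infty$. The same argument applied to $\frac{\delta \tilde G}{\delta m}$ using {\bf (HG1({\boldsymbol n}+1))} shows $\essup_{\omega}\|\tilde g_{T}^{0,\sharp}\|_{n+1+\alpha}<\infty$.

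With all inputs verified, Lemma \ref{lem:partie2:estimate:linear:vartheta=0} immediately gives
\begin{equation*}
\essup_{\omega \in \Omega}\sup_{t \in [0,T]}\|\tilde\rho_{t}\|_{-(n+\alpha')}+\essup_{\omega \in \Omega}\sup_{t \in [0,T]}\|\tilde z_{t}\|_{n+1+\alpha}<\infty,
\end{equation*}
which is the claim. The step I expect to require the most care is the $\cC^{n+\alpha}_{x}$-estimate on $\frac{\delta \tilde F_{t}}{\delta m}(\cdot,m_{t})(\rho_{t})$: one must balance carefully the loss of Hölder exponent between $\alpha$ (which is needed in $x$) and $\beta>\alpha'$ (which is available in $y$ through $\rho_{t}\in(\cC^{n+\beta})'$). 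The joint Hölder norm in {\bf (HF1({\boldsymbol n}))} is precisely what decouples the two variables and allows the required estimate to go through; without it, the bootstrap from $\beta$-regularity to $\alpha$-regularity would fail.
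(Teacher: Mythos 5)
Your proposal is correct and follows exactly the paper's own strategy: absorb the $\vartheta$-coupling terms into new inputs $\hat b^0$, $\hat f^0$, $\hat g^0$, verify that these satisfy the hypotheses (5)--(6) of the introduction of Subsection \ref{subse:par tie:2:linearization} using \textbf{(HF1($\boldsymbol n$))} and \textbf{(HG1($\boldsymbol n$+1))}, and invoke Lemma \ref{lem:partie2:estimate:linear:vartheta=0}. The paper leaves the verification of the input regularity implicit, whereas you spell it out; the substance is the same.
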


\begin{proof}
Given a solution $(\tilde{\rho}_{t},\tilde{z}_{t})_{t \in [0,T]}$
as in the statement, we let
\begin{equation*}
\begin{split}
\hat{b}^0_{t}
= \tilde{b}^0_{t}
+ \vartheta 
\tilde{m}_{t} \Gamma_{t} 
D \tilde{z}_{t}, 
\quad 
\hat{f}^0_{t}
= \tilde{f}^0_{t}
- \vartheta 
\frac{\delta \tilde{F}_{t}}{\delta m}(\cdot,m_{t})(\rho_{t}),
\quad
t \in [0,T] \ ; \quad 
\hat{g}^0_{T}
= \tilde{g}^0_{T} + \vartheta \frac{\delta \tilde{G}}{\delta m}(\cdot,m_{T})
(\rho_{T})
\end{split}
\end{equation*}
Taking benefit from the assumption {\bf (HF1(${\boldsymbol n}$))}, we can check that 
$(\hat{b}_{t}^0)_{t \in [0,T]}$, 
$(\hat{f}_{t}^0)_{t \in [0,T]}$
and
$\hat{g}_{T}^0$
satisfy the same assumptions as  
$(\tilde{b}_{t}^0)_{t \in [0,T]}$, 
$(\tilde{f}_{t}^0)_{t \in [0,T]}$
and
$\tilde{g}_{T}^0$ in the introduction of 
Subsection \ref{subse:par tie:2:linearization}. 
The
result then follows from Lemma 
\ref{lem:partie2:estimate:linear:vartheta=0}.
\end{proof}
\vspace{4pt}

The strategy now relies on a new stability argument, which is the analog of 
Lemma \ref{lem:partie:2:stability}:

\begin{Proposition}
\label{lem:partie2:stability:2}
Given some $\vartheta \in [0,1]$,
two initial conditions $\tilde{\rho}_{0}$ and $\tilde{\rho}_{0}'$
in $({\mathcal C}^{n+\alpha'}(\T^d))'$,
two sets of coefficients 
$(\tilde{V}_{t},\tilde{m}_{t},\Gamma_{t})_{t \in [0,T]}$
and 
$(\tilde{V}_{t}',\tilde{m}_{t}',\Gamma_{t}')_{t \in [0,T]}$
as in points 2, 3 and 4
of the introduction of 
Subsection \ref{subse:par tie:2:linearization}
 and two sets of inputs 
$((\tilde b_{t}^0,\tilde{f}_{t}^0)_{t \in [0,T]},
\tilde{g}_{T}^0)$ and 
$(
(\tilde{b}_{t}^{0\prime},\tilde{f}_{t}^{0\prime})_{t \in [0,T]},
\tilde{g}_{T}^{0\prime})$
as in points 5 and 6
of the introduction of 
Subsection \ref{subse:par tie:2:linearization},
consider 
two solutions $(\tilde{\rho}_{t},\tilde{z}_{t})_{t \in [0,T]}$
and $(\tilde{\rho}_{t}',\tilde{z}_{t}')_{t \in [0,T]}$
of the system 
\eqref{eq:partie2:systeme:linearise:continuation}
with the boundary condition 
\eqref{eq:partie2:systeme:linearise:continuation:bdary}, both 
being adapted with respect to the filtration $({\mathcal F}_{t})_{t \in [0,T]}$, having 
paths in the space
${\mathcal C}^0([0,T],({\mathcal C}^{n+\beta}(\T^d))') \times 
{\mathcal C}^0([0,T],{\mathcal C}^{n+1+\beta}(\T^d))$,
for some $\beta \in (\alpha',\alpha)$, 
and satisfying 
$$
\essup_{\omega \in \Omega} \sup_{t \in [0,T]}
\left(\| \tilde{\rho}_{t} \|_{-(n+\beta)} + \| \tilde{z}_{t} \|_{n+1+\beta}
+\| \tilde{\rho}_{t}' \|_{-(n+\beta)} + \| \tilde{z}_{t}' \|_{n+1+\beta}\right)
< \infty. 
$$
Then,   
it holds that
\begin{equation*}
\begin{split}
&{\mathbb E}
\biggl[ 
\sup_{t \in [0,T]} \| \tilde{z}_{t} - \tilde{z}_{t}' \|_{n+1+\alpha}^2
+
\sup_{t \in [0,T]} 
\| \tilde{\rho}_{t} - \tilde{\rho}_{t}' \|_{-(n+\alpha')}^2
\biggr] 
\\
&\hspace{5pt} \leq C'
\biggl\{ 
\| \tilde{\rho}_{0} - \tilde{\rho}_{0}' \|_{-(n+\alpha')}^2
\\
&\hspace{15pt}+  
{\mathbb E} \biggl[ \sup_{t \in [0,T]}
\| \tilde b_{t}^0 - \tilde b_{t}^{0\prime} \|_{-(n+\alpha'-1)}^2
+  \sup_{t \in [0,T]}
\| \tilde f_{t}^{0} - \tilde f_{t}^{0\prime} \|_{{n+\alpha}}^2
+  
\| \tilde g_{T}^{0} - \tilde g_{T}^{0\prime} \|_{n+1+\alpha}^2
\\
&\hspace{25pt}
+ \sup_{t \in [0,T]}
\Bigl\{
\bigl( \| \tilde{z}_{t}' \|_{n+1+\alpha}^2 + \| \tilde{\rho}_{t}'\|_{-(n+\alpha')}^2
\bigr) \bigl(
\| \tilde{V}_{t} - \tilde{V}_{t}' \|_{n+\alpha}^2
+ [{\mathbf d}_{1}(m_{t},m_{t}')]^2
+ 
\| \Gamma_{t} - \Gamma_{t}' \|_{0}^2
\bigr) \Bigr\}
\biggr] \biggr\},
\end{split}
\end{equation*}
the constant $C'$ only depending upon $C$ in the introduction of 
Subsection \ref{subse:par tie:2:linearization},
$T$, $d$, $\alpha$ and $\alpha'$. 
\end{Proposition}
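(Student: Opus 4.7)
I follow the blueprint of Lemma \ref{lem:partie:2:stability}, now in the linear setting. Set $\tilde W_t := \tilde z_t - \tilde z_t'$ and $\tilde \mu_t := \tilde\rho_t - \tilde\rho_t'$, and split each coefficient mismatch via identities of the form
\begin{equation*}
\tilde m_t \Gamma_t D\tilde z_t - \tilde m_t' \Gamma_t' D\tilde z_t' = \tilde m_t \Gamma_t D\tilde W_t + (\tilde m_t \Gamma_t - \tilde m_t' \Gamma_t') D\tilde z_t', \qquad \tilde V_t \tilde\rho_t - \tilde V_t' \tilde\rho_t' = \tilde V_t \tilde\mu_t + (\tilde V_t - \tilde V_t')\tilde\rho_t',
\end{equation*}
and, analogously, $\frac{\delta \tilde F_t}{\delta m}(\cdot,m_t)(\rho_t) - \frac{\delta \tilde F_t}{\delta m}(\cdot,m_t')(\rho_t') = \frac{\delta \tilde F_t}{\delta m}(\cdot,m_t)(\tilde\mu_t) + [\frac{\delta \tilde F_t}{\delta m}(\cdot,m_t) - \frac{\delta \tilde F_t}{\delta m}(\cdot,m_t')](\rho_t')$ (and likewise for $\tilde G$). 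One sees that $(\tilde\mu,\tilde W)$ solves a system of the same form as \eqref{eq:partie2:systeme:linearise:continuation} driven by the reference coefficients $(\tilde V,\tilde m,\Gamma)$, forced by the prescribed data differences and by error terms of the form (coefficient mismatch) $\times$ (reference solution $\tilde z'$ or $\tilde\rho'$).

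The core step is a coercivity estimate. Applying the stochastic integration by parts of Lemma \ref{lem:partie2:ito:special} to $\int_{\T^d} \tilde W_t \, d\tilde\mu_t$, the contributions $\Delta\tilde W$ and $\langle\tilde V, D\tilde W\rangle$ cancel between the backward and forward equations. Taking expectation to kill the martingale contribution, and using the ellipticity assumption $\Gamma_t \geq C^{-1}\mathrm{Id}$ and the monotonicity hypothesis \eqref{e.monotoneF} (which gives $\int_{\T^d}[\delta \tilde F_t/\delta m](\cdot,m_t)(\tilde\mu_t)\,d\tilde\mu_t \geq 0$ and similarly for $\tilde G$ at time $T$), one arrives at
\begin{equation*}
\vartheta\, C^{-1}\,\E \int_0^T \!\!\! \int_{\T^d} |D\tilde W_t|^2 \, d\tilde m_t\, dt \;\leq\; C'\eta^{-1}\, \mathcal R^2 + \eta\, C'\, \E\bigl[\sup_t\|\tilde W_t\|_1^2\bigr], \quad \eta \in (0,1],
\end{equation*}
where $\mathcal R^2$ denotes the right-hand side of the statement, the bound being obtained by Cauchy--Schwarz and Young on each of the boundary contributions at $t=0,T$ (involving $\tilde\mu_0$, $\tilde g_T^0 - \tilde g_T^{0\prime}$, and the $\tilde G$-mismatch) and on each volume contribution (involving $\tilde f^0 - \tilde f^{0\prime}$, $\tilde b^0 - \tilde b^{0\prime}$, and the products of coefficient mismatches with $\tilde z'$ or $\tilde\rho'$).

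To close the system I combine this coercivity inequality with two linear estimates. First, the pathwise duality technique from the proof of Lemma \ref{lem:BasicEsti2} — testing the forward equation satisfied by $\tilde\mu$ against the solution of the adjoint (heat plus drift $\tilde V$) backward equation with terminal datum $\xi \in \cC^{n+\alpha'}(\T^d)$ — gives
\begin{equation*}
\sup_t \|\tilde\mu_t\|_{-(n+\alpha')}^2 \;\leq\; C' \Bigl(\|\tilde\mu_0\|_{-(n+\alpha')}^2 + \sup_t\|\tilde b_t^0 - \tilde b_t^{0\prime}\|_{-(n+\alpha'-1)}^2 + \int_0^T \!\! \int_{\T^d} |D\tilde W|^2\, d\tilde m\, dt + \mathcal R_\rho^2 \Bigr),
\end{equation*}
with $\mathcal R_\rho^2 \leq C' \sup_t\|\tilde\rho_t'\|_{-(n+\alpha')}^2(\|\tilde V-\tilde V'\|_{n+\alpha}^2 + \|\Gamma-\Gamma'\|_0^2)$. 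Second, Lemma \ref{lem:reg:cond} with $p=2$ applied to the backward equation for $\tilde W$, combined with \textbf{(HF1(}$\boldsymbol{n}$\textbf{))} and \textbf{(HG1(}$\boldsymbol{n+1}$\textbf{))} to convert the $\delta\tilde F/\delta m$- and $\delta\tilde G/\delta m$-terms into bounds involving $\sup_t\|\tilde\mu_t\|_{-(n+\alpha')}$ and $\dk(m_t,m_t')\sup_t\|\tilde\rho_t'\|_{-(n+\alpha')}$, yields
\begin{equation*}
\E\bigl[\sup_t \|\tilde W_t\|_{n+1+\alpha}^2\bigr] \;\leq\; C'\,\E\Bigl[\|\tilde g_T^0 - \tilde g_T^{0\prime}\|_{n+1+\alpha}^2 + \sup_t \|\tilde f_t^0 - \tilde f_t^{0\prime}\|_{n+\alpha}^2 + \sup_t\|\tilde\mu_t\|_{-(n+\alpha')}^2 + \mathcal R_z^2\Bigr],
\end{equation*}
with $\mathcal R_z^2 \leq C'\sup_t\|\tilde z_t'\|_{n+1+\alpha}^2\, \dk^2(m,m')$. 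Substituting the coercivity bound into the estimate for $\tilde\mu$, then the latter into the estimate for $\tilde W$, and choosing $\eta$ small enough that $\eta\,\E[\sup_t\|\tilde W_t\|_1^2] \leq \eta\, \E[\sup_t\|\tilde W_t\|_{n+1+\alpha}^2]$ can be absorbed on the left, produces the announced inequality.

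The main difficulty is not conceptual but organizational: the many error terms generated by the coefficient mismatches must be arranged so that each appears multiplied by precisely the correct reference norm, $\sup_t\|\tilde z_t'\|_{n+1+\alpha}$ or $\sup_t\|\tilde\rho_t'\|_{-(n+\alpha')}$, in order to reproduce the exact form of the right-hand side. A secondary subtlety is to verify the applicability of Lemma \ref{lem:partie2:ito:special}: it requires that $\tilde W$ has paths in $\cC^0([0,T],\cC^{n+1+\beta}(\T^d))$ for some $\beta \in (\alpha',\alpha)$ — so as to be compatible with the dual space $(\cC^{n+\beta}(\T^d))'$ in which $\tilde\mu$ takes its values — and that the drift in the forward equation for $\tilde\mu$ is bounded in $\cC^0$; both are ensured by the standing regularity assumptions on the coefficients and by the hypothesis $n \geq 2$.
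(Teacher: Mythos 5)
Your proposal reproduces the paper's proof essentially verbatim: the coercivity estimate from monotonicity of $\tilde F$, $\tilde G$ and convexity of $\Gamma$ obtained by a duality/integration-by-parts argument, the pathwise duality technique bounding $\sup_t\|\tilde\rho_t-\tilde\rho_t'\|_{-(n+\alpha')}$ by the energy integral $\int_0^T\int_{\T^d}|D(\tilde z_s-\tilde z_s')|^2\,d\tilde m_s\,ds$, and the backward parabolic estimate via Lemma \ref{lem:reg:cond}, all closed by absorbing a small multiple of $\sup_t\|\tilde z_t-\tilde z_t'\|^2$. The one step you pass over too quickly is the justification of the stochastic integration-by-parts identity: Lemma \ref{lem:partie2:ito:special} is stated for measure-valued forward components, whereas here $\tilde\mu_t=\tilde\rho_t-\tilde\rho_t'$ lives only in $(\cC^{n+\beta}(\T^d))'$ and the duality bracket against the martingale part is not \textit{a priori} well defined, which is why the paper first mollifies $\tilde\rho_0$, $\tilde\rho_0'$, $\tilde b^0$, $\tilde b^{0\prime}$, takes expectation to cancel the martingale, and only then relaxes the smoothness by passing to the limit — a point your verification of ``applicability'' implicitly assumes away.
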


 \begin{proof}
\textit{First step.} The first step is to make use of a duality argument. 

We start with the case when 
$\tilde \rho_{0}$, $\tilde \rho_{0}'$, 
$\tilde{b}^0$
and $\tilde{b}^{0\prime}$
are smooth. Letting 
$\hat{b}^0_{t} = \vartheta \tilde{m}_{t}
\Gamma_{t} D \tilde{z}_{t} + 
\tilde{b}^0_{t}$
and 
$\hat{b}^{0\prime}_{t} = \vartheta \tilde{m}_{t}'
\Gamma_{t}' D \tilde{z}_{t}' + 
\tilde{b}^{0\prime}_{t}$, 
for $t \in [0,T]$,
we notice that 
$(\tilde{\rho}_{t})_{t \in [0,T]}$
and 
$(\tilde{\rho}_{t}')_{t \in [0,T]}$
solve the linear equation \textit{(ii)} in 
\eqref{eq:partie2:systeme:linearise:continuation:2}
with $(\tilde{b}^0_{t})_{t \in [0,T]}$
and $(\tilde{b}^{0\prime}_{t})_{t \in [0,T]}$
replaced by $(\hat{b}^0_{t})_{t \in [0,T]}$
and $(\hat{b}^{0\prime}_{t})_{t \in [0,T]}$
respectively. By Lemma 
\ref{lem:partie2:estimate:linear:vartheta=0}
with 
$(\tilde{b}^0_{t})_{t \in [0,T]}$
in 
\eqref{eq:partie2:systeme:linearise:continuation:2}
equal to 
$(\hat b_{t}^0)_{t \in [0,T]}$ and 
with $n$ in the statement of Lemma 
\ref{lem:partie2:estimate:linear:vartheta=0} replaced by 
$n-1$, we deduce that 
$(\tilde{\rho}_{t})_{t \in [0,T]}$
and $(\tilde{\rho}_{t}')_{t \in [0,T]}$
have
bounded paths in
${\mathcal C}^0([0,T],({\mathcal C}^{n-1+\beta}(\T^d))')$, 
for the same $\beta \in (\alpha',\alpha)$ as in the statement of 
Proposition \ref{lem:partie2:stability:2}.

 With a suitable adaptation of Lemma 
 \ref{lem:partie2:ito:special}
   and with the same kind of notations 
 as in 
Subsection \ref{subsec:LS}, this permits to 
 expand the infinitesimal variation of the duality 
 bracket 
$\langle \tilde{z}_{t} - \tilde{z}_{t}',\tilde{\rho}_{t} 
 - \tilde{\rho}_{t}' \rangle_{X_{n},X_{n}'}$, 
 with $X_{n}={\mathcal C}^{n+\beta}(\T^d)$. 
We compute
 \begin{equation*}
\begin{split}
 &d_{t} \bigl\langle \tilde{z}_{t} - \tilde{z}_{t}',\tilde{\rho}_{t} 
 - \tilde{\rho}_{t}' \bigr\rangle_{X_{n},X_{n}'}
 \\
 &= \Bigl\{ - \Bigl\langle \tilde D ( \tilde{z}_{t} - \tilde{z}_{t}'), \tilde{\rho}_{t}' 
  \bigl( \tilde{V}_{t}
 -  \tilde{V}_{t}'\bigr) \Bigr\rangle_{X_{n},X_{n}'}  +
 \Bigl\langle 
 D \tilde{z}_{t}',
 \bigl( 
 \tilde{V}_{t}
 - \tilde{V}_{t}' \bigr) \bigl( \tilde{\rho}_{t}
- \tilde{\rho}_{t}'\bigr)
 \Bigr\rangle_{X_{n},X_{n}'} \Bigr\} dt
 \\ 
 &\hspace{5pt} + \Bigl\{ \Big\langle \tilde{f}_{t}^0 - \tilde{f}_{t}^{0\prime},
\tilde{\rho}_{t} 
 - \tilde{\rho}_{t}' \Big\rangle_{X_{n},X_{n}'} dt 
- \Big\langle D \bigl( \tilde{z}_{t}- \tilde{z}_{t}' \bigr), 
 \tilde{b}_{t}^0- \tilde{b}_{t}^{0\prime} \Big\rangle_{X_{n-1},X_{n-1}'} 
 \Bigr\} dt
 \\
&\hspace{5pt} - \vartheta \Bigl\{ \Big\langle \frac{\delta \tilde{F}_{t}}{\delta m}(\cdot,m_{t})
 \bigl( \rho_{t} - \rho_{t}' \bigr), \tilde \rho_{t} - \tilde \rho_t' \Big\rangle_{X_{n},X_{n}'}
 +
 \Big\langle \bigl( 
 \frac{\delta \tilde{F}_{t}}{\delta m}(\cdot,m_{t})
 - \frac{\delta \tilde{F}_{t}}{\delta m}(\cdot,m_{t}')
 \bigr)
 \bigl( \rho_{t}' \bigr), \tilde \rho_{t} - \tilde \rho_t' \Big\rangle_{X_{n},X_{n}'}
 \Bigr\} dt
 \\
&\hspace{5pt} - \vartheta 
\Bigl\{ 
\Big\langle D \bigl( \tilde{z}_{t}- \tilde{z}_{t}' \bigr),
\tilde{m}_{t} \Gamma_{t}
D \bigl( \tilde{z}_{t}- \tilde{z}_{t}' \bigr)
 \Big\rangle_{X_{n},X_{n}'} 
 +
\Big\langle D \bigl( \tilde{z}_{t}- \tilde{z}_{t}' \bigr),
\bigl( \tilde{m}_{t} \Gamma_{t}- \tilde{m}_{t}' \Gamma_{t}'
\bigr) 
D \tilde{z}_{t}' 
 \Big\rangle_{X_{n},X_{n}'}  
 \Bigr\}dt 
 \\
&\hspace{15pt} + d_t M_{t},
 \end{split} 
 \end{equation*}
 where $(M_{t})_{0 \leq t \leq T}$ is a martingale
 and where we applied Remark 
 \ref{rem:partie2:notation:2}
 to define $(\rho_{t})_{t \in [0,T]}$
 and $(\rho_{t}')_{t \in [0,T]}$. 
 An important fact in the proof is that 
 the martingale part 
 in 
 \eqref{eq:partie2:systeme:linearise:continuation:2}
  has
continuous paths in 
${\mathcal C}^0([0,T],\cC^{n-1+\beta}(\T^d))$, which 
permits to 
give a sense to the duality bracket (in $x$) with $(\tilde{\rho}_{t}-\tilde{\rho}_{t}')_{t \in [0,T]}$, 
since $(\tilde{\rho}_{t}-\tilde{\rho}_{t}')_{t \in [0,T]}$
is here assumed to have continuous 
paths
in
${\mathcal C}^0([0,T],(\cC^{n-1+\beta}(\T^d))')$. 
Similarly, the duality bracket of $(\tilde{z}_{t}-\tilde{z}_{t}')_{t \in [0,T]}$ 
with the Laplacian 
of $(\tilde{\rho}_{t}-\tilde{\rho}_{t}')_{t \in [0,T]}$ makes sense
and, conversely,  the duality bracket of $(\tilde{\rho}_{t})_{t \in [0,T]}$ 
with the Laplacian 
of $(\tilde{z}_{t}-\tilde{z}_{t}')_{t \in [0,T]}$ makes sense as well, the two of them 
canceling with one another. 

Of course, the goal is to relax the smoothness 
assumption made 
on $\tilde{\rho}_{0}$, 
$\tilde{\rho}_{0}'$, $\tilde{b}^0$ and $\tilde{b}^{0\prime}$. 
Although it was pretty straightforward to do in the deterministic case, 
it is more difficult here because of the additional martingale term. 
As already mentioned, the martingale term is defined as a duality 
bracket between a path with values in 
${\mathcal C}^0([0,T],\cC^{n-1+\beta}(\T^d))$
and a path with values in
${\mathcal C}^0([0,T],\cC^{-(n-1+\beta)}(\T^d))$. Of course, the problem is that
 this
 is no more true in the general case that
$(\tilde{\rho}_{t}-\tilde{\rho}_{t}')_{t \in [0,T]}$
has paths in 
${\mathcal C}^0([0,T],\cC^{-(n-1+\beta)}(\T^d))$. 
In order to circumvent the difficulty, a way is to take first the expectation 
in order to cancel the martingale part and then to relax the smoothness 
conditions. 
Taking the expectation in the above formula, we get
(in the mollified setting):
 \begin{align}
 &\frac{d}{dt} \E \bigl[ \bigl\langle \tilde{z}_{t} - \tilde{z}_{t}',\tilde{\rho}_{t} 
 - \tilde{\rho}_{t}' \bigr\rangle_{X_{n},X_{n}'}
 \bigr] \nonumber
 \\
 &= \Bigl\{ - \E  \Bigl[ 
 \Bigl\langle \tilde D ( \tilde{z}_{t} - \tilde{z}_{t}'), \tilde{\rho}_{t}' 
  \bigl( \tilde{V}_{t}
 -  \tilde{V}_{t}'\bigr) \Bigr\rangle_{X_{n},X_{n}'}
 \Bigr]  +
\E  \Bigl[ \Bigl\langle 
 D \tilde{z}_{t}',
 \bigl( 
 \tilde{V}_{t}
 - \tilde{V}_{t}' \bigr) \bigl( \tilde{\rho}_{t}
- \tilde{\rho}_{t}'\bigr)
 \Bigr\rangle_{X_{n},X_{n}'} \Bigr] \Bigr\} \nonumber
 \\ 
 &\hspace{5pt} + \Bigl\{ 
 \E \Bigl[
 \Big\langle \tilde{f}_{t}^0 - \tilde{f}_{t}^{0\prime},
\tilde{\rho}_{t} 
 - \tilde{\rho}_{t}' \Big\rangle_{X_{n},X_{n}'} 
 \Bigr] - \E \Bigl[ \Big\langle D \bigl( \tilde{z}_{t}- \tilde{z}_{t}' \bigr), 
 \tilde{b}_{t}^0- \tilde{b}_{t}^{0\prime} \Big\rangle_{X_{n-1},X_{n-1}'} 
 \Bigr]
 \Bigr\} \nonumber
  \\
&\hspace{5pt} - \vartheta \Bigl\{ 
\E \Bigl[ 
\Big\langle \frac{\delta \tilde{F}_{t}}{\delta m}(\cdot,m_{t})
 \bigl( \rho_{t} - \rho_{t}' \bigr), \tilde \rho_{t} - \tilde \rho_t' \Big\rangle_{X_{n},X_{n}'}
 \Bigr]
 \label{eq:duality:martingale}
\\
&\hspace{150pt}
 +
\E \Bigl[ \Big\langle \bigl( 
 \frac{\delta \tilde{F}_{t}}{\delta m}(\cdot,m_{t})
 - \frac{\delta \tilde{F}_{t}}{\delta m}(\cdot,m_{t}')
 \bigr)
 \bigl( \rho_{t}' \bigr), \tilde \rho_{t} - \tilde \rho_t' \Big\rangle_{X_{n},X_{n}'}
 \Bigr]
 \Bigr\} \nonumber
 \\
&\hspace{5pt} - \vartheta 
\Bigl\{ 
\E \Bigl[ \Big\langle D \bigl( \tilde{z}_{t}- \tilde{z}_{t}' \bigr),
\tilde{m}_{t} \Gamma_{t}
D \bigl( \tilde{z}_{t}- \tilde{z}_{t}' \bigr)
 \Big\rangle_{X_{n},X_{n}'} 
 \Bigr]
 +
 \E \Bigl[
\Big\langle D \bigl( \tilde{z}_{t}- \tilde{z}_{t}' \bigr),
\bigl( \tilde{m}_{t} \Gamma_{t}- \tilde{m}_{t}' \Gamma_{t}'
\bigr) 
D \tilde{z}_{t}' 
 \Big\rangle_{X_{n},X_{n}'}  
 \Bigr]
 \Bigr\}. \nonumber
 \end{align}
Whenever $\tilde{\rho}_{0}$, 
$\tilde{\rho}_{0}'$, $\tilde{b}^0$ and 
$\tilde{b}^{0\prime}$ are not 
smooth
(and thus just satisfy the assumption in the statement of 
Proposition 
\ref{lem:partie2:stability:2}), we can mollify them in the same 
way as in 
the first step of 
Lemma 
\ref{lem:partie2:estimate:linear:vartheta=0}. 
We call $(\tilde{\rho}_{0}^{N})_{p \geq 1}$, 
$(\tilde{\rho}_{0}^{\prime,N})_{p \geq 1}$,
$(\tilde{b}^{0,N}_{t})_{p \geq 1}$
and $(\tilde{b}^{0\prime,N}_{t})_{p \geq 1}$
the mollifying sequences. For any $\beta' \in (\alpha',\alpha)$
and $\P$ almost surely, 
the two sequences respectively converge to 
$\tilde{\rho}_{0}$ and $\tilde{\rho}_{0}'$
in norm $\| \cdot \|_{-(n+\beta')}$ and the two last ones 
respectively converge to $\tilde{b}^0_{t}$ and 
$\tilde{b}^{0\prime}_{t}$ in norm $\| \cdot \|_{-(n-1+\beta')}$,
uniformly in $t \in [0,T]$. With 
$(\tilde{\rho}_{t},\tilde{z}_{t})_{t \in [0,T]}$
and $(\tilde{\rho}_{t}',\tilde{z}_{t}')_{t \in [0,T]}$
the original solutions given by the statement of 
Proposition \ref{lem:partie2:stability:2}, 
we 
denote, for each $N \geq 1$, by $(\tilde{\rho}_{t}^{N},\tilde{z}_{t}^{N})_{t \in [0,T]}$
and
$(\tilde{\rho}_{t}^{\prime,N},\tilde{z}_{t}^{\prime,N})_{t \in [0,T]}$ 
the respective solutions to 
\eqref{eq:partie2:systeme:linearise:continuation:2}, but 
with 
$(\tilde{b}^0_{t},\tilde{f}_{t}^0,\tilde{g}^0_{T})_{t \in [0,T]}$
respectively replaced by 
\begin{equation*}
\begin{split}
&\Bigl(\hat{b}^{0,N}_{t} = \tilde{b}_{t}^{0,N} + \vartheta
\tilde{m}_{t} \Gamma_{t} D \tilde{z}_{t},
\hat{f}^{0}_{t} = \tilde{f}_{t}^{0} - \vartheta
\frac{\delta F}{\delta m}(\cdot,m_{t})(\rho_{t}),
\hat{g}^0_{T}= \tilde{g}_{T}^0 + \vartheta 
\frac{\delta G}{\delta m}(\cdot,m_{T})(\rho_{T})
\Bigr)_{t \in [0,T]},
\\
\textrm{\rm and} \  &\Bigl(\hat{b}^{0\prime,N}_{t} = \tilde{b}_{t}^{0\prime,N} + \vartheta
\tilde{m}_{t}' \Gamma_{t}' D \tilde{z}_{t}',
\hat{f}^{0\prime}_{t} = \tilde{f}_{t}^{0\prime} - \vartheta
\frac{\delta F}{\delta m}(\cdot,m_{t}')(\rho_{t}'),
\hat{g}^{0\prime}_{T} = \tilde{g}_{T}^{0\prime} + \vartheta
\frac{\delta G}{\delta m}(\cdot,m_{T}')(\rho_{T}')
\Bigr)_{t \in [0,T]}.
\end{split}
\end{equation*}
By linearity of 
\eqref{eq:partie2:systeme:linearise:continuation:2}
and by Lemma 
\ref{lem:partie2:estimate:linear:vartheta=0}, we have that 
$(\tilde{\rho}_{t}^{N})_{N \geq 1}$
and $(\tilde{\rho}_{t}^{\prime,N})_{N \geq 1}$
converge to $\tilde{\rho}_{t}$
and $\tilde{\rho}_t'$ in norm $\| \cdot \|_{-(n+\beta)}$, 
uniformly in $t \in [0,T]$, 
and that $(\tilde{z}_{t}^{N})_{N \geq 1}$
and $(\tilde{z}_{t}^{\prime,N})_{N \geq 1}$
converge to $\tilde{z}_{t}$
and $\tilde{z}_t'$ in norm $\| \cdot \|_{n+1+\beta}$, 
uniformly in $t \in [0,T]$.

Then, 
we may write down the analogue of 
\eqref{eq:duality:martingale}
for any mollified solution 
$(\tilde{\rho}_{t}^{N},
\tilde{z}_{t}^{N})_{N \geq 1}$
(pay attention that the formulation
of
\eqref{eq:duality:martingale}
for the mollified solutions
 is slightly different
since the mollified solutions only satisfy an approximating 
version of \eqref{eq:partie2:systeme:linearise:continuation}). 
Following
\eqref{eq:partie:2:cv:mollification}, we can 
pass to the limit under the symbol $\E$. 
By Lemma
\ref{lem:partie2:estimate:linear:vartheta=0}, 
we can easily exchange the almost sure convergence and the 
symbol $\E$, proving that the identity 
\eqref{eq:duality:martingale}
holds true 
under the standing assumption on $\tilde{\rho}_{0}$, 
$\tilde{\rho}_{0\prime}$, 
$(\tilde{b}^{0}_{t})_{t \in [0,T]}$
and  $(\tilde{b}^{0\prime}_{t})_{t \in [0,T]}$. 
 
Using the convexity of $\Gamma$ and 
the monotonicity of $\tilde{F}$, we deduce that
\begin{equation*}
\begin{split}
&{\mathbb E}
\left[ \bigl\langle \tilde{z}_{T} - \tilde{z}_{T}',\tilde{\rho}_{T} 
 - \tilde{\rho}_{T}' \bigr\rangle_{X_{n},(X_{n})'} \right]
 + C^{-1} \vartheta
 {\mathbb E}
 \biggl[ \int_{t}^T 
\biggl(
 \int_{\T^d}
 \big\vert 
 D \bigl(
 \tilde{z}_{s} - \tilde{z}_{s}' \bigr)
 \big\vert^2 d\tilde{m}_{s}
 \biggr)
 ds
 \biggr]
 \\
&\hspace{15pt} 
\leq 
{\mathbb E}
\bigl[ 
\bigl\langle \tilde{z}_{0} - \tilde{z}_{0}',\tilde{\rho}_{0} 
 - \tilde{\rho}_{0}' \bigr\rangle_{X_{n},X_{n}'}  \bigr]
+ C' {\mathbb E}
\biggl[ \int_{t}^T 
\Theta \Bigl( 
\| \tilde{\rho}_s - \tilde{\rho}_{s}' \|_{{-(n+\alpha')}} +
\| \tilde{z}_s - \tilde{z}_{s}' \|_{n+1+\alpha} \Bigr) ds
\biggr],
\end{split}
\end{equation*}
where
\begin{equation*}
\begin{split}
&\Theta :=
\| \tilde{\rho}_{0} - \tilde{\rho}_{0} \|_{-(n+\alpha')}
+
\| \tilde{g}_{T}^0 - \tilde{g}_{T}^{0\prime} \|_{n+1+\alpha}
\\
&\hspace{30pt}
+
\sup_{s \in [0,T]}
\Bigl[
\| \tilde{f}_{s}^0 - \tilde{f}_{s}^{0\prime} \|_{{n+\alpha}}
+
\| \tilde{b}_{s}^0 - \tilde{b}_{s}^{0\prime} \|_{-(n+\alpha'-1)}
\\
&\hspace{50pt}
+ 
\bigl( \| \tilde{z}_{s}' \|_{n+1+\alpha} + \| \tilde{\rho}_{s}'\|_{-(n+\alpha')}
\bigr) \bigl(
\| \tilde{V}_{s} - \tilde{V}_{s}' \|_{n+\alpha}
+ {\mathbf d}_{1}(m_{s},m_{s}')
+ 
\| \Gamma_{s} - \Gamma_{s}' \|_{0}
\bigr)
\Bigr].
\end{split}
\end{equation*}
Recalling that 
\begin{equation*}
\begin{split}
&\langle
\tilde{z}_{T} - \tilde{z}_{T}',\tilde{\rho}_{T} - \tilde{\rho}_{T}' \rangle_{X_{n},X_{n}'} 
\\
&= 
\vartheta \Big\langle 
\frac{\delta \tilde G}{\delta m}(\cdot,m_{T})(\rho_{T}-\rho_{T}'),
\tilde{\rho}_{T} - \tilde{\rho}_{T}' \Big\rangle_{X_{n},X_{n}'}
\\
&\hspace{15pt}
+ 
\vartheta \Big\langle 
\bigl(
\frac{\delta \tilde G}{\delta m}(\cdot,m_{T})
- \frac{\delta \tilde G}{\delta m}(\cdot,m_{T}')
\bigr)
(\rho_{T}'),
\tilde{\rho}_{T} - \tilde{\rho}_{T}' \Big\rangle_{X_{n},X_{n}'}
 + 
\langle
\tilde{g}_{T}^0 - \tilde{g}_{T}^{0\prime},\tilde{\rho}_{T}
-\tilde{\rho}_{T}'
\rangle_{X_{n},X_{n}'} 
\\
&\geq 
- C' \Theta
\| \tilde{\rho}_{T} - \tilde{\rho}_{T}' \|_{-(n+\alpha')},
\end{split}
\end{equation*}
where we have used the monotonicity of $G$ to deduce the second line,  
we thus get 
\begin{equation}
\label{eq:partie2:systeme:linearise:continuation:10}
\begin{split}
&\vartheta
 {\mathbb E}
 \biggl[ \int_{0}^T 
 \biggl( \int_{\T^d}
 \big\vert 
 D \bigl(
 \tilde{z}_{s} - \tilde{z}_{s}' \bigr)
 \big\vert^2
 d \tilde{m}_{s} \biggr)
 ds
 \biggr]
 \\
&\hspace{15pt} 
\leq 
C' {\mathbb E}
\biggl[ 
\Theta
\biggl( 
\| \tilde{z}_{0} - \tilde{z}_{0}' \|_{n+1+\alpha}
+
\| \tilde{\rho}_{T} - \tilde{\rho}_{T}' \|_{-(n+\alpha')}
\\
&\hspace{30pt}
+ \int_{0}^T \bigl( 
\| \tilde{\rho}_s - \tilde{\rho}_{s}' \|_{-(n+\alpha')} 
+ \| \tilde{z}_s - \tilde{z}_{s}' \|_{n+1+\alpha}
\bigr) ds
\biggr)
\biggr].
\end{split}
\end{equation}

\textit{Second step.}
As a second step, 
we follow the strategy used in the deterministic case in order to 
estimate 
$(\| \tilde\rho_{t} - \tilde{\rho}_{t}'\|_{-(n+\alpha')})_{t \in [0,T]}$
in terms of 
$\int_{0}^T 
(\int_{\T^d}
 \vert 
 D \bigl(
 \tilde{z}_{s} - \tilde{z}_{s}' \bigr)
 \vert^2
 d \tilde{m}_{s}) ds$
 in the left-hand side of \eqref{eq:partie2:systeme:linearise:continuation:10}. 

We use again a duality argument. 
Given $\xi \in {\mathcal C}^{n+\alpha}(\T^d)$ and $\tau \in [0,T]$, 
we consider the 
solution $(\tilde w_{t})_{t \in [0,\tau]}$,
with paths in ${\mathcal C}^{0}([0,\tau],{\mathcal C}^{n+\beta}(\T^d))$,
to the backward PDE:
\begin{equation}
\label{eq:partie:2:bounds:linear:regularization:2}
\partial_{t} \tilde{w}_{t} = \bigl\{ -  \Delta \tilde{w}_{t} 
+ \langle \tilde V_{t}(\cdot), D \tilde{w}_{t} \rangle \bigr\},
\end{equation}
with the terminal boundary condition $\tilde{w}_{\tau} = \xi$. 
Pay attention that the solution is not adapted. It satisfies (see the proof in the last step below), with probability 
$1$, 
\begin{equation}
\label{eq:partie:2:bounds:linear:regularization}
\begin{split}
&\forall t \in [0,\tau], \quad \|\tilde{w}_{t} \|_{n+\alpha'} \leq 
C'
\| \xi \|_{n+\alpha'},
\\
&\forall t \in [0,\tau), \quad \|\tilde{w}_{t} \|_{n+1+\alpha'} \leq \frac{C'}{\sqrt{\tau-t}} 
\| \xi \|_{n+\alpha'}.
\end{split} 
\end{equation}
Then, 
letting $X_{n}^- = {\mathcal C}^{n+\alpha'}(\T^d)$
and 
following the end of the proof of Lemma 
\ref{lem:BasicEsti2}, we have
\begin{equation*}
\begin{split}
&d_{t} \langle \tilde{w}_{t},\tilde{\rho}_{t}
-\tilde{\rho}_{t}'
\rangle_{X_{n}^-,(X_{n}^-)'}
\\
&= - \Big\langle D \tilde{w}_{t}, 
 \tilde{b}_{t}^0- \tilde{b}_{t}^{0\prime} \Big\rangle_{X_{n-1}^-,(X_{n-1}^-)'} dt \
{+} \
\Big\langle D \tilde{w}_{t}, (\tilde{V}_{t}'-\tilde{V}_{t}) 
\tilde{\rho}_{t}' \Big\rangle_{X_{n}^-,(X_{n}^-)'} dt 
\\
&\hspace{5pt} -\vartheta 
\Big\langle D \tilde{w}_{t} ,
\tilde{m}_{t} \Gamma_{t}
D \bigl( \tilde{z}_{t}- \tilde{z}_{t}' \bigr)
 \Big\rangle_{X_{n}^-,(X_{n}^-)'} dt
 - \vartheta
 \Big\langle D \tilde{w}_{t},
 (\tilde{m}_{t} \Gamma_{t}
 - \tilde{m}_{t}' \Gamma_{t}')
 D \tilde{z}_{t}'
 \Big\rangle_{X_{n}^-,(X_{n}^-)'} dt,
\end{split}
\end{equation*}
so that 
\begin{equation*}
\begin{split}
&\langle \xi ,\tilde{\rho}_{\tau}
-\tilde{\rho}_{\tau}'
\rangle_{X_{n}^-,(X_{n}^-)'}
\leq C' 
\| \xi \|_{n+\alpha'}
\biggl[
\Theta
+
\vartheta \int_{0}^\tau
\biggl( \int_{\T^d} 
\big| D (\tilde{z}_{s} - \tilde{z}_{s}') \big\vert^2 d \tilde{m}_{s}
\biggr)^{1/2} ds  \biggr].
\end{split}
\end{equation*}
Therefore,
\begin{equation}
\label{eq:partie2:systeme:linearise:continuation:11}
\begin{split}
&\| \tilde{\rho}_{\tau} - \tilde{\rho}_{\tau}' \|_{-(n+\alpha')}
\leq C'  
\biggl[ \Theta 
+ \vartheta  
\biggl( 
\int_{0}^T\int_{\R^d} 
\big| D (\tilde{z}_{s} - \tilde{z}_{s}') \big\vert^2 d \tilde{m}_{s}
\biggr)^{1/2} ds  \biggr].
\end{split}
\end{equation}
Plugging
\eqref{eq:partie2:systeme:linearise:continuation:11}
into \eqref{eq:partie2:systeme:linearise:continuation:10}, we obtain
\begin{equation}
\label{eq:partie2:systeme:linearise:continuation:10:b}
\begin{split}
\vartheta
 {\mathbb E}
 \biggl[ \int_{0}^T 
 \biggl( \int_{\T^d}
 \big\vert 
 D \bigl(
 \tilde{z}_{s} - \tilde{z}_{s}' \bigr)
 \big\vert^2
 d \tilde{m}_{s} \biggr)
 ds
 \biggr]
\leq 
C' {\mathbb E}
\Bigl[ 
\Theta
\Bigl( \Theta
+ 
  \sup_{t \in [0,T]} \| \tilde{z}_{t} - \tilde{z}_{t}' \|_{n+1+\alpha}
  \Bigr) \Bigr].
\end{split}
\end{equation}
Therefore,
\begin{equation}
\label{eq:partie2:systeme:linearise:continuation:10:c}
\begin{split}
&
\E \Bigl[
\sup_{t \in [0,T]} 
\| \tilde{\rho}_{t} - \tilde{\rho}_{t}' \|_{-(n+\alpha')}^2
\Bigr]
\leq C'  
\E \Bigl[ \Theta \Bigl( \Theta 
+  \sup_{t \in [0,T]} \| \tilde{z}_{t} - \tilde{z}_{t}' \|_{n+1+\alpha}
\Bigr)  
\Bigr].
\end{split}
\end{equation}

\textit{Third step.} We now combine the two first steps to get an estimate of
$(\| \tilde{z}_{t}- \tilde{z}_{t}'\|_{n+1+\alpha})_{t \in [0,T]}$. 
Following the proof of
\eqref{eq:partie:2:linear:typical:method:2}
on the linear equation 
\eqref{eq:partie:2:linear:typical:method:1}
and using the assumptions
{\bf (HF1(${\boldsymbol n}$))}
and 
{\bf (HG1(${\boldsymbol n}$+1))}, 
we get
 that 
\begin{equation}
\label{eq:partie2:systeme:linearise:continuation:61}
\begin{split}
{\mathbb E}
\bigl[\sup_{t \in [0,T]}
\| \tilde{z}_{t} - \tilde{z}_{t}'\|_{n+1+\alpha}^2
\bigr] \leq 
{\mathbb E} \biggl[ 
\Theta^2
+
\| \tilde{\rho}_{T} - \tilde{\rho}_{T}'\|_{-(n+\alpha')}^2
+ 
\int_{0}^T 
 \| \tilde{\rho}_{s} - \tilde{\rho}_{s}' \|_{-(n+\alpha')}^2
 ds
\biggr]. 
\end{split}
\end{equation}
By
\eqref{eq:partie2:systeme:linearise:continuation:10:c}, 
we easily complete the proof. 

It just remains to prove 
\eqref{eq:partie:2:bounds:linear:regularization}. 
The first line follows from Lemma \ref{l.estilinear}. 
The second line may be proved as follows. 
Following \eqref{eq:partie2:proof:existence:vartheta=1:varpi=0},
we have, with probability $1$,
\begin{equation}
\label{eq:partie2:duality:argument:100}
\forall t \in [0,\tau), \quad 
\| \tilde{w}_{t} \|_{n+1+\alpha'} \leq 
C' \biggl( 
\frac{
\| \xi \|_{n+\alpha'}
}{\sqrt{\tau -t}}
+ \int_{t}^{\tau} \frac{\| \tilde{w}_{s} \|_{n+1+\alpha'}}{\sqrt{s-t}}
ds
\biggr). 
\end{equation}
Integrating and allowing 
the constant $C'$ to increase from line to line,
we have, for all $t \in [0,\tau)$,
\begin{equation*}
\begin{split}
&\int_{t}^{\tau}
\frac{\| \tilde{w}_{s} \|_{n+1+\alpha'}}{\sqrt{s-t}} ds 
\\
&\leq 
C' \biggl[
\| \xi \|_{n+\alpha'}
\int_{t}^{\tau}
\frac{1}{\sqrt{\tau -s} \sqrt{s-t}} ds
+ \int_{t}^{\tau}
\| \tilde{w}_{r} \|_{n+1+\alpha'}
\biggl(
\int_{t}^{r}
 \frac{1}{\sqrt{r-s}\sqrt{s-t}}
ds
\biggr) dr \biggr]
\\
&\leq C' \biggl[
\| \xi \|_{n+\alpha'}
+ \int_{t}^{\tau}
\| \tilde{w}_{r} \|_{n+1+\alpha'}
 dr \biggr].
\end{split}
\end{equation*}
Plugging the above estimate into 
\eqref{eq:partie2:duality:argument:100}, we get that 
\begin{equation*}
\forall t \in [0,\tau), \quad 
\sqrt{\tau -t}
\| \tilde{w}_{t} \|_{n+1+\alpha'} \leq 
C' 
\biggl( 
\| \xi \|_{n+\alpha'} + 
\int_{t}^{\tau}
\sqrt{\tau -r}
\| \tilde{w}_{r} \|_{n+1+\alpha'}
dr
\biggr),
\end{equation*}
which yields, by Gronwall's lemma,
\begin{equation*}
\label{eq:partie2:duality:argument:101}
\forall t \in [0,\tau), \quad 
\| \tilde{w}_{t} \|_{n+1+\alpha'} \leq 
\frac{C'}{\sqrt{\tau-t}} \| \xi \|_{n+\alpha'},
\end{equation*}
which is the required bound. 
 \end{proof}

\subsubsection{A priori estimate}
 
A typical example of application
of Proposition
\ref{lem:partie2:stability:2}
 is to choose:
$\tilde{\rho}_{0}'=0$, $(\tilde b^{0\prime},\tilde f^{0\prime},\tilde g^{0\prime})
\equiv
(0,0,0)$, $\tilde{V} \equiv \tilde{V}'$, $\Gamma
\equiv \Gamma'$, in which case 
\begin{equation*}
\bigl( \tilde{\rho}',\tilde{z}' \bigr) \equiv (0,0).
\end{equation*}
Then, Proposition
\ref{lem:partie2:stability:2}
provides an \textit{a priori} $L^2$ estimate of the solutions 
to 
\eqref{eq:partie2:systeme:linearise}. 
(Pay attention that the constant $C$ in the statement 
depends upon the 
smoothness assumptions satisfied by 
$\tilde{V}$.)
The following corollary shows that the 
$L^2$ bound can be turned into 
an $L^\infty$ bound. It reads as extension of Lemma 
\ref{lem:partie2:estimate:linear:vartheta=0} to the case when 
$\vartheta$ may be non zero:
\begin{Corollary}
\label{cor:partie2:systeme:linearise:linfty}
Given $\vartheta \in [0,1]$, an initial condition $\tilde{\rho}_{0}$ in $({\mathcal C}^{n+\alpha'}(\T^d))'$
and a set of inputs $((\tilde b_{t}^0,\tilde{f}_{t}^0)_{t \in [0,T]},
\tilde{g}_{T}^0)$
as in points 1--6
in the introduction of 
Subsection \ref{subse:par tie:2:linearization}, 
consider 
an adapted solution $(\tilde{\rho}_{t},\tilde{z}_{t})_{t \in [0,T]}$
of the system 
\eqref{eq:partie2:systeme:linearise:continuation}--\eqref{eq:partie2:systeme:linearise:continuation:bdary}, 
with paths in the space 
${\mathcal C}^0([0,T],({\mathcal C}^{n+\beta}(\T^d))') \times 
{\mathcal C}^0([0,T],{\mathcal C^{n+1+\beta}}(\T^d))$
for some $\beta \in (\alpha',\alpha)$, 
such that 
$$\essup_{\omega \in \Omega}
\sup_{t \in [0,T]} 
\Bigl( \| \tilde{z}_{t}  \|_{n+1+\beta}
+
\| \tilde{\rho}_{t} \|_{-(n+\beta)} 
\Bigr) < \infty.$$
Then, we can find a constant $C'$, only depending upon $C$, $T$, $d$, 
$\alpha$ and $\alpha'$, such that 
\begin{equation}
\label{eq:cor:partie2:systeme:linearise:linfty:1}
\begin{split}
&
\essup_{\omega \in \Omega}
\sup_{t \in [0,T]}
\Bigl( \| \tilde{z}_{t}  \|_{n+1+\alpha}
+
\| \tilde{\rho}_{t} \|_{-(n+\alpha')}
\Bigr)
\\
&\hspace{5pt} \leq C'
\biggl( 
\| \tilde{\rho}_{0}\|_{-(n+\alpha')}
+  
\essup_{\omega \in \Omega}
\Bigl[
\| \tilde g_{T}^{0}  \|_{n+1+\alpha}
+
\sup_{t \in [0,T]}
\Bigl(
\| \tilde f_{t}^{0}  \|_{{n+\alpha}}
+  
\| \tilde b_{t}^0 \|_{-(n+\alpha'-1)}
\Bigr)
\Bigr]
\biggr).
\end{split}
\end{equation}
For another initial condition $\tilde{\rho}_{0}'$ in $({\mathcal C}^{n+\alpha'}(\T^d))'$
and another set of inputs $((\tilde b_{t}^{0\prime},\tilde{f}_{t}^{0\prime})_{t \in [0,T]},
\tilde{g}_{T}^{0,\prime})$
as in points 1--6
in the introduction of 
Subsection \ref{subse:par tie:2:linearization}, 
consider 
an adapted solution $(\tilde{\rho}_{t}',\tilde{z}_{t}')_{t \in [0,T]}$
of the system 
\eqref{eq:partie2:systeme:linearise:continuation}--\eqref{eq:partie2:systeme:linearise:continuation:bdary}, 
with paths in the space 
${\mathcal C}^0([0,T],({\mathcal C}^{n+\beta}(\T^d))') \times 
{\mathcal C}^0([0,T],{\mathcal C^{n+1+\beta}}(\T^d))$
for the same $\beta \in (\alpha',\alpha)$ as above, 
such that,  
$$\essup_{\omega \in \Omega}
\sup_{t \in [0,T]} 
\Bigl( \| \tilde{z}_{t}'  \|_{n+1+\beta}
+
\| \tilde{\rho}_{t}' \|_{-(n+\beta)} 
\Bigr) < \infty.$$
Then, we can find a constant $C'$, only depending upon $C$, $T$, $d$, 
$\alpha$ and $\alpha'$ and on 
\begin{equation*}
\begin{split}
&\| \tilde{\rho}_{0}\|_{-(n+\alpha')}
+
\| \tilde{\rho}_{0}'\|_{-(n+\alpha')}
+
\essup_{\omega \in \Omega}
\bigl[
\| \tilde g_{T}^{0}  \|_{n+1+\alpha}
+
\| \tilde g_{T}^{0 \prime}  \|_{n+1+\alpha}
\bigr]
\\
&\hspace{15pt} +  
\essup_{\omega \in \Omega}
\sup_{t \in [0,T]}
\Bigl[
\| \tilde f_{t}^{0}  \|_{{n+\alpha}}
+\| \tilde f_{t}^{0 \prime}  \|_{{n+\alpha}}
+  
\| \tilde b_{t}^0 \|_{-(n+\alpha'-1)}
+
\| \tilde b_{t}^{0\prime} \|_{-(n+\alpha'-1)}
\Bigr],
\end{split}
\end{equation*}
such that 
\begin{equation}
\label{eq:cor:partie2:systeme:linearise:linfty:2}
\begin{split}
&\essup_{\omega \in \Omega} 
\sup_{t \in [0,T]} 
\Bigl[ 
\| \tilde{z}_{t} - \tilde{z}_{t}' \|_{n+1+\alpha}^2
+
\| \tilde{\rho}_{t} - \tilde{\rho}_{t}' \|_{-(n+\alpha')}^2
\Bigr]
\\
&\hspace{5pt} \leq C'
\biggl\{ 
\| \tilde{\rho}_{0} - \tilde{\rho}_{0}' \|_{-(n+\alpha')}^2
\\
&\hspace{15pt}
+ \essup_{\omega \in \Omega}
\biggl(
\| \tilde g_{T}^{0} - \tilde g_{T}^{0\prime} \|_{n+1+\alpha}^2
+\sup_{t \in [0,T]}
\Bigl[
\| \tilde b_{t}^0 - \tilde b_{t}^{0\prime} \|_{-(n+\alpha'-1)}^2
+ 
\| \tilde f_{t}^{0} - \tilde f_{t}^{0\prime} \|_{{n+\alpha}}^2
\Bigr]
\\
&\hspace{15pt}+  
\essup_{\omega \in \Omega}
\sup_{t \in [0,T]}
\Bigl[
\| \tilde{V}_{t} - \tilde{V}_{t}' \|_{n+\alpha}^2
+ {\mathbf d}_{1}^2(m_{t},m_{t}')
+
\| \Gamma_{t} - \Gamma_{t}' \|_{0}^2
\Bigr] \biggr) \biggr\}.
\end{split}
\end{equation}

\end{Corollary}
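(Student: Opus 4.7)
We first observe that (i) is a corollary of (ii): it suffices to apply (ii) with $\tilde{\rho}_{0}'=0$, vanishing inputs $(\tilde{b}^{0\prime},\tilde{f}^{0\prime},\tilde{g}^{0\prime}_{T})\equiv(0,0,0)$ and identical coefficients $(\tilde{V}',\tilde{m}',\Gamma')=(\tilde{V},\tilde{m},\Gamma)$; for this choice, the zero pair $(\tilde{\rho}',\tilde{z}')\equiv(0,0)$ solves the system, and \eqref{eq:cor:partie2:systeme:linearise:linfty:2} produces \eqref{eq:cor:partie2:systeme:linearise:linfty:1}. Only the stability estimate \eqref{eq:cor:partie2:systeme:linearise:linfty:2} must be established. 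The strategy is to promote the $L^{2}$ bound of Proposition \ref{lem:partie2:stability:2} to an $L^{\infty}$ bound by conditioning on $\mathcal{F}_{t}$ and restarting the system at time $t$, exactly in the spirit of the third step of the proof of Theorem \ref{thm:partie:2:existence:uniqueness}.

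The key point is that the martingale increment appearing in the duality identity \eqref{eq:duality:martingale} has zero conditional expectation given $\mathcal{F}_{t}$, so the whole derivation of \eqref{eq:partie2:systeme:linearise:continuation:10}--\eqref{eq:partie2:systeme:linearise:continuation:61} in the proof of Proposition \ref{lem:partie2:stability:2} may be repeated on the interval $[t,T]$ with $\E[\,\cdot\,|\mathcal{F}_{t}]$ in place of $\E$ and with $(\tilde{\rho}_{t},\tilde{\rho}_{t}')$ as random initial conditions. This yields, $\P$-almost surely, the conditional stability estimate
\begin{equation*}
\E\biggl[\sup_{s\in[t,T]}\Bigl(\|\tilde{z}_{s}-\tilde{z}_{s}'\|_{n+1+\alpha}^{2}+\|\tilde{\rho}_{s}-\tilde{\rho}_{s}'\|_{-(n+\alpha')}^{2}\Bigr)\,\Big|\,\mathcal{F}_{t}\biggr]\leq C'\,\widehat{\Theta}(t)^{2},
\end{equation*}
where $\widehat{\Theta}(t)$ collects the initial gap $\|\tilde{\rho}_{t}-\tilde{\rho}_{t}'\|_{-(n+\alpha')}$ together with the essential supremum (in $\omega$) of the right-hand side data of \eqref{eq:cor:partie2:systeme:linearise:linfty:2} restricted to $[t,T]$. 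Since the process $(\tilde{z}_{t})_{t\in[0,T]}$ is $(\mathcal{F}_{t})$-adapted, $\|\tilde{z}_{t}-\tilde{z}_{t}'\|_{n+1+\alpha}$ is $\mathcal{F}_{t}$-measurable and the conditional bound collapses to the pathwise inequality $\|\tilde{z}_{t}-\tilde{z}_{t}'\|_{n+1+\alpha}\leq C'\,\widehat{\Theta}(t)$.

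To bound $\|\tilde{\rho}_{t}-\tilde{\rho}_{t}'\|_{-(n+\alpha')}$ pathwise, I re-run the duality argument of the second step of the proof of Proposition \ref{lem:partie2:stability:2} one more time, but now directly without taking any expectation. The auxiliary backward PDE \eqref{eq:partie:2:bounds:linear:regularization:2} for the test function $(\tilde{w}_{s})_{s\in[0,\tau]}$ can be solved pathwise for each $\omega\in\Omega$, and the pairing $\langle\tilde{w}_{s},\tilde{\rho}_{s}-\tilde{\rho}_{s}'\rangle$ expands without generating any stochastic integral since the forward equation for $\tilde{\rho}-\tilde{\rho}'$ has absolutely continuous time-variation. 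This delivers the pathwise analogue of \eqref{eq:partie2:systeme:linearise:continuation:11},
\begin{equation*}
\|\tilde{\rho}_{\tau}-\tilde{\rho}_{\tau}'\|_{-(n+\alpha')}\leq C'\biggl[\Theta_{\tau}+\vartheta\biggl(\int_{0}^{\tau}\int_{\T^{d}}|D(\tilde{z}_{s}-\tilde{z}_{s}')|^{2}\,d\tilde{m}_{s}\,ds\biggr)^{1/2}\biggr],
\end{equation*}
with $\Theta_{\tau}$ the pathwise restriction of $\Theta$ to $[0,\tau]$. Inserting the pathwise $L^{\infty}$ bound on $\tilde{z}_{s}-\tilde{z}_{s}'$ obtained in the previous paragraph and taking $\essup_{\omega}$ over $\tau\in[0,T]$ closes the estimate \eqref{eq:cor:partie2:systeme:linearise:linfty:2}.

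The main technical obstacle is to legitimize the restarted conditional stability identity, i.e.\ to justify that the mollification procedure used to derive \eqref{eq:duality:martingale} at the level of distributional initial data passes under $\E[\,\cdot\,|\mathcal{F}_{t}]$ and commutes with the progressively-measurable selection of the conditional expectations. This is a routine consequence of the adaptedness of all coefficients, of Lemma \ref{lem:part:2:conditionnement}, and of the fact that the constants appearing in \eqref{eq:partie2:systeme:linearise:continuation:10:b} and \eqref{eq:partie2:systeme:linearise:continuation:61} are deterministic and depend only on $C$, $T$, $d$, $\alpha$ and $\alpha'$. Once this is in place, the rest is book-keeping.
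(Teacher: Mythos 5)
Your plan correctly identifies the two key ingredients that also appear in the paper's own proof: restarting the conditional $L^2$ stability estimate of Proposition \ref{lem:partie2:stability:2} at each time $t$, using adaptedness of $\tilde{z}_t$ to collapse the conditional bound to a pathwise bound, and then closing the estimate on $\tilde{\rho}$ via a pathwise duality argument plus Gronwall. However, there is a logical gap: you cannot deduce \eqref{eq:cor:partie2:systeme:linearise:linfty:1} from \eqref{eq:cor:partie2:systeme:linearise:linfty:2} the way you propose, because your derivation of \eqref{eq:cor:partie2:systeme:linearise:linfty:2} already uses \eqref{eq:cor:partie2:systeme:linearise:linfty:1}.

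Concretely, the quantity $\Theta$ appearing in the proof of Proposition \ref{lem:partie2:stability:2} contains the factor
$\sup_{s\in[t,T]}\bigl(\|\tilde{z}_s'\|_{n+1+\alpha}+\|\tilde{\rho}_s'\|_{-(n+\alpha')}\bigr)\bigl(\|\tilde{V}_s-\tilde{V}_s'\|_{n+\alpha}+\dk(m_s,m_s')+\|\Gamma_s-\Gamma_s'\|_0\bigr)$,
not just the raw differences $\|\tilde{V}_s-\tilde{V}_s'\|_{n+\alpha}+\dots$. Your $\widehat{\Theta}(t)$ is declared to consist of the "data" in the right-hand side of \eqref{eq:cor:partie2:systeme:linearise:linfty:2}, which silently assumes $\sup_s(\|\tilde{z}_s'\|_{n+1+\alpha}+\|\tilde{\rho}_s'\|_{-(n+\alpha')})$ has already been absorbed into the constant; but that absorption is exactly the content of \eqref{eq:cor:partie2:systeme:linearise:linfty:1} applied to the primed system. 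The paper avoids this circularity by proving \eqref{eq:cor:partie2:systeme:linearise:linfty:1} \emph{first}: it applies the conditional restart of Proposition \ref{lem:partie2:stability:2} with $(\tilde{\rho}',\tilde{z}')\equiv(0,0)$ and identical coefficients, in which case the offending factor vanishes identically and $\Theta$ is genuinely data-only; then it runs the pathwise duality-plus-Gronwall step to bound $\|\tilde{\rho}_t\|_{-(n+\alpha')}$; and only afterwards, in a last step, does it plug the resulting $L^\infty$ bounds on both solutions into the stability estimate to get \eqref{eq:cor:partie2:systeme:linearise:linfty:2}. To repair your argument, reverse the logical order: establish the single-solution estimate \eqref{eq:cor:partie2:systeme:linearise:linfty:1} first via the primed-system-equals-zero specialization, then feed those a priori bounds into the two-solution argument.
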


\begin{proof}
We start with the proof of 
\eqref{eq:cor:partie2:systeme:linearise:linfty:1}. 
\vspace{2pt}

\textit{First step.}
The proof relies on the
same trick as that used in the third step of the proof of 
Theorem \ref{thm:partie:2:existence:uniqueness}. In the statement of
Proposition \ref{lem:partie2:stability:2}, 
the initial conditions
$\tilde{\rho}_{0}$ and $\tilde{\rho}_{0}'$ are assumed to be deterministic. 
It can be checked that the same argument holds
when both are random and the expectation is replaced
by a conditional expectation given 
the initial condition. 
More generally, given some time 
$t \in [0,T]$, we may see the 
pair $(\tilde{\rho}_{s},\tilde{z}_{s})_{s \in [t,T]}$
as the solution 
of
the system 
\eqref{eq:partie2:systeme:linearise:continuation}
with the boundary condition 
\eqref{eq:partie2:systeme:linearise:continuation:bdary}, but
on the interval $[t,T]$ instead of $[0,T]$. In particular, 
when
$\tilde{\rho}_{0}'=0$,
$(\tilde b^{0\prime},\tilde f^{0\prime},\tilde g^{0\prime})
\equiv
(0,0,0)$, $\tilde{V} \equiv \tilde{V}'$, $\Gamma
\equiv \Gamma'$
(in which case $(\tilde{\rho}',\tilde{z}') \equiv (0,0)$), 
we get 
\begin{equation*}
\begin{split}
&{\mathbb E}
\Bigl[ 
\sup_{s \in [t,T]} 
\Bigl( \| \tilde{z}_{s}  \|_{n+1+\alpha}^2
+
\| \tilde{\rho}_{s}  \|_{-(n+\alpha')}^2
\Bigr)
\big\vert {\mathcal F}_{t}
\biggr] 
 \leq C'
\Bigl[ 
\| \tilde{\rho}_{t}  \|_{-(n+\alpha')}^2
+  
{\mathbb E} \bigl[ 
\Theta^2
\vert {\mathcal F}_{t}
\bigr] \Bigr],
\end{split}
\end{equation*}
where we have let
\begin{equation*}
\Theta =
\sup_{s \in [t,T]}
\| \tilde b_{s}^0  \|_{-(n+\alpha'-1)}
+  \sup_{s \in [t,T]}
\| \tilde f_{s}^{0}  \|_{n+\alpha}
+  
\| \tilde g_{T}^{0} \|_{n+1+\alpha}.
\end{equation*}

\textit{Second step.}
We now prove the estimate on 
$\tilde{\rho}$. 
From the first step, we deduce that 
\begin{equation}
\label{eq:partie2:linearisation:linfty:2}
\| \tilde{z}_{t} \|^2_{n+1+\alpha} \leq 
C' \Bigl[ 
\| \tilde{\rho}_{t} \|^2_{-(n+\alpha')}
+ {\mathbb E}
\bigl[ \Theta^2
\vert {\mathcal F}_{t} \bigr] 
\Bigr]
\leq C' \Bigl[  
\| \tilde{\rho}_{t} \|^2_{-(n+\alpha')}
+ \essup_{\omega \in \Omega} \Theta^2
\Bigr]
\end{equation}
The above inequality holds 
true for any $t \in [0,T]$, $\P$ almost surely. 
By continuity of both sides, we can exchange the 
`$\P$ almost sure' and the `for all $t \in [0,T]$'.  
Now we can use the same duality trick as in
the proof of 
Proposition
\ref{lem:partie2:stability:2}. 
With the same notations as in 
\eqref{eq:partie:2:bounds:linear:regularization:2}
and 
\eqref{eq:partie:2:bounds:linear:regularization},
we have 
\begin{equation*}
\begin{split}
&\forall t \in [0,\tau], 
\quad \| \tilde{w}_{t} \|_{n+\alpha'} \leq 
C' \| \xi \|_{n+\alpha'}. 
\end{split}
\end{equation*}
Then, we have
\begin{equation*}
\begin{split}
\bigl\langle \tilde{w}_{\tau},
\tilde{\rho}_{\tau}
\bigr\rangle_{X_{n}^-,(X_{n}^-)'}
&\leq 
\bigl\langle \tilde{w}_{0},
\tilde{\rho}_{0}
\bigr\rangle_{X_{n}^-,(X_{n}^-)'}
+ \int_{0}^{\tau}
\| D \tilde{w}_{s} \|_{n+\alpha'-1}
\bigl( \| \tilde{b}_{s}^0 \|_{-(n+\alpha'-1)}
+ \| \tilde{z}_{s} \|_{n+\alpha} \bigr) ds
\\
&\leq C' \| \xi \|_{n+\alpha'}
\biggl( \| \tilde{\rho}_{0} \|_{-(n+\alpha')}
+ \int_{0}^{\tau} 
\Bigl[ 
\| \tilde{\rho}_{s} \|_{-(n+\alpha')}
+ \essup_{\omega \in \Omega} \Theta
\Bigr]
ds \biggr),
\end{split}
\end{equation*}
from which we deduce, by Gronwall's lemma, that 
\begin{equation*}
\begin{split}
\| \tilde{\rho}_{\tau}
\|_{-(n+\alpha')}
&\leq 
C' 
\Bigl( \| \tilde{\rho}_{0} \|_{-(n+\alpha')}
+ \sup_{t \in [0,T]}
 \essup_{\omega \in \Omega} \Theta
 \Bigr), 
\end{split}
\end{equation*}
and thus 
\begin{equation}
\label{eq:partie2:linearisation:linfty}
\essup_{\omega \in \Omega}
\sup_{t \in [0,T]}
\| \tilde{\rho}_{t}
\|_{-(n+\alpha')}
\leq 
C' 
\Bigl(
 \| \tilde{\rho}_{0} \|_{-(n+\alpha')}
+ \essup_{\omega \in \Omega} \Theta
 \Bigr).
\end{equation}
%

By 
\eqref{eq:partie2:linearisation:linfty:2}
and 
\eqref{eq:partie2:linearisation:linfty}, 
we easily get a bound for 
$\tilde{z}$. 
\vspace{5pt}

\textit{Last step.}
It then remains to prove 
\eqref{eq:cor:partie2:systeme:linearise:linfty:2}. 
By means of the first step, we have bounds for
$$\essup_{\omega \in \Omega}
\sup_{t \in [0,T]} 
\Bigl( 
\| \tilde{z}_{t}  \|_{n+1+\alpha}
+\| \tilde{z}_{t}'  \|_{n+1+\alpha}
+
\| \tilde{\rho}_{t} \|_{-(n+\alpha')} 
+
\| \tilde{\rho}_{t}' \|_{-(n+\alpha')} 
\Bigr).$$
Plugging the bound into 
the stability estimate in Proposition 
\ref{lem:partie2:stability:2}, we may proceed in the same way 
as in the two first steps in order to complete the proof. 
\end{proof}

\subsubsection{Proof of Theorem 
\ref{thm:partie2:existence:uniqueness:linearise}}

We now complete the proof of Theorem 
\ref{thm:partie2:existence:uniqueness:linearise}. 
It suffices to prove
\begin{Proposition}
\label{prop:partie2:argument:continuation}
There is an $\varepsilon >0$ 
such that
if, for  some $\vartheta \in [0,1)$ and 
$\beta \in (\alpha',\alpha)$,
for any initial condition 
$\tilde{\rho}_{0}$ in $({\mathcal C}^{n+\alpha'}(\T^d))'$
and any input $((\tilde{b}_{t}^0)_{t \in [0,T]},
(\tilde{f}_{t}^0)_{t \in [0,T]},
\tilde{g}_{T}^0)$ as in the introduction of Subsection 
\ref{subse:par tie:2:linearization},
the system 
\eqref{eq:partie2:systeme:linearise:continuation}--\eqref{eq:partie2:systeme:linearise:continuation:bdary}
has a unique solution
$(\tilde{\rho}_{t},\tilde{z}_{t})_{t \in [0,T]}$ 
with paths in ${\mathcal C}^0([0,T],(\cC^{n+\beta}(\T^d))')
\times 
{\mathcal C}^0([0,T],\cC^{n+1+\beta}(\T^d))$
such that $\essup_{\omega} \sup_{t \in [0,T]}
(\| \tilde{\rho}_{t} \|_{-(n+\beta)} + \| \tilde{z}_{t} \|_{n+1+\beta})
< \infty$, 
$(\tilde{\rho}_{t},\tilde{z}_{t})_{t \in [0,T]}$ 
also satisfying 
$\essup_{\omega} \sup_{t \in [0,T]}
(\| \tilde{\rho}_{t} \|_{-(n+\alpha')} + \| \tilde{z}_{t} \|_{n+1+\alpha})
< \infty$,
then 
unique solvability also holds with 
$\vartheta$ replaced by $\vartheta+\varepsilon$, 
for the same class of initial conditions and of inputs
and in the same space; moreover, solutions also
lie (almost surely) in a bounded subset of the space 
$L^\infty([0,T],(\cC^{(n+\alpha')}(\T^d))')
\times 
L^\infty([0,T],\cC^{n+1+\alpha}(\T^d))$. 
\end{Proposition}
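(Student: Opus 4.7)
The strategy is a standard continuation argument: absorb the extra $\varepsilon$-contribution into the input data of the $\vartheta$-problem and invoke the Banach fixed point theorem. Fix $\beta \in (\alpha',\alpha)$, and consider the Banach space ${\mathcal E}$ of pairs of $({\mathcal F}_t)_{t\in[0,T]}$-adapted processes $(\tilde\rho,\tilde z)$ with paths in ${\mathcal C}^0([0,T],({\mathcal C}^{n+\beta}(\T^d))') \times {\mathcal C}^0([0,T],{\mathcal C}^{n+1+\beta}(\T^d))$ such that
\begin{equation*}
\|(\tilde\rho,\tilde z)\|_{{\mathcal E}} := \essup_{\omega} \sup_{t\in[0,T]} \bigl(\|\tilde\rho_t\|_{-(n+\alpha')}+\|\tilde z_t\|_{n+1+\alpha}\bigr) < \infty.
\end{equation*}
Given $(\tilde\rho^\star,\tilde z^\star)\in{\mathcal E}$, I define $\Psi(\tilde\rho^\star,\tilde z^\star) := (\tilde\rho,\tilde z)$ to be the unique solution at level $\vartheta$ of \eqref{eq:partie2:systeme:linearise:continuation}--\eqref{eq:partie2:systeme:linearise:continuation:bdary} with the same initial condition $\tilde\rho_0$ but with modified inputs
\begin{equation*}
\hat b_t^0 := \tilde b_t^0 + \varepsilon\, \tilde m_t \Gamma_t D\tilde z_t^\star, \qquad \hat f_t^0 := \tilde f_t^0 - \varepsilon \frac{\delta \tilde F_t}{\delta m}(\cdot,m_t)(\rho_t^\star), \qquad \hat g_T^0 := \tilde g_T^0 + \varepsilon \frac{\delta \tilde G}{\delta m}(\cdot,m_T)(\rho_T^\star),
\end{equation*}
$\rho^\star$ being associated to $\tilde\rho^\star$ via Remark \ref{rem:partie2:notation:2}. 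By construction, a fixed point of $\Psi$ is exactly a solution of the system at level $\vartheta+\varepsilon$ for the original inputs, and vice versa.

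The next step is to verify that $\Psi$ is well-defined. Since $\tilde m_t$ is a probability measure, $\Gamma_t\in {\mathcal C}^1(\T^d)$ with uniform bounds, and $D\tilde z^\star$ has paths in ${\mathcal C}^{n+\alpha}(\T^d)$, the product $\tilde m_t\Gamma_t D\tilde z_t^\star$ is a vector-valued measure whose norm in $({\mathcal C}^{n+\alpha'-1}(\T^d))'$ is controlled by $\|\tilde z_t^\star\|_{n+1+\alpha}$. Assumption {\bf (HF1(${\boldsymbol n}$))} yields $\|\frac{\delta \tilde F_t}{\delta m}(\cdot,m_t)(\rho_t^\star)\|_{n+\alpha}\leq C\|\tilde\rho_t^\star\|_{-(n+\alpha')}$, and {\bf (HG1(${\boldsymbol n}$+1))} gives a similar bound in ${\mathcal C}^{n+1+\alpha}$ for the terminal datum. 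The modified inputs thus satisfy the admissibility conditions in the introduction of Subsection \ref{subse:par tie:2:linearization}, the $\vartheta$-solvability hypothesis applies, and the bound \eqref{eq:cor:partie2:systeme:linearise:linfty:1} in Corollary \ref{cor:partie2:systeme:linearise:linfty} ensures that the image lies in ${\mathcal E}$.

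To show that $\Psi$ is a contraction for small $\varepsilon$, I invoke the stability estimate \eqref{eq:cor:partie2:systeme:linearise:linfty:2} with $\tilde V=\tilde V'$, $m=m'$, $\Gamma=\Gamma'$ and common initial condition: the terms involving the coefficients all vanish, and only the differences $\hat b^0-\hat b^{0\prime}$, $\hat f^0-\hat f^{0\prime}$, $\hat g_T^0-\hat g_T^{0\prime}$ contribute. Each of these is $\varepsilon$ times a linear expression in $(\tilde\rho^{\star,1}-\tilde\rho^{\star,2},\tilde z^{\star,1}-\tilde z^{\star,2})$ controlled exactly as in the previous paragraph, yielding
\begin{equation*}
\|\Psi(\tilde\rho^{\star,1},\tilde z^{\star,1}) -\Psi(\tilde\rho^{\star,2},\tilde z^{\star,2})\|_{{\mathcal E}} \leq C' \varepsilon\, \|(\tilde\rho^{\star,1},\tilde z^{\star,1})-(\tilde\rho^{\star,2},\tilde z^{\star,2})\|_{{\mathcal E}},
\end{equation*}
with $C'$ depending only on the structural constants and on the bounds in {\bf (HF1)}--{\bf (HG1)}, but \emph{not} on $\vartheta$. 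Choosing $\varepsilon := 1/(2C')$ makes $\Psi$ a strict contraction; Banach's theorem then provides a unique fixed point, which is the desired solution at level $\vartheta+\varepsilon$. Uniqueness in the broader solution class of the statement is a direct consequence of \eqref{eq:cor:partie2:systeme:linearise:linfty:2} applied with trivial inputs, while the uniform bound asserted on the solution follows from one further application of \eqref{eq:cor:partie2:systeme:linearise:linfty:1}.

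The main subtlety — and really the only obstacle beyond bookkeeping on norms — is the $\vartheta$-independence of the constant $C'$ in Corollary \ref{cor:partie2:systeme:linearise:linfty}. This uniformity is built into the statement of the Corollary and is precisely what allows the induction to close: starting from the base case $\vartheta=0$ supplied by Lemma \ref{lem:partie2:estimate:linear:vartheta=0}, Proposition \ref{prop:partie2:argument:continuation} advances $\vartheta$ by the same fixed increment $\varepsilon$ at each step, reaching $\vartheta=1$ in $\lceil 1/\varepsilon\rceil$ iterations and thus completing the proof of Theorem \ref{thm:partie2:existence:uniqueness:linearise}.
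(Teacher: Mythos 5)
Your overall strategy — absorbing the $\varepsilon$-increment into the input data, invoking $\vartheta$-level solvability, and closing via Banach's fixed point theorem — is exactly the one used in the paper, and the definition of the map $\Psi$ matches the paper's $\Phi_\varepsilon$. However, there is a genuine gap in your contraction argument.

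You invoke \eqref{eq:cor:partie2:systeme:linearise:linfty:2} to get the contraction, asserting that its constant $C'$ depends "only on the structural constants and on the bounds in {\bf (HF1)}--{\bf (HG1)}." This is not what Corollary~\ref{cor:partie2:systeme:linearise:linfty} says: the constant $C'$ in \eqref{eq:cor:partie2:systeme:linearise:linfty:2} is stated to depend also on the $L^\infty$ sizes of \emph{both} sets of inputs $(\tilde b^0,\tilde f^0,\tilde g_T^0)$ and $(\tilde b^{0\prime},\tilde f^{0\prime},\tilde g_T^{0\prime})$ (this dependence enters precisely because the $L^\infty$ stability estimate is derived by plugging $L^\infty$ bounds on the solutions into the $L^2$ estimate of Proposition~\ref{lem:partie2:stability:2}). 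In your setting, these are the modified inputs $\hat b^0,\hat f^0,\hat g_T^0$, which contain terms like $\varepsilon\,\tilde m_t \Gamma_t D\tilde z_t^\star$ and thus depend on the iterates $(\tilde\rho^\star,\tilde z^\star)$ themselves. Without first establishing that $\Psi$ leaves a fixed bounded ball of ${\mathcal E}$ invariant (so that the modified inputs stay uniformly bounded), your contraction constant is not uniform over the iterates, and the fixed-point argument does not close. Your earlier well-definedness step only shows that the image lies in ${\mathcal E}$, not in a bounded subset of it.

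The paper resolves this by splitting the work between the two estimates. It first proves that the set of processes satisfying \eqref{eq:partie2:linearisation:linfty:2:b} (a fixed $L^\infty$ bound determined by the original data) is stable by $\Phi_\varepsilon$ when $2C'c\varepsilon\leq 1$, using \eqref{eq:cor:partie2:systeme:linearise:linfty:1}, whose constant is input-independent. It then proves the contraction on that invariant set via Proposition~\ref{lem:partie2:stability:2}, the $L^2$ stability estimate, whose constant is \emph{also} input-independent (the remaining product term there vanishes since $\tilde V$, $m$ and $\Gamma$ are unchanged), yielding $C'\varepsilon^2<1$. Your argument could be repaired along these lines — establish the invariant ball first, then either invoke Proposition~\ref{lem:partie2:stability:2} directly as the paper does, or accept the iterate-dependent constant of \eqref{eq:cor:partie2:systeme:linearise:linfty:2} and observe that it is uniformly bounded once iterates are confined to that ball — but as written the claim that $C'$ is structural is incorrect and the ball-invariance step is missing.
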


\begin{proof}
Given $\vartheta \in [0,1)$ in the statement,
$\varepsilon >0$,
an initial condition
$\tilde{\rho}_{0} \in ({\mathcal C}^{n+\alpha'}(\T^d))'$, 
an input $((\tilde{b}_{t}^0)_{t \in [0,T]},
(\tilde{f}_{t}^0)_{t \in [0,T]},
\tilde{g}_{T}^0)$
satisfying the prescription 
described in the introduction of Subsection
\ref{subse:par tie:2:linearization}
and 
an adapted process $(\tilde{\rho}_t,\tilde{z}_{t})_{t \in [0,T]}$
($\tilde{\rho}$ having $\tilde{\rho}_{0}$ as initial condition)
with paths in ${\mathcal C}^0([0,T],({\mathcal C}^{n+\beta}(\T^d))')
\times {\mathcal C}^0([0,T],{\mathcal C}^{n+1+\beta}(\T^d))$
such that 
\begin{equation}
\label{eq:partie2:linearisation:linfty:2:c}
\essup_{\omega \in \Omega} \sup_{t \in [0,T]}
\bigl( \| \tilde{\rho}_{t} \|_{-(n+\alpha')}
+  \|\tilde{z}_{t} \|_{n+1+\alpha} \bigr) < \infty,
\end{equation}
we call $\Phi_{\varepsilon}(\tilde{\rho},\tilde{z})$ the pair 
$(\tilde{\rho}_t',\tilde{z}_{t}')_{0 \leq t \leq T}$
solving the system 
\eqref{eq:partie2:systeme:linearise:continuation} 
with respect to the initial condition
$\tilde{\rho}_{0}$
and to the input:
\begin{equation*}
\begin{split}
&\tilde b_{t}^{0\prime} = \varepsilon  \tilde{m}_{t} \Gamma_{t} D \tilde{z}_{t} + \tilde b_{t}^0,
\\
&\tilde f_{t}^{0\prime} =
-\varepsilon \frac{\delta \tilde F_{t}}{\delta m}(\cdot,m_{t})(\rho_{t})
+ \tilde{f}_{t}^0,
\\
& \tilde{g}_{T}^{0\prime}
=
\varepsilon
\frac{\delta \tilde G}{\delta m}(\cdot,m_{T})(\rho_{T}) + \tilde{g}_{T}^0.
\end{split}
\end{equation*}
By assumption, it satisfies
\begin{equation*}
\essup_{\omega \in \Omega} \sup_{t \in [0,T]}
\bigl( \| \tilde{\rho}_{t}' \|_{-(n+\alpha')}
+  \|\tilde{z}_{t}' \|_{n+1+\alpha} \bigr) < \infty,
\end{equation*}
By Corollary \ref{cor:partie2:systeme:linearise:linfty},
\begin{equation*}
\begin{split}
&
\essup_{\omega \in \Omega}
\sup_{t \in [0,T]}
\bigl(
 \| \tilde{z}_{t}'  \|_{n+1+\alpha}
+
\| \tilde{\rho}_{t}' \|_{-(n+\alpha')}
\bigr)
\\
&\leq C'
\biggl[ 
\| \tilde{\rho}_{0}\|_{-(n+\alpha')}
+ c \, \varepsilon  \,
\essup_{\omega \in \Omega}
\sup_{t \in [0,T]}
\bigl( 
 \| \tilde{\rho}_{t}  \|_{-(n+\alpha')}
+ 
\| \tilde{z}_{t}  \|_{n+1+\alpha}
\bigr)
\\
&\hspace{15pt}
+ \essup_{\omega \in \Omega}
\Bigl[
\sup_{t \in [0,T]}
\bigl(
\| \tilde b_{t}^0 \|_{-(n+\alpha'-1)}
+ 
\| \tilde f_{t}^{0}  \|_{n+\alpha}
\bigr)
+  
\| \tilde g_{T}^{0}  \|_{n+1+\alpha}
\Bigr]
\biggr],
\end{split}
\end{equation*}
where $c$ is a constant, 
which
 only depends on the constant $C$ 
 appearing in points 1--6 in introduction 
 of Subsection 
 \ref{subse:par tie:2:linearization}
 and on the bounds appearing in 
 {\bf (HF1(${\boldsymbol n}$))}
and 
{\bf (HG1(${\boldsymbol n}$+1))}. 

In particular, if 
\begin{align}
\label{eq:partie2:linearisation:linfty:2:b}
&\essup_{\omega \in \Omega}
\sup_{t \in [0,T]}
\bigl( 
\| \tilde{z}_{t}  \|_{n+1+\alpha}
+
\| \tilde{\rho}_{t} \|_{-(n+\alpha')}
\bigr)
\\
&\hspace{10pt} \leq 2C'
\biggl( 
\| \tilde{\rho}_{0}\|_{-(n+\alpha')}
+ \essup_{\omega \in \Omega}
\Bigl[
\| \tilde g_{T}^{0}  \|_{n+1+\alpha}
+
\sup_{t \in [0,T]}
\bigl(
\| \tilde b_{t}^0 \|_{-(n+\alpha'-1)}
+ 
\| \tilde f_{t}^{0}  \|_{n+\alpha}
\bigr)
\Bigr]
\biggr), \nonumber
\end{align}
and $2C' c \varepsilon \leq 1$, then
\begin{equation*}
\begin{split}
&\essup_{\omega \in \Omega}
\sup_{t \in [0,T]}
\bigl( 
\| \tilde{z}_{t}'  \|_{n+1+\alpha}
+
\| \tilde{\rho}_{t}' \|_{-(n+\alpha')}
\bigr)
\\
&\hspace{10pt} 
\leq 2C'
\biggl( 
\| \tilde{\rho}_{0}\|_{-(n+\alpha')}
+ \essup_{\omega \in \Omega}
\Bigl[
\| \tilde g_{T}^{0}  \|_{n+1+\alpha}
+
\sup_{t \in [0,T]}
\bigl(
\| \tilde b_{t}^0 \|_{-(n+\alpha'-1)}
+ 
\| \tilde f_{t}^{0}  \|_{n+\alpha}
\bigr)
\Bigr]
\biggr),
\end{split}
\end{equation*}
so that the set of pairs $(\tilde{\rho},\tilde{z})$ that satisfy 
\eqref{eq:partie2:linearisation:linfty:2:c}
and
\eqref{eq:partie2:linearisation:linfty:2:b}
is stable by $\Phi_{\varepsilon}$ for $\varepsilon$ small enough. 

Now, given two pairs 
$(\tilde{\rho}_{t}^1,\tilde{z}_{t}^1)_{t \in [0,T]}$
and 
$(\tilde{\rho}_{t}^2,\tilde{z}_{t}^2)_{t \in [0,T]}$
satisfying 
\eqref{eq:partie2:linearisation:linfty:2:b},
we let 
$(\tilde{\rho}_{t}^{1\prime},\tilde{z}_{t}^{1\prime})_{t \in [0,T]}$
and 
$(\tilde{\rho}_{t}^{2\prime},\tilde{z}_{t}^{2\prime})_{t \in [0,T]}$
be their respective images by $\Phi_{\varepsilon}$. 
We deduce from 
Proposition  \ref{lem:partie2:stability:2} that
\begin{equation*}
\begin{split}
&{\mathbb E}
\Bigl[ 
\sup_{t \in [0,T]} \| \tilde{z}_{t}^{1\prime} - \tilde{z}_{t}^{2\prime}\|_{n+1+\alpha}^2
+
\sup_{t \in [0,T]} 
\| \tilde{\rho}_{t}^{1\prime} - \tilde{\rho}_{t}^{2\prime} \|_{-(n+\alpha')}^2
\Bigr] 
\\
&\hspace{15pt} \leq C'
\varepsilon^2
{\mathbb E}
\Bigl[ 
\sup_{t \in [0,T]} \| \tilde{z}_{t}^1 - \tilde{z}_{t}^2 \|_{n+1+\alpha}^2
+
\sup_{t \in [0,T]} 
\| \tilde{\rho}_{t}^1 - \tilde{\rho}_{t}^2 \|_{-(n+\alpha')}^2
\Bigr],
\end{split}
\end{equation*}
for a possibly new value of the constant $C'$, but still independent of $\vartheta$ and $\varepsilon$. 
Therefore, for $C' \varepsilon^2 <1$ and 
$2C' c \varepsilon \leq 1$, $\Phi_{\varepsilon}$ 
is a contraction on the set of adapted processes $(\tilde{\rho}_{t},\tilde{z}_{t})_{t \in [0,T]}$ 
having paths 
in ${\mathcal C}^{0}([0,T],({\mathcal C}^{n+\beta}(\T^d))')
\times
{\mathcal C}^{0}([0,T],{\mathcal C}^{n+1+\beta}(\T^d))$
and satisfying 
\eqref{eq:partie2:linearisation:linfty:2:b}
(and thus \eqref{eq:partie2:linearisation:linfty:2:c} as well), which forms a closed subset of 
the Banach space
${\mathcal C}^0([0,T],({\mathcal C}^{n+\beta}(\T^d))')
\times 
{\mathcal C}^0([0,T],{\mathcal C}^{n+\beta}(\T^d))$. 
By Picard fixed point theorem, we 
deduce that $\Phi_{\varepsilon}$ has a unique fixed point 
satisfying \eqref{eq:partie2:linearisation:linfty:2:b}.
The fixed point 
solves \eqref{eq:partie2:systeme:linearise:continuation}--\eqref{eq:partie2:systeme:linearise:continuation:bdary},
with $\vartheta$ replaced by 
$\vartheta+\varepsilon$.

Consider now
another solution
to \eqref{eq:partie2:systeme:linearise:continuation}--\eqref{eq:partie2:systeme:linearise:continuation:bdary}
with $\vartheta$ replaced by 
$\vartheta+\varepsilon$,
with paths in a bounded subset of ${\mathcal C}^0([0,T],(\cC^{n+\beta}(\T^d))')
\times 
{\mathcal C}^0([0,T],\cC^{n+1+\beta}(\T^d))$.
By Proposition 
\ref{lem:partie2:stability:2}, it must coincide with the solution we just constructed.
\end{proof}

\newpage

\section{The second-order master equation}
\label{subse:partie:2:first:order:derivative}

Taking benefit of the analysis performed in the previous section on 
the unique solvability of the MFG system, we are now ready to define and investigate the 
solution of the master equation. The principle is the same as in the first-order case: 
the forward component of the MFG system has to be seen as 
the characteristics of the master equation. The regularity of 
the solution of the master equation is then investigated  
through the 
\textit{tangent process} that solves the linearized MFG system.

{\it As in the previous section, the level of common noise $\beta$ is set to $1$ throughout this section. This is without loss of generality and  this makes the notation a little bit simpler. }

\subsection{Construction of the Solution}
\label{subse:construction:solution:master}

\textbf{Assumption.} Throughout the paragraph, we assume that 
the assumption of Theorem \ref{thm:partie:2:existence:uniqueness}
is in force, with $\alpha \in (0,1)$. 
\vspace{5pt}

For any initial distribution $m_{0} \in {\mathcal P}(\T^d)$,
the system 
\eqref{eq:se:3:tilde:HJB:FP}
admits a unique solution so that, following 
the analysis performed in the deterministic setting, 
we may let 
\begin{equation*}
U(0,x,m_{0}) =  \tilde{u}_{0}(x), \quad x \in \T^d.
\end{equation*}
The initialization is here performed at time $0$, but, of course, there is no difficulty in replacing $0$ by any arbitrary time $t_{0} \in [0,T]$, in which case 
the system \eqref{eq:se:3:tilde:HJB:FP}
rewrites 
\begin{equation}
\label{eq:se:3:tilde:HJB:FP:t0}
\begin{split}
&d_{t} \tilde{m}_{t} = \bigl\{ \Delta \tilde{m}_{t}
+{\rm div} \bigl( \tilde{m}_{t} D_{p} \tilde{H}_{t_{0},t}(\cdot,D \tilde{u}_{t})
\bigr) \bigr\} dt,  
\\
&d_{t} \tilde{u}_{t}
=  \bigl\{ -   \Delta \tilde{u}_{t} + \tilde{H}_{t_{0},t}(\cdot,D\tilde{u}_{t}) - 
\tilde{F}_{t_{0},t}(\cdot,m_{t_{0},t}) 
\bigr\} dt
+ d\tilde{M}_{t},
\end{split}
\end{equation}
with 
the initial condition $\tilde{m}_{t_{0}} = m_{0}$
and
the terminal boundary condition $\tilde{u}_{T}
= \tilde{G}_{t_{0}}(\cdot,m_{t_{0},T})$, 
under the prescription that 
\begin{equation}
\label{eq:se:3:tilde:HJB:FP:prescription}
\begin{split}
&m_{t_{0},t} = \bigl( id + \sqrt{2} (W_{t}- W_{t_{0}}) \bigr) \sharp \tilde{m}_{t},
\\
&\tilde{F}_{t_{0},t}(x,\mu) = F \bigl( x + \sqrt{2} (W_{t}-W_{t_{0}}),\mu \bigr),
\\
&\tilde{G}_{t_{0}}(x,\mu) = G \bigl( x + \sqrt{2} (W_{T}-W_{t_{0}}),\mu \bigr),
\\
&\tilde{H}_{t_{0},t}(x,p) =  H \bigl( x + \sqrt{2} (W_{t}-W_{t_{0}}),p
\bigr), \quad x \in \T^d, \ p \in \R^d, \ \mu \in {\mathcal P}(\T^d). 
\end{split}
\end{equation}

It is then possible to let
\begin{equation*}
U(t_{0},x,m_{0}) = \tilde{u}_{t_{0}}(x), \quad x \in \T^d. 
\end{equation*}

We shall often use the following important fact:
\begin{Lemma}
\label{lem:partie2:master:equation}
Given an initial condition $(t_{0},m_{0}) \in [0,T] \times {\mathcal P}(\T^d)$, denote by $(\tilde{m}_{t},\tilde{u}_{t})_{t \in [t_{0},T]}$ the solution of 
\eqref{eq:se:3:tilde:HJB:FP:t0}
with the prescription  
\eqref{eq:se:3:tilde:HJB:FP:prescription}
and with
 $\tilde{m}_{t_{0}} = m_{0}$
as initial condition. Call $m_{t_{0},t}$ the image of 
$\tilde{m}_{t}$ by the random mapping 
$\T^d \ni x \mapsto x + \sqrt{2}(W_{t}-W_{t_{0}})$
that is ${m}_{t_{0},t} = [id + 
\sqrt{2}(W_{t}-W_{t_{0}})] \sharp 
\tilde{m}_{t}$. Then,
for any $t_{0}+h \in [t_{0},T]$,
$\P$ almost surely,
\begin{equation*}
\tilde{u}_{t_{0}+h}(x) =
U\bigl(t_{0}+h,x + \sqrt{2}(W_{t_{0}+h}-W_{t_{0}}),m_{t_{0},t_{0}+h}\bigr), \quad x \in \T^d. 
\end{equation*}
\end{Lemma}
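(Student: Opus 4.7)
Fix $s = t_{0}+h$, and let $(\tilde u_{t}, \tilde m_{t}, \tilde M_{t})_{t \in [t_{0},T]}$ denote the unique solution provided by Theorem \ref{thm:partie:2:existence:uniqueness} to the system \eqref{eq:se:3:tilde:HJB:FP:t0}--\eqref{eq:se:3:tilde:HJB:FP:prescription} with initial condition $\tilde m_{t_{0}} = m_{0}$. The first step of the proof is to exploit the additive structure of the common noise by performing an $\mathcal F_{s}$-measurable spatial shift: for $t \in [s,T]$, I set
\begin{equation*}
\hat u_{t}(y) := \tilde u_{t}\bigl(y - \sqrt{2}(W_{s}-W_{t_{0}})\bigr),
\qquad \hat m_{t} := \bigl(id + \sqrt{2}(W_{s}-W_{t_{0}})\bigr) \sharp \tilde m_{t},
\end{equation*}
together with $\hat M_{t}(y) := \tilde M_{t}(y - \sqrt{2}(W_{s}-W_{t_{0}})) - \tilde M_{s}(y - \sqrt{2}(W_{s}-W_{t_{0}}))$. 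Using the algebraic identity $\tilde H_{t_{0},t}(x,p) = \tilde H_{s,t}(x + \sqrt{2}(W_{s}-W_{t_{0}}),p)$ and its analogues for $\tilde F$ and $\tilde G$, and noting that the coupling measure $\hat m_{s,t} := (id + \sqrt{2}(W_{t}-W_{s})) \sharp \hat m_{t}$ coincides with $m_{t_{0},t}$ by composition of shifts, a direct pathwise computation shows that $(\hat u_{t}, \hat m_{t}, \hat M_{t})_{t \in [s,T]}$ solves the analogue of \eqref{eq:se:3:tilde:HJB:FP:t0}--\eqref{eq:se:3:tilde:HJB:FP:prescription} on $[s,T]$, with initial time $s$, initial measure $\hat m_{s} = m_{t_{0},s}$, and coefficients $\tilde H_{s,\cdot}, \tilde F_{s,\cdot}, \tilde G_{s}$. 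The $\mathcal F_{s}$-measurability of the shift ensures that $(\hat M_{t})_{t \in [s,T]}$ remains an $(\mathcal F_{t})_{t \in [s,T]}$-martingale pointwise in $y$.

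The second step is to identify $\hat u_{s}(y)$ with $U(s, y, m_{t_{0},s})$. Were the initial measure $m_{t_{0},s}$ deterministic, this would be immediate from the very definition of $U(s,\cdot,\cdot)$ combined with the uniqueness part of Theorem \ref{thm:partie:2:existence:uniqueness}. The point is that $m_{t_{0},s}$ is $\mathcal F_{s}$-measurable whereas the increments $(W_{t}-W_{s})_{t \in [s,T]}$ that drive the coefficients $\tilde H_{s,\cdot}, \tilde F_{s,\cdot}, \tilde G_{s}$ are independent of $\mathcal F_{s}$, so the problem reduces to the deterministic case by conditioning on $\mathcal F_{s}$. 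First, I would invoke the stability estimate of Theorem \ref{thm:partie:2:existence:uniqueness}, applied to a countable dense family of deterministic measures and extended by Lipschitz continuity, in order to select a jointly measurable version $(\omega,\mu) \mapsto U(s,\cdot,\mu)(\omega)$; this makes $U(s,\cdot,m_{t_{0},s})$ a well-defined random field. Then, using a regular conditional distribution with respect to $\mathcal F_{s}$ and applying the uniqueness of Theorem \ref{thm:partie:2:existence:uniqueness} on almost every fiber, one concludes that $\hat u_{s}(y) = U(s,y,m_{t_{0},s})$ $\P$-almost surely for every $y$; by joint continuity of both sides in $y$, the exceptional set can be chosen independent of $y$. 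Undoing the definition of $\hat u_{s}$ yields
\begin{equation*}
\tilde u_{s}(x) = \hat u_{s}\bigl(x + \sqrt{2}(W_{s}-W_{t_{0}})\bigr) = U\bigl(s, x + \sqrt{2}(W_{s}-W_{t_{0}}), m_{t_{0},s}\bigr),
\end{equation*}
which is the claimed identity at $s = t_{0}+h$.

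The only delicate point in the argument is the identification carried out in the second step, since the uniqueness result in Theorem \ref{thm:partie:2:existence:uniqueness} is phrased for deterministic initial measures. The conditioning argument rests on two ingredients: the independence of $\mathcal F_{s}$ from the future increments of $W$, and the existence of a jointly measurable version of $U$, itself a consequence of the Lipschitz stability estimate. Everything else is routine: the change of variable on the torus is purely algebraic, and the invariance of the martingale property under the $\mathcal F_{s}$-measurable shift follows from the tower property of conditional expectations.
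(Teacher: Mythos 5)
Your first step coincides with the paper's: both perform the same $\mathcal{F}_{s}$-measurable spatial shift by $\sqrt{2}(W_{s}-W_{t_{0}})$ (with $s=t_{0}+h$) and verify that the shifted triple solves the system on $[s,T]$ with the correctly transformed coefficients and with $m_{t_{0},s}$ as initial datum. The genuine difference lies in how you each deal with the fact that $m_{t_{0},s}$ is random, so that the uniqueness statement of Theorem \ref{thm:partie:2:existence:uniqueness} (phrased for deterministic initial measures) cannot be invoked directly. The paper proceeds by \emph{discretization}: it covers the compact space $\mathcal{P}(\T^d)$ by finitely many sets $A^{1},\dots,A^{N}$ of diameter at most $\varepsilon$, picks representatives $\mu_{i}\in A^{i}$, solves the system deterministically from each $\mu_{i}$, recombines via the indicators $\mathbf{1}_{A^{i}}(m_{t_{0},s})$ — which are $\mathcal{F}_{s}$-measurable, hence independent of $(W_{t}-W_{s})_{t\geq s}$ — and then controls the error by the Lipschitz stability estimate before letting $\varepsilon\to 0$. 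You instead disintegrate along $\mathcal{F}_{s}$ via a regular conditional distribution and apply the deterministic uniqueness on almost every fiber. Both strategies are legitimate; the paper's is more concrete but requires a final compactness and approximation step, while yours is conceptually shorter but pushes the technicalities into the conditioning.

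Where your proposal is thin is precisely in that conditioning step. Under $\P(\cdot\,\vert\,\mathcal{F}_{s})(\omega_{0})$ one must check that $(\hat u_{t},\hat m_{t},\hat M_{t})_{t\in[s,T]}$ is adapted to the filtration generated by the increments $(W_{r}-W_{s})_{r\in[s,t]}$ (rather than merely to $\mathcal{F}_{t}=\mathcal{F}_{s}\vee\sigma((W_{r}-W_{s})_{r\in[s,t]})$, which is bigger); that $(\hat M_{t}(x))_{t\in[s,T]}$ remains a martingale under the conditional measure; and that $(W_{r}-W_{s})_{r\geq s}$ is still a Brownian motion there. None of these is difficult — they all follow from the independence of $(W_{r}-W_{s})_{r\geq s}$ from $\mathcal{F}_{s}$ plus the conditional-Fubini machinery used elsewhere in the paper — but they are exactly the kind of subtleties the paper's discretization is designed to sidestep, and your sketch treats them as routine without spelling them out. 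One small misstatement worth fixing: you speak of selecting a ``jointly measurable version $(\omega,\mu)\mapsto U(s,\cdot,\mu)(\omega)$''; in fact $U(s,\cdot,\mu)$ is deterministic by construction (the system starts afresh at time $s$ with no noise yet consumed), so the measurability you need is the Borel measurability of $\mu\mapsto U(s,\cdot,\mu)$, which is supplied directly by the Lipschitz stability estimate and requires no random-field machinery.
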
 

\begin{proof}
Given $t_{0}$ and $h$ as above, we let
\begin{equation*}
\bar{m}_{t} = \bigl[ 
id + \sqrt{2} \bigl( W_{t_{0}+h} -W_{t_{0}}
\bigr) \bigr] \sharp \tilde{m}_{t},
\
\bar{u}_{t}(x) = \tilde{u}_{t} \bigl[ x - \sqrt{2} 
\bigl( W_{t_{0}+h} - W_{t_{0}} \bigr) \bigr], 
\ t \in [t_{0}+h,T], \ x \in \T^d. 
\end{equation*}
We claim that 
$(\bar{m}_{t},\bar{u}_{t})_{t \in [t_{0}+h,T]}$
is a solution 
of \eqref{eq:se:3:tilde:HJB:FP:t0}--\eqref{eq:se:3:tilde:HJB:FP:prescription}, with $t_{0}$
replaced by $t_{0}+h$ and with $m_{t_{0},t_{0}+h}$
as initial condition. 

The proof is as follows. 
We start with a preliminary remark. For $t \in [t_{0}+h,T]$,
\begin{equation}
\label{eq:partie2:change:variable:t0}
\bigl[ 
id + \sqrt{2} \bigl( W_{t} -W_{t_{0+h}}
\bigr) \bigr]
\sharp 
\bar{m}_{t} = \bigl[ 
id + \sqrt{2} \bigl( W_{t} -W_{t_{0}}
\bigr) \bigr] \sharp \tilde{m}_{t} = m_{t_{0},t}.
\end{equation}

We now prove that 
the pair $(\bar{m}_{t},\bar{u}_{t})_{t_{0}+h \leq t \leq T}$ solves the forward equation in \eqref{eq:se:3:tilde:HJB:FP:t0}. To this end, 
denote by $(X_{t_{0},t})_{t \in [t_{0},T]}$ the solution 
of the SDE 
\begin{equation*}
dX_{t_{0},t} = - D_{p} \tilde{H}_{t_{0},t}\bigl(X_{t_{0},t},D \tilde{u}_{t}(X_{t_{0},t})
\bigr) dt + \sqrt{2} dB_{t}, \quad 
t \in [t_{0},T],
\end{equation*}
the initial condition $X_{t_{0},t_{0}}$ having $m_{0}$ as distribution.
(Notice that the equation is well-posed as $D\tilde{u}$ is known to be Lipschitz in space.) 
Then, the process $(\tilde{X}_{t} = X_{t_{0},t} + \sqrt{2}(W_{t_{0}+h}- W_{t_{0}}))_{t \in [t_{0}+h,T]}$
has $(\bar{m}_{t} = (id + \sqrt{2}(W_{t_{0}+h}-W_{t_{0}}))\sharp \tilde{m}_{t})_{t \in [t_{0}+h,T]}$ as marginal conditional distributions (given $(W_{t})_{t \in [0,T]}$). 
The process satisfies the SDE
\begin{equation*}
\begin{split}
d \tilde{X}_{t} 
&= - D_{p} \tilde{H}_{t_{0},t}
\Bigl(
\tilde{X}_{t} - \sqrt{2}(W_{t_{0}+h}- W_{t_{0}}), 
D \tilde{u}_{t}
\bigl( \tilde{X}_{t} -
\sqrt{2}(W_{t_{0}+h}- W_{t_{0}}) \bigr)
\Bigr) dt 
 + \sqrt{2} dB_{t} 
\\
&=- D_{p} \tilde{H}_{t_{0}+h,t}
\Bigl(
\tilde{X}_{t}, 
D \bar{u}_{t}
\bigl( \tilde{X}_{t} \bigr)
\Bigr) dt  + \sqrt{2} dB_{t}, 
\end{split}
\end{equation*}
which is enough to check that the forward equation holds true, 
with $\bar{m}_{t_{0}+h} = m_{t_{0},t_{0}+h}$
as initial condition, see 
\eqref{eq:partie2:change:variable:t0}. 

We now have
\begin{equation*}
\begin{split}
d_{t} \bar{u}_{t}
&=  \bigl[ -   \Delta \bar{u}_{t} + \bigl\{  \tilde{H}_{t_{0},t}(\cdot,D\tilde{u}_{t}) - 
\tilde{F}_{t_{0},t}(\cdot,m_{t_{0},t})
\bigr\}\bigl(\cdot -  \sqrt{2}( W_{t_{0}+h} - W_{t_{0}}) \bigr) \bigr] dt
 \\
&\hspace{15pt} + d \tilde{M}_{t}\bigl(\cdot -  \sqrt{2}( W_{t_{0}+h} - W_{t_{0}}) \bigr)
\\
&= \bigl[ -   \Delta \bar{u}_{t} + \bigl\{  \tilde{H}_{t_{0}+h,t}(\cdot,D\bar{u}_{t}) - 
\tilde{F}_{t_{0}+h,t}(\cdot,m_{t_{0},t})
\bigr\} \bigr] dt  + d \tilde{M}_{t}\bigl(\cdot -  \sqrt{2}( W_{t_{0}+h} - W_{t_{0}}) \bigr).
\end{split} 
\end{equation*}
Now, \eqref{eq:partie2:change:variable:t0}
says that $m_{t_{0},t}$ reads 
$[ 
id + \sqrt{2} \bigl( W_{t} -W_{t_{0}+h}
\bigr)]
\sharp 
\bar{m}_{t}$, where $(\bar{m}_{t})_{t_{0}+h \leq t \leq T}$ is the 
current
forward component. This matches exactly the prescription on the backward equation 
in \eqref{eq:se:3:tilde:HJB:FP:t0}
and
\eqref{eq:se:3:tilde:HJB:FP:prescription}.

If $m_{t_{0},t_{0}+h}$ was deterministic, we would have, by  
definition of $U$, $U(t_{0}+h,x,m_{t_{0},t_{0}+h})=
\bar{u}_{t_{0}+h}(x)$, $x \in \T^d$, and thus, by definition 
of $\bar{u}_{t_{0}+h}$, 
\begin{equation}
\label{eq:Partie2:U:t0:t0+h} 
\tilde{u}_{t_{0}+h}(x) = U
\bigl(t_{0}+h,x+ \sqrt{2}(W_{t_{0}+h} - W_{t_{0}})
,m_{t_{0},t_{0}+h}\bigr), \quad x \in \T^d. 
\end{equation}
Although the result is indeed correct, the argument is false as 
$m_{t_{0},t_{0}+h}$ is random.

To 
prove
\eqref{eq:Partie2:U:t0:t0+h}, we proceed as follows.
By compactness of
${\mathcal P}(\T^d)$, we can find, for any 
$\varepsilon$, a family 
of $N$ disjoint Borel subsets 
$A^{1},\dots,A^{N} \subset {\mathcal P}(\T^d)$,
each of them being of diameter less than $\varepsilon$,
that covers 
${\mathcal P}(\T^d)$.

For each 
$i \in \{1,\dots,N\}$, we may find $\mu_{i} \in
A^{i}$. We then denote by 
$(\hat{m}_{t}^i,\hat{u}_{t}^i)_{t \in [t_{0}+h,T]}$
the solution of \eqref{eq:se:3:tilde:HJB:FP:t0}--\eqref{eq:se:3:tilde:HJB:FP:prescription}, with $t_{0}$
replaced by $t_{0}+h$ and with $\mu_{i}$
as initial condition. We let
\begin{equation*}
\begin{split}
&\hat{m}_{t} 
= \sum_{i=1}^N \hat{m}_{t}^i {\mathbf 1}_{A^{i}}
\bigl( m_{t_{0},t_{0}+h} \bigr) ,
\\
&\hat{u}_{t} 
= \sum_{i=1}^N \hat{u}_{t}^i {\mathbf 1}_{A^{i}}
\bigl( m_{t_{0},t_{0}+h} \bigr).
\end{split}
\end{equation*}

Since the events $\{m_{t_{0},t_{0}+h} \in A^i\}$, for
each $i =1,\dots,N$, are independent of 
the Brownian motion $(W_{t}-W_{t_{0}+h})_{t \in [t_{0}+h,T]}$, the process $(\hat{m}_{t},\hat{u}_{t})_{t \in [t_{0}+h,T]}$
is a solution 
of \eqref{eq:se:3:tilde:HJB:FP:t0}--\eqref{eq:se:3:tilde:HJB:FP:prescription}, with $t_{0}$
replaced by $t_{0}+h$ and with $\hat{m}_{t_{0},t_{0}+h}$
as initial condition. 
With an obvious generalization of Theorem 
\ref{thm:partie:2:existence:uniqueness}
to cases when the initial conditions are random, we deduce that 
\begin{equation*}
{\mathbb E} \bigl[ \| \bar{u}_{t_{0+h}} - \hat{u}_{t_{0}+h}
\|_{n+\alpha}^2 \bigr] \leq C 
{\mathbb E} \bigl[ {\mathbf d}_{1}^2(\bar{m}_{t_{0}+h},
\hat{m}_{t_{0}+h})
\bigr] =  
C 
\sum_{i=1}^N {\mathbb E} \bigl[
{\mathbf 1}_{A^{i}}
\bigl( m_{t_{0},t_{0}+h} \bigr)
 {\mathbf d}_{1}^2(m_{t_{0},t_{0}+h},
\mu^i)
\bigr].
\end{equation*}
Obviously, the right-hand side is less than $C \varepsilon^2$. 
The
trick is then to say that 
$\hat{u}_{t_{0}+h}^i$ reads $U(t_{0}+h,\cdot,\mu_{i})$. 
Therefore,
\begin{equation*}
\sum_{i=1}^N 
{\mathbb E} \bigl[
{\mathbf 1}_{A^{i}}
\bigl( m_{t_{0},t_{0}+h} \bigr)
 \| \bar{u}_{t_{0+h}} - 
 U(t_{0}+h,\cdot,\mu^i)
\|_{n+\alpha}^2 \bigr] \leq C \varepsilon^2. 
\end{equation*}
Using the Lipschitz property of $U(t_{0}+h,\cdot,\cdot)$
in the measure argument (see Theorem 
\ref{thm:partie:2:existence:uniqueness}), we deduce that 
\begin{equation*}
{\mathbb E} \Bigl[
 \bigl\| \bar{u}_{t_{0+h}} - 
 U\bigl(t_{0}+h,\cdot,
 m_{t_{0},t_{0}+h}
 \bigr)
\bigr\|_{n+\alpha}^2 \Bigr] \leq C \varepsilon^2. 
\end{equation*}
Letting $\varepsilon$ tend to $0$, we complete the proof.
\end{proof}

\begin{Corollary}
\label{cor:partie:2:reg:temps}
For any $\alpha' \in (0,\alpha)$,
we can find a constant $C$ such that, 
for any $t_{0} \in [0,T]$, $h \in [0,T-t_{0}]$, and $m_{0} \in {\mathcal P}(\T^d)$,
\begin{equation*}
\bigl\|
U(t_{0}+h,\cdot,m_{0}) 
- 
U(t_{0},\cdot,m_{0}) 
\bigr\|_{n+\alpha'} \leq C h^{(\alpha-\alpha')/2}.
\end{equation*}
\end{Corollary}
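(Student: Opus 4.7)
The plan is to reduce the time-increment $U(t_0+h,\cdot,m_0) - U(t_0,\cdot,m_0)$ to three pieces using Lemma \ref{lem:partie2:master:equation}. Let $(\tilde u_t, \tilde m_t, \tilde M_t)_{t \in [t_0,T]}$ be the unique solution of \eqref{eq:se:3:tilde:HJB:FP:t0}--\eqref{eq:se:3:tilde:HJB:FP:prescription} with $\tilde m_{t_0}=m_0$, and set $m_{t_0,t} = [id + \sqrt{2}(W_t-W_{t_0})] \sharp \tilde m_t$. By Lemma \ref{lem:partie2:master:equation}, $\tilde u_{t_0+h}(y-\sqrt{2}(W_{t_0+h}-W_{t_0})) = U(t_0+h,y,m_{t_0,t_0+h})$ almost surely. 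Since both $U(t_0+h,\cdot,m_0)$ and $U(t_0,\cdot,m_0)=\tilde u_{t_0}(\cdot)$ are deterministic, inserting $\pm\,\E[U(t_0+h,y,m_{t_0,t_0+h})]$ and $\pm\,\E[\tilde u_{t_0+h}(y)]$ yields
\begin{equation*}
\begin{split}
U(t_0+h,y,m_0) - U(t_0,y,m_0)
&= \E\bigl[U(t_0+h,y,m_0) - U(t_0+h,y,m_{t_0,t_0+h}) \bigr] \\
&\quad + \E\bigl[\tilde u_{t_0+h}\bigl(y-\sqrt{2}(W_{t_0+h}-W_{t_0})\bigr)-\tilde u_{t_0+h}(y)\bigr] \\
&\quad + \E\bigl[\tilde u_{t_0+h}(y)-\tilde u_{t_0}(y)\bigr] =: (A)(y) + (B_1)(y) + (B_2)(y),
\end{split}
\end{equation*}
and I would then estimate each of $(A), (B_1), (B_2)$ in $\cC^{n+\alpha'}$ separately.

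For $(A)$, I would combine the Lipschitz-in-measure estimate $\|U(t_0+h,\cdot,\mu_1) - U(t_0+h,\cdot,\mu_2)\|_{n+\alpha} \leq C \mathbf{d}_{1}(\mu_1,\mu_2)$ provided by Theorem \ref{thm:partie:2:existence:uniqueness} (applied, for each fixed $\omega$, with the deterministic initial data $\mu_2 = m_{t_0,t_0+h}(\omega)$) together with the bound $\E[\mathbf{d}_{1}(m_0,m_{t_0,t_0+h})] \leq C\sqrt{h}$, which follows from coupling $m_0$ with the forward particle at time $t_0+h$ whose Brownian increments contribute at most $C\sqrt h$ to the $L^1$-displacement. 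By Jensen's inequality for H\"older seminorms applied to the expectation, $\|(A)\|_{n+\alpha'}\leq\|(A)\|_{n+\alpha}\leq C\sqrt h \leq Ch^{(\alpha-\alpha')/2}$, the last inequality using $h\leq T$ and $\alpha-\alpha' \leq 1$. For $(B_1)$, I rely on the standard interpolation-type estimate
\begin{equation*}
\|f(\cdot+v)-f\|_{n+\alpha'} \leq C \|f\|_{n+\alpha} \, |v|^{\alpha-\alpha'}, \qquad v\in\R^d,
\end{equation*}
which is a direct consequence of $\min(|x-y|^\alpha,|v|^\alpha)\leq |x-y|^{\alpha'}|v|^{\alpha-\alpha'}$ applied to the H\"older seminorm of $D^n f$. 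Applied with $f=\tilde u_{t_0+h}$ (uniformly bounded in $\cC^{n+\alpha}$ by Theorem \ref{thm:partie:2:existence:uniqueness}) and $v=-\sqrt{2}(W_{t_0+h}-W_{t_0})$, Jensen's inequality and the Gaussian moment bound $\E|W_{t_0+h}-W_{t_0}|^{\alpha-\alpha'} \leq Ch^{(\alpha-\alpha')/2}$ together give $\|(B_1)\|_{n+\alpha'}\leq Ch^{(\alpha-\alpha')/2}$.

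The delicate piece is $(B_2)$. I would integrate the backward equation in \eqref{eq:se:3:tilde:HJB:FP:t0} between $t_0$ and $t_0+h$; taking expectation annihilates the martingale $(\tilde M_t)$ and leaves
\begin{equation*}
(B_2)(y) = \E \int_{t_0}^{t_0+h}\Bigl\{-\Delta \tilde u_s(y) + \tilde H_{t_0,s}\bigl(y,D\tilde u_s(y)\bigr)-\tilde F_{t_0,s}\bigl(y,m_{t_0,s}\bigr)\Bigr\}\, ds.
\end{equation*}
Using the uniform bound $\sup_s \|\tilde u_s\|_{n+\alpha} \leq C$ (and the assumption $n\geq 2$, which is crucial to control $\Delta \tilde u_s$), the integrand is uniformly bounded in $\cC^{n-2+\alpha}$, so that $\|(B_2)\|_{n-2+\alpha}\leq Ch$; the trivial bound $\|(B_2)\|_{n+\alpha}\leq 2C$ also holds. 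The three-line H\"older interpolation $\|f\|_{n+\alpha'} \leq C \|f\|_{n-2+\alpha}^{(\alpha-\alpha')/2} \|f\|_{n+\alpha}^{1-(\alpha-\alpha')/2}$, valid since $n+\alpha' = \tfrac{\alpha-\alpha'}{2}(n-2+\alpha)+\bigl(1-\tfrac{\alpha-\alpha'}{2}\bigr)(n+\alpha)$, then produces $\|(B_2)\|_{n+\alpha'}\leq Ch^{(\alpha-\alpha')/2}$. The main technical point to watch, and the reason why a naive interpolation between $\cC^0$ and $\cC^{n+\alpha}$ is insufficient (it would only yield the weaker exponent $(\alpha-\alpha')/(n+\alpha)$), is precisely to read off the $\cC^{n-2+\alpha}$ bound of order $h$ from the backward equation; this is where the standing assumption $n\geq 2$ enters in an essential way.
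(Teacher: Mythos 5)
Your proof is correct and, for the two pieces $(A)$ and $(B_1)$ (the change of measure and the spatial shift), it coincides in substance with what the paper does after invoking Lemma \ref{lem:partie2:master:equation}. The genuine difference is in how you handle $(B_2) = \E[\tilde u_{t_{0}+h}] - \tilde u_{t_{0}}$. The paper passes through the mild (Duhamel) representation of the backward equation, $\tilde u_{t_{0}} = \E\bigl[ P_{h}\tilde u_{t_{0}+h} - \int_{t_{0}}^{t_{0}+h} P_{s-t_{0}} \psi_{s}\,ds\bigr]$, and then invokes the heat-semigroup estimate $\| (P_{h}-\textit{id}) \tilde u_{t_{0}+h} \|_{n+\alpha'} \leq C h^{(\alpha-\alpha')/2} \| \tilde u_{t_{0}+h} \|_{n+\alpha}$ together with the singular-kernel bound on the integral term. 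You instead integrate the backward SPDE in strong form, kill the martingale by taking expectation, and then interpolate three H\"older spaces $\cC^{n-2+\alpha} \hookleftarrow \cC^{n+\alpha'} \hookleftarrow \cC^{n+\alpha}$ with exponent $\theta = (\alpha-\alpha')/2$. Both are legitimate; the Duhamel route avoids the multiplicative interpolation inequality but needs the mild formulation, while your route is more elementary (strong SPDE plus classical interpolation) and makes the source of the exponent $(\alpha-\alpha')/2$ transparent. Your observation that interpolation between $\cC^0$ and $\cC^{n+\alpha}$ would only yield $h^{(\alpha-\alpha')/(n+\alpha)}$, and that the $\cC^{n-2+\alpha}$ endpoint (hence the hypothesis $n\geq 2$) is the right one, is correct and is the real content of the argument. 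One small point worth noting explicitly: passing the norm through $\E$ in each step uses the fact that the H\"older seminorms are convex, i.e.\ $\| \E[f] \|_{k+\gamma} \leq \E[ \| f \|_{k+\gamma}]$, which you implicitly rely on when bounding $\E[\tilde u_{t_{0}+h}]$ in $\cC^{n+\alpha}$.
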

\begin{proof}
Using the backward equation in 
\eqref{eq:se:3:tilde:HJB:FP:t0}, we have that 
\begin{equation*}
\tilde{u}_{t_{0}}(\cdot) =
{\mathbb E}
\biggl[ P_{h} \tilde{u}_{t_{0}+h}(\cdot)
- \int_{t_{0}}^{t_{0}+h}
P_{s-t_{0}} \bigl( \tilde{H}_{t_{0},t}(\cdot,D 
\tilde{u}_{s}) - \tilde{F}_{t_{0},t}(\cdot,m_{t_{0},s})
\bigr) ds \biggr].  
\end{equation*}
Therefore,
\begin{equation*}
\tilde{u}_{t_{0}}(\cdot) - 
{\mathbb E} \bigl( 
\tilde{u}_{t_{0}+h}(\cdot) \bigr) =
{\mathbb E}
\biggl[ \bigl( P_{h} - \textit{id} \bigr) \tilde{u}_{t_{0}+h}(\cdot)
- \int_{t_{0}}^{t_{0}+h}
P_{s-t_{0}} \bigl( \tilde{H}_{t_{0},t}(\cdot,D 
\tilde{u}_{s}) - \tilde{F}_{t_{0},t}(\cdot,m_{t_{0},s})
\bigr) ds \biggr].  
\end{equation*}
So that 
\begin{equation*}
\begin{split}
\| \tilde{u}_{t_{0}} - {\mathbb E} \bigl(
 \tilde{u}_{t_{0}+h} \bigr)
\|_{n+\alpha'} &\leq  {\mathbb E} \Bigl[ 
\bigl\|
\bigl( P_{h}
- \textit{id} \bigr) \tilde{u}_{t_{0}+h}  \bigr\|_{n+\alpha'}
\Bigr]
\\
&\hspace{15pt} + C
\int_{t_{0}}^{t_{0}+h}
(s-t_{0})^{-1/2}
\bigl\| \tilde{H}_{t_{0},t}(\cdot,D 
\tilde{u}_{s}) - \tilde{F}_{t_{0},t}(\cdot,m_{t_{0},s})
\bigr\|_{n+\alpha'-1} ds.  
\end{split}
\end{equation*}
It is well checked that 
\begin{equation*}
\begin{split}
\E \Bigl[
\bigl\| \bigl(
P_{h} - \textit{id} \bigr) \tilde{u}_{t_{0}+h}
\bigr\|_{n+\alpha'}\Bigl]
&\leq 
C 
h^{(\alpha-\alpha')/2}
\E \Bigl[
\bigl\| 
 \tilde{u}_{t_{0}+h}
\bigr\|_{n+\alpha} \Bigr]
\\
&\leq 
C h^{(\alpha-\alpha')/2},
\end{split}
\end{equation*}
the last line following from Lemma 
\ref{lem:super:reg}. 

Now, by Lemma \ref{lem:partie2:master:equation},
\begin{equation*}
\begin{split}
{\mathbb E} \bigl[ \tilde{u}_{t_{0}+h}
\bigr] 
&= 
{\mathbb E} \bigl[ U\bigl(t_{0}+h,\cdot + \sqrt{2} ( 
W_{t_{0}+h}-W_{t_{0}}),m_{t_{0},t_{0+h}}\bigr)
\bigr] 
\\
&=
{\mathbb E} \bigl[ U\bigl(t_{0}+h,\cdot + \sqrt{2} ( 
W_{t_{0}+h}-W_{t_{0}}),m_{t_{0},t_{0+h}}\bigr)
- 
 U\bigl(t_{0}+h,\cdot ,m_{0}\bigr)
\bigr]  +  U\bigl(t_{0}+h,\cdot ,m_{0}\bigr),
\end{split}
\end{equation*}
where, by Theorem 
\ref{thm:partie:2:existence:uniqueness}, it holds that
\begin{equation*}
\begin{split}
&\Bigl\| {\mathbb E} \bigl[ U\bigl(t_{0}+h,\cdot + \sqrt{2} ( 
W_{t_{0}+h}-W_{t_{0}}),m_{t_{0},t_{0+h}}\bigr)
- 
 U\bigl(t_{0}+h,\cdot ,m_{0}\bigr)
\bigr] \Bigr\|_{n+\alpha'}
\\
&\hspace{15pt} \leq C {\mathbb E} 
\bigl[ \vert  \dk (m_{t_{0},t_{0}+h},m_{0})
\vert 
\bigr] 
\\
&\hspace{30pt} + 
 {\mathbb E} \Bigl[
 \bigl\| U\bigl(t_{0}+h,\cdot
 + \sqrt{2} ( 
W_{t_{0}+h}-W_{t_{0}}),m_{0}\bigr)
- 
 U\bigl(t_{0}+h,\cdot ,m_{0}\bigr)
 \bigr\|_{n+\alpha'} \Bigr],
\end{split}
\end{equation*}
which is less than $C h^{(\alpha-\alpha')/2}$. 

\end{proof}

\subsection{First-order Differentiability}
\label{subsubse:partie:2:1st:order}

\textbf{Assumption.} Throughout the paragraph, we assume that
$F$, $G$ and $H$ satisfy 
\eqref{HypD2H}
and 
\eqref{e.monotoneF}
in Subsection \ref{subsec:hyp}
and 
that, for some integer $n \geq 2$
and some $\alpha \in (0,1)$, 
{\bf (HF1(${\boldsymbol n}$))}
and 
{\bf (HG1(${\boldsymbol n}$+1))}
hold true.
\vspace{5pt}

The purpose is here to follow Subsection
\ref{subse:partie:1:differentiability}
in order to establish the differentiability of 
$U$ with respect to the argument 
$m_{0}$. 
The analysis is performed at $t_{0}$ fixed, so that, 
without any loss of generality, $t_{0}$ can be 
chosen as $t_{0}=0$.

The initial distribution $m_{0} \in {\mathcal P}(\T^d)$
being given, we call $(\tilde{m},\tilde{u})$
the solution of the system 
\eqref{eq:se:3:tilde:HJB:FP}
with $m_{0}$ as initial distribution. 
Following 
\eqref{eq:vmubis}, the strategy is to investigate the linearized 
system (of the same type as \eqref{eq:partie2:systeme:linearise}):
\begin{equation}
\label{eq:partie2:systeme:derivative}
\begin{split}
&d_{t} \tilde{z}_{t} = \bigl\{ -  \Delta \tilde{z}_{t} + \langle 
D_{p} \tilde{H}_{t}(\cdot,D \tilde{u}_{t}),D \tilde{z}_{t}
\rangle - 
\frac{\delta \tilde F_{t}}{\delta m}(\cdot,m_{t})(\rho_{t})
  \bigr\} dt + d \tilde{M}_{t},
\\
&\partial_{t} \tilde{\rho}_{t} -  \Delta \tilde{\rho}_{t}
- \textrm{div}\bigl(\tilde{\rho}_{t} D_{p}
\tilde{H}_{t}(\cdot,D \tilde{u}_{t}) 
\bigr) - \textrm{div} \bigl( \tilde{m}_{t} 
D^2_{pp} \tilde{H}_{t}(\cdot,D \tilde{u}_{t})
 D \tilde{z}_{t}
\bigr) = 0,
\end{split}
\end{equation}
with a boundary condition of the form 
\begin{equation*}
\tilde{z}_{T} = 
\frac{\delta \tilde G}{\delta m}(\cdot,m_{T})(\rho_{T}).
\end{equation*}
As explained later on, the initial condition of the forward equation will be chosen in an appropriate way. 
In that framework, we shall repeatedly apply the results from 
Subsection \ref{subse:par tie:2:linearization} with
\begin{equation}
\label{eq:partie:2:V:Gamma}
\tilde{V}_{t}(\cdot) = D_{p} \tilde{H}_{t}(\cdot,D \tilde{u}_{t}),
\quad \Gamma_{t} = D^2_{pp} \tilde{H}_{t}(\cdot,D \tilde{u}_{t}),
\quad t \in [0,T],
\end{equation}
which motivates the following lemma:
\begin{Lemma}
\label{lem:partie2:V:Gamma}
There exists a constant $C$ such that, for any initial condition 
$m_{0} \in {\mathcal P}(\T^d)$, 
the processes $(\tilde{V}_{t})_{t \in [0,T]}$
and $(\Gamma_t)_{t \in [0,T]}$ in 
\eqref{eq:partie:2:V:Gamma}
satisfy points 2 and 4 in the introduction 
of Subsection \ref{subse:par tie:2:linearization}. 
\end{Lemma}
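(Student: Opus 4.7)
The plan is to deduce everything from the regularity of the solution $(\tilde{m}_{t},\tilde{u}_{t},\tilde{M}_{t})_{t \in [0,T]}$ of the MFG system \eqref{eq:se:3:tilde:HJB:FP} provided by Theorem \ref{thm:partie:2:existence:uniqueness}. Since the standing assumptions of the paragraph include {\bf (HF1(${\boldsymbol n}$))} and {\bf (HG1(${\boldsymbol n}$+1))}, that theorem applies with $n$ replaced by $n+1$, so that, for any $m_0 \in {\mathcal P}(\T^d)$, the associated solution satisfies
\begin{equation*}
\essup_{\omega \in \Omega} \sup_{t \in [0,T]}
\| \tilde{u}_{t} \|_{n+1+\alpha}
\leq C,
\end{equation*}
for a constant $C$ independent of $m_0$. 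In particular $(D\tilde u_t)_{t\in[0,T]}$ is uniformly bounded in $\cC^{n+\alpha}(\T^d,\R^d)$ and has paths in ${\mathcal C}^0([0,T],\cC^n(\T^d,\R^d))$, and the key pointwise bound $\|D\tilde u_t\|_\infty \leq C$ holds uniformly in $(t,\omega)$. The role of this shift of the regularity index is precisely to buy one extra derivative on $D\tilde u_t$ compared to what the lemma requires on $\tilde V_t$.

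For point 2, since $\tilde V_t(x) = D_p H(x + \sqrt 2 W_t, D\tilde u_t(x))$ is obtained by composing smooth functions of $x$, $W_t$ and $D\tilde u_t(x)$, it is $({\mathcal F}_t)$-adapted. The smoothness of $H$ and the standard Fa\`a di Bruno composition estimate give a nondecreasing function $\Phi$, depending only on $H$, such that $\|\tilde V_t\|_{n+\alpha} \leq \Phi(\|\tilde u_t\|_{n+1+\alpha})$, whence $\essup_{\omega} \sup_t \|\tilde V_t\|_{n+\alpha} \leq C$. Continuity of $t \mapsto \tilde V_t$ in $\cC^n(\T^d,\R^d)$ follows from the continuity of $t \mapsto (W_t,\tilde u_t)$ in $\R^d \times \cC^{n+1}(\T^d)$ together with the local uniform continuity of the derivatives of $H$ up to order $n$.

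For point 4, the very same composition argument applied to $\Gamma_t(x) = D^2_{pp} H(x+\sqrt 2 W_t, D\tilde u_t(x))$ yields adaptedness, continuity of the paths in $\cC^1(\T^d,\R^{d\times d})$ and the bound $\sup_t \|\Gamma_t\|_1 \leq C$. The upper bound $\Gamma_t(x) \leq C I_d$ is the right-hand inequality in \eqref{HypD2H}. For the lower bound, the uniform estimate $|D\tilde u_t(x)| \leq \|\tilde u_t\|_{n+1+\alpha} \leq C$ combined with the coercivity part of \eqref{HypD2H} gives $\Gamma_t(x) \geq C^{-1}(1+|D\tilde u_t(x)|)^{-1} I_d \geq C'^{-1} I_d$ for some $C'$ independent of $(t,x,\omega,m_0)$.

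There is no genuine obstacle here: the whole lemma is essentially a bookkeeping exercise, the only mildly subtle point being to locate the right input in Theorem \ref{thm:partie:2:existence:uniqueness} (that is, to run it at level $n+1$ rather than $n$) in order to secure one more derivative on $\tilde u_t$ than what the composition with $H$ will consume.
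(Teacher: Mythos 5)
Your proof is correct and follows essentially the same route as the paper: run Theorem \ref{thm:partie:2:existence:uniqueness} one index up (at level $n+1$, which the standing assumptions {\bf (HF1(${\boldsymbol n}$))} and {\bf (HG1(${\boldsymbol n}$+1))} permit) to get a uniform $\cC^{n+1+\alpha}$ bound on $\tilde u_t$, compose with the (Lipschitz, smooth) $H$, and read off the ellipticity of $\Gamma$ from \eqref{HypD2H} together with the uniform bound on $\|D\tilde u_t\|_\infty$. The only cosmetic difference is that the paper obtains the uniform-in-$m_0$ $L^\infty$ bound on $\tilde u_t$ by invoking Lemma \ref{lem:super:reg} in tandem with Theorem \ref{thm:partie:2:existence:uniqueness}, whereas you attribute it to the theorem alone; your version can be justified via the Lipschitz stability estimate in the theorem together with the boundedness of $\dk$ on $\Pk$, so the argument stands.
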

\begin{proof}
By Theorem \ref{thm:partie:2:existence:uniqueness}
and Lemma \ref{lem:super:reg}, 
we can find a constant $C$ such that
any solution $(\tilde{m}_{t},\tilde{u}_{t})_{t \in [0,T]}$
to \eqref{eq:se:3:tilde:HJB:FP} satisfies, 
independently of the initial condition 
$m_{0}$,
\begin{equation*}
\essup_{\omega \in \Omega} \sup_{t \in [0,T]} \| \tilde{u}_{t} \|_{n+1+\alpha} \leq C. 
\end{equation*}
In particular, allowing the constant $C$ to increase from line to line, it must hold that 
\begin{equation*}
\essup_{\omega \in \Omega} \sup_{t \in [0,T]} 
\bigl\| D_{p} \tilde{H}_{t} (\cdot, D \tilde{u}_{t} \bigr) \bigr\|_{n+\alpha} \leq C. 
\end{equation*}
Moreover, implementing the local 
coercivity condition 
\eqref{HypD2H}, we deduce that (assuming $C \geq 1$), with probability 
$1$, for all $t \in [0,T]$,
\begin{equation*}
\| \Gamma_{t} \|_{1} \leq C \ ; \quad \forall x \in \T^d, \quad 
C^{-1} I_{d} \leq \Gamma_{t}(x) \leq C I_{d},
\end{equation*}
which completes the proof.
\end{proof}

Given $y \in \T^d$
and a $d$-tuple $\ell \in \{0,\dots,n\}^d$
such that $\vert \ell \vert = \sum_{i=1}^n \ell_{i}
\leq n$, we call 
$\T^d \ni x \mapsto v^{(\ell)}(x,m_{0},y) \in \R$
the value at time $0$ of the backward component of the solution to \eqref{eq:partie2:systeme:derivative}
when the forward component is initialized with the distribution 
$(-1)^{\vert \ell \vert} D^{\ell} \delta_{y}$.
Clearly, $D^\ell \delta_{y} \in ({\mathcal C}^{n+\alpha'}(\T^d))'$
for any $\alpha' \in (0,1)$, so that,  
by Theorem \ref{thm:partie2:existence:uniqueness:linearise}, 
$v^{(\ell)}(\cdot,m_{0},y)$ belongs to ${\mathcal C}^{n+\alpha}(\T^d)$. 
(Recall that, for a
test function $\varphi \in {\mathcal C}^{n}(\T^d)$,  
$(D^{\ell} \delta_{y}) \varphi = (-1)^{\vert \ell \vert} D^{\ell}_{y_{1}^{\ell_{1}} \dots y_{d}^{\ell_{d}}} \varphi(y)$.) 
%
Similarly, 
we may denote by 
$(\tilde{\rho}_{t}^{\ell,y},\tilde{z}_{t}^{\ell,y})_{t \in [0,T]} 
$
the solution of 
\eqref{eq:partie2:systeme:derivative} with $\tilde{\rho}_{0}^{\ell,y}
= (-1)^{\vert \ell \vert}D^{\ell} \delta_{y}$
as initial condition. For simplicity, we omit $m_{0}$ in the notation. 
We then have
\begin{equation}
\label{eq:representation:z0:ell:y}
\tilde{z}_{0}^{\ell,y} = v^{(\ell)}(\cdot,m_{0},y).
\end{equation}

We then claim

\begin{Lemma}
\label{lem:partie2:kernel:derivative}
Let $m_{0} \in {\mathcal P}(\T^d)$. Then, with the same notation 
as above, we have, for any 
$\alpha' \in (0,\alpha)$
and 
any $d$-tuple
$\ell \in \{0,\dots,n\}^d$
such that $\vert \ell \vert \leq n$,
\begin{equation}
\label{eq:partie2:kernel:derivative:1}
\begin{split}
&\lim_{\T^d \ni h \rightarrow 0}
\essup_{\omega \in \Omega}
\sup_{t \in [0,T]}
\Bigl( 
\bigl\| \tilde{\rho}_{t}^{\ell,y+h}
- \tilde{\rho}_{t}^{\ell,y} \bigr\|_{-(n+\alpha')}
+
\bigl\| \tilde{z}_{t}^{\ell,y+h}
- \tilde{z}_{t}^{\ell,y} \bigr\|_{n+1+\alpha}
\Bigr) = 0.
\end{split}
\end{equation}
Moreover, for any $\ell \in \{0,\dots,n-1\}^d$
with $\vert \ell \vert \leq n-1$
and any $i \in \{1,\dots,d\}$,
\begin{equation*}
\begin{array}{l}
\lim_{\R \backslash \{0\}\ni h \rightarrow 0}
\essup_{\omega \in \Omega}
\sup_{t \in [0,T]}
\Bigl( 
\bigl\|
\frac{1}{h}
\bigl(
 \tilde{\rho}_{t}^{\ell,y+h e_{i}}
- \tilde{\rho}_{t}^{\ell,y}
\bigr)
- 
\tilde{\rho}_{t}^{\ell+e_{i},y}
 \bigr\|_{-(n+\alpha')} 
 \\
\hspace{150pt}+
\bigl\|
\frac{1}{h}
\bigl( \tilde{z}_{t}^{\ell,y+h e_{i}}
- \tilde{z}_{t}^{\ell,y}
\bigr)
- \tilde{z}_{t}^{\ell+e_{i},y} 
 \bigr\|_{n+1+\alpha}
\Bigr) = 0, 
\end{array}
\end{equation*}
where $e_{i}$ denotes the $i^{\textrm{th}}$ vector of the canonical basis
and $\ell+e_{i}$ is understood as $(\ell+e_{i})_{j} = \ell_{j}
+\delta_{i}^j$, for $j \in \{1,\dots,d\}$, 
$\delta_{i}^j$
denoting the Kronecker symbol.

In particular, 
the function 
$[\T^d]^2 \ni (x,y) \mapsto v^{(0)}(x,m_{0},y)$
is $n$-times differentiable with respect to 
$y$
and,
for any $\ell \in \{0,\dots,n\}^d$ with $| \ell | \leq n$, 
the derivative 
$D^{\ell}_{y} v^{(0)}(\cdot,m_{0},y) 
: \T^d \ni x \mapsto 
D^{\ell}_{y} v^{(0)}(x,m_{0},y)$
belongs to ${\mathcal C}^{n+1+\alpha}(\T^d)$
and writes
\begin{equation*}
D^{\ell}_{y} v^{(0)}(x,m_{0},y) = v^{(\ell)}(x,m_{0},y), 
\quad (x,y) \in \T^d. 
\end{equation*}
Moreover, 
\begin{equation*}
\sup_{m_{0} \in {\mathcal P}(\T^d)}
\sup_{y \in \T^d} \| D^{\ell}_{y} v^{(0)}(\cdot,m_{0},y) \|_{n+1+\alpha} < \infty. 
\end{equation*}
\end{Lemma}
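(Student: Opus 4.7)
The plan is to exploit the linearity of the system \eqref{eq:partie2:systeme:derivative} in $(\tilde{\rho}, \tilde{z})$ and appeal to the stability estimate \eqref{eq:cor:partie2:systeme:linearise:linfty:2} from Corollary \ref{cor:partie2:systeme:linearise:linfty}, in which the coefficients $\tilde{V}_{t}$, $\Gamma_{t}$, $\tilde{m}_{t}$ (provided by Lemma \ref{lem:partie2:V:Gamma}) and the boundary inputs $(\tilde b^{0},\tilde f^{0},\tilde g^{0}) \equiv 0$ are all held constant across the two copies being compared. By linearity, the difference $(\tilde{\rho}_{t}^{\ell,y+h} - \tilde{\rho}_{t}^{\ell,y}, \tilde{z}_{t}^{\ell,y+h} - \tilde{z}_{t}^{\ell,y})$ solves \eqref{eq:partie2:systeme:derivative} with zero inputs and forward initial condition $(-1)^{|\ell|}(D^{\ell}\delta_{y+h} - D^{\ell}\delta_{y})$, while $h^{-1}(\tilde{\rho}_{t}^{\ell,y+he_{i}} - \tilde{\rho}_{t}^{\ell,y}) - \tilde{\rho}_{t}^{\ell+e_{i},y}$ solves the same system with initial condition
\begin{equation*}
(-1)^{|\ell|}\bigl[ h^{-1}(D^{\ell}\delta_{y+he_{i}} - D^{\ell}\delta_{y}) + D^{\ell+e_{i}}\delta_{y} \bigr].
\end{equation*}
Corollary \ref{cor:partie2:systeme:linearise:linfty} thus reduces both \eqref{eq:partie2:kernel:derivative:1} and the differentiability claim to the corresponding convergences of these initial conditions in $({\mathcal C}^{n+\alpha'}(\T^d))'$.

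These convergences are elementary. For any $\varphi \in \cC^{n+\alpha'}(\T^d)$ with $\|\varphi\|_{n+\alpha'}\leq 1$ and $|\ell|\leq n$, the identity $\langle D^{\ell}\delta_{y+h}-D^{\ell}\delta_{y},\varphi\rangle = (-1)^{|\ell|}\bigl(D^{\ell}\varphi(y+h) - D^{\ell}\varphi(y)\bigr)$, combined with the fact that $D^{\ell}\varphi$ is either Lipschitz (when $|\ell|<n$) or $\alpha'$-H\"older (when $|\ell|=n$) with norm controlled by $\|\varphi\|_{n+\alpha'}$, yields
\begin{equation*}
\|D^{\ell}\delta_{y+h}-D^{\ell}\delta_{y}\|_{-(n+\alpha')} \leq C |h|^{\alpha'} \xrightarrow[h\to 0]{} 0,
\end{equation*}
uniformly in $y$. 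For $|\ell+e_{i}|\leq n$, writing $D^{\ell}\varphi(y+he_{i}) - D^{\ell}\varphi(y) = h\int_{0}^{1} D^{\ell+e_{i}}\varphi(y+she_{i})\,ds$ and using the continuity (in fact $\alpha'$-H\"older regularity) of $D^{\ell+e_{i}}\varphi$, one obtains
\begin{equation*}
\bigl\| h^{-1}(D^{\ell}\delta_{y+he_{i}}-D^{\ell}\delta_{y}) + D^{\ell+e_{i}}\delta_{y} \bigr\|_{-(n+\alpha')} \xrightarrow[h\to 0]{} 0.
\end{equation*}
Combined with the first step, this proves \eqref{eq:partie2:kernel:derivative:1} and, by induction over $|\ell|$ starting from the tautology for $\ell=0$, the identification $D^{\ell}_{y} v^{(0)}(x,m_{0},y) = v^{(\ell)}(x,m_{0},y)$, the convergence of the difference quotients being moreover uniform in $x$ in the norm $\|\cdot\|_{n+1+\alpha}$.

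Finally, the uniform bound on $D^{\ell}_{y} v^{(0)}(\cdot,m_{0},y) \in \cC^{n+1+\alpha}(\T^d)$ follows from \eqref{eq:cor:partie2:systeme:linearise:linfty:1} applied to each $(\tilde{\rho}^{\ell,y},\tilde{z}^{\ell,y})$, since $\|D^{\ell}\delta_{y}\|_{-(n+\alpha')} \leq 1$ whenever $|\ell|\leq n$ (as $|D^\ell \varphi(y)| \leq \|\varphi\|_{|\ell|} \leq \|\varphi\|_{n+\alpha'}$), the resulting bound being independent of $t_{0}$, $y$ and $m_{0}$. There is no real obstacle here: the whole lemma is essentially a clean packaging of the linearity of \eqref{eq:partie2:systeme:derivative} with the stability estimate of Corollary \ref{cor:partie2:systeme:linearise:linfty}, and the only care required is in verifying that the finite differences of Dirac masses converge in the dual norm $(\cC^{n+\alpha'}(\T^d))'$ in which that stability estimate has been formulated---the $\alpha'<\alpha$ gap being precisely what allows the H\"older-type estimate on $D^\ell\delta_{y+h}-D^\ell\delta_y$ to control the initial data in the dual norm prescribed by the corollary.
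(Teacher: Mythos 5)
Your proposal is correct and follows essentially the same route as the paper: invoke the linearity of \eqref{eq:partie2:systeme:derivative} together with the $L^\infty$ stability estimate of Corollary \ref{cor:partie2:systeme:linearise:linfty} (itself a consequence of Proposition \ref{lem:partie2:stability:2}, which the paper also cites) to reduce matters to the convergence of $D^\ell\delta_{y+h}-D^\ell\delta_y$ and of $h^{-1}(D^\ell\delta_{y+he_i}-D^\ell\delta_y)+D^{\ell+e_i}\delta_y$ in $(\cC^{n+\alpha'}(\T^d))'$, and then verify these elementary facts using the $\alpha'<\alpha$ regularity gap. Your computation of the dual-norm bounds on the Dirac differences, the induction on $|\ell|$, and the final uniform bound via \eqref{eq:cor:partie2:systeme:linearise:linfty:1} all match the paper's argument.
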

\begin{proof}
By
Corollary 
\ref{cor:partie2:systeme:linearise:linfty} (with $\alpha=\alpha$
and $\alpha'=\alpha'$ for some $\alpha' \in (0,\alpha)$), 
we can find a constant $C$ such that, 
for all $y \in \T^d$,
for all $m_{0} \in {\mathcal P}(\T^d)$
and all $\ell \in \{0,\dots,n\}^d$ with $\vert \ell \vert \leq n$, 
\begin{equation*}
\essup_{\omega \in \Omega}
\sup_{t \in [0,T]}
\bigl( 
\| \tilde{z}_{t}^{\ell,y} \|_{n+1+\alpha}
+
\| \tilde{\rho}_{t}^{\ell,y} \|_{-(n+\alpha')}
\bigr) \leq C. 
\end{equation*}
In particular, 
\begin{equation*}
\| v^{(\ell)}(\cdot,m_{0},y) \|_{n+1+\alpha} \leq C. 
\end{equation*}
Now, we make use of Proposition 
\ref{lem:partie2:stability:2}. 
We know that, for any $\alpha' \in (0,1)$, 
\begin{equation*}
\lim_{h \rightarrow 0}
\bigl\|
D^{\ell} \delta_{y+h} - 
D^{\ell} \delta_{y}
\bigr\|_{-(n+\alpha')} = 0.
\end{equation*}
Therefore, for $\alpha' < \alpha$,
Corollary 
\ref{cor:partie2:systeme:linearise:linfty}
(with $\alpha'=\alpha' < \alpha$
and $\alpha=\alpha$)
gives 
\eqref{eq:partie2:kernel:derivative:1}. 
This
yields
\begin{equation*}
\lim_{h \rightarrow 0}
\bigl\|
v^{(\ell)}
(\cdot,m_{0},y+h) 
 - 
v^{(\ell)}
(\cdot,m_{0},y) 
\bigr\|_{n+1+\alpha} = 0,
\end{equation*}
proving that the mapping 
$\T^d \ni y \mapsto v^{(\ell)}(\cdot,m_{0},y) \in 
{\mathcal C}^{n+1+\alpha}(\T^d)$ is continuous. 

Similarly, for
$\vert \ell \vert \leq n-1$
and $i \in \{1,\dots,d\}$,
\begin{equation*}
\lim_{\R \setminus  \{0\} \ni h \rightarrow 0}
\bigl\|
\frac{1}{h}
\bigl(
D^{\ell} \delta_{y+h e_{i}} - 
D^{\ell} \delta_{y}
\bigr) + D^{\ell+e_{i}} \delta_{y}
\bigr\|_{-(n+\alpha')} = 0,
\end{equation*}
or equivalently,
\begin{equation*}
\lim_{\R \setminus  \{0\} \ni h \rightarrow 0}
\bigl\|
\frac{1}{h}
\bigl(
(-1)^{\vert \ell \vert}
D^{\ell} \delta_{y+h e_{i}} - 
(-1)^{\vert \ell \vert}
D^{\ell} \delta_{y}
\bigr)  - 
(-1)^{\vert \ell + e_{i} \vert}
 D^{\ell+e_{i}} \delta_{y}
\bigr\|_{-(n+\alpha')} = 0,
\end{equation*}
As a byproduct, we get
\begin{equation*}
\lim_{\R \setminus \{0\} \ni h \rightarrow 0}
\Bigl\|
\frac{1}{h}
\bigl[
v^{(\ell)}
(\cdot,m_{0},y+h e_{i}) 
 - 
v^{(\ell)}
(\cdot,m_{0},y) 
\bigr] 
-
v^{(\ell+e_{i})}
(\cdot,m_{0},y) 
\Bigr\|_{n+1+\alpha} = 0,
\end{equation*}
which proves, by induction, that 
\begin{equation*}
D^{\ell}_{y} v^{(0)}(x,m_{0},y) = v^{(\ell)}(x,m_{0},y), \quad 
x,y \in \T^d.
\end{equation*}
This completes the proof. 
\end{proof}

Now, we prove
\begin{Lemma}
\label{lem:partie2:representation:derivative}
Given a finite signed measure $\mu$ on $\T^d$, 
the solution $\tilde{z}$ to 
\eqref{eq:partie2:systeme:derivative}
with $\mu$ as initial condition reads, when taken 
at time $0$,
\begin{equation*}
\tilde{z}_{0} : \R^d 
\ni x  \mapsto 
\tilde{z}_{0}(x) = \int_{\T^d} v^{(0)}(x,m_{0},y) d\mu(y). 
\end{equation*}
\end{Lemma}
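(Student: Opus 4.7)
The plan is to proceed by linearity and density. The linearized system \eqref{eq:partie2:systeme:derivative} depends linearly on the initial datum $\tilde{\rho}_{0}$, so any linear combination of solutions solves the system with the corresponding linear combination of initial data, and Theorem \ref{thm:partie2:existence:uniqueness:linearise} guarantees that such a combination is the unique solution. For a finite combination of Dirac masses $\mu = \sum_{i=1}^{N} c_{i} \delta_{y_{i}}$, invoking \eqref{eq:representation:z0:ell:y} with $\ell=0$ for each atom immediately yields
\begin{equation*}
\tilde{z}_{0}^{\mu}(x) = \sum_{i=1}^{N} c_{i} v^{(0)}(x, m_{0}, y_{i}) = \int_{\T^{d}} v^{(0)}(x, m_{0}, y) \, d\mu(y),
\end{equation*}
which is the desired identity in the atomic case.

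For a general finite signed measure $\mu$, I would approximate it by atomic measures and pass to the limit. Fixing $\alpha' \in (0, \alpha)$, I would partition $\T^{d}$ into Borel cells $(A_{i}^{k})_{i}$ of diameter at most $1/k$, pick $y_{i}^{k} \in A_{i}^{k}$, and set $\mu_{k} = \sum_{i} \mu(A_{i}^{k}) \delta_{y_{i}^{k}}$. A direct H\"older estimate against test functions $\varphi \in \cC^{\alpha'}(\T^{d})$ gives $\|\mu - \mu_{k}\|_{-\alpha'} \leq k^{-\alpha'} |\mu|(\T^{d})$, and the continuous embedding $\cC^{n+\alpha'} \hookrightarrow \cC^{\alpha'}$ yields $\|\mu - \mu_{k}\|_{-(n+\alpha')} \to 0$.

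I would then apply Corollary \ref{cor:partie2:systeme:linearise:linfty}, with the coefficients $\tilde{V}$ and $\Gamma$ specified in \eqref{eq:partie:2:V:Gamma} (which satisfy the standing assumptions of Subsection \ref{subse:par tie:2:linearization} by Lemma \ref{lem:partie2:V:Gamma}), with all inputs $\tilde{b}^{0}, \tilde{f}^{0}, \tilde{g}^{0}_{T}$ set to zero, and with initial conditions $\mu_{k}$ and $\mu$. The stability bound \eqref{eq:cor:partie2:systeme:linearise:linfty:2} delivers
\begin{equation*}
\bigl\| \tilde{z}_{0}^{\mu} - \tilde{z}_{0}^{\mu_{k}} \bigr\|_{n+1+\alpha} \leq C' \|\mu - \mu_{k}\|_{-(n+\alpha')} \longrightarrow 0.
\end{equation*}
In parallel, Lemma \ref{lem:partie2:kernel:derivative} shows that $(x, y) \mapsto v^{(0)}(x, m_{0}, y)$ is jointly continuous on the compact set $\T^{d} \times \T^{d}$, hence uniformly continuous, and is uniformly bounded in the $\cC^{n+1+\alpha}$ norm in $x$; the same partition estimate, this time applied with $x$ fixed to the function $y \mapsto v^{(0)}(x, m_{0}, y)$, gives $\int v^{(0)}(x, m_{0}, y) \, d\mu_{k}(y) \to \int v^{(0)}(x, m_{0}, y) \, d\mu(y)$ uniformly in $x$. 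Combining these two convergences with the atomic-case identity established for each $\mu_{k}$ completes the argument.

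The only genuine subtlety lies in the matching of dual norms: the stability bound of Corollary \ref{cor:partie2:systeme:linearise:linfty} is formulated in $\|\cdot\|_{-(n+\alpha')}$, which forces the approximation $\mu_{k} \to \mu$ to take place in the same norm, while the integral on the right-hand side must be controlled by the regularity of the kernel $y \mapsto v^{(0)}(\cdot, m_{0}, y)$. Both sides are ultimately controlled by Lemma \ref{lem:partie2:kernel:derivative}, and because all the relevant bounds on $v^{(0)}$ and its $y$-derivatives are uniform in $y$, no new technical obstacle appears beyond those already handled in the linearization theory.
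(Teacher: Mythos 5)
Your proposal is correct and follows essentially the same strategy as the paper's own proof: establish the identity for atomic measures by linearity (equivalently, via \eqref{eq:representation:z0:ell:y} with $\ell=0$), approximate a general finite signed measure by atomic measures supported on a partition into small cells, control the error in a dual H\"older norm, and then invoke the stability estimate for the linearized system together with the continuity of $v^{(0)}$ in $y$ to pass to the limit. The paper uses Proposition \ref{lem:partie2:stability:2} directly with $\cC^1$ test functions, while you invoke Corollary \ref{cor:partie2:systeme:linearise:linfty} with $\cC^{\alpha'}$ test functions, but since the solution at time $0$ is deterministic the two are interchangeable and the argument is otherwise identical.
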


\begin{proof}
By compactness of the torus,
we can find, for a given $\varepsilon >0$, a covering 
$(U_{i})_{1 \leq i \leq N}$ of $\T^d$, made of disjoint Borel 
subsets, such that each $U_{i}$, $i=1,\dots,N$, 
has a diameter less than $\varepsilon$.
Choosing, for each $i \in \{1,\dots,N\}$,
$y_{i} \in U_{i}$,
we then let
\begin{equation*}
\mu^{\varepsilon} = \sum_{i=1}^N \mu \bigl( U_{i} )
\delta_{y_{i}}. 
\end{equation*} 
Then, for any $\varphi \in {\mathcal C}^1(\T^d)$, 
with $\| \varphi\|_{1} \leq 1$,
we have
\begin{equation*}
\begin{split}
\biggl\vert \int_{\T^d} \varphi (y) 
d \bigl( \mu - \mu^{\varepsilon} 
\bigr)(y) \biggr\vert &=
\biggl\vert \sum_{i=1}^N 
\int_{U_{i}} \bigl( \varphi(y) 
- \varphi(y_{i}) \bigr) d \mu(y) 
\biggr\vert 
\leq C \| \mu \| \varepsilon,
\end{split}
\end{equation*}
where we have denoted by $\| \mu\|$ the total mass of $\mu$. 

Therefore, by Proposition 
\ref{lem:partie2:stability:2},
\begin{equation*}
\biggl\| 
\tilde{z}_{0} - \sum_{i=1}^N 
\int_{U_{i}}
v^{(0)}(\cdot,m_{0},y_{i}) d\mu(y)
\biggr\|_{n+1+\alpha} \leq C \| \mu \| \varepsilon,
\end{equation*}
where we have used the fact that, by linearity, the 
value at time $0$ of the backward component of the 
solution to 
\eqref{eq:partie2:systeme:derivative}, when the forward component is initialized with $\mu^{\varepsilon}$, reads
\begin{equation*}
\sum_{i=1}^N
\mu(U_{i})
v^{(0)}(\cdot,m_{0},y_{i})
=
\sum_{i=1}^N 
\int_{U_{i}}
v^{(0)}(\cdot,m_{0},y_{i}) d\mu(y).
\end{equation*}
By smoothness of $v^{(0)}$ in $y$, we easily deduce that 
\begin{equation*}
\biggl\| 
\tilde{z}_{0} - 
\int_{\T^d}
v^{(0)}(\cdot,m_{0},y) d\mu(y)
\biggr\|_{n+1+\alpha} \leq C \| \mu \| \varepsilon.
\end{equation*}
The result follows by letting $\varepsilon$ tend to $0$. 
\end{proof}

On the model of Corollary \ref{cor:diff0}, we now claim
\begin{Proposition}
\label{prop:partie2:U:differentiability}
Given two initial conditions $m_{0},m_{0}' \in {\mathcal P}(\T^d)$, 
we denote by 
$(\tilde{m}_{t},\tilde{u}_{t})_{t \in [0,T]}$
and 
$(\tilde{m}_{t}',\tilde{u}_{t}')_{t \in [0, T]}$ the respective solutions
of \eqref{eq:se:3:tilde:HJB:FP} with $m_{0}$
and $m_{0}'$ as initial conditions and 
by $(\tilde{\rho}_{t},\tilde{z}_{t})_{t \in [0,T]}$
the solution of 
\eqref{eq:partie2:systeme:derivative}
with $m_{0}'-m_{0}$ as initial condition, 
so that 
we can let 
\begin{equation*}
\delta \tilde{\rho}_{t}
= \tilde{m}_{t}' - 
\tilde{m}_{t} - \tilde{\rho}_{t},
\quad
\delta \tilde{z}_{t} = 
\tilde{u}_{t}' - \tilde{u}_{t} - \tilde{z}_{t},
\quad t \in [0,T].
\end{equation*}
Then, for any $\alpha' \in (0,\alpha)$,
we can find a constant $C$, independent of 
$m_{0}$ and $m_{0}'$, such that 
\begin{equation*}
\essup_{\omega \in \Omega}
\sup_{0 \leq t \leq T}
\bigl( \| \delta \tilde{\rho}_{t} \|_{-(n+\alpha')}
+ \| \delta \tilde{z}_{t} \|_{n+1+\alpha}
\bigr)
\leq 
C {\mathbf d}_{1}^2(m_{0},m_{0}').
\end{equation*}
In particular,
\begin{equation*}
\biggl\|
U(0,\cdot,m_{0}')
- U(0,\cdot,m_{0})
- \int_{\T^d} v^{(0)}(x,m_{0},y)
d \bigl(m_{0}' - m_{0})(y) 
\biggr\|_{n+1+\alpha}
\leq C {\mathbf d}_{1}^2(m_{0},m_{0}'),
\end{equation*}
and, thus, for any $x \in \T^d$, the mapping 
${\mathcal P}(\T^d) \ni m \mapsto U(0,x,m)$
is differentiable with respect to $m$ and the derivative reads, 
for any $m \in {\mathcal P}(\T^d)$,
\begin{equation*}
\frac{\delta U}{\delta m}(0,x,m,y) = v^{(0)}(x,m,y), \quad y \in \T^d. 
\end{equation*}
\end{Proposition}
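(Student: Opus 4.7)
The plan mirrors the strategy of Proposition \ref{prop:diff} in the deterministic case, but now the linear system has to be handled by the stochastic tools developed in Section \ref{se:common:noise}. I will derive a forward--backward system for the triple $(\delta\tilde\rho_{t},\delta\tilde z_{t},\delta\tilde M_{t})_{t\in[0,T]}$, identify it as an instance of \eqref{eq:partie2:systeme:linearise} driven by explicit source terms of size $O(\dk^{2}(m_{0},m_{0}'))$, and then invoke the $L^\infty$ stability estimate of Corollary \ref{cor:partie2:systeme:linearise:linfty}.

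First, I would subtract the two MFG systems \eqref{eq:se:3:tilde:HJB:FP} associated with $m_{0}$ and $m_{0}'$ and then subtract the linearized system \eqref{eq:partie2:systeme:derivative} with initial datum $m_{0}'-m_{0}$. Writing $\tilde{H}_{t}(\cdot,D\tilde u_{t}')-\tilde{H}_{t}(\cdot,D\tilde u_{t})-D_{p}\tilde H_{t}(\cdot,D\tilde u_{t})\cdot D\tilde z_{t}$, and similarly for $\tilde F_{t}$, $\tilde G$, $D_{p}\tilde H_{t}$, a first--order Taylor expansion produces sources of the form
\begin{equation*}
\tilde f_{t}^{0}\;=\;-\!\int_{0}^{1}\!\!\bigl[D_{p}\tilde H_{t}(\cdot,sD\tilde u_{t}'+(1-s)D\tilde u_{t})-D_{p}\tilde H_{t}(\cdot,D\tilde u_{t})\bigr]\cdot D(\tilde u_{t}'-\tilde u_{t})\,ds+R^{F}_{t},
\end{equation*}
a similar drift source $\tilde b_{t}^{0}$ coming from the Fokker--Planck side (involving products of $(\tilde m_{t}'-\tilde m_{t})$ with $D^{2}_{pp}\tilde H_{t}\,D(\tilde u_{t}'-\tilde u_{t})$ and second--order differences of $D^{2}_{pp}\tilde H_{t}$ applied to $\tilde m_{t}'$), and a terminal source $\tilde g_{T}^{0}$ built from the quadratic remainder of $\tilde G$ in $m$.

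Second, I would estimate each source term in the norms required by Corollary \ref{cor:partie2:systeme:linearise:linfty}, exactly as in the bounds for $A$, $B$, $c$ and $z_{T}$ in the proof of Proposition \ref{prop:diff}, but using now the pathwise Lipschitz estimate $\sup_{t}\|\tilde u_{t}'-\tilde u_{t}\|_{n+\alpha}+\sup_{t}\dk(\tilde m_{t},\tilde m_{t}')\le C\dk(m_{0},m_{0}')$ provided by Theorem \ref{thm:partie:2:existence:uniqueness}, together with the regularity of $\delta F/\delta m$, $\delta^{2}F/\delta m^{2}$ and $\delta^{2}G/\delta m^{2}$ guaranteed by \textbf{(HF1($\boldsymbol n$))}, \textbf{(HG1($\boldsymbol n$+1))}, the bounds on $D^{2}_{pp}\tilde H_{t}$ from \eqref{HypD2H}, and the uniform regularity of $\tilde u,\tilde u'$ from Lemma \ref{lem:super:reg}. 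A straightforward accounting yields $\essup_{\omega}\sup_{t}\bigl(\|\tilde b_{t}^{0}\|_{-(n+\alpha'-1)}+\|\tilde f_{t}^{0}\|_{n+\alpha}\bigr)+\essup_{\omega}\|\tilde g_{T}^{0}\|_{n+1+\alpha}\le C\dk^{2}(m_{0},m_{0}')$. Since $\delta\tilde\rho_{0}=0$, applying \eqref{eq:cor:partie2:systeme:linearise:linfty:1} of Corollary \ref{cor:partie2:systeme:linearise:linfty} with the coefficients \eqref{eq:partie:2:V:Gamma} (whose admissibility is Lemma \ref{lem:partie2:V:Gamma}) produces the announced bound on $\delta\tilde\rho$ and $\delta\tilde z$.

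Finally, evaluating at $t=0$ gives $\|U(0,\cdot,m_{0}')-U(0,\cdot,m_{0})-\tilde z_{0}\|_{n+1+\alpha}\le C\dk^{2}(m_{0},m_{0}')$, and Lemma \ref{lem:partie2:representation:derivative} identifies $\tilde z_{0}(x)=\int_{\T^{d}}v^{(0)}(x,m_{0},y)\,d(m_{0}'-m_{0})(y)$, so the directional derivative in the sense of Definition \ref{def:Diff} exists and coincides with $v^{(0)}$ (the normalization \eqref{ConvCondDeriv} being automatic since initializing the linearized system with $\mu_{0}=m_{0}$ gives the trivial solution, as in Remark \ref{rem:normalization}). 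The main technical obstacle I anticipate is the second step: because the linearized forward equation lives in the distributional space $({\mathcal C}^{n+\alpha'}(\T^d))'$ whereas $\tilde m_{t}'-\tilde m_{t}$ only enjoys an $L^{\infty}$--in--$\omega$ Wasserstein bound, the quadratic source in the drift of $\delta\tilde\rho$ must be carefully split so that every bilinear expression is dualized against test functions of the correct order, which is exactly where the higher regularity assumption $\textbf{(HF1($\boldsymbol n$))}$ (one order above what is needed for mere well--posedness) enters.
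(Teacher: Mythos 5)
Your proposal is correct and follows essentially the same approach as the paper: write the forward--backward system for $(\delta\tilde\rho,\delta\tilde z)$, identify the residual sources $\tilde b^0,\tilde f^0,\tilde g^0_T$ by Taylor expansion, bound them quadratically in $\dk(m_0,m_0')$ using Theorem \ref{thm:partie:2:existence:uniqueness} and Lemma \ref{lem:super:reg}, then apply Corollary \ref{cor:partie2:systeme:linearise:linfty} with coefficients \eqref{eq:partie:2:V:Gamma} and close via Lemma \ref{lem:partie2:representation:derivative}. One small correction: the standing assumption here is only \textbf{(HF1($\boldsymbol n$))} and \textbf{(HG1($\boldsymbol n$+1))}, so you cannot invoke $\delta^2F/\delta m^2$ or $\delta^2G/\delta m^2$; the quadratic remainders $R^F_t$ and $\tilde g^0_T$ are instead controlled through the Lipschitz continuity in $m$ of $\delta F/\delta m$ and $\delta G/\delta m$ (the terms ${\rm Lip}_n(\delta F/\delta m)$, ${\rm Lip}_n(\delta G/\delta m)$ built into \textbf{(HF1)}, \textbf{(HG1)}), exactly as in the proof of Proposition \ref{prop:diff}.
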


The normalization condition holds: 
$$
\inte v^{(0)}(x,m,y)dm(y)=0.
$$
The proof is the same as in the deterministic case (see Remark \ref{rem:normalization}).  

\begin{proof}
%
We have
\begin{equation*}
\begin{split}
&d_{t} 
\bigl(\delta \tilde{z}_{t}\bigr) = \bigl\{ -  \Delta
\bigl( \delta \tilde{z}_{t} \bigr) 
+ \langle 
D_{p} \tilde{H}_{t}(\cdot,D \tilde{u}_{t}),D 
\bigl (\delta \tilde{z}_{t} \bigr)
\rangle - 
\frac{\delta \tilde F_{t}}{\delta m}(\cdot,m_{t})
\bigl(\delta \rho_{t} \bigr)
+ \tilde{f}_{t}
  \bigr\} dt + d \tilde{M}_{t},
\\
&\partial_{t} \bigl( \delta \tilde{\rho}_{t} \bigr) -  \Delta
\bigl( \delta \tilde{\rho}_{t} \bigr)
- \textrm{div} \bigl[ \bigl(\delta \tilde{\rho}_{t}\bigr) D_{p}
\tilde{H}_{t}(\cdot,D \tilde{u}_{t}) 
\bigr] - \textrm{div} \bigl[ \tilde{m}_{t} 
D^2_{pp} \tilde{H}_{t}(\cdot,D \tilde{u}_{t})
 \bigl( D \delta \tilde{z}_{t} \bigr)
 + \tilde{b}_{t}
\bigr] = 0,
\end{split}
\end{equation*}
with a boundary condition of the form 
\begin{equation*}
\delta \tilde{z}_{T} = 
\frac{\delta \tilde G}{\delta m}(\cdot,m_{T})\bigl(
\delta \rho_{T} \bigr)
+ \tilde{g}_{T}, 
\end{equation*}
where
\begin{equation*}
\begin{split}
\tilde{b}_{t} &= 
\tilde{m}_{t}' \bigl( 
D_{p} \tilde{H}_{t}( \cdot,D \tilde{u}_{t}')
- D_{p} \tilde{H}_{t}(\cdot, D \tilde{u}_{t})
\bigr) 
-
\tilde{m}_{t}  D^2_{pp} \tilde{H}_{t}(\cdot,D \tilde{u}_{t})
\bigl(
D \tilde{u}_{t}' - D \tilde{u}_{t}
\bigr)
\\
\tilde{f}_{t} &= 
\tilde{H}_{t}(\cdot,D \tilde{u}_{t}')
- 
\tilde{H}_{t}(\cdot,D \tilde{u}_{t})
-
\bigl\langle D_{p} \tilde{H}_{t}(\cdot,D \tilde{u}_{t}),
D \tilde{u}_{t}' - D \tilde{u}_{t} \bigr\rangle 
\\
&\hspace{15pt} - \Bigl( 
\tilde{F}_{t}(\cdot,m_{t}')
-
\tilde{F}_{t}
(\cdot,m_{t})
- \frac{\delta \tilde{F}_{t}}{\delta m}(\cdot,m_{t})
\bigl( m_{t}' - m_{t} \bigr)
 \Bigr),
\\
\tilde{g}_{T}
&= \tilde{G}(\cdot,m_{T}') - 
\tilde{G}(\cdot,m_{T}) - \frac{\delta \tilde{G}}{\delta m}(\cdot,m_{T}) \bigl( m_{T}' - m_{T} \bigr). 
\end{split}
\end{equation*}
Now,
\begin{equation*}
\begin{split}
\tilde{b}_{t} &= 
\bigl( \tilde{m}_{t}' - \tilde{m}_{t} \bigr) \bigl( 
D_{p} \tilde{H}_{t}( \cdot,D \tilde{u}_{t}')
- D_{p} \tilde{H}_{t}(\cdot, D \tilde{u}_{t})
\bigr) 
\\
&\hspace{15pt}
+ 
\tilde{m}_{t} \int_{0}^1 \Bigl[ 
D^2_{pp} \tilde{H}_{t}
\bigl(\cdot, \lambda D \tilde{u}_{t}' + (1-\lambda) 
D \tilde{u}_{t} \bigr)
- D^2_{pp} \tilde{H}_{t}(\cdot,D \tilde{u}_{t})
\Bigr]
\bigl(
D \tilde{u}_{t}' - D \tilde{u}_{t}
\bigr) d\lambda
\\
&= \bigl( \tilde{m}_{t}' - \tilde{m}_{t} \bigr) \int_{0}^1 
D^2_{pp}
\tilde{H}_{t}\bigl( \cdot, \lambda D \tilde{u}_{t}'
+
(1-\lambda) D \tilde{u}_{t}
\bigr)
\bigl( D \tilde{u}_{t}' - D \tilde{u}_{t} \bigr) 
d \lambda
\\
&\hspace{15pt}
+ 
\tilde{m}_{t} \int_{0}^1 \int_{0}^1 
\lambda D^3_{ppp} \tilde{H}_{t}
\bigl(\cdot, \lambda s D \tilde{u}_{t}' + (1-\lambda + \lambda (1-s)) 
D \tilde{u}_{t} \bigr)
\bigl(
D \tilde{u}_{t}' - D \tilde{u}_{t}
\bigr)^{\otimes 2}
 d\lambda
ds,
\\
\tilde{f}_{t} &=
\int_{0}^1 \bigl\langle 
D_{p} \tilde{H}_{t}(\cdot,\lambda D \tilde{u}_{t}' + 
(1-\lambda) D \tilde{u}_{t} )
-
D_{p} \tilde{H}_{t}(\cdot,D \tilde{u}_{t} ),
D \tilde{u}_{t}' - 
D \tilde{u}_{t} \bigr\rangle d\lambda
\\
&\hspace{15pt}
- \int_{0}^1 
\Bigl( 
\frac{\delta \tilde{F}_{t}}{\delta m}
\bigl( \cdot,\lambda m_{t}' + (1-\lambda) m_{t}
\bigr) - 
\frac{\delta \tilde{F}_{t}}{\delta m}
( \cdot,m_{t} )
\Bigr) \bigl( {m}_{t}' - {m}_{t} \bigr) d\lambda
\\
&=
\int_{0}^1 \int_{0}^1 
\lambda \bigl\langle 
D^2_{pp} \tilde{H}_{t}\bigl(\cdot,\lambda s D \tilde{u}_{t}' + 
(1-\lambda + \lambda (1-s)) D \tilde{u}_{t} \bigr)
\bigl( D \tilde{u}_{t}' - 
D \tilde{u}_{t} \bigr) ,
D \tilde{u}_{t}' - 
D \tilde{u}_{t} \bigr\rangle d\lambda ds
\\
&\hspace{15pt}
- \int_{0}^1 
\Bigl( 
\frac{\delta \tilde{F}_{t}}{\delta m}
\bigl( \cdot,\lambda m_{t}' + (1-\lambda) m_{t}
\bigr) - 
\frac{\delta \tilde{F}_{t}}{\delta m}
( \cdot,m_{t} )
\Bigr) \bigl( {m}_{t}' - {m}_{t} \bigr) d\lambda,
\\
\tilde{g}_{T}
&=
\int_{0}^1 
\Bigl( 
\frac{\delta \tilde{G}}{\delta m}
\bigl( \cdot,\lambda m_{T}' + (1-\lambda) m_{T}
\bigr) - 
\frac{\delta \tilde{G}}{\delta m}
( \cdot,m_{T} )
\Bigr) \bigl( {m}_{T}' - {m}_{T} \bigr) d\lambda.
\end{split}
\end{equation*}
By Lemma \ref{lem:super:reg}, we have a universal bound for 
\begin{equation*}
\essup_{\omega \in \Omega}
\sup_{t \in [0,T]}
\bigl( \| \tilde{u}_{t} \|_{n+1+\alpha}
+ \| \tilde{u}_{t}' \|_{n+1+\alpha} \bigr). 
\end{equation*}
We deduce that
\begin{equation*}
\begin{split}
&\| \tilde{b}_{t} \|_{-1} \leq C
\Bigl( 
 {\mathbf d}_{1}
\bigl(\tilde{m}_{t}',\tilde{m}_{t}
\bigr) \| \tilde{u}_{t}' - \tilde{u}_{t} \|_{2}
+ \| \tilde{u}_{t}' - \tilde{u}_{t} \|_{1}^2 \Bigr),
\\
&\| \tilde{f}_{t} \|_{n+\alpha} \leq C 
\Bigl(
\| \tilde{u}_{t}' - \tilde{u}_{t} \|^2_{n+1+\alpha}
+  
{\mathbf d}_{1}^2
\bigl(\tilde{m}_{t}',\tilde{m}_{t}
\bigr)  
\Bigr),
\\
&\| \tilde{g}_{T} \|_{n+1+\alpha} \leq C 
 {\mathbf d}_{1}^2
\bigl(\tilde{m}_{T}',\tilde{m}_{T}
\bigr).
\end{split}
\end{equation*}
Therefore, by 
Theorem \ref{thm:partie:2:existence:uniqueness}, we deduce that 
\begin{equation*}
\begin{split}
&\essup_{\omega \in \Omega}
\sup_{0 \leq t \leq T}
\| \tilde{b}_{t} \|_{-1} 
+ \essup_{\omega \in \Omega}
\sup_{0 \leq t \leq T}
\| \tilde{f}_{t} \|_{n+\alpha}
+ 
\essup_{\omega \in \Omega}
\| \tilde{g}_{T} \|_{n+1+\alpha}
\leq C
 {\mathbf d}_{1}^2
\bigl({m}_{0}',{m}_{0}
\bigr).
\end{split}
\end{equation*}
By Corollary 
\ref{cor:partie2:systeme:linearise:linfty}, we get the first of the two inequalities in the statement. 
We deduce that 
\begin{equation*}
\bigl\|
U(0,\cdot,m_{0}')
- U(0,\cdot,m_{0})
- \tilde{z}_{0}
\bigr\|_{n+1+\alpha}
\leq C  {\mathbf d}_{1}^2(m_{0},m_{0}'). 
\end{equation*}
By 
Lemma
\ref{lem:partie2:representation:derivative}, we complete the proof. 
\end{proof}

\begin{Proposition}
\label{prop:partie2:U:differentiability:2}
For any $\alpha' \in (0,\alpha)$,
we can find a constant $C$
such that, for any $m_{0},m_{0}' \in {\mathcal P}(\T^d)$, 
any $y,y' \in \T^d$
and any 
index $\ell \in \{0,\dots,n\}^d$
with $\vert \ell \vert \leq n$,
denoting by 
$(\tilde{m}_{t},\tilde{u}_{t})_{t \in [0,T]}$
and $(\tilde{m}_{t}',\tilde{u}_{t}')_{t \in [0,T]}$
the respective solutions of 
\eqref{eq:se:3:tilde:HJB:FP},
and then   
$(\tilde{\rho}_{t},\tilde{z}_{t})_{t \in [0,T]}$
and $(\tilde{\rho}_{t}',\tilde{z}_{t}')_{t \in [0,T]}$
the corresponding 
solutions of
\eqref{eq:partie2:systeme:derivative}
 when driven by two initial conditions
 $(-1)^{\vert \ell \vert}D^{\ell} \delta_{y}$ and 
 $(-1)^{\vert \ell \vert}D^\ell \delta_{y'}$, 
 it holds that  
\begin{equation*}
\begin{split}
&\essup_{\omega \in \Omega}
\biggl[ 
\sup_{t \in [0,T]} \| \tilde{z}_{t} - \tilde{z}_{t}' \|_{n+1+\alpha}
+
\sup_{t \in [0,T]} 
\| \tilde{\rho}_{t} - \tilde{\rho}_{t}' \|_{-(n+\alpha')}
\biggr] 
 \leq C
\Bigl(  {\mathbf d}_{1}(m_{0},m_{0}') + \vert y - y' \vert^{\alpha'}
\Bigr).
\end{split}
\end{equation*}
In particular,
\begin{equation*}
\forall y,y' \in \T^d, \quad
\biggl\|
D_{y}^{\ell} \frac{\delta U}{\delta m}(0,\cdot,m_{0},y)
- 
D_{y}^{\ell}
\frac{\delta U}{\delta m}(0,\cdot,m_{0}',y')
\biggr\|_{n+1+\alpha}
\leq C
\Bigl(  {\mathbf d}_{1}(m_{0},m_{0}') + \vert y - y' \vert^{\alpha'}
\Bigr). 
\end{equation*}
\end{Proposition}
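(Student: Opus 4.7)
The plan is to reduce the estimate to an application of Corollary
\ref{cor:partie2:systeme:linearise:linfty}
by viewing $(\tilde \rho',\tilde z')$ as the solution of the linearized system
\eqref{eq:partie2:systeme:linearise:continuation} driven by the same
coefficients $(\tilde V_t, \Gamma_t, \tilde m_t)$ as those associated with $(\tilde \rho,\tilde z)$, but with additional
input perturbations $(\tilde b^{0\prime}_t, \tilde f^{0\prime}_t, \tilde g^{0\prime}_T)$ that
absorb all the coefficient discrepancies between the two MFG systems.

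First I would expand the primed forward equation by writing
$\mathrm{div}(\tilde m_t' \Gamma_t' D \tilde z_t') = \mathrm{div}(\tilde m_t \Gamma_t D\tilde z_t') + \mathrm{div}[(\tilde m_t' \Gamma_t' - \tilde m_t \Gamma_t) D\tilde z_t']$ and $\mathrm{div}(\tilde \rho_t' \tilde V_t') = \mathrm{div}(\tilde \rho_t' \tilde V_t) + \mathrm{div}[\tilde \rho_t' (\tilde V_t' - \tilde V_t)]$, which gives
$\tilde b^{0\prime}_t = \tilde \rho_t'(\tilde V_t' - \tilde V_t) + (\tilde m_t' \Gamma_t' - \tilde m_t \Gamma_t)D\tilde z_t'$.
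Similarly the primed backward equation yields
$\tilde f^{0\prime}_t = \langle \tilde V_t' - \tilde V_t, D \tilde z_t'\rangle - [\frac{\delta \tilde F_t}{\delta m}(\cdot,m_t')(\rho_t') - \frac{\delta \tilde F_t}{\delta m}(\cdot,m_t)(\rho_t')]$
and $\tilde g^{0\prime}_T = \frac{\delta \tilde G}{\delta m}(\cdot,m_T')(\rho_T') - \frac{\delta \tilde G}{\delta m}(\cdot,m_T)(\rho_T')$. The unprimed system corresponds to zero input perturbations, so applying Corollary \ref{cor:partie2:systeme:linearise:linfty} (more precisely, \eqref{eq:cor:partie2:systeme:linearise:linfty:2}) in its essup form delivers
\begin{equation*}
\essup_{\omega} \sup_t \bigl[\|\tilde z_t - \tilde z_t'\|_{n+1+\alpha} + \|\tilde \rho_t - \tilde \rho_t'\|_{-(n+\alpha')}\bigr] \leq C' \bigl(\|D^\ell \delta_y - D^\ell \delta_{y'}\|_{-(n+\alpha')} + \mathcal{I}\bigr),
\end{equation*}
where $\mathcal{I}$ is the essup of the relevant norms of $\tilde b^{0\prime}, \tilde f^{0\prime}, \tilde g^{0\prime}_T$.

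The second step is to control each of the three quantities. For the initial condition, a test function $\varphi \in \cC^{n+\alpha'}$ with $\|\varphi\|_{n+\alpha'} \leq 1$ satisfies $|\langle D^\ell \delta_y - D^\ell \delta_{y'}, \varphi\rangle| = |D^\ell \varphi(y) - D^\ell \varphi(y')| \leq |y-y'|^{\alpha'}$, since $|\ell| \leq n$. For the input perturbations, Theorem \ref{thm:partie:2:existence:uniqueness} gives $\essup_\omega \sup_t (\|\tilde u_t - \tilde u_t'\|_{n+\alpha} + \dk(\tilde m_t, \tilde m_t')) \leq C \dk(m_0,m_0')$, which, combined with the smoothness of $H$ (hence of $\tilde V, \Gamma$) and the Lipschitz regularity of $\delta F/\delta m$, $\delta G/\delta m$ in $m$ provided by \textbf{(HF1(${\boldsymbol n}$))}--\textbf{(HG1(${\boldsymbol n}$+1))}, produces $\|\tilde V_t' - \tilde V_t\|_{n+\alpha} + \|\Gamma_t' - \Gamma_t\|_0 \leq C \dk(m_0,m_0')$. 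At the same time, Corollary \ref{cor:partie2:systeme:linearise:linfty} applied to the primed system alone (whose initial condition has uniformly bounded $\| \cdot \|_{-(n+\alpha')}$-norm) yields universal bounds on $\essup_\omega \sup_t(\|\tilde z_t'\|_{n+1+\alpha} + \|\tilde \rho_t'\|_{-(n+\alpha')})$. Using these together with the usual product estimates in H\"older and negative-H\"older norms (pairing $\tilde \rho_t' \in (\cC^{n+\alpha'})'$ against $\tilde V_t' - \tilde V_t \in \cC^{n+\alpha}$, etc.) gives $\mathcal{I} \leq C \dk(m_0, m_0')$.

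The third and final step is to translate the PDE stability into the announced estimate on the derivatives of $U$. By Proposition \ref{prop:partie2:U:differentiability} and Lemma \ref{lem:partie2:kernel:derivative},  $D^\ell_y (\delta U/\delta m)(0,\cdot,m_0,y) = v^{(\ell)}(\cdot, m_0, y) = \tilde z_0$, and similarly for the primed solution, so the bound on $\|\tilde z_0 - \tilde z_0'\|_{n+1+\alpha}$ is exactly the claim. The main technical obstacle is the careful bookkeeping in step two: one must estimate the mixed products $(\tilde V_t' - \tilde V_t) \cdot D\tilde z_t'$ and $(\tilde m_t' \Gamma_t' - \tilde m_t \Gamma_t) D \tilde z_t'$ in the correct (negative) H\"older norms so that no regularity beyond what is available for $\tilde z_t'$ and $\tilde \rho_t'$ is invoked; this requires distributing derivatives using the Leibniz rule together with the slack between $\alpha'$ and $\alpha$, and it is exactly the reason for the $\alpha'$-loss (both in the Hölder exponent for $\tilde \rho_t - \tilde \rho_t'$ and in the Hölder exponent $|y-y'|^{\alpha'}$).
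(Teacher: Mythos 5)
Your overall strategy of applying Corollary \ref{cor:partie2:systeme:linearise:linfty} is the same as the paper's, and your handling of the initial datum and the final translation to the derivative estimate on $U$ are both correct. However, there is a genuine gap in the way you plan to handle the coefficient discrepancies.

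You propose to view the primed system as driven by the unprimed coefficients $(\tilde V_t,\Gamma_t,\tilde m_t)$ and to absorb the differences into forward input perturbations, producing in particular
\begin{equation*}
\tilde b^{0\prime}_t = \tilde \rho_t'\bigl(\tilde V_t' - \tilde V_t\bigr) + \bigl(\tilde m_t' \Gamma_t' - \tilde m_t \Gamma_t\bigr)D\tilde z_t'.
\end{equation*}
The second term is a signed measure times a bounded vector field and hence lands comfortably in $(\cC^{0}(\T^d))'$. But the first term is the product of $\tilde\rho_t' \in (\cC^{n+\alpha'}(\T^d))'$ with a $\cC^{n+\alpha}$ function, which a priori only lies in $(\cC^{n+\alpha'}(\T^d))'$ and not in the stronger space $(\cC^{n+\alpha'-1}(\T^d))'$ that Corollary \ref{cor:partie2:systeme:linearise:linfty} requires for $\tilde b^0$ (see point 5 in the introduction of Subsection \ref{subse:par tie:2:linearization}). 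This is not a removable technicality: when $\vert\ell\vert = n$, the initial datum $\tilde\rho_0' = (-1)^n D^\ell\delta_{y'}$ is a distribution of order $n$ and is not in $(\cC^{n+\alpha'-1})'$, so $\essup_\omega\sup_t\|\tilde b^{0\prime}_t\|_{-(n+\alpha'-1)}$ would be infinite. Your bound $\mathcal{I} \leq C{\mathbf d}_1(m_0,m_0')$ therefore does not go through in the required norm.

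The paper avoids this entirely: it applies the stability estimate \eqref{eq:cor:partie2:systeme:linearise:linfty:2} directly with the two distinct sets of coefficients $(\tilde V_t,\Gamma_t,\tilde m_t)$ and $(\tilde V_t',\Gamma_t',\tilde m_t')$, rather than reducing to identical coefficients. Inside the duality argument of Proposition \ref{lem:partie2:stability:2}, the coefficient-discrepancy term $(\tilde V_t'-\tilde V_t)\tilde\rho_t'$ is paired against the dual test function $D\tilde w_t$ at the level $X_n^- = \cC^{n+\alpha'}$ (i.e., $\langle D\tilde w_t,(\tilde V_t'-\tilde V_t)\tilde\rho_t'\rangle_{X_n^-,(X_n^-)'}$), whereas the genuine forward input is paired one degree lower in $X_{n-1}^-$. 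Keeping these two contributions in their separate slots is precisely what rescues the extra degree of duality regularity; folding them into a single input destroys that distinction and is why your plan breaks at the endpoint $\vert\ell\vert=n$. Concretely, to fix your argument you would either have to restrict to $\vert\ell\vert \leq n-1$, or give up on the absorption and instead use \eqref{eq:cor:partie2:systeme:linearise:linfty:2} in the form the paper uses it, with the $\|\tilde V_t-\tilde V_t'\|_{n+\alpha}$, ${\mathbf d}_1(m_t,m_t')$, $\|\Gamma_t-\Gamma_t'\|_0$ terms kept separate on the right-hand side and then controlled via Theorem \ref{thm:partie:2:existence:uniqueness}.
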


\begin{proof}
Given two initial conditions $m_0$ and $m_0'$, we call 
$(\tilde{m}_{t},\tilde{u}_{t})_{t \in [0,T]}$
and $(\tilde{m}_{t}',\tilde{u}_{t}')_{t \in [0,T]}$
the respective solutions of 
\eqref{eq:se:3:tilde:HJB:FP}. 
With $(\tilde{m}_{t},\tilde{u}_{t})_{t \in [0,T]}$
and $(\tilde{m}_{t}',\tilde{u}_{t}')_{t \in [0,T]}$, 
we associate 
the solutions 
$(\tilde{\rho}_{t},\tilde{z}_{t})_{t \in [0,T]}$
and $(\tilde{\rho}_{t}',\tilde{z}_{t}')_{t \in [0,T]}$
of
\eqref{eq:partie2:systeme:derivative}
 when driven by two initial conditions
 $(-1)^{\vert \ell \vert}
 D^{\ell} \delta_{y}$ and 
 $(-1)^{\vert \ell \vert} D^\ell \delta_{y'}$. 
 Since $\vert \ell \vert \leq n$, we have
 \begin{equation*}
 \bigl\| 
 D^\ell \delta_{y}
 - D^\ell \delta_{y'}
 \bigr\|_{-(n+\alpha')} \leq \vert y - y' \vert^{\alpha'}.
 \end{equation*}
In order to prove the first estimate, we can apply 
Corollary \ref{cor:partie2:systeme:linearise:linfty}
with 
\begin{equation*}
\begin{split}
&\tilde{V}_{t} = D_{p}\tilde{H}(\cdot,D \tilde{u}_{t}), 
\quad 
\tilde{V}_{t}' = D_{p}\tilde{H}(\cdot,D \tilde{u}_{t}'), 
\\
&
\Gamma_{t} = 
D^2_{pp} \tilde{H}_{t}( \cdot,D \tilde{u}_{t}),
\quad 
\Gamma_{t}'=
D^2_{pp} \tilde{H}_{t}( \cdot,D \tilde{u}_{t}'), 
\end{split}
\end{equation*}
so that, following 
the proof of 
Proposition
\ref{prop:partie2:U:differentiability},
\begin{equation*}
\begin{split}
&\| \tilde{V}_{t} - \tilde{V}_{t}' \|_{n+\alpha} 
+ \| \Gamma_{t} - \Gamma_{t}' \|_{0} \leq C 
\| \tilde{u}_{t} - \tilde{u}_{t}' \|_{n+1+\alpha}.  
\end{split}
\end{equation*}
Now, the first estimate in the statement follows from 
the combination of  
Theorem 
\ref{thm:partie:2:existence:uniqueness}
and Corollary \ref{cor:partie2:systeme:linearise:linfty}.

The second estimate is a straightforward consequence of the first one. 
\end{proof}

\begin{Proposition}
\label{prop:partie2:regularity:time:first:order}
Propositions
\ref{prop:partie2:U:differentiability}
and 
\ref{prop:partie2:U:differentiability:2}
easily extend to any initial time $t_{0} \in [0,T]$. 
Then,  for any
$\alpha' \in (0,\alpha)$,  
any 
$t_{0} \in [0,T]$
and
$m_{0} \in \Pk$
\begin{equation*}
\lim_{h \rightarrow 0}
\sup_{\ell \in \{0,\dots,n\}^d, \vert \ell \vert \leq n}
\Bigl\|
D_{y}^{\ell} \frac{\delta U}{\delta m}(t_{0}+h,\cdot,m_{0},\cdot)
- 
D_{y}^{\ell} \frac{\delta U}{\delta m}(t_{0},\cdot,m_{0},\cdot)
\Bigr\|_{n+1+\alpha',\alpha'}
=0.
\end{equation*}

\end{Proposition}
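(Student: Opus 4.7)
The plan is to first extend Propositions \ref{prop:partie2:U:differentiability} and \ref{prop:partie2:U:differentiability:2} to an arbitrary initial time $t_0 \in [0,T]$, which is routine: all the estimates leading to them (through Theorem \ref{thm:partie:2:existence:uniqueness}, Lemma \ref{lem:super:reg}, Theorem \ref{thm:partie2:existence:uniqueness:linearise} and Corollary \ref{cor:partie2:systeme:linearise:linfty}) concern time intervals of length at most $T$ and are invariant under the shift $(W_{s})_{s \geq 0} \mapsto (W_{t_0+s} - W_{t_0})_{s \geq 0}$, whose law does not change; the constants obtained are therefore uniform in $t_0$. In particular, fixing an intermediate $\alpha'' \in (\alpha',\alpha)$, the extended version of Proposition \ref{prop:partie2:U:differentiability:2} yields a uniform bound on $\| D^\ell_y \frac{\delta U}{\delta m}(t_0,\cdot,m_0,y)\|_{n+1+\alpha''}$ and a uniform $\alpha''$-H\"older modulus in $y$ of $D^\ell_y \frac{\delta U}{\delta m}(t_0,\cdot,m_0,y)$ in $\cC^{n+1+\alpha''}(\T^d)$-norm, independent of $(t_0, m_0, y)$ and of $\ell$ with $|\ell| \leq n$. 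By a standard interpolation argument in H\"older spaces, the target convergence in $\cC^{n+1+\alpha',\alpha'}$ is then reduced to the $L^\infty$-convergence in $(x,y)$
\[
\lim_{h \to 0} \sup_{|\ell| \leq n, \, (x,y) \in [\T^d]^2}
\Bigl|
D^\ell_y \frac{\delta U}{\delta m}(t_0+h,x,m_0,y)
-
D^\ell_y \frac{\delta U}{\delta m}(t_0,x,m_0,y)
\Bigr| = 0,
\]
with some algebraic rate in $h$.

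To prove this pointwise convergence, set $\mu := \delta_y - m_0$ if $\ell = 0$ and $\mu := (-1)^{|\ell|} D^{\ell} \delta_y$ if $|\ell| \geq 1$; in either case $\mu$ has zero total mass and, by the normalization of $\delta U / \delta m$, $\int \frac{\delta U}{\delta m}(t,x,m,y') \mu(dy') = D^\ell_y \frac{\delta U}{\delta m}(t,x,m,y)$. Let $(\tilde m_t,\tilde u_t,\tilde M_t)_{t \in [t_0,T]}$ be the solution of \eqref{eq:se:3:tilde:HJB:FP:t0}--\eqref{eq:se:3:tilde:HJB:FP:prescription} starting from $\tilde m_{t_0}=m_0$ and $(\tilde \rho_t,\tilde z_t)_{t \in [t_0,T]}$ the associated linearized system with $\tilde\rho_{t_0} = \mu$, so that $\tilde z_{t_0}(x) = D^\ell_y \frac{\delta U}{\delta m}(t_0,x,m_0,y)$ (an identity which extends Lemma \ref{lem:partie2:representation:derivative} to distributional initial data by iterated use of Lemma \ref{lem:partie2:kernel:derivative}). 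Applying the Duhamel formula, together with the $\mathcal{F}_{t_0}$-conditioning trick of Lemma \ref{lem:cas:vartheta=0} to annihilate the martingale part, the source term over $[t_0,t_0+h]$ is $O(h^{1/2})$ in $\cC^{n+1+\alpha''}$-norm, thanks to the smoothing property of the heat semigroup $(P_{s-t_0})$ and the uniform bounds on $(\tilde\rho_s, \tilde z_s)$ given by the extended Corollary \ref{cor:partie2:systeme:linearise:linfty}; hence one is left with comparing $\E[(P_h\tilde z_{t_0+h})(x)]$ to $D^\ell_y \frac{\delta U}{\delta m}(t_0+h,x,m_0,y)$. Differentiating the identity of Lemma \ref{lem:partie2:master:equation} with respect to $m_0$ in the direction $\mu$ yields the chain-rule identity
\[
\tilde z_{t_0+h}(x) = \int \frac{\delta U}{\delta m}\bigl(t_0+h, x + \sqrt{2}(W_{t_0+h}-W_{t_0}), m_{t_0,t_0+h}, y'\bigr) \rho_{t_0,t_0+h}(dy'),
\]
where $\rho_{t_0,t_0+h}$ is the push-forward of $\tilde\rho_{t_0+h}$ by the shift by $\sqrt{2}(W_{t_0+h}-W_{t_0})$.

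The difference between the two sides then decomposes into four error contributions, each vanishing at an algebraic rate in $h$: (i) the random shift $\sqrt{2}(W_{t_0+h}-W_{t_0}) = O(\sqrt h)$ in $L^2$, tested against the $\cC^1$-smoothness of $\delta U/\delta m$ in $x$; (ii) the displacement of $m_{t_0,t_0+h}$ from $m_0$, of size $O(\sqrt h)$ in $\dk$ by the stability bound of Theorem \ref{thm:partie:2:existence:uniqueness}, tested against the uniform Lipschitz continuity of $\delta U/\delta m$ in $m$ granted by the extension of Proposition \ref{prop:partie2:U:differentiability:2}; (iii) the drift of $\tilde\rho_{t_0+h}$ away from $\mu$, for which a direct computation with the linearized forward equation yields $\| \tilde\rho_{t_0+h} - \mu \|_{-(n+2+\alpha'')} = O(h)$ which, combined with the uniform bound in $(\cC^{n+\alpha''})'$ and the standard interpolation $\| \cdot \|_{-(n+1+\alpha'')} \leq C\| \cdot \|_{-(n+\alpha'')}^{1/2} \| \cdot \|_{-(n+2+\alpha'')}^{1/2}$, gives $O(h^{1/2})$ in $(\cC^{n+1+\alpha''})'$ and, after pairing with $\delta U/\delta m$, which is $\cC^{n+1+\alpha''}$-smooth in $y$, contributes an $O(h^{1/2})$ error; (iv) the action of $(P_h - \mathrm{Id})$, of order $O(h^{(\alpha''-\alpha')/2})$ in $\cC^{n+1+\alpha'}$ applied to functions uniformly bounded in $\cC^{n+1+\alpha''}$. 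Summing these estimates yields pointwise convergence at rate $O(h^\gamma)$ with $\gamma = (\alpha''-\alpha')/2 > 0$, which together with the interpolation step of the first paragraph concludes the proof. The main technical obstacle is to justify the chain-rule identity above when $\mu$ is a distribution (case $|\ell| \geq 1$) rather than a finite signed measure; this is handled by a standard mollification: one approximates $\mu$ by smooth signed measures $(\mu_k)_{k\geq 1}$ converging to $\mu$ in $(\cC^{n+\alpha''}(\T^d))'$, establishes the identity for each $\mu_k$ as a direct consequence of the first-order differentiability statement of Proposition \ref{prop:partie2:U:differentiability} (in its extended form), and passes to the limit using the continuity of the solution map of the linearized system from Corollary \ref{cor:partie2:systeme:linearise:linfty}.
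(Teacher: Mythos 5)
Your approach is genuinely different from the paper's. The paper proves this proposition by a soft compactness argument: from Proposition~\ref{prop:partie2:U:differentiability:2} the family $(t \mapsto D^{\ell}_y\frac{\delta U}{\delta m}(t,\cdot,m_0,\cdot))_{t\in[0,T]}$ is relatively compact in $\mathcal C^{n+1+\alpha'}\times\mathcal C^{\alpha'}$; writing the first-order expansion of $U(t,\cdot,m')-U(t,\cdot,m)$ from Proposition~\ref{prop:partie2:U:differentiability} (with a Landau term uniform in $t$), passing to the limit $t\to t_0$ along any subsequence using Corollary~\ref{cor:partie:2:reg:temps}, and testing against flows of Fokker--Planck type, the paper identifies every limit point with $D^\ell_y\frac{\delta U}{\delta m}(t_0,\cdot,m_0,\cdot)$, which gives convergence. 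Your proof instead targets an explicit algebraic rate in $h$: after the same interpolation reduction, you propagate the linearized system over $[t_0,t_0+h]$, control the Duhamel source by the heat-kernel smoothing, and invoke a ``chain-rule identity'' obtained by formally differentiating Lemma~\ref{lem:partie2:master:equation} in $m_0$. If carried out, your argument is stronger, since it produces a H\"older-in-time modulus for $\delta U/\delta m$ that the paper does not claim.

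The main point to flag is that the chain-rule identity
\[
\tilde z_{t_0+h}(x)=\int_{\T^d}\frac{\delta U}{\delta m}\bigl(t_0+h,x+\sqrt2(W_{t_0+h}-W_{t_0}),m_{t_0,t_0+h},y'\bigr)\,\rho_{t_0,t_0+h}(dy')
\]
is nowhere proved in the paper and is the crux of your argument, so it cannot be taken for granted. Differentiating Lemma~\ref{lem:partie2:master:equation} in the measure argument is a chain rule between infinite-dimensional spaces: one has to combine (a) differentiability of $m_0\mapsto(\tilde u_{t_0+h},\tilde m_{t_0+h})$ (from Proposition~\ref{prop:partie2:U:differentiability} and its analogue for the forward component), (b) differentiability in $m$ of $U(t_0+h,x',\cdot)$, and (c) a genuine composition rule pairing the derivative of $U$ against the directional derivative $\rho_{t_0,t_0+h}$ of the flow, all uniformly over the randomness of $(W_{t_0+h}-W_{t_0},m_{t_0,t_0+h})$. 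The first-order expansion in Proposition~\ref{prop:partie2:U:differentiability} is stated for probability-measure increments $m_0'-m_0$ with a $\dk^2$-remainder; to reach $\mu=(-1)^{|\ell|}D^\ell\delta_y$ you need to show that the directional derivative is continuous and linear in $\mu$ in the $(\mathcal C^{n+\alpha''})'$-norm, which is exactly what the mollification step at the end of your proof should establish. This is plausible given the stability statements of Proposition~\ref{lem:partie2:stability:2} and Corollary~\ref{cor:partie2:systeme:linearise:linfty}, but it is a substantial auxiliary lemma in its own right. Everything else (the $O(h^{1/2})$ Duhamel estimate, the four-term decomposition, the estimate $\|\tilde\rho_{t_0+h}-\mu\|_{-(n+2+\alpha'')}=O(h)$ and the negative-norm interpolation, the final $(P_h-\mathrm{Id})$ bound) looks sound once this identity is in hand. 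In short: the route is valid and more informative, but the chain-rule identity needs a careful separate proof that the paper's compactness argument deliberately circumvents.
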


\begin{proof}
Given two probability measures $m,m' \in {\mathcal P}(\T^d)$, we know
from Proposition 
\ref{prop:partie2:U:differentiability}
 that, for any 
$t \in [0,T]$, 
\begin{equation}
\label{eq:partie2:continuity:derivatives:measures}
U\bigl(t,\cdot,m'
\bigr)
- U \bigl( t,\cdot,m \bigr)
  = 
  \int_{\T^d}
  \frac{\delta U}{\delta m}\bigl(t,\cdot,
  m,y \bigr)
  d \bigl( m' - m \bigr)(y) + 
  O \bigl( {\mathbf d}_{1}^2(m,m') \bigr),
\end{equation}
the equality holding true in ${\mathcal C}^{n+1+\alpha}(\T^d)$
and the Landau notation $O(\cdot)$ 
being uniform in $t_{0}$ and $m$ (the constant
$C$ in the statement of 
Proposition \ref{prop:partie2:U:differentiability} being explicitly 
quantified by means of Proposition 
\ref{lem:partie2:stability:2}, related to the stability of solutions to the linear equation). 

By Proposition
\ref{prop:partie2:U:differentiability:2}, the set of functions 
$([\T^d]^2 \ni (x,y) 
\mapsto (\delta U/\delta m)(t,x,m,y))_{t \in [0,T]}$
is relatively compact in ${\mathcal C}^{n+1+\alpha'}(\T^d)
\times 
{\mathcal C}^{n+\alpha'}(\T^d)
$, for any $\alpha' \in (0,\alpha)$. Any limit $\Phi : [\T^d]^2 \rightarrow \R$ 
obtained by letting $t$ tend to $t_{0}$ in 
\eqref{eq:partie2:continuity:derivatives:measures}
must satisfy 
(use 
Corollary
\ref{cor:partie:2:reg:temps} to pass to the limit in the left-hand side):
\begin{equation*}
U\bigl(t_{0},\cdot,m'
\bigr)
- U \bigl( t_{0},\cdot,m \bigr)
  = 
  \int_{\T^d}
 \Phi\bigl(\cdot,y \bigr)
  d \bigl( m' - m \bigr)(y) + 
  O \bigl( {\mathbf d}_{1}^2(m,m') \bigr),
\end{equation*}
the equality holding true in ${\mathcal C}^0(\T^d)$. This proves that, 
for any $x \in \T^d$, 
\begin{equation*}
\int_{\T^d}
  \frac{\delta U}{\delta m}\bigl(t_{0},x,
  m,y \bigr)
  d \bigl( m' - m \bigr)(y) 
  = 
  \int_{\T^d}
 \Phi\bigl(x,y \bigr)
  d \bigl( m' - m \bigr)(y).
\end{equation*}
Choosing $m'$ as the solution at time $h$ of the Fokker-Planck equation 
\begin{equation*}
\partial_{t} m_{t} = - \textrm{\rm div}(b m_{t}), \quad t \geq 0,
\end{equation*}
for a smooth field $b$ and with $m_{0}=m$ as initial condition,
and then letting $h$ tend to $0$, 
we deduce that 
\begin{equation*}
\int_{\T^d}
D_mU\bigl(t_{0},x,
  m,y \bigr) \cdot b(y)
dm(y)
  = 
  \int_{\T^d}
 D_{y}
 \Phi\bigl(x,y \bigr)
\cdot  b(y)
 dm(y).
\end{equation*}
When $m$ has full support, this proves that 
\begin{equation*}
\Phi(x,y) =  \frac{\delta U}{\delta m}\bigl(t_{0},x,
  m,y \bigr) + c(x), \quad x,y \in \T^d. 
\end{equation*}
Since both sides have a zero integral in $y$ with respect to $m$, $c(x)$ must be zero. 

When the support of $m$ does not cover $\T^d$, we can 
approximate $m$ by a sequence $(m_{n})_{n \geq 1}$
of measures with full supports. By Proposition 
\ref{prop:partie2:U:differentiability:2}, we know that,
for any $\alpha' \in (0,\alpha)$,
\begin{equation*}
\lim_{n \rightarrow \infty}\sup_{t \in [0,T] }\Bigl\| 
\frac{\delta U}{\delta m}\bigl(t,\cdot,
m_{n},\cdot \bigr) 
- 
\frac{\delta U}{\delta m}\bigl(t,\cdot,
m,\cdot \bigr) 
\Bigr\|_{n+1+\alpha',\alpha'} =0,
\end{equation*}
so that, in 
${\mathcal C}^{n+1+\alpha'}(\T^d) 
\times {\mathcal C}^{\alpha'}(\T^d)$, 
\begin{equation*}
\lim_{t \rightarrow t_{0}}
\frac{\delta U}{\delta m}\bigl(t,\cdot,
m,\cdot \bigr) 
= 
\lim_{n \rightarrow \infty}
\lim_{t \rightarrow t_{0}}
\frac{\delta U}{\delta m}\bigl(t,\cdot,
m_{n},\cdot \bigr) 
= \frac{\delta U}{\delta m}\bigl(t_{0},\cdot,
m,\cdot \bigr). 
\end{equation*}
We easily complete the proof
when $\vert \ell \vert =0$. 
Since  the
set of functions 
$([\T^d]^2 \ni (x,y) 
\mapsto (D_{y}^{\ell}
\delta U/\delta m)(t,x,m,y))_{t \in [0,T]}$
is relatively compact in 
 ${\mathcal C}^{n+1+\alpha'}(\T^d)
\times
{\mathcal C}^{\alpha'}(\T^d)$, any limit 
as $t$ tends to $t_{0}$ must coincide
with the 
derivative of index $\ell$ in $y$ of the limit of 
$[\T^d]^2 \ni (x,y) 
\mapsto 
[\delta U/\delta m](t,x,m,y)$
as $t$ tends to $t_{0}$. 
\end{proof}

\subsection{Second-order Differentiability}
\label{subse:second-order differentiability:proof}

\textbf{Assumption.} Throughout the paragraph, we assume that
$F$, $G$ and $H$ satisfy 
\eqref{HypD2H}
and 
\eqref{e.monotoneF}
in Subsection \ref{subsec:hyp}
and 
that, for some integer $n \geq 2$
and some $\alpha \in (0,1)$, 
{\bf (HF2(${\boldsymbol n}$))}
and 
{\bf (HG2(${\boldsymbol n}$+1))}
hold true.
\vspace{5pt}
%

In order to complete the analysis of the master equation, 
we need to investigate the second-order differentiability
in the direction of the measure, on the
same model as for the first-order derivatives. 

As for the first order, the idea is to write the second-order
derivative of $U$ in the direction $m$ as the initial 
value of the backward component of a linearized system 
of the type \eqref{eq:partie2:systeme:linearise}, 
which is referred next to as the \textit{second-order linearized system}. 
Basically, the \textit{second-order linearized system}
is obtained by differentiating 
one step more 
the \textit{first-order linearized system}
\eqref{eq:partie2:systeme:derivative}. 
Recalling that 
\eqref{eq:partie2:systeme:derivative}
has the form 
\begin{equation}
\label{eq:partie2:systeme:derivative:bis}
\begin{split}
&d_{t} \tilde{z}_{t} = \bigl\{ -  \Delta \tilde{z}_{t} + \langle 
D_{p} \tilde{H}_{t}(\cdot,D \tilde{u}_{t}),D \tilde{z}_{t}
\rangle - 
\frac{\delta \tilde F_{t}}{\delta m}(\cdot,m_{t})(\rho_{t})
  \bigr\} dt + d \tilde{M}_{t},
\\
&\partial_{t} \tilde{\rho}_{t} -  \Delta \tilde{\rho}_{t}
- \textrm{div}\bigl(\tilde{\rho}_{t} D_{p}
\tilde{H}_{t}(\cdot,D \tilde{u}_{t}) 
\bigr) - \textrm{div} \bigl( \tilde{m}_{t} 
D^2_{pp} \tilde{H}_{t}(\cdot,D \tilde{u}_{t})
 D \tilde{z}_{t}
\bigr) = 0,
\end{split}
\end{equation}
with the boundary condition
\begin{equation*}
\tilde{z}_{T} = 
\frac{\delta \tilde G}{\delta m}(\cdot,m_{T})(\rho_{T}),
\end{equation*}
the procedure is to 
differentiate the pair 
$(\tilde{\rho}_{t},\tilde{z}_{t})_{t \in [0,T]}$ 
with respect to the initial condition $m_{0}$ of $(\tilde{m}_{t},\tilde{u}_{t})_{t \in [0,T]}$,
the initial condition of 
$(\tilde{\rho}_{t},\tilde{z}_{t})_{t \in [0,T]}$ 
being kept frozen. 

Above, $(\tilde{m}_{t},\tilde{u}_{t})_{0 \leq t \leq T}$
is indeed chosen as the solution of the 
system
\eqref{eq:se:3:tilde:HJB:FP}, 
for a given initial distribution 
$m_{0} \in {\mathcal P}(\T^d)$,
and
$(\tilde{\rho}_{t},\tilde{z}_{t})_{t \in [0,T]}$
as the solution of the system 
\eqref{eq:partie2:systeme:derivative:bis}
with an initial condition $\rho_{0} \in ({\mathcal C}^{n+\alpha'}(\T^d))'$, 
for some $\alpha'<\alpha$.
Implicitly, 
the initial condition
$\rho_{0}$ is understood as some $m_{0}'-m_{0}$ for another
$m_{0}' \in {\mathcal P}(\T^d)$, in which case 
we know from Proposition \ref{prop:partie2:U:differentiability}
that 
$(\tilde{\rho}_{t},\tilde{z}_{t})_{t \in [0,T]}$ 
reads as the derivative, at $\varepsilon =0$, of the solution to 
\eqref{eq:se:3:tilde:HJB:FP}
when initialized with 
the measure
$m_{0} + \varepsilon (m_{0}' - m_{0})$. 
However, following the strategy used in the analysis 
of the first-order derivatives of $U$, 
it is much more convenient,
in order to investigate the second-order derivatives of $U$, 
 to distinguish the initial condition of 
$(\tilde \rho_{t})_{t \in [0,T]}$ from the direction $m_{0}'-m_{0}$  
used to differentiate the system 
\eqref{eq:se:3:tilde:HJB:FP}. 
This says that, in 
\eqref{eq:partie2:systeme:derivative:bis},
 we should allow 
$(\tilde{\rho}_{t},\tilde{z}_{t})_{t \in [0,T]}$
to be driven by an arbitrary initial condition 
$\rho_{0} \in ({\mathcal C}^{n+\alpha'}(\T^d))'$. 

Now, when 
\eqref{eq:partie2:systeme:derivative:bis}
is driven by an
arbitrary initial condition
$\rho_{0}$ and 
$m_{0}$ is perturbed in the direction $m_{0}'-m_{0}$
 for another $m_{0}' \in {\mathcal P}(\T^d)$
 (that is 
$m_{0}$ is changed into $m_{0} + \varepsilon (m_{0}'-m_{0})$
for some small $\varepsilon$), 
the system
obtained by differentiating
\eqref{eq:partie2:systeme:derivative:bis} 
 (at $\varepsilon = 0$) 
takes the form
\begin{equation}
\label{eq:partie2:systeme:derivative:2}
\begin{split}
&d_{t} \tilde{z}_{t}^{(2)} = \Bigl\{ -  \Delta \tilde{z}_{t}^{(2)} + \bigl\langle 
D_{p} \tilde{H}_{t}(\cdot,D \tilde{u}_{t}),D \tilde{z}_{t}^{(2)}
\bigr\rangle 
- \frac{\delta \tilde F_{t}}{\delta m}(\cdot,m_{t})(\rho_{t}^{(2)})
\\
&\hspace{50pt} + \bigl\langle 
D^2_{pp} \tilde{H}_{t}(\cdot,D \tilde{u}_{t}),
D \tilde{z}_{t}
\otimes
D \partial_{m} \tilde{u}_{t}
\bigr\rangle
- 
\frac{\delta^2 \tilde F_{t}}{\delta m^2}(\cdot,m_{t})(\rho_{t},\partial_{m}
m_{t})
  \Bigr\} dt + d \tilde{M}_{t},
\\
&\partial_{t} \tilde{\rho}_{t}^{(2)} -  \Delta \tilde{\rho}_{t}^{(2)}
- \textrm{div}\Bigl(\tilde{\rho}_{t}^{(2)} D_{p}
\tilde{H}_{t}(\cdot,D \tilde{u}_{t}) 
\Bigr) - \textrm{div} \Bigl( \tilde{m}_{t} 
D^2_{pp} \tilde{H}_{t}(\cdot,D \tilde{u}_{t})
 D \tilde{z}_{t}^{(2)}
\Bigr) 
\\
&\hspace{50pt}
- \textrm{div}\Bigl(\tilde{\rho}_{t}
 D_{pp}^2
\tilde{H}_{t}(\cdot,D \tilde{u}_{t}) 
D \partial_{m}\tilde{u}_{t}
\Bigr)
- \textrm{div}\Bigl(\partial_{m} \tilde m_{t}
 D_{pp}^2
\tilde{H}_{t}(\cdot,D \tilde{u}_{t}) 
D \tilde{z}_{t}
\Bigr)
\\
&\hspace{50pt}
- \textrm{div} \Bigl( \tilde{m}_{t} 
D^3_{ppp} \tilde{H}_{t}(\cdot,D \tilde{u}_{t})
 D \tilde{z}_{t}
\otimes 
 D \partial_{m}\tilde{u}_{t}\Bigr) 
=0,
\end{split}
\end{equation}
with a terminal boundary condition of the form
\begin{equation*}
\tilde{z}_{T}^{(2)}
= 
\frac{\delta \tilde{G}}{\delta m}
(\cdot,m_{T})\bigl( \rho_{T}^{(2)} \bigr)
+
\frac{\delta^2 \tilde{G}}{\delta m^2}
(\cdot,m_{T})(\rho_{T},\partial_{m} m_{T}),
\end{equation*}
where we have denoted by 
$(\partial_{m} \tilde{m}_{t},\partial_{m} \tilde{u}_{t})_{t \in [0,T]}$
the derivative of 
$(\tilde{m}_{t},\tilde{u}_{t})_{t \in [0,T]}$
when the initial condition is differentiated 
in the direction $m_{0}'-m_{0}$ at point $m_{0}$, for another 
$m_{0}' \in {\mathcal P}(\T^d)$.  
In \eqref{eq:partie2:systeme:derivative:2}, 
the pair $(\tilde{\rho}^{(2)}_{t},\tilde{z}^{(2)}_{t})_{t \in [0,T]}$
is then understood as the derivative 
of the solution
$(\tilde{\rho}_{t},\tilde{z}_{t})_{t \in [0,T]}$
to 
\eqref{eq:partie2:systeme:derivative:bis}. 

Now, using the same philosophy as in the analysis of the first-order derivatives,
we can choose freely the initial condition $\rho_{0}$.
Generally speaking, we will choose 
$\rho_{0} = 
(-1)^{\vert \ell \vert}
D^{\ell} \delta_{y}$, for some multi-index $\ell \in \{0,\dots,n-1\}^d$ 
with $\vert \ell \vert \leq n-1$ and some $y \in \T^d$. 
Since $\rho_{0}$ is expected to be insensitive to any 
perturbation that could apply to $m_{0}$, it then makes sense to let 
$\rho^{(2)}_{0}=0$. As said above, the initial condition  
$\partial_{m} m_{0}$ of $(\partial_{m} \tilde{m}_{t})_{0 \leq t \leq T}$
is expected to have the form $m_{0}'-m_{0}$ for another probability 
measure $m_{0}' \in {\mathcal P}(\T^d)$. Anyhow, by the same linearity argument 
as in the analysis of the first-order derivative, we can start with the case when
$\partial_{m} m_{0}$ is the derivative of a Dirac mass, namely
$\partial_{m} m_{0} = 
(-1)^{\vert k \vert}
D^{k} \delta_{\zeta}$, for another 
multi-index $k \in \{0,\dots,n-1\}^d$, and 
another $\zeta \in \T^d$, in which case  
$(\partial_{m} \tilde{m}_{t},\partial_{m} \tilde{u}_{t})_{0 \leq t \leq T}$
is another solution to 
\eqref{eq:partie2:systeme:derivative:bis},
but with $\partial_{m} m_{0} =
(-1)^{\vert k \vert}
D^{k} \delta_{\zeta}$ as initial condition. 
Given these initial conditions, 
 we then let 
\begin{equation*}
v^{(\ell,k)}\bigl(\cdot,m_{0},y,\zeta 
\bigr) = \tilde{z}_{0}^{(2)}, 
\end{equation*}
provided that 
\eqref{eq:partie2:systeme:derivative:2} has a unique solution.

In order to check that existence and uniqueness hold true, we may proceed as follows. 
The system 
\eqref{eq:partie2:systeme:derivative:2}
is of the type
\eqref{eq:partie2:systeme:linearise}, with
\begin{equation}
\label{eq:partie2:def:second-order:coeff}
\begin{split}
&\tilde{V}_{t}
= D_{p} \tilde{H}_{t}(\cdot,D \tilde{u}_{t}),
\quad {\Gamma}_{t}
= D^2_{pp} \tilde{H}_{t}(\cdot,D\tilde{u}_{t}),
\\
&\tilde{b}_{t}^0
= \tilde{\rho}_{t} D^{2}_{pp} \tilde{H}_{t}
(\cdot,D\tilde{u}_{t}) D \partial_{m}\tilde{u}_{t}
+  \partial_{m}\tilde{m}_{t} D^{2}_{pp} \tilde{H}_{t}
(\cdot,D\tilde{u}_{t}) D \tilde{z}_{t}
+ \tilde{m}_{t} D^3_{ppp} \tilde{H}_{t}(\cdot,D\tilde{u}_{t})
D \tilde{z}_{t} \otimes D \partial_{m} \tilde{u}_{t},
\\
&\tilde{f}_{t}^0 = 
\bigl\langle 
D^2_{pp} \tilde{H}_{t}(\cdot,D \tilde{u}_{t}),
D \tilde{z}_{t}
\otimes
D \partial_{m}\tilde{u}_{t}
\bigr\rangle
- 
\frac{\delta^2 \tilde F_{t}}{\delta m^2}(\cdot,m_{t})(\rho_{t},\partial_{m} 
m_{t}), 
\\
&\tilde{g}_{T}^0 
=
\frac{\delta^2 \tilde{G}}{\delta m^2}
(\cdot,m_{T})(\rho_{T},\partial_{m} m_{T}).
\end{split}
\end{equation}
Recall from 
Theorem \ref{thm:partie:2:existence:uniqueness}
and Lemma \ref{lem:super:reg}
on the one hand 
and from 
Corollary 
\ref{cor:partie2:systeme:linearise:linfty}
on the other hand
that 
we can find a constant $C$ (the value of which 
is allowed to increase from line to line),
independent of $m_{0}$, $y$, $\zeta$,
$\ell$
and 
$k$, such that 
\begin{equation}
\label{eq:partie2:second:order:linfini:first:order}
\begin{split}
&\essup_{\omega \in \Omega} \sup_{t \in [0,T]} \| \tilde{u}_{t} \|_{n+1+\alpha} \leq C,
\\
&\essup_{\omega \in \Omega}
\Bigl[ 
\sup_{t \in [0,T]} 
\bigl( 
\| \tilde{z}_{t}  \|_{n+1+\alpha}
+
\| \partial_{m}\tilde{u}_{t}  \|_{n+1+\alpha}
+
\| \tilde{\rho}_{t} \|_{-(n+\alpha')} 
+
\| \partial_{m}\tilde{m}_{t}  \|_{-(n+\alpha')}
\bigr) 
\Bigr]
\leq C.
\end{split}
\end{equation}
Since $\vert \ell \vert, \vert k \vert \leq n-1$, 
we 
can apply 
Corollary 
\ref{cor:partie2:systeme:linearise:linfty}
with $n$ replaced by $n-1$ (notice that 
$n-1$ satisfies the assumption of 
\S
\ref{subsubse:partie:2:1st:order}), so that
\begin{equation}
\label{eq:partie2:second:order:linfini:first:order:n-1}
\essup_{\omega \in \Omega}
\Bigl[ 
\sup_{t \in [0,T]} 
\bigl( 
\| \tilde{\rho}_{t} \|_{-(n+\alpha'-1)}
+ 
\| \partial_{m}\tilde{m}_{t} \|_{-(n+\alpha'-1)}
\bigr)
\Bigr]
 \leq C. 
\end{equation}
Therefore, we deduce that 
\begin{equation*}
\essup_{\omega \in \Omega}
\sup_{t \in [0,T]} 
\| \tilde{b}_{t}^0 \|_{-(n+\alpha'-1)} \leq C. 
\end{equation*}
Similarly, 
\begin{equation*}
\essup_{\omega \in \Omega}
\sup_{t \in [0,T]} 
\| \tilde{f}_{t}^0 \|_{n+\alpha} 
+ 
\essup_{\omega \in \Omega}
\sup_{t \in [0,T]} 
\| \tilde{g}_{t}^0 \|_{n+1+\alpha} 
\leq C.  
\end{equation*}
From Theorem \ref{thm:partie2:existence:uniqueness:linearise}, we deduce that, with the prescribed initial conditions, 
\eqref{eq:partie2:systeme:derivative:2}
has a unique solution. 
Moreover, by Corollary
\ref{cor:partie2:systeme:linearise:linfty}, 
\begin{equation}
\label{eq:partie2:infinity:bound:solution}
\essup_{\omega \in \Omega}
\sup_{t \in [0,T]} \| \tilde{z}_{t}^{(2)}  \|_{n+1+\alpha}
+
\essup_{\omega \in \Omega}
\sup_{t \in [0,T]} 
\| \tilde{\rho}_{t}^{(2)} \|_{-(n+\alpha')}
\leq C. 
\end{equation}

On the model 
of Lemma \ref{lem:partie2:kernel:derivative}, we claim:
\begin{Lemma}
\label{lem:partie2:kernel:derivative:2}
The function 
$$[\T^d]^2 \ni (x,y,\zeta) \mapsto v^{(0,0)}(x,m_{0},y,\zeta)$$
admits continuous crossed 
derivatives in $(y,\zeta)$,
up to the order $n-1$
in 
$y$ and to the order $n-1$ in $\zeta$, the derivative 
$$D^{\ell}_{y} D^{k}_{\zeta} v^{(0,0)}(\cdot,m_{0},y,\zeta) 
: \T^d \ni x \mapsto 
D^{\ell}_{y}
 D^{k}_{\zeta} 
v^{(0,0)}(x,m_{0},y,\zeta),$$ 
for $\vert \ell \vert,  \vert k \vert \leq n-1$,
belonging to 
${\mathcal C}^{n+1+\alpha}(\T^d)$
and writing
\begin{equation*}
D^{\ell}_{y} 
 D^{k}_{\zeta} 
v^{(0,0)}(x,m_{0},y,\zeta) = 
v^{(\ell,k)}(x,m_{0},y,\zeta), 
\quad x,y,\zeta \in \T^d. 
\end{equation*}
Moreover, for $\alpha' \in (0,\alpha)$, there exists a constant $C$ such that, for any multi-indices 
$\ell,k$ with $\vert \ell \vert, \vert k \vert \leq n-1$, any $y,y',\zeta,\zeta' \in \T^d$ and any $m_{0} \in {\mathcal P}(\T^d)$,
\begin{equation*}
\begin{split}
&\bigl\| v^{(\ell,k)}(\cdot,m_{0},y,
\zeta) \bigr\|_{n+1+\alpha} \leq C,
\\
&\bigl\| v^{(\ell,k)}(\cdot,m_{0},y,
\zeta)
-
v^{(\ell,k)}(\cdot,m_{0},y',
\zeta')
 \bigr\|_{n+1+\alpha} \leq C \bigl( \vert y - y' \vert^{\alpha'} 
 + \vert \zeta - \zeta' \vert^{\alpha'}
 \bigr).  
\end{split}
\end{equation*}
\end{Lemma}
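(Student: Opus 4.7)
The plan is to transpose the scheme of Lemma \ref{lem:partie2:kernel:derivative} to the second-order linearized system \eqref{eq:partie2:systeme:derivative:2}, with inputs $(\tilde b^0,\tilde f^0,\tilde g^0)$ described in \eqref{eq:partie2:def:second-order:coeff}. The uniform $\cC^{n+1+\alpha}$ bound on $v^{(\ell,k)}(\cdot,m_0,y,\zeta)$ is essentially \eqref{eq:partie2:infinity:bound:solution}: combining \eqref{eq:partie2:second:order:linfini:first:order} and \eqref{eq:partie2:second:order:linfini:first:order:n-1} with assumptions {\bf (HF2(${\boldsymbol n}$))} and {\bf (HG2(${\boldsymbol n}$+1))}, one checks that the inputs $(\tilde b^0,\tilde f^0,\tilde g^0)$ built from the first-order solutions driven by $(-1)^{|\ell|}D^\ell\delta_y$ and $(-1)^{|k|}D^k\delta_\zeta$ lie in a bounded set of the admissible class listed in the introduction of Subsection \ref{subse:par tie:2:linearization}, independently of $m_0,y,\zeta,\ell,k$; the bound then follows from Corollary \ref{cor:partie2:systeme:linearise:linfty}.

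For the H\"older continuity in $(y,\zeta)$, I would fix $(y,\zeta),(y',\zeta')\in[\T^d]^2$ and denote by $(\tilde\rho,\tilde z),(\tilde\rho',\tilde z')$ the first-order solutions of \eqref{eq:partie2:systeme:derivative:bis} started from $(-1)^{|\ell|}D^\ell\delta_y$ and $(-1)^{|\ell|}D^\ell\delta_{y'}$, and similarly $(\partial_m\tilde m,\partial_m\tilde u),(\partial_m\tilde m',\partial_m\tilde u')$ started from $(-1)^{|k|}D^k\delta_\zeta$ and $(-1)^{|k|}D^k\delta_{\zeta'}$. Proposition \ref{prop:partie2:U:differentiability:2} provides
\begin{equation*}
\essup_\omega\sup_{t\in[0,T]}\Bigl(\|\tilde z_t-\tilde z_t'\|_{n+1+\alpha}+\|\tilde\rho_t-\tilde\rho_t'\|_{-(n+\alpha')}\Bigr)\leq C|y-y'|^{\alpha'},
\end{equation*}
together with the analogous bound for the $(\partial_m\tilde m,\partial_m\tilde u)$ pair. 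Substituting into \eqref{eq:partie2:def:second-order:coeff} and using {\bf (HF2(${\boldsymbol n}$))}, {\bf (HG2(${\boldsymbol n}$+1))}, the already available uniform bounds, and the bilinear structure of $\delta^2\tilde F/\delta m^2$ and $\delta^2\tilde G/\delta m^2$, one obtains that the difference of the second-order inputs is of order $C(|y-y'|^{\alpha'}+|\zeta-\zeta'|^{\alpha'})$ in the norms $-(n+\alpha'-1)$, $n+\alpha$ and $n+1+\alpha$ respectively. The stability estimate \eqref{eq:cor:partie2:systeme:linearise:linfty:2} applied with $\tilde V=\tilde V'$, $\Gamma=\Gamma'$, $\tilde m=\tilde m'$ then yields the announced H\"older bound on $v^{(\ell,k)}(\cdot,m_0,y,\zeta)-v^{(\ell,k)}(\cdot,m_0,y',\zeta')$.

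The identification $D_y^\ell D_\zeta^k v^{(0,0)}=v^{(\ell,k)}$ is obtained inductively on $|\ell|+|k|$, exactly as in Lemma \ref{lem:partie2:kernel:derivative}. If $|\ell|\leq n-2$ and $i\in\{1,\dots,d\}$, the pair of difference quotients $(h^{-1}[\tilde\rho^{(2)}_t(y+he_i)-\tilde\rho^{(2)}_t(y)], h^{-1}[\tilde z^{(2)}_t(y+he_i)-\tilde z^{(2)}_t(y)])$ solves a system of type \eqref{eq:partie2:systeme:linearise} whose inputs converge, thanks to Lemma \ref{lem:partie2:kernel:derivative} applied to the first-order $y$-block (which requires $|\ell|+1\leq n-1+1\leq n$), to those defining $v^{(\ell+e_i,k)}$. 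Corollary \ref{cor:partie2:systeme:linearise:linfty} then gives convergence in $\cC^{n+1+\alpha}$ of the difference quotients and identifies the limit with $v^{(\ell+e_i,k)}$; the symmetric argument handles the $\zeta$ variable. The restriction $|\ell+e_i|,|k+e_i|\leq n-1$ at each induction step is precisely what is needed for Theorem \ref{thm:partie2:existence:uniqueness:linearise} to still apply to the limiting system.

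The main obstacle in this plan is the careful bookkeeping of regularity indices in \eqref{eq:partie2:def:second-order:coeff}. Terms such as $\tilde\rho\, D^2_{pp}\tilde H(\cdot,D\tilde u)D\partial_m\tilde u$ and $\tilde m\, D^3_{ppp}\tilde H(\cdot,D\tilde u)D\tilde z\otimes D\partial_m\tilde u$ pair distributional factors ($\tilde\rho$ and $\partial_m\tilde m$ having only negative H\"older regularity, see \eqref{eq:partie2:second:order:linfini:first:order}--\eqref{eq:partie2:second:order:linfini:first:order:n-1}) with smooth factors coming from $\tilde u$, while the source $[\delta^2\tilde F/\delta m^2](\cdot,m)(\rho,\partial_m m)$ involves a double duality pairing in the measure argument. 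The key point is that {\bf (HF2(${\boldsymbol n}$))} and {\bf (HG2(${\boldsymbol n}$+1))} supply enough joint regularity of these second-order kernels so that multiplication against $\cC^{n+\alpha}$ factors acts boundedly on $\cC^{-(n+\alpha'-1)}$-distributions; verifying this product estimate in the right norms, together with its H\"older continuity in $(y,\zeta)$, is what makes the inputs of \eqref{eq:partie2:def:second-order:coeff} fall into the admissible class of Subsection \ref{subse:par tie:2:linearization} and allows the stability estimates of Corollary \ref{cor:partie2:systeme:linearise:linfty} to close the induction.
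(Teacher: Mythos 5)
Your proposal is correct and follows essentially the same route as the paper's proof: the $L^\infty$ bound from \eqref{eq:partie2:infinity:bound:solution}, then H\"older continuity in $(y,\zeta)$ obtained by feeding Proposition \ref{prop:partie2:U:differentiability:2} into \eqref{eq:partie2:def:second-order:coeff} and closing with a stability estimate for the second-order linearized system, and finally identification of $D^\ell_y D^k_\zeta v^{(0,0)}$ with $v^{(\ell,k)}$ by convergence of difference quotients via Lemma \ref{lem:partie2:kernel:derivative}. The paper invokes Proposition \ref{lem:partie2:stability:2} for the stability step, while you invoke Corollary \ref{cor:partie2:systeme:linearise:linfty}; both are equivalent here since the relevant quantity $\tilde z^{(2)}_0$ is ${\mathcal F}_0$-measurable and hence deterministic, so this is a cosmetic rather than substantive difference.
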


\begin{proof}
With the same notations as in Lemma
\ref{lem:partie2:kernel:derivative}, 
we denote by 
$(\tilde{\rho}_{t}^{k,\zeta},\tilde{z}_{t}^{k,\zeta})_{t \in [0,T]}$
the solution to \eqref{eq:partie2:systeme:derivative} with 
$(-1)^{\vert k \vert}D^{k} \delta_{\zeta}$ as initial condition
and by 
$(\tilde{\rho}_{t}^{\ell,y},\tilde{z}_{t}^{\ell,y})_{t \in [0,T]}$
the solution to \eqref{eq:partie2:systeme:derivative} with 
$(-1)^{\vert \ell \vert}D^{\ell}\delta_{y}$ as initial condition. 

By Proposition 
\ref{prop:partie2:U:differentiability:2}
(applied with both $n-1$ and $n$), we have, for any 
$y,y' \in \T^d$ and any $\zeta,\zeta' \in \T^d$,
\begin{equation}
\label{eq:partie2:2nd:order:alpha prime:holder:regularity}
\begin{split}
&\essup_{\omega \in \Omega}
\biggl[ 
\sup_{t \in [0,T]} \| \tilde{z}_{t}^{k,\zeta} - \tilde{z}_{t}^{k,\zeta'} \|_{n+1+\alpha}
+
\sup_{t \in [0,T]} 
\| \tilde{\rho}_{t}^{k,\zeta} - \tilde{\rho}_{t}^{k,\zeta'} \|_{-(n+\alpha'-1)}
\biggr] 
 \leq C
\vert \zeta - \zeta' \vert^{\alpha'},
\\
&\essup_{\omega \in \Omega}
\biggl[ 
\sup_{t \in [0,T]} \| \tilde{z}_{t}^{\ell,y} - \tilde{z}_{t}^{\ell,y'} \|_{n+1+\alpha}
+
\sup_{t \in [0,T]} 
\| \tilde{\rho}_{t}^{\ell,y} - \tilde{\rho}_{t}^{\ell,y'} \|_{-(n+\alpha'-1)}
\biggr] 
 \leq C\vert y - y' \vert^{\alpha'}.
\end{split}
\end{equation}
Denote now by $(\tilde{b}^{\ell,k,y,\zeta}_{t})_{t \in [0,T]}$
the process $(\tilde{b}_{t}^0)_{t \in [0,T]}$
in \eqref{eq:partie2:def:second-order:coeff}
when $(\tilde{\rho}_{t},\tilde{z}_{t})_{t \in [0,T]}$
stands for the process $(\tilde{\rho}_{t}^{\ell,y},\tilde{z}_{t}^{\ell,y})_{
t \in [0,T]}$
and
$(\partial_{m} \tilde{m}_{t},\partial_{m} \tilde{u}_{t})_{t \in [0,T]}$
is replaced by $(\tilde{\rho}_{t}^{k,\zeta},\tilde{z}_{t}^{k,\zeta})_{
t \in [0,T]}$. Define in a similar way
$(\tilde{f}^{\ell,k,y,\zeta}_{t})_{t \in [0,T]}$
and 
$\tilde{g}_{T}^{\ell,k,y,\zeta}$. 
Then, combining 
\eqref{eq:partie2:2nd:order:alpha prime:holder:regularity}
 with 
\eqref{eq:partie2:second:order:linfini:first:order}
and 
\eqref{eq:partie2:second:order:linfini:first:order:n-1}
\begin{equation*}
\begin{split}
&\essup_{\omega}
\Bigl[
\bigl\| 
\tilde{g}_{T}^{\ell,k,y',\zeta'}
- 
\tilde{g}_{T}^{\ell,k,y,\zeta}
\bigr\|_{n+1+\alpha}
\\
&\hspace{15pt} +
\sup_{t \in [0,T]}
\Bigl( 
\bigl\| 
\tilde{b}_{t}^{\ell,k,y',\zeta'}
- 
\tilde{b}_{t}^{\ell,k,y,\zeta}
\bigr\|_{-(n+\alpha'-1)}
+
\bigl\| 
\tilde{f}_{t}^{\ell,k,y',\zeta'}
- 
\tilde{f}_{t}^{\ell,k,y,\zeta}
\bigr\|_{n+\alpha}
\Bigr)
\Bigr] 
\\
&\leq C 
\bigl( 
\vert y - y' \vert^{\alpha'}
+
\vert z - z' \vert^{\alpha'}
\bigr). 
\end{split}
\end{equation*}
By Proposition \ref{lem:partie2:stability:2}, we deduce that 
\begin{equation}
\label{eq:partie:2:2nd:order:reg:y:zeta}
\Bigl\| v^{(\ell,k)}(\cdot,m_{0},y,
\zeta)
-
v^{(\ell,k)}(\cdot,m_{0},y',
\zeta')
 \Bigr\|_{n+1+\alpha} \leq C \bigl( \vert y - y' \vert^{\alpha'} 
 + \vert \zeta - \zeta' \vert^{\alpha'}
 \bigr),
\end{equation}
 which provides the last claim in the statement (the $L^{\infty}$
 bound following from 
\eqref{eq:partie2:infinity:bound:solution}).  

Now, by Lemma 
\ref{lem:partie2:kernel:derivative}
(applied with both $n$ and $n-1$), we know that, for $\vert k \vert \leq n-2$
and $j \in \{1,\dots,d\}$,
\begin{equation*}
\begin{split}
&\lim_{\R \setminus \{0\} \ni h \rightarrow 0}
\essup_{\omega \in \Omega}
\Bigl[ 
\sup_{t \in [0,T]}
\Bigl(
\bigl\|
\frac{1}{h}
\bigl(
 \tilde{\rho}_{t}^{\zeta +h e_{j},k}
- 
\tilde{\rho}_{t}^{\zeta,k}
\bigr)
- \tilde{\rho}_{t}^{\zeta,k+e_{j}}  
\bigr
\|_{-(n+\alpha'-1)}
\\
&\hspace{100pt}+  
\bigl\|
\frac{1}h 
\bigl(
 \tilde{z}_{t}^{\zeta +h e_{j},k}
- 
\tilde{z}_{t}^{\zeta,k}
\bigr)
- \tilde{z}_{t}^{\zeta,k+e_{j}} 
\bigr
\|_{n+1+\alpha}
\Bigr)
\Bigr] = 0,
\end{split}
\end{equation*}
where $e_{j}$ denotes the $j^{\textrm{th}}$ vector of the canonical basis of 
$\R^d$.  
Therefore, by \eqref{eq:partie2:second:order:linfini:first:order}, 
\begin{equation*}
\begin{split}
\lim_{\R \setminus \{0\} \ni h \rightarrow 0}
\essup_{\omega \in \Omega}
\Bigl[ 
&\sup_{t \in [0,T]}
\Bigl(
\bigl\| 
\frac{1}{h}
\bigl(
\tilde{b}_{t}^{\ell,k,y,\zeta+he_{j}}
- 
\tilde{b}_{t}^{\ell,k,y,\zeta}
\bigr)
- \tilde{b}_{t}^{\ell,k+e_{j},y,\zeta}  
\bigr
\|_{-(n+\alpha'-1)}
\\
&\hspace{15pt}+  
\bigl\|
\frac{1}{h}
\bigl(
 \tilde{f}_{t}^{\ell,k,y,\zeta +h e_{j}}
- 
\tilde{f}_{t}^{\ell,k,y,\zeta}
\bigr)
- \tilde{f}_{t}^{\ell,k+e_{j},y,\zeta}
\bigr
\|_{n+\alpha} \Bigr)
\\
&\hspace{15pt} +
\bigl\| 
\frac{1}{h}
\bigl(
\tilde{g}_{T}^{\ell,k,y,\zeta +h e_{j}}
- 
\tilde{g}_{T}^{\ell,k,y,\zeta}
\bigr)
- \tilde{g}_{T}^{\ell,k+e_{j},y,\zeta}
\bigr
\|_{n+1+\alpha}
\Bigr) 
\Bigr] = 0. 
\end{split}
\end{equation*}
By Proposition \ref{lem:partie2:stability:2}, 
\begin{equation*} 
\lim_{h \rightarrow 0}
\Bigl\| \frac{1}{h}
\bigl( v^{(\ell,k)}
(\cdot,m_{0},y,\zeta+h e_{j})
- v^{(\ell,k)}
(\cdot,m_{0},y,\zeta)
\bigr)
-
v^{(\ell,k+e_{j})}
(\cdot,m_{0},y,\zeta) 
 \Bigr\|_{n+1+\alpha} = 0,
\end{equation*}
which proves, by induction, that 
\begin{equation*}
D^{k}_{\zeta}
v^{(\ell,0)}
(x,m_{0},y,\zeta)
= v^{(\ell,k)}
(x,m_{0},y,\zeta), \quad x,y,\zeta \in \T^d. 
\end{equation*}
Similarly, we can prove that 
\begin{equation*}
D^\ell_{y}
v^{(0,k)}
(x,m_{0},y,\zeta)
= v^{(\ell,k)}
(x,m_{0},y,\zeta), \quad x,y,\zeta \in \T^d. 
\end{equation*}
Together with the continuity property 
\eqref{eq:partie:2:2nd:order:reg:y:zeta}, we complete the proof. 
\end{proof}

We claim that 

\begin{Proposition}
\label{prop:partie2:U:differentiability:3}
We can find a constant $C$
such that, for any $m_{0},m_{0}' \in {\mathcal P}(\T^d)$,
any $y,y',\zeta \in \T^d$, 
any multi-indices $\ell,k$ with $\vert \ell\vert, \vert k \vert \leq n-1$,
\begin{equation*}
\Bigl\|
v^{(\ell,k)}(\cdot,m_{0},y,\zeta)
- v^{(\ell,k)}(\cdot,m_{0}',y,\zeta)
\Bigr\|_{n+1+\alpha}
\leq C 
{\mathbf d}_{1}(m_{0},m_{0}'). 
\end{equation*}
\end{Proposition}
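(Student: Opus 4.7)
Fix $y,\zeta\in\T^d$ and multi-indices $\ell,k$ with $|\ell|,|k|\leq n-1$. Given two initial distributions $m_{0},m_{0}'\in\Pk$, the plan is to compare, layer by layer, the three nested systems whose initial-time backward values define $v^{(\ell,k)}(\cdot,m_{0},y,\zeta)$ and $v^{(\ell,k)}(\cdot,m_{0}',y,\zeta)$: namely, the MFG system \eqref{eq:se:3:tilde:HJB:FP} with initial data $m_{0}$ versus $m_{0}'$, giving $(\tilde m_{t},\tilde u_{t})$ versus $(\tilde m_{t}',\tilde u_{t}')$; the two copies of the first-order linearized system \eqref{eq:partie2:systeme:derivative:bis} with inputs $(-1)^{|\ell|}D^{\ell}\delta_{y}$ and $(-1)^{|k|}D^{k}\delta_{\zeta}$ respectively, giving $(\tilde\rho_{t},\tilde z_{t})$ and $(\partial_{m}\tilde m_{t},\partial_{m}\tilde u_{t})$ versus their primed analogues; and finally the second-order linearized system \eqref{eq:partie2:systeme:derivative:2} with $\rho^{(2)}_{0}=0$, giving $(\tilde\rho^{(2)}_{t},\tilde z^{(2)}_{t})$ versus $(\tilde\rho^{(2)\prime}_{t},\tilde z^{(2)\prime}_{t})$, whose backward components at $t=0$ are precisely $v^{(\ell,k)}(\cdot,m_{0},y,\zeta)$ and $v^{(\ell,k)}(\cdot,m_{0}',y,\zeta)$.

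The first layer is controlled directly by Theorem \ref{thm:partie:2:existence:uniqueness}, which yields $\sup_{t}\|\tilde u_{t}-\tilde u_{t}'\|_{n+1+\alpha}+\sup_{t}\mathbf{d}_{1}(\tilde m_{t},\tilde m_{t}')\leq C\,\mathbf{d}_{1}(m_{0},m_{0}')$. For the second layer, observe that the coefficients $\tilde V_{t}=D_{p}\tilde H_{t}(\cdot,D\tilde u_{t})$ and $\Gamma_{t}=D^{2}_{pp}\tilde H_{t}(\cdot,D\tilde u_{t})$ inherit the bound $\|\tilde V_{t}-\tilde V_{t}'\|_{n+\alpha}+\|\Gamma_{t}-\Gamma_{t}'\|_{0}\leq C\|\tilde u_{t}-\tilde u_{t}'\|_{n+1+\alpha}$ from the smoothness of $H$, while the initial conditions $D^{\ell}\delta_{y}$ and $D^{k}\delta_{\zeta}$ coincide in the two copies. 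Hence Corollary \ref{cor:partie2:systeme:linearise:linfty} (applied with $n$ replaced by $n-1$, since $|\ell|,|k|\leq n-1$, so that the distributional initial data lie in $(\cC^{n+\alpha'}(\T^{d}))'$) yields, for any $\alpha'\in(0,\alpha)$,
\begin{equation*}
\essup_{\omega}\sup_{t}\Bigl(\|\tilde z_{t}-\tilde z_{t}'\|_{n+\alpha}+\|\tilde\rho_{t}-\tilde\rho_{t}'\|_{-(n-1+\alpha')}+\|\partial_{m}\tilde u_{t}-\partial_{m}\tilde u_{t}'\|_{n+\alpha}+\|\partial_{m}\tilde m_{t}-\partial_{m}\tilde m_{t}'\|_{-(n-1+\alpha')}\Bigr)\leq C\,\mathbf{d}_{1}(m_{0},m_{0}').
\end{equation*}

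For the third layer, write the system satisfied by $(\tilde\rho^{(2)}-\tilde\rho^{(2)\prime},\tilde z^{(2)}-\tilde z^{(2)\prime})$. It is of the form \eqref{eq:partie2:systeme:linearise}, with coefficients $\tilde V$, $\Gamma$ and with inputs $\tilde b^{0}-\tilde b^{0\prime}$, $\tilde f^{0}-\tilde f^{0\prime}$, $\tilde g^{0}_{T}-\tilde g^{0\prime}_{T}$, where each triplet is built by \eqref{eq:partie2:def:second-order:coeff} from the corresponding MFG/first-order solutions. The plan is then to apply Corollary \ref{cor:partie2:systeme:linearise:linfty} one more time (with zero initial data since $\rho^{(2)}_{0}=0$ in both cases), which gives
\begin{equation*}
\sup_{t}\|\tilde z^{(2)}_{t}-\tilde z^{(2)\prime}_{t}\|_{n+1+\alpha}\leq C\Bigl(\essup_{\omega}\|\tilde g^{0}_{T}-\tilde g^{0\prime}_{T}\|_{n+1+\alpha}+\essup_{\omega}\sup_{t}\bigl(\|\tilde b^{0}_{t}-\tilde b^{0\prime}_{t}\|_{-(n+\alpha'-1)}+\|\tilde f^{0}_{t}-\tilde f^{0\prime}_{t}\|_{n+\alpha}\bigr)\Bigr).
\end{equation*}
Evaluating at $t=0$ yields the required inequality for $v^{(\ell,k)}$.

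What remains is to estimate the three input differences by $\mathbf{d}_{1}(m_{0},m_{0}')$. Each input is a sum of trilinear products involving one factor among $\{\tilde m_{t},\tilde \rho_{t},\partial_{m}\tilde m_{t}\}$, one factor among $\{D\tilde z_{t},D\partial_{m}\tilde u_{t}\}$, and one Hamiltonian factor among $\{D^{2}_{pp}\tilde H_{t}(\cdot,D\tilde u_{t}),D^{3}_{ppp}\tilde H_{t}(\cdot,D\tilde u_{t})\}$ (plus, for $\tilde f^{0}$ and $\tilde g^{0}_{T}$, a term involving $\delta^{2}F/\delta m^{2}$ or $\delta^{2}G/\delta m^{2}$ evaluated at $(\rho_{t},\partial_{m}m_{t})$). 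For every such product, I would write a telescoping decomposition ``$ABC-A'B'C'=(A-A')BC+A'(B-B')C+A'B'(C-C')$'', use the universal $L^{\infty}$-in-time bounds on each factor supplied by \eqref{eq:partie2:second:order:linfini:first:order}--\eqref{eq:partie2:second:order:linfini:first:order:n-1}, and then dominate each difference $(A-A')$, $(B-B')$, $(C-C')$ by the second-layer estimate of the preceding paragraph together with the smoothness of $H$; for the terms involving $\delta^{2}F/\delta m^{2}$ and $\delta^{2}G/\delta m^{2}$, the Lipschitz dependence in $m$ afforded by {\bf (HF2(${\boldsymbol n}$))} and {\bf (HG2(${\boldsymbol n}$+1))} combines with $\mathbf{d}_{1}(m_{t},m_{t}')\leq C\mathbf{d}_{1}(m_{0},m_{0}')$ to yield the desired bound.

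The main obstacle will be to choose the dual norms consistently so that the triple products truly land in $\cC^{n+\alpha}$ (for $\tilde f^{0}$) or in $(\cC^{n+\alpha'-1}(\T^{d}))'$ (for $\tilde b^{0}$); since the $\rho$-factors are only distributions of order $n-1$, each product must be paired against a test function of regularity at least $n$, which is exactly where the fact that $|\ell|,|k|\leq n-1$ (rather than $\leq n$) is crucial and matches the regularity of $\tilde z$ and $\partial_{m}\tilde u$ in $\cC^{n+1+\alpha}$. Once the norms are correctly matched, the estimate is a direct assembly of the three layers.
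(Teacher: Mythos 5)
Your proof plan is correct and follows the same three-layer strategy as the paper's proof: compare the MFG system, the two copies of the first-order linearized system (with initial data $(-1)^{|\ell|}D^\ell\delta_y$ and $(-1)^{|k|}D^k\delta_\zeta$), and finally the second-order system, controlling the input differences $\tilde b^0-\tilde b^{0\prime}$, $\tilde f^0-\tilde f^{0\prime}$, $\tilde g^0_T-\tilde g^{0\prime}_T$ by telescoping the trilinear products and then invoking the stability estimate (the paper cites Proposition \ref{lem:partie2:stability:2} together with Proposition \ref{prop:partie2:U:differentiability:2}; your use of Corollary \ref{cor:partie2:systeme:linearise:linfty} is the $L^\infty$ version of the same thing).

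One point in your writeup is off, though your own final paragraph implicitly corrects it. In the displayed second-layer estimate you apply the stability corollary only at level $n-1$, yielding $\|\tilde z_t-\tilde z_t'\|_{n+\alpha}$ and $\|\partial_m\tilde u_t-\partial_m\tilde u_t'\|_{n+\alpha}$. That is too weak for the third layer: the input $\tilde f^0_t$ in \eqref{eq:partie2:def:second-order:coeff} contains the factor $D\tilde z_t\otimes D\partial_m\tilde u_t$, so bounding $\|\tilde f^0_t-\tilde f^{0\prime}_t\|_{n+\alpha}$ (the norm that Corollary \ref{cor:partie2:systeme:linearise:linfty} requires on the input) forces you to control $D\tilde z_t-D\tilde z_t'$ in $\cC^{n+\alpha}$, i.e. $\tilde z_t-\tilde z_t'$ in $\cC^{n+1+\alpha}$, and likewise for $\partial_m\tilde u_t-\partial_m\tilde u_t'$. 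The resolution — which is exactly the parenthetical remark ``applied with both $n$ and $n-1$'' in the paper's proof — is to apply the first-order stability estimate at \emph{both} levels: at level $n$ (licit since $|\ell|,|k|\leq n-1<n$, so $D^\ell\delta_y\in(\cC^{n+\alpha'})'$) to obtain $\|\tilde z_t-\tilde z_t'\|_{n+1+\alpha}$ and $\|\partial_m\tilde u_t-\partial_m\tilde u_t'\|_{n+1+\alpha}$, and at level $n-1$ to obtain the stronger dual bounds $\|\tilde\rho_t-\tilde\rho_t'\|_{-(n-1+\alpha')}$ and $\|\partial_m\tilde m_t-\partial_m\tilde m_t'\|_{-(n-1+\alpha')}$ needed in the pairing with $\tilde b^0$. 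With the display corrected in this way the rest of your argument assembles as stated.
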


\begin{proof}
The proof consists of a new application of Proposition 
\ref{lem:partie2:stability:2}. 
Given 
\begin{itemize}
\item the solutions 
$(\tilde{m}_{t},\tilde{u}_{t})_{t \in [0,T]}$
and $(\tilde{m}_{t}',\tilde{u}_{t}')_{t \in [0,T]}$
to \eqref{eq:se:3:tilde:HJB:FP} with $\tilde{m}_{0}=m_{0}$
and $\tilde{m}_{0}'=m_{0}'$ 
as respective initial conditions,
\item the solutions
$(\partial_{m}\tilde{m}_{t},\partial_{m}\tilde{u}_{t})_{t \in [0,T]}$
and
$(\partial_{m}\tilde{m}_{t}',\partial_{m}\tilde{u}_{t}')_{t \in [0,T]}$
to \eqref{eq:partie2:systeme:derivative:bis}, 
with
$(\tilde{m}_{t},\tilde{u}_{t})_{t \in [0,T]}$
and 
$(\tilde{m}_{t}',\tilde{u}_{t}')_{t \in [0,T]}$
as respective input and
$\partial_{m}\tilde{m}_{0}
=\partial_{m}\tilde{m}_{0}' = (-1)^{\vert k\vert} D^k \delta_{\zeta}$
as initial condition,
for some multi-index $k$ with $\vert k \vert \leq n-1$ and for some $\zeta \in \T^d$,
\item the solutions 
$(\tilde{\rho}_{t},\tilde{z}_{t})_{t \in [0,T]}$
and 
$(\tilde{\rho}_{t}',\tilde{z}_{t}')_{t \in [0,T]}$
to \eqref{eq:partie2:systeme:derivative:bis}, 
with 
$(\tilde{m}_{t},\tilde{u}_{t})_{t \in [0,T]}$
and 
$(\tilde{m}_{t}',\tilde{u}_{t}')_{t \in [0,T]}$
as respective input and
$(-1)^{\vert \ell \vert} D^{\ell} \delta_{y}$ as initial condition, 
for some multi-index $\ell$ with $\vert \ell \vert \leq n-1$ and for some $y \in \T^d$,
\item the
solutions 
$(\tilde{\rho}_{t}^{(2)},\tilde{z}_{t}^{(2)})_{t \in [0,T]}$
and 
$(\tilde{\rho}_{t}^{(2)\prime},\tilde{z}_{t}^{(2)\prime})_{t \in [0,T]}$
to the second-order linearized system \eqref{eq:partie2:systeme:derivative:2}
with 
$(\tilde{m}_{t},\tilde{u}_{t},\tilde{\rho}_{t},\tilde{z}_{t}, 
\partial_{m} \tilde{m}_{t}, 
\partial_{m} \tilde{u}_{t}
)_{t \in [0,T]}$
and 
$(\tilde{m}_{t}',\tilde{u}_{t}',\tilde{\rho}_{t}',\tilde{z}_{t}',
\partial_{m} \tilde{m}_{t}', 
\partial_{m} \tilde{u}_{t}'
)_{t \in [0,T]}$
as respective input and with $0$ as initial condition.
\end{itemize} 

Notice
from 
\eqref{eq:representation:z0:ell:y}
 that 
 $\tilde{z}_{0} = v^{(\ell)}(\cdot,m_{0},y)$
 and 
$\tilde{z}_{0}' = v^{(\ell)}(\cdot,m_{0}',y)$.
\vspace{4pt}

With each of 
$(\tilde{m}_{t},\tilde{u}_{t},\tilde{\rho}_{t},\tilde{z}_{t},\partial_{m} \tilde{m}_{t}, 
\partial_{m} \tilde{u}_{t})_{t \in [0,T]}$
and 
$(\tilde{m}_{t}',\tilde{u}_{t}',\tilde{\rho}_{t}',\tilde{z}_{t}',\partial_{m} \tilde{m}_{t}', 
\partial_{m} \tilde{u}_{t}'
)_{t \in [0,T]}$, 
we can associate the same 
coefficients as in 
\eqref{eq:partie2:def:second-order:coeff},
labeling with a prime 
the coefficients associated with the input
$(\tilde{m}_{t}',\tilde{u}_{t}',\tilde{\rho}_{t}',\tilde{z}_{t}',
\partial_{m} \tilde{m}_{t}', 
\partial_{m} \tilde{u}_{t}'
)_{t \in [0,T]}$. 
Combining with 
\eqref{eq:partie2:second:order:linfini:first:order}
and 
\eqref{eq:partie2:second:order:linfini:first:order:n-1},
we obtain:
\begin{equation*}
\begin{split}
&\| \tilde{V}_{t}
- \tilde{V}_{t}' \|_{n+\alpha} + \| \Gamma_{t} - \Gamma_{t}' \|_{0}
+ \| b_{t}^0 - b_{t}^{0'} \|_{-(n+\alpha'-1)}
+ \| f_{t}^{0} - f_{t}^{0\prime} \|_{n+\alpha}
+ \| g_{T}^{0} - g_{T}^{0\prime} \|_{n+1+\alpha}
\\
&\leq C 
\Bigl(
\| \tilde{u}_{t} - \tilde{u}_{t}' \|_{n+1+\alpha}
+
\| \tilde{z}_{t} - \tilde{z}_{t}' \|_{n+1+\alpha}
+
\| \partial_{m} \tilde{u}_{t} - \partial_{m}\tilde{u}_{t}' \|_{n+1+\alpha}
\\
&\hspace{100pt}+ 
 {\mathbf d}_{1}(\tilde{m}_{t},\tilde{m_{t}}')
+
\| \tilde{\rho}_{t} - \tilde{\rho}_{t}'\|_{-(n+\alpha'-1)}
+
\| \partial_{m}\tilde{m}_{t} - \partial_{m}\tilde{m}_{t}'\|_{-(n+\alpha'-1)}
\Bigr).
\end{split}
\end{equation*}
%
By 
Propositions 
\ref{lem:partie2:stability:2} 
and \ref{prop:partie2:U:differentiability:2}
(applied with both $n$ and $n-1$), 
we complete the proof. 
\end{proof}

On the model of 
Lemma \ref{lem:partie2:representation:derivative}, 
we have
\begin{Lemma}
\label{lem:partie2:representation:derivative:2}
Given a finite measure $\mu$ on $\T^d$, 
the solution $\tilde{z}^{(2)}$ to 
\eqref{eq:partie2:systeme:derivative},
with $0$ as initial condition, 
when $(\tilde{m}_{t})_{0 \leq t \leq T}$
is initialized with $m_{0}$,
$(\tilde{\rho}_{t})_{0 \leq t \leq T}$
is initialized with $(-1)^{\vert \ell \vert}
D^{\ell} \delta_{y}$,
for $\vert \ell \vert \leq n-1$ and $y \in \T^d$,
and $(\partial_{m}\tilde{m}_{t})_{0 \leq t \leq T}$
is initialized with $\mu$, 
reads,
when taken 
at time $0$,
\begin{equation*}
\tilde{z}_{0}^{(2)} : \R^d 
\ni x  \mapsto 
\tilde{z}_{0}^{(2)}(x) = \int_{\T^d} v^{(\ell,0)}(x,m_{0},y,\zeta) d\mu(\zeta). 
\end{equation*}
\end{Lemma}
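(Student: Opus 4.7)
The plan is to mimic the argument of Lemma~\ref{lem:partie2:representation:derivative}, exploiting the fact that, once the ``base'' data $(\tilde m_t,\tilde u_t,\tilde\rho_t,\tilde z_t)_{t\in[0,T]}$ are frozen (they depend only on $m_0$ and on the initial condition $(-1)^{|\ell|}D^\ell\delta_y$ of the first-order system), the map $\mu\mapsto(\partial_m\tilde m,\partial_m\tilde u,\tilde\rho^{(2)},\tilde z^{(2)})$ becomes a \emph{linear} operator. Indeed, the coefficients $\tilde b_t^0$, $\tilde f_t^0$ and $\tilde g_T^0$ appearing in the decomposition \eqref{eq:partie2:def:second-order:coeff} of the second-order system depend linearly on $(\partial_m\tilde m,\partial_m\tilde u)$, which themselves depend linearly on the initial condition of the first-order linearized system with the frozen base as input. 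Composing these two linear dependencies, $\tilde z_0^{(2)}$ is a linear functional of $\mu$.

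Given this linearity, I would fix $\varepsilon>0$ and introduce a finite Borel partition $(U_i)_{i=1}^N$ of $\T^d$ with $\mathrm{diam}(U_i)\le\varepsilon$, pick $\zeta_i\in U_i$, and set $\mu^\varepsilon=\sum_{i=1}^N\mu(U_i)\,\delta_{\zeta_i}$. By the linearity just described, the associated solution $\tilde z_0^{(2),\varepsilon}$ is exactly $\sum_{i=1}^N\mu(U_i)\,v^{(\ell,0)}(\cdot,m_0,y,\zeta_i)$; by construction, as $\varepsilon\downarrow 0$ this sum converges in $\cC^{n+1+\alpha}(\T^d)$ to $\int_{\T^d}v^{(\ell,0)}(\cdot,m_0,y,\zeta)\,d\mu(\zeta)$, using the joint regularity of $v^{(\ell,0)}$ in $\zeta$ from Lemma~\ref{lem:partie2:kernel:derivative:2}.

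It then remains to control $\tilde z_0^{(2)}-\tilde z_0^{(2),\varepsilon}$. Let $(\partial_m\tilde m_t^\varepsilon,\partial_m\tilde u_t^\varepsilon)$ be the solution of \eqref{eq:partie2:systeme:derivative:bis} driven by $\mu^\varepsilon$ and let $(\tilde\rho_t^{(2),\varepsilon},\tilde z_t^{(2),\varepsilon})$ be the solution of the second-order system \eqref{eq:partie2:systeme:derivative:2} associated with it. A standard duality argument against a smooth test function shows $\|\mu-\mu^\varepsilon\|_{-1}\le C\|\mu\|\,\varepsilon$, hence, by Corollary~\ref{cor:partie2:systeme:linearise:linfty} (applied with $n$ replaced by $n-1$) to the difference $(\partial_m\tilde m-\partial_m\tilde m^\varepsilon,\partial_m\tilde u-\partial_m\tilde u^\varepsilon)$, we get $\essup_\omega\sup_t\bigl(\|\partial_m\tilde m_t-\partial_m\tilde m_t^\varepsilon\|_{-(n+\alpha'-1)}+\|\partial_m\tilde u_t-\partial_m\tilde u_t^\varepsilon\|_{n+\alpha}\bigr)\le C\|\mu\|\,\varepsilon$. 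Plugging this back into the expressions of $\tilde b_t^0$, $\tilde f_t^0$, $\tilde g_T^0$ in \eqref{eq:partie2:def:second-order:coeff} and applying Corollary~\ref{cor:partie2:systeme:linearise:linfty} once more to the difference $(\tilde\rho^{(2)}-\tilde\rho^{(2),\varepsilon},\tilde z^{(2)}-\tilde z^{(2),\varepsilon})$ yields $\|\tilde z_0^{(2)}-\tilde z_0^{(2),\varepsilon}\|_{n+1+\alpha}\le C\|\mu\|\,\varepsilon$. Letting $\varepsilon\to 0$ completes the proof.

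The only mildly delicate point is bookkeeping the two successive applications of the stability estimates: the first one is for the first-order linearized system (in order to replace $\mu$ by $\mu^\varepsilon$ at the level of $(\partial_m\tilde m,\partial_m\tilde u)$), and the second one for the second-order system. Both are covered by Corollary~\ref{cor:partie2:systeme:linearise:linfty}, provided one chooses the Hölder indices consistently (in particular using $\alpha'<\alpha$ so that $\|D^k\delta_{\zeta'}-D^k\delta_\zeta\|_{-(n+\alpha')}\to 0$ as $\zeta'\to\zeta$ for $|k|\le n-1$, which is the same mechanism as in Lemma~\ref{lem:partie2:kernel:derivative}). Beyond that, the argument is a direct transcription of the first-order version.
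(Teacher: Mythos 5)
Your proof is correct and is essentially the transcription of the argument for Lemma~\ref{lem:partie2:representation:derivative} that the paper itself intends (the paper states the lemma ``on the model of'' the first-order version without giving an explicit proof). The key points you handle — the linearity of $\mu\mapsto\tilde z_0^{(2)}$ once the base $(\tilde m,\tilde u)$ and the first slot $(\tilde\rho,\tilde z)$ are frozen, the $\|\mu-\mu^\varepsilon\|_{-1}\le C\|\mu\|\varepsilon$ bound, and the two successive stability estimates (first-order then second-order linearized system) — are exactly what the paper's machinery requires, and the use of Lemma~\ref{lem:partie2:kernel:derivative:2} for the $\zeta$-regularity needed to pass from the Riemann sum to the integral is the right ingredient.
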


Now,

\begin{Proposition}
\label{prop:partie2:U:differentiability:2:b}
We can find a constant $C$
such that, 
for any multi-index $\ell$ with $\vert \ell\vert \leq n-1$,
any $m_{0},m_{0}' \in {\mathcal P}(\T^d)$
and any $y \in \T^d$,
\begin{equation*}
\biggl\|
v^{(\ell)}(\cdot,m_{0}',y)
- 
v^{(\ell)}(\cdot,m_{0},y)
- \int_{\T^d} v^{(\ell,0)}(\cdot,m_{0},y,\zeta)
d \bigl(m_{0}' - m_{0})(\zeta) 
\biggr\|_{n+1+\alpha}
\leq C {\mathbf d}_{1}^2(m_{0},m_{0}'). 
\end{equation*}
\end{Proposition}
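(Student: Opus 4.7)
The strategy mirrors exactly that of Proposition \ref{prop:partie2:U:differentiability}, with the \textit{second-order linearized system} \eqref{eq:partie2:systeme:derivative:2} now playing the role that \eqref{eq:partie2:systeme:derivative} played in the first-order case. Let $(\tilde{m},\tilde{u})$ and $(\tilde{m}',\tilde{u}')$ denote the solutions of \eqref{eq:se:3:tilde:HJB:FP} with initial conditions $m_{0}$ and $m_{0}'$ respectively, let $(\tilde{\rho},\tilde{z})$ and $(\tilde{\rho}',\tilde{z}')$ denote the solutions of the first-order linearized system \eqref{eq:partie2:systeme:derivative:bis} driven by $(\tilde{m},\tilde{u})$ and $(\tilde{m}',\tilde{u}')$, with the common initial condition $(-1)^{\vert \ell \vert} D^{\ell}\delta_{y}$, and let $(\partial_{m} \tilde{m},\partial_{m} \tilde{u})$ be the solution of \eqref{eq:partie2:systeme:derivative:bis} driven by $(\tilde{m},\tilde{u})$ with initial condition $m_{0}'-m_{0}$. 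Finally, let $(\tilde{\rho}^{(2)},\tilde{z}^{(2)})$ denote the solution of the second-order linearized system \eqref{eq:partie2:systeme:derivative:2} with the above inputs and with zero initial condition. By Lemma \ref{lem:partie2:representation:derivative:2}, we have $\tilde{z}_{0}^{(2)}(x)=\int_{\T^d} v^{(\ell,0)}(x,m_{0},y,\zeta) d(m_{0}'-m_{0})(\zeta)$, while $\tilde{z}_{0}=v^{(\ell)}(\cdot,m_{0},y)$ and $\tilde{z}_{0}'=v^{(\ell)}(\cdot,m_{0}',y)$ by \eqref{eq:representation:z0:ell:y}, so it suffices to bound $\|\delta \tilde{z}_{0}\|_{n+1+\alpha}$, where $\delta \tilde{z}_{t} := \tilde{z}_{t}' - \tilde{z}_{t} - \tilde{z}_{t}^{(2)}$ and, similarly, $\delta \tilde{\rho}_{t} := \tilde{\rho}_{t}' - \tilde{\rho}_{t} - \tilde{\rho}_{t}^{(2)}$.

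The first step is to subtract the equations satisfied by the three pairs and check that $(\delta \tilde{\rho},\delta \tilde{z})$ solves a system of the form \eqref{eq:partie2:systeme:linearise}, with $\tilde V_{t} = D_{p}\tilde{H}_{t}(\cdot,D\tilde{u}_{t})$, $\Gamma_{t}=D^2_{pp}\tilde H_{t}(\cdot,D\tilde u_{t})$, zero initial condition $\delta \tilde{\rho}_{0}=0$, and inhomogeneities $(\tilde b^0,\tilde f^0,\tilde g^0)$ that are remainders of second-order Taylor expansions. The key algebraic fact is that, for instance,
\begin{equation*}
\begin{split}
&\langle D_{p}\tilde H_{t}(\cdot,D\tilde u_{t}'),D\tilde z_{t}'\rangle
-\langle D_{p}\tilde H_{t}(\cdot,D\tilde u_{t}),D\tilde z_{t}\rangle
-\langle D_{p}\tilde H_{t}(\cdot,D\tilde u_{t}),D\tilde z_{t}^{(2)}\rangle
\\
&\hspace{20pt}-\langle D^2_{pp}\tilde H_{t}(\cdot,D\tilde u_{t})D\partial_{m}\tilde u_{t},D\tilde z_{t}\rangle
= \langle D_{p}\tilde H_{t}(\cdot,D\tilde u_{t}),D\delta\tilde z_{t}\rangle + \tilde f_{t}^{0,H},
\end{split}
\end{equation*}
where $\tilde f_{t}^{0,H}$ gathers three types of terms: a genuine second-order Taylor remainder in $H$, a term $\langle D^2_{pp}\tilde H_{t}(\cdot,D\tilde u_{t})D(\tilde u_{t}'-\tilde u_{t}-\partial_{m}\tilde u_{t}),D\tilde z_{t}\rangle$ that is quadratic in $\dk(m_{0},m_{0}')$ by Proposition \ref{prop:partie2:U:differentiability}, and a cross term $\langle D_{p}\tilde H_{t}(\cdot,D\tilde u_{t}')-D_{p}\tilde H_{t}(\cdot,D\tilde u_{t}),D(\tilde z_{t}'-\tilde z_{t})\rangle$, quadratic since both factors are $O(\dk)$ by Theorem \ref{thm:partie:2:existence:uniqueness} and Proposition \ref{prop:partie2:U:differentiability:3} (applied with $k=0$). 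A completely analogous expansion applies to $\delta F/\delta m$ using the second-order term $(\delta^2 \tilde F_{t}/\delta m^2)(\cdot,m_{t})(\rho_{t},\partial_{m}m_{t})$, and to $\delta G/\delta m$ at terminal time. The forward equation gives $\tilde b^0$ of the same quadratic nature through the expansions of $D_{p}\tilde H_{t}$ and $D^2_{pp}\tilde H_{t}$.

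The second step is to estimate these remainders. Using Theorem \ref{thm:partie:2:existence:uniqueness}, Propositions \ref{prop:partie2:U:differentiability}, \ref{prop:partie2:U:differentiability:2}, \ref{prop:partie2:U:differentiability:3} and Corollary \ref{cor:partie2:systeme:linearise:linfty} applied to the first-order system (which all provide either $L^{\infty}$ bounds or stability estimates of order $\dk$ or $\dk^{2}$ for the building blocks $\tilde u$, $\tilde z$, $\tilde \rho$, $\partial_{m}\tilde u$, $\partial_{m}\tilde m$ and their primed counterparts), together with assumptions {\bf (HF2(${\boldsymbol n}$))} and {\bf (HG2(${\boldsymbol n}$+1))} on the second-order measure derivatives of $F$ and $G$, one shows
\begin{equation*}
\essup_{\omega\in\Omega}\Bigl(\sup_{t\in[0,T]}\bigl(\|\tilde b_{t}^0\|_{-(n+\alpha'-1)}+\|\tilde f_{t}^0\|_{n+\alpha}\bigr)+\|\tilde g_{T}^0\|_{n+1+\alpha}\Bigr)\leq C\,\dk^2(m_{0},m_{0}').
\end{equation*}
The third and final step is to apply Corollary \ref{cor:partie2:systeme:linearise:linfty} to the linear system solved by $(\delta\tilde\rho,\delta\tilde z)$, which yields $\essup_{\omega}\sup_{t}\|\delta\tilde z_{t}\|_{n+1+\alpha}\leq C\dk^2(m_{0},m_{0}')$, and thus the desired estimate at $t=0$.

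\textbf{Main obstacle.} The most delicate point lies in step two, namely in keeping track of which terms in the Taylor expansion genuinely give a $\dk^{2}$ bound. Some terms are quadratic because the two factors are individually only $O(\dk)$ (as in the cross term involving $D_{p}\tilde H(\cdot,D\tilde u')-D_{p}\tilde H(\cdot,D\tilde u)$ and $D(\tilde z'-\tilde z)$, which requires Proposition \ref{prop:partie2:U:differentiability:3}), others genuinely quadratic through the second-order Taylor inequality on the Hamiltonian and on $F,G$, and still others are quadratic only because the first-order derivative of the MFG flow in the direction of $m$ approximates the difference $(\tilde m'-\tilde m,\tilde u'-\tilde u)$ at order $\dk^{2}$ (Proposition \ref{prop:partie2:U:differentiability}). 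The bookkeeping is tedious but each piece is controlled by a result already established in the previous subsection; in particular the monotonicity of $F$ and $G$ enters only indirectly, through the uniform bounds coming from Theorem \ref{thm:partie:2:existence:uniqueness}.
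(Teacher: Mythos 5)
Your proposal is correct and follows essentially the same strategy as the paper: introduce $\delta\tilde{\rho}=\tilde{\rho}'-\tilde{\rho}-\tilde{\rho}^{(2)}$ and $\delta\tilde{z}=\tilde{z}'-\tilde{z}-\tilde{z}^{(2)}$, check that they solve a system of type \eqref{eq:partie2:systeme:linearise} with zero initial condition, show by Taylor expansion that the inhomogeneous terms are $O({\mathbf d}_1^2)$, and conclude by a stability estimate for the linear system. Two small quibbles: the Lipschitz bound on $\tilde{z}'-\tilde{z}$ used in your cross term should be cited from Proposition \ref{prop:partie2:U:differentiability:2} (applied with $y=y'$) rather than from Proposition \ref{prop:partie2:U:differentiability:3}, which concerns the second-order kernel $v^{(\ell,k)}$ rather than the first-order linearized flow; and the paper closes with the $L^2$ stability estimate of Proposition \ref{lem:partie2:stability:2} (sufficient since $\delta\tilde{z}_0$ is deterministic) rather than the $L^\infty$ bound of Corollary \ref{cor:partie2:systeme:linearise:linfty}, though either works here.
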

\begin{proof}
We follow the lines of the proof of 
Proposition \ref{prop:partie2:U:differentiability}. 
Given two initial conditions 
$m_{0},m_{0}' \in {\mathcal P}(\T^d)$,
we consider
\begin{itemize}
\item the solutions 
$(\tilde{m}_{t},\tilde{u}_{t})_{t \in [0,T]}$
and $(\tilde{m}_{t}',\tilde{u}_{t}')_{t \in [0,T]}$
to \eqref{eq:se:3:tilde:HJB:FP} with $\tilde{m}_{0}=m_{0}$
and $\tilde{m}_{0}'=m_{0}'$ 
as respective initial conditions,
\item the solution
$(\partial_{m}\tilde{m}_{t},\partial_{m}\tilde{u}_{t})_{t \in [0,T]}$
to \eqref{eq:partie2:systeme:derivative}, 
when driven by the input 
$(\tilde{m}_{t},\tilde{u}_{t})_{t \in [0,T]}$
and by the initial condition 
$\partial_{m}\tilde{m}_{0} = m_{0}' - m_{0}$,
\item the solutions 
$(\tilde{\rho}_{t},\tilde{z}_{t})_{t \in [0,T]}$
and 
$(\tilde{\rho}_{t}',\tilde{z}_{t}')_{t \in [0,T]}$
to \eqref{eq:partie2:systeme:derivative:bis}, 
with 
$(\tilde{m}_{t},\tilde{u}_{t})_{t \in [0,T]}$
and 
$(\tilde{m}_{t}',\tilde{u}_{t}')_{t \in [0,T]}$
as respective input and
$(-1)^{\vert \ell \vert} D^{\ell} \delta_{y}$ as initial condition, 
for some multi-index $\ell$ with $\vert \ell \vert \leq n-1$ and for some $y \in \T^d$,
\item the solution  
$(\tilde{\rho}^{(2)}_{t},\tilde{z}_{t}^{(2)})_{t \in [0,T]}$
to \eqref{eq:partie2:systeme:derivative:2}
with 
$(\tilde{m}_{t},\tilde{u}_{t},
\tilde{\rho}_{t},\tilde{z}_{t},
\partial_{m}\tilde{m}_{t},\partial_{m}\tilde{u}_{t}
)_{t \in [0,T]}$
as input and $0$ as initial condition. 
\end{itemize}

Then, we let
\begin{equation*}
\delta \tilde{\rho}_{t}^{(2)}
= \tilde{\rho}_{t}' - \tilde{\rho}_{t}
- \tilde{\rho}_{t}^{(2)},
\quad
\delta \tilde{z}_{t}^{(2)}
= \tilde{z}_{t}' - \tilde{z}_{t}
- \tilde{z}_{t}^{(2)}, 
\quad t \in [0,T]. 
\end{equation*}
We have
\begin{equation*}
\begin{split}
&d_{t} 
\bigl(\delta \tilde{z}_{t}^{(2)}\bigr) = \bigl\{ -  \Delta
\bigl( \delta \tilde{z}_{t}^{(2)} \bigr) 
+ \langle 
D_{p} \tilde{H}_{t}(\cdot,D \tilde{u}_{t}),D 
\bigl (\delta \tilde{z}_{t}^{(2)} \bigr)
\rangle - 
\frac{\delta \tilde F_{t}}{\delta m}(\cdot,m_{t})
\bigl(\delta \rho_{t}^{(2)} \bigr)
+ \tilde{f}_{t}
  \bigr\} dt + d \tilde{M}_{t},
\\
&\partial_{t} \bigl( \delta \tilde{\rho}_{t}^{(2)} \bigr) -  \Delta
\bigl( \delta \tilde{\rho}_{t}^{(2)} \bigr)
- \textrm{div} \bigl[ \bigl(\delta \tilde{\rho}_{t}^{(2)}\bigr) D_{p}
\tilde{H}_{t}(\cdot,D \tilde{u}_{t}) 
\bigr] - \textrm{div} \bigl[ \tilde{m}_{t} 
D^2_{pp} \tilde{H}_{t}(\cdot,D \tilde{u}_{t})
 \bigl( D \delta \tilde{z}_{t}^{(2)} \bigr)
 + \tilde{b}_{t}
\bigr] = 0,
\end{split}
\end{equation*}
with a boundary condition of the form 
\begin{equation*}
\delta \tilde{z}_{T}^{(2)} = 
\frac{\delta \tilde G}{\delta m}(\cdot,m_{T})\bigl(
\delta \rho_{T}^{(2)} \bigr)
+ \tilde{g}_{T}, 
\end{equation*}
where
\begin{equation*}
\begin{split}
\tilde{b}_{t} &= 
\tilde{\rho}_{t}'
\Bigl( 
D_{p} \tilde{H}_{t}(\cdot,D \tilde{u}_{t}')
 - 
D_{p} \tilde{H}_{t}(\cdot,D \tilde{u}_{t})
\Bigr)
+
\Bigl(  \tilde{m}_{t}' 
D^2_{pp} \tilde{H}_{t}(\cdot,D \tilde{u}_{t}')
 - 
   \tilde{m}_{t} 
D^2_{pp} \tilde{H}_{t}(\cdot,D \tilde{u}_{t})
\Bigr)
 D \tilde{z}_{t}'
\\
&\hspace{15pt} -
\partial_{m}\tilde{m}_{t} D^{2}_{pp} \tilde{H}_{t}
(\cdot,D\tilde{u}_{t}) D \tilde{z}_{t}
- \tilde{\rho}_{t} D^{2}_{pp} \tilde{H}_{t}
(\cdot,D\tilde{u}_{t}) D \partial_{m}\tilde{u}_{t}
- \tilde{m}_{t} D^3_{ppp} \tilde{H}_{t}(\cdot,D\tilde{u}_{t})
D \tilde{z}_{t} \otimes D \partial_{m} \tilde{u}_{t},
\\
\tilde{f}_{t} &= 
\bigl\langle 
D_{p} \tilde{H}_{t}(\cdot,D \tilde{u}_{t}')
-D_{p} \tilde{H}_{t}(\cdot,D \tilde{u}_{t}),
D \tilde{z}_{t}'
\bigr\rangle 
- 
\bigl\langle 
D^2_{pp} \tilde{H}_{t}(\cdot,D \tilde{u}_{t}),
D \tilde{z}_{t}
\otimes 
D 
\partial_{m}
\tilde{u}_{t}
\bigr\rangle 
\\
&\hspace{15pt} -
\Bigl( 
\frac{\delta \tilde{F}_{t}}{\delta m}(\cdot,m_{t}')
({\rho}_{t}')
-
\frac{\delta \tilde{F}_{t}}{\delta m}(\cdot,m_{t})
({\rho}_{t}')
-\frac{\delta^2 \tilde{F}_{t}}{\delta m^2}(\cdot,m_{t})
({\rho}_{t},
\partial_{m} m_{t})
 \Bigr),
\\
\tilde{g}_{T}
&= 
\frac{\delta \tilde{G}}{\delta m}(\cdot,m_{T}')
({\rho}_{T}')
-
\frac{\delta \tilde{G}}{\delta m}(\cdot,m_{T})
({\rho}_{T}')
-\frac{\delta^2 \tilde{G}}{\delta m^2}(\cdot,m_{T})
({\rho}_{T},\partial_{m} m_{T}),
\end{split}
\end{equation*}
and where $(\tilde{M}_{t})_{t \in [0,T]}$
is a square integrable martingale as in 
\eqref{eq:partie2:systeme:linearise}. 

Therefore,
\begin{equation*}
\begin{split}
\tilde{b}_{t} &=
\bigl(
\tilde{\rho}_{t}'
-
\tilde{\rho}_{t}
\bigr)
\Bigl( 
D_{p} \tilde{H}_{t}(\cdot,D \tilde{u}_{t}')
 - 
D_{p} \tilde{H}_{t}(\cdot,D \tilde{u}_{t})
\Bigr)
\\
&\hspace{5pt} + \tilde{\rho}_{t}
\Bigl( D_{p} \tilde{H}_{t}\bigl(\cdot,
D \tilde{u}_{t}'
\bigr)
- D_{p} \tilde{H}_{t}\bigl(\cdot,
D \tilde{u}_{t}
\bigr) 
-
\langle D^2_{pp} \tilde{H}_{t}(\cdot,
D \tilde{u}_{t}
), D \partial_{m} \tilde{u}_{t}
\bigr\rangle \Bigr)
\\
&\hspace{5pt}
+
\Bigl(  \tilde{m}_{t}' 
D^2_{pp} \tilde{H}_{t}(\cdot,D \tilde{u}_{t}')
 - 
   \tilde{m}_{t} 
D^2_{pp} \tilde{H}_{t}(\cdot,D \tilde{u}_{t})
\Bigr)
\bigl( D \tilde{z}_{t}' - D \tilde{z}_{t} \bigr)
\\
&\hspace{5pt} + 
\Bigl(  \tilde{m}_{t}' - \tilde{m}_{t} \Bigr) 
\Bigl(
D^2_{pp} \tilde{H}_{t}(\cdot,D \tilde{u}_{t}')
-
D^2_{pp} \tilde{H}_{t}(\cdot,D \tilde{u}_{t})
\Bigr) 
 D \tilde{z}_{t}
\\
&\hspace{5pt} +
\Bigl( \tilde{m}_{t}'
 - \tilde{m}_{t}
 - \partial_{m}\tilde{m}_{t}
 \Bigr)
 D^2_{pp} \tilde{H}_{t}
(\cdot, D \tilde{u}_{t}) D \tilde{z}_{t}
  \\
 &\hspace{5pt}
 +
\tilde{m}_{t} 
\Bigl( 
D^2_{pp} \tilde{H}_{t}(\cdot,D \tilde{u}_{t}')
- 
D^2_{pp} \tilde{H}_{t}(\cdot,D \tilde{u}_{t})
- 
D^{3}_{ppp}
\tilde{H}_{t}(\cdot,D \tilde{u}_{t}) 
{D \partial_{m} \tilde{u}_{t}
\Bigr) D \tilde{z}_{t}},
\end{split}
\end{equation*}
and
\begin{equation*}
\begin{split}
\tilde{f}_{t} &=
\Bigl\langle 
D_{p} \tilde{H}_{t}\bigl(\cdot,
D \tilde{u}_{t}'
\bigr)
- D_{p} \tilde{H}_{t}\bigl(\cdot,
D \tilde{u}_{t}
\bigr), D \tilde{z}_{t}' - D \tilde{z}_{t}
\Bigr\rangle
\\
&\hspace{15pt} + 
\Bigl\langle 
D_{p} \tilde{H}_{t}\bigl(\cdot,
D \tilde{u}_{t}'
\bigr)
- D_{p} \tilde{H}_{t}\bigl(\cdot,
D \tilde{u}_{t}
\bigr)
- D^2_{pp} \tilde{H}_{t}(\cdot,
D \tilde{u}_{t}
)
D \partial_{m} \tilde{u}_{t}
, D \tilde{z}_{t}
\Bigr\rangle
\\
&\hspace{15pt}
+ \Bigl( 
\frac{\delta \tilde{F}_{t}}{\delta m}(\cdot,m_{t}')
-
\frac{\delta \tilde{F}_{t}}{\delta m}(\cdot,m_{t})
\Bigr)
\bigl({\rho}_{t}' - {\rho}_{t}\bigr)
\\
&\hspace{15pt}
+ \Bigl( 
\frac{\delta \tilde{F}_{t}}{\delta m}(\cdot,m_{t}')({\rho}_{t})
-
\frac{\delta \tilde{F}_{t}}{\delta m}(\cdot,m_{t})({\rho}_{t})
-\frac{\delta^2 \tilde{F}_{t}}{\delta m^2}(\cdot,m_{t})
({\rho}_{t},\partial_{m} m_{t})
 \Bigr),
\end{split}
\end{equation*}
Similarly, 
\begin{equation*}
\begin{split}
\tilde{g}_{T} &=
\Bigl( 
\frac{\delta \tilde{G}}{\delta m}(\cdot,m_{T}')
-
\frac{\delta \tilde{G}}{\delta m}(\cdot,m_{T})
\Bigr)
\bigl({\rho}_{T}' - {\rho}_{T}\bigr)
\\
&\hspace{15pt}
+ \Bigl( 
\frac{\delta \tilde{G}}{\delta m}(\cdot,m_{t}')({\rho}_{T})
-
\frac{\delta \tilde{G}}{\delta m}(\cdot,m_{t})({\rho}_{T})
-\frac{\delta^2 \tilde{G}}{\delta m^2}(\cdot,m_{T})
({\rho}_{T},\partial_{m} m_{T})
 \Bigr).
\end{split}
\end{equation*}
Applying
Theorem 
\ref{thm:partie:2:existence:uniqueness},
Lemma 
\ref{lem:super:reg},
Propositions \ref{prop:partie2:U:differentiability}
and 
 \ref{prop:partie2:U:differentiability:2}
 and 
 \eqref{eq:partie2:second:order:linfini:first:order}
 and 
 \eqref{eq:partie2:second:order:linfini:first:order:n-1} 
 and using the same kind of Taylor expansion
 as in the proof of Proposition
 \ref{prop:partie2:U:differentiability}, we deduce that : 
\begin{equation*}
\begin{split}
\essup_{\omega \in \Omega}
\sup_{t \in [0,T]}
\Bigl[ 
\| \tilde{b}_{t} \|_{-(n+\alpha'-1)}
+ 
\| \tilde{f}_{t} \|_{n+\alpha}
+
\| \tilde{g}_{T} \|_{n+1+\alpha}
\Bigr]
\leq C {\mathbf d}_{1}^2(m_{0},m_{0}'). 
\end{split}
\end{equation*}
By Proposition 
\ref{lem:partie2:stability:2}, 
we complete the proof.
\end{proof}

We thus deduce:

\begin{Proposition}
\label{prop:partie2:regularite:partial2Um}
For any $x \in \T^d$, the function 
${\mathcal P}(\T^d) \ni m \mapsto U(0,x,m)$
is twice differentiable in the direction $m$
and the second-order derivatives read, for any 
$m \in {\mathcal P}(\T^d)$
\begin{equation*} 
\frac{\delta^2 U}{\delta m^2}(0,x,m,y,y') = 
v^{(0,0)}(x,m,y,y'), \quad y,y' \in \T^d. 
\end{equation*}
In particular, 
for any $\alpha' \in (0,\alpha)$,
$t \in [0,T]$
and $m \in {\mathcal P}(\T^d)$, 
the function 
$[\delta^2 U/\delta m^2](0,\cdot,m,\cdot,\cdot)$
belongs to ${\mathcal C}^{n+1+\alpha'}(\T^d)
\times 
{\mathcal C}^{n-1+\alpha'}(\T^d) 
\times
{\mathcal C}^{n-1+\alpha'}(\T^d)$
and the mapping
\begin{equation*}
{\mathcal P}(\T^d) \ni 
m \mapsto \frac{\delta^2 U}{\delta m^2}(0,\cdot,m,\cdot,\cdot)
\in {\mathcal C}^{n+1+\alpha'}(\T^d)
\times 
{\mathcal C}^{n-1+\alpha'}(\T^d)
\times 
{\mathcal C}^{n-1+\alpha'}(\T^d)
\end{equation*}
is continuous (with respect to ${\mathbf d}_{1}$). 
The derivatives in $y$ and $y'$ read:
\begin{equation*} 
D_{y}^\ell D_{y'}^k \frac{\delta^2 U}{\delta m^2}(0,x,m,y,y') = 
v^{(\ell,k)}(x,m,y,y'), \quad y,y' \in \T^d, 
\quad \vert k\vert, \vert \ell \vert \leq n-1.  
\end{equation*}
\end{Proposition}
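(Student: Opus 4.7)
The proposition is essentially a synthesis of the preparatory results from Subsection \ref{subse:second-order differentiability:proof}. The plan is to identify each claim with something already proved, the only genuine verification being that the quadratic Taylor expansion of $v^{(0)}$ in $m$ yields the functional derivative in the sense of Definition \ref{def:Diff}.

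First, recall that by Proposition \ref{prop:partie2:U:differentiability}, we have $\delta U/\delta m(0,x,m,y) = v^{(0)}(x,m,y)$. Applying Proposition \ref{prop:partie2:U:differentiability:2:b} with the multi-index $\ell = 0$ gives
\begin{equation*}
\biggl\|
v^{(0)}(\cdot,m_{0}',y)
-
v^{(0)}(\cdot,m_{0},y)
- \int_{\T^d} v^{(0,0)}(\cdot,m_{0},y,\zeta)
\, d(m_{0}' - m_{0})(\zeta)
\biggr\|_{n+1+\alpha}
\leq C \dk^2(m_{0},m_{0}').
\end{equation*}
Taking $m' = (1-s) m_0 + s m_1$ for arbitrary $m_1 \in \Pk$, dividing by $s$ and letting $s \to 0^+$ shows, by the very definition of $\delta/\delta m$, that the map $m \mapsto v^{(0)}(x,m,y) = [\delta U/\delta m](0,x,m,y)$ is $\cC^1$ in $m$ with
\begin{equation*}
\frac{\delta}{\delta m} \Bigl( \frac{\delta U}{\delta m} \Bigr)(0,x,m,y,y') = v^{(0,0)}(x,m,y,y'),
\end{equation*}
that is $\delta^2 U/\delta m^2 = v^{(0,0)}$ (the normalization condition $\int v^{(0,0)}(\cdot,m,y,\cdot) \, dm = 0$ being checked by the same argument as in Remark \ref{rem:normalization}, taking $\mu_0 = m$ as initial condition of the second-order linearised system and using that the source terms vanish by monotonicity normalization).

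Next, the differentiability of $v^{(0,0)}$ in $(y,y')$ up to order $n-1$, together with the identification $D_y^\ell D_{y'}^k v^{(0,0)} = v^{(\ell,k)}$ and the uniform $\cC^{n+1+\alpha}$ bound in $x$, is exactly the content of Lemma \ref{lem:partie2:kernel:derivative:2}. This gives both the announced regularity of $\delta^2 U/\delta m^2$ in the variables $(x,y,y')$ and, by Lemma \ref{lem:partie2:kernel:derivative:2} again, the Hölder continuity in $(y,y')$ in the stronger norm $\cC^{n+1+\alpha}$ in $x$, hence in $\cC^{n+1+\alpha'}$ for any $\alpha' \in (0,\alpha)$.

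Finally, the continuity of the map $m \mapsto [\delta^2 U/\delta m^2](0,\cdot,m,\cdot,\cdot)$ in the space $\cC^{n+1+\alpha'}(\T^d) \times \cC^{n-1+\alpha'}(\T^d) \times \cC^{n-1+\alpha'}(\T^d)$ follows from Proposition \ref{prop:partie2:U:differentiability:3}, which asserts the Lipschitz continuity of $m \mapsto v^{(\ell,k)}(\cdot,m,y,y')$ in the stronger norm $\cC^{n+1+\alpha}$, uniformly in $(y,y')$ and for all $|\ell|,|k| \leq n-1$; together with the uniform $\cC^{n+1+\alpha}$ bound in $y,y'$ from Lemma \ref{lem:partie2:kernel:derivative:2}, an interpolation between $\cC^{n-1+\alpha}$ and $\cC^{n+\alpha}$ in the Hölder scale transfers the $\dk$-Lipschitz dependence on $m$ into the desired continuity in the $(n-1+\alpha')$-Hölder topology in $(y,y')$. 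I expect no real obstacle here: the entire proof amounts to bookkeeping, since the quantitative estimates are all already in place from the stability analysis of the linearised and second-order linearised systems in Subsection \ref{subse:par tie:2:linearization}.
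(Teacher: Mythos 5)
Your proposal is essentially correct and relies on the same three ingredients the paper uses (Proposition \ref{prop:partie2:U:differentiability:2:b}, Lemma \ref{lem:partie2:kernel:derivative:2}, Proposition \ref{prop:partie2:U:differentiability:3}). There is a small but genuine organizational difference: the paper applies Proposition \ref{prop:partie2:U:differentiability:2:b} for \emph{every} multi-index $\ell$ with $|\ell|\leq n-1$, obtaining $\tfrac{\delta}{\delta m}[D_y^\ell\tfrac{\delta U}{\delta m}]=v^{(\ell,0)}$, and then invokes Schwarz' Lemma \ref{lem:schwarz} to commute $D_y^\ell$ with $\tfrac{\delta}{\delta m}$; you instead apply the proposition only for $\ell=0$ to get $\tfrac{\delta^2 U}{\delta m^2}=v^{(0,0)}$ as functions, then differentiate $v^{(0,0)}$ directly through the identity $D_y^\ell D_{y'}^k v^{(0,0)}=v^{(\ell,k)}$ of Lemma \ref{lem:partie2:kernel:derivative:2}. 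Your route is cleaner and avoids Schwarz altogether; it buys nothing quantitatively, but it is logically tighter.

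Two smaller remarks. First, the normalization aside is incorrect as stated: if one takes $\partial_m m_0=m_0$ in the second-order linearized system, then $(\partial_m\tilde m_t,\partial_m\tilde u_t)=(\tilde m_t,0)$ and while $\tilde f^0_t$ and $\tilde g^0_T$ indeed vanish (using $\int\frac{\delta^2 F}{\delta m^2}(\cdot,m,y,y')\,dm(y')=0$ and the analogous identity for $G$), the forcing $\tilde b^0_t=\partial_m\tilde m_t\,D^2_{pp}\tilde H_t\,D\tilde z_t=\tilde m_t\,D^2_{pp}\tilde H_t\,D\tilde z_t$ does \emph{not} vanish, so the argument of Remark \ref{rem:normalization} does not carry over verbatim. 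This is tangential -- the paper's own proof does not discuss the normalization constant either, and the constant is anyway killed by any derivative $D_y^\ell D_{y'}^k$ with $|\ell|+|k|\geq 1$, which is all the master equation uses -- but you should not assert it. Second, the interpolation remark is right in spirit but a bit garbled: the correct interpolation is between the uniform $\alpha''$-H\"older bound (for some $\alpha''\in(\alpha',\alpha)$) in $(y,y')$ from Lemma \ref{lem:partie2:kernel:derivative:2} and the ${\mathbf d}_1$-Lipschitz control of the $L^\infty$ norm in $(y,y')$ from Proposition \ref{prop:partie2:U:differentiability:3}, which yields continuity of $m\mapsto[\delta^2U/\delta m^2](0,\cdot,m,\cdot,\cdot)$ in the $\alpha'$-H\"older topology for any $\alpha'<\alpha''<\alpha$.
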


\begin{proof}
By Proposition 
\ref{prop:partie2:U:differentiability:2:b}, we indeed
know that, for any multi-index
$\ell$ with $\vert \ell \vert \leq n-1$
and any $x,y \in \T^d$, the mapping 
${\mathcal P}(\T^d) \ni m \mapsto D^{\ell}_{y}[\delta U/\delta m](0,x,m,y)$
is differentiable with respect to $m$, the derivative 
writing, for any $m \in {\mathcal P}(\T^d)$, 
\begin{equation*}
\frac{\delta}{\delta m}
\Bigl[
D^{\ell}_{y}
\frac{\delta U}{\delta m}
\Bigr]
(0,x,m,y,y') = v^{(\ell,0)}(0,x,m,y,y'),
\quad y,y' \in \T^d. 
\end{equation*}
By Lemma \ref{lem:partie2:kernel:derivative:2}, 
$[\delta / \delta m][D^{\ell}_{y} [\delta U/\delta m]](0,x,m,y,y')$ 
is $n-1$ times differentiable with respect to 
$y'$ and, together with
Proposition 
\ref{prop:partie2:U:differentiability:3},
 the derivatives are continuous 
 in all the parameters. Making use of 
  Schwarz' Lemma 
  \ref{lem:schwarz}, the proof is easily completed. 
\end{proof}

Following Proposition 
\ref{prop:partie2:regularity:time:first:order}, we
finally claim:
\begin{Proposition}
\label{prop:partie2:regularity:time:2nd:order}
Proposition
\ref{prop:partie2:regularite:partial2Um}
easily extend to any initial time $t_{0} \in [0,T]$. 
Then,  for any
$\alpha' \in (0,\alpha)$,  
any 
$t_{0} \in [0,T]$
and
$m_{0} \in \Pk$
\begin{equation*}
\lim_{h \rightarrow 0}
\sup_{\vert k\vert \leq n-1}
\sup_{\vert \ell \vert \leq n-1}
\Bigl\|
D_{y}^{\ell} D_{y'}^k
\frac{\delta^{2} U}{\delta m^{2}}(t_{0}+h,\cdot,m_{0},\cdot)
- 
D_{y}^{\ell} D_{y'}^k \frac{\delta^{2} U}{\delta m^{2}}(t_{0},\cdot,m_{0},\cdot)
\Bigr\|_{n+1+\alpha',\alpha',\alpha'}
=0.
\end{equation*}
\end{Proposition}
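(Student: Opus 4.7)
The strategy mirrors the one used for the first-order derivative in Proposition \ref{prop:partie2:regularity:time:first:order}, but applied one step up. The starting point is the second-order Taylor-type expansion of Proposition \ref{prop:partie2:U:differentiability:2:b}: for every multi-index $\ell$ with $\vert \ell \vert \leq n-1$, every $m,m' \in \Pk$, every $y \in \T^d$ and every $t \in [0,T]$,
\begin{equation*}
D_{y}^{\ell} \frac{\delta U}{\delta m}(t,\cdot,m',y) - D_{y}^{\ell} \frac{\delta U}{\delta m}(t,\cdot,m,y) = \int_{\T^d} D_{y}^{\ell} \frac{\delta^{2}U}{\delta m^{2}}(t,\cdot,m,y,\zeta) \, d(m'-m)(\zeta) + O\bigl(\dw^{2}(m,m')\bigr),
\end{equation*}
where the equality holds in $\cC^{n+1+\alpha}(\T^d)$ and the remainder is uniform in $t$, $m$, $m'$, $\ell$ and $y$ (because the constant coming out of Proposition \ref{lem:partie2:stability:2} is quantitative and independent of those parameters). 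Together with the uniform bounds and H\"older estimates established in Lemma \ref{lem:partie2:kernel:derivative:2} and Proposition \ref{prop:partie2:regularite:partial2Um}, this identity is the precise analogue of \eqref{eq:partie2:continuity:derivatives:measures} one level higher in differentiation.

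The key regularity input is that the family $\{(x,y,\zeta)\mapsto [\delta^{2}U/\delta m^{2}](t,x,m,y,\zeta)\}_{t\in[0,T]}$ lies, thanks to Proposition \ref{prop:partie2:U:differentiability:3} combined with Lemma \ref{lem:partie2:kernel:derivative:2}, in a bounded subset of $\cC^{n+1+\alpha}(\T^{d})\times\cC^{n-1+\alpha}(\T^{d})\times\cC^{n-1+\alpha}(\T^{d})$. For any $\alpha'<\alpha$, Arzel\`a--Ascoli gives relative compactness in $\cC^{n+1+\alpha'}\times\cC^{n-1+\alpha'}\times\cC^{n-1+\alpha'}$. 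Let $\Psi$ be any cluster point as $t\to t_{0}$. Using Proposition \ref{prop:partie2:regularity:time:first:order} to pass to the limit on the left-hand side of the Taylor expansion above (with $\vert\ell\vert=0$, for instance) and dominated convergence on the right-hand side (the integrand being uniformly bounded and pointwise convergent along a subsequence), one gets
\begin{equation*}
\frac{\delta U}{\delta m}(t_{0},\cdot,m',y) - \frac{\delta U}{\delta m}(t_{0},\cdot,m,y) = \int_{\T^{d}} \Psi(\cdot,y,\zeta)\, d(m'-m)(\zeta) + O\bigl(\dw^{2}(m,m')\bigr).
\end{equation*}

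To identify $\Psi(\cdot,y,\zeta)$ with $[\delta^{2}U/\delta m^{2}](t_{0},\cdot,m,y,\zeta)$, one proceeds as in the proof of Proposition \ref{prop:partie2:regularity:time:first:order}: test $m'$ against the image at time $h$ of $m$ by a smooth Fokker-Planck flow with vector field $b$, divide by $h$ and let $h\to 0^{+}$, which forces equality of the $D_{\zeta}\Psi$'s with $D_{\zeta}[\delta^{2}U/\delta m^{2}](t_{0},\cdot,m,y,\zeta) = D_{m}[\delta U/\delta m](t_{0},\cdot,m,y,\zeta)$ integrated against $b \, dm$. When $m$ has full support this determines $\Psi$ up to an additive $\zeta$-independent function; the normalization $\int [\delta^{2}U/\delta m^{2}](t_{0},\cdot,m,y,\zeta)\, dm(\zeta)=0$ (see Remark \ref{rem:convention:D^2m}) then fixes the additive constant. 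For general $m$ one approximates by full-support measures and uses the continuity of $[\delta^{2}U/\delta m^{2}](t,\cdot,m,\cdot,\cdot)$ in $m$, uniformly in $t$, given by Proposition \ref{prop:partie2:regularite:partial2Um}. Since every cluster point coincides with the same limit, the full family converges as $t\to t_{0}$.

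This handles the case $\vert\ell\vert=\vert k\vert=0$ in the norm $\|\cdot\|_{n+1+\alpha',0,0}$. The extension to all $\vert\ell\vert,\vert k\vert\leq n-1$ and to the $\|\cdot\|_{n+1+\alpha',\alpha',\alpha'}$ norm is a routine compactness argument: the bound in $\cC^{n+1+\alpha}\times\cC^{n-1+\alpha}\times\cC^{n-1+\alpha}$ is uniform in $t$, so convergence of $[\delta^{2}U/\delta m^{2}](t,\cdot,m,\cdot,\cdot)$ in $\cC^{0}\times\cC^{0}\times\cC^{0}$ upgrades automatically to convergence of all derivatives up to order $n+1$ in $x$ and $n-1$ in $(y,y')$ in the $\alpha'$-H\"older topology, losing $\alpha-\alpha'$ in the exponent. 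The main obstacle, as in the first-order case, is to rigorously identify the limit $\Psi$; once this is done the upgrade to all multi-indices and to the H\"older scale is essentially automatic from uniform bounds and Arzel\`a--Ascoli.
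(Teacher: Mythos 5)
Your plan is correct and follows precisely the template that the paper implicitly intends: the paper does not write out a proof of this proposition but simply says that Proposition \ref{prop:partie2:regularite:partial2Um} "easily extends" and states the time continuity, so the reader is expected to repeat the argument used for Proposition \ref{prop:partie2:regularity:time:first:order} one degree of differentiation higher, which is exactly what you do. You correctly identified the three ingredients: (i) the second-order Taylor expansion from Proposition \ref{prop:partie2:U:differentiability:2:b}, uniform in $t$, $m$, $m'$, $\ell$, $y$; (ii) the uniform-in-$t$ boundedness in a strictly better H\"older class (coming from Lemma \ref{lem:partie2:kernel:derivative:2} and Proposition \ref{prop:partie2:U:differentiability:3}), giving relative compactness by Arzel\`a--Ascoli after passing to any $\alpha'<\alpha$; and (iii) the identification of every cluster point by differentiating against a smooth Fokker--Planck flow, invoking the normalization of Remark \ref{rem:convention:D^2m} and the density of full-support measures. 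One small technical point worth making explicit (you gesture at it but it is where the loss $\alpha\mapsto\alpha'$ actually enters): to get compactness in $\cC^{\alpha'}$ in the $(y,y')$ variables, you choose an intermediate exponent $\alpha''\in(\alpha',\alpha)$, use the H\"older bound of Lemma \ref{lem:partie2:kernel:derivative:2} at exponent $\alpha''$ to get a uniform $\cC^{\alpha''}$ bound, and then apply Arzel\`a--Ascoli to conclude in $\cC^{\alpha'}$. With that remark the argument is complete and matches the paper's intent.
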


\subsection{Proof of 
Theorem \ref{theo:2nd-order:master:equation}}
We now prove 
Theorem
\ref{theo:2nd-order:master:equation}. Of course 
the key point is to prove that $U$, as constructed in the
previous, subsection is 
a solution of the master equation
\eqref{eq:master:equation:2nd:order:def}.

\subsubsection{Regularity Properties of the Solution} 
The regularity properties 
of $U$ follow
from 
Subsections
\ref{subse:construction:solution:master}, 
\ref{subsubse:partie:2:1st:order}
and
\ref{subse:second-order differentiability:proof},
see in particular 
Propositions
\ref{prop:partie2:regularite:partial2Um} 
and 
\ref{prop:partie2:regularity:time:2nd:order}
(pay attention that,
in the statements 
of 
Theorem
\ref{theo:2nd-order:master:equation}
and
of Proposition
\ref{prop:partie2:regularite:partial2Um},
the indices of regularity
in $y$ and $y'$ 
are not exactly the same).

\subsubsection{Derivation of the Master Equation}
We now have all the necessary ingredients in order to derive the 
master equation satisfied by $U$. The first point is to recall 
that, whenever 
the forward component $(\tilde{m}_{t})_{t \in [t_{0},T]}$
in 
\eqref{eq:se:3:tilde:HJB:FP:t0}
is initialized with $m_{0} \in {\mathcal P}(\T^d)$ at time $t_{0}$, then 
\begin{equation*}
U(t_{0},x,m_{0}) = \tilde{u}_{t_{0}}(x), \quad x \in \T^d, 
\end{equation*}
$(\tilde{u}_{t})_{t \in [t_{0},T]}$
denoting the backward component in 
\eqref{eq:se:3:tilde:HJB:FP:t0}. 
Moreover, by 
Lemma 
\ref{lem:partie2:master:equation},
for any $h \in [T-t_{0}]$, 
\begin{equation*}
\tilde{u}_{t_{0}+h}(x) =
U\bigl(t_{0}+h,x + \sqrt{2}(W_{t_{0}+h}-W_{t_{0}}),m_{t_{0},t_{0}+h}\bigr), \quad x \in \T^d, 
\end{equation*}
where $m_{t_{0},t}$ the image of 
$\tilde{m}_{t}$ by the random mapping 
$\T^d \ni x \mapsto x + \sqrt{2}(W_{t}-W_{t_{0}})$
that is ${m}_{t_{0},t} = [id + 
\sqrt{2}(W_{t}-W_{t_{0}})] \sharp 
\tilde{m}_{t}$. 
In particular, 
we can write
\begin{equation}
\label{eq:partie2:proof:master:equation}
\begin{split}
&\frac{U(t_{0}+h,x,m_{0})-U(t_{0},x,m_{0})}{h}
\\
&=  \frac{
  {\mathbb E}\bigl[U \bigl(t_{0}+h,x
+ \sqrt2  (W_{t_{0}+h}-W_{t_{0}})  
  ,m_{t_{0},t_{0+h}} \bigr) \bigr]
  - U(t_{0},x,m_{0}) }{h}
\\
&\hspace{15pt} + \frac{U(t_{0}+h,x,m_{0}) - {\mathbb E}
\bigl[U \bigl(t_{0}+h,x
+ \sqrt2  (W_{t_{0}+h}-W_{t_{0}})
,m_{t_{0},t_{0}+h} \bigr) \bigr]}{h}
\\
&=  
\frac{ 
\E [\tilde{u}_{t_{0}+h}(x)]
 -
\tilde{u}_{t_{0}}(x)}{h}
\\
&\hspace{15pt} + \frac{U(t_{0}+h,x,m_{0}) - 
{\mathbb E}
\bigl[U \bigl(t_{0}+h,x
+ \sqrt2  (W_{t_{0}+h}-W_{t_{0}})
,m_{t_{0},t_{0}+h} \bigr) \bigr]}{h}.
\end{split}
\end{equation}
We start with the first term in the right-hand side 
of  
\eqref{eq:partie2:proof:master:equation}. 
Following 
\eqref{eq:partie2:construction:backward},
we deduce from the backward equation in 
\eqref{eq:se:3:tilde:HJB:FP:t0}
that, for any $x \in \T^d$,
\begin{equation*}
\begin{split}
d_{t}
\bigl[ {\mathbb E}
\bigl( \tilde{u}_{t}\bigl(x
\bigr)
\bigr) \bigr]
=  {\mathbb E}
\Bigl[ \bigl\{ -   \Delta \tilde{u}_{t} + \tilde{H}_{t_{0},t}(\cdot,D\tilde{u}_{t}) - 
\tilde{F}_{t_{0},t}(\cdot,m_{t_{0},t}) 
\bigr\}(x)
\Bigr] dt,
\end{split}
\end{equation*}
where the coefficients 
$\tilde{F}_{t_{0},t}$
and 
$\tilde{H}_{t_{0},t}$
are given
by 
\eqref{eq:se:3:tilde:HJB:FP:prescription}. 
In particular, thanks to the regularity property 
in Corollary
\ref{cor:partie:2:reg:temps}, we deduce that 
\begin{equation}
\label{eq:partie2:proof:master:equation:20}
\begin{split}
&\lim_{h \searrow 0}
\frac{ 
\E [\tilde{u}_{t_{0}+h}(x)]
 -
\tilde{u}_{t_{0}}(x)
}{h}
= - \Delta_{x} U(t_{0},m_{0},x) 
+ H\bigl(x,D_{x} U(t_{0},m_{0},x) \bigr)
- F \bigl(x,m_{0} \bigr). 
\end{split}
\end{equation}
In order to pass to the limit in the last term in
\eqref{eq:partie2:proof:master:equation}, we need a specific
form of It\^o's formula. The precise version 
is given in Lemma 
\ref{lem:ito:local} below. 
Applied to the current setting,  
with
\begin{equation*}
\beta_{t}(\cdot) = D_{p} H\bigl(\cdot,D_{x} U(t,\cdot,m_{t_{0},t})\bigr),
\end{equation*}
it says that 
\begin{equation}
\label{eq:partie2:proof:master:equation:101}
\begin{split}
&
\lim_{h \searrow 0} \frac{1}{h}
\E \Bigl[
U\bigl(t_{0}+h,x+\sqrt{2}(W_{t_{0}+h}-W_{t_{0}}),m_{t_{0},t_{0}+h}
\bigr)
-U\bigl(t_{0}+h,x,m_{{0}}\bigr)\Bigr]
\\
&=  \Delta_{x} U(t_{0},x,m_{0}) 
\\
&\hspace{15pt} + 2 \int_{\T^d}
\textrm{div}_{y} 
\bigl[
D_{m} U
\bigr]
 \bigl( t_{0},x,m_{0},y
 \bigr) d {m}_{0}(y)
 \\
&\hspace{15pt}
- \int_{\T^d}
D_{m} 
U
 \bigl( t_{0},x,m_{0},y
 \bigr) 
 D_{p} H\bigl(y, D U(t_{0},y,m_{0}) 
 \bigr)
 d m_{0}(y)
 \\
&\hspace{15pt} + 2 
\int_{\T^d}
\textrm{div}_{x}
\bigl[
D_{m} 
U
\bigr]
\bigl( t_{0}, x,
 m_{0},y
\bigr) dm_{0}(y)
\\
&\hspace{15pt}
+
\int_{[\T^d]^2}
{\rm Tr} \Bigl[ D^2_{mm}
U
\bigl( t_{0}, x ,
 m_{0},y,y'\bigr) 
\Bigr] d m_{0}
(y)
d m_{0}
(y').
\end{split}
\end{equation}
From 
\eqref{eq:partie2:proof:master:equation:20}
and
\eqref{eq:partie2:proof:master:equation:101}, we deduce that, for any 
$(x,m_{0}) \in \T^d \times {\mathcal P}(\T^d)$,
the mapping 
$[0,T] \ni t \mapsto U(t,x,m_{0})$ is right-differentiable 
and, for any $t_{0} \in [0,T)$,
\begin{equation*}
\begin{split}
&\lim_{h \searrow 0}
\frac{U(t_{0}+h,x,m_{0}) - U(t_{0},x,m_{0}) }{h}
\\
&=- 2 \Delta_{x} U(t_{0},x,m_{0}) 
+ H\bigl(x,D_{x} U(t_{0},x,m_{0}) \bigr)
- F \bigl(x,m_{0} \bigr)
\\
&\hspace{15pt} -2 \int_{\T^d}
\textrm{div}_{y} 
\bigl[
D_{m} U
\bigr]
 \bigl( t_{0},x,m_{0},y
 \bigr) d {m}_{0}(y)
 \\
 &\hspace{15pt}
+
\int_{\T^d}
D_{m} 
U
 \bigl( t_{0},x,m_{0},y
 \bigr) 
 D_{p} H\bigl(y, D U(t_{0},y,m_{0}) 
 \bigr)
 d m_{0}(y)
 \\
&\hspace{15pt} - 2 
\int_{\T^d}
\textrm{div}_{x}
\bigl[
D_{m} 
U
\bigr]
\bigl( t_{0}, x,
 m_{0},y
\bigr) dm_{0}(y)
-
\int_{[\T^d]^2}
{\rm Tr} \Bigl[ D^2_{mm}
U
\bigl( t_{0}, x ,
 m_{0},y,y'\bigr) 
\Bigr] d m_{0}
(y)
d m_{0}
(y').
\end{split}
\end{equation*}
Since the right-hand side is continuous in $(t_{0},x,m_{0})$, we deduce that 
$U$ is continuously differentiable in time and satisfies the master 
equation \eqref{eq:master:equation:2nd:order:def}.

\subsubsection{Uniqueness}
\label{subsub:uniqueness.sec4}
It now remains to prove uniqueness. 
Considering a solution $V$ to the master 
equation \eqref{eq:master:equation:2nd:order:def}
along the lines of Definition \ref{def:master:eq:2nd:order}, 
the strategy is to expand
\begin{equation*}
\tilde u_{t}' = V\bigl(t,x+ \sqrt{2}W_{t},
m_{t}' \bigr),
\quad t \in [0,T],
\end{equation*}
where, for a given initial condition $m_{0} \in {\mathcal P}(\T^d)$,
$m_{t}'$ is the image of $\tilde{m}_{t}'$
by the mapping 
$\T^d \ni x \mapsto x + \sqrt{2} W_{t}$, 
$(\tilde{m}_{t}')_{t \in [0,T]}$ denoting the solution of the Fokker-Planck equation 
\begin{equation*}
d_{t} \tilde{m}_{t} '= \Bigl\{ \Delta \tilde{m}_{t}'
+{\rm div} \bigl( \tilde{m}_{t}' D_{p} \tilde{H}_{t}\bigl(\cdot,
D_{x} V(t,x+\sqrt{2}W_{t}, \tilde{m}_{t}')  \bigr) \bigr) \Bigr\} dt,
\end{equation*}
which reads, for almost every realization of $(W_{t})_{t \in [0,T]}$, as the flow of conditional marginal distributions (given $(W_{t})_{t \in [0,T]}$) of the McKean-Vlasov process
\begin{equation}
\label{eq:EDS:MKV:V}
dX_{t} = -D_{p} \tilde{H}_{t}\bigl(X_{t},
D_{x} V(t,x+ \sqrt{2} W_{t}, {\mathcal L}(X_t|W) )\bigr) dt + \sqrt{2} dB_{t}, \quad t \in [0,T],
\end{equation}
$X_{0}$ having $m_{0}$ as distribution.  
Notice that the above equation is uniquely solvable since 
$D_{x} V$ is Lipschitz continuous in the space and measure arguments 
(by the simple fact that $D^2_{x} V$ and $D_{m} D_{x} V$ 
are continuous functions on a compact set).
We refer to \cite{Szn}
for standard solvability results for McKean-Vlasov SDEs
(which may be easily extended to the current setting).

Of course, the key point is to prove that 
the pair
$(\tilde{m}_{t}',\tilde{u}_{t}')_{t \in [0,T]}$
solves the same forward-backward system 
\eqref{eq:se:3:tilde:HJB:FP}
as $(\tilde{m}_{t},\tilde{u}_{t})_{t \in [0,T]}$, in which 
case it will follow that $V(0,x,m_{0})=\tilde{u}_{0}' = \tilde{u}_{0}
= U(0,x,m_{0})$. (The same argument may be repeated for any other initial 
condition with another initial time.)
 
The strategy consists of a suitable application of Lemma 
\ref{lem:ito:local} below. 
Given $0 \leq t \leq t +h \leq T$, we have to expand the difference
\begin{equation}
\label{eq:2nd:order:uniqueness}
\begin{split}
&{\mathbb E} \bigl[ V\bigl(t+h,x+\sqrt{2}W_{t+h},m_{t+h}' \bigr)
\vert {\mathcal F}_{t} \bigr]
-
V\bigl(t,x+\sqrt{2}W_{t},m_{t}' \bigr) 
\\
&= {\mathbb E} \bigl[ 
V\bigl(t+h,x+\sqrt{2}W_{t+h},m_{t+h}' \bigr)\vert {\mathcal F}_{t} \bigr]
-
V\bigl(t+h,x+\sqrt{2}W_{t},m_{t}' \bigr)
\\
&\hspace{15pt} + 
V\bigl(t+h,x+\sqrt{2}W_{t},m_{t}' \bigr)
- 
V\bigl(t,x+\sqrt{2}W_{t},m_{t}' \bigr)
\\
&=S^1_{t,h} + S^2_{t,h}. 
\end{split}
\end{equation}
By Lemma \ref{lem:ito:local} below, with 
\begin{equation*}
\beta_{t}(\cdot) = D_{p} H\bigl(\cdot,D_{x} V(t,\cdot,m_{t}')\bigr), 
\quad t \in [0,T],
\end{equation*}
it holds that 
\begin{equation}
\label{eq:2nd:order:uniqueness:2}
\begin{split}
S^1_{t,h} 
&= h 
\biggl[ \Delta_{x} V\bigl(t,x + \sqrt{2}
W_{t},m_{t}'
\bigr)  + 2 \int_{\T^d}
\textrm{\rm div}_{y} 
\bigl[
D_{m} V
\bigr]
 \bigl( t,x+ \sqrt{2} W_{t},m_{t}',y
 \bigr) d {m}_{t}'(y)
 \\
&\hspace{15pt}-  \int_{\T^d}
D_{m} 
V
 \bigl( t,x+ \sqrt{2}W_{t},m_{t}',y
 \bigr) 
 \cdot D_{p} H\bigl(y,D_{x} V(t,y,m_{t}')
 \bigr)
 d m_{t}'(y)
 \\
&\hspace{15pt} + 2 
\int_{\T^d}
\textrm{\rm div}_{x}
\bigl[
D_{m} 
V
\bigr]
\bigl( t, x+ \sqrt{2} W_{t},
 m_{t}',y
\bigr) dm_{t}'(y)
\\
&\hspace{15pt} +
\int_{[\T^d]^2}
{\rm Tr} \Bigl[ D^2_{mm}
V\Bigr]
\bigl( t, x + \sqrt{2} W_{t},
 m_{t}',y,y'\bigr) 
 d m_{t}'
(y)
d m_{t}'
(y') + \varepsilon_{t,t+h}
\biggr],
\end{split}
\end{equation}
where $(\varepsilon_{s,t})_{s,t \in [t_{0},T]  :  s \leq t}$ 
is
a family of real-valued random variables
such that 
\begin{equation*}
\lim_{h \searrow 0} \sup_{s,t \in [t_{0},T] : \vert s -t \vert \leq h}  {\mathbb E} \bigl[ \vert \varepsilon_{s,t} \vert \bigr] = 0.
\end{equation*}
Expand now 
$S^{2}_{t,h}$
in \eqref{eq:2nd:order:uniqueness}
at the first order in $t$ and use the fact that $\partial_{t} V$ is uniformly continuous 
on the compact set $[0,T] \times \T^d \times {\mathcal P}_{2}(\T^d)$.
Combining 
\eqref{eq:2nd:order:uniqueness},
\eqref{eq:2nd:order:uniqueness:2}
and the master PDE 
\eqref{eq:master:equation:2nd:order:def}
satisfied by $V$, we deduce that 
\begin{equation*}
\begin{split}
&{\mathbb E} \bigl[ V\bigl(t+h,x+\sqrt{2}W_{t+h},m_{t+h}' \bigr)
\vert {\mathcal F}_{t} \bigr]
-
V\bigl(t,x+\sqrt{2}W_{t},m_{t}' \bigr) 
\\
&= - h \Bigl[ 
\Delta_{x} V\bigl(t,x+\sqrt{2} W_{t},m_{t}' \bigr)
- H\bigl(x+\sqrt{2} W_{t}, D_{x} V(t,x + \sqrt{2} W_{t},m_{t}') \bigr)
\\
&\hspace{15pt}+ F \bigl( x + \sqrt{2} W_{t},m_{t}' \bigr) 
+ \varepsilon_{t,t+h}
\Bigr].
\end{split}
\end{equation*}
Considering a partition $t=t_{0}<t_{1}<\dots<t_{N}=T$ of $[t,T]$ of step size $h$,
the above identity yields
\begin{equation*}
\begin{split}
&{\mathbb E} \bigl[ G\bigl(x+\sqrt{2}W_{T},m_{T}' \bigr)
- V\bigl(t,x+\sqrt{2}W_{t},m_{t}' \bigr)
\vert {\mathcal F}_{t} \bigr] 
\\
&= - h \sum_{i=0}^{N-1}\Bigl[ 
\Delta_{x} V\bigl(t_{i},x+\sqrt{2} W_{t_{i}},m_{t_{i}}' \bigr)
- H\bigl(x+\sqrt{2} W_{t_{i}}, D_{x} V(t_{i},x + \sqrt{2} W_{t_{i}},m_{t_{i}}') \bigr)
\\
&\hspace{60pt}+ F \bigl( x + \sqrt{2} W_{t_{i}},m_{t_{i}}' \bigr) \Bigr]
\\ 
&\hspace{15pt}+ h \sum_{i=0}^{N-1} 
\E \bigl[ \varepsilon_{t_{i},t_{i}+h} \vert {\mathcal F}_{t} \bigr].
\end{split}
\end{equation*}
Since
\begin{equation*}
\limsup_{h \searrow 0}
\sup_{r,s \in [0,T] : \vert r-s \vert \leq h}
\E 
\Bigl[ \bigl\vert \E \bigl[ \varepsilon_{r,s} \vert {\mathcal F}_{t} \bigr]
\bigr\vert \Bigr]
\leq 
\limsup_{h \searrow 0}
\sup_{r,s \in [0,T] : \vert r-s \vert \leq h}
\E 
\bigl[ \vert   \varepsilon_{r,s} 
\vert \bigr] = 0,
\end{equation*}
we can easily replace each 
$\E [ \varepsilon_{t_{i},t_{i}+h} \vert {\mathcal F}_{t}]$
by $\varepsilon_{t_{i},t_{i+h}}$ itself, allowing 
for a modification of $\varepsilon_{t_{i},t_{i+h}}$.
Moreover, 
here and below (\textit{cf.} the proof of Lemma 
\ref{lem:ito:local}), we use the fact that, for a random process $(\gamma_{t})_{t \in [{0},T]}$,
with paths in ${\mathcal C}^0([0,T],\R)$,
satisfying
\begin{equation}
\label{eq:partie2:trick:2}
\begin{split}
&\essup_{\omega \in \Omega}\sup_{t \in [0,T]} | \gamma_{t} | < \infty,
\end{split}
\end{equation}
it must hold that 
\begin{equation}
\label{eq:partie2:trick}
\lim_{h \searrow 0} 
\sup_{s,t \in [0,T] : \vert s-t \vert \leq h}
\E \bigl[ \vert \eta_{s,t} \vert \bigr] 
= 0, \quad 
\eta_{s,t} = \frac{1}{\vert s-t \vert}
\int_{s}^t (\gamma_{r}-\gamma_{s}) dr,
\end{equation}
the proof just consisting in bounding 
$\vert \eta_{s,t} \vert$ by $w_{\gamma}(h)$, where
$w_{\gamma}$ stands for the pathwise modulus of continuity 
of $(\gamma_{t})_{t \in [0,T]}$, which satisfies, thanks to 
\eqref{eq:partie2:trick:2}
and 
Lebesgue's dominated convergence theorem,
\begin{equation*}
\lim_{h \searrow 0} \E[ w_{\gamma}(h) ] = 0. 
\end{equation*}
Therefore, 
allowing for a modification of the random variables 
$\varepsilon_{t_{i},t_{i}+h}$, 
for $i=0,\dots,N-1$, we deduce that 
\begin{equation*}
\begin{split}
&{\mathbb E} \bigl[ G\bigl(x+\sqrt{2}W_{T},m_{T}' \bigr)
- V\bigl(t,x+\sqrt{2}W_{t},m_{t}' \bigr)
\vert {\mathcal F}_{t} \bigr] 
\\
&=
-
 \int_{t}^T \Bigl[ 
\Delta_{x} V\bigl(s,x+\sqrt{2} W_{s},m_{s}' \bigr)
- H\bigl(x+\sqrt{2} W_{s}, D_{x} V(s,x + \sqrt{2} W_{s},m_{s}') \bigr)
\\
&\hspace{60pt}+ F \bigl( x + \sqrt{2} W_{s},m_{s}' \bigr) \Bigr] ds
\\ 
&\hspace{15pt}+ h \sum_{i=0}^{N-1} \varepsilon_{t_{i},t_{i}+h}.
\end{split}
\end{equation*}
Letting, for any $x \in \T^d$, 
\begin{equation*}
\begin{split}
\tilde M_{t}'(x) &= 
V\bigl(t,x+\sqrt{2}W_{t},m_{t}' \bigr) 
\\
&\hspace{15pt} + \int_{0}^t 
\Bigl[ 
\Delta_{x} V\bigl(s,x+\sqrt{2} W_{s},m_{s}' \bigr)
- H\bigl(x+\sqrt{2} W_{s}, D_{x} V(s,x + \sqrt{2} W_{s},m_{s}') \bigr)
\\
&\hspace{60pt}+ F \bigl( x + \sqrt{2} W_{s},m_{s}' \bigr)
\Bigr] ds,
\end{split}
\end{equation*}
we deduce that 
\begin{equation*}
{\mathbb E} \bigl[ 
\tilde M_{T}'(x) -\tilde M_{t}'(x)
\vert {\mathcal F}_{t} \bigr] = h \sum_{i=0}^{N-1} \varepsilon_{t_{i},t_{i}+h}.
\end{equation*}
Now, letting 
$h$ tend to $0$, we deduce that 
$(\tilde M_{t}'(x))_{t \in [0,T]}$ is a martingale. 
Thanks to the regularity properties of $V$ and its derivatives, it is bounded.

Letting 
\begin{equation*}
\tilde{v}_{t}(x) = V(t,x+\sqrt{2} W_{t},m_{t}'), \quad t \in [0,T],
\end{equation*}
we finally notice that 
\begin{equation*}
\tilde{v}_{t}(x) = \tilde G_{T}(x,m_{T}')+
\int_{t}^T 
\bigl[ 
\Delta_{x} \tilde{v}_{s}(x)
- \tilde H_{s}\bigl(x, D \tilde{v}_{s}(x)\bigr)
+ \tilde F ( x,m_{s}' )
\bigr] ds - \bigl( \tilde M_{T}'- \tilde M_{t}' \bigr)(x), \quad t \in [0,T],
\end{equation*} 
which proves that $(\tilde{m}_{t}',\tilde{v}_{t},\tilde M_{t}')_{t \in [0,T]}$
solves \eqref{eq:se:3:tilde:HJB:FP}. 

\subsubsection{Tailor-made It\^o's Formula}
\label{subsub:Tailor-madeItoFormula}

Let 
$U$ be a function satisfying 
the same assumption as in 
Definition \ref{def:master:eq:2nd:order}
and, for a given $t_{0} \in [0,T]$, 
$(\tilde m_{t})_{t \in [t_{0},T]}$ be an adapted process 
with paths in ${\mathcal C}^0([t_{0},T],{\mathcal P}(\T^d))$
such that, 
with probability 1,
for any smooth test function $\varphi \in \cC^n(\T^d)$,
\begin{equation}
\label{eq:local:FKP}
\begin{split}
&d_{t} \biggl[ 
\int_{\T^d} \varphi(x) d \tilde m_{t}(x) \biggr]
\\
&\hspace{15pt}= \biggl\{ 
\int_{\T^d} 
\bigl[ 
\Delta \varphi(x) - \langle \beta_{t}\bigl(x + \sqrt{2} (W_{t}-W_{t_{0}})\bigr), D\varphi(x) \rangle \bigr] d \tilde m_{t}(x)
\biggr\} dt, 
\quad t \in [t_{0},T],
\end{split}
\end{equation}
for some adapted process $(\beta_{t})_{t \in [t_{0},T]}$,
with paths in ${\mathcal C}^0([t_{0},T],[\cC^0(\T^d)]^d)$,
such that 
\begin{equation*}
\begin{split}
&\essup_{\omega \in \Omega}\sup_{t \in [t_{0},T]} \| \beta_{t} \|_{0} < \infty,
\end{split}
\end{equation*}
so that, by Lebesgue's dominated convergence theorem,
\begin{equation*}
\lim_{h \rightarrow 0} 
\E \bigl[
\sup_{s,t \in [0,T], \vert t-s \vert \leq h} 
\| \beta_{s} - \beta_{t} \|_{0} \bigr] =0.
\end{equation*}
In other words, $(\tilde{m}_{t})_{t \in [t_{0},T]}$ stands for the flow of conditional 
marginal laws of $(X_{t})_{t \in [t_{0},T]}$ given ${\mathcal F}_{T}$,
where $(X_{t})_{t \in [t_{0},T]}$ solves the stochastic differential equation:
\begin{equation*}
dX_{t} = 
-
\beta_{t}\bigl(X_{t}
+ \sqrt{2}(W_{t}-W_{t_{0}})
\bigr) dt + \sqrt{2}dB_{t}, \quad t \in [t_{0},T],
\end{equation*}
$X_{t_{0}}$ being distributed according to $m_{t_{0}}$
conditional on ${\mathcal F}_{T}$.  In particular, 
there exists a deterministic constant $C$ such that,
with probability 1, 
 for all $t_{0} \leq t \leq t+h \leq T$,
\begin{equation*}
{\mathbf d}_{1}(\tilde m_{t+h},\tilde{m}_{t}) \leq C \sqrt{h}.
\end{equation*} 
Given some $t \in [t_{0},T]$, we denote by 
$m_{t}=(\cdot \mapsto \cdot + \sqrt{2}(W_{t}-W_{t_{0}})) \sharp 
\tilde m_{t}$ 
the push-forward of $\tilde m_{t}$ by the application 
$\T^d \ni x \mapsto x + W_{t} - W_{t_{0}} \in \T^d$ (so that 
$m_{t_{0}} = \tilde{m}_{t_{0}}$).  
\vspace{15pt}

We then have the local It\^o-Taylor expansion:

\begin{Lemma}
\label{lem:ito:local}
Under the above assumption, we can find a family 
of real-valued random variables $(\varepsilon_{s,t})_{s,t \in [t_{0},T]  :  s \leq t}$ such that 
\begin{equation*}
\lim_{h \searrow 0} \sup_{s,t \in [t_{0},T] : \vert s -t \vert \leq h} {\mathbb E} \bigl[  \vert \varepsilon_{s,t} \vert \bigr] = 0,
\end{equation*}
and, for any $t \in [t_{0},T]$, 
\begin{equation*}
\begin{split}
&
\frac1h
\Bigl[
\E \bigl[ 
U\bigl(t+h,x+\sqrt{2} ( W_{t+h}-W_{t_{0}}),m_{t+h}
\bigr)
-U\bigl(t+h,x + \sqrt{2} ( W_{t+h}-W_{t_{0}}),m_{t}\bigr) \vert {\mathcal F}_{t}
\bigr]
\Bigr]
\\
&=  \Delta_{x} U\bigl(t,x + \sqrt{2}
( W_{t}-W_{t_{0}}),m_{t}
\bigr)  + 2 \int_{\T^d}
\textrm{\rm div}_{y} 
\bigl[
D_{m} U
\bigr]
 \bigl( t,x+ \sqrt{2}( W_{t}-W_{t_{0}}),m_{t},y
 \bigr) d {m}_{t}(y)
 \\
&\hspace{15pt}-  \int_{\T^d}
D_{m} 
U
 \bigl( t,x+ \sqrt{2} ( W_{t}-W_{t_{0}}),m_{t},y
 \bigr) 
 \cdot \beta_{t}(y)
 d m_{t}(y)
 \\
&\hspace{15pt} + 2 
\int_{\T^d}
\textrm{\rm div}_{x}
\bigl[
D_{m} 
U
\bigr]
\bigl( t, x+ \sqrt{2} ( W_{t}-W_{t_{0}}),
 m_{t},y
\bigr) dm_{t}(y)
\\
&\hspace{15pt} +
\int_{[\T^d]^2}
{\rm Tr} \Bigl[ D^2_{mm}
U\Bigr]
\bigl( t, x + \sqrt{2}( W_{t}-W_{t_{0}}),
 m_{t},y,y'\bigr) 
 d m_{t}
(y)
d m_{t}
(y') + 
\varepsilon_{t,t+h}.
\end{split}
\end{equation*}
\end{Lemma}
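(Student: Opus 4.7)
The plan is to perform a joint second-order Taylor expansion of $U$ in both its space and measure arguments around $(t,\bar x_t,m_t)$, where $\bar x_s := x + \sqrt{2}(W_s-W_{t_0})$. We read the statement with the second occurrence of $U$ evaluated at $\bar x_t$ in place of $\bar x_{t+h}$, which is the form actually used in \eqref{eq:partie2:proof:master:equation:101} and in $S^1_{t,h}$ of Subsection \ref{subsub:uniqueness.sec4}. The two increments $\bar x_{t+h}-\bar x_t=\sqrt{2}(W_{t+h}-W_t)$ and $m_{t+h}-m_t$ are both of typical size $\sqrt{h}$, the latter in $\dk$ thanks to the Lipschitz bound on $\beta$ and the SDE representation of $(\tilde m_s)$.

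Taking the conditional expectation given $\mathcal F_t$ and dividing by $h$, each leading piece of the expansion yields exactly one term in the statement. The pure $x$-quadratic piece $\tfrac12(\bar x_{t+h}-\bar x_t)^\top D^2_xU\,(\bar x_{t+h}-\bar x_t)$ produces $\Delta_xU$ via the identity $\E[(\bar x_{t+h}-\bar x_t)(\bar x_{t+h}-\bar x_t)^\top\mid\mathcal F_t]=2hI_d$. The linear $m$-piece is tested against $\phi(z):=\tfrac{\delta U}{\delta m}(t+h,\bar x_t,m_t,z)$, which is $\mathcal F_t$-measurable, and processed via \eqref{eq:local:FKP}: rewriting $\int\phi\,d(m_{t+h}-m_t)$ as a $dt$-integral over $[t,t+h]$ plus a $dW$-martingale, the conditional expectation kills the latter and, after an integration by parts in $z$, yields the terms with $\dive_y[D_mU]$ and $D_mU\cdot\beta_t$. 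The cross $x$-$m$ term $(\bar x_{t+h}-\bar x_t)\cdot\int D_x[\delta U/\delta m](t+h,\bar x_t,m_t,z)\,d(m_{t+h}-m_t)(z)$ produces the $\dive_x[D_mU]$ term through the joint quadratic covariation of $\bar x_\cdot$ and the martingale part of $(m_s)$. Finally, the quadratic $m$-piece produces $\iint{\rm Tr}[D^2_{mm}U]\,dm_t\otimes dm_t$ via the bracket of $(m_s)$ against itself, computed from $d[(m_s)(\phi_1),(m_s)(\phi_2)]=2(\int D\phi_1\,dm_s)\cdot(\int D\phi_2\,dm_s)\,ds$.

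The remaining error contributions are a joint third-order Taylor remainder of conditional expectation $O(h^{3/2})$, together with ``freezing'' errors of the form $h^{-1}\int_t^{t+h}[\Phi(s,\bar x_t,m_s,\cdot)-\Phi(t,\bar x_t,m_t,\cdot)]\,ds$ generated when replacing the evaluation point $(t+h,\bar x_t,m_s)$ by $(t,\bar x_t,m_t)$ in each leading piece and when replacing $\beta_s$ by $\beta_t$. All of these are absorbed into the common $\varepsilon_{t,t+h}$ by the Lipschitz and H\"older estimates for $D_mU$ and $D^2_{mm}U$ from Propositions \ref{prop:partie2:regularite:partial2Um} and \ref{prop:partie2:regularity:time:2nd:order}, by the continuity of $s\mapsto m_s$ and $s\mapsto\beta_s$, and by the ``trick'' \eqref{eq:partie2:trick} already invoked in the uniqueness argument.

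The main technical difficulty is to justify the ``It\^o chain rule for functionals of measures'' (responsible for the bracket terms arising from the cross and quadratic $m$-pieces) together with the associated second-order Taylor expansion with a remainder controlled in $\dk^3(m_{t+h},m_t)$, using only the $\cC^2$-regularity granted by Definition \ref{def:master:eq:2nd:order}. A clean route is to pass through finite-dimensional approximations: one replaces $m_s$ by the empirical measure of $N$ i.i.d.\ copies of the underlying process $Y_s$, uses the identities \eqref{e.derivUN1Intro}--\eqref{e.derivUN3Intro} to rewrite the pairings of $U$ and its $\delta/\delta m$-derivatives with measure increments as ordinary derivatives on $(\T^d)^N$, applies the standard finite-dimensional It\^o formula there, and passes to the limit $N\to\infty$ using the uniform regularity bounds of Propositions \ref{prop:partie2:regularite:partial2Um} and \ref{prop:partie2:regularity:time:2nd:order}.
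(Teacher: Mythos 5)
Your proposal correctly identifies how each term in the formula arises from a joint second-order Taylor expansion of $U$ in $(\bar x,m)$ (conditional variance of the Brownian increment $\leadsto\Delta_x U$, Fokker--Planck drift/diffusion $\leadsto\dive_y[D_mU]$ and the $\beta_t$-transport term, cross bracket $\leadsto\dive_x[D_mU]$, $m$-bracket $\leadsto D^2_{mm}U$), and you also correctly note that the second $U$-evaluation in the displayed formula must be at $\bar x_t$ rather than $\bar x_{t+h}$. However, your route differs from the paper's in the one place where the real work lies. You rely on an It\^o chain rule for the measure-valued process $(m_s)$ (the bracket formula $d[m_\cdot(\phi_1),m_\cdot(\phi_2)]_s=2(\int D\phi_1\,dm_s)\cdot(\int D\phi_2\,dm_s)\,ds$ and its cross bracket with $\bar x_\cdot$), which you flag as the main difficulty and propose to justify via a finite-dimensional particle approximation; but this is left as a sketch and would need a delicate uniform-in-$s$ passage to the limit in each pairing. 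The paper's proof instead avoids any measure-level It\^o calculus altogether. After the Taylor--Lagrange expansion into $T^1_h,\dots,T^5_h$, it exploits the push-forward structure $m_t=(\textrm{id}+\sqrt{2}(W_t-W_{t_0}))\sharp\tilde m_t$ together with the elementary decomposition identities \eqref{eq:partie2:proof:master:equation:10}--\eqref{eq:partie2:proof:master:equation:11}, which split each pairing $\int\varphi\,d(m_{t+h}-m_t)$ into (i) a piece driven by the absolutely continuous Fokker--Planck evolution of $\tilde m_t$ (no martingale part at all) and (ii) a shift piece from translating the test function by $\sqrt{2}\,\delta_h W_t$, which is handled by finite-dimensional It\^o. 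In particular the factor $2$ in front of $\dive_y[D_mU]$ comes from adding the Laplacian of the Fokker--Planck piece to the Laplacian of the shift piece, not from any abstract bracket formula --- and this decomposition is precisely the idea your proposal is missing. A minor additional point: your $O(h^{3/2})$ remainder implicitly calls for third-order regularity, which Definition \ref{def:master:eq:2nd:order} does not supply; the paper controls the remainder instead through uniform continuity of the second-order derivatives via the device \eqref{eq:partie2:trick}--\eqref{eq:partie2:trick:bb}.
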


\begin{proof}
Without any loss of generality, we assume that $t_{0}=0$.
Moreover, throughout the analysis, we shall use 
the following variant of \eqref{eq:partie2:trick}:
For two random 
processes $(\gamma_{t})_{t \in [{0},T]}$
and $(\gamma_{t}')_{t \in [{0},T]}$,
with paths in 
${\mathcal C}^0([0,T],\cC^0(E))$
and ${\mathcal C}^0([0,T],F)$ respectively,
where $E$ is a compact metric space (the distance being denoted by $d_{E}$)
and $F$ is a metric space (the distance being denoted by $d_{F}$),
satisfying
\begin{equation*}
\begin{split}
&
\essup_{\omega \in \Omega}\sup_{t \in [0,T]} \| \gamma_{t} \|_{0}
< \infty,
\end{split}
\end{equation*}
it must hold that 
\begin{equation}
\label{eq:partie2:trick:bb}
\lim_{h \searrow 0} 
\sup_{s,t \in [0,T] : \vert s-t \vert \leq h}
\E \bigl[ \vert \eta_{s,t} \vert \bigr]
=0, \quad 
\eta_{s,t} = 
\sup_{r \in [s,t]}
\sup_{x,y \in E : d_{E}(x,y) \leq \sup_{r \in [s,t]} d_{F}(\gamma_{r}',\gamma_{s}')}
\big\vert \gamma_{r}(y) -\gamma_{s}(x) \bigr\vert.
\end{equation}

Now, for given $t \in [0,T)$ and $h \in (0,T-t]$, 
we let
$\delta_{h} W_{t} = W_{t+h} - W_{t}$ 
and
$\delta_{h} m_{t} = m_{t,t+h} - m_{t}$. By
Taylor-Lagrange's formula, we can find some random variable $\lambda$ with values 
in $[0,1]$
\footnote{
The fact that $\lambda$ is a random variable may be justified as follows. 
Given a continuous mapping 
$\varphi$ from $\T^d \times {\mathcal P}(\T^d)$ into $\R$
and two random variables $(X,m)$ and $(X',m')$ with values in 
$(\R^d,{\mathcal P}(\T^d))$ such that 
the mapping $[0,1] \ni c \mapsto \varphi(cX'+(1-c)X,cm'+(1-c)m)$
vanishes at least once, the quantity 
$\lambda = \inf \{c \in [ 0,1] :
\varphi(cX'+(1-c)X,cm'+(1-c)m) =0 \}$ defines a random variable
since $\{ \lambda > c \} = 
\cap_{n \in {\mathbb N} \setminus \{0\}}
\cap_{c' \in {\mathbb Q} \in [0,c]}
\{ \varphi(c' X'+(1-c')X,c'm'+(1-c')m) \varphi( X,m) >1/n \}$.
} such that 
\begin{equation}
\label{eq:partie2:proof:master:equation:3}
\begin{split}
&U\bigl(t+h,x+\sqrt{2} W_{t+h},m_{t+h}
\bigr)
-U\bigl(t+h,x+\sqrt{2} W_{t},m_{t}\bigr)
\\
&= \sqrt{2} D_{x} U \bigl( t+h,x+\sqrt{2} W_{t},m_{t}
\bigr) \cdot \delta_{h} W_{t}
+ \int_{\T^d}
 \frac{\delta U}{\delta m}
 \bigl( t+h,x+\sqrt{2} W_{t},m_t,y
 \bigr) d \bigl( \delta_{h} m_{t} \bigr)(y) 
 \\
 &\hspace{1pt}+ D^2_{x} U \bigl( t+h, x  
 +\sqrt{2} W_{t}
 +
 \sqrt{2}
 \lambda 
 \delta_{h} W_{t},
 m_{t} + \lambda 
\delta_{h} m_{t}
\bigr) 
\cdot (\delta_{h} W_{t})^{\otimes 2}
\\
&\hspace{1pt} +
\sqrt{2}
\int_{\T^d}
D_{x} \frac{\delta U}{\delta m}
\bigl( t+h, x 
+\sqrt{2} W_{t}
+ \sqrt{2}
 \lambda 
 \delta_{h} W_{t},
 m_{t} + \lambda 
\delta_{h} m_{t},y
\bigr)
\cdot  
 \delta_{h} W_{t}
d \bigl( 
\delta_{h} m_{t}
\bigr)
(y)
\\
&\hspace{1pt}
+ \frac12
\int_{[\T^d]^2}
\frac{\delta^2 U}{\delta m^2}
\bigl( t+h, x
+\sqrt{2} W_{t}
 + \sqrt{2}
 \lambda 
 \delta_{h} W_{t},
 m_{t} + \lambda 
\delta_{h} m_{t},y,y'
\bigr) 
d \bigl( 
\delta_{h} m_{t}
\bigr)
(y)
d \bigl( 
\delta_{h} m_{t}
\bigr)(y')
\\
&= T^1_{h} + T^{2}_{h} + T^3_{h} + T^4_{h} + T^{5}_{h}, 
 \end{split}
 \end{equation}
 where we used the dot ``$\cdot$'' to denote the inner product in Euclidean spaces. 
 Part of the analysis relies on the following decomposition. 
 Given a bounded and Borel measurable 
 function $\varphi : \T^d \rightarrow \R$, 
 it holds that
 \begin{equation}
 \label{eq:partie2:proof:master:equation:10}
 \begin{split}
 &\int_{\T^d} \varphi(y) d \bigl( \delta_{h} m_{t}
 \bigr) (y)
 \\
  &=
 \int_{\T^d}
 \varphi(y) d m_{t+h}(y) 
 - \int_{\T^d}
 \varphi(y) d m_{t}(y) 
 \\
&= \int_{\T^d} \varphi \bigl( y +
\sqrt{2} W_{t+h} 
\bigr) d \tilde{m}_{t+h}(y) 
- \int_{\T^d}
 \varphi(y+
\sqrt{2} W_{t}) d \tilde{m}_{t}(y) 
 \\
 &= \int_{\T^d}
 \varphi \bigl( y 
 +
\sqrt{2} W_{t+h}
\bigr) d \bigl( 
\tilde{m}_{t+h}
- \tilde{m}_{t}\bigr)
(y) 
+ \int_{\T^d}
 \Bigl[
 \varphi \bigl( y 
+
\sqrt{2} W_{t+h} 
\bigr)
-
 \varphi( y+
\sqrt{2} W_{t})
\Bigr] d \tilde{m}_{t}(y)
\\
&= \int_{\T^d}
 \varphi \bigl( y 
 +
\sqrt{2} W_{t+h}
\bigr) d \bigl( 
\tilde{m}_{t+h}
- \tilde{m}_{t}\bigr)
(y) 
+ \int_{\T^d}
 \Bigl[
 \varphi \bigl( y 
+
\sqrt{2} \delta_{h}
W_{t} 
\bigr)
-
 \varphi( y)
\Bigr] d {m}_{t}(y). 
 \end{split}
 \end{equation}
 In particular,
 whenever $\varphi$ is a bounded Borel 
 measurable mapping from $[\T^d]^2$ into $\R$, 
it holds that
  \begin{align}
 &\int_{[\T^d]^2}
\varphi(y,y') d  
 \bigl( \delta_{h} m_{t}
 \bigr)(y) d \bigl( \delta_{h} m_{t}
 \bigr)(y') \nonumber
 \\
 &= \int_{[\T^d]^2}
 \varphi \bigl( y 
 +
\sqrt{2} W_{t+h},y'\bigr) 
 d \bigl( \tilde{m}_{t+h}
 - \tilde{m}_{t}
 \bigr)(y) d \bigl( \delta_{h} m_{t}
 \bigr)(y') \nonumber
 \\
 &\hspace{15pt}
 + \int_{[\T^d]^2}
\Bigl[
 \varphi \bigl( y 
 +
\sqrt{2}\delta_{h} W_{t},y'\bigr) 
 -
 \varphi( y,y') 
\Bigr] 
 d m_{t}(y) d \bigl( \delta_{h} m_{t}
 \bigr)(y') \nonumber
 \\
 &= \int_{[\T^d]^2}
 \varphi \bigl( y 
 +
\sqrt{2} W_{t+h},y'+
\sqrt{2} W_{t+h}
\bigr) 
 d \bigl( \tilde{m}_{t+h}
 - \tilde{m}_{t}
 \bigr)(y) 
d \bigl( \tilde{m}_{t+h}
 - \tilde{m}_{t}
 \bigr)(y')   \label{eq:partie2:proof:master:equation:11}
 \\
 &\hspace{15pt} +  
 \int_{[\T^d]^2}
\Bigl[
 \varphi \bigl( y+
\sqrt{2} W_{t+h},y'
 +
\sqrt{2} \delta_{h}W_{t}
 \bigr) 
 -  \varphi \bigl( y
+
\sqrt{2} W_{t+h},y'
 \bigr) 
 \Bigr]
 d \bigl( \tilde{m}_{t+h}
 - \tilde{m}_{t}
 \bigr)(y) d  m_{t}(y') \nonumber
 \\
 &\hspace{15pt} +  
 \int_{[\T^d]^2}
\Bigl[
 \varphi \bigl( y 
 +
\sqrt{2}\delta_{h} W_{t},y'+
\sqrt{2} W_{t+h}
 \bigr) 
 -  \varphi \bigl( y,y'+
\sqrt{2} W_{t+h}
 \bigr) 
 \Bigr]
 d  m_{t}(y)
 d \bigl( \tilde{m}_{t+h}
 - \tilde{m}_{t}
 \bigr)(y') \nonumber
 \\
 &\hspace{15pt}
 + \int_{[\T^d]^2}
\Bigl[
 \varphi \bigl( y+
\sqrt{2} \delta_{h} W_{t},y'
 +
\sqrt{2} \delta_{h}W_{t}
 \bigr) 
 -
  \varphi \bigl( y +
\sqrt{2} \delta_{h} W_{t},y' \bigr)  \nonumber
 \\
&\hspace{60pt}
 -
  \varphi \bigl( y,y'+
\sqrt{2} \delta_{h} W_{t}\bigr) 
+ \varphi ( y,y') 
\Bigr] 
 d m_{t}(y) d  m_{t}(y'). \nonumber
 \end{align}
 We now proceed with the analysis of 
 \eqref{eq:partie2:proof:master:equation:3}.  
 We start with $T^{1}_{h}$. It is pretty clear
that 
 \begin{equation}
 \label{eq:partie2:proof:master:equation:41} 
 \E \bigl[ T^1_{h} \vert {\mathcal F}_{t} \bigr] = 0. 
 \end{equation}
Look at now the term $T^2_{h}$. 
Following 
\eqref{eq:partie2:proof:master:equation:10}, write it
\begin{equation}
\label{eq:partie2:proof:master:equation:2}
\begin{split}
T^2_{h} &= \int_{\T^d}
 \frac{\delta U}{\delta m}
 \bigl( t+h,x
 +\sqrt{2} W_{t},m_{t},y
+ \sqrt{2} 
W_{t+h}
 \bigr) d \bigl( \tilde{m}_{t+h}
  -\tilde{m}_{t}\bigr)(y)
  \\
&\hspace{15pt} + 
\int_{\T^d}
 \Bigl[
 \frac{\delta U}{\delta m}
 \bigl( t+h,x
  +\sqrt{2} W_{t},m_{t},y
+ \sqrt{2}\delta_{h}
W_{t}
 \bigr)
 \\
&\hspace{100pt} -
 \frac{\delta U}{\delta m}
 \bigl( t+h,x +\sqrt{2} W_{t},m_{t},y
 \bigr)
 \Bigr]
  d  {m}_{t} (y)
  \\
  &= T^{2,1}_{h} + T^{2,2}_{h}. 
\end{split}
\end{equation}
By the PDE satisfied by $(\tilde{m}_{t})_{t \in [t_{0},T]}$, 
we have
\begin{equation}
\label{eq:T21:h}
\begin{split}
T^{2,1}_{h} &=
\int_{t}^{t+h}
ds \int_{\T^d}
\Delta_{y} \frac{\delta U}{\delta m}
 \bigl( t+h,x +\sqrt{2} W_{t},m_{t},y
+ \sqrt{2} 
W_{t+h}
 \bigr) d \tilde{m}_{s}(y)
 \\
 &\hspace{10pt}
 - 
 \int_{t}^{t+h}
ds \int_{\T^d}
D_{y} \frac{\delta U}{\delta m}
 \bigl( t+h,x +\sqrt{2} W_{t},m_{t},y
+ \sqrt{2} 
W_{t+h}
 \bigr) \cdot
\beta_{s}\bigl(y+\sqrt{2} W_{s} \bigr) 
 d \tilde{m}_{s}(y).
\end{split}
\end{equation}
Therefore, taking the conditional expectation, dividing by $h$
and using the fact that 
$m_{t}$ is the push-forward of 
$\tilde m_{t}$ by the mapping 
$\T^d \ni x \mapsto x + \sqrt{2} W_{t}$ (pay attention that the
measures below are $m_{t}$ and not $\tilde{m}_{t}$), we can write
\begin{equation*}
\begin{split}
\frac{1}{h}\E \bigl[ T^{2,1}_{h} \vert {\mathcal F}_{t}
\bigr]
&=
\int_{\T^d}
\Delta_{y} \frac{\delta U}{\delta m}
 \bigl( t,x +\sqrt{2} W_{t},m_{t},y
 \bigr) d {m}_{t}(y)
\\
&\hspace{15pt} -  \int_{\T^d}
D_{m} U
 \bigl( t,x +\sqrt{2} W_{t},m_{t},y
 \bigr) 
\cdot \beta_{t}(y)
 d m_{t}(y) + \varepsilon_{t,t+h},
\end{split}
\end{equation*}
where, as in the statement, $(\varepsilon_{s,t})_{0 \leq s \leq t \leq T}$ is a generic notation for denoting 
a family of random variables that satisfies
\begin{equation}
\label{eq:partie2:trick:3}
\lim_{h \searrow 0}
\sup_{\vert t-s \vert \leq h}
\E \bigl[  \vert \varepsilon_{s,t} \vert
\bigr] = 0.
\end{equation}
Here we used the same trick as in 
\eqref{eq:partie2:trick:bb}
to prove \eqref{eq:partie2:trick:3}
(see also \eqref{eq:partie2:trick}). Indeed, 
by a first application of \eqref{eq:partie2:trick:bb}, 
we can write 
\begin{equation*}
\begin{split}
T^{2,1}_{h} 
&=
\int_{t}^{t+h}
ds \int_{\T^d}
\Delta_{y} \frac{\delta U}{\delta m}
 \bigl(s,x +\sqrt{2} W_{s},m_{s},y
+ \sqrt{2} 
W_{s}
 \bigr) d \tilde{m}_{s}(y)
 \\
 &\hspace{15pt}
 - 
 \int_{t}^{t+h}
ds \int_{\T^d}
D_{y} \frac{\delta U}{\delta m}
 \bigl( s,x +\sqrt{2} W_{s},m_{s},y
+ \sqrt{2} 
W_{s}
 \bigr) \cdot
\beta_{s}\bigl(y+\sqrt{2} W_{s} \bigr) 
 d \tilde{m}_{s}(y) + h \varepsilon_{t,t+h}.
\end{split}
\end{equation*}
Then, we can apply \eqref{eq:partie2:trick:bb} once again 
with 
\begin{equation*}
\begin{split}
\gamma_{s}(x) &= 
\int_{\T^d}
\Delta_{y} \frac{\delta U}{\delta m}
 \bigl(s,x +\sqrt{2} W_{s},m_{s},y
+ \sqrt{2} 
W_{s}
 \bigr) d \tilde{m}_{s}(y)
 \\
 &\hspace{15pt}
 - 
\int_{\T^d}
D_{y} \frac{\delta U}{\delta m}
 \bigl( s,x +\sqrt{2} W_{s},m_{s},y
+ \sqrt{2} 
W_{s}
 \bigr) \cdot
\beta_{s}\bigl(y+\sqrt{2} W_{s} \bigr) 
 d \tilde{m}_{s}(y).
\end{split}
\end{equation*}

Using It\^o's formula to handle the second term in 
\eqref{eq:partie2:proof:master:equation:2}, we get
in a similar way 
\begin{equation}
\label{eq:partie2:proof:master:equation:4} 
\begin{split}
\frac{1}{h}
\E \bigl[ T^{2}_{h} \vert {\mathcal F}_{t}\bigr]
&=
2 \int_{\T^d}
\Delta_{y} \frac{\delta U}{\delta m}
 \bigl( t,x +\sqrt{2} W_{t},m_{t},y
 \bigr) d {m}_{t}(y)
\\
&\hspace{15pt} -  \int_{\T^d}
D_{m} U
 \bigl( t,x +\sqrt{2} W_{t},m_{t},y
 \bigr) 
\cdot \beta_{t}(y)
 d m_{t}(y)
  + \varepsilon_{t,t+h}.
\end{split}
\end{equation}
Turn now to 
$T^3_{h}$
in 
\eqref{eq:partie2:proof:master:equation:3}. 
Using again 
\eqref{eq:partie2:trick:bb}, it is quite clear that 
\begin{equation}
\label{eq:partie2:proof:master:equation:42}
\frac{1}{h}
\E 
\bigl[ T^3_{h}
\vert {\mathcal F}_{t}
\bigr]
= \Delta_{x} U(t,x +\sqrt{2} W_{t},m_{t})   + \varepsilon_{t,t+h}. 
\end{equation}
We now handle $T^4_{h}$. 
Following 
\eqref{eq:partie2:proof:master:equation:10}, we write
\begin{equation*}
\begin{split}
&T^4_{h}
\\
&= \sqrt{2}
\int_{\T^d}
\Bigl[
D_{x} \frac{\delta U}{\delta m}
\bigl( t +h, x + \sqrt{2} W_{t} + \sqrt{2}
 \lambda 
 \delta_{h} W_{t},
 m_{t} + \lambda 
\delta_{h} m_{t},y
+ \sqrt{2}
W_{t+h}
\bigr)
\\
&\hspace{300pt}
 \cdot
 \delta_{h} W_{t}
\Bigr]
d \bigl( 
\tilde{m}_{t+h}
- \tilde{m}_{t}
\bigr)
(y)
\\
&\hspace{15pt}
+\sqrt{2}
\int_{\T^d}
\Bigl[
D_{x} \frac{\delta U}{\delta m}
\bigl( t+h , x + \sqrt{2} W_{t} + \sqrt{2}
 \lambda 
 \delta_{h} W_{t},
 m_{t} + \lambda 
\delta_{h} m_{t},y
+ \sqrt{2}
\delta_{h}W_{t}
\bigr) 
\\
&\hspace{50pt}
- 
D_{x} \frac{\delta U}{\delta m}
\bigl( t+h, x + \sqrt{2} W_{t} + \sqrt{2}
 \lambda 
 \delta_{h} W_{t},
 m_{t} + \lambda 
\delta_{h} m_{t},y
\bigr) 
\Bigr] \cdot
 \delta_{h} W_{t}
d {m}_{t}
(y)
\\
&= T^{4,1}_{h} + T^{4,2}_{h}. 
\end{split}
\end{equation*}
Making use of the forward Fokker-Planck equation 
for $(\tilde{m}_{t})_{t \in [t_{0},T]}$ 
as in the proof of \eqref{eq:partie2:proof:master:equation:4}, we get 
that
\begin{equation*}
\frac{1}{h} \E
\bigl[
T^{4,1}_{h}
\vert {\mathcal F}_{t}
\bigr] = \varepsilon_{t,t+h}.
\end{equation*}

Now, by Taylor-Lagrange's formula, we can find another 
$[0,1]$-valued
random variable $\lambda'$ 
such that
\begin{equation*}
\begin{split}
T^{4,2}_{h}
&= 2
\int_{\T^d}
\Bigl[
D_{y} 
D_{x} \frac{\delta U}{\delta m}
\bigl( t+h, x + \sqrt{2} W_{t} + \sqrt{2}
 \lambda 
 \delta_{h} W_{t},
 m_{t} + \lambda 
\delta_{h} m_{t},y
+ \sqrt{2}
\lambda'
\delta_{h}
W_{t}
\bigr)
\\ 
&\hspace{330pt}
\cdot
( \delta_{h} W_{t}
)^{\otimes 2}
\Bigr]
d {m}_{t}
(y).
\end{split}
\end{equation*}
And, then,
\begin{equation}
\label{eq:partie2:proof:master:equation:44}
\begin{split}
\frac{1}{h}
\E \bigl[ T^{4}_{h}
\vert {\mathcal F}_{t}
\bigr]
&=  \frac{1}{h}
\E \bigl[ T^{4,2}_{h}
\vert {\mathcal F}_{t}
\bigr] + \varepsilon_{t,t+h}
\\
&= 2 
\int_{\T^d}
\textrm{div}_{y}
\bigl[
D_{x} \frac{\delta U}{\delta m}
\bigr]
\bigl( t, x +\sqrt{2} W_{t},
 m_{t},y
\bigr) dm_{t}(y)
+ \varepsilon_{t,t+h}
\\
&= 
2 
\int_{\T^d}
\textrm{div}_{x}
\bigl[
D_{y} \frac{\delta U}{\delta m}
\bigr]
\bigl( t, x +\sqrt{2} W_{t},
 m_{t},y
\bigr) dm_{t}(y)
+ \varepsilon_{t,t+h}. 
\end{split}
\end{equation}
It finally remains to handle $T^5_{h}$. 
Thanks to 
\eqref{eq:partie2:proof:master:equation:11}, we write
\begin{align}
&T^5_{h} \nonumber
\\
&=
\frac12
\int_{[\T^d]^2}
\frac{\delta^2 U}{\delta m^2}
\bigl( t+h, x + \sqrt{2} W_{t} + \sqrt{2}
 \lambda 
 \delta_{h} W_{t},
 m_{t} + \lambda 
\delta_{h} m_{t},y 
+ 
\sqrt{2}  W_{t+h}
,y'
+ 
\sqrt{2}  W_{t+h}
\bigr)  \nonumber
\\
&\hspace{200pt} d \bigl( 
\tilde{m}_{t+h}
- \tilde{m}_{t}
\bigr)
(y)
d \bigl( 
\tilde{m}_{t+h}
- \tilde{m}_{t}
\bigr)
(y') \nonumber
\\
&\hspace{5pt}
+ \frac12
\int_{[\T^d]^2}
\biggl[
\frac{\delta^2 U}{\delta m^2}
\bigl( t+h, x + \sqrt{2} W_{t}
+
\sqrt{2}
 \lambda 
 \delta_{h} W_{t},
 m_{t} + \lambda 
\delta_{h} m_{t},y 
+ 
\sqrt{2} \delta_{h}W_{t}
,y'
+ 
\sqrt{2}  W_{t+h}
\bigr)  \nonumber
\\
&\hspace{20pt} 
-
\frac{\delta^2 U}{\delta m^2}
\bigl( t+h, x + \sqrt{2} W_{t} + \sqrt{2}
 \lambda 
 \delta_{h} W_{t},
 m_{t} + \lambda 
\delta_{h} m_{t},y,y'
+ 
\sqrt{2}  W_{t+h}
\bigr) 
\biggl]
d {m}_{t}
(y) \nonumber
\\
&\hspace{350pt}d \bigl( 
\tilde{m}_{t+h}
- \tilde{m}_{t}
\bigr)
(y') \nonumber
\\
&\hspace{5pt}
+ \frac12
\int_{[\T^d]^2}
\biggl[
\frac{\delta^2 U}{\delta m^2}
\bigl( t+h, x + \sqrt{2} W_{t} + \sqrt{2}
 \lambda 
 \delta_{h} W_{t},
 m_{t} + \lambda 
\delta_{h} m_{t},y 
+ 
\sqrt{2}  W_{t+h}
,y'
+ 
\sqrt{2} \delta_{h} W_{t}
\bigr) \nonumber
\\
&\hspace{20pt} 
-
\frac{\delta^2 U}{\delta m^2}
\bigl( t+h, x + \sqrt{2} W_{t} + \sqrt{2}
 \lambda 
 \delta_{h} W_{t},
 m_{t} + \lambda 
\delta_{h} m_{t},y 
+ 
\sqrt{2}  W_{t+h},y' \bigr) 
\biggl]
d \bigl( 
\tilde{m}_{t+h}
- \tilde{m}_{t}
\bigr)
(y) \nonumber
\\
&\hspace{350pt}
d {m}_{t}
(y')\nonumber
\\
&\hspace{5pt} + \frac12
\int_{[\T^d]^2}
\biggl[
\frac{\delta^2 U}{\delta m^2}
\bigl( t+h, x + \sqrt{2} W_{t} + \sqrt{2}
 \lambda 
 \delta_{h} W_{t},
 m_{t} + \lambda 
\delta_{h} m_{t},y 
+ 
\sqrt{2} \delta_{h} W_{t}
,y'
+ 
\sqrt{2} \delta_{h}  W_{t}
\bigr) \nonumber
\\
&\hspace{20pt} 
-
\frac{\delta^2 U}{\delta m^2}
\bigl( t+h, x +
\sqrt{2} W_{t}+
 \sqrt{2}
 \lambda 
 \delta_{h} W_{t},
 m_{t} + \lambda 
\delta_{h} m_{t},y + 
\sqrt{2} \delta_{h} W_{t},y'
\bigr) \nonumber
\\
&\hspace{20pt} 
-
\frac{\delta^2 U}{\delta m^2}
\bigl( t+h, x + \sqrt{2}
 \lambda 
 \delta_{h} W_{t},
 m_{t} + \lambda 
\delta_{h} m_{t},y,y' + \sqrt{2}
\delta_{h}W_{t}
\bigr) \nonumber
\\
&\hspace{20pt}+
\frac{\delta^2 U}{\delta m^2}
\bigl( t+h, x + \sqrt{2} W_{t} + \sqrt{2}
 \lambda 
 \delta_{h} W_{t},
 m_{t} + \lambda 
\delta_{h} m_{t},y,y'\bigr) 
\biggl]
d {m}_{t}
(y)
d {m}_{t}
(y') \nonumber
\\
&= \frac12 \bigl( T^{5,1}_{h} + T^{5,2}_{h}
+ T^{5,3}_{h} + T^{5,4}_{h} \bigr). 
\label{eq:T5:h}
\end{align}
Making use of the Fokker-Planck 
equation satisfied by $(\tilde{m}_{t})_{t \in [t_{0},T]}$
together with 
the regularity 
assumptions of 
$\delta^2 U/\delta m^2$ in Definition \ref{def:master:eq:2nd:order}, 
it is readily seen that 
\begin{equation}
\label{eq:partie2:proof:master:equation:13} 
\frac{1}{h}
{\mathbb E}
\bigl[ T^{5,1}_{h}
+
T^{5,2}_{h}
+
T^{5,3}_{h}
\vert {\mathcal F}_{t}
\bigr]
= \varepsilon_{t,t+h}. 
\end{equation}
Focus now on $T^{5,4}_{h}$. With obvious notation, 
write it under the form 
\begin{equation}
\label{eq:partie2:proof:master:equation:12} 
T^{5,4}_{h}
= T^{5,4,1}_{h}
- T^{5,4,2}_{h} 
- T^{5,4,3}_{h}
+ T^{5,4,4}_{h}. 
\end{equation}
Performing a second-order Taylor expansion, we get
\begin{equation*}
\begin{split}
&T^{5,4,1}_{h}
\\
&=
\int_{[\T^d]^2}
\frac{\delta^2 U}{\delta m^2}
\bigl( t+h, x + \sqrt{2}
W_{t} + \sqrt{2}
 \lambda 
 \delta_{h} W_{t},
 m_{t} + \lambda 
\delta_{h} m_{t},y,y'\bigr) d {m}_{t}
(y)
d {m}_{t}
(y')
\\
&\hspace{1pt} + 
\int_{[\T^d]^2}
\sqrt{2}
D_{y}\frac{\delta^2 U}{\delta m^2}
\bigl( t+h, x 
+ \sqrt{2} W_{t}
+ \sqrt{2}
 \lambda 
 \delta_{h} W_{t},
 m_{t} + \lambda 
\delta_{h} m_{t},y ,y'\bigr) 
\cdot
 \delta_{h} W_{t}
 d {m}_{t}
(y)
d {m}_{t}
(y')
 \\
&\hspace{1pt} + 
\int_{[\T^d]^2}
\sqrt{2}
D_{y'}\frac{\delta^2 U}{\delta m^2}
\bigl( t+h, x + \sqrt{2}
W_{t} + \sqrt{2}
 \lambda 
 \delta_{h} W_{t},
 m_{t} + \lambda 
\delta_{h} m_{t},y,y'\bigr) 
\cdot
 \delta_{h} W_{t}
 d {m}_{t}
(y)
d {m}_{t}
(y')
\\
&\hspace{1pt}
+
\int_{[\T^d]^2}
D_{y}^2\frac{\delta^2 U}{\delta m^2}
\bigl( t +h, x + \sqrt{2}
W_{t} + \sqrt{2}
 \lambda 
 \delta_{h} W_{t},
 m_{t} + \lambda 
\delta_{h} m_{t},y,y'\bigr) 
\cdot
\bigl(
 \delta_{h} W_{t} \bigr)^{\otimes 2}
 d {m}_{t}
(y)
d {m}_{t}
(y')
\\
&\hspace{1pt}
+
\int_{[\T^d]^2}
D_{y'}^2\frac{\delta^2 U}{\delta m^2}
\bigl( t+h, x + \sqrt{2}
W_{t} + \sqrt{2}
 \lambda 
 \delta_{h} W_{t},
 m_{t} + \lambda 
\delta_{h} m_{t},y,y'\bigr) 
\cdot
\bigl(
 \delta_{h} W_{t} \bigr)^{\otimes 2}
d {m}_{t}
(y)
d {m}_{t}
(y')
 \\
 &\hspace{1pt}
+
\int_{[\T^d]^2}
2 D_{y} D_{y'}\frac{\delta^2 U}{\delta m^2}
\bigl( t+h, x + \sqrt{2}
W_{t} + \sqrt{2}
 \lambda 
 \delta_{h} W_{t},
 m_{t} + \lambda 
\delta_{h} m_{t},y,y'\bigr) 
\cdot
\bigl(
 \delta_{h} W_{t} \bigr)^{\otimes 2}
 d {m}_{t}
(y)
d {m}_{t}
(y')
\\
&\hspace{15pt} + \varepsilon_{t,t+h}
 \\
 &= T^{5,4,4}_{h} + I^1_{h} + I^2_{h} + J^{1}_{h} +  J^{2}_{h}
 + J^{1,2}_{h} 
 + h \varepsilon_{t,t+h}.
\end{split}
\end{equation*}
Similarly, we get 
\begin{equation*}
\begin{split}
&T^{5,4,2}_{h}
= T^{5,4,4}_{h} + I^1_{h} + J^{1}_{h} 
 + h \varepsilon_{t,t+h},
\\
&T^{5,4,3}_{h}
=
T^{5,4,4}_{h} + I^2_{h} + J^{2}_{h} +
h
 \varepsilon_{t,t+h}, 
\end{split}
\end{equation*}
from which, together 
with \eqref{eq:partie2:proof:master:equation:12}, 
we deduce that 
\begin{equation}
\label{T:5,4:h}
T^{5,4}_{h} = J^{1,2}_{h} + 
h
\varepsilon_{t,t+h},
\end{equation}
and then, with 
\eqref{eq:partie2:proof:master:equation:13}, 
\begin{equation}
\label{eq:partie2:proof:master:equation:14} 
\begin{split}
\frac{1}{h}
\E \bigl[ T^{5}_{h}
\vert {\mathcal F}_{t}
\bigr]
&=
\frac{1}{2h}
\E \bigl[ T^{5,4}_{h}
\vert {\mathcal F}_{t}
\bigr] + \varepsilon_{t,t+h}
\\
&= \int_{[\T^d]^2}
{\rm Tr} \Bigl[ D_{y} D_{y'}\frac{\delta^2 U}{\delta m^2}
\bigl( t, x+\sqrt{2} W_{t} ,
 m_{t},y,y'\bigr) 
\Bigr] d m_{t}
(y)
d m_{t}
(y') 
+ \varepsilon_{t,t+h}.
\end{split}
\end{equation}
From 
\eqref{eq:partie2:proof:master:equation:3},
\eqref{eq:partie2:proof:master:equation:41},
\eqref{eq:partie2:proof:master:equation:4},  
\eqref{eq:partie2:proof:master:equation:42},
\eqref{eq:partie2:proof:master:equation:44}
and 
\eqref{eq:partie2:proof:master:equation:14}, 
we deduce that, 
\begin{equation*}
\begin{split}
&
\frac1h 
\Bigl[
\E \bigl[ 
U\bigl(t+h,x+\sqrt{2} W_{t+h},m_{t}
\bigr)
-U\bigl(t+h,x+\sqrt{2} W_{t},m_{t}\bigr) \vert {\mathcal F}_{t}
\bigr]
\Bigr]
\\
&= \Delta_{x} U(t,x+\sqrt{2} W_{t},m_{t})  + 2 \int_{\T^d}
\textrm{div}_{y} 
\bigl[
D_{m} U
\bigr]
 \bigl( t,x+\sqrt{2}W_{t},m_{t},y
 \bigr) d {m}_{t}(y)
 \\
&\hspace{15pt}-  \int_{\T^d}
D_{m} 
U
 \bigl( t,x+\sqrt{2}W_{t},m_{t},y
 \bigr) 
 \cdot \beta_{t}(y)
 d m_{t}(y)
 \\
&\hspace{15pt} + 2 
\int_{\T^d}
\textrm{div}_{x}
\bigl[
D_{m} 
U
\bigr]
\bigl( t, x+\sqrt{2}W_{t},
 m_{t},y
\bigr) dm_{t}(y)
\\
&\hspace{15pt} +
\int_{[\T^d]^2}
{\rm Tr} \Bigl[ D^2_{mm}
U
\bigl( t, x +\sqrt{2}W_{t},
 m_{t},y,y'\bigr) 
\Bigr] d m_{t}
(y)
d m_{t}
(y') + \varepsilon_{t,t+h},
\end{split}
\end{equation*}
which completes the proof. 
\end{proof}

\subsection{Proof of Corollary \ref{c.sec5.MFGstoch}}
\label{subse:proof:c.sec5.MFGstoch}

We are now ready to come back to the well-posedness of the stochastic MFG system 
\be
\label{e.MFGstocsyst.sec5}
\left\{
\begin{array}{l}
d_{t} u_{t} = \bigl\{ -  2 \Delta u_{t} + H(x,Du_{t}) - F(x,m_{t}) -  \sqrt{2} {\rm div}(v_{t}) \bigr\} dt+ v_{t} \cdot \sqrt{2} dW_{t},\\
d_{t} m_{t} = \bigl[  2 \Delta m_{t} + {\rm div} \bigl( m_{t} D_{p} H(x,D u_{t}) 
\bigr) \bigr] dt - \sqrt{2} {\rm div} ( m_{t} dW_{t} \bigr), \qquad {\rm in}\; [t_0,T]\times \T^d,
\\
 m_{t_0}=m_0, \; u_T(x)= G(x, m_T) \qquad {\rm in}\; \T^d.
\end{array}\right.
\ee
For simplicity of notation, we prove the existence and uniqueness of the solution for $t_0=0$. 
\vspace{4pt}

\textit{First step. Existence of a solution.}
We start with the solution $(\tilde{u}_t,\tilde{m}_t,\tilde{M}_t)_{t \in [0,T]}$ to the system 
\begin{equation}
\label{eq:StochNashTransfo}
\left\{\begin{array}{l}
d_{t} \tilde{m}_{t} = \bigl\{ \Delta \tilde{m}_{t}
+{\rm div} \bigl( \tilde{m}_{t} D_{p} \tilde{H}_{t}(\cdot,D \tilde{u}_{t})
\bigr) \bigr\} dt,  
\\
d_{t} \tilde{u}_{t}
=  \bigl\{ -   \Delta \tilde{u}_{t} + \tilde{H}_{t}(\cdot,D\tilde{u}_{t}) - 
\tilde{F}_{t}(\cdot,m_{t}) 
\bigr\} dt
+ d\tilde{M}_{t},\\
 \tilde{m}_{0}=m_0, \; \tilde{u}_T(x)= \tilde{G}(x, m_T) \qquad {\rm in}\; \T^d.
\end{array}\right.
\end{equation}
where $\tilde{H}_{t}(x,p) = H(x+ \sqrt{2}W_{t},p)$, $\tilde{F}_{t}(x,m) = F(x+\sqrt{2} W_{t},m)$ and $\tilde{G}(x,m) = G(x+\sqrt{2} W_{T},m)$. The existence and uniqueness of 
a solution $(\tilde{u}_t,\tilde{m}_t,\tilde{M}_t)_{t \in [0,T]}$ 
to 
\eqref{eq:StochNashTransfo}
is ensured by Theorem \ref{thm:partie:2:existence:uniqueness}. 
Given such a solution, we let
$$
u_{t}(x) = \tilde{u}_{t}(x- \sqrt{2} W_{t}), 
\quad x \in \T^d \ ; 
\quad
\quad m_{t} = ( \textit{id} +\sqrt{2} W_{t})\sharp \tilde m_{t}, \quad t \in [0,T],
$$
and claim that the pair $(u_t,m_t)_{t \in [0,T]}$ thus defined satisfies \eqref{e.MFGstocsyst.sec5} (for a suitable $(v_{t})_{t \in [0,T]}$). 

The dynamics satisfied by $(m_t)_{t \in [0,T]}$
are given by the so-called
It\^o-Wentzell formula for distributed-valued processes,
see \cite[Theorem 1.1]{Kry11}, 
the proof of which works as follows:
for any test function $\phi\in \cC^{3}(\T^d)$
and any $z \in \R^d$, we have 
$\inte \phi(x)dm_t(x)= \inte \phi(x+\sqrt{2} W_{t}) d\tilde{m}_t(x)$; 
expanding the 
variation of
$(\int_{\T^d} \phi(x+z) d\tilde{m}_{t}(x))_{t \in [0,T]}$
by means of the Fokker-Planck equation satisfied by 
$(\tilde{m}_{t})_{t \in [0,T]}$
and then replacing 
$z$ by $\sqrt{2} W_{t}$, 
we then obtain the semi-martingale expansion of 
$(\int_{\T^d} \phi(x+
\sqrt{2} W_{t}) d\tilde{m}_{t}(x))_{t \in [0,T]}$
by applying the standard
It\^o-Wentzell formula. Once again we refer to 
\cite[Theorem 1.1]{Kry11}
for a complete account.

Applying \cite[Theorem 1.1]{Kry11}
to our framework
(with the formal 
writing
$(m_{t}(x) = \tilde{m}_{t}(x-\sqrt{2}W_{t}))_{t\in [0,T]}$), this shows exactly that
$(m_t)_{t \in [0,T]}$ solves 
\begin{equation}
\label{eq:m:proof:corollary:MFG:with:noise}
\begin{split}
d_tm_t&= \Bigl\{ 2\Delta m_{t} + \dive \Bigl(
 D_p{H}_{t}\bigl(x - \sqrt{2}W_{t},D \tilde{u}_{t}(x- \sqrt2 W_{t}) \bigr)
 \Bigr)
 \Bigr\}dt -
\sqrt{2} \dive(m_tdW_t)
\\
&= \Bigl\{ 2\Delta m_{t} + \dive \Bigl(
 D_p{H}\bigl(x,D {u}_{t}(x) \bigr)
 \Bigr)
 \Bigr\}dt -
\sqrt{2} \dive(m_tdW_t).
\end{split}
\end{equation}

Next we consider the equation satisfied by $(u_t)_{t \in [0,T]}$. 
Generally speaking, the strategy is similar. Intuitively, it consists in applying It\^o-Wenztell formula again, but
to $(u_{t}(x) = \tilde{u}_{t}(x-\sqrt{2} W_{t}))_{t \in [0,T]}$. 
Anyhow, 
in order to apply It\^o-Wentzell formula, 
we need first to identify the martingale part in $(\tilde{u}_{t}(x))_{t \in [0,T]}$
(namely $(\tilde{M}_{t}(x))_{t \in [0,T]}$). 
Recalling
from 
Lemma \ref{lem:partie2:master:equation}
 the formula 
\begin{equation*}
\tilde u_{t}(x) = U\bigl(t,x+ \sqrt{2}W_{t},
m_{t} \bigr),
\quad t \in [0,T],
\end{equation*}
we understand that the martingale part of $(\tilde{u}_{t}(x))_{t \in [0,T]}$
should be given by the first-order expansion of the above right-hand side (using an appropriate version of 
It\^o's formula for functionals defined on $[0,T] \times \T^d \times {\mathcal P}(\T^d)$).  
For our purpose, it is simpler to 
express $u_{t}(x)$ in terms of $U$ directly: 
\begin{equation*}
u_{t}(x) = U\bigl(t,x,m_{t} \bigr),
\quad t \in [0,T]. 
\end{equation*}
The trick is then to expand the above right-hand side by taking benefit from 
the master equation satisfied by $U$ and from the tailor-made It\^o's formula
given in Lemma \ref{lem:ito:local}. 

In order to apply Lemma \ref{lem:ito:local}, we observe that, 
in $(U(t,x,m_{t}))_{t \in [0,T]}$, the $x$-dynamics are entirely frozen so that 
we are led back to the case when $U$ is independent of $x$. 
With the same notation as in Lemma \ref{lem:ito:local}, we then
get
\begin{equation}
\label{eq:ito:measure:drift}
\begin{split}
&{\mathbb E} \bigl[ U(t+h,x,m_{t+h}) - U(t+h,x,m_{t})
\vert {\mathcal F}_{t}
\bigr]
\\
&= 2 \int_{\T^d} \textrm{\rm div}_{y}
\bigl[ D_{m} U \bigr](t,x,m_{t},y) dm_{t}(y)
\\
&\hspace{15pt}
- \int_{\T^d} D_{m} U(t,x,m_{t},y) \cdot D_{p} H\bigl(y,Du_{t}(y)\bigr) dm_{t}(y) 
\\
&\hspace{15pt}
+ \int_{[\T^d]^2}
\textrm{Tr} \bigl[ D^2_{mm} U
\bigr](t,x,m_{t},y,y') d m_{t}(y) dm_{t}(y') + 
\varepsilon_{t,t+h}. 
\end{split}
\end{equation}
Of course, this gives the absolutely continuous part only in the semi-martingale 
expansion of $(U(t,x,m_{t}))_{t \in [0,T]}$. In order to compute the martingale 
part, one must revisit the proof of Lemma 
\ref{lem:ito:local}. 
Going back 
to \eqref{eq:partie2:proof:master:equation:3}, we know that, in our 
case, 
$T^1_{h}$,
$T^3_{h}$ and $T^4_{h}$ are zero (as everything works as if $U$ was independent 
of $x$). 

Now, denoting by 
$(\eta_{s,t})_{s,t \in [0,T] : s \leq  t }$
a family of random variables satisfying
\begin{equation}
\label{eq:eta:t,t+h}
\lim_{h \searrow 0} \frac1h \sup_{s,t \in [0,T] :  \vert s-t \vert \leq h}
{\mathbb E} \bigl[ \vert \eta_{s,t} \vert^2 \bigr] = 0,  
\end{equation}
we can write, by 
\eqref{eq:partie2:proof:master:equation:2}
and \eqref{eq:T21:h}:
\begin{equation*}
T^2_{h}
= \sqrt{2} \biggl( \int_{\T^d} D_{y} \frac{\delta U}{\delta m}(t,x,m_{t},y) dm_{t}(y)
\biggr) \cdot \delta_{h} W_{t} + \eta_{t,t+h}
\end{equation*}
Moreover, 
by
\eqref{eq:T5:h}
and 
\eqref{T:5,4:h}
\begin{equation*}
T^5_{h}
= \eta_{t,t+h},
\end{equation*}
proving that 
\begin{equation*}
\begin{split}
&
U(t+h,x,m_{t+h}) 
-
{\mathbb E} \bigl[ U(t+h,x,m_{t+h}) 
\vert {\mathcal F}_{t}
\bigr]
\\
&\hspace{15pt} = 
\sqrt{2}
 \biggl( \int_{\T^d} D_{y} \frac{\delta U}{\delta m}(t,x,m_{t},y) dm_{t}(y)
\biggr) \cdot \delta_{h} W_{t} + \eta_{t,t+h},
\end{split}
\end{equation*}
for some family $(\eta_{s,t})_{s,t \in [0,T] :  s \leq t}$ that 
must satisfy \eqref{eq:eta:t,t+h}.
With such a decomposition, it holds that $\E[ \eta_{t,t+h} \vert {\mathcal F}_{t}]=0$.
Therefore, for any $t \in [0,T]$ and any partition 
$0=r_{0}<r_{1}<r_{2}<\dots<r_{N}=t$, we have
\begin{equation*}
\begin{split}
&\sum_{i=0}^{N-1}
\Bigl(
U(r_{i+1},x,m_{r_{i+1}}) 
-
{\mathbb E} \bigl[ U(r_{i+1},x,m_{r_{i+1}}) 
\vert {\mathcal F}_{r_{i}}
\bigr]
\Bigr)
\\
&= 
\sum_{i=0}^{N-1}
\biggl[
\sqrt{2}
 \biggl( \int_{\T^d} D_{y} \frac{\delta U}{\delta m}(r_{i},x,m_{r_{i}},y) dm_{r_{i}}(y)
\biggr) \cdot \bigl( W_{r_{i+1}} - W_{r_{i}} \bigr) + \eta_{r_{i},r_{i+1}}
\biggr],
\end{split}
\end{equation*}
with the property that 
\begin{equation*}
{\mathbb E}
\bigl[ \eta_{r_{i},r_{i+1}} \vert {\mathcal F}_{r_{i}}
\bigr] = 0, \quad
{\mathbb E}
\bigl[ \vert \eta_{r_{i},r_{i+1}} \vert^2
\bigr] \leq  \pi_{r_{i},r_{i+1}} \vert r_{i+1}-r_{i} \vert,
\end{equation*}
where $\lim_{h \searrow 0} \sup_{(s,t) \in [0,T]^2 : \vert s -t \vert \leq h}
\pi_{s,t} = 0$. By a standard computation of conditional expectation, 
we have that 
\begin{equation*}
\lim_{\delta \rightarrow 0}
\E
\biggl[ 
\Bigl\vert 
\sum_{i=0}^{N-1} \eta_{r_{i},r_{i+1}}
\Bigr\vert^2 
\biggr] = 0,
\end{equation*}
where $\delta$ stands for the mesh of the partition 
$r_{0},r_{1},\dots,r_{N}$. As a consequence,
the following limit holds true in $L^2$:
\begin{equation*}
\begin{split}
&\lim_{\delta \searrow 0}
\sum_{i=0}^{N-1}
\Bigl(
U(r_{i+1},x,m_{r_{i+1}}) 
-
{\mathbb E} \bigl[ U(r_{i+1},x,m_{r_{i+1}}) 
\vert {\mathcal F}_{r_{i}}
\bigr]
\Bigr)
= 
\sqrt{2}
\int_{0}^t D_{m} U(s,x,m_{s},y) \cdot dW_{s}.
\end{split}
\end{equation*}
Together with \eqref{eq:ito:measure:drift}, we deduce that 
\begin{equation*}
\begin{split}
d_t u_{t}(x) 
&=  
\biggl\{ \partial_t U\bigl(t,x,m_{t} \bigr) 
\\
&\hspace{15pt} + \inte \Bigl[ 2\dive_y \bigl[D_mU\bigr](t,x,m_t,y)
- D_m U (t,x,m_t,y)\cdot  D_p{H}_{t}\bigl(y,D {u}_{t}(y)\bigr)\Bigr] dm_t(y) 
\\
&\hspace{15pt} + 
\inte\inte {\rm Tr}\bigl[D^2_{mm} U\bigr](t,x,m_t,y,y')dm_t(y)dm_t(y')\biggr\}dt
\\
&\hspace{15pt} + \sqrt{2} \biggl(
\inte D_mU (t,x,m_t,y)dm_t(y)\biggr)\cdot dW_t .
\end{split}
\end{equation*}
Letting 
\begin{equation*}
v_t(x) = \inte D_mU (t,x,m_t,y)dm_t(y), \quad t \in [0,T], \ x \in \T^d,
\end{equation*}
and using the master equation satisfied by $U$, we obtain therefore 
$$
\begin{array}{l}
\ds d_t u_{t}(x)  
=  \ds \left\{ -2\Delta u_t(x)+H\bigl(x,Du_t(x)\bigr)-F(x,m_t)-\sqrt{2}\dive \bigl(v_t(x)
\bigr)\right\}dt
+ v_t(x)\cdot \sqrt{2}dW_t.
\end{array}
$$
Together with 
\eqref{eq:m:proof:corollary:MFG:with:noise}, this completes the proof of the existence 
of a solution to 
\eqref{e.MFGstocsyst.sec5}.
\vspace{4pt}

\textit{Second step. Uniqueness of the solution.}
We now prove uniqueness of the solution to 
\eqref{e.MFGstocsyst.sec5}.
Given a solution 
$(u_{t},m_{t})_{t \in [0,T]}$
(with some $(v_{t})_{t \in [0,T]}$)
to \eqref{e.MFGstocsyst.sec5}, we let 
\begin{equation*}
\tilde{u}_{t}(x) = u_{t}(x+ \sqrt{2} W_{t}), \quad 
x \in \T^d, \quad 
\tilde{m}_{t} = (\textit{id} - \sqrt{2} W_{t}) \sharp m_{t},
\quad t \in [0,T].
\end{equation*}
In order to prove uniqueness, it suffices to show that 
$(\tilde{u}_{t},\tilde{m}_{t})_{t \in [0,T]}$
is a solution to 
\eqref{eq:StochNashTransfo} (for some martingale $(\tilde{M}_{t})_{t \in [0,T]}$).

We first investigate the dynamics of $(\tilde{m}_{t})_{t \in [0,T]}$. 
As in the first step (\textit{existence of a solution}), 
we may apply 
It\^o-Wenztell formula for distribution-valued processes. 
Indeed, thanks to \cite[Theorem 1.1]{Kry11}
(with the formal writing 
$(\tilde{m}_{t}(x)=m_{t}(x+\sqrt{2} W_{t}))_{t \in [0,T]}$), 
we get 
exactly that 
$(\tilde{m}_{t})_{t \in [0,T]}$
satisfy the first equation in 
\eqref{eq:StochNashTransfo}. 

In order to prove the second equation in \eqref{eq:StochNashTransfo}, 
we apply It\^o-Wentzell formula 
for real-valued processes
 to $(\tilde{u}_{t}(x)
= u_{t}(x+ \sqrt{2} W_{t}))_{t \in [0,T]}$, 
see \cite[Theorem 3.1]{Kry11}.

\newpage
\section{Convergence of the Nash system}
\label{sec.convergence}

In this section, we consider,
for an integer $N \geq 2$, a classical solution $(v^{N,i})_{i \in \{1,\dots,N\}}$ of the Nash system with a common noise:  
\be\label{Nash0}
\left\{ \begin{array}{l}
\ds - \partial_t v^{N,i}(t,\bx) -  \sum_{j} \Delta_{x_j}v^{N,i}(t,\bx) - \beta  \sum_{j,k} {\rm Tr} D^2_{x_j,x_k} v^{N,i}(t,\bx) + H\bigl(x_i,  D_{x_i}v^{N,i}(t,\bx)
\bigr) \\
\ds \qquad \qquad + \sum_{j\neq i}  D_pH \bigl(x_j, D_{x_j}v^{N,j}(t,\bx)\bigr)
\cdot  D_{x_j}v^{N,i}(t,\bx)=
F(x_i, m^{N,i}_{\bx})\qquad {\rm in }\; [0,T]\times (\T^{d})^N,
\\
\ds v^{N,i}(T,\bx)= G(x_i,m^{N,i}_{\bx})\qquad {\rm in }\;  (\T^{d})^N,
\end{array}\right.
\ee
where we set, for $\ds {\bx}=(x_1, \dots, x_N)\in (\T^{d})^N$, $\ds m^{N,i}_{\bx}=\frac{1}{N-1}\sum_{j\neq i} \delta_{x_j}$.
Our aim is to prove Theorem \ref{thm:mainCV}, which says that the solution $(v^{N,i})_{i \in \{1,\dots,N\}}$ converges, in a suitable sense, to the solution of the second order master equation and Theorem \ref{thm:CvMFG}, which claims that the optimal trajectories also converge. 

Throughout this part we assume that $H$, $F$ and $G$ satisfy the assumption 
of Theorem 
\ref{theo:2nd-order:master:equation}
with $n \geq 2$.
This allows us to define $U=U(t,x,m)$ the solution of the second order master equation
\be\label{MasterCN}
\begin{array}{l}
\left\{\begin{array}{l} 
\ds - \partial_t U  - (1+\beta)\Delta_x U +H(x,D_xU) -(1+\beta)\inte \dive_y \left[D_m U\right]\ d m(y)\\
\ds    \;  + \inte  D_m U\cdot   D_pH(y,D_xU)\ dm(y) \\
\ds   \; -2 \beta  \inte  \dive_x\left[D_mU\right]dm(y) -\beta \int_{[\T^d]^2} {\rm Tr} D^2_{mm}U\ dm(y)dm(z) =F(x,m)
\vspace{2pt}
\\
 \ds \qquad \qquad  \qquad {\rm in }\; (0,T)\times \T^d\times \Pw,
\vspace{2pt} 
 \\
 U(T,x,m)= G(x,m) \qquad  {\rm in }\; \T^d\times \Pw,\\
\end{array}\right.
\end{array}
\ee
where $\beta\geq 0$ is a parameter for the common noise. 
For $\alpha'\in (0,\alpha)$, we have for any $(t,x)\in [0,T]\times \T^d$, $m,m'\in \Pk$
\begin{equation}
\label{Cond:Sol}
\begin{split}
&\|U(t,\cdot,m)\|_{n+2+\alpha'}+ \left\|\frac{\delta U}{\delta m}(t,\cdot, m, \cdot)\right\|_{(n+2+\alpha',n+1+\alpha')}+
  \left\|\frac{\delta^2 U}{\delta m^2}(t,\cdot, m, \cdot,\cdot)\right\|_{(n+2+\alpha',n+\alpha',n+\alpha')} 
\\
&\leq \ C_0,
\end{split}
\end{equation}
and that the mapping 
\be\label{Cond:Sol2}
[0,T] \times {\mathcal P}(\T^d) \ni (t,m) \mapsto 
\frac{\delta^2 U}{\delta m^2}(t,\cdot,m,\cdot,\cdot) \in 
\cC^{n+2+\alpha'}(\T^d) \times \left[\cC^{n+\alpha'}(\T^d)\right]^2
\ee
is continuous. 
As already said,  a solution of \eqref{MasterCN} satisfying the above properties has been built in Theorem \ref{theo:2nd-order:master:equation}. When $\beta=0$, one just needs to replace the above assumptions by 
those of Theorem \ref{theo:ex}, which does not require the second order differentiability of $F$ and $G$ with respect to $m$. 

The main idea for proving the convergence of the $(v^{N,i})_{i \in \{1,\dots,N\}}$ towards the solution $U$ is to use the fact that suitable finite dimensional projections of $U$ are nearly solutions to the Nash equilibrium equation. 
Actually, 
as we already alluded to 
at the end
of Section 
\ref{sec:prelim}, 
this strategy works under weaker 
 assumptions than that required in the statement 
of Theorem 
\ref{theo:2nd-order:master:equation}. What is really needed is 
that $H$ and $D_{p} H$
are globally Lipschitz continuous
and
that the master 
equation has a classical solution satisfying the conclusion of Theorem 
\ref{theo:2nd-order:master:equation}
(or Theorem \ref{theo:ex}
if $\beta=0$). 
In particular,  
the monotonicity properties of $F$ and $G$ have no role
in the proof of the convergence of the $N$-Nash system.
We refer to Remarks
\ref{rem:quelles:H:utiles}
and 
\ref{rem:quelles:H:utiles:2}
 below and we let the interesting reader reformulate the statements of Theorems
\ref{thm:mainCV} and \ref{thm:CvMFG}
accordingly.

\subsection{Finite dimensional projections of $U$}

For $N\geq 2$ and $i\in \{1,\dots, N\}$ we set 
$$
u^{N,i}(t,{\bx})= U(t,x_i, m^{N,i}_{\bx})\quad {\rm where }\; {\bx}=(x_1, \dots, x_N)\in (\T^{d})^N, \; m^{N,i}_{\bx}=\frac{1}{N-1}\sum_{j\neq i} \delta_{x_j}.
$$
Note that the $u^{N,i}$ are at least $\cC^2$ with respect to the $x_i$ variable because so is $U$. Moreover, $\partial_t u^{N,i}$ exists and is continuous because of the regularity of $U$. The next statement says that $u^{N,i}$ is actually globally $\cC^2$ in the space variables: 

\begin{Proposition}\label{uNC2} For any $N\geq 2$, $i\in \{1, \dots, N\}$, $u^{N,i}$ is of class $\cC^2$ in the space variables, with 
\begin{align*}
&D_{x_j}  u^{N,i}(t,{\bx})= \frac{1}{N-1} D_mU(t,x_i,m^{N,i}_{\bx},x_j) &&\qquad (j\neq i),
\\
&D^2_{x_i,x_j}  u^{N,i}(t,{\bx})= \frac{1}{N-1} D_xD_mU(t,x_i,m^{N,i}_{\bx},x_j) &&\qquad (j\neq i),
\\
&D^2_{x_j,x_j}  u^{N,i}(t,{\bx})= \frac{1}{N-1} D_y\left[D_mU\right](t,x_i,m^{N,i}_{\bx},x_j)
\\
&\hspace{90pt} +\frac{1}{(N-1)^2} D^2_{mm}U(t,x_i,m^{N,i}_{\bx},x_j,x_j)  &&\qquad (j\neq i)
\\
\textrm{\rm while, \ if} \ j\neq k, \quad 
&D^2_{x_j,x_k}  u^{N,i}(t,{\bx})= \frac{1}{(N-1)^2} D^2_{mm}U(t,x_i,m^{N,i}_{\bx},x_j,x_k) &&\qquad (i,j,k \textrm{\rm \ distinct}).
\end{align*}
\end{Proposition}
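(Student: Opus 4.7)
The plan is to derive each formula by applying the fundamental linearization identity \eqref{e.iubsdiazsd} to the perturbation of the empirical measure $m^{N,i}_{\bx}$ produced by moving a single coordinate, and then to iterate this device, invoking Lemma \ref{lem:schwarz} to express the resulting measure derivatives of $D_m U$ in terms of $D^2_{mm}U$. Global $\cC^2$ regularity will follow from the continuity properties of the relevant derivatives of $U$ recorded in \eqref{Cond:Sol}--\eqref{Cond:Sol2}, together with the continuity of the map $\bx \mapsto m^{N,i}_{\bx}$ into $(\Pk,\dk)$.

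First I would handle the first-order derivative $D_{x_j} u^{N,i}$ for $j \neq i$. Fixing $\bx$ and a unit vector $e_k \in \R^d$, the sole effect of replacing $x_j$ by $x_j + h e_k$ is to change the measure argument by $\frac{1}{N-1}(\delta_{x_j+h e_k} - \delta_{x_j})$, while the first spatial argument $x_i$ is untouched. Plugging this perturbation into \eqref{e.iubsdiazsd} applied to $U(t,x_i,\cdot)$ and dividing by $h$, the integrand becomes a finite difference of $\frac{\delta U}{\delta m}$ in its $y$-argument along the convex interpolant $m^s_h = (1-s)m^{N,i}_{\bx}+s m^{N,i}_{\bx^h}$. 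Since $\dk(m^s_h, m^{N,i}_{\bx}) = O(h)$ and $D_m U = D_y \frac{\delta U}{\delta m}$ is jointly continuous in $(m,y)$ by \eqref{Cond:Sol}, this integrand converges uniformly in $s$ to $D_m U(t,x_i,m^{N,i}_{\bx},x_j)\cdot e_k$, yielding the claimed formula.

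Next I would iterate this scheme to compute the second derivatives. The mixed derivative $D^2_{x_i,x_j} u^{N,i}$ ($j\neq i$) is obtained by classical differentiation in the first spatial argument of $\frac{1}{N-1} D_m U(t,x_i,m^{N,i}_{\bx},x_j)$, since only that argument depends on $x_i$. For the off-diagonal term $D^2_{x_j,x_k} u^{N,i}$ with $i,j,k$ distinct, only the measure argument depends on $x_k$, so I would apply the argument of the previous paragraph to the vector-valued map $m \mapsto D_m U(t,x_i,m,x_j)$; Lemma \ref{lem:schwarz} identifies its $\delta/\delta m$-derivative with $y' \mapsto D_{x_j}\frac{\delta^2 U}{\delta m^2}(t,x_i,m,x_j,y')$ and hence its intrinsic $D_m$-derivative with $y' \mapsto D^2_{mm}U(t,x_i,m,x_j,y')$, producing the prefactor $(N-1)^{-2} D^2_{mm}U$. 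For the diagonal term $D^2_{x_j,x_j}$ with $j\neq i$, the variable $x_j$ appears both in the measure argument of $D_m U$ and as its $y$-argument; a total-derivative computation then produces exactly two contributions, namely $\frac{1}{N-1} D_y[D_m U](t,x_i,m^{N,i}_{\bx},x_j)$ from the $y$-dependence and $\frac{1}{(N-1)^2}D^2_{mm}U(t,x_i,m^{N,i}_{\bx},x_j,x_j)$ from the measure dependence, which is exactly the stated formula.

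Finally, joint continuity of all second derivatives with respect to $(t,\bx)$ is immediate from the continuity statements on $D_m U$, $D_x D_m U$, $D_y[D_m U]$ and $D^2_{mm}U$ contained in \eqref{Cond:Sol}--\eqref{Cond:Sol2}, together with $\bx \mapsto m^{N,i}_{\bx}$ being continuous into $(\Pk,\dk)$; the purely classical derivative $D^2_{x_i,x_i}u^{N,i}$ is simply $D_x^2 U(t,x_i,m^{N,i}_{\bx})$ and is handled by the $\cC^2$-regularity of $U$ in its first spatial argument. The main (and rather mild) technical point I expect to need care on is the second step: the chain rule in the measure variable must be justified for the \emph{vector-valued} map $m \mapsto D_m U(t,x_i,m,x_j)$, which is not itself a scalar function of $m$, so one must check that Definition \ref{def:Diff} and the identity \eqref{e.iubsdiazsd} apply componentwise and that the identification of the intrinsic derivative with $D^2_{mm} U$ via Lemma \ref{lem:schwarz} is applied with the right bookkeeping of the two $y$-variables. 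Once this is in place, everything reduces to the scalar linearization argument of the first paragraph.
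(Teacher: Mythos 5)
Your proposal is correct, but it takes a genuinely different route from the paper. The paper first restricts to configurations $\bx$ with pairwise-distinct coordinates, builds a smooth vector field $\phi$ equal to the displacement $v_j$ on a small ball around each $x_j$, and then applies the second-order expansion of $U$ along push-forwards $(\textit{id}+\phi)\sharp m^{N,i}_{\bx}$ from Propositions \ref{prop:diffdeltaDm2} and \ref{prop:diffdeltaDm2bis} to identify \emph{all} of $D_{x_j}u^{N,i}$, $D^2_{x_j,x_j}u^{N,i}$ and $D^2_{x_j,x_k}u^{N,i}$ simultaneously; it then removes the distinctness restriction by continuity of $D_m U$, $D_y[D_mU]$ and $D^2_{mm}U$. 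You instead differentiate coordinate by coordinate, first via the linearization identity \eqref{e.iubsdiazsd} for the first-order derivative, then by differentiating the resulting formula once more, using Lemma \ref{lem:schwarz} (which identifies $D_m(D_m U(\cdot,y))(m,y')$ with $D^2_{mm}U(m,y,y')$) for the $m$-dependence and an ordinary chain/total-derivative argument for the diagonal term. Your approach has the modest advantage of never needing the smooth vector field, and hence never needing the distinct-coordinates restriction or the subsequent continuity-extension step: perturbing a single coordinate alters the empirical measure by a controlled Dirac difference regardless of coincidences among the other coordinates. The paper's approach is more compact (one Taylor expansion produces the full Hessian at once and reuses Appendix material of independent interest), at the cost of the density argument. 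You correctly flag the one genuine technical point in your route, namely that the measure-derivative argument applied to the $\R^d$-valued map $m\mapsto D_m U(t,x_i,m,x_j)$ must be checked componentwise and that Lemma \ref{lem:schwarz} must be invoked with the correct bookkeeping of the two $y$-variables; this is indeed where all the work lies, and your sketch handles it in the right way.
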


\begin{Remark}{\rm  If we only assume that $U$ has a first order derivative with respect to $m$, one can show that, for any $N\geq 2$, $i\in \{1, \dots, N\}$, $u^{N,i}$ is of class $\cC^1$ in all
the variables, with 
$$
D_{x_j} u^{N,i}(t,{\bx})= \frac{1}{N-1} D_mU(t,x_i,m^{N,i}_{\bx},x_j)\qquad \forall j\neq i,
$$
with a globally Lipschitz continuous space derivative.  The proof is the same except that one uses Proposition \ref{prop:diffdeltaDm} instead of Proposition \ref{prop:diffdeltaDm2}. 
}\end{Remark}

\begin{proof} For ${\bx}=(x_j)_{j \in \{1,\dots,N\}}$ such that $x_j\neq x_k$ for any $j\neq k$, let $\ep = \min_{j\neq k}|x_j-x_k|$. For ${\boldsymbol v}=(v_j)\in (\R^d)^N$ with $v_i=0$
(the value of $i \in \{1,\dots,N\}$ being fixed), we consider a smooth vector field $\phi$ such that 
$$
\phi(x)= v_j \qquad {\rm if } \; x\in B(x_j,\ep/4),
$$
where $B(x_{j},\ep/4)$ is the ball of center $x_{j}$ and of radius $\ep/4$. 
Then, in view of our assumptions \eqref{Cond:Sol} and \eqref{Cond:Sol2} on $U$, Propositions \ref{prop:diffdeltaDm2} and \ref{prop:diffdeltaDm2bis} in Appendix imply that 
\begin{equation*}
\begin{split}
&\biggl| U\bigl(t,x_i,(id+\phi)\sharp m^{N,i}_{\bx}\bigr)-
U\bigl(t,x_i,m^{N,i}_{\bx}\bigr)-\inte  D_mU\bigl(t,x_i,m^{N,i}_{\bx},y\bigr)\cdot \phi(y)\ dm^{N,i}_{\bx}(y) 
\\
&\hspace{15pt} - \frac12 \inte  D_y\bigl[D_mU\bigr]\bigl(t,x_i,m^{N,i}_{\bx},y\bigr)
\phi(y)\cdot\phi(y)\ dm^{N,i}_{\bx}(y)
\\
&\hspace{15pt} - \frac12 \inte\inte D^2_{mm}U\bigl(t,x_i,m^{N,i}_{\bx},y,y'\bigr)\phi(y)\cdot\phi(y')\ dm^{N,i}_{\bx}(y) dm^{N,i}_{\bx}(y')\biggr|\; \leq \;  \|\phi\|_{L^3_{m^{N,i}_{\bx}}}^2
\omega(\|\phi\|_{L^3_{m^{N,i}_{\bx}}}),
\end{split}
\end{equation*}
for some modulus $\omega$ such that $\omega(s)\to 0$ as $s\to0^+$. Therefore, 
\begin{equation*}
\begin{split}
&u^{N,i}(t,{\bx}+{\boldsymbol v})- u^{N,i}(t,\bx) 
\\
&= U\bigl((id+\phi) \sharp  m^{N,i}_{\bx}\bigr) - U(m^{N,i}_{\bx}) 
\\
&=   \inte  D_mU \bigl(t,x_{i},m^{N,i}_{\bx},y \bigr)\cdot  \phi(y)\ dm^{N,i}_{\bx}(y)
+ \frac12\inte  D_y\bigl[D_mU\bigr]\bigl(t,x_{i},m^{N,i}_{\bx},y\bigr)
\phi(y)\cdot \phi(y) dm^{N,i}_{\bx}(y)
\\
&\hspace{15pt} +\frac12 \inte\inte   D_{mm}^2U\bigl(t,x_{i},m^{N,i}_{\bx},y,z
\bigr) \phi(y)\cdot  \phi(z) dm^{N,i}_{\bx}(y) dm^{N,i}_{\bx}(z)
+ \|\phi\|_{L^3(m^{N,i}_{\bx})}^2\omega
\bigl(\|\phi\|_{L^3(m^{N,i}_{\bx})}\bigr)
\\
&= \frac{1}{N-1} \sum_{j\neq i}  D_mU\bigl(t,x_{i},m^{N,i}_{\bx},x_j\bigr)\cdot  v_j 
+\frac{1}{2(N-1)} \sum_{j\neq i}  D_y\bigl[D_mU\bigr]
\bigl(t,x_{i},m^{N,i}_{\bx},x_j \bigr) v_j\cdot  v_j 
\\
&\hspace{15pt}+\frac{1}{2(N-1)^2} \sum_{j,k\neq i}  D^2_{mm}U
\bigl(t,x_{i},m^{N,i}_{\bx},x_j,x_k \bigr)v_j\cdot  v_k + |{\boldsymbol v}|^2\omega(|{\boldsymbol v}|) .
\end{split}
\end{equation*}
This shows that $u^{N,i}$ has a  second order expansion at ${\bx}$ with respect to the variables $(x_j)_{j\neq i}$ and that
\begin{align*}
&D_{x_j}  u^{N,i}(t,{\bx})= \frac{1}{N-1} D_mU\bigl(t,x_i,m^{N,i}_{\bx},x_j\bigr) \qquad &&(j\neq i),
\\
&D^2_{x_j,x_j}  u^{N,i}(t,{\bx})= \frac{1}{N-1} D_y\bigl[D_mU\bigr]
\bigl(t,x_{i},m^{N,i}_{\bx},x_j \bigr)
\\
&\hspace{90pt}+\frac{1}{(N-1)^2} D^2_{mm}U
\bigl(t,x_{i},m^{N,i}_{\bx},x_j,x_j\bigr)  \qquad &&(j\neq i)
\\
\textrm{\rm while, \ if}\  j\neq k,
\quad 
&D^2_{x_j,x_k}  u^{N,i}(t,{\bx})= \frac{1}{(N-1)^2} D^2_{mm}U
\bigl(t,x_{i},m^{N,i}_{\bx},x_j,x_k\bigr) \qquad &&(i,j,k \ \textrm{\rm distinct}).
\end{align*}
So far we have proved the existence of  first and second order space derivatives of $U$ in the open subset of $[0,T]\times (\T^d)^N$ consisting in the points $(t,\bx) =(t,x_1,\cdots x_N)$ such that $x_i\neq x_j$ for any $i\neq j$. As $D_mU$, $D_y\left[D_mU\right]$ and $D^2_{mm}U$ are continuous, these first and second order derivatives can be continuously extended to the whole space $[0,T]\times (\T^d)^N$, and therefore $u^{N,i}$ is $\cC^2$ with respect to the space variables in $[0,T]\times\T^{Nd}$.
\end{proof}

We now show that $(u^{N,i})_{i \in \{1,\dots,N\}}$ is ``almost" a solution to the Nash system \eqref{Nash0}:   

\begin{Proposition}\label{Prop:equNi} One has, for any $i\in \{1, \dots, N\}$, 
\be\label{eq:uNi}
\left\{\begin{array}{l}
\ds  - \partial_t u^{N,i}   - \sum_j \Delta_{x_j} u^{N,i}  - \beta  \sum_{j,k} {\rm Tr} D^2_{x_j,x_k} u^{N,i} +H(x_i,D_{x_i}u^{N,i}) \\
\qquad \ds  +\sum_{j\neq i}  D_{x_j}u^{N,i}(t,{\bx})\cdot  D_pH 
\bigl(x_j,D_{x_j}u^{N,j}(t,{\bx}) \bigr) 
=F(x_i,m^{N,i}_{\bx}) +r^{N,i}(t,{\bx}) 
\\
\qquad \qquad \qquad \qquad \qquad  \ds  \qquad 
\hspace{4pt}
{\rm in}\; (0,T)\times \T^{Nd},
\vspace{2pt}
\\
u^{N,i}(T,{\bx})= G(x_i,m^{N,i}_{\bx}) \qquad  {\rm in}\;  \T^{Nd},
\end{array}\right.
 \ee
where $r^{N,i}\in \cC^0([0,T]\times \T^d)$ with
$$
\|r^{N,i}\|_\infty\leq \frac{C}{N}.
$$
\end{Proposition}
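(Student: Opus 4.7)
The plan is to substitute $u^{N,i}(t,\bx)=U(t,x_{i},m^{N,i}_{\bx})$ into the left-hand side of \eqref{eq:uNi}, express every derivative through Proposition \ref{uNC2}, recognise the resulting expression as the master equation \eqref{MasterCN} evaluated at $(t,x_{i},m^{N,i}_{\bx})$, and track the leftover terms to check that they are $O(1/N)$ in sup norm. The terminal condition is immediate, since $u^{N,i}(T,\bx)=U(T,x_{i},m^{N,i}_{\bx})=G(x_{i},m^{N,i}_{\bx})$, and the time derivative is simply $\partial_{t}u^{N,i}(t,\bx)=\partial_{t}U(t,x_{i},m^{N,i}_{\bx})$.

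First I would deal with the two second-order sums. The Laplacian $-\sum_{j}\Delta_{x_{j}}u^{N,i}$ splits as $-\Delta_{x}U$ (the term $j=i$) plus, via Proposition \ref{uNC2}, $-\int_{\T^d}\mathrm{div}_{y}[D_{m}U]\,dm^{N,i}_{\bx}(y)$ and an extra $\tfrac{1}{N-1}\int_{\T^d}\mathrm{Tr}[D^{2}_{mm}U](t,x_{i},m^{N,i}_{\bx},y,y)\,dm^{N,i}_{\bx}(y)$ coming from the diagonal of $D^{2}_{mm}U$, which is bounded by $C/N$ thanks to the $\mathcal{C}^{0}$ bound on $D^{2}_{mm}U$ from \eqref{Cond:Sol}. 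The common-noise sum $-\beta\sum_{j,k}\mathrm{Tr}\,D^{2}_{x_{j}x_{k}}u^{N,i}$ expands, using Proposition \ref{uNC2} and regrouping the $j\neq k$ and $j=k$ pieces, exactly into the four terms $-\beta\Delta_{x}U-2\beta\int\mathrm{div}_{x}[D_{m}U]dm^{N,i}_{\bx}-\beta\int\mathrm{div}_{y}[D_{m}U]dm^{N,i}_{\bx}-\beta\int\!\int\mathrm{Tr}[D^{2}_{mm}U]dm^{N,i}_{\bx}\otimes dm^{N,i}_{\bx}$; here the diagonal $j=k$ and off-diagonal $j\neq k$ contributions combine neatly into the full double integral against $m^{N,i}_{\bx}\otimes m^{N,i}_{\bx}$, so this line contributes \emph{no} residual. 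The Hamiltonian term $H(x_{i},D_{x_{i}}u^{N,i})=H(x_{i},D_{x}U(t,x_{i},m^{N,i}_{\bx}))$ and the source $F(x_{i},m^{N,i}_{\bx})$ match verbatim.

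The delicate point, and the main obstacle, is the nonlinear coupling $\sum_{j\neq i}D_{p}H(x_{j},D_{x_{j}}u^{N,j})\cdot D_{x_{j}}u^{N,i}$. Using Proposition \ref{uNC2} this equals
\begin{equation*}
\frac{1}{N-1}\sum_{j\neq i}D_{m}U(t,x_{i},m^{N,i}_{\bx},x_{j})\cdot D_{p}H\bigl(x_{j},D_{x}U(t,x_{j},m^{N,j}_{\bx})\bigr),
\end{equation*}
whereas the master equation, when evaluated at $m=m^{N,i}_{\bx}$, produces the same expression with $m^{N,i}_{\bx}$ in place of $m^{N,j}_{\bx}$ inside the inner $D_{x}U$. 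The crucial observation is that $m^{N,i}_{\bx}-m^{N,j}_{\bx}=\tfrac{1}{N-1}(\delta_{x_{j}}-\delta_{x_{i}})$, whence $\dk(m^{N,i}_{\bx},m^{N,j}_{\bx})\leq 2/(N-1)$. Since $H$ is globally Lipschitz with a Lipschitz $D_{p}H$ by \eqref{HypD2H}, and $D_{x}U$ is Lipschitz in $m$ with respect to $\dk$ uniformly in $(t,x)$ (this follows from the regularity granted by Theorem \ref{theo:2nd-order:master:equation}, in particular the bound on $D_{x}[\delta U/\delta m]$ in \eqref{Cond:Sol}), each summand differs by at most $C/N$; the factor $1/(N-1)$ in front then yields a total discrepancy of order $1/N$. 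The $\mathcal{C}^{0}$ bound on $D_{m}U$ from \eqref{Cond:Sol} controls the prefactor $D_{m}U(t,x_{i},m^{N,i}_{\bx},x_{j})$ uniformly. Collecting the diagonal remainder from the Laplacian and this coupling remainder, and using that the master equation is satisfied pointwise at $(t,x_{i},m^{N,i}_{\bx})$, produces \eqref{eq:uNi} with $\|r^{N,i}\|_{\infty}\leq C/N$, where $C$ depends only on the Lipschitz constants of $H$, $D_{p}H$ and on $C_{0}$ in \eqref{Cond:Sol}, independently of $N$ and $i$. Continuity of $r^{N,i}$ follows from the joint continuity of $U$, $D_{x}U$, $D_{m}U$, $D_{y}D_{m}U$, $D_{x}D_{m}U$ and $D^{2}_{mm}U$ guaranteed by Definition \ref{def:master:eq:2nd:order}.
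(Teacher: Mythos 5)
Your proof is correct and follows essentially the same approach as the paper's: substitute $u^{N,i}=U(t,x_i,m^{N,i}_{\bx})$, convert derivatives via Proposition \ref{uNC2}, recognise the master equation at $(t,x_i,m^{N,i}_{\bx})$, and bound the two residuals (the $\frac{1}{N-1}\int\mathrm{Tr}[D^2_{mm}U](y,y)\,dm^{N,i}_{\bx}(y)$ term from the diagonal of the Laplacian, and the $O(1/N)$ discrepancy from replacing $m^{N,j}_{\bx}$ by $m^{N,i}_{\bx}$ in the coupling, controlled via Lipschitz continuity of $D_xU$ in $m$ and of $D_pH$ in $p$). The one thing you spell out more explicitly than the paper is that the diagonal contribution to the $\beta$-sum cancels exactly against the $j\neq k$ piece, leaving only the residual from the pure Laplacian; this is a useful observation, and correct.
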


\begin{Remark}{\rm When $\beta=0$, we can require $U$ to have only a first order derivative with respect to the measure, but in this case equation \eqref{eq:uNi} only holds  a.e. with $r^{N,i}\in L^\infty$ still satisfying $\ds \|r^{N,i}\|_\infty\leq \frac{C}{N}$. 
}\end{Remark}

\begin{proof}
As $U$ solves \eqref{MasterCN}, one has at a point $(t,x_i,m^{N,i}_{\bx})$: 
\begin{align*} 
&- \partial_t U  - (1+\beta)\Delta_x U +H(x_i,D_xU)  -(1+\beta)\inte \dive_y \bigl[D_m U\bigr]\bigl(t,x_i,m^{N,i}_{\bx},y \bigr) d m^{N,i}_{\bx}(y) 
\\
&\hspace{15pt}+ \inte  D_m U\bigl(t,x_i,m^{N,i}_{\bx},y\bigr)\cdot  
D_pH\bigl(y,D_xU(t,y,m^{N,i}_{\bx}) \bigr)
dm^{N,i}_{\bx}(y) 
\\
&\hspace{15pt} -2 \beta  \inte  \dive_x\bigl[D_mU\bigr]
\bigl(t,x_i,m^{N,i}_{\bx},y\bigr)dm^{N,i}_{\bx}(y) 
\\
&\hspace{15pt}
 -\beta \inte {\rm Tr} D^2_{mm}U\bigl(t,x_i,m^{N,i}_{\bx},y,z\bigr) 
 dm^{N,i}_{\bx}(y)dm^{N,i}_{\bx}(z) =F\bigl(x_i,m^{N,i}_{\bx}\bigr). 
\end{align*}
So $u^{N,i}$ satisfies:  
\begin{align*}
&- \partial_t u^{N,i}  -(1+\beta) \Delta_{x_i} u^{N,i} +H(x_i,D_{x_i}u^{N,i})  -  (1+\beta)\inte \dive_y\bigl[ D_m U\bigr]\bigl(t,x_i,m^{N,i}_{\bx},y\bigr) d m^{N,i}_{\bx}(y)
  \\
&\hspace{30pt} +\frac{1}{N-1}\sum_{j\neq i}  D_m U\bigl(t,x_i,m^{N,i}_{\bx},x_j\bigr) \cdot  D_pH \bigl(x_j,D_xU(t,x_j,m^{N,i}_{\bx})\bigr)
\\
&\hspace{15pt}
-2 \beta  \inte  \dive_x\bigl[D_mU\bigr]\bigl(t,x_i,m^{N,i}_{\bx},y\bigr)dm^{N,i}_{\bx}(y) 
\\
&\hspace{15pt}  -\beta \inte {\rm Tr} D^2_{mm}U\bigl(t,x_i,m^{N,i}_{\bx},y,z\bigr) dm^{N,i}_{\bx}(y)dm^{N,i}_{\bx}(z) =F(x_i,m^{N,i}_{\bx}) .
\end{align*}
 Note that, by Proposition \ref{uNC2}, 
 $$
\frac{1}{N-1} D_m U \bigl(t,x_i,m^{N,i}_{\bx},x_j\bigr) = D_{x_j}u^{N,i}(t,\bx).
$$
In particular, 
\be\label{e.boundDxjui}
\| D_{x_j}u^{N,i}\|_\infty\leq \frac{C}{N}.
\ee
By the Lipschitz continuity of $D_xU$ with respect to $m$, we have 
$$
\left|D_xU(t,x_j,m^{N,i}_{\bx})- D_xU(t,x_j,m^{N,j}_{\bx})\right| \leq C \dk(m^{N,i}_{\bx}, m^{N,j}_{\bx})\leq \frac{C}{N-1}, 
$$
so that,
by Lipschitz continuity of $D_pH$, 
\begin{equation}
\label{eq:quelles:H:utiles}
\bigl| D_pH \bigl(x_j,D_xU(t,x_j,m^{N,i}_{\bx})\bigr)- D_pH 
\bigl(x_j,D_{x_j}u^{N,j}(t,{\bx})\bigr)\bigr|\leq  \frac{C}{N}.
\end{equation}
Collecting the above relations, we obtain 
\begin{align*}
&\frac{1}{N-1}\sum_{j\neq i}  D_m U\bigl(t,x_i,m^{N,i}_{\bx},x_j\bigr) \cdot  D_pH \bigl(x_j,D_xU(t,x_j,m^{N,i}_{\bx})\bigr) 
\\
&\hspace{15pt} = \sum_{j\neq i}  D_{x_j}u^{N,i}(t,{\bx})\cdot  D_pH \bigl(x_j,D_xU(t,x_j,m^{N,i}_{\bx})\bigr) 
\\
&\hspace{15pt} =
\sum_{j\neq i}  D_{x_j}u^{N,i}(t,{\bx})\cdot  D_pH \bigl(x_j,D_{x_j}u^{N,j}(t,{\bx})\bigr) + O(1/N),
\end{align*}
where we used \eqref{e.boundDxjui} in the last inequality. 
On the other hand, 
\begin{align*}
\sum_{j=1}^N \Delta_{x_j} u^{N,i}   + \beta  \sum_{j,k=1}^N {\rm Tr} D^2_{x_j,x_k} u^{N,i}
&= 
(1+\beta)\Delta_{x_i} u^{N,i} + (1+\beta) \sum_{j\neq i} \Delta_{x_j} u^{N,i} 
\\
&\hspace{15pt} + 2\beta \sum_{j\neq i} {\rm Tr} D^2_{x_i,x_j} u^{N,i} +
\beta \sum_{j\neq k\neq i } {\rm Tr} D^2_{x_j,x_k} u^{N,i},
\end{align*}
where, using Proposition \ref{uNC2}, 
\begin{align*}
\sum_{j\neq i} \Delta_{x_j} u^{N,i}(t,\bx)  &= 
 \inte \dive_y\bigl[D_mU\bigr]\bigl(t,x_i,m^{N,i}_{\bx},y\bigr)dm^{N,i}_{\bx}(y) 
 \\
&\hspace{15pt} +\frac{1}{N-1} \inte {\rm Tr} \bigl[D^2_{mm}U\bigr]
\bigl(t,x_i,m^{N,i}_{\bx},y,y\bigr) dm^{N,i}_{\bx}(y)
\\
\sum_{j\neq i} {\rm Tr} D^2_{x_i,x_j} u^{N,i}(t,\bx) &= \inte \dive_x\bigl[D_mU\bigr]\bigl(t,x_i,m^{N,i}_{\bx},y\bigr)dm^{N,i}_{\bx}(y)
\\
\sum_{j\neq k\neq i } {\rm Tr} D^2_{x_j,x_k} u^{N,i}(t,\bx) 
&=\inte\inte {\rm Tr} \bigl[D^2_{mm}U\bigr]\bigl(t,x_i,m^{N,i}_{\bx},y,z
\bigr)dm^{N,i}_{\bx}(y)dm^{N,i}_{\bx}(z).
\end{align*}
Therefore 
\begin{align*}
&- \partial_t u^{N,i}(t,\bx) - \sum_j \Delta_{x_j} u^{N,i}(t,\bx)
  - \beta  \sum_{j,k} {\rm Tr} D^2_{x_j,x_k} u^{N,i}(t,\bx)  
  + H\bigl(x_i,D_{x_i}u^{N,i}(t,\bx) \bigr)    
  \\
&\hspace{15pt} + \sum_{j\neq i}  D_{x_j}u^{N,i}(t,{\bx})\cdot  D_pH 
\bigl(x_j,D_{x_j}u^{N,j}(t,{\bx})\bigr)  
\\
&\hspace{15pt} + \frac{1}{N-1} \inte {\rm Tr} D^2_{mm}U
\bigl(t,x_i,m^{N,i}_{\bx},y,y\bigr) dm^{N,i}_{\bx}(y) =F(x_i,m^{N,i}_{\bx})
+ 
O(1/N),
\end{align*}
which shows the result.
\end{proof}

\begin{Remark}
\label{rem:quelles:H:utiles}
The reader may observe that, in addition 
to the existence of a classical solution
$U$ (to the master equation) 
satisfying the conclusion 
of Theorem \ref{theo:2nd-order:master:equation}, 
only the global Lipschitz property of $D_{p} H$
is used in the proof, see 
\eqref{eq:quelles:H:utiles}. 
\end{Remark}

\subsection{Convergence}
\label{subsubse:partie3:convergence}

We now turn to the proof of Theorem \ref{thm:mainCV}. For this, we consider the solution $(v^{N,i})_{i \in \{1,\dots,N\}}$ of the Nash system \eqref{Nash0}.   
By uniqueness of the solution, the $(v^{N,i})_{i \in \{1,\dots,N\}}$ must be symmetrical. 
By symmetrical, we mean that, for any ${\bx}=(x_l)_{l \in \{1,\dots,N\}}\in \T^{Nd}$ and for any indices $j\neq k$, if $\tilde {\bx}=(\tilde x_l)_{l \in \{1,\dots,N\}}$ is the $N$-tuple obtained from ${\bx}$ by permuting the $j$ and $k$ vectors (i.e., $\tilde x_l=x_l$ for $l
\not \in \{ j,k\}$, $\tilde x_j=x_k$, $\tilde x_k= x_j$), then 
$$
v^{N,i}(t, \tilde {\bx})= v^{N,i}(t, {\bx}) \; {\rm if}\; i\not \in \{ j,k\}, \;{\rm while}\;  v^{N,i}(t, \tilde {\bx})= v^{N,k}(t,{\bx}) \; {\rm if}\; i=j,
$$
which may be reformulated as follows: 
There exists a function $V^N : \T^d \times [\T^d]^{N-1} \rightarrow 
\R$ such that, for any $x \in \T^d$, the function $[\T^d]^{N-1}
\ni (y_{1},\dots,y_{N-1}) \mapsto V^N(x,(y_{1},\dots,y_{N-1}))$
is invariant under permutation, and 
\begin{equation*}
\forall i \in \{1,\dots,N\}, \
 \bx \in [\T^d]^N, 
\quad v^{N,i}(t,\bx) = V^N\bigl(x_{i},(x_{1},\dots,x_{i-1},x_{i+1},\dots,x_{N})
\bigr). 
\end{equation*}
Note that the $(u^{N,i})_{i \in \{1,\dots,N\}}$ are also symmetrical. 

The proof of Theorem \ref{thm:mainCV} consists in comparing ``optimal trajectories" for $v^{N,i}$ and for $u^{N,i}$, for any $i \in \{1,\dots,N\}$. For this, let us fix $t_0\in [0,T)$, $m_0\in \Pw$ and let $(Z_i)_{i \in \{1,\dots,N\}}$ be an i.i.d family of $N$ random variables of law $m_0$. We set $\bZ=(Z_i)_{i \in \{1,\dots,N\}}$.  Let also $((B_{t}^{i})_{t \in [ 0,T]})_{i \in \{1,\dots,N\}}$ be a family of $N$ independent $d$-dimensional Brownian Motions which is also independent of $(Z_i)_{i \in \{1,\dots,N\}}$ and let $W$ be a $d$-dimensional Brownian Motion independent of the $((B^i_{t})_{t \in [ 0,T]})_{i \in \{1,\dots,N\}}$ and $(Z_i)_{i \in \{1,\dots,N\}}$. 
We consider the systems of SDEs with variables $(\bX_{t}=(X_{i,t})_{i\in \{1,\dots,N\}})_{t \in [0,T]}$ and $(\bY_{t}=(Y_{i,t})_{i\in \{1,\dots,N\}})_{t \in [0,T]}$(the SDEs being set on $\R^d$ 
with periodic coefficients): 
\be\label{defxit}
\left\{\begin{array}{l}
dX_{i,t}= -D_pH\bigl(X_{i,t}, D_{x_i}u^{N,i}(t, \bX_t)\bigr)dt +\sqrt{2} dB^{i}_t+\sqrt{2\beta} dW_t\qquad t\in [t_0,T]\\
X_{i,t_0}= Z_i,
\end{array}\right.
\ee
and 
\be\label{defyit}
\left\{\begin{array}{l}
dY_{i,t}= -D_pH\bigl(Y_{i,t}, D_{x_i}v^{N,i}(t, \bY_t)\bigr)dt +\sqrt{2} dB^{i}_t+\sqrt{2\beta} dW_t\qquad t\in [t_0,T]\\
Y_{i,t_0}= Z_i.
\end{array}\right.
\ee
Note that, since the $(u^{N,i})_{i \in \{1,\dots,N\}}$ are symmetrical, 
the processes $((X_{i,t})_{t\in [t_0,T]})_{i \in \{1,\dots,N\}}$ are exchangeable. The same holds for the $((Y_{i,t})_{t\in [t_0,T]})_{i \in \{1,\dots,N\}}$
and, actually, the 
$N$ $\R^{2d}$-valued 
processes
$((X_{i,t},Y_{i,t})_{t\in [t_0,T]})_{i \in \{1,\dots,N\}}$ are also exchangeable.
\\

%

\begin{Theorem}\label{thm:Cvyx} 
Under the standing assumptions, 
we have, for any $i\in \{1, \dots, N\}$,   
\begin{align}
&\E\bigl[\sup_{t\in [t_0,T]} |Y_{i,t}-X_{i,t}|\bigr]\leq \frac{C}{{N}},
\label{estixi-yi}
\\
&\E\biggl[\sup_{t\in [t_0,T]}\left|u^{N,i}(t, \bY_t)-v^{N,i}(t, \bY_t)\right|^2 \nonumber
\\
&\hspace{30pt} +\int_{t_0}^T  |D_{x_i}v^{N,i}(t, \bY_t)- D_{x_i}u^{N,i}(t,\bY_t)|^2dt \biggr]  
 \leq CN^{-2},
 \label{CvyxIneq2}
\end{align}
and,
$\P$ almost surely,
\begin{equation}
\frac{1}{N}
\sum_{i=1}^N
\vert 
v^{N,i}(t_{0},\bZ) 
- 
u^{N,i}(t_{0},\bZ) 
\vert
  \leq CN^{-1}, \label{estiAE}
\end{equation}
where $C$ is a (deterministic) constant that does not depend on $t_0$, $m_0$ and $N$.
\end{Theorem}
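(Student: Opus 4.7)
My plan is to set up a BSDE-type identity for the difference $w^{N,i}:=u^{N,i}-v^{N,i}$ along the Nash trajectory $\bY$ defined by \eqref{defyit}. Subtracting the exact Nash system satisfied by $v^{N,i}$ from the approximate system satisfied by $u^{N,i}$ (Proposition \ref{Prop:equNi}) and then applying It\^o's formula to $w^{N,i}(t,\bY_t)$ gives, using the terminal condition $w^{N,i}(T,\bY_T)=0$,
\[
dw^{N,i}(t,\bY_t)=\bigl[A^i_t+B^i_t-r^{N,i}(t,\bY_t)\bigr]\,dt+dM^i_t,
\]
where, after combining the Hamiltonian term of the PDE with the drift $-D_pH(Y_{i,t},D_{x_i}v^{N,i})$ coming from the SDE,
$A^i_t=H(Y_{i,t},D_{x_i}u^{N,i})-H(Y_{i,t},D_{x_i}v^{N,i})-(D_{x_i}u^{N,i}-D_{x_i}v^{N,i})\cdot D_pH(Y_{i,t},D_{x_i}v^{N,i})$
is nonnegative and satisfies $A^i_t\geq c|D_{x_i}w^{N,i}(t,\bY_t)|^2$ by uniform convexity of $H$ in $p$ (the range of the relevant gradients is bounded via the a priori bounds on $D_xU$); the cross coupling $B^i_t=\sum_{j\neq i}D_{x_j}u^{N,i}\cdot[D_pH(Y_j,D_{x_j}u^{N,j})-D_pH(Y_j,D_{x_j}v^{N,j})]$ only involves the off-diagonal derivatives $D_{x_j}u^{N,i}$, which by Proposition \ref{uNC2} are small and satisfy the key bound $\sum_{j\neq i}|D_{x_j}u^{N,i}|^2\leq C/N$; finally, the martingale part $M^i$ has quadratic variation dominated by $\int\sum_j|D_{x_j}w^{N,i}|^2\,dt$ thanks to the independent idiosyncratic noises $B^j$.

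To prove the sharp estimate \eqref{CvyxIneq2}, I would apply It\^o's formula to $|w^{N,i}(t,\bY_t)|^2$, sum over $i$, and take conditional expectation given $\mathcal{F}_{t_0}$ to kill the stochastic integrals. Using $\sum_iA^i\geq c\sum_j|D_{x_j}w^{N,j}|^2$ combined with the dissipation produced by the Brownian part of the quadratic variation of $M^i$, the cross terms $\sum_i|B^i|$ are bounded via Cauchy--Schwarz on each factor $|B^i|\leq L(\sum_{j\neq i}|D_{x_j}u^{N,i}|^2)^{1/2}(\sum_{j\neq i}|D_{x_j}w^{N,j}|^2)^{1/2}$ (using the $C/N$ identity above) and absorbed into the coercive budget via Young's inequality, while $\sum_i|r^{N,i}|^2\leq C/N$. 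A Gr\"onwall-type argument yields the averaged bound $\E\sum_i[\sup_t|w^{N,i}(t,\bY_t)|^2+\int_{t_0}^T|D_{x_i}w^{N,i}|^2dt]\leq C/N$, and the exchangeability of the processes $((X_{i,\cdot},Y_{i,\cdot}))_i$ converts this into the per-particle bound \eqref{CvyxIneq2}.

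Estimate \eqref{estiAE} follows from \eqref{CvyxIneq2} pointwise: since $\bY_{t_0}=\bZ$, Cauchy--Schwarz in $i$ yields
\[
\Bigl(\sum_i|w^{N,i}(t_0,\bZ)|\Bigr)^2\leq N\sum_i|w^{N,i}(t_0,\bZ)|^2\leq N\cdot C/N=C,
\]
whence $\frac{1}{N}\sum_i|w^{N,i}(t_0,\bZ)|\leq C/N$ pathwise; the ``a.s.'' statement extends to every $\bx$ because the constant is independent of $m_0$, so one may specialize $m_0=\delta_{\bar x}$. Estimate \eqref{estixi-yi} is a Gr\"onwall argument comparing \eqref{defxit} and \eqref{defyit}: triangle inequality, Lipschitz continuity of $D_pH$, and the space Lipschitz bounds $|D^2_{x_ix_i}u^{N,i}|\leq C$ and $|D^2_{x_ix_j}u^{N,i}|\leq C/N$ for $j\neq i$ (from Proposition \ref{uNC2}) give, with $E^i_t:=|X_{i,t}-Y_{i,t}|$,
\[
E^i_t\leq C\int_{t_0}^tE^i_s\,ds+\frac{C}{N}\int_{t_0}^t\sum_{j\neq i}E^j_s\,ds+L\int_{t_0}^t|D_{x_i}w^{N,i}(s,\bY_s)|\,ds;
\]
summing over $i$, applying Gr\"onwall and then invoking \eqref{CvyxIneq2} through Cauchy--Schwarz and exchangeability gives the per-particle rate $\E\sup_tE^1_t\leq C/N$.

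The main technical obstacle is the sharp $L^2$ BSDE estimate used for \eqref{CvyxIneq2}. Each cross coupling $B^i_t$ is small of order $1/N$ per pair, but there are $O(N)$ such pairs, so a naive absorption into the coercive $A^i$ yields only an $O(1)$ bound for the averaged squared error, whereas the target scale is $|r^{N,i}|^2\sim 1/N^2$. The balance must be exactly right: the sharp identity $\sum_{j\neq i}|D_{x_j}u^{N,i}|^2\leq C/N$ from Proposition \ref{uNC2}, combined with the full coercive budget $\sum_iA^i\geq c\sum_j|D_{x_j}w^{N,j}|^2$ (which pools contributions across \emph{all} players) and the systematic use of exchangeability, are all needed to bring out the factor $1/N^2$ on the right-hand side of \eqref{CvyxIneq2}.
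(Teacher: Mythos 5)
Your overall plan follows the same route as the paper: It\^o along $\bY$ to compare $u^{N,i}$ and $v^{N,i}$, coercivity from the idiosyncratic quadratic variation, Young's inequality and Gronwall after conditioning on the initial data, exchangeability to pass from the averaged to the per-particle rate, and the same crucial bound $\sum_{j\neq i}|D_{x_j}u^{N,i}|^2\leq C/N$. The decomposition $A^i+B^i-r^{N,i}$ you obtain after combining the Hamiltonian term with the $j=i$ interaction term is exactly what the paper has (in expanded form in \eqref{rep:vNi}--\eqref{rep:uNi}), and your Cauchy--Schwarz reading of $B^i$, the cancellation of the $\sqrt{N}$ factor against the $O(N)$ number of cross pairs, the $\sum_i|r^{N,i}|^2\leq C/N$ budget and the exchangeability step are all correct.

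There is, however, one genuine gap: the coercivity claim $A^i_t\geq c|D_{x_i}w^{N,i}(t,\bY_t)|^2$, which you justify by ``uniform convexity of $H$ in $p$, the range of the relevant gradients being bounded via the a priori bounds on $D_xU$.'' The point $p$ at which you need $D^2_{pp}H$ to be uniformly positive ranges over the segment from $D_{x_i}v^{N,i}(t,\bY_t)$ to $D_{x_i}u^{N,i}(t,\bY_t)$. The a priori bounds on $D_xU$ control $D_{x_i}u^{N,i}$, but they say nothing about $D_{x_i}v^{N,i}$, and there is no a priori gradient bound for the Nash system that is uniform in $N$ --- indeed this lack of estimates on $v^{N,i}$ is precisely the difficulty the whole section is designed to circumvent. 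Since \eqref{HypD2H} only gives $D^2_{pp}H\geq C^{-1}I_d/(1+|p|)$, the lower bound on $A^i_t$ degenerates if $|D_{x_i}v^{N,i}|$ is allowed to be large, so the coercivity constant would become $N$-dependent. Moreover, even if $A^i_t\geq c|D_{x_i}w^{N,i}|^2$ held, in the It\^o expansion of $|w^{N,i}(t,\bY_t)|^2$ the term $A^i_t$ appears multiplied by $2w^{N,i}(t,\bY_t)$, which has no sign, so $A^i\geq 0$ (or even a quadratic lower bound) does not by itself produce dissipation in the $L^2$ estimate. The fix is simple and is exactly what the paper does: drop the convexity argument entirely and use instead the global Lipschitz bound $|A^i_t|\leq C|D_{x_i}w^{N,i}(t,\bY_t)|$ (available because $H$ and $D_pH$ are globally Lipschitz in $p$), pair with $2|w^{N,i}|$ by Young's inequality, and absorb the small quadratic piece into the coercive budget, which comes solely from $d\langle M^i\rangle\geq 2\sum_j|D_{x_j}w^{N,i}|^2\,dt$ (note this is a lower bound, not a domination from above). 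This is the sense in which the paper's Remark~\ref{rem:quelles:H:utiles:2} stresses that only the Lipschitz properties of $H$ and $D_pH$ enter the convergence proof.

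Two smaller points. (i) Your sentence ``Estimate \eqref{estiAE} follows from \eqref{CvyxIneq2} pointwise'' is misleading: \eqref{CvyxIneq2} is an unconditional expectation estimate, from which no pathwise bound can be read off. What delivers \eqref{estiAE} is the \emph{conditional} version you obtain before taking the final expectation (i.e., the estimate on $\sum_i\E^{\bZ}[|w^{N,i}(t,\bY_t)|^2]$, noting that at $t=t_0$ the quantity $w^{N,i}(t_0,\bZ)$ is $\bZ$-measurable); your Gronwall step conditioned on ${\mathcal F}_{t_0}$ does produce it, so the gap is one of phrasing rather than substance. (ii) Your suggested extension of the pathwise bound to every $\bx$ by taking $m_0=\delta_{\bar x}$ only covers the diagonal $\bx=(\bar x,\dots,\bar x)$, since the $Z_i$ would then all coincide; the paper instead takes $m_0$ uniform so that the support of $\bZ$ is all of $(\T^d)^N$, and then uses continuity. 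That step belongs to the proof of Theorem~\ref{thm:mainCV} rather than of the present statement, so it does not affect the correctness of your proof of \eqref{estiAE} itself.
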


\begin{proof}[Proof of Theorem \ref{thm:Cvyx}.] 
\textit{First step.}
We start with the proof of 
\eqref{CvyxIneq2}. 
For simplicity, we work with $t_0=0$. Let us first introduce new notations: 
\begin{equation*} 
\begin{split}
&U^{N,i}_{t} = u^{N,i}(t,\bY_{t}), \quad 
V^{N,i}_{t} = v^{N,i}(t,\bY_{t}), 
\\
&DU^{N,i,j}_{t} = D_{x_{j}} u^{N,i}(t,\bY_{t}), 
\quad
DV^{N,i,j}_{t} = D_{x_{j}} v^{N,i}(t,\bY_{t}), 
\quad t \in [0,T].
\end{split}
\end{equation*}
Using equation \eqref{Nash0} satisfied by the 
$(v^{N,i})_{i \in \{1,\dots,N\}}$, 
we deduce from It\^o's formula that, for any $i \in \{1,\dots,N\}$,
\begin{equation}
\label{rep:vNi}
\begin{split}
d V^{N,i}_{t}
&= 
\Bigl[
\partial_{t} v^{N,i}(t,\bY_{t})
- \sum_{j}
D_{x_{j}} v^{N,i}(t,\bY_{t}) 
\cdot
D_pH \bigl(Y_{j,t}, D_{x_j}v^{N,i}(t, \bY_t) \bigr)
\\
&\hspace{15pt}
+ \sum_{j} \Delta_{x_{j}}
v^{N,i}(t,\bY_{t}) 
+ \beta \sum_{j,k} \text{Tr} D^2_{x_{j},x_{k}}
v^{N,i}(t,\bY_{t}) 
\Bigr] dt
\\
&\hspace{15pt} + 
\sqrt{2}
\sum_{j} D_{x_{j}} v^{N,i}(t,\bY_{t})
dB^j_{t}
+ 
\sqrt{2 \beta}
\sum_{j} D_{x_{j}} v^{N,i}(t,\bY_{t})
dW_{t}
\\
&= 
\Bigl[ H \bigl( Y_{i,t},D_{x_i} v^{N,i}(t, \bY_t) \bigr)
- D_{x_{i}} v^{N,i}(t,\bY_{t}) 
\cdot
D_pH\bigl(Y_{i,t}, 
D_{x_i} v^{N,i}(t, \bY_t) \bigr)
\\
&\hspace{150pt}
- F \bigl( Y_{i,t},m^{N,i}_{\bY_t})
\Bigr] dt
\\
&\hspace{15pt} + 
\sqrt{2}
\sum_{j} D_{x_{j}} v^{N,i}(t,\bY_{t})
dB^j_{t}
+ 
\sqrt{2 \beta}
\sum_{j} D_{x_{j}} v^{N,i}(t,\bY_{t})
dW_{t}.
\end{split}
\end{equation}
Similarly, as $(u^{N,i})_{i \in \{1,\dots,N\}}$ satisfies \eqref{eq:uNi}, we have by standard computation
\begin{equation}
\label{rep:uNi}
\begin{split}
&d U^{N,i}_{t}
\\
&= 
\Bigl[ H \bigl( Y_{i,t},D_{x_i} u^{N,i}(t, \bY_t) \bigr)
- D_{x_{i}} u^{N,i}(t,\bY_{t}) 
\cdot
D_pH\bigl(Y_{i,t}, 
D_{x_i} u^{N,i}(t, \bY_t) \bigr)
\\
&\hspace{150pt}
- F \bigl( Y_{i,t},m^{N,i}_{\bY_t})
- r^{N,i}(t,\bY_{t})
\Bigr] dt
\\
&\hspace{5pt}
-
\sum_{j }
D_{x_{j}} u^{N,i}(t,\bY_{t}) 
\cdot
\Bigl( 
D_pH \bigl(Y_{j,t}, D_{x_j}v^{N,j}(t, \bY_t) \bigr)
-
D_pH \bigl(Y_{j,t}, D_{x_j}u^{N,j}(t, \bY_t) \bigr)
\Bigr) dt
\\
&\hspace{5pt} + 
\sqrt{2}
\sum_{j} D_{x_{j}} u^{N,i}(t,\bY_{t})
\cdot
dB^j_{t}
+ 
\sqrt{2 \beta}
\sum_{j} D_{x_{j}} u^{N,i}(t,\bY_{t})
\cdot
dW_{t}.
\end{split}
\end{equation}
Make the difference between 
\eqref{rep:vNi}
and \eqref{rep:uNi}, take the square and apply It\^o's formula
again:
\begin{equation*}
\begin{split}
&d 
\bigl[ 
U^{N,i}_{t}
- 
V^{N,i}_{t}
\bigr]^2
\\
&= \biggl[ 2 \bigl( U^{N,i}_{t}
- 
V^{N,i}_{t}
\bigr) 
\cdot \Bigl(  
 H \bigl( Y_{i,t},DU^{N,i,i}_{t} \bigr)
 - H \bigl( Y_{i,t},DV^{N,i,i}_{t} \bigr)
 \Bigr)
\\ 
&\hspace{30pt} 
- 2 \bigl( U^{N,i}_{t}
- 
V^{N,i}_{t}
\bigr) 
\cdot \Bigl(  
DU^{N,i,i}_{t}
\cdot
\bigl[
 D_{p} H \bigl( Y_{i,t},DU^{N,i,i}_{t} \bigr)
 -
D_{p} H \bigl( Y_{i,t},DV^{N,i,i}_{t}
\bigr) \bigr] \Bigr)
\\ 
&\hspace{30pt} 
- 2 \bigl( U^{N,i}_{t}
- 
V^{N,i}_{t}
\bigr) 
\cdot \Bigl(  
\bigl[
DU^{N,i,i}_{t}
-
DV^{N,i,i}_{t}
\bigr]
\cdot
 D_{p} H \bigl( Y_{i,t},DV^{N,i,i}_{t} \bigr)
 \Bigr)
 \\
 &\hspace{30pt}
 - 2
 \bigl( U^{N,i}_{t}
- 
V^{N,i}_{t}
\bigr)
r^{N,i}(t,\bY_{t})
 \biggr] dt
\\
&\hspace{15pt} 
- 2 \bigl( U^{N,i}_{t}
- 
V^{N,i}_{t}
\bigr) 
\sum_{j }
D U^{N,i,j}_{t} 
\cdot
\Bigl( 
D_pH \bigl(Y_{j,t}, DV^{N,j,j}_{t} \bigr)
-
D_pH \bigl(Y_{j,t}, DU^{N,j,j}_{t} \bigr)
\Bigr) dt
 \\
 &\hspace{15pt}
 + 
 \biggl[ 2
 \sum_{j} \vert 
DU^{N,i,j}_{t} 
-
 DV^{N,i,j}_{t}
 \vert^2 
 + 
 2 \beta 
\Bigl\vert
 \sum_{j} 
 \bigl( DU^{N,i,j}_{t}
 - DV^{N,i,j}_{t} \bigr)
\Bigr\vert^2
\biggr] dt 
 \\
 &\hspace{15pt}
+
\sqrt{2}
\sum_{j} 
\bigl(
DU^{N,i,j}_{t}
- DV^{N,i,j}_{t}
\bigr) 
\cdot
dB^j_{t}
+ 
\sqrt{2 \beta}
\sum_{j} 
\bigl(
DU^{N,i,j}_{t}
- DV^{N,i,j}_{t}
\bigr) 
\cdot
dW_{t}
\end{split}
\end{equation*}
Recall now that 
$H$ 
and $D_{p} H$
are Lipschitz continuous in the variable $p$. 
Recall also that 
$DU^{N,i,i}_{t}
= D_{x_{i}} U(t,Y_{i,t},m_{{\boldsymbol Y}_{t}}^{N,i})$
is bounded, independently of $i$, $N$ and $t$,
and that $DU^{N,i,j}_{t}
= D_{x_{j}} U(t,Y_{i,t},m_{\boldsymbol Y}^{N,i})$
is bounded by $C/N$ when $i \not =j$, for $C$ independent 
of $i$, $j$, $N$ and $t$.  
Recall
 finally
 from Proposition 
 \ref{Prop:equNi}
that $r^{N,i}$
is bounded by $C/N$. 
Integrating from $t$ to $T$ 
in the above formula and 
taking the conditional expectation 
given $\bZ$ (with the shorten notation 
$\E^{\bZ}[\cdot]
= \E[ \cdot \vert \bZ]$), we deduce: 
\begin{equation}
\label{eq:quelles:H:utiles:2}
\begin{split}
&{\mathbb E}^{\bZ}
\bigl[ 
\vert
U^{N,i}_{t}
- 
V^{N,i}_{t}
\vert^2
\bigr]
+ 
2  \sum_{j} \E^{\bZ}
 \biggl[ \int_{t}^T
 \vert 
DU^{N,i,j}_{s} 
-
 DV^{N,i,j}_{s}
 \vert^2 ds 
 \biggr]
\\
&\hspace{15pt} \leq {\mathbb E}^{\bZ}
\bigl[ 
\vert
U^{N,i}_{T}
- 
V^{N,i}_{T}
\vert^2
\bigr]
+ \frac{C}{N}
\int_{t}^T 
\E^{\bZ} 
\bigl[
\vert
U^{N,i}_{s}
- 
V^{N,i}_{s}
\vert
\bigr]
ds
\\
&\hspace{30pt}
+ C 
\int_{t}^T 
\E^{\bZ}
\Bigl[
\vert
U^{N,i}_{s}
- 
V^{N,i}_{s}
\vert
\cdot
\vert 
DU^{N,i,i}_{s} 
-
DV^{N,i,i}_{s}
\vert 
\Bigr]
ds
\\
&\hspace{30pt} + 
\frac{C}{N}
\sum_{j \not = i} 
\int_{t}^T 
\E^{\bZ}
\Bigl[
\vert
U^{N,i}_{s}
- 
V^{N,i}_{s}
\vert
\cdot
\vert 
DU^{N,j,j}_{s} 
-
DV^{N,j,j}_{s}
\vert 
\Bigr]
ds.
\end{split}
\end{equation}
Note that the boundary condition 
$U^{N,i}_{T}
- 
V^{N,i}_{T}$
is zero. 
By a standard convexity argument, we get
\begin{equation*}
\begin{split}
&{\mathbb E}^{\bZ}
\bigl[ 
\vert
U^{N,i}_{t}
- 
V^{N,i}_{t}
\vert^2
\bigr]
+  \E^{\bZ}
 \biggl[ \int_{t}^T
 \vert 
DU^{N,i,i}_{s} 
-
 DV^{N,i,i}_{s}
 \vert^2 ds 
  \biggr]
  \\
 &\leq \frac{C}{N^2}
+
 C 
\int_{t}^T 
\E^{\bZ}
\bigl[
\vert
U^{N,i}_{s}
- 
V^{N,i}_{s}
\vert^2
\bigr]
ds
+ 
\frac12 \sum_{j}
\E^{\bZ}
 \biggl[ \int_{t}^T
 \vert 
DU^{N,j,j}_{s} 
-
 DV^{N,j,j}_{s}
 \vert^2 ds 
  \biggr].
\end{split}
\end{equation*}
By Gronwall's Lemma, we finally get
(modifying the value of the constant $C$): 
\begin{equation}
\label{CvyxIneq2:001}
\begin{split}
&
\sup_{t \in [0,T]}
{\mathbb E}^{\bZ}
\bigl[ 
\vert
U^{N,i}_{t}
- 
V^{N,i}_{t}
\vert^2
\bigr]
+  \E^{\bZ}
 \biggl[ \int_{0}^T
 \vert 
DU^{N,i,i}_{s} 
-
 DV^{N,i,i}_{s}
 \vert^2 ds 
 \biggr]
 \\
&\hspace{15pt}
\leq \frac{C}{N^2}
+ 
\frac12 \sum_{j}
\E^{\bZ}
 \biggl[ \int_{t}^T
 \vert 
DU^{N,j,j}_{s} 
-
 DV^{N,j,j}_{s}
 \vert^2 ds 
  \biggr].
\end{split}
\end{equation}
Taking the expectation
and using 
the exchangeability 
of the processes 
$((X_{j,t},Y_{j,t})_{t\in [t_0,T]})_{j \in \{1,\dots,N\}}$, we obtain
\eqref{CvyxIneq2}. 
\vspace{4pt}

\textit{Second step.}
We now derive 
\eqref{estixi-yi}
and 
\eqref{estiAE}. 
We start with 
\eqref{estiAE}. 
Noticing that 
$U^{N,i}_{0}
- 
V^{N,i}_{0}=
u^{N,i}({0},\bZ)
- v^{N,i}({0},\bZ)$, we 
deduce, by summing 
\eqref{CvyxIneq2:001}
over $i \in \{1,\dots,N\}$,
 that, with probability $1$ under $\P$, 
\begin{equation*}
\frac1N \sum_{i=1}^N 
\vert u^{N,i}({0},\bZ)
- v^{N,i}({0},\bZ) \vert \leq \frac{C}{N},
\end{equation*}
which is exactly
\eqref{estiAE}.

We are now ready to estimate the difference $X_{i,t}-Y_{i,t}$, for $t \in [0,T]$
and $i \in \{1,\dots,N\}$. 
In view of the equation satisfied by the processes $(X_{i,t})_{t \in [0,T]}$ and by $(Y_{i,t})_{t \in [0,T]}$, we have
\begin{equation}
\label{eq:quelles:H:utiles:3}
\begin{split}
 |X_{i,t}-Y_{i,t}| &\leq   \int_0^t 
\bigl|D_pH \bigl(X_{i,s}, D_{x_{i}}
u^{N,i}(s,\bX_{s})
\bigr) -D_pH
\bigl(Y_{i,s}, D_{x_{i}}v^{N,i}(s,\bY_{s})
\bigr)
\bigr|
 ds 
\\
&\leq  C\int_0^t |X_{i,s}-Y_{i,s}|ds  + C\int_0^T  
\bigl|
D U^{N,i,i}_{s}
-
D V^{N,i,i}_{s}
\bigr|  ds.
\end{split}
\end{equation}
By Gronwall inequality
and by 
\eqref{CvyxIneq2:001}, we obtain 
\eqref{estixi-yi}.
\end{proof}

\begin{Remark}
\label{rem:quelles:H:utiles:2}
The reader may observe that, in addition 
to the existence of a classical solution
$U$ (to the master equation) 
satisfying the conclusion 
of Theorem \ref{theo:2nd-order:master:equation}, 
only the global Lipschitz properties of 
$H$ and $D_{p} H$
are used in the proof, see 
\eqref{eq:quelles:H:utiles:2}
and 
\eqref{eq:quelles:H:utiles:3}. 
\end{Remark}

\begin{proof}[Proof of Theorem \ref{thm:mainCV}.] For part (i), let us choose $m_0\equiv 1$ and apply \eqref{estiAE}: 
$$
\frac{1}{N}\sum_{i=1}^N\left|U(t_0, Z_i, m^{N,i}_\bZ)-v^{N,i}(t_0, \bZ)\right|  \leq C
N^{-1}
\qquad {\rm a.e.},
$$
where $\bZ=(Z_1, \dots, Z_N)$ with $Z_1, \dots, Z_N$ i.i.d. random variables with uniform density on $\T^d$. The support of $\bZ$ being $(\T^d)^N$, we derive from the continuity of $U$ and of the $(v^{N,i})_{i \in \{1,\dots,N\}}$ that the above inequality holds for any $\bx\in (\T^d)^N$: 
$$
\frac{1}{N}\sum_{i=1}^N\left|U(t_0, x_i, m^{N,i}_{\bx})-v^{N,i}(t_0, \bx)\right|  \leq C
N^{-1}
\qquad \forall \bx\in (\T^d)^N. 
$$
Then we  use the Lipschitz continuity of $U$ with respect to $m$ to replace $U(t_0, x_i, m^{N,i}_{\bx})$ by $U(t_0, x_i, m^{N}_{\bx})$ in the above inequality, the additional error term being of order $1/N$. 

For proving (ii), we use the the Lipschitz continuity of $U$ and a result by Dereich, Scheutzow and Schottstedt \cite{DeSeSc} to deduce that, for $d\geq 3$  and for any $x_i\in \T^d$, 
$$
\begin{array}{l}
 \ds  \int_{\T^{d(N-1)}} |u^{N,i}(t,\bx)- U(t,x_i, m_0)|\prod_{j\neq i} m_0(dx_j) \\  
\qquad \qquad \ds = \;  \ds \int_{\T^{d(N-1)}} |U(t,x_i,m^{N,i}_\bx)- U(t,x_i, m_0)|\prod_{j\neq i} m_0(dx_j)\\
\qquad \qquad \ds \leq  C\int_{\T^{d(N-1)}} \dk(m^{N,i}_\bx,m_0)\prod_{j\neq i} m_0(dx_j) \; \leq \; CN^{-1/d}.
\end{array}
$$
If $d=2$, following Ajtai, Komlos and Tusnády \cite{AjKoTu}, the right-hand side has to be replaced by $N^{-1/2}\log(N)$. 
Combining Theorem \ref{thm:Cvyx} with the above inequality, we obtain therefore, for $d\geq 3$, 
$$
\begin{array}{l}
\ds \left\| w^{N,i}(t_0,\cdot, m_0)-U(t_0,\cdot, m_0)\right\|_{L^1(m_0)} \\
\qquad  \ds = \inte \left| \int_{\T^{d(N-1)}}v^{N,i}\bigl(t,(x_j)\bigr)\prod_{j\neq i}m_0(dx_j) -U(t,x_i, m_0)\right|\ dm_0(x_i)\\
\qquad \ds \leq \E\bigl[| v^{N,i}(t,\bZ)-u^{N,i}(t,\bZ)|\bigr]  + \int_{\T^{dN}} |u^{N,i}(t,\bx)- U(t,x_i, m_0)|\prod_{j=1}^N m_0(dx_j) \\
\qquad \ds \leq C
N^{-1}+ CN^{-1/d}\; \leq \;  CN^{-1/d}.
\end{array}
$$
As above, the right-hand side is  $N^{-1/2}\log(N)$ if $d=2$. 
This shows part (ii) of the theorem. 
\end{proof}

\begin{proof}[Proof of Corollary \ref{cor.NashLim}.] We fix $(t,x_1,m)\in [0,T]\times \T^d\times \Pk$ and assume that there exists $v\in \R$ such that 
$$
\limsup_{N\to+\infty, \ x_1'\to x_1, \ m^{N,1}_{\bx'}\to m} \left| v^{N,1}(t,\bx')-v\right| =0.
$$
Our aim is to show that, if $x_1$ belongs to the support of $m$, then $v=U(t,x_1,m)$. For this we first note, from a standard application of the maximum principle, that the 
$(v^{N,i})_{i \in \{1,\dots,N\}}$ are uniformly bounded by a constant $M$ (independent 
of $N$).

Fix $\ep>0$. By our assumption there exists $N_0>0$ and $\delta>0$ such that 
\be\label{tagada}
\left| v^{N,1}(t,\bx')-v\right|\leq \ep \qquad {\rm if }\; N\geq N_0, \;  \dk(m^{N,1}_{\bx'},m)\leq \delta\; {\rm and}\; |x_1-x_1'|\leq \delta. 
\ee
As 
$$
\lim_{N\to+\infty} \int_{(\T^d)^{N-1}}\dk\left(m^{N,1}_{\bx'} ,m\right)
\prod_{
j= 2}^N m(dx'_j)=0, 
$$
we can also  choose $N_0$ large enough so that 
$$
 \int_{(\T^d)^{N-1}} \1_{\bigl\{  \dk\left(m^{N,1}_{\bx'} ,m\right)\geq \delta
 \bigr\}} \prod_{j=2}^N m(dx'_j)\leq \ep \qquad {\rm if }\; N\geq N_0 \; {\rm and}\; |x_1-x_1'|\leq \delta.
$$
Then, integrating \eqref{tagada} over $(\T^d)^{N-1}$, we obtain 
$$
\left|w^{N,1}(t,x'_1)- v\right| \leq \ep+M\ep= \ep(M+1) \qquad {\rm if }\; N\geq N_0\; {\rm and}\; |x_1-x_1'|\leq \delta.
$$
We now integrate this inequality with respect to the measure $m$  on the ball 
$B(x_1,\delta)$: 
$$
\int_{B(x_1,\delta)} \left|w^{N,1}(t,x_1')- v\right| dm(x_1') \leq  \ep(M+1)m\bigl(B(x_1,\delta)\bigr). 
$$
Now Theorem \ref{thm:mainCV}-(ii) states that $w^{N,1}(t,\cdot)$ converges in $L^1_m$ to $U(t,\cdot,m)$. Thus, letting $N\to+\infty$ in the above inequality, we get 
$$
\int_{B(x_1,\delta)} \left|U(t,x_1',m)- v\right| dm(x_1') \leq  \ep(M+1) 
m\bigl(B(x_1,\delta)\bigr). 
$$
Since $U$ is continuous and $x_1$ is in the support of $m$, this last inequality implies that $v=  U(t,x_1,m)$. 
\end{proof}

\subsection{Propagation of chaos}

We now prove Theorem \ref{thm:CvMFG}. Let us recall the notation. 
Throughout this part, $(v^{N,i})_{i \in \{1,\dots,N\}}$ is the solution of the Nash system \eqref{Nash0} and the $((Y_{i,t})_{t \in [t_{0},T]})_{i \in \{1,\dots,N\}}$ are ``optimal trajectories" for this system, i.e., solve \eqref{defyit}
with $Y_{i,t_{0}}=Z_{i}$ as initial condition at time 
$t_{0}$. 
Our aim is to understand the behavior of the $((Y_{i,t})_{t \in [t_{0},T]})_{i \in \{1,\dots,N\}}$ for a large number of players $N$.  

For any $i\in \{1,\dots, N\}$, let 
$(\tilde X_{i,t})_{t \in [t_{0},T]}$ be the solution the SDE of McKean-Vlasov type:
$$
d\tilde X_{i,t} =  -D_pH\left(\tilde X_{i,t}, D_xU(t,\tilde X_{i,t},{\mathcal L}(\tilde X_{i,t}\vert W)\right)dt +\sqrt{2}dB^i_t+\sqrt{2\beta} dW_t, \qquad \tilde X_{i,t_0}= Z_i. 
$$
Recall that, for any $i\in \{1,\dots, N\}$, the conditional law ${\mathcal L}(\tilde X_{i,t}\vert W)$ is equal to $(m_t)$ where $(u_t,m_t)$ is the solution of the MFG system with common noise given by \eqref{eq:se:3:tilde:HJB:FP:t0}-\eqref{eq:se:3:tilde:HJB:FP:prescription} (see section \ref{subsub:uniqueness.sec4}).
Solvability of the McKean-Vlasov equation may be discussed on the model of 
\eqref{eq:EDS:MKV:V}.

Our aim is to show that 
$$
\E\Bigl[ \sup_{t\in [t_0,T]} \bigl|Y_{i,t}-\tilde X_{i,t}\bigr| \Bigr]
\leq C N^{-1/(d+8)},
$$
for some $C>0$. 
Before starting the proof of Theorem \ref{thm:CvMFG}, we need to estimate the distance between the empirical measure associated with the $(\tilde X_{i,t})_{i\in \{1,\dots,N\}}$ and $m_t$. For this, let us  set $\tilde \bX_t= (\tilde X_{i,t})_{i\in \{1,\dots,N\}}$. As the $(\tilde X_{i,t})$ are, conditional on $W$, i.i.d. random variables with law $m_t$, we have by a variant of a result due to Horowitz and Karandikar (see for instance Rashev and Rüschendorf \cite{RaRu98}, Theorem 10.2.1): 

\begin{Lemma} \label{lem:LLN} 
$$
\E\Bigl[\sup_{t\in [t_0,T] } \dk\Bigl(m^{N,i}_{\tilde \bX_t}, m_t\Bigr)\Bigr]
\leq C N^{-1/(d+8)}.
$$
\end{Lemma}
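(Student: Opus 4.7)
The plan is to reduce the desired supremum-in-time estimate to a pointwise Horowitz--Karandikar-type bound via a time discretization argument. First, I would observe that, conditional on the common Brownian motion $W$, the random variables $\tilde X_{1,t},\dots,\tilde X_{N,t}$ are i.i.d.\ with common conditional law $m_t$. A conditional version of the Horowitz--Karandikar estimate (whose hypotheses are met since $\T^d$ is compact and $m_t$ is supported on $\T^d$) then yields, for each fixed $t$ and each $p\geq 1$, a pointwise control of the form $\E\bigl[\dk(m^{N,i}_{\tilde \bX_t},m_t)^p\bigr]^{1/p}\leq C_p N^{-\alpha(d,p)}$, where $\alpha(d,p)$ is the dimension-dependent exponent supplied by the quantitative empirical law of large numbers.

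Next, I would discretize $[t_0,T]$ with a uniform mesh $t_0=t_0^{(N)}<t_1^{(N)}<\cdots<t_{K_N}^{(N)}=T$ of step $\delta_N$, so that $K_N\sim (T-t_0)/\delta_N$, and split
\begin{equation*}
\sup_{t\in[t_0,T]}\dk(m^{N,i}_{\tilde \bX_t},m_t)
\leq \max_{k}\dk(m^{N,i}_{\tilde \bX_{t_k^{(N)}}},m_{t_k^{(N)}})
+ \sup_{|t-s|\leq\delta_N}\Bigl[\dk(m^{N,i}_{\tilde \bX_t},m^{N,i}_{\tilde \bX_s})+\dk(m_t,m_s)\Bigr].
\end{equation*}
The first term on the right-hand side is controlled by combining the pointwise bound above, Markov's inequality, and a union bound over the $K_N$ mesh points. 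For the second term, on one hand the trivial estimate $\dk(m^{N,i}_{\tilde \bX_t},m^{N,i}_{\tilde \bX_s})\leq \frac{1}{N-1}\sum_{j\neq i}|\tilde X_{j,t}-\tilde X_{j,s}|$ combined with the standard modulus-of-continuity estimate for Brownian paths with bounded drift yields $\E\bigl[\sup_{|t-s|\leq\delta_N}|\tilde X_{j,t}-\tilde X_{j,s}|\bigr]\leq C\sqrt{\delta_N|\log\delta_N|}$; on the other hand, $\dk(m_t,m_s)\leq C\sqrt{|t-s|}$ follows from the forward Fokker--Planck dynamics (in the spirit of Proposition \ref{prop:reguum}), since $m_t$ is the $W$-conditional marginal of a diffusion whose drift is globally bounded thanks to the regularity of $D_{x}U$.

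Finally, I would optimize $\delta_N$ and the moment order $p$ used in Markov's inequality so as to balance the mesh-point contribution $\delta_N^{-1/p}N^{-\alpha(d,p)}$ against the path-oscillation contribution $\sqrt{\delta_N}$; the choice of $\delta_N$ as a negative power of $N$ depending on $p$ and $d$ then produces the rate $N^{-1/(d+8)}$ stated in the lemma. The main obstacle is precisely this simultaneous optimization: taking a finer mesh improves the oscillation bound $\sqrt{\delta_N}$ but forces one either to use a larger $p$---which degrades the constant $C_p$ in the conditional Horowitz--Karandikar bound---or to accept a larger prefactor $\delta_N^{-1/p}$ in the union bound. This tradeoff, which is absent in the purely pointwise Fournier--Guillin rate $N^{-1/d}$, is what degrades the effective exponent to $1/(d+8)$.
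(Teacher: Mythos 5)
Your proposal takes a genuinely different route from the paper. The paper's proof of this lemma is extremely short: it simply invokes Theorem 10.2.7 of Rachev and R\"uschendorf (the uniform-in-time Horowitz--Karandikar bound for the flow of empirical measures of i.i.d.\ diffusions) and observes that the two places where independence enters that proof survive verbatim when ``expectation'' is replaced by ``conditional expectation given $W$'', since the $(\tilde X_{j,t})_{j}$ are conditionally i.i.d.\ with conditional law $m_t$ and everything else (drift bounds, compactness of $\T^d$) is deterministic. You instead propose a from-scratch re-derivation via time discretization, which is likely close to what is inside the cited theorem, and you correctly isolate all the ingredients that would be needed: the conditional i.i.d.\ structure, the uniform boundedness of the drift $-D_pH(\cdot,D_xU(\cdot,\cdot,m_t))$ giving the $\sqrt{|t-s|}$ modulus for $m_t$, and the Brownian modulus of continuity for the particles themselves. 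These two routes buy different things: the paper's argument is one line but leans entirely on the external reference, while yours would be self-contained and would also make transparent that the constant $C$ is uniform in $(t_0,m_0)$.

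The one genuine gap is in your final step, where you assert that the optimization over $(\delta_N,p)$ ``then produces the rate $N^{-1/(d+8)}$''. This is stated, not derived, and the numbers do not obviously line up. Your balance reads $\delta_N^{-1/p}N^{-\alpha(d,p)}\asymp\sqrt{\delta_N}$, hence $\sqrt{\delta_N}\asymp N^{-p\alpha(d,p)/(p+2)}$; to hit $1/(d+8)$ one needs $p\alpha(d,p)/(p+2)=1/(d+8)$, which, for the natural input $\alpha(d,p)\equiv 1/(d+4)$ (the Horowitz--Karandikar pointwise exponent, which by McDiarmid concentration holds for all moderate $p$ since the $1/N$ bounded-differences window is of smaller order than the mean), forces the very specific choice $p=(d+4)/2$. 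Your own argument does not explain why you should stop at that $p$: for larger $p$ the balance improves toward $1/(d+4)$, while using only the $L^2$ moment (which is all that is literally guaranteed by the classical Horowitz--Karandikar theorem without invoking concentration) gives only $1/(2d+8)$. You would need to either carry out the moment estimate carefully (including the growth of $C_p$ in $p$), or, more in the spirit of the paper, just accept the $1/(d+8)$ rate as whatever the cited reference delivers and note that the convergence theorems only need \emph{some} polynomial rate. As written, your tradeoff discussion gestures at the right tension but does not close the loop.
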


\begin{proof} The proof is exactly the same as for Theorem  10.2.7 in \cite{RaRu98} (for the i.i.d. case). In this proof independence is only used twice and, in both cases, one can simply replace the expectation by the conditional expectation. 
\end{proof}

\begin{proof}[Proof of Theorem \ref{thm:CvMFG}.]  The proof is a direct application of  Theorem \ref{thm:Cvyx} combined with the following estimate on the distance between 
$(\tilde X_{i,t})_{t \in [t_{0},T]}$ and the solution 
$(X_{i,t})_{t \in [t_{0},T]}$ of \eqref{defxit}:
\be
\label{xit-tildexit}
\E \Bigl[ \sup_{t\in [t_0,T] } \bigl| X_{i,t}-\tilde X_{i,t}\bigr| \Bigr] \leq 
C N^{-1/(d+8)}.
\ee
Indeed, by the triangle inequality, we have, provided that 
\eqref{xit-tildexit} holds true:
\begin{equation*}
\begin{split}
\E \Bigl[ \sup_{t\in [t_0,T] } \bigl| Y_{i,t}-\tilde X_{i,t}\bigr| \Bigr] &\leq \E \Bigl[ 
\sup_{t\in [t_0,T] } \bigl| Y_{i,t}-X_{i,t}\bigr| \Bigr] 
+\E \Bigl[ \sup_{t\in [t_0,T] } \bigl| X_{i,t}-\tilde X_{i,t}\bigr| \Bigr] 
\\
&\leq C\bigl( N^{-1}+N^{-1/(d+8)} \bigr),
\end{split}
\end{equation*}
where we used
\eqref{estixi-yi} to pass from the first to the second line. 
\vspace{4pt}

It now remains to check \eqref{xit-tildexit}. For this, we fix $ i\in \{1, \dots, N\}$ and let 
$$
\rho(t)= \E \Bigl[ \sup_{s\in [t_0,t] }\bigl| X_{i,s}-\tilde X_{i,s}\bigr|\Bigr]. 
$$
Then, for any $s\in [t_0,t]$, we have  
\begin{align*}
\bigl| X_{i,s}-\tilde X_{i,s}\bigr|
&\leq \int_{t_0}^s 
\bigl| -D_pH \bigl(X_{i,r}, D_{x_i}u^{N,i}(r, \bX_r)
\bigr)+D_pH \bigl(\tilde  X_{i,r}, D_xU \bigl(r, \tilde X_{i,r},
m_{r}\bigr)
\bigr)  \bigr| dr 
\\
&\leq \int_{t_0}^s \bigl| -D_pH \bigl(X_{i,r}, D_{x}U \bigl(r, X_{i,r}, m^{N,i}_{\bX_r}\bigr)
\bigr) + D_pH \bigl(\tilde  X_{i,r}, D_xU \bigl(r, \tilde X_{i,r},m^{N,i}_{\tilde \bX_r}\bigr)\bigr)  \bigr|  dr
\\
&\hspace{5pt}+\int_{t_0}^s \bigl| -D_pH
\bigl(\tilde  X_{i,r}, D_xU \bigl(r, \tilde X_{i,r},m^{N,i}_{\tilde \bX_r}\bigr)\bigr) 
+D_pH\bigl(\tilde  X_{i,r}, D_xU \bigl(r, \tilde X_{i,r},m_{r}\bigr)\bigr)  \bigr|  dr.
\end{align*}
As $(x,m)\to D_xU(t,x,m)$ is uniformly Lipschitz continuous, we get
\begin{align*}
\bigl| X_{i,s}-\tilde X_{i,s}\bigr|
\leq C\int_{t_0}^s 
\Bigl(
|X_{i,r}-\tilde  X_{i,r}|+ \dk\bigl(m^{N,i}_{\bX_r}, m^{N,i}_{\tilde \bX_r}\bigr) 
+ \dk \bigl( m^{N,i}_{\tilde \bX_r}, m_{r}\bigr)
\Bigr) dr, 
\end{align*}
where 
\be\label{dkmNXtildeX}
\dk\bigl(m^{N,i}_{X_t}, m^{N,i}_{\tilde X_t}\bigr)
\leq \frac{1}{N-1} \sum_{j\neq i} |X_{j,s}-\tilde X_{j,s}|.
\ee
Hence
\begin{equation*}
\bigl| X_{i,s}-\tilde X_{i,s}\bigr| \leq 
C\int_{t_0}^s 
\Bigl(
 |X_{i,r}-\tilde  X_{i,r}|+ \frac{1}{N-1} \sum_{j\neq i} |X_{j,r}-\tilde X_{j,r}|+ \dk ( m^{N,i}_{\tilde \bX_r}, m_{r})\Bigr) dr. 
\end{equation*}
Taking the supremum over $s\in [t_0,t]$ and then the expectation, we have, recalling that the random variables $(X_{j,r}-\tilde X_{j,r})_{j \in \{1,\dots,N\}}$ have the same law: 
\begin{align*}
\rho(t) &=\E\Bigl[ \sup_{s\in [t_0,t]} \bigl| X_{i,s}-\tilde X_{i,s}\bigr|\Bigr] 
\\
&\leq  C\int_{t_0}^t \biggl( \E\Bigl[\sup_{r\in [t_0,s]} |X_{i,r}-\tilde  X_{i,r}| 
\Bigr]
+ \frac{1}{N-1}\sum_{j\neq i} \E\Bigl[\sup_{r\in [t_0,s]} |X_{j,r}-\tilde X_{j,r}|\Bigr]
\biggr) ds
\\
&\hspace{15pt} + C \, \E\Bigl[\sup_{r\in [t_0,T]} \dk \bigl( m^{N,i}_{\tilde \bX_r}, m_{r}\bigr) \Bigr]  
\\
& \leq  C\int_{t_0}^t \rho(s) ds +  CN^{-1/(d+8)},
\end{align*}
where we used  Lemma \ref{lem:LLN} for the last inequality. Then Gronwall inequality gives \eqref{xit-tildexit}. 
\end{proof}





\newpage 
\section{Appendix}
We now provide several basic results on the notion of 
differentiability on the space of probability measures
used in the paper, including a short comparison 
with the derivative
on the set of random variables.

\subsection{Link with
the derivative on the set of random variables}
As a first step, we discuss the connection between 
the derivative $\delta U/\delta m$ in Definition 
\ref{def:Diff} and 
the  derivative
introduced by Lions in 
\cite{LcoursColl}
and used  
(among others) in
\cite{BuLiPeRa,CgCrDe}. 

The notion introduced in 
\cite{LcoursColl} consists in lifting up functionals defined on the space of probability measures into functionals defined on the set of random variables. When the underlying probability measures are defined on a (finite dimensional) vector space
$E$ (so that the random variables that are distributed along these probability measures also take values in $E$), this permits to benefit from the standard differential calculus on the Hilbert space formed 
by the square-integrable random variables with values in $E$. 

Here the setting is slightly different as the 
probability measures that are considered throughout the article 
are defined on the torus. 
Some care is thus needed in the definition of the linear structure 
underpinning the argument. 

\subsubsection{First order expansion 
with respect to 
torus-valued random variables.
}
\label{subsubse:1storder:rv:torus}
On the torus $\T^d$, we may consider the group of translations 
$(\tau_{y})_{y \in \R^d}$, parameterized by elements $x$ of $\R^d$. For any $y \in \R^d$, $\tau_{y}$ maps $\T^d$ into itself. 
The mapping $\R^d \ni y \mapsto \tau_{y}(0)$ being obviously measurable, this permits to define, for any square integrable 
random variable $\tilde X \in L^2(\Omega,{\mathcal A},\P;\R^d)$ 
(where $(\Omega,{\mathcal A},\P)$ is an atomless probability space), the random variable $\tau_{\tilde X}(0)$, which takes values in $\T^d$. Given a mapping $U : {\mathcal P}(\T^d) \rightarrow \R$, we may define its \textit{lifted}
version as
\begin{equation}
\label{eq:def:lift:U}
\tilde{U} : L^2(\Omega,{\mathcal A},\P;\R^d) \ni 
\tilde X \mapsto 
\tilde{U}( \tilde X) = U\bigl( {\mathcal L}(\tau_{\tilde X}(0)) \bigr),
\end{equation}
where the argument in the right-hand side denotes the law 
of $\tau_{\tilde X}(0)$ (seen as a $\T^d$-valued random variable). 
Quite obviously, ${\mathcal L}(\tau_{\tilde X}(0))$ only depends on 
the law of $\tilde X$. 

Assume now that the mapping $\tilde{U}$ is continuously Fr\'echet differentiable on $L^2(\Omega,{\mathcal A},\P;\R^d)$. 
What \cite{LcoursColl}
says is that, 
for any 
$\tilde{X} \in L^2(\Omega,{\mathcal A},\P;\R^d)$, the Fr\'echet 
derivative has the form
\begin{equation}
\label{eq:Lions:derivative:torus}
D \tilde{U}(\tilde{X}) = \widetilde{\partial_{\mu}U}\bigl(
{\mathcal L}(\tilde X)
\bigr)(\tilde X), \quad \P \ \text{almost surely,}
\end{equation}
for a mapping $
\{ \widetilde{\partial_{\mu} U}({\mathcal L}(\tilde X))
: \R^d \ni y \mapsto  
\widetilde{\partial_{\mu} U}({\mathcal L}(\tilde X))(y)
\in \R^d
\}
 \in L^2(\R^d,{\mathcal L}(\tilde X))$. 
 This relationship is fundamental. 
 Another key observation
 is that, for any random variables 
 $\tilde{X}$ and $\tilde{Y}$ with values in $\R^d$
 and $\tilde{\xi}$ with values in $\Z^d$, it holds that 
 \begin{equation*}
 \begin{split}
 \lim_{\varepsilon \rightarrow 0}
 \frac1{\varepsilon}
 \Bigl[ \tilde{U}
 \bigl( \tilde{X} + \tilde{\xi} + \varepsilon \tilde{Y}
 \bigr) - \tilde{U}
 \bigl( \tilde{X} \bigr) \Bigr]
 &= \E
 \Bigl[
 \bigl\langle
  D \tilde{U}
 \bigl( 
 \tilde{X} + \tilde{\xi}
\bigr),
\tilde{Y}
 \bigr\rangle
 \Bigr],
 \end{split}
 \end{equation*}
which is, 
by the simple fact that 
$\tau_{\tilde{X}+\tilde{\xi}}(0)
= \tau_{\tilde{X}}(0)$, also equal to 
\begin{equation*}
 \begin{split}
 \lim_{\varepsilon \rightarrow 0}
 \frac1{\varepsilon}
 \Bigl[ \tilde{U}
 \bigl( \tilde{X}  + \varepsilon \tilde{Y}
 \bigr) - \tilde{U}
 \bigl( \tilde{X} \bigr) \Bigr]
 &= \E
 \Bigl[
 \bigl\langle
  D \tilde{U}
 \bigl( 
 \tilde{X}
\bigr),
\tilde{Y}
 \bigr\rangle
 \Bigr],
 \end{split}
 \end{equation*}
proving that 
\begin{equation}
\label{eq:frechet:derivative:torus}
D \tilde{U} \bigl(\tilde{X} \bigr) = 
D \tilde{U} \bigl(\tilde{X} +  \tilde{\xi} \bigr).
\end{equation}

Consider now a random variable $X$ from 
$\Omega$ with values into $\T^d$. 
With $X$, we may associate the random variable
$\hat{X}$, with values in $[0,1)^d$, given 
(pointwise)
as the only representative of $X$ in $[0,1)^d$. 
We observe that the law of $\hat{X}$ is uniquely determined by 
the law of $X$ and that for any Borel function 
$h : \T^d \rightarrow \R$, 
\begin{equation*}
{\mathbb E}[h(X)] = {\mathbb E}[\hat{h}(\hat{X})],
\end{equation*}
where $\hat{h}$ is the identification of $h$ 
as a function from $[0,1)^d$ to $\R$.  

Then, we 
deduce from 
\eqref{eq:Lions:derivative:torus}
that 
\begin{equation*}
D \tilde{U}(\hat{X}) = 
\widetilde{\partial_{\mu}
U} \bigl( {\mathcal L}(\hat X) \bigr) 
( \hat{X} ), \quad \P \ \text{almost surely}.
\end{equation*}
Moreover, from 
\eqref{eq:frechet:derivative:torus},
we also have,
 for any random variable 
$\hat{\xi}$ with values in $\Z^d$,
\begin{equation*}
D \tilde{U}(\hat{X} + \hat{\xi}) = 
\widetilde{
\partial_{\mu}
U} \bigl( {\mathcal L}(\hat X) \bigr) 
( \hat{X} ), \quad \P \ \text{almost surely}.
\end{equation*}
Since $\partial_{\mu} U({\mathcal L}(\hat{X}))
(\cdot)$ is in $L^2(\R^d,{\mathcal L}(\hat{X}))$ and 
$\hat{X}$ takes values in $[0,1)^d$, we can 
identify $\partial_{\mu} U({\mathcal L}(\hat{X}))(\cdot)$
with a function in $L^2(\T^d,{\mathcal L}(X))$. 
Without any ambiguity, we may denote this function
(up to a choice of a version) by 
\begin{equation*}
\T^d \ni y \mapsto \partial_{\mu}
U \bigl( {\mathcal L}(X) \bigr) 
(y). 
\end{equation*}
As an application we have that, 
for any random variables $X$ and $Y$
with values in $\T^d$,
\begin{equation*}
\begin{split}
U\bigl( {\mathcal L}(Y)
\bigr) 
- U \bigl( {\mathcal L}(X)
\bigr) 
&= \tilde{U}
\bigl( \hat{Y} \bigr) -
\tilde{U} \bigl( \hat{X}
\bigr)
\\
&= 
\E \int_{0}^1
\Bigl\langle
D \tilde{U}
\bigl( 
{\mathcal L}( \lambda \hat{Y}
+ (1-\lambda) \hat{X}
) 
\bigr), 
\hat{Y}
- \hat{X}
\Bigr\rangle d\lambda.
\end{split}
\end{equation*}
Now, we can write 
\begin{equation*}
\lambda \hat{Y}
+ (1-\lambda) \hat{X}
= \hat{X} + \lambda ( \hat{Y} - \hat{X})
= \hat{Z}, \quad \text{with} \
Z = \tau_{\lambda (\hat{Y}-\hat{X})}(X).
\end{equation*}
Noticing that $Z$ is a random variable with values in 
$\T^d$, we deduce that 
\begin{equation*}
\begin{split}
U\bigl( {\mathcal L}(Y)
\bigr) 
- U \bigl( {\mathcal L}(X)
\bigr) 
&= 
\E \int_{0}^1
\Bigl\langle
\partial_{\mu} U
\bigl( 
{\mathcal L}( \tau_{\lambda (\hat{Y} - \hat{X})}(X)
) 
\bigr)
( \tau_{\lambda (\hat{Y} - \hat{X})}(X)
) , 
\hat{Y}
- \hat{X}
\Bigr\rangle d\lambda.
\end{split}
\end{equation*}
Similarly, for any random 
variable $\hat{\xi}$ with values in $\Z^d$,
\begin{equation*}
\begin{split}
U\bigl( {\mathcal L}(Y)
\bigr) 
- U \bigl( {\mathcal L}(X)
\bigr) 
&= \tilde{U}
\bigl( \hat{Y} + \hat{\xi} \bigr) -
\tilde{U} \bigl( \hat{X}
\bigr)
\\
&= 
\E \int_{0}^1
\Bigl\langle
D \tilde{U}
\bigl( 
 \hat{X}
+ \lambda (\hat{Y} + \hat{\xi} - \hat{X}
) 
\bigr), 
\hat{Y} +  \hat{\xi}
- \hat{X}
\Bigr\rangle d\lambda.
\end{split}
\end{equation*}
Now, 
$
 \hat{X}
+ \lambda (\hat{Y} + \hat{\xi} - \hat{X}
)$
writes $\hat{Z} + \hat{\zeta}$, where 
$\hat{\zeta}$ is a random variable with values in 
$\Z^d$ and $\hat{Z}$ is associated with 
the $\T^d$-valued random variable
${Z} = \tau_{ \lambda (\hat{Y} + \hat{\xi} - \hat{X}
)}(X)$,
so that 
\begin{equation}
\label{eq:derivative:torus:b}
\begin{split}
U\bigl( {\mathcal L}(Y)
\bigr) 
- U \bigl( {\mathcal L}(X)
\bigr) 
&=
\E \int_{0}^1
\Bigl\langle
D \tilde{U}
\bigl( 
 \hat{Z} 
\bigr), 
\hat{Y} +  \hat{\xi}
- \hat{X}
\Bigr\rangle d\lambda
\\
&=
\E \int_{0}^1
\Bigl\langle
\partial_\mu U
\bigl( 
{\mathcal L}( 
\tau_{\lambda  (\hat{Y} + \hat{\xi} - \hat{X}
) }(X)
\bigr)
\bigl(
\tau_{\lambda  (\hat{Y} + \hat{\xi} - \hat{X}
) }(X)
\bigr)
, 
\hat{Y} +  \hat{\xi}
- \hat{X}
\Bigr\rangle d\lambda.
\end{split}
\end{equation}
The fact that $\hat{\xi}$ 
can be chosen in a completely arbitrary way 
says that the choice of the representatives of 
$X$ and $Y$ in the above formula does not matter. Of course, 
this
is 
a consequence of the periodicity structure underpinning 
the whole analysis. Precisely, for any 
representatives $\bar{X}$ and $\bar{Y}$
(with values in $\R^d$)
 of $X$ and
$Y$,
we can write
\begin{equation}
\label{eq:derivative:torus}
\begin{split}
U\bigl( {\mathcal L}(Y)
\bigr) 
- U \bigl( {\mathcal L}(X)
\bigr) 
&=
\E \int_{0}^1
\Bigl\langle
\partial_\mu U
\bigl( 
{\mathcal L}( 
\tau_{\lambda  (\bar{Y}  - \bar{X}
) }(X)
\bigr)
\bigl(
\tau_{\lambda  (\bar{Y}  - \bar{X}
) }(X)
\bigr)
, 
\bar{Y} 
- \bar{X}
\Bigr\rangle d\lambda.
\end{split}
\end{equation}
Formula 
\eqref{eq:derivative:torus}
gives a rule for expanding, 
along torus-valued random variables, functionals 
depending on 
torus-supported probability measures. 
It is the analogue of the \textit{differentiation rule}
defined in 
\cite{LcoursColl}
on the space of probability measures on $\R^d$
through the differential calculus in 
$L^2(\Omega,{\mathcal A},\P;\R^d)$. 
\vspace{4pt}

In particular, if 
$\tilde{U}$ is continuously differentiable, 
with (say) $D \tilde{U}$ being Lipschitz continuous
on $L^2(\Omega,{\mathcal A},\P;\R^d)$, then
(with the same notations as in 
\eqref{eq:derivative:torus:b}) 
\begin{equation}
\label{eq:regularity:derivative:measure}
\begin{split}
{\mathbb E}
\bigl[ \vert D \tilde U(\hat{Y}) - D \tilde U(\hat{X})
\vert^2 \bigr]
&= {\mathbb E}
\bigl[ \vert D\tilde U(\hat{Y} + \hat{\xi}) - D \tilde U(\hat{X})
\vert^2 \bigr]
\\
&\leq C {\mathbb E}
\bigl[ \vert \hat{Y} + \hat{\xi} - \hat{X}
\vert^2 \bigr]. 
\end{split}
\end{equation}
Now, for two random variables $X$ and $Y$ with values
in the torus, one may find a random variable
$\hat{\xi}$, with values in $\Z^d$, such that, pointwise, 
\begin{equation*} 
\hat{\xi} = \text{argmin}_{c \in \Z^d}
\vert \tau_{c}(\hat{Y}) - \hat{X} \vert,
\end{equation*}
the right-hand side being the distance
$d_{\T^d}(X,Y)$
 between $X$ and $Y$
on the torus. Put it differently,
we may choose $\hat{\xi}$ such that $\vert
\hat{Y}+\hat{\xi} - \hat{X}
\vert =
d_{\T^d}(X,Y)$. Plugged into 
\eqref{eq:regularity:derivative:measure},
this shows that 
the Lipschitz property of $D \tilde U$ (on $L^2(\Omega,{\mathcal A},\P;\R^d)$) reads as 
a Lipschitz property with respect to 
torus-valued
 random variables. 

Next, we make the connection between 
the mapping ${\mathcal P}(\T^d) \times \T^d
\ni (m,y) \mapsto
\partial_{\mu} U(m)(y) \in \R^d$
and the derivative ${\mathcal P}(\T^d) \times \T^d
\ni (m,y) \mapsto
[\delta U/\delta m](m,y) \in \R^d$
defined in 
Definition 
\ref{def:Diff}. 

\subsubsection{From differentiability along random variables
to differentiability in $m$}

\begin{Proposition}
\label{prop:appendix:lions->def}
Assume that the function $U$ 
is differentiable in the sense explained in 
Subsubsection 
\ref{subsubse:1storder:rv:torus}
and thus
satisfies the expansion
formula 
\eqref{eq:derivative:torus}.
Assume moreover that there
exists a continuous version of 
the mapping
$\partial_{\mu} U : {\mathcal P}(\T^d) \times \T^d \ni (m,y) 
\mapsto \partial_{\mu} U(m,y) \in \R^d$. 

Then, $U$ is differentiable in the sense of Definition \ref{def:Diff}. Moreover,
$\delta U/\delta m$ is continuously differentiable with respect to the second variable
and
\begin{equation*}
D_{m} U(m,y) = \partial_{\mu} U(m)(y), \quad m \in {\mathcal P}(\T^d),
\ y \in \T^d. 
\end{equation*}
\end{Proposition}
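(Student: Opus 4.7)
My plan is to construct the candidate $V=\frac{\delta U}{\delta m}$ from the expansion formula \eqref{eq:derivative:torus} and then to identify $D_yV$ with $\partial_\mu U$ by differentiating the prelimit directly, which avoids asking for any curl-free hypothesis on $\partial_\mu U$. I would define
\[
V(m,y) := \lim_{s\to 0^+}\frac{U((1-s)m+s\delta_y)-U(m)}{s}
\]
and realize $(1-s)m+s\delta_y$ as the law of $X_s := X\mathbf{1}_{\{\xi=0\}}+y\mathbf{1}_{\{\xi=1\}}$, with $X\sim m$ and $\xi\sim\mathrm{Ber}(s)$ independent. Applying \eqref{eq:derivative:torus} to the pair $(X,X_s)$, the displacement $\bar X_s-\bar X=(\bar y-\bar X)\mathbf{1}_{\{\xi=1\}}$ vanishes on $\{\xi=0\}$, the indicator on $\{\xi=1\}$ supplies the prefactor $s$, and the intermediate law $(1-s)m+s\,\mathcal{L}(\tau_{\lambda(\bar y-\bar X)}(X))$ tends to $m$ uniformly in $\lambda$. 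Continuity of $\partial_\mu U$ and dominated convergence then yield existence of the limit and the explicit formula
\[
V(m,y)=\mathbb{E}_{X\sim m}\!\!\int_0^1\!\!\bigl\langle\partial_\mu U(m)\bigl(\tau_{\lambda(\bar y-\bar X)}(X)\bigr),\bar y-\bar X\bigr\rangle d\lambda,
\]
from which joint continuity of $V$ in $(m,y)$ follows immediately.

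Next, I would verify the defining identity of Definition \ref{def:Diff} by repeating the mixture argument with $Y\sim m'$ in place of the deterministic $y$: the limit $\lim_{s\to 0^+}s^{-1}[U((1-s)m+sm')-U(m)]$ equals $\int V(m,y)\,dm'(y)$. A symmetry argument, swapping $X$ and $Y$ when both have law $m$ and substituting $\lambda\mapsto 1-\lambda$ --- which identifies $\tau_{\lambda(\bar Y-\bar X)}(X)$ with $\tau_{(1-\lambda)(\bar X-\bar Y)}(Y)$ as the same point $(1-\lambda)\bar X+\lambda\bar Y$ of $\T^d$ --- forces $\int V(m,y)\,dm(y)=0$, delivering both the normalization \eqref{ConvCondDeriv} and the expansion $\lim=\int V\,d(m'-m)$.

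The delicate step, and the main obstacle, is the identity $D_yV(m,y)=\partial_\mu U(m)(y)$. Differentiating the integral representation of $V$ in $y$ directly would leave a spurious term unless $\partial_\mu U(m,\cdot)$ is curl-free, a property that is not readily available from the hypotheses. Instead I would compute the derivative in $y$ of the prelimit. Applying \eqref{eq:derivative:torus} to the pair $(\tilde X_y,\tilde X_{y'})$ with $\tilde X_y:=X\mathbf{1}_{\{\xi=0\}}+y\mathbf{1}_{\{\xi=1\}}$ yields
\[
U((1-s)m+s\delta_{y'})-U((1-s)m+s\delta_y)=s\!\!\int_0^1\!\!\bigl\langle\partial_\mu U\bigl((1-s)m+s\delta_{y_\lambda}\bigr)(y_\lambda),\bar y'-\bar y\bigr\rangle d\lambda,
\]
with $y_\lambda:=\tau_{\lambda(\bar y'-\bar y)}(y)$, so that sending $y'\to y$ gives $D_yU((1-s)m+s\delta_y)=s\,\partial_\mu U((1-s)m+s\delta_y)(y)$. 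Dividing by $s$ and using the bound $\dk((1-s)m+s\delta_y,m)\le Cs$ together with uniform continuity of $\partial_\mu U$ on the compact set $\Pk\times\T^d$ shows that the $y$-derivative of the differential quotient $s^{-1}[U((1-s)m+s\delta_y)-U(m)]$ converges to $\partial_\mu U(m)(y)$ \emph{uniformly in $y$}. The classical criterion --- pointwise convergence of a family of functions together with uniform convergence of their derivatives implies differentiability of the limit with derivative equal to the uniform limit --- then gives that $V$ is differentiable in $y$ with $D_yV(m,y)=\partial_\mu U(m)(y)$. The key technical point to watch is precisely this uniformity, which is what makes the joint continuity of $\partial_\mu U$ on $\Pk\times\T^d$ enough to secure true differentiability of $V$ from the sole continuity of $\partial_\mu U$.
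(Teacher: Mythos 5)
Your proof is correct, but it takes a genuinely different route from the paper's.

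The paper proceeds in two steps. First, it shows that $\partial_{\mu}U(m)(\cdot)$ is a gradient field by testing it against divergence-free vector fields: for $m$ with a smooth positive density, the flow of $b/m$ (with $b$ divergence free) leaves $m$ invariant, so $U$ is constant along that flow; differentiating at $t=0$ via \eqref{eq:derivative:torus} forces $\int_{\T^d}\langle\partial_\mu U(m)(y),b(y)\rangle\,dy=0$, and hence $\partial_\mu U(m)(\cdot)=D_yV(m,\cdot)$, where $V(m,\cdot)$ solves the Poisson equation $\Delta V(m,\cdot)=\mathrm{div}_y\,\partial_\mu U(m)(\cdot)$. The general case follows by mollification. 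Second, it verifies Definition \ref{def:Diff} for empirical measures $m^N_X,m^N_Y$ using \eqref{eq:derivative:torus} with the optimal matching of representatives, and then passes to the limit by density and continuity of $V$.

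Your argument instead constructs $V(m,y)$ directly as the one-sided directional derivative of $U$ toward $\delta_y$, realizes the Bernoulli-mixture law $(1-s)m+s\delta_y$ probabilistically, and reads off the explicit integral representation by applying \eqref{eq:derivative:torus} (the indicator $\mathbf{1}_{\{\xi=1\}}$ supplying the factor $s$). The main novelty is the identification $D_yV=\partial_\mu U$: rather than first proving curl-freeness, you differentiate the prelimit in $y$ by applying \eqref{eq:derivative:torus} to the pair of two-Dirac perturbations $\tilde X_y,\tilde X_{y'}$, which gives $D_y\bigl[U((1-s)m+s\delta_y)\bigr]=s\,\partial_\mu U((1-s)m+s\delta_y)(y)$ exactly, and then lets $s\to 0^+$ and invokes the uniform convergence of derivatives together with pointwise convergence of the quotient to conclude. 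Normalization $\int V\,dm=0$ follows from the clean $\lambda\mapsto 1-\lambda$ symmetry. The upshot is a more self-contained argument: it needs no reduction to smooth densities, no mollification, and no Poisson equation, and joint continuity of $V$ comes for free from the uniform-convergence estimate rather than from a density argument on empirical measures. The paper's divergence-free-testing step, on the other hand, makes the geometric reason why $\partial_\mu U(m)(\cdot)$ must be a gradient especially transparent. Both arguments rely on the same key invariance property of formula \eqref{eq:derivative:torus} under choice of representatives in $\R^d$, which your proof uses implicitly in the expectation over $X\sim m$ and should be noted if you write it out in full.
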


\begin{proof}
\textit{First step.}
The first step is to prove that, for any $m \in {\mathcal P}(\T^d)$, there exists 
a continuously differentiable map $V(m,\cdot) : \T^d \ni y \mapsto V(m,y) \in \R$ such that 
\begin{equation*}
 \partial_{\mu} U(m)(y) = D_{y} V(m,y), \quad y \in \T^d. 
\end{equation*}
The strategy is to prove that $\partial_{\mu} U(m) : \T^d 
\mapsto \partial_{\mu} U(m)(y)$ is orthogonal (in $L^2(\T^d,dy)$)
to divergence free vector fields. It suffices to prove that, for any smooth  divergence free vector field
$b : \T^d \rightarrow \R^d$,
\begin{equation*}
\int_{\T^d} \langle \partial_{\mu} U(m)(y), b(y) \rangle dy
= 0.  
\end{equation*}
Since $\partial_{\mu} U$ is jointly continuous in $(m,y)$, it is enough to prove the above identity 
for any $m$ with a positive smooth density. When $m$ is not smooth, we may indeed approximate it
by $m \star \rho$, where $\star$ denotes the convolution 
and $\rho$ a smooth kernel on $\R^d$ with full support. 

With such an $m$ and such a $b$, we consider the ODE
(set on $\R^d$ but driven by periodic coefficients)
\begin{equation*}
 dX_{t} = \frac{b(X_{t})}{m(X_{t})}  dt, \quad t \geq 0,
\end{equation*}
the initial condition $X_{0}$ being $[0,1)^d$-valued and distributed
according to some $m \in {\mathcal P}(\T^d)$ (identifying 
$m$ with a probability measure on $[0,1)^d$).  
By periodicity of $b$ and $m$, 
$(X_{t})_{t \geq 0}$ generates on 
$\T^d$ a flow of probability measures 
$(m_{t})_{t \geq 0}$
satisfying the Fokker Planck equation 
\begin{equation*}
\partial_{t} m_{t} = - \textrm{div}( \frac{b}{m} m_{t} \bigr), \quad t \geq 0, \quad m_{0} = m.  
\end{equation*}
Since $b$ is divergence free, we get that $m_{t} = m$ for all $t \geq 0$.  
Then, for all $t \geq 0$,
\begin{equation*}
U\bigl(m_{t}\bigr) - U\bigl(m_{0}\bigr) = 0,
\end{equation*}
so that, with the same notation as
in 
\eqref{eq:def:lift:U}, 
$\lim_{t \searrow 0} 
[( \tilde{U}(X_{t}) - \tilde{U}(X_{0}))/t] = 0$.
Now, choosing 
$\bar{Y} =X_{t}$
and $\bar{X}=X_{0}$ in 
\eqref{eq:derivative:torus},
we get
\begin{equation*}
\begin{split}
 \int_{\T^d} \langle \partial_{\mu} U( m)( y), b(y) \rangle dy =0.
 \end{split}
\end{equation*}
We easily deduce that 
$\partial_{\mu} U( m)$ reads
as a gradient that is 
\begin{equation*}
\partial_{\mu} U(m)(y)
= \partial_{y} V(m,y). 
\end{equation*}
It is given as a solution of the Poisson equation
\begin{equation*}
 \Delta V(m,y) = {\rm div}_y  \ \partial_{\mu} U(m)(y)
\end{equation*}
Of course, $V(m,\cdot)$ is uniquely defined up to an additive constant. We can choose it in such a way that 
\begin{equation*}
\int_{\T^d} V(m,y) dm(y) = 0. 
\end{equation*} 
Using the representation of the solution of the Poisson equation by means of the Poisson kernel, we easily deduce that the 
function
$V$ is jointly continuous. 
\vspace{4pt}

\textit{Second step.}
The second step of the proof is to check that 
Definition \ref{def:Diff} holds true. Let us consider two measures of the form $m^N_X$ and $m^N_Y$, where $N\in \N^*$, $X=(x_1,\dots, x_N)\in (\T^d)^N$ is  such that $x_i\neq x_j$ and $Y=(y_1,\dots,y_N)\in (\T^d)^N$. Without loss of generality we assume that the indices for $Y$ are such that 
\be\label{mrjeznsnd}
\dk(m^N_X, m^N_Y)=
\frac{1}{N} \sum_{i=1}^N d_{{\mathbb T}^d}(x_i,y_i)
= 
\frac{1}{N} \sum_{i=1}^N |\bar x_i- \bar y_i|,
\ee
where $\bar{x}_{1},\dots,\bar{x}_{N}$ 
and $\bar{y}_{1},\dots,\bar{y}_{N}$
are well-chosen representatives, in $\R^d$, 
of the points $x_{1},\dots,x_{N}$
and $y_{1},\dots,y_N$ in $\T^d$ ($d_{\T^d}$
denoting the distance on the torus). 
Let $\bar X$ be a random variable such that 
$\P(\bar X=\bar x_i)= 1/N$ and $\bar Y$ be the random variable defined by $\bar Y=\bar y_i$ if $\bar X=\bar x_i$. Then, with the 
same notations as in 
\eqref{eq:def:lift:U},
$\P_{{\mathcal L}(\tau_{\bar{X}}(0))}= m^N_X$ and 
$\P_{{\mathcal L}(\tau_{\bar{Y}}(0))}= m^N_Y$. 

Thanks to \eqref{eq:derivative:torus}, we 
get
$$
\begin{array}{l}
\ds U(m^N_Y)-U(m^N_X) 
= 
\ds
\int_{0}^1
\E\Bigl[ \Bigl\langle \partial_\mu U\Bigl( {\mathcal L}
\bigl( {\mathcal \tau}_{\lambda \bar{Y}
+(1-\lambda) \bar{X}}(0)
\bigr)
\Bigr)\bigl( 
\tau_{\lambda \bar{Y}
+(1-\lambda) \bar{X}}(0)
\bigr), \bar Y-\bar X\Bigr\rangle \Bigr] 
d\lambda  
\end{array}
$$
So, if $w$ is a modulus of continuity of the map $\partial_\mu U$ on the compact set ${\mathcal P}(\T^d)\times \T^d$, we obtain by \eqref{mrjeznsnd}:   
\begin{equation}
\label{eq:expansion:1storder:torus:bary:barx}
\begin{split}
&\biggl\vert U(m^N_Y)-U(m^N_X) -  \int_0^1
\E\Bigl[ 
 \Bigl\langle \partial_\mu U\bigl( m^N_{X}
\bigr)\bigl( 
\tau_{\lambda \bar{Y}
+(1-\lambda) \bar{X}}(0)
\bigr), \bar Y-\bar X\Bigr\rangle \Bigr] 
d\lambda
\biggr\vert
\\
&\hspace{15pt} \leq  \E\bigl[ |\bar Y-\bar X|\bigr|] 
w \bigl(\dk(m^N_X, m^N_Y)
\bigr) = \dk(m^N_X, m^N_Y) 
w \bigl(\dk(m^N_X, m^N_Y)
\bigr). 
\end{split}
\end{equation}
Moreover, since  
$D_y V(m,y)= \partial_\mu U(m)(y)$, we have  
\begin{equation*}
\begin{split}
&\int_0^1\E\Bigl[ \Bigl\langle \partial_\mu U(m^N_X)
\bigl( \tau_{\lambda \bar{Y} + (1-\lambda) \bar{X}}
(0)
\bigr)\bigl( 
\tau_{\lambda \bar{Y} + (1-\lambda) \bar{X}}
\bigr), \bar Y-\bar X \Bigr\rangle \Bigr] 
d\lambda
\\ 
&\qquad =  \frac{1}{N}\sum_{i=1}^N  \int_0^1  \bigl\langle
D_y V \bigl(m^N_X, 
\tau_{\lambda \bar y_i+(1-\lambda) \bar x_i}(0)
\bigr), \bar y_i- \bar x_i \bigr\rangle d\lambda 
\\
&\qquad =
\frac{1}{N}\sum_{i=1}^N  \int_0^1  \bigl\langle
D_y V \bigl(m^N_X,
\lambda \bar y_i+(1-\lambda) \bar x_i
\bigr), \bar y_i- \bar x_i \bigr\rangle d\lambda, 
\end{split}
\end{equation*}
where we saw $D_{y}V(m^N_{X},\cdot)$ as a 
periodic function defined on the whole $\R^d$. 
Then,
\begin{equation*}
\begin{split}
\int_0^1\E\Bigl[ \Bigl\langle \partial_\mu U(m^N_X)
\bigl( \tau_{\lambda \bar{Y} + (1-\lambda) \bar{X}}
(0)
\bigr)\bigl( 
\tau_{\lambda \bar{Y} + (1-\lambda) \bar{X}}
\bigr), \bar Y-\bar X \Bigr\rangle \Bigr] 
d\lambda
 = \inte V (m^N_X, x)d(m^N_Y-m^N_X)(x) .
\end{split}
\end{equation*}
By density of the measures of the form $m^N_X$ and $m^N_Y$ and by continuity of $V$, 
we deduce from 
\eqref{eq:expansion:1storder:torus:bary:barx}
 that, for any measure $m,m'\in {\mathcal P}(\T^d)$, 
$$
\begin{array}{l}
\ds \left| U(m')-U(m) -  \inte V (m, x)d(m'-m)(x) \right| \leq  \dk(m, m') w (\dk(m, m')) ,
\end{array}
$$
which shows that $U$ is ${\mathcal C}^1$ in the sense of Definition \ref{def:Diff}  with $\frac{\delta U}{\delta m}=V$. 
\end{proof}

\subsubsection{From differentiability in $m$ to differentiability along random variables}

We now discuss the converse
to Proposition \ref{prop:appendix:lions->def} 

\begin{Proposition}
\label{prop:appendix:def->lions}
Assume that $U$ satisfies the assumption of
Definition 
\ref{def:intrinsic:derivative}. 
Then, $U$ 
satisfies the differentiability property
\eqref{eq:derivative:torus}.
Moreover, 
$D_{m} U(m,y) = \partial_{\mu} U(m)(y)$, 
$m \in {\mathcal P}(\T^d)$ and $y \in \T^d$.
\end{Proposition}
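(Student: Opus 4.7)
The plan is to bootstrap from the weak differentiability encoded in Definition \ref{def:Diff} to the Fréchet-type expansion \eqref{eq:derivative:torus} by interpolating two arbitrary torus-valued random variables along a straight line in $\mathbb{R}^d$, and then to identify $D_mU$ with $\partial_\mu U$ by matching the two expansion formulas on a sufficiently rich family of perturbations.

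Fix two $\mathbb{T}^d$-valued random variables $X$ and $Y$ with $\mathbb{R}^d$-valued representatives $\bar X,\bar Y$. For $\lambda\in [0,1]$, set $Z_\lambda:=\tau_{\lambda(\bar Y-\bar X)}(X)$, which takes values in $\mathbb{T}^d$, and let $m_\lambda:=\mathcal{L}(Z_\lambda)$. The first step is to prove that the scalar function $\phi(\lambda):=U(m_\lambda)$ is $\mathcal{C}^1$ on $[0,1]$ with derivative
\begin{equation*}
\phi'(\lambda) \;=\; \mathbb{E}\bigl[\bigl\langle D_mU(m_\lambda,Z_\lambda),\,\bar Y-\bar X\bigr\rangle\bigr].
\end{equation*}
To do so, I would apply \eqref{e.iubsdiazsd} to the pair $(m_\lambda,m_{\lambda+h})$, writing
\begin{equation*}
\phi(\lambda+h)-\phi(\lambda) \;=\; \int_0^1\!\! \int_{\mathbb{T}^d} \tfrac{\delta U}{\delta m}\bigl((1-s)m_\lambda+sm_{\lambda+h},y\bigr)\,d(m_{\lambda+h}-m_\lambda)(y)\,ds,
\end{equation*}
then rewriting the inner integral as $\mathbb{E}\bigl[\tfrac{\delta U}{\delta m}(\cdots,Z_{\lambda+h})-\tfrac{\delta U}{\delta m}(\cdots,Z_\lambda)\bigr]$, and finally Taylor-expanding in the $y$-variable along the deterministic increment $h(\bar Y-\bar X)$ to pull out a factor of $h$. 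Dividing by $h$ and invoking the joint continuity of $D_mU$ on the compact set $\mathcal{P}(\mathbb{T}^d)\times \mathbb{T}^d$ (together with the Lipschitz estimate $\mathbf{d}_1(m_{\lambda+h},m_\lambda)\le |h|\,\mathbb{E}|\bar Y-\bar X|$), dominated convergence yields the claimed formula, uniformly in $\lambda$.

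Integrating $\phi'$ from $0$ to $1$ gives exactly
\begin{equation*}
U(\mathcal{L}(Y))-U(\mathcal{L}(X))\;=\;\int_0^1 \mathbb{E}\bigl[\bigl\langle D_mU(m_\lambda,\tau_{\lambda(\bar Y-\bar X)}(X)),\,\bar Y-\bar X\bigr\rangle\bigr]\,d\lambda,
\end{equation*}
which is the expansion \eqref{eq:derivative:torus} with $D_mU$ in the role of $\partial_\mu U$. For the identification $D_mU=\partial_\mu U$, I would specialize this identity to $Y=\tau_{\varepsilon V}(X)$ with $V\in L^2(\Omega;\mathbb{R}^d)$ bounded, so that $\bar Y-\bar X=\varepsilon V$; dividing by $\varepsilon$ and letting $\varepsilon\to 0$ shows that the lift $\tilde U$ of \eqref{eq:def:lift:U} is Gâteaux differentiable at $\bar X$ with derivative $D_mU(\mathcal{L}(\tau_{\bar X}(0)))(\tau_{\bar X}(0))$. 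Continuity of $D_mU$ upgrades this to Fréchet differentiability, and comparing with \eqref{eq:Lions:derivative:torus} forces $\partial_\mu U(m)(y)=D_mU(m,y)$ for $m$-a.e.\ $y$; joint continuity of both sides and density of the laws of such $\tau_{\bar X}(0)$ in $\mathcal{P}(\mathbb{T}^d)$ remove the a.e. restriction.

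The main obstacle is the first step, namely the passage to the limit in the expansion of $\phi(\lambda+h)-\phi(\lambda)$. The point is that the interpolation $m_\lambda$ is \emph{not} the linear interpolation $(1-\lambda)m+\lambda m'$ for which Definition \ref{def:Diff} is stated, so one must insert a second layer of interpolation inside the first, and then control simultaneously (i) the weak convergence of $(1-s)m_\lambda+sm_{\lambda+h}$ to $(1-s)m_\lambda+sm_\lambda=m_\lambda$ as $h\to 0$ and (ii) the one-dimensional Taylor remainder in $y$. Both are handled by the uniform continuity of $D_mU$ on the compact product $\mathcal{P}(\mathbb{T}^d)\times\mathbb{T}^d$, but this must be invoked carefully to obtain convergence that is uniform in $s\in[0,1]$ and in $\lambda$, which is what turns the one-sided differentiation into a bona fide fundamental theorem of calculus for $\phi$.
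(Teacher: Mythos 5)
Your plan is correct and follows essentially the same route as the paper: both differentiate $\lambda \mapsto U(\mathcal{L}(\tau_{\lambda(\bar Y - \bar X)}(X)))$ by applying the representation formula \eqref{e.iubsdiazsd}, rewriting the mass integral as an expectation, Taylor-expanding $\delta U/\delta m$ in the $y$-variable, and invoking the uniform continuity of $D_m U$ on the compact set $\mathcal{P}(\mathbb{T}^d)\times\mathbb{T}^d$ — the only cosmetic difference being that the paper first packages this into a uniform first-order Taylor estimate before differentiating along the path, whereas you differentiate directly. Your final identification paragraph makes explicit a step the paper leaves implicit and is accurate (note only that $h(\bar Y - \bar X)$ is a \emph{random}, not deterministic, increment, which is harmless since the Taylor expansion in $y$ is performed pathwise).
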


\begin{proof}
We are given two random variables $X$ and $Y$
with values in the torus $\T^d$. By Definition 
\ref{def:Diff},
\begin{equation*}
\begin{split}
&U({\mathcal L}(Y) ) - U({\mathcal L}(X) )
\\
&= 
\int_{0}^1
\biggl[
\int_{\T^d}
\frac{\delta U}{\delta m}
\bigl( \lambda {\mathcal L}(Y)  + (1-\lambda) {\mathcal L}(X) ,y
\bigr) d \bigl( {\mathcal L}(Y)  - {\mathcal L}(X) \bigr)(y)
\biggr] d\lambda
\\
&= \int_{0}^1 {\mathbb E}
\biggl[ 
 \frac{\delta U}{\delta m}
\bigl( \lambda {\mathcal L}(Y)  + (1-\lambda) {\mathcal L}(X) ,Y
\bigr)
- 
\frac{\delta U}{\delta m}
\bigl( \lambda {\mathcal L}(Y)  + (1-\lambda) {\mathcal L}(X) ,X
\bigr)
\biggr] d\lambda 
\\
&= \int_{0}^1 \int_{0}^1
{\mathbb E}
\biggl[ 
D_{y} \frac{\delta U}{\delta m}
\bigl( \lambda {\mathcal L}(Y)  + (1-\lambda) {\mathcal L}(X) 
\bigr)\bigl( \lambda' \bar Y + (1-\lambda') \bar X \bigr)
(\bar Y- \bar X)
\biggr] d\lambda d \lambda',  
\end{split}
\end{equation*}
where $\bar X$ and $\bar Y$ are $\R^d$-valued random variables
that represent the $\T^d$-valued random variables $X$ and $Y$, while $D_{y} [\delta U/\delta m](m,\cdot)$ is seen as a periodic function from $\R^d$ into $\R^{d \times d}$.

By uniform continuity of $D_{m} U = D_{y}[\delta U/\delta m]$ on the compact set ${\mathcal P}(\T^d) 
\times \T^d$, we deduce that, 
\begin{equation}
\label{eq:appendix:def->lions}
\begin{split}
&U\bigl({\mathcal L}(Y) \bigr) - U\big({\mathcal L}(X) 
\bigr) 
\\
&\hspace{15pt} = 
{\mathbb E}
\biggl[ 
D_{y} \frac{\delta U}{\delta m}
\bigl(
{\mathcal L}(X) 
\bigr)( \bar X )
\bigl(
\bar Y-
\bar X
\bigr)
\biggr] +  
{\mathbb E}[\vert\bar X- \bar Y \vert^2]^{1/2}
w 
\bigl( {\mathbb E}[\vert\bar X- \bar Y \vert^2]^{1/2}
\bigr),
\end{split}
\end{equation}
for a function $w : \R_{+} \rightarrow \R_{+}$
that tends to $0$ in $0$ ($w$ being independent of $X$ and $Y$).  
Above, we used the fact that $\dk({\mathcal L}(X),{\mathcal L}(Y)) \leq \E[\vert \bar X - \bar Y\vert^2]^{1/2}$.

Let now $Z_{\lambda} = \tau_{\lambda (\bar{Y}-\bar{X})}(X)$, 
for $\lambda \in [0,1]$,
so that $Z_{\lambda + \varepsilon}
= \tau_{\varepsilon (\bar{Y}-\bar{X})}(Z_{\lambda})$,
for $0 \leq \lambda \leq \lambda + \varepsilon \leq 1$.
Then, $(\lambda+\varepsilon) \bar{Y} + 
[1-(\lambda+\varepsilon)] \bar{X}$ and 
$\lambda \bar{Y} + (1-\lambda) \bar{X}$
are representatives of $Z_{\lambda+\varepsilon}$
and $Z_{\lambda}$ and
the distance between both reads
\begin{equation*}
\bigl\vert (\lambda+\varepsilon) \bar{Y} + 
[1-(\lambda+\varepsilon)] \bar{X}
-
\lambda \bar{Y} + (1-\lambda) \bar{X}
\bigr\vert=
\varepsilon \vert \bar{Y} - \bar{X} \vert.
\end{equation*}
Therefore, by 
\eqref{eq:appendix:def->lions},
\begin{equation*}
\frac{d}{d\lambda}
U(Z_{\lambda})
= {\mathbb E}
\biggl[ 
D_{y} \frac{\delta U}{\delta m}
\bigl(
{\mathcal L}(Z_{\lambda}) 
\bigr)( Z_{\lambda} )
\bigl(
\bar Y-
\bar X
\bigr)
\biggr], \quad \lambda \in [0,1]. 
\end{equation*}
Integrating with respect to $\lambda \in [0,1]$, 
we get \eqref{eq:derivative:torus}.
\end{proof}

\subsection{Technical remarks on derivatives}

Here we collect several results related with the notion of derivative defined in Definition 
\ref{def:Diff}. 

The first one is a quantified version of Proposition \ref{prop:DmUderiv}. 

\begin{Proposition}\label{prop:diffdeltaDm} Assume that $U: \T^d\times \Pk\to \R$ is  $\cC^1$, that, for some $n\in \N$,  
$U(\cdot, m)$ and $\ds \frac{\delta U}{\delta m}(\cdot,m,\cdot)$ are in $\cC^{n+\alpha}$ and in $\cC^{n+\alpha}\times \cC^{2}$ respectively, and that there exists a constant $C_n$ such that, for any $m,m'\in \Pk$,  
\begin{equation}
\label{U-U-infinity}
\left\|\frac{\delta U}{\delta m}(\cdot,m,\cdot)\right\|_{(n+\alpha,2)} \leq C_n,
\end{equation}
and 
\be\label{U-U-inte}
\begin{array}{l}
\ds \left\|U(\cdot,m')-U(\cdot,m)- \inte \frac{\delta U}{\delta m}(\cdot,m,y)d(m'-m)(y) \right\|_{n+\alpha}
\leq C_n\dk^2(m, m').
\end{array}
\ee
Fix $m\in \Pk$ and let $\phi\in L^2(m,\R^d)$ be a vector field. Then 
\begin{equation}
\label{eq:U-Uresult}
\left\|U \bigl(\cdot,(id+\phi)\sharp m \bigr)-U(\cdot,m)- \inte  D_mU(\cdot,m, y)\cdot \phi(y)\ dm(y) \right\|_{n+\alpha}\leq (C_n+1)\|\phi\|^2_{L^2(m)}
\end{equation}
\end{Proposition}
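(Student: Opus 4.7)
The plan is to combine the hypothesis \eqref{U-U-inte}, which is a second-order Taylor-type expansion for $U$ with respect to $m$, with a Taylor expansion in $y$ of $\frac{\delta U}{\delta m}(\cdot,m,y)$ around $y$. The additive structure of the two errors will yield the claimed bound, up to adjusting the constants.

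First I would apply \eqref{U-U-inte} with the choice $m'=(id+\phi)\sharp m$. Since the optimal transport plan $y\mapsto (y,y+\phi(y))$ is admissible in the definition of $\dk$, one has
\begin{equation*}
\dk\bigl(m,(id+\phi)\sharp m\bigr)\;\leq\;\int_{\T^d}|\phi(y)|\,dm(y)\;\leq\;\|\phi\|_{L^2(m)},
\end{equation*}
where the last step uses that $m$ is a probability measure. Plugging this into \eqref{U-U-inte} gives
\begin{equation*}
\Bigl\|U\bigl(\cdot,(id+\phi)\sharp m\bigr)-U(\cdot,m)-\int_{\T^d}\frac{\delta U}{\delta m}(\cdot,m,y)\,d\bigl((id+\phi)\sharp m-m\bigr)(y)\Bigr\|_{n+\alpha}\;\leq\; C_n\|\phi\|_{L^2(m)}^2.
\end{equation*}

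The second step is to expand the linear term. By the definition of push-forward,
\begin{equation*}
\int_{\T^d}\frac{\delta U}{\delta m}(\cdot,m,y)\,d\bigl((id+\phi)\sharp m-m\bigr)(y)=\int_{\T^d}\Bigl[\tfrac{\delta U}{\delta m}\bigl(\cdot,m,y+\phi(y)\bigr)-\tfrac{\delta U}{\delta m}(\cdot,m,y)\Bigr]\,dm(y).
\end{equation*}
Performing a first-order Taylor expansion in the second variable of $\frac{\delta U}{\delta m}$, and recalling that $D_m U=D_y\frac{\delta U}{\delta m}$, the integrand equals $D_m U(\cdot,m,y)\cdot\phi(y)+R(\cdot,y)$, where
\begin{equation*}
R(x,y)=\int_0^1(1-s)\bigl\langle D^2_y\tfrac{\delta U}{\delta m}\bigl(x,m,y+s\phi(y)\bigr)\phi(y),\phi(y)\bigr\rangle ds.
\end{equation*}
This reproduces the target principal term $\int D_m U(\cdot,m,y)\cdot\phi(y)\,dm(y)$, and it remains to estimate $\int R(\cdot,y)\,dm(y)$ in $\cC^{n+\alpha}$.

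The only real bookkeeping step is controlling this remainder in $\cC^{n+\alpha}$ rather than just in sup-norm, and this is where the mixed norm assumption $\|\frac{\delta U}{\delta m}(\cdot,m,\cdot)\|_{(n+\alpha,2)}\leq C_n$ is used exactly as stated. For $|\ell|\leq n$, one differentiates under the integral in $x$ to get
\begin{equation*}
D^\ell_x R(x,y)=\int_0^1(1-s)\bigl\langle D^\ell_x D^2_y\tfrac{\delta U}{\delta m}\bigl(x,m,y+s\phi(y)\bigr)\phi(y),\phi(y)\bigr\rangle ds,
\end{equation*}
whose sup-norm in $x$ is bounded by $\tfrac12 C_n|\phi(y)|^2$, and whose $\alpha$-Hölder seminorm in $x$ when $|\ell|=n$ is also bounded by $\tfrac12 C_n|\phi(y)|^2$ thanks to the $(n+\alpha,2)$-assumption (note that the translation $s\phi(y)$ appears in the $y$-slot only, hence does not disturb the $x$-Hölder control). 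Integrating in $y$ against $m$ and using Cauchy--Schwarz, $\|\int R(\cdot,y)\,dm(y)\|_{n+\alpha}\leq\tfrac12 C_n\|\phi\|_{L^2(m)}^2$. Adding this to the estimate of the first step, absorbing numerical factors into the stated $(C_n+1)$ constant, yields \eqref{eq:U-Uresult}. No step is genuinely hard; the only subtle point is that the $\cC^{n+\alpha}$ regularity of the remainder rests squarely on the joint regularity encoded by $\|\cdot\|_{(n+\alpha,2)}$, not merely on $\cC^2$ regularity in $y$.
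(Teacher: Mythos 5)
Your proof is correct and takes essentially the same approach as the paper: both apply \eqref{U-U-inte} at $m'=(id+\phi)\sharp m$ using $\dk(m,(id+\phi)\sharp m)\leq \|\phi\|_{L^2(m)}$, then expand the linear term in $y$ via Taylor with remainder, bounding the second-order remainder in $\cC^{n+\alpha}$ through the $(n+\alpha,2)$-bound. The only superficial difference is that you write the remainder in standard integral-remainder form with the $(1-s)$ weight, whereas the paper iterates the fundamental theorem of calculus twice, but these are the same computation.
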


Below, we give conditions that ensure that \eqref{U-U-inte} holds true. 

\begin{proof} 
Using \eqref{U-U-inte} we obtain
\begin{equation}
\label{estijhbvkvfd}
\begin{split}
&\left\|U\bigl(\cdot,(id+\phi)\sharp m\bigr)-U(\cdot,m)- 
\inte \frac{\delta U}{\delta m}(\cdot,m,y)d
\bigl((id+\phi)\sharp m-m\bigr)(y) \right\|_{n+\alpha}
\\
&\hspace{15pt} \leq C_n\dk^2\bigl(m, (id+\phi)\sharp m\bigr)
\; \leq \; C_n\|\phi\|_{L^{2}(m)}^{2} .
\end{split}
\end{equation}
Using the regularity of $\frac{\delta U}{\delta m}$, we obtain, for
an $\{1,\cdots,d\}$-valued tuple $\ell$ of length $|\ell|\leq n$ and
for any $x\in \T^d$, 
(omitting the dependence with respect to $m$ for simplicity): 
\begin{equation*}
\begin{split}
&\inte D^\ell_x \frac{\delta U}{\delta m}(x,y)
d\bigl\{(id+\phi)\sharp m\bigr\}(y)- \inte  D^\ell_x\frac{\delta U}{\delta m}(x,y)dm(y)-\inte   D^\ell_xD_m U(x,y)\cdot 
\phi(y)\  dm(y)
\\
&\qquad  =  \inte  \left(D^\ell_x\frac{\delta U}{\delta m}
\bigl(x,y+\phi(y)\bigr)- D^\ell_x\frac{\delta U}{\delta m}(x,y)-  D^\ell_xD_m U(x,y)\right)\cdot \phi(y)\  dm(y)
\\
&\qquad =  \ds  \int_0^1\inte  \left(D^\ell_xD_y\frac{\delta U}{\delta m}\bigl(x,y+s\phi(y)\bigr)- D^\ell_xD_m U(x, y)\right)\cdot \phi(y)\  dm(y)ds 
\\ 
&\qquad  =  \ds  \int_0^1\int_0^1 \inte s  D^\ell_xD_yD_m U\bigl(x,
y + st \phi(y)\bigr)\phi(y)\cdot \phi(y)\  dm(y)\ dsdt \;  \leq \; \ds C_n\|\phi\|_{L^{2}(m)}^{2}, 
\end{split}
\end{equation*}
where we used \eqref{U-U-infinity} in the last line. 

Coming back to \eqref{estijhbvkvfd}, this shows that 
\begin{equation*}
\begin{split}
&\left\|D^\ell U\Bigl(\cdot,(id+\phi)\sharp m\bigr)-D^\ell U(\cdot,m)- \inte   D^\ell_xD_m U(\cdot,y)\cdot 
\phi(y)\  dm(y) \right\|_{\infty}
 \leq \; C_n\|\phi\|_{L^{2}(m)}^{2},
\end{split}
\end{equation*}
which proves 
\eqref{eq:U-Uresult} but with $\alpha=0$. 

The proof of the H\"older estimate goes along the same line: if $x,x'\in \T^d$, then  
\begin{equation*}
\begin{split}
&\inte D^\ell_x \frac{\delta U}{\delta m}(x,y)
d
\bigl\{
(id+\phi)\sharp m
\bigr\}
(y)- \inte  D^\ell_x\frac{\delta U}{\delta m}(x,y)dm(y)-\inte   D^\ell_xD_m U(x,y)\cdot \phi(y)\  dm(y)
\\
&\hspace{15pt} - \biggl( \inte  D^\ell_x\frac{\delta U}{\delta m}(x',y)d
\bigl\{
(id+\phi)\sharp m
\bigr\}
(y)- \inte  D^\ell_x\frac{\delta U}{\delta m}(x',y)dm(y)
\\
&\hspace{250pt} -\inte   D^\ell_x D_m U(x',y)\cdot \phi(y)\  dm(y)\biggr)
\\
&=  \inte  \left(D^\ell_x\frac{\delta U}{\delta m}\bigl(x,y+\phi(y)
\bigr)- D^\ell_x\frac{\delta U}{\delta m}(x,y)-  D^\ell_xD_m U(x,y)\right)\cdot \phi(y)\  dm(y)
\\
&\hspace{15pt}
-\inte \left( D^\ell_x\frac{\delta U}{\delta m}\bigl(x',y+\phi(y)\bigr)- D^\ell_x\frac{\delta U}{\delta m}(x',y)-  D^{\ell}_x
D_m U(x',y)\right) \cdot \phi(y)\  dm(y)
\\
&=  \int_0^1\inte  \left(D^\ell_xD_y\frac{\delta U}{\delta m}(x,y+s\phi(y))- D^\ell_xD_m U(x, y)\right)\cdot \phi(y)\  dm(y)ds 
\\ 
&\hspace{15pt}  - \int_0^1 \inte \left( D^\ell_xD_y\frac{\delta U}{\delta m}(x',y+s\phi(y))- D^\ell_xD_m U(x', y)\right)\cdot \phi(y) \  dm(y)ds 
\\ 
&=  \ds  \int_0^1\int_0^1 \inte s  
\Bigl(D^\ell_xD_yD_m U
\bigl(x, y+ st \phi(y)\bigr)
\\
&\hspace{150pt}-D^\ell_xD_yD_m U
\bigl(x',  y+st \phi(y)\bigr)
\Bigr)\phi(y)\cdot \phi(y)\  dm(y)\ dsdt 
\\ 
&\leq \ds C_n|x-x'|^{\alpha}\|\phi\|_{L^{2}(m)}^{2}.
\end{split}
\end{equation*}
This shows that 
$$
\left\| \inte \frac{\delta U}{\delta m}(\cdot,m,y)d
\Bigl[ \bigl\{(id+\phi)\sharp m
\bigr\}
-m \Bigr](y) 
-\inte  D_mU(\cdot,m,y)\cdot \phi(y)\ dm(y)  \right\|_{n+\alpha} \leq C_n \|\phi\|_{L^{2}(m)}^{2} .
$$
Plugging this inequality into \eqref{estijhbvkvfd} 
shows the result. 
\end{proof}

We now give conditions under which \eqref{U-U-inte} holds. 

\begin{Proposition} Assume that $U: \T^d\times \Pk\to \R$ is  $\cC^1$ and that, for some $n\in \N^*$, 
$$
\left\|\frac{\delta U}{\delta m}(\cdot, m, \cdot)\right\|_{(n+\alpha,n+\alpha)} + {\rm Lip}_n\left(\frac{\delta U}{\delta m}\right) \;  \leq \; C_n.
$$
Then, for any $m,m'\in \Pk$, we have 
$$
\begin{array}{l}
\ds \left\|U(\cdot,m')-U(\cdot,m)- \inte \frac{\delta U}{\delta m}(\cdot,m,y)d(m'-m)(y) \right\|_{n+\alpha}
\leq C_n\dk^2(m, m').
\end{array}
$$
\end{Proposition}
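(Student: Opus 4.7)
The natural starting point is the fundamental theorem of calculus formula \eqref{e.iubsdiazsd}: for any $m,m'\in \Pk$,
$$
U(\cdot,m')-U(\cdot,m) = \int_0^1 \inte \frac{\delta U}{\delta m}\bigl(\cdot,m_s,y\bigr)\, d(m'-m)(y)\, ds, \qquad m_s := (1-s)m + sm'.
$$
Subtracting $\int_{\T^d} [\delta U/\delta m](\cdot,m,y)\, d(m'-m)(y)$ from both sides and moving this term inside the $s$-integral (using that it does not depend on $s$), I obtain the key representation
$$
U(\cdot,m')-U(\cdot,m) - \inte \frac{\delta U}{\delta m}(\cdot,m,y)d(m'-m)(y) = \int_0^1 \inte g_s(\cdot,y)\, d(m'-m)(y)\, ds,
$$
where $g_s(x,y) := \frac{\delta U}{\delta m}(x,m_s,y) - \frac{\delta U}{\delta m}(x,m,y)$.

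The plan is then to bound, uniformly in $s\in[0,1]$, the $\cC^{n+\alpha}$ norm in $x$ of the integral $h_s(x) := \int_{\T^d} g_s(x,y)\, d(m'-m)(y)$. The Lipschitz-in-$m$ hypothesis provides exactly the right estimate: since $\dk(m_s,m) = s\,\dk(m,m')$,
$$
\bigl\|g_s\bigr\|_{(n+\alpha,n+\alpha)} \leq {\rm Lip}_n\Bigl(\frac{\delta U}{\delta m}\Bigr)\, \dk(m_s,m) \leq C_n\, s\, \dk(m,m').
$$
For any multi-index $\ell$ with $|\ell|\leq n$ and any $x\in\T^d$, the map $y\mapsto D_x^{\ell} g_s(x,y)$ is then Lipschitz continuous on $\T^d$ with constant bounded by $C_n s\,\dk(m,m')$ (since the $(n+\alpha,n+\alpha)$ norm controls in particular the Lipschitz norm in $y$ uniformly in $x$). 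The dual characterization
$$
\dk(\mu,\nu) = \sup\Bigl\{ \inte \phi\, d(\mu-\nu) : \phi:\T^d\to\R \text{ is } 1\text{-Lipschitz}\Bigr\}
$$
then yields
$$
\bigl| D_x^{\ell} h_s(x)\bigr| = \Bigl| \inte D_x^{\ell} g_s(x,y)\, d(m'-m)(y)\Bigr| \leq C_n\, s\, \dk(m,m')^2.
$$
The $\alpha$-H\"older seminorm in $x$ of $D_x^{\ell} h_s$ for $|\ell|=n$ is handled identically: applying the same duality to the map $y \mapsto D_x^{\ell} g_s(x,y) - D_x^{\ell} g_s(x',y)$, whose Lipschitz constant in $y$ is bounded by $|x-x'|^{\alpha}\,\|g_s\|_{(n+\alpha,n+\alpha)}$, gives
$$
\bigl| D_x^{\ell} h_s(x) - D_x^{\ell} h_s(x')\bigr| \leq C_n\, s\, \dk(m,m')^2\, |x-x'|^{\alpha}.
$$

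Combining the pointwise and H\"older bounds and integrating in $s\in[0,1]$ (which contributes a factor of $1/2$ that is absorbed into the constant) completes the proof. There is no genuine obstacle here: once one recognizes that the term $\int [\delta U/\delta m](\cdot,m,y)\, d(m'-m)(y)$ can be absorbed into the $s$-integral, everything reduces to a routine application of the Kantorovich--Rubinstein duality, and the Lipschitz-in-$m$ hypothesis on $\delta U/\delta m$ is precisely what is needed to squeeze out the extra factor of $\dk(m,m')$ that converts a linear estimate into the claimed quadratic one.
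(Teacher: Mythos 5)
Your proof is correct and takes essentially the same approach as the paper: the fundamental-theorem representation \eqref{e.iubsdiazsd}, Kantorovich--Rubinstein duality to turn the $y$-integral against $d(m'-m)$ into a factor of $\dk(m,m')$, and the ${\rm Lip}_n$ hypothesis applied to $\delta U/\delta m(\cdot,m_s,\cdot)-\delta U/\delta m(\cdot,m,\cdot)$ to produce the second factor of $\dk(m,m')$. Your presentation is a bit more organized (introducing $g_s$, $h_s$ and writing out both the sup-norm and the H\"older seminorm bounds explicitly) than the paper's, which only carries out the H\"older seminorm computation and remarks that the $L^\infty$ estimate is simpler, but the substance and every key estimate are the same.
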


\begin{proof} We only show the Holder regularity: the $L^\infty$ estimates go along the same line and are simpler. 
For any $\ell \in \N^d$ with $|\ell|\leq n$ and any $x,x'\in \T^d$, we have 
\begin{equation*}
\begin{split}
&\left| D^\ell_x U(x,m')-D^\ell_xU(x,m)- \inte D^\ell_x \frac{\delta U}{\delta m}(x,m,y)d(m'-m)(y)\right.
\\
&\hspace{30pt}
 \left. - \biggl(D^\ell_x U(x',m')-D^\ell_xU(x',m)- \inte D^\ell_x\frac{\delta U}{\delta m}(x',m,y)d(m'-m)(y)
 \biggr)
 \right|
 \\
&\leq \int_0^1\left| \inte \left(D^\ell_x\frac{\delta U}{\delta m}
\bigl(x, (1-s)m+sm',y\bigr)- D^\ell_x \frac{\delta U}{\delta m}(x,m,y) \right.\right.
\\
&\hspace{30pt} -\left.\left. \Bigl[ D^\ell_x\frac{\delta U}{\delta m}\bigl(x', (1-s)m+sm',y\bigr)- D^\ell_x \frac{\delta U}{\delta m}(x',m,y) 
\Bigr]
\right)d(m'-m)(y)\right|ds 
\\
&\leq \ds \sup_{s,y} \left| D_yD^\ell_x\frac{\delta U}{\delta m}
\bigl(x, (1-s)m+sm',y\bigr)- D_yD^\ell_x \frac{\delta U}{\delta m}(x,m,y) \right.
\\
&\hspace{30pt}
-\left. \Bigl[ D_yD^\ell_x\frac{\delta U}{\delta m}
\bigl(x', (1-s)m+sm',y\bigr)- D_yD^\ell_x \frac{\delta U}{\delta m}(x',m,y) 
\Bigr]
\right|\dk(m,m') 
\\
&\leq \ds {\rm Lip}_n\left(\frac{\delta U}{\delta m}\right) |x-x'|^{\alpha} \dk^2(m,m').
\end{split}
\end{equation*}
This proves our claim. 
\end{proof}

\begin{Proposition}\label{prop:diffdeltaDm2} Assume that $U:\Pk\to \R$ is $\cC^2$ with, for any $m,m'\in \Pk$,  
\begin{equation}
\label{e.condprop:diffdeltaDm2}
\begin{split}
&\left| U(m')-U(m)-\inte \frac{\delta U}{\delta m}(m,y)d(m'-m)(y) \right.
\\
&\hspace{15pt} \left.-\frac12 \inte\inte  \frac{\delta^2 U}{\delta m^2}(m,y,y')d(m'-m)(y) d(m'-m)(y')\right|\; \leq \; \dk^2(m,m') w
\bigl(\dk(m,m')\bigr),
\end{split}
\end{equation}
where $w(t)\to 0$ as $t\to0$, 
and that
$$
\left\| \frac{\delta U}{\delta m}(m, \cdot)\right\|_{3}+\left\|\frac{\delta^2 U}{\delta m^2}(m, \cdot,\cdot) \right\|_{(2,2)}\leq C_0.
$$
Then, for any $m\in \Pk$ and any vector field $\phi\in L^3(m,\R^d)$, we have 
\begin{equation*}
\begin{split}
&\biggl| U\bigl((id+\phi)\sharp m\bigr)-U(m)-\inte  D_mU(m,y)\cdot \phi(y)\ dm(y) 
\\
&\hspace{45pt} -\frac12\inte  D_yD_mU(m,y)\phi(y)\cdot\phi(y)\ dm(y)
\\
&\hspace{45pt}
- \frac12 \inte\inte   D^2_{mm}U(m,y,y')\phi(y)\cdot\phi(y')\ dm(y) dm(y')\biggr|
\\
&\leq  \|\phi\|_{L^3_m}^2\tilde w(\|\phi\|_{L^3_m}),
\end{split}
\end{equation*}
where the modulus $\tilde w$ depends on $w$ and on $C_0$. 
\end{Proposition}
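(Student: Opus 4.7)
The plan is to apply the hypothesis \eqref{e.condprop:diffdeltaDm2} with $m' = (id+\phi)\sharp m$ and then Taylor-expand the two integrals involving $\delta U/\delta m$ and $\delta^2 U/\delta m^2$ in the space variable in order to replace them by the intrinsic quantities $D_mU$, $D_yD_mU$ and $D^2_{mm}U$. The $L^3$-norm appears naturally as soon as one controls third-order remainders in the Taylor expansion against the Hölder inequality.

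First, since $\dk(m,(id+\phi)\sharp m) \leq \int |\phi(y)|\,dm(y) \leq \|\phi\|_{L^3_m}$, plugging $m' = (id+\phi)\sharp m$ into \eqref{e.condprop:diffdeltaDm2} gives
\begin{equation*}
\Big| U\bigl((id+\phi)\sharp m\bigr) - U(m) - T_1 - \tfrac12 T_2 \Big| \;\leq\; \|\phi\|_{L^3_m}^2\, w(\|\phi\|_{L^3_m}),
\end{equation*}
where $T_1 := \inte \frac{\delta U}{\delta m}(m,y)\,d((id+\phi)\sharp m-m)(y)$ and $T_2 := \int_{\T^{2d}} \frac{\delta^2 U}{\delta m^2}(m,y,y')\,d((id+\phi)\sharp m-m)(y)\,d((id+\phi)\sharp m-m)(y')$. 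By the push-forward change of variables, $T_1 = \inte[\frac{\delta U}{\delta m}(m,y+\phi(y)) - \frac{\delta U}{\delta m}(m,y)]\,dm(y)$; since $\|\frac{\delta U}{\delta m}(m,\cdot)\|_3 \leq C_0$, a second-order Taylor expansion in $y$ combined with the identities $D_y\frac{\delta U}{\delta m} = D_mU$ and $D^2_y\frac{\delta U}{\delta m} = D_yD_mU$ yields
\begin{equation*}
\Big|T_1 - \inte D_mU(m,y)\cdot\phi(y)\,dm(y) - \tfrac12 \inte D_yD_mU(m,y)\phi(y)\cdot\phi(y)\,dm(y)\Big| \;\leq\; C_0 \|\phi\|_{L^3_m}^3.
\end{equation*}

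For $T_2$, an analogous change of variables turns the integral into a double difference which, by Schwarz-type two-parameter Taylor expansion applied to $(s,t)\mapsto \frac{\delta^2 U}{\delta m^2}(m,y+s\phi(y),y'+t\phi(y'))$ and the identity $D_yD_{y'}\frac{\delta^2U}{\delta m^2} = D^2_{mm}U$, rewrites as
\begin{equation*}
T_2 = \iint_{\T^{2d}} \int_0^1\!\!\int_0^1 D^2_{mm}U\bigl(m,y+s\phi(y),y'+t\phi(y')\bigr)\phi(y)\cdot\phi(y')\,ds\,dt\,dm(y)dm(y').
\end{equation*}
Subtracting the main term (obtained by freezing $s=t=0$) leaves an error of the form $\iint \omega_0(|\phi(y)|+|\phi(y')|)|\phi(y)||\phi(y')|\,dm(y)dm(y')$, where $\omega_0$ denotes the modulus of continuity of $D^2_{mm}U(m,\cdot,\cdot)$ on the compact torus. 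The main obstacle is to show that this error is $o(\|\phi\|_{L^3_m}^2)$; for this I would split the domain at $\{|\phi(y)|+|\phi(y')|\leq\delta\}$, bounding by $\omega_0(\delta)\|\phi\|_{L^1_m}^2 \leq \omega_0(\delta)\|\phi\|_{L^3_m}^2$ on the small side, and by the uniform bound $C_0$ combined with a Markov-type inequality $\mathbf{1}_{\{|\phi(y)|+|\phi(y')|>\delta\}} \leq \delta^{-2}(|\phi(y)|+|\phi(y')|)^2$ on the large side; the latter contribution is $\leq C\delta^{-2}\|\phi\|_{L^3_m}^4$, and optimizing in $\delta$ (e.g.\ $\delta = \|\phi\|_{L^3_m}^{1/2}$) gives a bound of the form $\|\phi\|_{L^3_m}^2\bigl[\omega_0(\|\phi\|_{L^3_m}^{1/2}) + C\|\phi\|_{L^3_m}\bigr]$.

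Assembling the three estimates above gives the desired inequality with $\tilde w(s) := w(s) + C_0 s + \omega_0(s^{1/2}) + Cs$, which vanishes as $s\to 0$ and depends only on $w$ and $C_0$ (through $\omega_0$).
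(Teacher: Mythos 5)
Your proof follows the same overall strategy as the paper: plug $m' = (id+\phi)\sharp m$ into \eqref{e.condprop:diffdeltaDm2}, use the change of variables to rewrite $T_1$ and $T_2$ as integrals of increments, and then Taylor-expand in the space variable using the identities $D_y\frac{\delta U}{\delta m} = D_mU$, $D^2_y\frac{\delta U}{\delta m} = D_yD_mU$, $D^2_{yy'}\frac{\delta^2 U}{\delta m^2} = D^2_{mm}U$. The treatment of $T_1$ is identical to the paper's.

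The place where you diverge is the remainder in $T_2$. You control it via a general modulus of continuity $\omega_0$ of $D^2_{mm}U(m,\cdot,\cdot)$ and a splitting argument with Markov's inequality. Two remarks. First, this is more machinery than is needed: the hypothesis $\|\frac{\delta^2 U}{\delta m^2}(m,\cdot,\cdot)\|_{(2,2)} \leq C_0$ implies that $D^2_{mm}U(m,\cdot,\cdot) = D_yD_{y'}\frac{\delta^2 U}{\delta m^2}(m,\cdot,\cdot)$ has $\cC^1$-norm bounded by $C_0$, uniformly in $m$, so $\omega_0(s) \leq C_0 s$. Plugging this directly into your error integral gives
\begin{equation*}
\iint \omega_0\bigl(|\phi(y)|+|\phi(y')|\bigr)|\phi(y)||\phi(y')|\,dm\,dm \;\leq\; 2C_0\,\|\phi\|_{L^2_m}^2\|\phi\|_{L^1_m}\;\leq\; 2C_0\,\|\phi\|_{L^3_m}^3,
\end{equation*}
which is precisely the $O(\|\phi\|_{L^3_m}^3)$ bound the paper obtains, with no need for the splitting and the optimization in $\delta$. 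Second, and more substantively, your final claim that $\tilde w$ ``depends only on $w$ and $C_0$ (through $\omega_0$)'' is not justified as written: a priori $\omega_0$ is the modulus of continuity of $D^2_{mm}U(m,\cdot,\cdot)$ for the particular $m$, so it may depend on $m$ unless you invoke exactly the uniform Lipschitz bound $\omega_0(s) \leq C_0 s$ above. Once you invoke it, the splitting becomes superfluous. So the argument is correct but takes a detour; tightening the last step by using the Lipschitz estimate directly gives the cleaner modulus $\tilde w(s) = w(s) + C\,C_0\,s$ that the paper's proof produces.
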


\begin{proof} We argue as in Proposition \ref{prop:diffdeltaDm}: by our assumption, we have 
\begin{equation*}
\begin{split}
&\left| U \bigl((id+\phi)\sharp m\bigr)-U(m)
-\inte \frac{\delta U}{\delta m}(m,y)d\Bigl(
\bigl\{
(id+\phi)\sharp m
\bigr\}
-m\Bigr)(y) \right.
\\
&\hspace{30pt} \left.-\frac12 \inte\inte  \frac{\delta^2 U}{\delta m^2}(m,y,z)d\Bigl(\bigl\{(id+\phi)\sharp m\bigr\}-m\Bigr)(y) d\Bigl( \bigl\{ (id+\phi)\sharp m \bigr\}-m\Bigr)(z)\right|
\\
&\leq \dk^2\bigl(m,(id+\phi)\sharp m\bigr)
w\bigl(\dk(m,(id+\phi)\sharp m)\bigr)  \leq \|\phi\|_{L^3_m}^2w(\|\phi\|_{L^3_m}).
\end{split}
\end{equation*}
Now 
\begin{equation*}
\begin{split}
&\inte \frac{\delta U}{\delta m}(m,y)d
\Bigl[ \bigl\{ (id+\phi)\sharp m
\bigr\}
-m \Bigr](y)
\\
&= \inte \left(\frac{\delta U}{\delta m}(m,y+\phi(y))-\frac{\delta U}{\delta m}(m,y) \right)dm(y) 
\\
&= \inte \left( D_y\frac{\delta U}{\delta m}(m,y)\cdot \phi(y) +\frac12  D^2_y\frac{\delta U}{\delta m}(m,y)\phi(y)\cdot\phi(y)+O(|\phi(y)|^3)\right)dm(y) 
\\
&= \inte \left( D_mU(m,y)\cdot \phi(y) +\frac12  D_yD_mU(m,y)\phi(y)\cdot\phi(y) +O(|\phi(y)|^3)\right)dm(y),
\end{split}
\end{equation*}
where 
$$
\inte \left|O(|\phi(y)|^3)\right|\ dm(y) \leq \left\| D^2_yD_mU\right\|_\infty  \inte |\phi(y)|^3dm(y)\leq C_0  \|\phi\|_{L^3_m}^3.
$$
Moreover, 
\begin{equation*}
\begin{split}
&\inte\inte  \frac{\delta^2 U}{\delta m^2}(m,y,z)
d\Bigl[ \bigl\{ 
(id+\phi)\sharp m
\bigr\}
-m
\Bigr]
(y) d\Bigl[ \bigl\{ (id+\phi)\sharp m
\bigr\}
-m
\Bigr]
(z) 
\\
&= \inte\inte  \left(\frac{\delta^2 U}{\delta m^2}
\bigl(m,y+\phi(y),z+\phi(z)\bigr)-\frac{\delta^2 U}{\delta m^2}\bigl(m,y+\phi(y),z\bigr)
- \frac{\delta^2 U}{\delta m^2}(m,y,z+\phi(z))\right.
\\
&\hspace{300pt}
 \left.
+ \frac{\delta^2 U}{\delta m^2}(m,y,z)\right) dm(y) dm(z) 
\\
&= \inte\inte\left(  D^2_{y,z} \frac{\delta^2 U}{\delta m^2} (m, y,z)\phi(y)\cdot \phi(z) + O\bigl(|\phi(y)|^2|\phi(z)|+|\phi(y)||\phi(z)|^2 \bigr) \right)dm(y)dm(z) 
\\
&= \inte\inte\biggl(  D^2_{mm}U(m, y,z)\phi(y)\cdot\phi(z) + O\bigl(|\phi(y)|^2|\phi(z)|+|\phi(y)||\phi(z)|^2\bigr) \biggr)dm(y)dm(z),
\end{split}
\end{equation*}
where 
\begin{equation*}
\begin{split}
&\inte\Bigl|O \Bigl( \vert \phi(y)|^2|\phi(z)|+|\phi(y)||\phi(z)|^2\Bigr)\Bigr| dm(y)dm(z) 
\\
&\hspace{15pt} \leq \sup_m \| D^2_{mm}U(m, \cdot, \cdot)\|_{(\cC^1)^2}\|\phi\|^3_{L^3_m} 
\leq C_0\|\phi\|^3_{L^3_m}. 
\end{split}
\end{equation*}
Putting the above estimates together gives the result.  
\end{proof}

We complete the section by giving conditions under which inequality \eqref{e.condprop:diffdeltaDm2} holds: 

\begin{Proposition}\label{prop:diffdeltaDm2bis} Assume that the mapping ${\mathcal P}(\T^d) \ni m\mapsto \frac{\delta^2 U}{\delta m^2}(m, \cdot,\cdot)$ is continuous from $\Pk$ into $(\cC^2(\T^d))^2$ with a modulus $w$. Then \eqref{e.condprop:diffdeltaDm2} holds. 
\end{Proposition}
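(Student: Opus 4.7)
The plan is to iterate the fundamental theorem of calculus along the linear interpolation $m_s = (1-s)m + sm'$, $s \in [0,1]$, first for $U$ and then for $\delta U / \delta m$, and finally to exploit the assumed continuity of $\delta^2 U / \delta m^2$ together with the Kantorovich--Rubinstein duality of $\mathbf{d}_{1}$.

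First, applying \eqref{e.iubsdiazsd} to $U$ we get
\begin{equation*}
U(m') - U(m) = \int_0^1 \int_{\T^d} \frac{\delta U}{\delta m}(m_s, y) \, d(m'-m)(y) \, ds,
\end{equation*}
and applying \eqref{e.iubsdiazsd} to $\frac{\delta U}{\delta m}(\cdot,y)$ at parameter $s$ yields
\begin{equation*}
\frac{\delta U}{\delta m}(m_s, y) - \frac{\delta U}{\delta m}(m, y) = s \int_0^1 \int_{\T^d} \frac{\delta^2 U}{\delta m^2}(m_{ts}, y, y') \, d(m'-m)(y') \, dt.
\end{equation*}
Inserting the second identity into the first and using that $\int_0^1 \!\int_0^1 s \, dt \, ds = 1/2$, we obtain
\begin{equation*}
\begin{split}
&U(m') - U(m) - \int_{\T^d} \frac{\delta U}{\delta m}(m, y) \, d(m'-m)(y) - \frac12 \int_{\T^{2d}} \frac{\delta^2 U}{\delta m^2}(m, y, y') \, d(m'-m)(y) d(m'-m)(y') \\
&\quad = \int_0^1\!\int_0^1 s \int_{\T^{2d}} \bigl[ \tfrac{\delta^2 U}{\delta m^2}(m_{ts}, y, y') - \tfrac{\delta^2 U}{\delta m^2}(m, y, y') \bigr] \, d(m'-m)(y) d(m'-m)(y') \, dt \, ds.
\end{split}
\end{equation*}
This is the only point that requires a genuine estimate; the remainder of the argument is a double duality bound.

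For fixed $s,t \in [0,1]$, set $\varphi(y,y') = \frac{\delta^2 U}{\delta m^2}(m_{ts}, y, y') - \frac{\delta^2 U}{\delta m^2}(m, y, y')$. By the continuity assumption, $\|\varphi\|_{(2,2)} \leq w(\mathbf{d}_1(m_{ts}, m)) \leq w(\mathbf{d}_1(m, m'))$, so in particular $\|D_y D_{y'} \varphi\|_\infty \leq w(\mathbf{d}_1(m, m'))$. The map $\Phi(y) = \int_{\T^d} \varphi(y, y') \, d(m'-m)(y')$ is then Lipschitz in $y$ with constant at most $\|D_y D_{y'} \varphi\|_\infty \mathbf{d}_1(m, m')$, since for $y_1, y_2 \in \T^d$ the function $y' \mapsto \varphi(y_1, y') - \varphi(y_2, y')$ is Lipschitz with constant bounded by $|y_1 - y_2| \|D_y D_{y'} \varphi\|_\infty$. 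A second application of the Kantorovich--Rubinstein duality yields
\begin{equation*}
\left| \int_{\T^{2d}} \varphi(y, y') \, d(m'-m)(y) d(m'-m)(y') \right| \leq w(\mathbf{d}_1(m, m')) \, \mathbf{d}_1(m, m')^2.
\end{equation*}
Integrating in $s,t$ gives the desired bound with $\tilde w = w/2$. The only potential obstacle is justifying that $(2,2)$-regularity suffices to run this double duality, but this is immediate from the explicit formula $\|\psi\|_{(2,2)} = \sum_{|\ell| \leq 2, |\ell'| \leq 2} \|D^{(\ell,\ell')}\psi\|_\infty$, which contains the mixed derivative $D_y D_{y'}$ used above.
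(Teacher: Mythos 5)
Your proof is correct and follows essentially the same route as the paper: iterate the fundamental theorem of calculus along the linear interpolation to produce the remainder term involving $\delta^2 U/\delta m^2(m_{ts},\cdot,\cdot) - \delta^2 U/\delta m^2(m,\cdot,\cdot)$, then apply Kantorovich--Rubinstein duality twice (using the bound on the mixed derivative $D_y D_{y'}$ coming from the $(\cC^2)^2$-modulus) to control the double integral against $d(m'-m)\otimes d(m'-m)$. The only difference is that you spell out the double duality step in detail, whereas the paper compresses it into a single inequality involving $\|D^2_{yy'}\cdot\|_\infty$; the content is identical.
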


\begin{proof} We have 
\begin{equation*}
\begin{split}
U(m')-U(m) &= \int_0^1\inte \frac{\delta U}{\delta m}
\bigl((1-s)m+sm',y\bigr)d(m'-m)(y)
\\
&= \inte \frac{\delta U}{\delta m}(m,y)d(m'-m)(y) 
\\ 
&\hspace{15pt} + \int_0^1\int_0^1 \inte s \frac{\delta^2 U}{\delta m^2}\bigl((1-s\tau)m+s\tau m',y,y'\bigr)d(m'-m)(y)d(m'-m)(y').
\end{split}
\end{equation*}
Hence 
\begin{equation*}
\begin{split}
&\left| 
U(m')-U(m) - \inte \frac{\delta U}{\delta m}(m,y)d(m'-m)(y) \right.
\\
&\hspace{15pt} \left. - \frac12 \inte\inte  \frac{\delta^2 U}{\delta m^2}(m,y,y')d(m'-m)(y)d(m'-m)(y')\right| 
\\
&\leq  \dk(m,m')^2
 \int_0^1\int_0^1 s 
\left\|  D^2_{yy'}\frac{\delta^2 U}{\delta m^2}
\bigl((1-s\tau)m+s\tau m',\cdot,\cdot\bigr) - D^2_{yy'}\frac{\delta^2 U}{\delta m^2}(m,\cdot,\cdot)\right\|_\infty d\tau ds 
\\
&\leq   \dk(m,m')^2 w\bigl( \dk(m,m')^2\bigr).
\end{split}
\end{equation*}

\end{proof}

{\bf Acknowledgement:} The first author was partially supported by the ANR (Agence Nationale de la Recherche) projects  ANR-12-BS01-0008-01 and ANR-14-ACHN-0030-01.



\end{document}